\theoremstyle{plain}
\newtheorem{theorem}[equation]{Theorem}
\newtheorem{corollary}[equation]{Corollary}
\newtheorem{lemma}[equation]{Lemma}
\newtheorem{proposition}[equation]{Proposition}
\theoremstyle{definition}
\theoremstyle{remark}
\newtheorem{example}[equation]{Example}
\newtheoremstyle{indenteddefinition}{\topsep}{\topsep}{\addtolength{\leftskip}{2.0em}}{-0em}{\bfseries}{.}{
}{} 
\theoremstyle{indenteddefinition}
\newtheorem{definition}[equation]{Definition}
\font\temporary=manfnt
\def\dbend{{\temporary\char127}} 
\def\danger{\begin{trivlist}\item[]\noindent%
\begingroup\hangindent=3pc\hangafter=-2
\def\par{\endgraf\endgroup}%
\hbox to0pt{\hskip-\hangindent\dbend\hfill}\ignorespaces}
\def\enddanger{\par\end{trivlist}}
\DeclareMathOperator\Lie{Lie}
\DeclareMathOperator\rank{rank}
\DeclareMathOperator\Ad{Ad}
\DeclareMathOperator\Aut{Aut}
\DeclareMathOperator\Hom{Hom}
\DeclareMathOperator\Map{Map}
\DeclareMathOperator\Id{Id}
\DeclareMathOperator\Ind{Ind}
\DeclareMathOperator\End{End}
\DeclareMathOperator\Sesq{Sesq}
\DeclareMathOperator\quo{quo}
\DeclareMathOperator\sub{sub}
\DeclareMathOperator\cont{cont}
\DeclareMathOperator\weak{weak}
\DeclareMathOperator\tr{tr}
\DeclareMathOperator\gr{gr}
\DeclareMathOperator\RE{Re}
\DeclareMathOperator\CRE{RE}
\DeclareMathOperator\abs{abs}
\DeclareMathOperator\im{im}
\DeclareMathOperator\re{re}
\DeclareMathOperator\mult{mult}
\DeclareMathOperator\POS{pos}
\DeclareMathOperator\NEG{neg}
\DeclareMathOperator\RAD{rad}
\DeclareMathOperator\for{for}
\DeclareMathOperator\sig{sig}
\DeclareMathOperator\sgn{sgn}
\newcommand{\Cite}[1]{\hspace{-.2ex}\cite{#1}} 
\newcommand{\tildehatk}[2]{\big(\widetilde{#1}\big)_{#2}^{\widehat{}}}
\newcommand{\tildehatR}[1]{\big(\widetilde{#1}\big)_1^{\widehat{}}}
\newcommand{\LERP}{\le_{\text{RP}}}
\newcommand{\nind}[3]{#1 & #2; #3.} 
\author[J.~D.~Adams]{Jeffrey D. Adams}
\address{Department of Mathematics \\ University of  Maryland}
\email{jda@math.umd.edu}
\urladdr{http://www.math.umd.edu/~jda/}
\author[M. van Leeuwen]{Marc van Leeuwen}
\address{Laboratoire de Math\'ematiques et Applications
\\Universit\'e de Poitiers}
\email{Marc.van-Leeuwen@math.univ-poitiers.fr}
\urladdr{http://wwwmathlabo.univ-poitiers.fr/~maavl/}
\author[P.~E.~Trapa]{Peter E. Trapa} 
\address{Department of Mathematics \\ University of Utah}
\email{ptrapa@math.utah.edu}
\urladdr{http://math.utah.edu/~ptrapa}
\author[D.~A.~Vogan,~Jr.]{David A. Vogan, Jr.} 
\address{Department of Mathematics \\ Massachusetts 
Institute of Technology}
\email{dav@math.mit.edu} 
\urladdr{http://www-math.mit.edu/~dav} 
\begin{document}
\hyphenation{Grothen-dieck}
\frontmatter
\title{Unitary representations of real reductive groups}  
\alttitle{Repr\'esentations unitaires des groupes de Lie r\'eductifs} 

\begin{abstract}
We present an algorithm for computing the irreducible unitary
representations of a real reductive group $G$.  The Langlands
classification, as formulated by Knapp and Zuckerman, exhibits any
representation with an invariant Hermitian form as a deformation of a
unitary representation from 
the Plancherel formula. The behavior of
these deformations was in part determined in the Kazhdan-Lusztig
analysis of irreducible characters; more complete information comes
from the Beilinson-Bernstein proof of the Jantzen conjectures.

Our algorithm traces the signature of the form through this
deformation, counting changes at reducibility points.  An important
tool is Weyl's ``unitary trick:'' replacing the classical invariant
Hermitian form (where $\mathop{\rm Lie}(G)$ acts by skew-adjoint operators) by a
new one (where a compact form of $\mathop{\rm Lie}(G)$ acts by skew-adjoint
operators).
\end{abstract}
\begin{altabstract}
Nous pr{\'e}sentons un algorithme pour le calcul des
repr{\'e}sentations unitaires irr{\'e}ductibles d'un groupe de Lie
r{\'e}ductif r{\'e}el~$G$. La classification de Langlands, dans sa
formulation par Knapp et Zuckerman, pr{\'e}sente toute
repr{\'e}sentation hermitienne comme {\'e}tant la d{\'e}formation
d'une repr{\'e}sentation unitaire intervenant dans la formule de
Plancherel. 
Le comportement de ces d{\'e}formations est en partie
d{\'e}termin{\'e} par l'analyse de Kazhdan-Lusztig des caract{\`e}res
irr{\'e}ductibles; une information plus compl{\`e}te provient de la
preuve par Beilinson-Bernstein des conjectures de Jantzen.

Notre algorithme trace {\`a} travers
cette d{\'e}formation les changements de la signature de la forme qui peuvent
intervenir aux points de r{\'e}ductibilit{\'e}. Un
outil important est ``l'astuce unitaire'' de Weyl: on remplace
la forme hermitienne classique (pour laquelle $\mathop{\rm Lie}(G)$ agit par
des op{\'e}rateurs antisym{\'e}triques) par une forme hermitienne nouvelle
(pour laquelle c'est une forme compacte de $\mathop{\rm Lie}(G)$ qui
agit par des op{\'e}rateurs antisym{\'e}triques).
\end{altabstract}
\subjclass{22E46, 20G05, 17B15}
\keywords{unitary representation, Kazhdan-Lusztig polynomial, Hermitian form}
\altkeywords{representation unitaire, polyn\^omes de Kazhdan-Lusztig,
  forme hermitienne}
\thanks{All of the authors were supported in part by NSF grant
  DMS-0968275. The first author was supported in part by NSF grant
  DMS-0967566. The third author was supported in part by NSF grant
  DMS-0968060. The fourth author was supported in part by NSF grant
  DMS-0967272.}
\maketitle
\vfill\eject
\mainmatter
{\linespread{.95}
\tableofcontents}
\vfill\eject

\section*{Index of notation}

\begin{tabularx}{\linewidth}{l X}
\nind{$(A,\nu)$}
{continuous parameter}
{Definition \ref{def:dLP}}
\\
\nind{${}^\delta G(\mathbb C), {}^\delta G, {}^\delta K$}
{extended groups}
{Definition \ref{def:extgrp}}
\\
\nind{$\widehat{G}_u$}
{unitary dual of $G$}
{\eqref{eq:Ghatu}}
\\ 
\nind{$\widehat{G}$}
{nonunitary dual of $G$}
{Definition \ref{def:HCmod}}
\\ 
\nind{$G(\mathbb R, \sigma)$}
{real points of a real form of complex Lie group $G(\mathbb C)$}
{\eqref{eq:Grealpoints}}
\\
\nind{$\mathcal G(\mathfrak{h},L(\mathbb C))^\sigma$}
{Grothendieck group of finite length admissible
$(\mathfrak{h},L(\mathbb C))$ modules with nondegenerate
$\sigma$-invariant 
forms}
{Definition \ref{def:hermgroth}}
\\
\nind{$\Gamma=(H,\gamma,R^+_{i \mathbb R})$}
{(continued) Langlands parameter}
{Definitions \ref{def:langlandsparam},  \ref{def:genparam}}
\\
\nind{$\Gamma_1=({}^1 H,\gamma,R^+_{i \mathbb R})$}
{extended Langlands parameter}
{Definition \ref{def:extLP}}
\\
\nind{$\Gamma^{h,\sigma_0}, \Gamma^{h,\sigma_c}$}
{Hermitian dual (or $c$-Hermitian dual) of Langlands parameter}
{Definitions \ref{def:dualparam}, \ref{def:cdualparam}}
\\
\nind{$H \simeq T \times A$}
{Cartan decomposition of real torus}
{Proposition \ref{prop:toruschars}}
\\
\nind{${}^1H = \langle H,\delta_1\rangle$}
{extended maximal torus}
{Definition \ref{def:exttorus}}
\\
\nind{$(\mathfrak{h}, {}^\delta L(\mathbb C))$}
{(extended) pair}
{Definitions \ref{def:pair}, \ref{def:extpair}}
\\
\nind{$\theta$}
{Cartan involution}
{\eqref{eq:cartaninvolution}}
\\
\nind{$\Theta_\pi$}
{distribution character of $\pi$}
{Definition \ref{def:distnchar}}
\\
\nind{$I_{\mathrm{quo}}(\Gamma), I_{\mathrm{sub}}(\Gamma)$}
{standard quotient-type and sub-type modules}
{Theorem \ref{thm:Lrealiz}}
\\
\nind{$\ell(\Gamma)$}
{integral length of $\Gamma$}
{Definition \ref{def:length}}
\\
\nind{$\ell_o(\Gamma)$}
{orientation number of $\Gamma$}
{Definition \ref{def:ornumber}}
\\
\nind{$\Lambda=(T,\lambda,R^+_{i \mathbb R})$}
{discrete Langlands parameter}
{Definition \ref{def:dLP}}
\\ 
\nind
{$m_{\Xi,\Gamma}, M_{\Gamma, \Psi}$}
{entries of multiplicity matrices for standard modules (or character
  formulas for irreducible modules)}
{\eqref{se:charformulas}}
\\
\nind{$(\POS_V,\NEG_V,\RAD_V)$}
{signature character of
$\sigma$-invariant Hermitian form on
admissible $(\mathfrak{h},L(\mathbb C))$ module $V$}
{Proposition \ref{prop:sigchar}(5)}
\\
\nind{$P_{\Gamma, \Psi}$}
{character polynomial}
{Definition \ref{def:multpoly}}
\\
\nind{$P^c_{\Gamma, \Psi}$}
{signature character polynomial}
{Definition \ref{def:Wpoly}}
\\
\nind{$Q_{\Gamma, \Psi}$}
{multiplicity polynomial}
{Definition \ref{def:charpoly}}
\\
\nind{$Q^c_{\Gamma, \Psi}$}
{signature multiplicity polynomial}
{Definition \ref{def:wpoly}}
\\
\nind{$R_{\mathbb R}, R_{i\mathbb R}, R_{\mathbb C}$}
{real, imaginary, and complex roots in $R$}
{Definition \ref{def:rhoim}}
\\
\nind{$2\rho_{\abs}$}
{character of $H$}
{Lemma \ref{lemma:rhoimcover}}
\\
\nind{$\sigma$}
{real form of complex algebraic variety or group}
{\eqref{se:realform}, \eqref{eq:Grealform}}
\\
\nind{$\sigma_c$}
{compact real form of complex reductive group}
{Theorem \ref{thm:KC}}
\\
\nind{$(\pi^{h,\sigma},V^{h,\sigma})$}
{$\sigma$-Hermitian dual of $(\mathfrak{h}, L(\mathbb C))$-module $V$}
{\eqref{se:hermdual}}
\\
\nind{$\mathbb W$}
{signature ring}
{Definition \ref{def:witt}}
\\
\nind{$W^\Lambda$}
{stabilizer of $\Lambda$ in real Weyl group}
{Proposition \ref{prop:LCshape}(1)}
\\
\nind{$W(G,H)$}
{real Weyl group}
{Definition \ref{def:rhoim}, Proposition \ref{prop:realweyl}(4)}
\\
\nind{${}^\delta W(G,H)$}
{extended real Weyl group}
{Definition \ref{def:exttorus}}
\\ 

\nind{$X(\mathbb R,\sigma)$}
{real points for  real form $\sigma$ of variety $X(\mathbb  C)$}
{\eqref{se:realform}}
\\
\end{tabularx}
\vfill\eject

\section{First introduction}\label{sec:firstintro}
\setcounter{equation}{0}
The purpose of this paper is to give a finite algorithm for computing
the set of irreducible unitary representations of a real reductive Lie
group $G$. Before explaining the nature of the algorithm, it is
worth recalling why this is an interesting question.  A serious
historical survey would go back at least to the work of Fourier (which
can be understood in terms of the irreducible unitary representations
of the circle).  

\begin{subequations}\label{se:gelfand}
Since we are not serious historians, we will begin instead with a
formulation of ``abstract harmonic analysis'' arising from the work
of Gelfand beginning in the 1930s.  In Gelfand's formulation, one
begins with a topological group $G$ acting on a topological space
$X$.  A reasonable example to keep in mind is $G=GL(n,{\mathbb R})$
acting on the space $X$ of lattices in ${\mathbb R}^n$. What makes
such spaces difficult to study is that there is little scope for
using algebra.  

The first step in Gelfand's program is therefore to find a nice
Hilbert space ${\mathcal H}$ (often of functions on $X$); for example,
if $G$ preserves a measure on $X$, one can take
\begin{equation*} {\mathcal H} =L^2(X) \end{equation*}
If choices are made well (as in the case of an invariant measure, for
example) then $G$ acts on the Hilbert space ${\mathcal H}$ by unitary
operators: 
\begin{equation*} \pi\colon G \rightarrow U({\mathcal H}). \end{equation*}
Such a group homomorphism (assumed to be continuous, in the sense that
the map
\begin{equation*} G \times {\mathcal H} \rightarrow {\mathcal H},
  \qquad (g,v) \mapsto \pi(g)v \end{equation*}
is continuous) is called a {\em unitary representation of $G$}.
Gelfand's program says that questions about the action of $G$ on $X$
should be recast as questions about the unitary representation of $G$
on ${\mathcal H}$, where one can bring to bear tools of linear
algebra.

One of the most powerful tools of linear algebra is the theory of
eigenvalues and eigenvectors, which allow some problems about linear
transformations to be reduced to the case of dimension one, and the
arithmetic of complex numbers.  An eigenvector of a linear
transformation is a one-dimensional subspace preserved by the linear
transformation. In unitary representation theory the
analogue of an eigenvector is an {\em irreducible unitary
  representation}: a nonzero unitary representation having no proper
closed subspaces invariant under $\pi(G)$. Just as a
finite-dimensional complex vector space is a direct sum of eigenspaces
of any (nice enough) linear transformation, so any (nice enough) unitary
representation is something like a direct sum of irreducible unitary
representations. 

The assumption that we are looking at a {\em unitary} representation
avoids the difficulties (like nilpotent matrices) attached to
eigenvalue decompositions in the finite-dimensional case; but allowing
infinite-dimensional Hilbert spaces introduces complications of other
kinds. First, one must allow not only direct sums but also ``direct
integrals'' of irreducible representations. This complication appears
already in the case of the action of ${\mathbb R}$ on $L^2({\mathbb
  R})$ by translation. The decomposition into one-dimensional
irreducible representations is accomplished by the Fourier transform,
and so involves integrals rather than sums.

For general groups there are more serious difficulties, described by
von Neumann's theory of ``types.'' But one of Harish-Chandra's
fundamental theorems (\cite{HCI}*{Theorem 7}) is that real reductive
Lie groups are ``type I,'' and therefore that any unitary
representation of a reductive group may be written uniquely as a
direct integral of irreducible unitary representations. The second
step in Gelfand's program is to recast questions about the (reducible)
unitary representation $\pi$ into questions about the irreducible
representations into which it is decomposed.

The third step in Gelfand's program is to describe all of the
irreducible unitary representations of $G$. This is the problem of
``finding the unitary dual''
\begin{equation}\label{eq:Ghatu} \widehat{G}_u =_{\text{def}}
  \{\text{equiv.~classes of irreducible unitary representations of
    $G$}\} \end{equation}  
It is this problem for which we offer a solution (for real reductive
$G$) in this paper.  It is far from a completely satisfactory solution
for Gelfand's program; for of course what Gelfand's program asks is
that one should be able to answer interesting questions about all
irreducible unitary representations.  (Then these answers can be
assembled into answers to the questions about the reducible
representation $\pi$, and finally translated into answers to the
original questions about the topological space $X$ on which $G$ acts.)
We offer not a list of unitary representations but a method to
calculate the list. To answer general questions about unitary
representations in this way, one would need to study how the questions
interact with our algorithm.

Which is to say that we may continue to write papers after this
one.

Here is an outline of the algorithm for identifying unitary group
representations. To describe it, we will use a number of ideas to be
introduced and explained only later. What we will actually describe is
an algorithm for computing the signature of an invariant Hermitian
form on any irreducible representation that admits one. The algorithm
for calculating signatures of invariant Hermitian forms in the
analogous setting of highest weight modules is treated by Yee in
\cites{Yee,Yee-new}. 
We follow her ideas closely.

The Langlands classification Theorem
\ref{thm:LC} realizes each irreducible representation
$\overline{\pi}_1$ of $G$ as the unique irreducible quotient of an induced
representation $\pi_1$. This induced representation is part of an
analytic one-parameter family $\{\pi_t\mid t\in
[0,\infty)\}$ of representations. (One simply replaces the real
part $\nu_{\RE}$ of the continuous parameter (cf.~Proposition
\ref{prop:toruschars}) by $t\nu_{\RE}$.) If $\overline{\pi}_1$ is tempered
(and therefore unitary), then $\nu_{\RE}=0$ and $\pi_t=\pi_1
=\overline{\pi}_1$ is a constant family.  In all cases $\pi_0$ is
tempered, and therefore unitary. The representation $\pi_t$ is irreducible
except for a discrete set of values of $t$.

In case $\overline{\pi}_1$ admits an invariant Hermitian form
$\langle,\rangle_1$, we can extend it to an analytic one-parameter
family $\langle,\rangle_t$ of invariant Hermitian forms for the
representations $\pi_t$ (Proposition \ref{prop:meromorphic}). This
result of Knapp and Stein is one of the earliest general tools for
studying non-tempered unitary representations. Because of the
analyticity, the signature of the form $\langle,\rangle_t$ is {\em
  locally constant} on the open set where $\pi_t$ is irreducible.
Because $\pi_0$ is unitary, the form $\langle,\rangle_0$ is definite.

Our algorithm calculates the signature of $\langle,\rangle_t$
beginning with the definiteness at $t=0$, and then calculating the
{\em change} in the signature at each reducibility point $t' \in
[0,1]$. This idea, taken from \Cite{Vu}, is Corollary
\ref{cor:formchange}. The answer is described in terms of forms on
subquotients of the {\em Jantzen filtration} of $\pi_{t'}$ (defined in
\eqref{se:jantzenfilt}).

We have therefore described the signature of the invariant form
$\langle,\rangle_1$ in terms of various signatures of forms on
subquotients $\pi'$ of $\pi_{t'}$, for $t' \le 1$. The
reason this leads to an effective algorithm is that (as is evident
from Langlands' original proof of his classification in \Cite{LC}) the
real part $\RE\nu'$ of the continuous parameter attached to $\pi'$
satisfies
$$\| \RE\nu' \| < \| \RE\nu \|. $$

In order to carry out this computation, we first need to identify
explicitly the irreducible representations $\pi'$ appearing in the
various levels of the Jantzen filtrations. This is done by the Jantzen
Conjecture (Theorem \ref{thm:BBpol}, proved by Beilinson and Bernstein
in \Cite{BB}). The answer is phrased in terms of the Kazhdan-Lusztig
polynomials defined and calculated in \Cite{LV83}

All of these ideas were in place in the 1980s. The great difficulty is
that each irreducible representation $\pi'$ with an invariant
Hermitian form actually has {\em two} inequivalent forms, one the
negative of the other; and there is no way to specify a preferred
form. (A convincing example of this phenomenon is provided by the
two-dimensional irreducible representation $\overline{\pi}_1$ of
$G=SU(1,1)$. As is evident from the name of the group, there is an
invariant Hermitian form of signature $(1,1)$ on
$\overline{\pi}_1$. This form and its negative are exchanged by the
action of the outer automorphism group of $G$.)

When $\pi'$ appears in a Jantzen filtration
(and therefore carries a form defined by Jantzen, related to our
orginal $\langle,\rangle_1$ by analytic continuation) we must decide
which of the two forms on $\pi'$ is the Jantzen form. This
is the content of Theorem \ref{thm:KLsigpolQ}.

The main idea in the proof of Theorem \ref{thm:KLsigpolQ} is evident
in its statement: the theorem refers to Hermitian forms preserved not
by the real Lie algebra ${\mathfrak g}_0$, but rather by a compact
real form of ${\mathfrak g}_0$. We call these forms {\em $c$-invariant
  forms}. Their existence (which is quite easy) is Proposition
\ref{prop:cdualstd}. The great advantage of $c$-invariant forms is
that they are automatically definite on the lowest $K$-types (see
Proposition \ref{prop:cdualstd}(5)). Consequently there is a natural
choice of $c$-invariant form, the one which is {\em positive} on all the
lowest $K$-types.

Of course we are finally interested in ordinary invariant Hermitian
forms rather than $c$-invariant forms, so we need to be able to
translate signature calculations from one to the other.  This is done
by Theorem \ref{thm:ctoinv}. The translation depends on extending the
representation to include an action of the Cartan involution. The
Cartan involution is inner exactly when $\rank(G) = \rank(K)$; this
setting has been called the {\em equal rank case} since the work of
Harish-Chandra on discrete series representations in the 1960s. In the
equal rank case, the translation of signatures between invariant and
$c$-invariant forms is elementary (Theorem \ref{thm:ctoinveqrk}).

When the Cartan involution is {\em not} inner (the {\em unequal rank
  case}), we need to extend the representations we study to include an
action of the Cartan involution. Formally this is a straightforward
version of ``Clifford theory''; the result is Theorem
\ref{thm:extLC}. But we also need to extend the definition and
calculation of Kazhdan-Lusztig polynomials to these extended groups,
and this is a serious matter. (The outer automorphisms act on the
perverse sheaves calculated in \Cite{LV83}, and we need to know the
$+1$ and $-1$ eigenspaces of this action on each cohomology stalk.)
The mathematical ideas are in \Cite{LV12}, and the tracking of signs
is done in \Cite{AV}. One formulation of the answer is Corollary
\ref{cor:sigform}.

We will offer more details about the algorithm in Section
\ref{sec:secondintro}, after recalling the results of Harish-Chandra,
Langlands, and others in terms of which the algorithm is formulated.


Section \ref{sec:exsl2C} carries out the algorithm for
the complex reductive group $SL(2,{\mathbb C})$, computing all
invariant Hermitian forms and in particular identifying the unitary
representations (first described in \Cite{GN}).

The algorithm described in this paper has been implemented in the {\tt
  atlas} software package \Cite{atlas}. There it has been tested on
thousands of known unitary (and non-unitary) representations of groups
of rank up to about eight. The software in its present form is {\em
  not} fast enough to determine the unitarity of an arbitrary
representation of our favorite example, the split real form of $E_8$.

We are deeply grateful to George Lusztig for producing the mathematics
in \Cite{LV12}, without which we would have been unable to complete this
work. We thank Annegret Paul, whose expert reading of a draft of this
paper led to many emendations and improvements. Finally, we thank
Wai Ling Yee, whose work first showed us that an analysis of unitary
representations along these lines might be possible; indeed we hoped
to persuade her to be a coauthor.

\end{subequations}

\section{Nonunitary representations} \label{sec:nonunit}
\setcounter{equation}{0}
In this section we will recall Harish-Chandra's framework for the
study of representation theory for real reductive groups.
His first big idea is
to provide a category of possibly nonunitary representations for which
functional analysis works nearly as well as in the unitary case.
In order to formulate Harish-Chandra's definition of this category, we
need some general terminology about groups and representations.

\begin{subequations}\label{se:reps}
Suppose $H$ is a real Lie
  group. The real Lie algebra of $H$ and its complexification are
  written
\begin{equation*}{\mathfrak h}_0 = \Lie(H), \qquad {\mathfrak h} = {\mathfrak
  h}_0\otimes_{\mathbb R} {\mathbb C}.\end{equation*}
The universal enveloping algebra is written $U({\mathfrak h})$. The
group $H$ acts on ${\mathfrak h}$ and on $U({\mathfrak h})$ by
automorphisms
\begin{equation*}\Ad \colon H \rightarrow \Aut(U({\mathfrak
    h}));\end{equation*}
we write
\begin{equation*} \label{eq:center} {\mathfrak Z}({\mathfrak h}) = U({\mathfrak
    h})^{\Ad(H)}\end{equation*}
for the algebra of invariants. If $H$ is connected, or if $H$ is the group of
real points of a connected complex group, then ${\mathfrak
  Z}({\mathfrak h})$ is precisely the center of $U({\mathfrak h})$,
but in general we can say only
\begin{equation*}
{\mathfrak Z}({\mathfrak h}) \subset \text{center of $U({\mathfrak h})$}.
\end{equation*}

A {\em continuous representation} of $H$ is a complete locally convex
topological vector space $V$ equipped with a continuous action of $H$
by linear transformations. We will sometimes write representations
with module notation, and sometimes with a named homomorphism
$$\pi\colon H \rightarrow \Aut(V).$$
In either case the continuity assumption means that the map
\begin{equation*} H\times V \rightarrow V, \qquad (h,v) \mapsto h\cdot
  v = \pi(h)v\end{equation*}
is continuous.  
An {\em invariant subspace} is a closed subspace
$W\subset V$ preserved by the action of $H$. We say that $V$ is {\em
  irreducible} if it has exactly two invariant subspaces (namely $0$
and $V$, which are required to be distinct). 

The space of {\em smooth vectors in $V$} is
\begin{equation}\label{eq:smooth} V^\infty = \{ v\in V \mid \text{the
    map $h\mapsto h\cdot v$ from $H$ to $V$ is smooth}\}. \end{equation}
There is a natural complete locally convex topology on $V^\infty$, and
the inclusion in $V$ is continuous with dense image. The space
$V^\infty$ is preserved by the action of $H$, in this way becoming a
continuous representation in its own right. It carries at the same time
a natural (continuous) real Lie algebra representation of ${\mathfrak h}_0$ (equivalently, a
complex algebra action of $U({\mathfrak h})$) obtained by differentiating the
action of $H$. These two representations are related by
\begin{equation*}
h\cdot(X\cdot v) =
  (\Ad(h)X)\cdot(h\cdot v) \qquad (h \in H, X \in {\mathfrak h}_0, v
  \in V^\infty);
\end{equation*}
or in terms of the enveloping algebra
\begin{equation*} 
h\cdot(u\cdot v) =
  (\Ad(h)u)\cdot(h\cdot v) \qquad (h \in H, u \in  U({\mathfrak h}), v
  \in V^\infty). \end{equation*}
\end{subequations}

Here is Harish-Chandra's notion of ``reasonable'' nonunitary
representations.  (The definition of ``reductive group'' is of 
course logically even more fundamental than that of ``irreducible
representation;'' but it is less important for understanding the
results, and so we postpone it to Section \ref{sec:realred}.)  

\begin{definition}\label{def:quasisimple}(Harish-Chandra
  \Cite{HCI}*{page 225}). Suppose $G$ is a a real reductive
  Lie group. A continuous representation $(\pi,V)$ of $G$ is called
  {\em quasisimple} if every $z\in {\mathfrak Z}({\mathfrak g})$
  (see \eqref{eq:center}) acts by scalars on $V^\infty$.  In this case the
  algebra homomorphism
$$\chi_\pi\colon {\mathfrak Z}\rightarrow {\mathbb C}, \quad \pi(z) =
\chi_\pi(z) \Id_{V^\infty}$$
is called the {\em infinitesimal character of $\pi$}.
\end{definition}

Harish-Chandra's ``good'' nonunitary representations are the
irreducible quasisimple representations.   In order to state his basic
results about these representations, and about how the unitary
representations appear among them, we need one more bit of
technology. The difficulty is that these nonunitary representations do
not yet form a nice {\em category}: there are not enough intertwining
operators.

Assume again that $H$ is a real Lie group, and fix an
arbitrary choice of 
\begin{equation*} L = \text{compact subgroup of
    $H$}. \end{equation*} 
If $(\pi,V)$ is any representation of $H$, we define
\begin{equation}\label{eq:Kfin} V_L = \{v\in V \mid \dim(\langle \pi(L)v \rangle
  ) < \infty\}; \end{equation}
here $\langle \pi(L)v \rangle$ denotes the span of the vectors
$\pi(l)v$ as $l$ runs over $L$. This is the smallest $L$-invariant
subspace of $V$ containing $v$, so we call $V_L$ the space of {\em
  $L$-finite vectors of $\pi$}.  Let us write 
\begin{equation*} \widehat L =
\left\{\text{equivalence classes of irreducible representations of $L$} 
\right\}\end{equation*}
By elementary representation theory for compact groups,
\begin{equation*} V_L = \sum_{\delta \in \widehat L}
  V(\delta),\qquad  V(\delta) = \text{largest sum of copies of
    $\delta$ in $V$;}\end{equation*} 
$V(\delta)$ is called the space of {\em $\delta$-isotypic vectors} in
$V$. There is a 
natural complete locally convex topology on $V_L$, making it the 
algebraic direct sum of the subspaces $V(\delta)$; each $V(\delta)$ is a
closed subspace of $V$. 

\begin{lemma}[\cite{HCI}]\label{lemma:gK} Suppose $V$ is a
  representation of a real Lie group $H$, and $L$ is a compact
  subgroup of $H$. Then the space $V_L^\infty$ of smooth $L$-finite
  vectors (\eqref{eq:smooth}, \eqref{eq:Kfin}) is preserved by the
  actions of $L$ and of\, ${\mathfrak h}_0$ (equivalently, of
  $U({\mathfrak h})$). These two actions satisfy the
  conditions
\begin{enumerate}
\item the representation of $L$ is a direct sum of finite-dimensional
  irreducible representations;
\item the differential of the action of $L$ is equal to the
  restriction to ${\mathfrak l}_0$ of the action of ${\mathfrak h}_0$;
  and
\item $l\cdot(X\cdot v) =
  (\Ad(l)X)\cdot(l\cdot v)$, $l \in L$, $X \in {\mathfrak h}_0$, $v
  \in V^\infty_L)$; or equivalently
\item $l\cdot(u\cdot v) =
  (\Ad(l)u)\cdot(l\cdot v)$, $l \in L$, $u \in  U({\mathfrak h})$, $v
  \in V^\infty_L)$.
\end{enumerate} 
\end{lemma}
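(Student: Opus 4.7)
The plan is to reduce everything to the compatibility \eqref{eq:compatible} between the actions of $H$ and $U(\mathfrak{h})$ on $V^\infty$, together with the elementary fact that a compact group acting on a finite-dimensional space decomposes completely. First I would note that $V_L^\infty = V^\infty \cap V_L$ is trivially stable under the restriction of the $H$-action to $L$, since both $V^\infty$ and $V_L$ are $H$-, respectively $L$-stable. The real content of the preservation statement is therefore that $V_L^\infty$ is stable under $U(\mathfrak{h})$.

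For this, take $X \in \mathfrak{h}_0$ and $v \in V_L^\infty$. Then $X \cdot v \in V^\infty$ because $V^\infty$ is known to be a $U(\mathfrak{h})$-module. To see $X \cdot v \in V_L$, apply \eqref{eq:compatible} with $h$ restricted to $L$:
$$\pi(l)(X \cdot v) = (\Ad(l)X) \cdot \pi(l) v \qquad (l \in L).$$
The orbit $\{\Ad(l)X : l \in L\}$ sits inside the finite-dimensional space $\mathfrak{h}$, so spans a finite-dimensional subspace $W \subset \mathfrak{h}$; meanwhile $U := \langle \pi(L) v\rangle$ is finite-dimensional by hypothesis. Hence
$$\langle \pi(L)(X \cdot v) \rangle \;\subset\; W \cdot U,$$
which is finite-dimensional, so $X \cdot v \in V_L$. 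Iterating in the degree of $u \in U(\mathfrak{h})$ gives the desired stability.

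For the remaining conditions: (1) follows because, given any $v \in V_L^\infty$, the finite-dimensional $L$-stable subspace $\langle \pi(L)v\rangle$ decomposes as a direct sum of finite-dimensional irreducibles by complete reducibility for the compact group $L$; assembling these gives the isotypic decomposition $V_L^\infty = \bigoplus_{\delta \in \widehat L} V_L^\infty(\delta)$. Condition (2) is inherited from $H$: both the differential of the $L$-action and the restriction to $\mathfrak{l}_0 \subset \mathfrak{h}_0$ of the $U(\mathfrak{h})$-action are obtained by differentiating the $H$-action on $V^\infty$ in directions lying in $\mathfrak{l}_0$, so they agree. Conditions (3) and (4) are nothing but \eqref{eq:compatible} applied to elements $l \in L \subset H$, restricted to the subspace $V_L^\infty \subset V^\infty$, which we have just shown is stable under both actions.

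The main obstacle is the $U(\mathfrak{h})$-stability argument in the second paragraph; this is the only place where one needs to exploit the finite-dimensionality of $\mathfrak{h}$ and the compactness of $L$ (so that $\Ad(L)X$ has finite-dimensional span). Once this is in hand, the other assertions are essentially bookkeeping carried over from the ambient $H$-action.
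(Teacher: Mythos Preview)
Your argument is correct and is exactly the standard one; the paper does not give a proof at all, merely remarking that ``The Lemma is easy to prove,'' so there is nothing further to compare. One small quibble with your closing commentary: the finite-dimensionality of the span of $\Ad(L)X$ follows from the finite-dimensionality of $\mathfrak{h}$ alone, not from compactness of $L$; compactness is used only for complete reducibility in establishing (1).
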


\begin{definition}\label{def:gK} Suppose $L$ is a compact subgroup of
  a real Lie group $H$. An {\em $({\mathfrak h}_0,L)$-module} is a
  complex vector space $V$ that is at the same time a representation
  of the group $L$ 
  and of the real Lie algebra ${\mathfrak h}_0$, subject to the
  conditions in Lemma \ref{lemma:gK}.  A {\em morphism of $({\mathfrak
      h}_0,L)$-modules} is a linear map respecting the actions of $L$
  and of ${\mathfrak h}_0$ separately.
\end{definition}

\begin{lemma}\label{lemma:gKfunctor} Suppose $L$ is a compact subgroup
  of a real Lie group $H$. Passage to $L$-finite smooth vectors
$$V \rightarrow V_L^\infty$$
defines a faithful functor from the category of representations to the
category of $({\mathfrak h}_0,L)$-modules.  In particular, any
(nonzero) intertwining operator between representations induces a
(nonzero) morphism between the corresponding $({\mathfrak
  h}_0,L)$-modules.
\end{lemma}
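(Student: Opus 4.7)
The plan is to verify the functorial claim in two steps (well-definedness and compatibility with morphisms) and then reduce faithfulness to the density of $V_L^\infty$ in $V$.

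First, I would check that the assignment $V \leadsto V_L^\infty$ is well-defined on objects: Lemma \ref{lemma:gK} already records that $V_L^\infty$ is stable under $L$ and under $U(\mathfrak{h})$ and that these actions satisfy the axioms of Definition \ref{def:gK}. Next, given an intertwining operator $T \colon V_1 \to V_2$, I would show that $T$ maps $(V_1)_L^\infty$ into $(V_2)_L^\infty$. Smoothness is preserved because for $v \in V_1^\infty$ the map $h \mapsto \pi_2(h)Tv = T(\pi_1(h)v)$ is the composition of the smooth map $h \mapsto \pi_1(h)v$ with the continuous linear map $T$, hence is smooth. $L$-finiteness is preserved because $\langle \pi_2(L)Tv\rangle = T(\langle \pi_1(L)v\rangle)$ is a continuous linear image of a finite-dimensional space. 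Finally, the restriction $T|_{(V_1)_L^\infty}$ respects the $L$-action by hypothesis, and respects the $\mathfrak{h}_0$-action because it respects the one-parameter subgroups, so differentiation at the identity of the identity $T\pi_1(\exp tX) = \pi_2(\exp tX)T$ gives $T(X \cdot v) = X \cdot Tv$ for $X \in \mathfrak{h}_0$ and $v \in (V_1)_L^\infty$.

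For faithfulness, I need to show that if $T \colon V_1 \to V_2$ is a nonzero intertwining operator, then its restriction to $(V_1)_L^\infty$ is also nonzero; by linearity this is equivalent to proving that $(V_1)_L^\infty$ is dense in $V_1$, because $T$ is continuous and any continuous linear map vanishing on a dense subspace is zero. The density is standard and goes in two stages. First, Gårding's construction: for $f \in C_c^\infty(H)$ define $\pi_1(f)v = \int_H f(h)\pi_1(h)v\,dh$; these vectors lie in $V_1^\infty$, and choosing a net $f_\alpha$ of nonnegative test functions supported in shrinking neighborhoods of the identity with $\int f_\alpha = 1$ gives $\pi_1(f_\alpha) v \to v$, so $V_1^\infty$ is dense in $V_1$. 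Second, inside $V_1^\infty$ I would produce $L$-finite approximants by averaging against characters: for a finite subset $S \subset \widehat{L}$ set
\[
P_S = \sum_{\delta \in S} \dim(\delta) \int_L \overline{\chi_\delta(l)}\, \pi_1(l)\, dl,
\]
which is a continuous operator on $V_1$ commuting with the $L$-action and preserving $V_1^\infty$; its image lies in the algebraic sum $\bigoplus_{\delta \in S} V_1(\delta) \subset (V_1)_L$. As $S$ exhausts $\widehat{L}$, the operators $P_S$ tend to the identity on any $v \in V_1^\infty$ (this is just the Peter--Weyl expansion of the continuous function $l \mapsto \pi_1(l)v$ in the Fréchet space $V_1^\infty$). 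Combining both stages, any $v \in V_1$ is a limit of vectors $P_S \pi_1(f_\alpha) v \in (V_1)_L^\infty$, which gives the required density.

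The only genuinely delicate point is the second stage of density — one must know that the character projections $P_S$ actually act on $V_1^\infty$ and that the Peter--Weyl series of $l \mapsto \pi_1(l)v$ converges in the $V_1^\infty$ topology (rather than just weakly or pointwise). Once this classical input is granted, the rest of the argument is formal. Everything else (preservation of smoothness, preservation of $L$-finiteness, compatibility of $T$ with the $\mathfrak{h}_0$-action) is a direct unwinding of definitions and does not present any obstacle beyond care with topologies.
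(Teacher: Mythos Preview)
Your proof is correct. The paper gives no argument at all, remarking only that the lemma ``is very easy,'' so you have supplied precisely the details the authors omit. The functoriality on morphisms is the routine check you describe, and faithfulness is exactly the density of $V_L^\infty$ in $V$, which you obtain by the standard two-step G\aa rding plus Peter--Weyl argument. One small wording point: density is sufficient for faithfulness but not literally equivalent to it; your argument only uses the implication you need, so this does not affect correctness. Your flagging of the convergence of the isotypic partial sums $P_S$ as the one genuinely analytic ingredient is accurate; for the purpose at hand you only need convergence in $V_1$ (not in $V_1^\infty$), and since $v \in V_1^\infty$ is in particular $L$-smooth, the absolute convergence of its $L$-Fourier series in $V_1$ follows from the usual Laplacian estimate on $L$.
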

This is very easy.  What is harder (and requires more
hypotheses) is proving results in the other direction: that
$({\mathfrak h}_0,L)$-morphisms induce maps between group
representations.  For that, we return to the setting of a real
reductive Lie group $G$.  

Fix once and for all a choice of 
\begin{equation*} K = \text{maximal compact subgroup of
    $G$}. \end{equation*} 
By a theorem of E.~Cartan (see Theorem \ref{thm:realforms} below),
this choice is unique up to conjugation. 
Here are some of Harish-Chandra's fundamental results about
quasisimple representations.

\begin{theorem}\label{theorem:quasisimple} Suppose $G$ is a real
  reductive Lie group and $K$ is a maximal compact subgroup.
\begin{enumerate}
\item\label{item:segal} Suppose $V$ is an irreducible unitary representation of
  $G$. Then $V$ is quasisimple.
\item\label{item:HC1} Suppose $V$ is an irreducible quasisimple
  representation of $G$ 
  (Definition \ref{def:quasisimple}). Then the associated $({\mathfrak
    g}_0,K)$-module $V^\infty_K$ (Lemma \ref{lemma:gK}) is irreducible.
\item\label{item:HC2} Suppose $V_1$ and $V_2$ are irreducible unitary
  representations 
  of $G$, and that $V_{1,K}^\infty \simeq V_{2,K}^\infty$ as
  $({\mathfrak g}_0,K)$-modules. Then $V_1\simeq V_2$ as unitary
  representations. 
\item\label{item:HC3} Suppose $X$ is an irreducible $({\mathfrak
    g}_0,K)$-module. Then 
  there is an irreducible quasisimple representation $V$ of $G$ so
  that $X \simeq V^\infty_K$ as $({\mathfrak g}_0,K)$-modules.
\item\label{item:HC4} Suppose $X$ is an irreducible $({\mathfrak
    g}_0,K)$-module. Then 
  $X$ is associated to an irreducible unitary representation of $G$ if
  and only if $X$ admits a positive definite invariant Hermitian form
  $\langle,\rangle_X$.  Here ``invariant'' means
$$\begin{aligned}
\langle k\cdot v, w \rangle_X = \langle v,k^{-1}\cdot w\rangle_X,\quad 
&\langle X\cdot v, w \rangle_X = -\langle v,X\cdot w\rangle_X\\
&(k \in K,\ X\in {\mathfrak g}_0,\ v, w\in X)\end{aligned}
$$ 
\end{enumerate}\end{theorem}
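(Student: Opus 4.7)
The plan is to attack the five parts in order, with (1) being pure functional analysis, (2)--(3) a ``Schur's lemma'' for Harish-Chandra modules once admissibility is in hand, and (4)--(5) the main obstacle, since they require actual globalization.

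For (1), fix $z \in \mathfrak{Z}(\mathfrak{g})$; by \eqref{eq:compatible} the operator $\pi^\infty(z)$ commutes with $\pi(G)$ on the dense subspace $V^\infty$. Splitting $z$ into symmetric and skew parts under the principal anti-involution on $U(\mathfrak{g})$ (with respect to which $\pi$ is unitary), the resulting unbounded operators are essentially self-adjoint (resp.\ skew-adjoint) on a common dense core of analytic vectors, via Nelson's theorem. Their spectral projections then commute with every $\pi(g)$, so irreducibility of the unitary $\pi$ forces each to be $0$ or $I$; hence $\pi^\infty(z)$ acts by a scalar, which is precisely quasisimplicity.

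For (2) the workhorse is Harish-Chandra's admissibility theorem: in an irreducible quasisimple $V$, every isotypic component $V(\delta)$ is finite-dimensional. I would prove this by exhibiting $V(\delta)^\infty$ as a cyclic module for the convolution algebra of $\delta$-spherical elements in $U(\mathfrak{g})$, which after quotienting by $\ker\chi_\pi$ becomes a finite-dimensional algebra acting faithfully on $V(\delta)^\infty$. Granted admissibility, any proper $(\mathfrak{g}_0,K)$-submodule $W \subset V_K^\infty$ would have closure in $V$ that is $G$-stable (because the $G$-action on analytic vectors is determined by $U(\mathfrak{g})$), contradicting irreducibility of $V$. Part (3) then follows from the fact that an invariant positive definite Hermitian form on an irreducible $(\mathfrak{g}_0,K)$-module is unique up to a positive scalar (Schur): any $(\mathfrak{g}_0,K)$-isomorphism $V_{1,K}^\infty \to V_{2,K}^\infty$ is, after rescaling, an isometry of pre-Hilbert spaces, and so extends by density to a unitary $G$-intertwiner $V_1 \to V_2$.

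The main obstacle is (4)--(5), which require globalization. For (4), I would invoke the Casselman subrepresentation theorem to embed $X$ into the Harish-Chandra module of a minimal principal series, then extract the (admissibility-unique) irreducible quasisimple subquotient $V$ with $V_K^\infty \simeq X$. For (5), one direction is immediate restriction of the unitary inner product. Conversely, given a positive definite invariant form on $X$, use (4) to realize $X = V_K^\infty$ for some quasisimple $V$, form the Hilbert completion $\mathcal{H}$ of $X$, and extend the $G$-action to $\mathcal{H}$: the $K$-action is unitary on $\mathcal{H}$ by the orthogonal isotypic decomposition, and a $KAK$ decomposition together with infinitesimal-character estimates on $\pi(a)$ for $a \in A$ propagates boundedness from $K$ to all of $G$, with unitarity then forced by the group law on the generated representation. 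The globalization input in (4) is where all the real functional analysis hides.
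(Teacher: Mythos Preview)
The paper does not prove this theorem at all: its entire ``proof'' is a list of citations to Segal and to specific results in Harish-Chandra's 1953--54 \textit{Transactions} papers. So your sketch is attempting strictly more than the paper does, and there is no argument in the paper to compare yours against.

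Your outline is broadly reasonable, but there is one genuine gap and one imprecision worth flagging. The gap is in (4): the Casselman subrepresentation theorem applies to \emph{admissible} $(\mathfrak g_0,K)$-modules, and you have argued admissibility in (2) only for modules of the form $V_K^\infty$ with $V$ an irreducible quasisimple representation of $G$. An abstract irreducible $(\mathfrak g_0,K)$-module $X$ as in the hypothesis of (4) is not yet known to be admissible, so invoking Casselman is circular as written. That every irreducible $(\mathfrak g_0,K)$-module is admissible is itself a nontrivial theorem of Harish-Chandra, and his route to it---the subquotient theorem, realizing $X$ as a subquotient of a principal series---is exactly what yields (4) directly, two decades before Casselman's sharper embedding. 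The imprecision is in (2): the $\delta$-spherical algebra in $U(\mathfrak g)$ does \emph{not} become finite-dimensional after killing $\ker\chi_\pi$ alone. Already for $SL(2,\mathbb R)$ with $\delta$ trivial one has $U(\mathfrak g)^K\cong\mathbb C[\Omega,H]$, and modding out by $\Omega-c$ leaves the polynomial ring $\mathbb C[H]$. The actual finiteness input combines the infinitesimal characters of both $\mathfrak g$ and $\mathfrak k$, and making that precise is where the real work in admissibility lies; Harish-Chandra's own route was again the subquotient theorem rather than a direct algebraic argument of the shape you describe.
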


\begin{proof} Part \ref{item:segal} is due to Segal \Cite{Segal}. Part
  \ref{item:HC1} is
proved in Harish-Chandra \Cite{HCI}, although it is difficult to point
to a precise 
reference; the result is a consequence of Lemmas 33 and 34 and Theorem
5 (on pages 227--228). Part \ref{item:HC2} is \Cite{HCI}*{Theorem 8}.
Part \ref{item:HC3} is Harish-Chandra
\Cite{HCII}*{Theorem 4}. Part
\ref{item:HC4} is \Cite{HCI}*{Theorem 9}\end{proof}

\begin{definition}\label{def:HCmod}
Suppose $G$ is a real reductive Lie group and $K$ is a maximal compact
subgroup. Two representations $V_1$ and $V_2$ of $G$ are said to be
{\em infinitesimally equivalent} if the corresponding $({\mathfrak
  g}_0,K)$-modules $V^\infty_{1,K}$ and $V^\infty_{2,K}$ (Lemma
\ref{lemma:gK}) are equivalent.  We define
\begin{align*} \widehat{G} &=_{\text{def}} \{ \text{infinitesimal
    equiv.~classes of irreducible quasisimple reps of $G$} \} \\
&= \{ \text{equivalence classes of irreducible $({\mathfrak
    g}_0,K)$-modules}\}, \end{align*}
and call this set the ``nonunitary dual of $G$.''  The $({\mathfrak
  g}_0,K)$-module $V_{1,K}^\infty$ is called the {\em Harish-Chandra
  module of $V_1$}. 
\end{definition}

Harish-Chandra's theorem \ref{theorem:quasisimple} provides a natural
containment of the unitary dual \eqref{eq:Ghatu} in the nonunitary
dual
$$\widehat{G}_u \subset \widehat{G},$$
always assuming that $G$ is real reductive.  In case $G=K$ is compact,
then every irreducible representation is finite-dimensional, and the
representation space can be given a $K$-invariant Hilbert space
structure; so every representation is equivalent to a unitary one, and
$$\widehat{K}_u = \widehat{K}.$$

We will recall in Section \ref{sec:langlands} results of
Langlands and of Knapp-Zuckerman that provide a complete and explicit
parametrization of the nonunitary dual $\widehat G$.  The rest of the
paper will be devoted to an algorithm for identifying $\widehat{G}_u$
as a subset of these parameters for $\widehat G$.

\begin{subequations}\label{se:cptred}
It is well known that in order to discuss real-linear complex
representations of a real Lie algebra ${\mathfrak h}_0$, it is
equivalent (and often preferable) to discuss complex-linear
representations of the complexified Lie algebra 
\begin{equation*}
{\mathfrak h} =_{\text{def}} {\mathfrak h}_0 \otimes_{\mathbb R}
{\mathbb C}.
\end{equation*}
We will conclude this section with an equally useful language for
representations of 
\begin{equation}
L = \text{compact Lie group}.
\end{equation}
For that, we use the algebra
\begin{equation}
R(L) =_{\text{def}} \text{span of matrix coefficients of
  finite-dimensional reps of $L$.} 
\end{equation}
The algebra $R(L)$ is finitely generated and has no nilpotent
elements, so it is the algebra of regular functions on a complex
affine algebraic variety $L({\mathbb C})$. The closed points of
$L({\mathbb C})$ are the maximal ideals of $R(L)$, and these include
the ideals of functions vanishing at one point of $L$; so
\begin{equation}
L \subset L({\mathbb C})
\end{equation}
We call $L({\mathbb C})$ the {\em complexification of $L$}.
\end{subequations}

For the formulation of the next theorem, we need to recall (see for
example \Cite{LAG}*{\S\S 11--14}, or \Cite{Spr}*{Chapter 11}) the notion
of a {\em real form} of a complex affine algebraic variety $X({\mathbb
  C})$.  Write $R(X)$ for the (complex) algebra of regular
functions on $X({\mathbb C})$. Because ${\mathbb C}/{\mathbb R}$ is a
Galois extension, we can define a real form as a certain kind of
action of the Galois group on $R(X)$. Since the Galois group has only
one nontrivial element, a {\em real form of $X({\mathbb C})$} is given
  by a single map (the action of complex conjugation)
\begin{subequations}\label{se:realform}
\begin{equation}
\sigma^*\colon R(X) \rightarrow R(X)
\end{equation}
The map $\sigma^*$ has the characteristic properties
\begin{enumerate}
\item $\sigma^*$ is a ring automorphism of $R(X)$:
\begin{equation*}
\sigma^*(fg) = \sigma^*(f) \sigma^*(g), \quad \sigma^*(f + g) =
\sigma^*(f) + \sigma^*(g) \qquad (f, g\in R(X)).
\end{equation*}
\item $\sigma^*$ has order $2$.
\item $\sigma^*$ is conjugate linear:
\begin{equation*}
\sigma^*(zf) = {\overline{z}}\sigma^*(f) \qquad (z\in {\mathbb C},
f\in R(X)).
\end{equation*}
\end{enumerate}
Then $\sigma^*$ defines an automorphism $\sigma$ of order $2$ of the set
$X({\mathbb C})$ of maximal ideals in $R(X)$, and the (closed) real points
$X({\mathbb R},\sigma)$ are the maximal ideals fixed by $\sigma$.
Because the ring of functions on a variety has no nilpotent elements,
we can recover $\sigma^*$ from $\sigma$: regarding elements of $R(X)$
as functions on $X({\mathbb C})$, we have
\begin{equation}
(\sigma^*f)(x) = \overline{f(\sigma(x))}.
\end{equation}
 
\end{subequations}

\begin{theorem}[Chevalley \Cite{Chev}*{\S\S VIII--XII of Chapter
    VI}] \label{thm:KC} Suppose $L$ is a compact Lie group.  
\begin{enumerate}
\item The construction of \eqref{se:cptred} enlarges $L$ to a complex
  reductive algebraic group $L({\mathbb C})$. It is a covariant functor in $L$.
\item Every locally finite continuous representation $(\pi,V)$ of $L$ extends
  uniquely to an algebraic representation $(\pi({\mathbb C}),V)$ of
  $L({\mathbb C})$; and every algebraic representation of $L({\mathbb C})$
  restricts to a locally finite continuous representation of $L$.
\item There is unique real form $\sigma_c$ of $L({\mathbb C})$ with
  $L({\mathbb R},\sigma_c) = L$.
\end{enumerate}
Conversely, suppose $L({\mathbb C})$ is a complex reductive algebraic
group. There is a real form $\sigma_c$ of $L({\mathbb C})$,
unique up to conjugation by $L({\mathbb C})$, whose group of
real points $L = L({\mathbb R},\sigma_c)$ is compact, and meets every
component of $L({\mathbb C})$. Then
$L({\mathbb C})$ is the complexification of $L$.
\end{theorem}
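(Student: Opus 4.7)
The plan is to handle the forward direction (parts 1--3) via Peter--Weyl plus standard Hopf-algebra manipulations on the construction of \eqref{se:cptred}, and the converse by invoking Weyl's theorem on compact real forms.

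For part 1, the multiplication, inversion, and identity maps of $L$ dualize to a coproduct, antipode, and counit on $R(L)$, making it a commutative Hopf algebra. The essential verification is that the pullback $R(L) \to R(L \times L)$ lands in the algebraic tensor product $R(L) \otimes R(L)$; this follows from the fact that matrix coefficients of $\pi_1 \otimes \pi_2$ are products of matrix coefficients of $\pi_1$ and $\pi_2$. Thus $L({\mathbb C})$ inherits the structure of a complex affine algebraic group, functorial in $L$ by pullback of regular functions. Reductivity follows from the unitarizability of finite-dimensional representations of $L$ (average an arbitrary Hermitian form over Haar measure), which yields complete reducibility of $R(L)$-comodules and hence of algebraic $L({\mathbb C})$-representations.

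Part 2 is then essentially a restatement: a locally finite continuous representation $(\pi,V)$ of $L$ determines a coaction $V \to V \otimes R(L)$ sending $v$ to the $R(L)$-valued function $l \mapsto \pi(l)v$ (well defined after choosing a basis of $\langle \pi(L)v\rangle$), and conversely an algebraic representation of $L({\mathbb C})$ is by definition an $R(L)$-comodule, which restricts to a locally finite continuous action of $L$. Uniqueness of the extension uses that $L$ is Zariski dense in $L({\mathbb C})$: a regular function vanishing on $L$ is identically zero as a function and hence is $0$ in $R(L)$. For part 3, define $(\sigma_c^* f)(l) = \overline{f(l)}$ for $l \in L$; the complex conjugate of a matrix coefficient of $\pi$ is a matrix coefficient of the conjugate representation, so $\sigma_c^*$ preserves $R(L)$, and the three conditions of \eqref{se:realform} are immediate. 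Clearly $L \subseteq L({\mathbb R},\sigma_c)$; the reverse inclusion again uses Zariski density, since $\sigma_c$-real functions separate points of $L$ from its complement in $L({\mathbb C})$. Uniqueness of $\sigma_c$ is also immediate from Zariski density.

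For the converse, given a complex reductive algebraic group $L({\mathbb C})$, I would invoke Weyl's construction of a compact real form of the reductive Lie algebra $\Lie(L({\mathbb C}))$, integrate to a compact subgroup of the identity component, and extend across the (finite) component group to produce a compact $L \subset L({\mathbb C})$ meeting every component. Uniqueness of $\sigma_c$ up to $L({\mathbb C})$-conjugation amounts to conjugacy of such maximal compact subgroups, which is E.~Cartan's theorem. That $L({\mathbb C})$ is the complexification of this $L$ in the sense of \eqref{se:cptred} follows from part 2 together with Peter--Weyl for $L$: the algebraic functions on $L({\mathbb C})$ restrict faithfully to the span of matrix coefficients of $L$. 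The main obstacle is this converse direction: the forward direction is essentially bookkeeping once $R(L)$ is recognized as a Hopf algebra, but producing a compact real form of an abstract reductive algebraic group requires a genuine Lie-theoretic input, namely Weyl's unitary trick (or equivalently the classification of compact forms of semisimple Lie algebras).
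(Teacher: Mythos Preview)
The paper does not prove this theorem: it is stated as a classical result, with only a short remark afterward making the real form explicit via
\[
(\sigma_c^*f_{u,v})(\ell) = \langle \pi(\ell^{-1})v,u\rangle
\]
for matrix coefficients $f_{u,v}$, and observing that $L$ is recovered as the locus where all $\pi({\mathbb C})(\ell)$ are unitary. Your definition $(\sigma_c^*f)(l)=\overline{f(l)}$ for $l\in L$ agrees with this on $L$ (using unitarity of $\pi$ on $L$) and hence everywhere by the Zariski density you invoke.

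Your sketch is the standard Tannakian argument and is correct in outline. A few small points worth tightening: for Zariski density of $L$ in $L({\mathbb C})$, the key fact (stated just before the theorem in \eqref{se:cptred}) is that $R(L)$ is by construction an algebra of honest functions on $L$, so a regular function vanishing on $L$ is literally zero in $R(L)$; you use this implicitly several times. For reductivity, the cleanest statement is that every finite-dimensional $R(L)$-comodule is unitarizable as an $L$-representation, hence completely reducible; since $L({\mathbb C})$ is affine this forces the unipotent radical to be trivial. For the converse, your appeal to Weyl's compact form plus Cartan's conjugacy theorem is exactly the expected input; the only thing to add is that extending the compact form across the component group requires the (easy) fact that $\pi_0(L({\mathbb C}))$ is finite for reductive $L({\mathbb C})$, so one can choose coset representatives in a compact extension.
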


In the setting of the theorem, suppose $(\pi,V)$ is a
finite-dimensional unitary representation of $L$ with inner product
$\langle, \rangle$. If $u,v \in V$, then the matrix coefficient
$$f_{u,v}(\ell) =_{\text{def}} \langle \pi(\ell)u,v\rangle$$
is one of the spanning functions used to define $R(L)$.  The real form
in the theorem is
$$(\sigma^*f_{u,v})(\ell) = \langle \pi(\ell^{-1})v,u\rangle.$$
From this one can see that 
$$L = \{\ell \in L({\mathbb C}) \mid \pi({\mathbb C})(\ell) \text{\ is
  unitary, all\ } \pi \in \widehat{L}_u\}.$$
It follows in particular that $L$ is a maximal compact subgroup of
$L({\mathbb C})$.

\begin{corollary}\label{cor:HCC}
Suppose $L$ is a compact subgroup of a real Lie group $H$, and
$L({\mathbb C})$ is the complexification of $L$
(cf.~\eqref{se:cptred}). Define an $({\mathfrak h},L({\mathbb
  C}))$-module by analogy with Definition \ref{def:gK}, requiring the
representation of ${\mathfrak h}$ to be complex linear and the action
of $L({\mathbb C})$ to be algebraic. Then any $({\mathfrak 
  h}_0,L)$-module extends uniquely to an $({\mathfrak h},L({\mathbb
  C}))$-module; and this extension defines an equivalence of
categories from $({\mathfrak h}_0,L)$-modules to $({\mathfrak
  h},L({\mathbb C}))$-modules.
\end{corollary}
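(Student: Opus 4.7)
The plan is to use Theorem~\ref{thm:KC}(2) to extend the $L$-action to an algebraic $L(\mathbb{C})$-action and to extend the $\mathfrak{h}_0$-action to $\mathfrak{h}$ by complex linearity. The real work is to check that the compatibility condition (3) of Lemma~\ref{lemma:gK} transfers to the enlarged actions, and the tool for that will be Zariski density of $L$ in $L(\mathbb{C})$.

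First, I would make the constructions. Given an $(\mathfrak{h}_0,L)$-module $V$, condition (1) of Lemma~\ref{lemma:gK} says the $L$-action is locally finite, hence by Theorem~\ref{thm:KC}(2) it extends uniquely to an algebraic action of $L(\mathbb{C})$ on the same vector space $V$. The action of $\mathfrak{h}_0$ on the complex vector space $V$ is $\mathbb{R}$-linear, so it extends uniquely to a complex-linear action of $\mathfrak{h}=\mathfrak{h}_0\otimes_{\mathbb R}\mathbb C$. The adjoint representation $\Ad\colon L\to GL(\mathfrak{h})$ is continuous on the finite-dimensional space $\mathfrak{h}$, hence locally finite, and so also extends uniquely to an algebraic representation $\Ad_{\mathbb C}\colon L(\mathbb C)\to GL(\mathfrak h)$ by the same theorem.

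The main obstacle is the compatibility identity
\[
 \ell\cdot(X\cdot v)\;=\;(\Ad_{\mathbb C}(\ell)X)\cdot(\ell\cdot v)
 \qquad(\ell\in L(\mathbb C),\;X\in\mathfrak h,\;v\in V).
\]
Fix $v\in V$ and $X\in\mathfrak h$. By local finiteness, the subspace $W\subset V$ spanned by the $L$-orbits of $v$ and of $X\cdot v$ is finite-dimensional and $L$-stable; it is therefore also $L(\mathbb C)$-stable, and the $L(\mathbb C)$-action on $W$ is algebraic (a morphism $L(\mathbb C)\to GL(W)$). Consequently both sides of the displayed identity are regular maps $L(\mathbb C)\to W$ in $\ell$: the left side composes the algebraic action with the vector $X\cdot v$, while the right side is the product of the algebraic map $\ell\mapsto \Ad_{\mathbb C}(\ell)X$ (values in $\mathfrak h$, acting $\mathbb C$-linearly on $W$) and the algebraic map $\ell\mapsto\ell\cdot v$. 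By condition (3) of Lemma~\ref{lemma:gK} these two regular maps agree on $L$ when $X\in\mathfrak h_0$, and then by complex bilinearity they agree on $L$ for all $X\in\mathfrak h$. The last clause of Theorem~\ref{thm:KC} says $L$ meets every component of $L(\mathbb C)$, and within each component $L$ is Zariski dense (this is the standard fact that a compact real form is Zariski dense in its complexification), so the two regular maps coincide on all of $L(\mathbb C)$. This gives compatibility.

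Uniqueness of the extension is immediate from the uniqueness clauses in Theorem~\ref{thm:KC}(2) and in the complex-linear extension from $\mathfrak h_0$ to $\mathfrak h$. For the equivalence of categories, observe that restriction of scalars from $(\mathfrak h,L(\mathbb C))$-modules to $(\mathfrak h_0,L)$-modules is an obvious functor, and the extension just constructed goes the other way. A morphism $T\colon V\to V'$ of $(\mathfrak h_0,L)$-modules intertwines $L$ and $\mathfrak h_0$; by the uniqueness of the extended actions on $V$ and $V'$, the same $T$ automatically intertwines $L(\mathbb C)$ and $\mathfrak h$ (one can also see this directly by Zariski density applied to the map $\ell\mapsto T(\ell\cdot v)-\ell\cdot T(v)$). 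The two functors are mutually inverse on the nose, yielding the asserted equivalence of categories.
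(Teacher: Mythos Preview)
Your proof is correct and is precisely the argument the paper intends; the corollary is stated there without proof, as an immediate consequence of Theorem~\ref{thm:KC}. One minor imprecision: the right-hand side $(\Ad_{\mathbb C}(\ell)X)\cdot(\ell\cdot v)$ need not lie in your $W$, but it does lie in the still finite-dimensional space $W' = W + \SPAN\{Y\cdot w : Y\in\SPAN(\Ad(L(\mathbb C))X),\ w\in W\}$, and replacing $W$ by $W'$ makes the Zariski-density argument go through verbatim.
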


\section{Real reductive groups}\label{sec:realred}
\setcounter{equation}{0}
Before embarking on more representation theory, we must be more
precise about the definition of ``reductive group,'' since the
detailed description of the nonunitary dual is sensitive to details of
this definition.

\begin{subequations}\label{se:reductive}
We begin with a complex connected reductive algebraic group 
\begin{equation} G({\mathbb C}). \end{equation}
Recall that this means a subgroup of the group of $n\times n$
invertible matrices, specified as the zero locus of a collection of
polynomial equations in the matrix entries and the inverse of the
determinant, with two additional properties:
\begin{enumerate}
\item $G({\mathbb C})$ has no nontrivial normal subgroup consisting of
  unipotent matrices, and
\item $G({\mathbb C})$ is connected as a Lie group.
\end{enumerate}
The words preceding the conditions are the definition of ``complex
algebraic;'' the first condition is the definition of ``reductive;''
and the second condition is (equivalent to) the definition of
``connected.'' The first condition may be replaced by
\begin{enumerate}
\item[$1'.$] After an appropriate change of basis in ${\mathbb C}^n$, the
  group $G({\mathbb C})$ is preserved by the automorphism $\sigma_c$,
\begin{equation}\label{eq:Unbasis} \sigma_c(g) = (g^*)^{-1} \end{equation}
(inverse conjugate transpose) of $GL(n,{\mathbb C})$.
\end{enumerate}

A {\em real form} of a complex Lie group $G({\mathbb C})$ is an
antiholomorphic Lie group automorphism $\sigma$ of order 2:
\begin{equation} 
\label{eq:Grealform} 
\sigma\colon G({\mathbb C}) \rightarrow G({\mathbb
    C}), \qquad \sigma^2 = \Id. \end{equation}
(Essentially we are repeating the discussion of real forms from
\eqref{se:realform}, but now in the category of analytic manifolds
rather than of varieties.  For reductive algebraic groups, the two
categories give rise to exactly the same real forms.)
Here ``antiholomorphic'' means that if $f$ is a (locally defined)
holomorphic function on $G({\mathbb C})$, then 
\begin{equation*}
(\sigma^*f)(g) = \overline{f(\sigma(g))}
\end{equation*}
is also holomorphic. Clearly the differential at the
identity of a real form of $G({\mathbb C})$ is a real form of the
complex Lie algebra ${\mathfrak g}$: that is, a real Lie algebra
automorphism of order two carrying multiplication by $i$ to
multiplication by $-i$.  (But not every real form of the Lie algebra
must exponentiate to the group.) 

Given a real form $\sigma$, the {\em
  group of real points} is
\begin{equation} 
\label{eq:Grealpoints} 
G = G({\mathbb R},\sigma) = G({\mathbb
    C})^\sigma, \end{equation}
the group of fixed points of $\sigma$ on $G({\mathbb C})$. This is a
real Lie group of real dimension equal to the complex dimension of
$G({\mathbb C})$.  A {\em real reductive algebraic group} is a group
of real points $G = G({\mathbb R},\sigma)$ for a complex connected
reductive algebraic group.

This is the class of groups with which we will work in this paper.  It
is fairly to common to work with a somewhat larger class of groups: to
start with $G({\mathbb R},\sigma)$ as above, and to allow a real Lie
group $\widetilde{G}$ endowed with a homomorphism
\begin{equation}\label{eq:nonlinearG} \pi\colon \widetilde{G} \rightarrow
  G({\mathbb R},\sigma) \end{equation}
having finite kernel and open image.  For such a group $\widetilde{G}$, it is
still possible to formulate the very precise form of the Langlands
classification theorem that we need.  But the unitarity algorithm will
use also the Kazhdan-Lusztig theory of irreducible characters, and
this is not available in the setting \eqref{eq:nonlinearG}. (A number
of interesting special cases have been treated, for example in
\Cite{RenTr}.)  Once this Kazhdan-Lusztig theory is available, the
rest of the proof of the unitarity algorithm can proceed as in the
linear case.  But we will not discuss such extensions further.
\end{subequations}

We conclude with a little structure theory for real reductive groups,
which will be vital for the formulation of the Langlands
classification.
\begin{theorem}[Cartan; see for example
  \Cite{KBey}*{Proposition 1.143}, or \Cite{OV}*{Theorem
    5.1.4}]\label{thm:realforms}  
\addtocounter{equation}{-1}
\begin{subequations}\label{se:thmrealforms}

 Suppose $G({\mathbb C})$ is a
  complex connected reductive algebraic group
  (\eqref{se:reductive}). 
\begin{enumerate}
\item Suppose $\sigma_0$ is a real form of $G({\mathbb C})$. Then there
  is a compact real form $\sigma_c$ of $G({\mathbb C})$ that commutes with
$\sigma_0$. This compact real form is unique up to conjugation by
$G = G({\mathbb R},\sigma_0)$.  The composition
\begin{equation} 
\label{eq:cartaninvolution}
\theta = \sigma_0\circ \sigma_c
\end{equation}
is an algebraic automorphism of $G({\mathbb C})$ of order two; it is
called a {\em Cartan involution for the real form $\sigma_0$}.
\item Suppose $\theta$ is an involutive algebraic automorphism of
  $G({\mathbb C})$. Then there is a compact real form $\sigma_c$ of
  $G({\mathbb C})$ that commutes with $\theta$. This compact real form
is unique up to conjugation by $K({\mathbb C}) = G({\mathbb
  C})^\theta$. The composition
$$\sigma_0 = \theta\circ \sigma_c$$
is a real form of $G({\mathbb C})$, called a {\em real form for
  the involution $\theta$}.
\item The constructions above define a bijection
\begin{align*} &\left\{\text{real forms of $G({\mathbb
        C})$}\right\}/\left(\text{conjugation by $G({\mathbb
      C})$}\right) \\ \longleftrightarrow &\left\{\text{algebraic
    involutions of $G({\mathbb C})$}\right\}/\left(\text{conjugation
    by $G({\mathbb C})$}\right) \end{align*}
\item The group $K = G^\theta$ is
  maximally compact in $G$. Its
  complexification is the reductive algebraic group
$$K({\mathbb C}) = G({\mathbb C})^\theta.$$
\item Write 
$${\mathfrak g}_0 = {\mathfrak k}_0 + {\mathfrak s}_0$$
for the decomposition of the real Lie algebra ${\mathfrak g}_0 =
\Lie(G({\mathbb R},\sigma_0))$ into the $+1$ and $-1$ eigenspaces of
the Cartan involution $\theta$.  (In the setting \eqref{eq:Unbasis},
these are the skew-hermitian and 
hermitian matrices in the Lie algebra, respectively.) Then there is a
($K$-equivariant under conjugation) diffeomorphism
$$K  \times {\mathfrak s}_0 \rightarrow G, \qquad (k,X) \mapsto
k\cdot \exp(X).$$ 
\end{enumerate}
\end{subequations} 
\end{theorem}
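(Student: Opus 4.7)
The plan is to prove part (1) first---producing a compact real form $\sigma_c$ of $G(\mathbb{C})$ commuting with the given real form $\sigma_0$---and then derive the remaining parts from this. Once (1) is established, part (2) follows by the same argument with the roles of $\sigma_0$ and an algebraic involution $\theta$ interchanged, part (3) is a bookkeeping exercise on conjugacy classes of the two constructions, and parts (4)--(5) follow from standard properties of the Cartan decomposition together with the classical matrix polar decomposition.

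For part (1), I would begin with any compact real form $\sigma_c'$ of $G(\mathbb{C})$ (existence from Example \ref{ex:compactform}). The composition $\tau = \sigma_0 \sigma_c'$ is a holomorphic automorphism of $G(\mathbb{C})$ (product of two antiholomorphic maps), and a short calculation shows that $\sigma_0$ and $\sigma_c'$ each invert $\tau^2$. The essential analytic step is a positive square root: after embedding $G(\mathbb{C}) \hookrightarrow GL(n,\mathbb{C})$ so that $\sigma_c'$ is inverse conjugate transpose (condition $1'$ from \eqref{se:reductive}), the inner automorphism $\tau^2$ is implemented by a positive-definite Hermitian element $p \in G(\mathbb{C})$, which by the spectral theorem admits canonical real powers $p^t \in G(\mathbb{C})$. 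A direct computation using how $\sigma_0$ and $\sigma_c'$ transform $p$ then shows that, for an appropriate fractional $t$, the conjugate $\sigma_c := \Ad(p^t) \circ \sigma_c' \circ \Ad(p^{-t})$ is again a compact real form and now commutes with $\sigma_0$. Uniqueness up to $G$-conjugation comes from running the same square-root argument on the ``ratio'' of any two candidate commuting compact forms, and algebraicity of $\theta = \sigma_0 \sigma_c$ is automatic once the commutation holds, since the product of two antiholomorphic maps restricted to the real points extends holomorphically.

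Given (1), part (2) is obtained by interchanging roles: start from an algebraic involution $\theta$ and any compact $\sigma_c'$, and apply the same positive-square-root construction to produce a commuting compact form. The bijection in (3) is then the pair of maps $\sigma_0 \mapsto \sigma_0 \sigma_c$ and $\theta \mapsto \theta \sigma_c$; one verifies via the uniqueness clauses that each descends to conjugacy classes and that they are mutually inverse. For (4), $K = G^\theta$ lies inside the compact group $G(\mathbb{C})^{\sigma_c}$ and so is compact; maximality among compact subgroups of $G$ follows by applying (1) to the real form cut out by any larger compact subgroup, and the identity $K(\mathbb{C}) = G(\mathbb{C})^\theta$ is immediate from the algebraicity of $\theta$. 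Part (5) reduces via condition $1'$ to the classical polar decomposition for $GL(n,\mathbb{C})$: $\exp\colon \mathfrak{s}_0 \to G$ is a closed embedding onto the positive-definite Hermitian elements of $G$, and every $g \in G$ decomposes uniquely as $k \exp(X)$. The main obstacle is the analytic construction of the commuting compact form in (1), i.e.\ producing and controlling the positive fractional powers $p^t$ inside $G(\mathbb{C})$ and verifying that conjugation by them yields the desired commutation. Everything downstream reduces to formal manipulation or a direct transcription of the matrix polar decomposition made available by condition $1'$.
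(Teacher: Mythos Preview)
The paper does not supply its own proof of this theorem: it is stated as a classical result of Cartan with references to \cite{KBey} and \cite{OV}, followed only by the two-sentence remark that part (5) is (a consequence of) the matrix polar decomposition and that the maximality of $K$ in (4) follows immediately from that decomposition. Your sketch is essentially the standard argument found in those references---Cartan's ``square-root trick'' to produce a commuting compact form, followed by the polar decomposition---so there is no substantive divergence to report.

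One small point worth tightening: your argument for the maximality of $K$ (``applying (1) to the real form cut out by any larger compact subgroup'') is not quite right as stated, since an arbitrary compact subgroup of $G$ does not obviously determine a real form. The cleaner route, and the one the paper gestures at, is to deduce maximality directly from part (5): if $K' \supset K$ is compact in $G$, the diffeomorphism $K \times {\mathfrak s}_0 \to G$ forces any element of $K'$ to have the form $k\exp(X)$, and compactness of $K'$ then forces $X=0$ since $\{\exp(tX)\}$ is unbounded for nonzero $X \in {\mathfrak s}_0$.
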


The last assertion is a generalization (and also a consequence) of the
polar decomposition for invertible matrices.  The fact that
$K$ is a {\em maximal} compact subgroup follows immediately
from this decomposition.

\section{Maximal tori}\label{sec:maxtor}
\setcounter{equation}{0}

Our next serious goal is Langlands' classification of the irreducible
(possibly nonunitary) representations of a real reductive group. To a
first approximation, it says that the irreducible representations of
such a group are indexed by orbits of the (real) Weyl groups on characters of
(real points of) maximal tori. What requires a little care is the
understanding of ``maximal tori;'' that is the subject of the present section.

\begin{definition}\label{def:torus} Suppose $G({\mathbb C})$ is a
  complex connected 
  reductive algebraic group (\eqref{se:reductive}).  A {\it
    (complex) maximal torus} in $G({\mathbb C})$ is a maximal
  connected abelian subgroup consisting of diagonalizable matrices.
  Automatically such a 
  subgroup $H({\mathbb C})$ is again complex connected reductive
  algebraic.  Its {\em character lattice} is the group of algebraic
  group homomorphisms
$$X^* =_{\text{def}} \Hom_{\text{alg}}(H({\mathbb C}), {\mathbb
  C}^\times).$$
This is a lattice (a finitely-generated torsion free abelian group) of
rank equal to the complex dimension of $H({\mathbb C})$.  The dual
lattice 
$$X_* =_{\text{def}} \Hom(X^*,{\mathbb Z})$$
is naturally isomorphic to
$$X_* \simeq \Hom_{\text{alg}}({\mathbb C}^\times,H({\mathbb C})),$$
the {\em lattice of one-parameter subgroups}. 

Suppose now that $\sigma$ is a real form of $G({\mathbb C})$, so that
$G({\mathbb R},\sigma)$ is a real reductive algebraic group. We say
that $H({\mathbb C})$ is {\em defined over ${\mathbb R}$} (with
respect to $\sigma$) if it is preserved by $\sigma$. In that case the
group of real points of $H({\mathbb C})$ is
$$H = H({\mathbb R},\sigma) = G({\mathbb R},\sigma) \cap H({\mathbb
  C}) = G\cap H({\mathbb C}),$$
which we call a {\em (real) maximal torus of $G$}.
\end{definition}

All complex maximal tori in $G({\mathbb C})$ are conjugate, and all
are isomorphic to $({\mathbb C}^\times)^n$. There can be more than one
conjugacy class of real maximal tori in $G$, and in
that case they are not all isomorphic.  For example, in the real form
$SL(2,{\mathbb R})$ of $SL(2,{\mathbb C})$ there are two real forms of
the maximal torus ${\mathbb C}^\times$. One is $\sigma(z) = \overline
z$, with real points ${\mathbb R}^\times$; and the other is
$\sigma_c(z) = (\overline z)^{-1}$, with real points the (compact) circle group.

Because lattices are built from copies of ${\mathbb Z}$, it is
traditional and natural to write the group law in $X^*$ as $+$. But
the group law in $X^*$ arises from multiplication of the (nonzero
complex) values of algebraic characters, so there is always a danger
of confusion in the notation.  We will try to offer warnings about
some instances of this confusion.

The lattice $X^*$ is a fundamental invariant of the complex reductive
group. Our first goal is to describe all of the (continuous)
characters---the possibly nonunitary representations---of a real torus
$H$ in terms of this lattice of (algebraic) characters of
$H({\mathbb C})$. (This is a way of thinking about the Langlands
classification theorem for the abelian reductive group $H$.)  It will
be instructive at the same time to identify the unitary characters
within these nonunitary representations, since that is the simplest
case of the classification of unitary representations at which we are
aiming. 

\begin{proposition}\label{prop:realtorus} Suppose $H({\mathbb C})$ is a
  connected abelian complex
  reductive algebraic group (a complex torus), and $X^*$ is the
  lattice of algebraic characters (Definition \ref{def:torus}). 
\begin{enumerate}
\item We can recover $H({\mathbb C})$ from $X^*$ by the contravariant functor
$$H({\mathbb C}) \simeq \Hom(X^*,{\mathbb C}^\times).$$
The complex Lie algebra ${\mathfrak h}$ is 
$${\mathfrak h} \simeq X_*\otimes_{\mathbb Z} {\mathbb C},$$
and therefore is in a natural way defined over ${\mathbb Z}$.
\item Any real form $\sigma$ of $H({\mathbb C})$ gives rise to an
  automorphism $\theta$ of order $2$ of $X^*$, by the requirement
$$\theta(\lambda)(h) = \overline{\lambda(\sigma h)}^{-1}.$$
This defines a bijection between real forms and automorphisms of order
two of $X^*$.
\item An automorphism $\theta$ of $X^*$ of order two induces an
  automorphism (still called $\theta$) of $H({\mathbb C})$ of order
  two, by means of the functor in (1).  This automorphism is the
  Cartan involution corresponding to $\sigma$ (Theorem
  \ref{thm:realforms}).  In particular, $\theta$ is trivial if and
  only if $H({\mathbb R},\sigma)$ is compact.
\end{enumerate}
\end{proposition}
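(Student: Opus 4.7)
For part (1), I would use the classical duality for algebraic tori. Since $H(\mathbb{C})$ is connected abelian reductive, it is isomorphic to $(\mathbb{C}^\times)^n$ for some $n$, and both sides of the claimed isomorphism are functors from (split) lattices to algebraic tori that agree on the generator $\mathbb{Z}$ (which sends both to $\mathbb{C}^\times$), so the isomorphism follows by choosing a basis of $X^*$ and checking functoriality. The map sends $h \in H(\mathbb{C})$ to the homomorphism $\lambda \mapsto \lambda(h)$. For the Lie algebra statement, differentiate one-parameter subgroups: the pairing $X_* \otimes \mathbb{C} \to \mathfrak{h}$, $\nu \otimes z \mapsto z \cdot d\nu|_1$, is an isomorphism because one-parameter subgroups span $\mathfrak{h}$ and the rank matches.

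For part (2), I would check that the prescribed formula $\theta(\lambda)(h) := \overline{\lambda(\sigma h)}^{-1}$ actually defines an element of $X^*$. The function $h \mapsto \lambda(\sigma h)$ is antiholomorphic (composition of holomorphic $\lambda$ with antiholomorphic $\sigma$), so $\overline{\lambda(\sigma h)}$ is holomorphic; inverting keeps us in $\mathbb{C}^\times$; the multiplicativity of $\sigma$, $\lambda$, and conjugation/inversion combine to show $\theta(\lambda)$ is a group homomorphism, hence algebraic. Then $\theta$ is a group homomorphism $X^* \to X^*$, and the two conjugation/inversion operations cancel after applying twice, giving $\theta^2 = \Id$. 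For the bijectivity, I would exhibit the inverse construction: given an order-two $\theta \in \Aut(X^*)$, use the identification in (1) to define $(\sigma h)(\lambda) := \overline{\theta(\lambda)(h)}^{-1}$ as an element of $\Hom(X^*, \mathbb{C}^\times) = H(\mathbb{C})$; the conjugation forces $\sigma$ to be antiholomorphic in $h$, and the inversion combined with the conjugation guarantees $\sigma^2 = \Id$.

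For part (3), I would first identify the compact form of a complex torus concretely. By Theorem \ref{thm:KC} applied to a complex torus, the compact real form $\sigma_c$ is uniquely characterized by the requirement that every algebraic character of $H(\mathbb{C})$ is unitary on $H(\mathbb{R},\sigma_c)$, equivalently by
\[
\lambda(\sigma_c h) = \overline{\lambda(h)}^{-1} \qquad (\lambda \in X^*,\ h \in H(\mathbb{C})).
\]
Let $\theta^\vee$ denote the automorphism of $H(\mathbb{C})$ induced by the lattice automorphism $\theta$ via the duality of (1), so $\lambda(\theta^\vee h) = \theta(\lambda)(h)$. A direct calculation, substituting $h' = \sigma_c h$ into the defining formula for $\theta$, gives
\[
\lambda(\sigma \sigma_c h) \;=\; \overline{\theta(\lambda)(\sigma_c h)}^{-1} \;=\; \overline{\overline{\theta(\lambda)(h)}^{-1}}^{-1} \;=\; \theta(\lambda)(h) \;=\; \lambda(\theta^\vee h),
\]
for all $\lambda \in X^*$; since characters separate points, $\theta^\vee = \sigma \circ \sigma_c$, which is the Cartan involution of \eqref{eq:cartaninvolution}. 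For the last assertion, if $\theta = \Id$ then $\sigma = \sigma_c$, so $H(\mathbb{R},\sigma) = H(\mathbb{R},\sigma_c)$ is compact by Theorem \ref{thm:KC}; conversely, if $H$ is compact then every $\lambda \in X^*$ restricted to $H$ takes values in the unit circle, which forces $\sigma = \sigma_c$ and hence $\theta = \Id$.

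The main obstacle is purely bookkeeping: keeping track of which operations are conjugate-linear versus linear, and which directions the arrows go (covariant vs.\ contravariant functoriality under $X^* \mapsto H(\mathbb{C})$). The nontrivial conceptual content sits entirely in the identification of the compact form via the unitarity-of-characters condition, which is what makes the computation in part (3) collapse so cleanly.
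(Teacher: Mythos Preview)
Your proof is correct in all three parts; the verifications you sketch are the natural ones and go through without difficulty. The paper itself states Proposition~\ref{prop:realtorus} without proof, treating it as standard background on complex tori, so there is no argument to compare against. Your computation in part~(3) identifying $\sigma\circ\sigma_c$ with the dual automorphism $\theta^\vee$ is exactly the right way to connect the lattice-level $\theta$ with the Cartan involution of Theorem~\ref{thm:realforms}; note also (as your calculation implicitly shows) that $\sigma_c\circ\sigma = \theta^\vee$ as well, confirming that $\sigma$ and $\sigma_c$ commute, which is needed for Theorem~\ref{thm:realforms} to apply.
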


\begin{proposition}\label{prop:toruschars} Suppose $H({\mathbb C})$ is a
  connected abelian complex reductive algebraic group (a complex
  torus), $X^*$ is the lattice of algebraic characters (Definition
  \ref{def:torus}), and $\sigma$ is a real form of $H({\mathbb
    C})$. Write $\theta$ for the corresponding Cartan involution
  (Proposition \ref{prop:realtorus}), and $H = H({\mathbb R},\sigma)$
  for the group of real points.
\begin{enumerate}
\item The group 
$$T = H^\theta$$
is the (unique) maximal compact subgroup of $H$. Its
complexification is the (possibly disconnected) reductive algebraic
group
$$T({\mathbb C}) = H({\mathbb C})^\theta.$$
\item Write 
$${\mathfrak h}_0 = {\mathfrak t}_0 + {\mathfrak a}_0$$
for the decomposition of the real Lie algebra of $H$ into
$+1$ and $-1$ eigenspaces of $\theta$.  Then
$${\mathfrak t}_0 \simeq \Hom_{\mathbb Z}((X^*)^\theta, i{\mathbb
  R}),$$
$${\mathfrak a}_0 \simeq \Hom_{\mathbb Z}((X^*)^{-\theta}, {\mathbb
  R}).$$
\item Write
$$A = \exp({\mathfrak a}_0),$$
the identity component of the maximal split torus of $H$: the identity
component of the subgroup of elements $h$ with $\theta h = h^{-1}$.
The group $A$ is isomorphic to its Lie algebra, and
$$H\simeq T\times A,$$
a direct product of abelian Lie groups.
\item The group of continuous characters of $T$ may be
identified with the group of algebraic characters of $T({\mathbb C})$,
and so (by restriction) with a quotient of $X^*$:
$$\widehat{T} \simeq \Hom_{\text{alg}}(T({\mathbb
  C}),{\mathbb C}^\times) \simeq X^*/(1-\theta)X^*.$$
\item The group of continuous characters of $A$ is
$$\widehat A \simeq {\mathfrak a}^* \simeq (X^*)^{-\theta}
\otimes_{\mathbb Z} {\mathbb C},$$
a complex vector space naturally defined over ${\mathbb Z}$.
Therefore 
$$\widehat{H} \simeq \widehat{T} \times
\widehat{A} \simeq [X^*/(1-\theta)X^*] \times [(X^*)^{-\theta}
\otimes_{\mathbb Z} {\mathbb C}],$$
a direct product of a finitely generated discrete abelian group and a
complex vector space defined over ${\mathbb Z}$.
\item Suppose $\gamma = (\lambda,\nu)\in {\widehat T} \times {\widehat
    A}$ is a parameter for a character of $H$. Then the character is
  unitary if and only if 
  $\nu$ is purely imaginary; that is,
$$\nu \in (X^*)^{-\theta} \otimes_{\mathbb Z} i{\mathbb R}.$$
\end{enumerate}
\end{proposition}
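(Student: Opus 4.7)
The plan is to translate everything into combinatorics of the pair $(X^*, \theta)$ via the contravariant equivalence of Proposition \ref{prop:realtorus}, and to apply the Cartan decomposition (Theorem \ref{thm:realforms}) in the abelian case. Since $H(\mathbb{C})$ is a complex connected reductive algebraic group and $\sigma$ is a real form, $H$ is a real reductive group in the sense of \eqref{se:reductive}, and the $\theta$ of Proposition \ref{prop:realtorus}(3) is its Cartan involution. Part (1) is then exactly Theorem \ref{thm:realforms}(4), and for part (3) I would invoke Theorem \ref{thm:realforms}(5): the Cartan decomposition yields a diffeomorphism $T \times \mathfrak{s}_0 \to H$, $(t,X)\mapsto t\exp X$, which becomes a group isomorphism once I know that $A := \exp\mathfrak{a}_0$ is a subgroup (automatic because $\mathfrak{h}_0$ is abelian) and that $\exp$ is injective on $\mathfrak{a}_0$. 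The injectivity follows from Proposition \ref{prop:realtorus}(1): the kernel of $\exp\colon\mathfrak{h} \to H(\mathbb{C})$ is the lattice $2\pi i X_* \subset \Hom_\mathbb{Z}(X^*, i\mathbb{R})$, which meets $\mathfrak{a}_0 \subset \Hom_\mathbb{Z}(X^*, \mathbb{R})$ only at zero.

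The main technical step is part (2). By Proposition \ref{prop:realtorus}(3), the algebraic Cartan involution $\theta$ on $H(\mathbb{C}) = \Hom_\mathbb{Z}(X^*, \mathbb{C}^\times)$ is induced by the transpose of $\theta$ on $X^*$, so its differential acts on $\mathfrak{h} = \Hom_\mathbb{Z}(X^*, \mathbb{C})$ by $(d\theta\,X)(\lambda) = X(\theta\lambda)$. Unwinding the formula $\theta(\lambda)(h) = \overline{\lambda(\sigma h)}^{-1}$ of Proposition \ref{prop:realtorus}(2) at the Lie algebra level gives $(d\sigma\,X)(\lambda) = -\overline{X(\theta\lambda)}$, and hence
\[
\mathfrak{h}_0 = \{X \in \Hom_\mathbb{Z}(X^*, \mathbb{C}) : X(\lambda) = -\overline{X(\theta\lambda)} \text{ for all } \lambda \in X^*\}.
\]
Intersecting with the $\pm 1$-eigenspaces of $d\theta$ collapses these conditions to $X(\lambda)\in i\mathbb{R}$ with $X\circ\theta = X$ (giving $\mathfrak{t}_0$), and to $X(\lambda)\in\mathbb{R}$ with $X\circ\theta = -X$ (giving $\mathfrak{a}_0$). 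Because $X^*\otimes\mathbb{Q}$ splits as the direct sum of its $\pm\theta$-eigenspaces, restriction identifies these with $\Hom_\mathbb{Z}((X^*)^\theta, i\mathbb{R})$ and $\Hom_\mathbb{Z}((X^*)^{-\theta}, \mathbb{R})$ respectively; any $2$-torsion appearing in $X^*/(1-\theta)X^*$ is invisible because the targets are torsion-free and divisible.

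For (4), the same transpose description gives $T(\mathbb{C}) = H(\mathbb{C})^\theta = \Hom_\mathbb{Z}(X^*/(1-\theta)X^*, \mathbb{C}^\times)$, whose algebraic character group is exactly $X^*/(1-\theta)X^*$; Theorem \ref{thm:KC}(2) then identifies this with $\widehat T$. Part (5) is immediate from (2) and (3): $A\simeq\mathfrak{a}_0$ as a Lie group, so continuous characters of $A$ are $\mathbb{R}$-linear functionals $\mathfrak{a}_0\to\mathbb{C}$, i.e., elements of $\mathfrak{a}^* \simeq (X^*)^{-\theta}\otimes_\mathbb{Z}\mathbb{C}$, and the product decomposition $\widehat H \simeq \widehat T\times\widehat A$ follows from $H\simeq T\times A$. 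Finally (6) falls out: every character of the compact group $T$ is automatically unitary, while the character $\exp X \mapsto e^{\nu(X)}$ of $A$ is unitary exactly when $\nu$ takes purely imaginary values, i.e., $\nu\in(X^*)^{-\theta}\otimes i\mathbb{R}$. The main obstacle throughout will be part (2): relating the antiholomorphic involution $\sigma$ to the algebraic Cartan involution $\theta$ at the level of $\mathfrak{h}$ correctly, and then handling the $2$-torsion that can appear when passing between invariants and coinvariants of $\theta$ on $X^*$. Once that step is clean, the rest is bookkeeping.
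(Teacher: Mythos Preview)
Your proposal is correct and follows essentially the same route as the paper: invoke the Cartan decomposition (Theorem~\ref{thm:realforms}) for parts (1)--(3), use the complexification/equivalence-of-categories description for (4), and deduce (5)--(6) from (2)--(3) via $e^\nu(\exp X)=e^{\nu(X)}$. The paper's proof is in fact much terser than yours---it dispatches (1)--(3) in a single sentence (``just the Cartan decomposition \ldots\ for the abelian case'') without spelling out the eigenspace computation you carry out for (2)---so your explicit unwinding of $d\sigma$ and the restriction isomorphisms is additional detail rather than a different argument.
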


There is an essential asymmetry between the two factors $T$ and $A$ in part (3).  The first is
the group of real points of an algebraic group.  The symmetrically
defined algebraic group
$$A({\mathbb C}) = H({\mathbb C})^{-\theta}$$
is also defined over ${\mathbb R}$, and its group of real points is
$$A({\mathbb R}) = \{h \in H \mid \theta(h) = h^{-1}\}.$$
This group is often disconnected; and
$$A = \text{identity component of\ } A({\mathbb R}).$$
(This notation is inconsistent with our general practice of writing
unadorned Roman letters for groups of real points; we will not make use
of the algebraic group $A({\mathbb C})$, so no confusion should arise.)

\begin{proof} Parts (1)--(3) are just the Cartan decomposition of
  Theorem \ref{thm:realforms} for the abelian case.  The first
  assertion in part (4) (identifying continuous representations of a
  compact Lie group with algebraic representations of its
  complexification) is very general, and in some sense amounts to the
  way the complexification is defined.  The second part of (4) is part
  of the general fact that taking algebraic characters is 
  a contravariant equivalence of categories from complex reductive
  abelian groups to finitely generated abelian groups.

  For (5), the characters of a vector group are the same thing as
  complex-valued linear functionals on the Lie algebra; this can be
  computed from the description of the Lie algebra in (2).  The linear
  functional $\nu$ corresponds to the character $e^\nu$ defined by
$$e^\nu(\exp X) = e^{\nu(X)} \qquad (X \in {\mathfrak a}_0).$$
This formula makes it plain that the character $e^\nu$ is unitary if and only
if $\nu$ takes purely imaginary values.  (Any character of the compact abelian
group $T$ is automatically unitary.)
\end{proof}

We see therefore that a parameter for a character of $H$ has a
discrete part (the restriction to $T$, given by an element of the
lattice quotient $X^*/(1-\theta)X^*$) and a continuous part (the
restriction to $A$, given by an element of the complex vector space
${\mathfrak a}^*$). The complex vector space is naturally defined over
${\mathbb R}$ (with real points ${\mathfrak a}_0^*$); and the
character is unitary if and only if the real part of the continuous
parameter is equal to zero.  These statements are the ones we will
generalize to all real reductive groups.

\section{Coverings of tori}\label{sec:covtor}
\setcounter{equation}{0}

What enters most naturally in the Langlands classification are
characters not of the real tori $H$, but of certain
double covers
$$1 \rightarrow \{1,\epsilon \} \rightarrow \widetilde{H}
\rightarrow H \rightarrow 1.$$
The reason is that we are interested in versions of the Weyl character
formula, and the Weyl denominator is a function not on the torus but
rather on such a double cover.  Here is the construction we need.

\begin{definition}\label{def:ncover} Suppose $H$ is a Lie group, $n$
  is a positive integer, and
$$\gamma \in  \Hom(H,{\mathbb C}^\times),$$
an element of the group of (one-dimensional) continuous characters of
$H$.  The {\em $\gamma/n$ cover of $H$} is the Lie group
$$\widetilde H = \{(h,z) \in H \times {\mathbb C}^\times \mid
\gamma(h) = z^n\}.$$
(Here $\gamma/n$ is for the moment a formal symbol, recording the
defining character $\gamma$ and the level $n$ of the cover. Despite the absence of $\gamma$ and $n$ in the notation, the group
$\widetilde H$ depends very much on both of these values; we write
$\widetilde H(\gamma/n)$ when necessary.) Projection
on the first factor defines a surjective homomorphism to 
$H$; the kernel is isomorphic (by projection on the second factor) to the group
$$\langle \zeta_n \rangle = \{1,\zeta_n,
\zeta_n^2,\ldots,\zeta_n^{n-1}\}$$
(with $\zeta_n = e^{2\pi i/n}$) of $n$th roots of $1$ in ${\mathbb
  C}^\times$. Therefore $\widetilde H$ is an $n$-fold cover of $H$, in
the sense that there is a natural short exact sequence
$$1 \rightarrow \langle \zeta_n\rangle \rightarrow \widetilde{H}
\rightarrow H \rightarrow 1.$$
The subgroup $\langle \zeta_n\rangle$ is central. Projection on the
second factor defines a group character 
$$(\gamma/n) \colon \widetilde{H} \rightarrow {\mathbb C}^\times,
\qquad (\gamma/n)(h,z) = z.$$
The $n$th power of this character descends to $H$, and is equal to
$\gamma$; that is why we call $\widetilde H$ the ``$\gamma/n$ cover.''

Any (reasonable; for example, if $H$ is reductive, ``quasisimple'' is
a strong enough hypothesis) irreducible representation $\pi$ of
$\widetilde H$ must send $\zeta_n$ to $\zeta_n^k$ for 
some $k \in {\mathbb Z}$; we say that such a representation has {\em level
  $k$}. It is easy to check that all
level $k$ representations are uniquely of the form $\tau \otimes
k(\gamma/n)$ for 
some $\tau \in \widehat{H}$:
$$ \tildehatk{H}{k} 
=_{\text{def}} \{\text{level $k$ irreducibles of
    $\widetilde H$}\} \leftrightarrow k(\gamma/n) \otimes \widehat{H}.$$

Different characters can define isomorphic covers. An expression
$\gamma_2 = \gamma_1 + n\phi$ for one-dimensional 
characters of $H$ gives rise to a natural isomorphism
$$\widetilde{H}(\gamma_1/n) \simeq_{\phi} \widetilde{H}(\gamma_2/n),
\qquad (h,z) \mapsto (h,z\phi(h)).$$
\end{definition}

In order to introduce the coverings we need, we recall now the
fundamental structure theory of complex reductive groups.
\begin{definition}\label{def:posroots} Suppose $G({\mathbb C})$ is a
  complex connected reductive algebraic group, $H({\mathbb C})$ is
  a maximal torus, and $X^*$ is the lattice of algebraic characters of
  $H({\mathbb C})$.  Write
$$R = R(G({\mathbb C}),H({\mathbb C})) \subset X^*$$
for the finite set of nonzero weights of the adjoint representation
of $H({\mathbb C})$ on ${\mathfrak g}$, the {\em roots of $H({\mathbb
    C})$ in $G({\mathbb C})$}.  Attached to each root $\alpha$ there
is a {\em three-dimensional subgroup}
$$\phi_\alpha \colon SL(2,{\mathbb C}) \rightarrow G({\mathbb C}),$$
an algebraic group homomorphism with finite kernel satisfying
$$\phi_\alpha(\text{diagonal matrices}) \subset H({\mathbb C}), \qquad
0 \ne d\phi_\alpha\begin{pmatrix} 0 & 1 \\ 0 & 0\end{pmatrix} \in
{\mathfrak g}_\alpha.$$
These conditions characterize $\phi_\alpha$ uniquely up to conjugation
by diagonal matrices in $SL(2,{\mathbb C})$, or by $H({\mathbb
  C})$. In particular, they characterize uniquely the {\em coroot
  $\alpha^\vee$ corresponding to $\alpha$}, 
$$\alpha^\vee \colon {\mathbb C}^\times \rightarrow H({\mathbb C}),
\qquad \alpha^\vee(z) =_{\text{def}} \phi_\alpha \left(\begin{pmatrix} z& 0 \\ 0&
    z^{-1} \end{pmatrix} \right).$$
Write
$$R^\vee = R^\vee(G({\mathbb C}),H({\mathbb C})) \subset X_*$$
(notation as in Definition \ref{def:torus}) for the set of coroots.

Attached to the root $\alpha$ is the {\em root reflection}
$$s_\alpha \in \Aut(X^*), \qquad s_\alpha(\lambda) = \lambda -
\langle\lambda,\alpha^\vee\rangle \alpha.$$
This reflection acts also (by transpose) on $X_*$, and by extension of
scalars on ${\mathfrak h}^* = X^*\otimes_{\mathbb Z} {\mathbb C}$ and
${\mathfrak h}$ and $H({\mathbb C})$.  The group generated by all
reflections is the {\em complex Weyl group}
$$W(G({\mathbb C}),H({\mathbb C})) = \langle s_\alpha \mid \alpha \in
R(G,H) \rangle \subset \Aut(X^*).$$ 
This group may also be described as
$$W(G({\mathbb C}),H({\mathbb C})) = N_{G({\mathbb C})}(H({\mathbb
  C}))/H({\mathbb C}) \subset \Aut(H({\mathbb C})),$$
the group of automorphisms of $H({\mathbb C})$ arising from
$G({\mathbb C})$. 

Suppose $R^+ \subset R$ is a choice of positive roots (see
\Cite{Hum}*{10.1}). Define
$$2\rho = 2\rho(R^+) = \sum_{\alpha \in R^+}\alpha \in X^*,$$
Write $\Pi = \Pi(R^+)$ for the corresponding set of simple roots
(\cite{Hum}*{10.1}).  Then (\cite{Hum}*{Lemma 13.3A})
$$\Pi = \{\alpha \in R \mid \langle 2\rho, \alpha^\vee \rangle = 2
\}.$$
Consequently all coroots take nonzero even integer values on $2\rho$;
the positive coroots are those taking positive values on $2\rho$; and
the simple coroots are those taking the value $2$ on $2\rho$.
\end{definition}

We pause here for a moment to recall Harish-Chandra's description of
the center of the enveloping algebra ${\mathfrak Z}({\mathfrak g})$
\eqref{se:reps}. 

\begin{definition}\label{def:HChom} In the setting of Definition
  \ref{def:posroots}, suppose $R^+$ is a choice of positive
  roots. Write
$$\begin{aligned} 
{\mathfrak n} &= {\mathfrak n}(R^+) = \sum_{\alpha\in R^+} {\mathfrak
  g}_\alpha,\qquad 
{\mathfrak n}^- &= {\mathfrak n}(-R^+) = \sum_{\alpha\in R^+} {\mathfrak
  g}_{-\alpha},\\
{\mathfrak b} &= {\mathfrak b}(R^+) = {\mathfrak h} + {\mathfrak n},
\qquad {\mathfrak b}^- &= {\mathfrak b}(-R^+) = {\mathfrak h} + {\mathfrak n}^-
\end{aligned}$$
for the corresponding maximal nilpotent and Borel subalgebras, so that
$${\mathfrak g} = {\mathfrak n}^- \oplus {\mathfrak h} \oplus
{\mathfrak n}$$
$$U({\mathfrak g}) = U({\mathfrak n}^-) \otimes U({\mathfrak h})
\otimes U({\mathfrak n}), \qquad U({\mathfrak g}) = U({\mathfrak b}^-)
  \oplus U({\mathfrak g}){\mathfrak n}.$$
Write
$$\tilde\xi\colon U({\mathfrak g}) \rightarrow U({\mathfrak b}^-)$$
for the (linear) projection corresponding to the last direct sum
decomposition. Then Harish-Chandra observes that 
$$\tilde\xi\colon U({\mathfrak g})^{\mathfrak h} \rightarrow
U({\mathfrak b}^-)^{\mathfrak h} = U({\mathfrak h}) \simeq
S({\mathfrak h}) \simeq \text{polynomial functions on ${\mathfrak h}^*$}$$
is an algebra homomomorphism, the {\em
  unnormalized Harish-Chandra homomorphism}. 
Every $\sigma\in {\mathfrak h}^*$ gives rise to an algebra
automorphism of $S({\mathfrak h})$
$$\tau_\sigma = \text{translation by $\sigma$,} \qquad \tau_\sigma(Z)
= Z - \sigma(Z) \quad (Z\in {\mathfrak h}).$$
The {\it normalized Harish-Chandra homomorphism} is
$$\xi\colon U({\mathfrak g})^{\mathfrak h} \rightarrow S({\mathfrak
  h}),\qquad  \xi(z) = \tau_\rho(\tilde\xi(z)).$$
Here $\rho \in {\mathfrak h}^*$ (Definition \ref{def:posroots}) is
half the sum of the roots in $R^+$. For any $\lambda\in {\mathfrak
  h}^*$, the {\em infinitesimal character $\lambda$} is the algebra
homomorphism 
$$\xi_\lambda\colon {\mathfrak Z}({\mathfrak g}) \rightarrow {\mathbb
  C}, \qquad \xi_\lambda(z) = \xi(z)(\lambda) =
\tilde\xi(z)(\lambda-\rho).$$
\end{definition}

\begin{theorem}[Harish-Chandra; see \Cite{Hum}*{130--134}]
  \label{thm:HChom}. Suppose $H({\mathbb C}) \subset G({\mathbb C})$
  is a maximal torus in a connected reductive algebraic group; use the
  notation of Definition \ref{def:HChom}. The Harish-Chandra
  homomorphism $\xi$ is an algebra isomorphism
$$\xi\colon {\mathfrak Z}({\mathfrak g}) \rightarrow S({\mathfrak
  h})^{W(G({\mathbb C}),H({\mathbb C}))}.$$
Consequently
\begin{enumerate}
\item the homomorphism $\xi$ is independent of the choice of positive
  root system; 
\item every algebra homomorphism from ${\mathfrak Z}({\mathfrak g})$
  to ${\mathbb C}$ is 
$\xi_\lambda$ for some $\lambda\in {\mathfrak h}^*$; and
\item $\xi_\lambda = \xi_{\lambda'}$ if and only if $\lambda' \in
  W(G({\mathbb C}),H({\mathbb C}))\cdot \lambda$.
\end{enumerate}
\end{theorem}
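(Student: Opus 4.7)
The plan is to proceed in three stages: first show $\xi$ is a well-defined algebra homomorphism into $U(\mathfrak{h})$; then verify its image consists of Weyl invariants by analyzing how central elements act on Verma modules; and finally deduce bijectivity from Chevalley's restriction theorem. Consequences (1)--(3) will then follow formally from the resulting identification $\mathfrak{Z}(\mathfrak{g}) \simeq \mathbb{C}[\mathfrak{h}^*]^W$.

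For the first stage, since any $z \in \mathfrak{Z}(\mathfrak{g})$ commutes in particular with $\mathfrak{h}$, we have $z \in U(\mathfrak{g})^{\mathfrak{h}}$. The PBW decomposition $U(\mathfrak{g}) = U(\mathfrak{b}^-) \oplus U(\mathfrak{g})\mathfrak{n}$ is $\mathrm{ad}(\mathfrak{h})$-stable, and the $\mathfrak{h}$-weight-zero part of $U(\mathfrak{b}^-) = U(\mathfrak{n}^-)U(\mathfrak{h})$ is exactly $U(\mathfrak{h})$; so $\tilde\xi$ sends $U(\mathfrak{g})^{\mathfrak{h}}$ into $U(\mathfrak{h})$. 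Multiplicativity is a routine PBW computation: modulo $U(\mathfrak{g})\mathfrak{n}$, the product $z_1 z_2$ of two $\mathfrak{h}$-invariants reduces to the product of their $U(\mathfrak{h})$-components. Composing with the translation automorphism $\tau_\rho$ yields the algebra map $\xi$.

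For the second stage, given $\lambda \in \mathfrak{h}^*$ let $M(\lambda - \rho)$ denote the Verma module with highest weight vector $v$. The congruence $z \equiv \tilde\xi(z) \pmod{U(\mathfrak{g})\mathfrak{n}}$ gives at once $z \cdot v = \tilde\xi(z)(\lambda - \rho)\, v = \xi(z)(\lambda)\, v$. When $\lambda$ is dominant regular integral, a classical BGG-type construction produces a nonzero embedding $M(s_\alpha \lambda - \rho) \hookrightarrow M(\lambda - \rho)$ for each simple root $\alpha$, the $\rho$-shift being precisely what converts the natural ``dot action'' on highest weights into the ordinary $W$-action on the parameter $\lambda$. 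Comparing the scalars by which $z$ acts on the two highest weight vectors forces $\xi(z)(s_\alpha \lambda) = \xi(z)(\lambda)$ on a Zariski-dense subset of $\mathfrak{h}^*$, hence on all of $\mathfrak{h}^*$, so $\xi(z) \in U(\mathfrak{h})^W$.

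For the third stage, the degree filtration on $U(\mathfrak{g})$ induces compatible filtrations on $\mathfrak{Z}(\mathfrak{g})$ and on $U(\mathfrak{h})$, and $\xi$ preserves them. The associated graded of $\xi$ is the classical restriction map $S(\mathfrak{g})^{G(\mathbb{C})} \to S(\mathfrak{h})^W$, which is an isomorphism by Chevalley's restriction theorem. Since $\gr \mathfrak{Z}(\mathfrak{g}) \subseteq S(\mathfrak{g})^{G(\mathbb{C})}$ and the composite lands surjectively onto $S(\mathfrak{h})^W$, both inclusions must be equalities, so $\xi$ is bijective onto $U(\mathfrak{h})^W$. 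The three consequences are then immediate: $U(\mathfrak{h})^W$ is visibly independent of $R^+$; $\mathbb{C}$-valued algebra homomorphisms from $U(\mathfrak{h})^W \simeq \mathbb{C}[\mathfrak{h}^*]^W$ correspond to points of the affine quotient $\mathfrak{h}^*/W$; and $W$-invariant polynomials separate $W$-orbits, so two points of $\mathfrak{h}^*$ yield the same homomorphism iff they are $W$-conjugate. The main obstacle is the surjectivity assertion in the third stage, which rests entirely on Chevalley's restriction theorem, a genuinely nontrivial piece of invariant theory whose core content is that $S(\mathfrak{h})$ is free of rank $|W|$ over its Weyl invariants.
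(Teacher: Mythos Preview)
The paper does not give its own proof of this theorem; it simply records the statement and cites Humphreys, pages 130--134. Your outline is essentially the standard argument found there: define $\xi$ via the PBW decomposition, verify $W$-invariance by comparing central characters along Verma-module embeddings and using Zariski density, and deduce bijectivity by passing to the associated graded and invoking Chevalley's restriction theorem. So at the level of strategy you are in complete agreement with the cited source.

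There is, however, a gap in your third stage. You write that ``the composite lands surjectively onto $S(\mathfrak{h})^W$,'' but this is exactly the point requiring proof. What you actually have is a factorization
\[
\gr\xi\colon \gr\mathfrak{Z}(\mathfrak{g}) \hookrightarrow S(\mathfrak{g})^{G(\mathbb{C})} \xrightarrow{\ \sim\ } S(\mathfrak{h})^W,
\]
the second arrow being Chevalley's isomorphism. This immediately gives injectivity of $\xi$. But surjectivity requires the first inclusion to be an equality, and nothing in your argument establishes that; your sentence beginning ``Since $\gr\mathfrak{Z}(\mathfrak{g}) \subseteq S(\mathfrak{g})^{G(\mathbb{C})}$\ldots'' simply asserts the conclusion. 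The standard fix is the symmetrization map: the $G(\mathbb{C})$-equivariant linear isomorphism $\operatorname{sym}\colon S(\mathfrak{g}) \to U(\mathfrak{g})$ restricts to a filtered linear isomorphism $S(\mathfrak{g})^{G(\mathbb{C})} \to U(\mathfrak{g})^{G(\mathbb{C})} = \mathfrak{Z}(\mathfrak{g})$, which forces $\gr\mathfrak{Z}(\mathfrak{g}) = S(\mathfrak{g})^{G(\mathbb{C})}$. Once that is in place, $\gr\xi$ is bijective and hence so is $\xi$.
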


Algebraically the most complicated questions in representation theory
concern the integral weights $X^* \subset {\mathfrak h}^*$. The key to
the arguments of this paper is deformation: understanding the behavior
of representation theory in one-parameter families. Because of these
two facts, we will make a great deal of use of 

\begin{definition}[\Cite{Vgreen}*{Definition 5.4.11}]
  \label{def:realinf} Suppose we are in the setting of 
  Definition \ref{def:posroots}. Recall from Proposition 4.2 the
  identifications 
$${\mathfrak h}\simeq X_*\otimes_{\mathbb Z} {\mathbb C}, \qquad
{\mathfrak h}^*\simeq X^*\otimes_{\mathbb Z} {\mathbb C}$$
exhibiting ${\mathfrak h}$ as canonically defined over ${\mathbb
  Z}$. This means in particular that ${\mathfrak h}$ is canonically
defined over ${\mathbb R}$; we write
$${\mathfrak h}_{\CRE} = X_*\otimes_{\mathbb Z}{\mathbb R}, \qquad {\mathfrak
  h}^*_{\CRE} = X^*\otimes_{\mathbb Z}{\mathbb R},$$
the {\em canonical real form of ${\mathfrak h}$}.  (This real form is
{\em not} the real form defined by a real maximal torus unless that
torus is split.) Clearly these spaces
are preserved by the action of the Weyl group. The infinitesimal
character $\xi_\lambda$ is said to be {\em real} if $\lambda \in
{\mathfrak h}^*_{\CRE}$. By Theorem \ref{thm:HChom}(3), this property
is independent of the choice of $\lambda$ representing the
infinitesimal character.
\end{definition}

A real infinitesimal character must indeed be real-valued on the form of
${\mathfrak Z}({\mathfrak g})$ defined by the split real form of
$G({\mathbb C})$, but this is neither a sufficient condition to be
real nor (as far as we know) an interesting one.  

\begin{lemma}\label{lemma:wrho} In the setting of Definition
  \ref{def:posroots}, suppose $R^+$ and $(R^+)'$ are two choices of
  positive root system, with corresponding sums of positive roots
  $2\rho$ and $2\rho'$. 
\begin{enumerate}
\item There is a homomorphism
$$\epsilon \colon W(G({\mathbb C}),H({\mathbb C})) \rightarrow \{\pm
1\}, \quad \epsilon(s_\alpha) = -1 \quad (\alpha \in R).$$
\item There is a unique element $w \in W(G,H)$ (depending on $R^+$ and
  $(R^+)'$) such that $w(R^+) = (R^+)'$.
\item Define 
$$S = \{\alpha \in R^+ \mid \alpha \notin (R^+)'\}, \qquad 2\rho(S) =
\sum_{\alpha \in S} \alpha \in X^*.$$
Then $2\rho' = 2\rho - 4\rho(S)$.
\item The $\rho$ and $\rho'$ double covers of $H({\mathbb C})$
  (Definition \ref{def:ncover}) are canonically isomorphic.  We may
  therefore speak of ``the'' $\rho$ double cover of $H({\mathbb C})$
  without reference to a particular choice of positive root system.
\end{enumerate}
\end{lemma}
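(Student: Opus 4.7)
For part (1), I would define $\epsilon(w) = \det\bigl(w|_{\mathfrak{h}^*_{\CRE}}\bigr)$ using the canonical real form $\mathfrak{h}^*_{\CRE} = X^* \otimes_{\mathbb{Z}} \mathbb{R}$ from Definition \ref{def:realinf}. Each root reflection $s_\alpha$ fixes the hyperplane $\alpha^\vee = 0$ and reverses the line spanned by $\alpha$, hence has determinant $-1$; since $W(G(\mathbb{C}),H(\mathbb{C}))$ is generated by the $s_\alpha$, this is the desired sign homomorphism.

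For part (2) I would invoke the standard simple transitivity of the Weyl group on positive systems. For existence of $w$, induct on $n = |R^+ \setminus (R^+)'|$: if $n > 0$, not every simple root of $R^+$ can lie in $(R^+)'$ (else $R^+ \subseteq (R^+)'$ and hence $R^+ = (R^+)'$), so pick a simple $\alpha \in -(R^+)'$; then $s_\alpha R^+$ differs from $R^+$ only by the sign of $\alpha$, so $|s_\alpha R^+ \setminus (R^+)'| = n - 1$. Uniqueness reduces to showing that the stabilizer of $R^+$ in $W$ is trivial, which follows by a standard length argument on simple reflections.

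Part (3) is a direct computation. I would observe that $(R^+)' \setminus R^+ = -S$: indeed, if $\beta \in (R^+)' \setminus R^+$, then $-\beta \in R^+$ (since $R = R^+ \sqcup -R^+$) and $-\beta \notin (R^+)'$, so $-\beta \in S$. Therefore
\begin{equation*}
2\rho - 2\rho' = \sum_{\alpha \in S} \alpha \; - \sum_{\alpha \in -S} \alpha \; = \; 2 \sum_{\alpha \in S} \alpha \; = \; 4\rho(S),
\end{equation*}
which rearranges to $2\rho' = 2\rho - 4\rho(S)$.

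Part (4) then follows by combining (3) with the last paragraph of Definition \ref{def:ncover}. Setting $\phi := 2\rho(S) \in X^*$, the relation $2\rho = 2\rho' + 2\phi$ (with $n = 2$) produces an isomorphism
\begin{equation*}
\iota_{R^+,(R^+)'} \colon \widetilde{H}(2\rho/2) \xrightarrow{\ \sim\ } \widetilde{H}(2\rho'/2), \qquad (h,z) \mapsto (h, z\,\phi(h)).
\end{equation*}
The main (mild) obstacle is justifying the word ``canonically'': I need the family $\{\iota_{R^+,(R^+)'}\}$ to satisfy a cocycle condition, so that for any three positive systems $R_1^+, R_2^+, R_3^+$ with pairwise difference sets $S_{ij}$, the composition $\iota_{23} \circ \iota_{12}$ coincides with $\iota_{13}$. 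Unwinding the formula above, this amounts to the additive identity $2\rho(S_{12}) + 2\rho(S_{23}) = 2\rho(S_{13})$ in $X^*$. By part (3), multiplying by $2$ turns this into $(2\rho_1 - 2\rho_2) + (2\rho_2 - 2\rho_3) = 2\rho_1 - 2\rho_3$, which is trivial; since $X^*$ is torsion-free, the halved identity holds as well, completing the cocycle check and hence the proof.
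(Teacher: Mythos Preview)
Your proof is correct and follows essentially the same route as the paper, which simply cites Humphreys for (1) and (2), notes that (3) is ``almost obvious'' from the $\pm$-pairing of roots, and derives (4) from (3) together with the last paragraph of Definition~\ref{def:ncover}. Your explicit cocycle verification in (4) is a welcome addition that the paper leaves implicit.
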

The notation introduced in Definition \ref{def:ncover} says that we
should call the coverings in (4) the ``$2\rho/2$" and ``$2\rho'/2$''
covers. We prefer either to think of $\rho$ as an abbreviation for
the formal symbol $2\rho/2$, or else simply to abuse notation in this
way. Consistent with this convention, we will also write $\rho$ for
the character of this double cover that is called $2\rho/2$ in
Definition \ref{def:ncover}.

\begin{proof}
Part (1) is \Cite{Hum}*{Exercise 6 on page 54}. Part (2) is \Cite{Hum}*{Theorem 10.3}.  (Notice that this says that $W(G,H)$ acts in a simply
transitive way on positive root systems for $R$.) Part (3) is almost
obvious: what matters is that the roots appear in pairs $\{\pm
\alpha\}$, and that a positive system picks exactly one root from each
pair. Part (4) follows from (3) and from the discussion at the end of
Definition \ref{def:ncover}.
\end{proof}

For real groups, the character $\rho$ is not quite the most useful
one. In Mackey's definition of unitary induction there appears the
square root of the absolute value of the determinant of an action of a
reductive subgroup (on the cotangent space of a real flag
variety). This determinant is essentially a sum of positive roots, so
the square root gives something close to $\rho$; but the absolute
value has no parallel in what we have done so far.  In \Cite{ABV}
there is a systematic use of the ``$\rho$ double cover'' of maximal
tori. The absence of Mackey's absolute value requires a complicated
correction (see \Cite{ABV}*{Definition 11.6}). In this paper we will
adopt a slightly different approach, introducing a slightly
complicated modification of $\rho$ to define the covering, and as a
reward getting a simpler formulation of the Langlands classification
in the next section.

\begin{definition}\label{def:rhoim} Suppose $G({\mathbb C})$ is complex
  connected reductive algebraic group endowed with a real form
  $\sigma_0$ (cf. \eqref{se:reductive}) and a Cartan involution
  $\theta$ (Theorem \ref{thm:realforms}). Suppose that
  $H({\mathbb C})$ is a maximal torus defined over ${\mathbb
    R}$. After conjugating by $G$, we may assume that $H$
  is also preserved by $\theta$; the shorthand is that ``$H$ is a
  $\theta$-stable maximal torus in $G$.'' Define
$$W(G,H) = N_G(H)/H \simeq N_K(H)/H\cap K \subset \Aut(H),$$
the {\em real Weyl group of $H$ in $G$}. Because of the second
description of the complex Weyl group in Definition \ref{def:posroots},
it is clear that $W(G,H) \subset W(G({\mathbb C}),H({\mathbb C}))$.

  Write
$$R = R(G({\mathbb C}),H({\mathbb C})) \subset X^*$$
for the roots (Definition \ref{def:posroots}). These roots fall into
three classes (each of which is preserved by the real Weyl group
$W(G,H)$).
\begin{enumerate}[label=\alph*)]
\item the {\em real roots} $R_{\mathbb R}$, those defining real-valued
  characters $\alpha$ of $H$ (or equivalently of the Lie
  algebra ${\mathfrak h}_0$). Equivalent conditions are
$$\sigma_0(\alpha) = \alpha, \qquad \theta(\alpha) = -\alpha.$$
The real roots are a root subsystem, a Levi factor of $R$.
\item the {\em imaginary roots} $R_{i\mathbb R}$, those defining unitary
  characters $\beta$ of $H$ (or equivalently, purely
  imaginary-valued characters of the Lie
  algebra ${\mathfrak h}_0$). Equivalent conditions are
$$\sigma_0(\beta) = -\beta, \qquad \theta(\beta) = \beta.$$
The imaginary roots are a root subsystem, a Levi factor of
$R$.
\item the {\em complex roots} $R_{\mathbb C}$, those which are neither
  real-valued nor unitary. Equivalently, these are the roots taking
  neither real nor purely imaginary values on the Lie algebra
  ${\mathfrak h}_0$. Complex roots appear in four-tuples
  of distinct roots
$$\{\delta, \sigma_0(\delta), -\delta, -\sigma_0(\delta)\} = \{\delta,
-\theta(\delta), -\delta, \theta(\delta)\}.$$
\end{enumerate}

Recall from Definition \ref{def:posroots} the algebraic group homomorphism
$$\phi_\alpha\colon SL(2,{\mathbb C}) \rightarrow G({\mathbb
  C}).$$
If $\alpha$ is real or imaginary, then the real form $\sigma_0$ and the
Cartan involution $\theta$ both 
preserve the image of $\phi_\alpha$. It follows easily that these
automorphisms pull back to a real form $\sigma_\alpha$ of
$SL(2,{\mathbb C})$ and a corresponding Cartan involution
$\theta_\alpha$:
$$\phi_\alpha(\sigma_\alpha(g)) = \sigma_0(\phi_\alpha(g)), \qquad
\phi_\alpha(\theta_\alpha(g)) = \theta(\phi_\alpha(g)) \qquad (g \in
SL(2,{\mathbb C}).$$
If $\alpha$ is imaginary, then the diagonal torus in $SL(2,{\mathbb
  C})$ is compact for $\sigma_\alpha$.  We may conjugate $\phi_\alpha$
so that $\sigma_\alpha$ defines either $SU(1,1)$ (in which case we say that
$\alpha$ is {\em noncompact}) or $SU(2)$ (in which case $\alpha$ is
called {\em compact}). If $\alpha$ is real, then the diagonal torus is
split, and we may conjugate $\phi_\alpha$ so that $\sigma_\alpha$
defines the real form $SL(2,{\mathbb R})$.

If $\alpha$ is a real or imaginary root, define
$$m_\alpha = \phi_\alpha\begin{pmatrix} -1 & 0 \\ 0 &
  -1\end{pmatrix} = \alpha^\vee(-1),$$
an element of order (one or) two in $T \subset H$. 

If $\delta$ is complex, the sum $\delta + \sigma_0(\delta)$ takes
  positive real values on $H$, and therefore has a
  distinguished positive-valued square root
$$\frac{1}{2}(\delta+\sigma_0(\delta)) \in \widehat{H}.$$

Similarly, the difference $\delta - \sigma_0(\delta)$ is a unitary
character of $H$, and has a distinguished unitary square
root
$$\frac{1}{2}(\delta-\sigma_0(\delta)) =_{\text{def}} \delta -
\left(\frac{1}{2}(\delta +\sigma_0(\delta)) \right) \in
\widehat{H}.$$
\end{definition}

We will eventually need Knapp's detailed description of the real Weyl
group $W(G,H)$.  The notation can be a little confusing: the complex
roots $R_{\mathbb C}$ of Definition \ref{def:rhoim} are not a root
system, but in this proposition we will consider a (small) subset
$R_C$ that {\em is} a root system.

\begin{proposition}[Knapp; see \Cite{IC4}*{Proposition
  4.16}]\label{prop:realweyl} With notation as in Definition
  \ref{def:rhoim}, fix positive root systems $R^+_{\mathbb R}$ and
  $R^+_{i{\mathbb R}}$ for the real and imaginary roots, with
    corresponding half sums $\rho_{\mathbb R}$ and $\rho_{i{\mathbb
        R}}$ in ${\mathfrak h}^*$. Define
$$R_{C} = \{\alpha \in R \mid \rho_{\mathbb R}(\alpha^\vee) =
\rho_{i{\mathbb R}}(\alpha^\vee) = 0\}.$$
\begin{enumerate}
\item The root system $R_{C}$ is $\theta$-stable and complex
  (\cite{IC4}*{Definition 3.10}). That is, it is the sum of two root
  systems $R^L_{C}$ and $R^R_{C} = \theta(R^L_{C})$ that are
  interchanged by $\theta$. 
\item We have 
$$W(R_{C})^\theta = \{(w,\theta w) \mid w \in W(R^L_{C}) \} \subset
W(R^L_{C}) \times W(R^R_{C});$$
this description shows that
$$ W(R_{C})^\theta \simeq W(R^L_{C}).$$
\item
$$W(G({\mathbb C}),H({\mathbb C}))^\theta = W(R_{C})^\theta
\ltimes [W(R_{\mathbb R}) \times W(R_{i{\mathbb R}})],$$
a semidirect product with the subgroup in brackets normal. 
\item The real Weyl group $W(G,H)$ is a subgroup of $W(G({\mathbb
    C}),H({\mathbb C}))^\theta$. The first two factors in (3) are
  contained in $W(G,H)$. Consequently
$$W(G,H) = W(R_{C})^\theta\ltimes [W(R_{\mathbb R}) \times W(G^A,H)].$$
Here
$$W(G^A,H) = W(R_{i{\mathbb R}}) \cap W(G,H)$$
is the subgroup of the imaginary Weyl group having representatives in
$G$ (or equivalently in $K$).
\end{enumerate}
\end{proposition}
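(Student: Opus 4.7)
The plan is to build the description from the inside out, starting with $R_C$ and then adjoining the real and imaginary Weyl factors, following Knapp's argument in \Cite{IC4}. For part (1), I would first observe that the defining conditions on $R_C$ are $\theta$-stable: $\theta\rho_{\mathbb{R}} = -\rho_{\mathbb{R}}$ since $\theta$ negates real roots, and $\theta\rho_{i\mathbb{R}} = \rho_{i\mathbb{R}}$ since $\theta$ preserves the imaginary system, so the coroot-pairing conditions are preserved. The regularity of $\rho_{\mathbb{R}}$ for $R_{\mathbb{R}}$ rules out real roots from $R_C$, and similarly imaginary roots; hence every $\alpha \in R_C$ is complex, in particular $\theta\alpha \ne \pm\alpha$. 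The remaining geometric content is to split $R_C = R^L_C \sqcup R^R_C$ into two mutually orthogonal subsystems interchanged by $\theta$; the standard argument separates the $\theta$-orbits $\{\alpha,\theta\alpha\}$ in $R_C$ using a generic hyperplane and checks orthogonality $\langle\alpha,(\theta\beta)^\vee\rangle = 0$ for $\alpha,\beta \in R^L_C$ by means of the $\pm 1$-eigenspace decomposition $\alpha = \alpha_t + \alpha_a$ for $\theta$ together with the vanishing of pairings with $\rho_{\mathbb{R}},\rho_{i\mathbb{R}}$. I expect this orthogonal splitting to be the main obstacle; everything that follows is essentially bookkeeping once it is in hand.

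Part (2) is then immediate: a $\theta$-fixed $w \in W(R_C)$ restricts to $w^L \in W(R^L_C)$ and $w^R \in W(R^R_C)$, and the $\theta$-invariance forces $w^R = \theta w^L \theta^{-1}$, giving the isomorphism $w \mapsto w^L$. For part (3), I would first verify that $W(R_{\mathbb{R}}) \times W(R_{i\mathbb{R}})$ is normal in $W(G(\mathbb{C}),H(\mathbb{C}))^\theta$: since $R_{\mathbb{R}}$ and $R_{i\mathbb{R}}$ are $\theta$-stable Levi subsystems of $R$ (Definition \ref{def:rhoim}), any $\theta$-fixed element of the complex Weyl group preserves each of them setwise, so its conjugate of a real (resp.\ imaginary) reflection is again real (resp.\ imaginary). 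I would then build the quotient map $W^\theta \to W(R_C)^\theta$ by choosing a $\theta$-stable positive system refining $R^+_{\mathbb{R}}$ and $R^+_{i\mathbb{R}}$ and tracking how a $\theta$-fixed element acts on positive roots modulo the real and imaginary parts. Part (2) supplies a section of this quotient, yielding the semidirect product.

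For part (4), the containment $W(G,H) \subset W(G(\mathbb{C}),H(\mathbb{C}))^\theta$ is immediate because any $n \in N_G(H)$ commutes with $\sigma_0$ and hence with $\theta$. To realize $W(R_{\mathbb{R}})$ inside $W(G,H)$, I would use the $SL(2,\mathbb{R})$-subgroup $\phi_\alpha$ attached to each real root $\alpha$ (Definition \ref{def:rhoim}) and lift $s_\alpha$ via the standard Weyl element of $SL(2,\mathbb{R})$, which lies in $G$. To realize $W(R_C)^\theta$, I would represent a generator $s_\alpha$ of $W(R^L_C)$ paired with its $\theta$-partner $s_{\theta\alpha}$ by the product $\dot{s}_\alpha \cdot \sigma_0(\dot{s}_\alpha)$ for any complex Weyl lift $\dot{s}_\alpha$; orthogonality of $R^L_C$ and $R^R_C$ from (1) ensures the factors commute, and the product is manifestly $\sigma_0$-fixed, hence lies in $G$. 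Since $W(R_{i\mathbb{R}})$ generally does not lift to $G$, it is replaced by $W(G^A,H) = W(R_{i\mathbb{R}}) \cap W(G,H)$ by definition, producing the stated semidirect product decomposition.
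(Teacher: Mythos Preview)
The paper itself gives no proof of this proposition; it is simply attributed to Knapp with the citation to \cite{IC4}, Proposition 4.16. So there is nothing to compare your argument against except the standard literature, and your outline for parts (2)--(4) is essentially the usual proof: (2) is immediate from the splitting in (1); (3) follows because any $\theta$-fixed Weyl element preserves the real and imaginary subsystems setwise, so $W(R_{\mathbb R})\times W(R_{i\mathbb R})$ is normal in $W^\theta$ and $W(R_C)^\theta$ supplies the complement; and (4) is correctly obtained by lifting $s_\alpha s_{\theta\alpha}$ via $\dot s_\alpha\cdot\sigma_0(\dot s_\alpha)$, which is $\sigma_0$-fixed once you know the two factors commute.

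The genuine gap is in (1). Your sketch for the orthogonal splitting $R_C = R_C^L \sqcup R_C^R$ does not work as written. You propose to separate the $\theta$-orbits $\{\alpha,\theta\alpha\}$ by a generic hyperplane and then verify $\langle\alpha,(\theta\beta)^\vee\rangle=0$ from the eigenspace decomposition $\alpha=\alpha_t+\alpha_a$ together with the vanishing of $\rho_{\mathbb R}(\alpha^\vee)$ and $\rho_{i\mathbb R}(\alpha^\vee)$. But those two vanishing conditions only say that $\alpha^\vee_a$ is orthogonal to $\rho_{\mathbb R}$ and $\alpha^\vee_t$ is orthogonal to $\rho_{i\mathbb R}$; they give no information about $\langle\alpha_t,\beta^\vee_t\rangle$ or $\langle\alpha_a,\beta^\vee_a\rangle$ for two different roots $\alpha,\beta\in R_C$, which is what you would need. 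Moreover, an arbitrary bipartition of the $\theta$-orbits will not produce root \emph{subsystems}: closure under addition fails in general.

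The correct argument is structural rather than a direct pairing computation. The Levi subgroup $L$ of $G$ cut out by $R_C$ is $\theta$-stable and defined over $\mathbb R$, and $H$ has no real or imaginary roots in $L$. Hence $H$ is simultaneously a fundamental Cartan (centralizer of $T_0$ is $H$) and a maximally split Cartan (centralizer of $A$ is $H$) in $L$. This forces the derived group of $L(\mathbb R)$ to be locally a complex reductive group regarded as a real group, and for such a group the root system visibly decomposes as $R^L_C \sqcup \theta(R^L_C)$ with the two pieces orthogonal (they are the ``holomorphic'' and ``antiholomorphic'' roots). Equivalently, one shows directly that $\theta$ cannot preserve any irreducible component of $R_C$: an order-two automorphism of an irreducible root system always has a root in its $+1$- or $-1$-eigenspace, which would be imaginary or real. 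Once (1) is established this way, your arguments for (2)--(4) go through, including the commutativity you need in (4).
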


In general we follow a long tradition and write the group structure on
one-dimensional characters {\em additively}, even though values of the
character are being multiplied. In Lie theory this is particularly
attractive when we are identifying certain characters (like roots)
with linear functionals on the Lie algebra; we write addition on
either side of this identification, even though there is an
exponential map involved.  But in the following lemma we are taking
absolute values of some roots, so the tradition would require us to
write addition of absolute values when we mean multiplication. This
exceeds our (admittedly large) capacity to tolerate abuse of
notation. We will therefore write group characters multiplicatively in
this lemma. 
\begin{lemma}\label{lemma:rhoimcover} In the setting of Definition
  \ref{def:rhoim}, suppose $R^+$ is a choice of
  positive root system.
\begin{enumerate}
\item Define a character
$2\rho_{\abs} = 2\rho_{\abs}(R^+)$ of $H$ by
$$2\rho_{\abs}(h) = \left(\prod_{\alpha \in R^+_{\mathbb R}}|\alpha(h)|\right)
\left(\prod_{\beta \in R^+_{i\mathbb R}}\beta(h)\right) \left(\prod_{\delta \in
    R^+_{\mathbb C}} \delta(h)\right)$$
This character differs only by factors of $\pm 1$ from $2\rho$; in
particular, it has the same differential as $2\rho$. 
\item In the definition of the character $2\rho_{\abs}$, each factor in
  the first product has a distinguished square root $|\alpha|^{1/2}$;
  and the factors in the third product fall into pairs
  $\{\delta,\sigma_0(\delta)\}$ or $\{\delta, -\sigma_0(\delta)\}$. The
  product of each such pair of factors has a distinguished square root
  (Definition \ref{def:rhoim}).  Consequently the quotient
  $[2\rho_{\abs}][2\rho_{i\mathbb R}]^{-1}$ (the second factor being
  the inverse of the product of
  the positive imaginary roots) has a distinguished square root.
\item The {\em $\rho_{\abs}$-double cover of $H$} (Definition
  \ref{def:ncover}) exists, and is naturally isomorphic to 
the $\rho_{i\mathbb R}$-cover defined by the ``half sum'' (that is, the
square root of the product) of positive
imaginary roots.  

\item There is a homomorphism
$$\epsilon_{i{\mathbb R}} \colon W\left(G,H\right)
\rightarrow \{\pm 1\},$$
$$\epsilon_{i{\mathbb R}}(w) = (-1)^{\#\{\text{imaginary roots changing
    sign under $w$}\}}.$$
\item Suppose $(R^+)'$ is a second choice of positive roots. Define 
$$S = \{\alpha \in R^+ \mid \alpha \notin (R^+)'\} = S_{\mathbb R}
\cup S_{i\mathbb R} \cup S_{\mathbb C} $$
$$2\rho_{\abs}(S) = \prod_{\alpha \in S_{\mathbb R}} |\alpha| \cdot
\prod_{\beta \in S_{i\mathbb R} \cup S_{\mathbb C}} \beta \quad \in \ \ \widehat{H}.$$
Then $2\rho'_{\abs} = 2\rho_{\abs}\cdot[2\rho_{\abs}(S)]^{-2}$.
\item The $\rho_{\abs}$ and $\rho'_{\abs}$ double covers of $H$
  (Definition \ref{def:ncover}) are canonically isomorphic.  We may
  therefore speak of ``the'' $\rho_{\abs}$ double cover of $H$
  without reference to a particular choice of positive root system.
\end{enumerate}
\end{lemma}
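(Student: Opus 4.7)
My plan is to dispatch parts (1), (4), (5), (6) by direct computation, concentrating the real work in parts (2) and (3).

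For (1), the point is that each real root $\alpha$ and its absolute value $|\alpha|$ differ only by the composition of $\alpha$ with the sign character on $\mathbb R^\times$, while imaginary and complex roots appear unchanged in the two products. Thus $2\rho_{\abs}\cdot(2\rho)^{-1}$ is a product of $\{\pm 1\}$-valued characters of $H$, which is itself $\{\pm1\}$-valued and so has trivial differential. Part (4) I would handle by noting that $W(G,H)$ preserves $R_{i\mathbb R}$ as a set (Definition \ref{def:rhoim}), so acts on $\mathbb R\cdot R_{i\mathbb R}$ by orthogonal transformations permuting finitely many vectors; the determinant is $\pm1$ and obviously multiplicative. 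Fixing any $R^+_{i\mathbb R}$, the standard Coxeter-theoretic length computation shows this determinant equals $(-1)^{\#\{\beta\in R^+_{i\mathbb R}\,:\,w\beta\notin R^+_{i\mathbb R}\}}$, matching the stated formula and showing independence of the chosen positive system.

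Part (2) is the main technical step. Each real-root factor $|\alpha|$, being positive-real-valued, has the canonical positive square root $|\alpha|^{1/2}\colon H\to\mathbb R^+$. For the complex-root factors, I would use the four-tuple orbit structure $\{\delta,\sigma_0(\delta),-\delta,-\sigma_0(\delta)\}$ of Definition \ref{def:rhoim}: each such $\sigma_0$-orbit meets $R^+_{\mathbb C}$ either in $\{\delta,\sigma_0(\delta)\}$, whose product has a distinguished positive square root $\frac12(\delta+\sigma_0(\delta))$, or in $\{\delta,-\sigma_0(\delta)\}$, whose product has the distinguished unitary square root $\frac12(\delta-\sigma_0(\delta))$. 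Multiplying these pairwise square roots together with the $|\alpha|^{1/2}$'s yields a genuine character $\phi\in\widehat H$, and by construction $\phi^2$ is precisely $2\rho_{\abs}\cdot 2\rho_{i\mathbb R}^{-1}$, once one notes that the imaginary-root factor of $2\rho_{\abs}$ is $2\rho_{i\mathbb R}$ itself. Part (3) is then immediate: this $\phi$ satisfies $2\rho_{\abs}=2\rho_{i\mathbb R}+2\phi$ in additive notation, so the last assertion of Definition \ref{def:ncover} provides the canonical isomorphism of double covers via $(h,z)\mapsto(h,z\phi(h))$.

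For (5), I would decompose $S=S_{\mathbb R}\sqcup S_{i\mathbb R}\sqcup S_{\mathbb C}$ and use $(R^+)'=(R^+\setminus S)\sqcup(-S)$. Because $|-\alpha|=|\alpha|$, the real-root part of the ratio $2\rho'_{\abs}\cdot 2\rho_{\abs}^{-1}$ cancels, while each imaginary or complex root in $S$ replaces $\beta$ by $\beta^{-1}$ and so contributes $\beta^{-2}$. The resulting ratio agrees with $[2\rho_{\abs}(S)]^{-2}$ modulo the square $[\prod_{S_{\mathbb R}}|\alpha|]^{2}$, which is precisely the information needed for (6): the ratio is a square of a character $\psi\in\widehat H$, so Definition \ref{def:ncover} again supplies a canonical isomorphism of double covers. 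The main obstacle is part (2): one must carefully track how positive complex roots pair off under $\sigma_0$ in the two possible ways, and verify that the distinguished square roots from Definition \ref{def:rhoim} (positive-real in one case, unitary in the other) combine coherently with the $|\alpha|^{1/2}$'s into a single character of $H$ whose square is the claimed quotient.
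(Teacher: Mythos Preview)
Your overall strategy is correct and is exactly what the paper intends (the paper simply says the proof is parallel to that of Lemma~\ref{lemma:wrho} and omits the details). Parts (1)--(4) are fine, and your treatment of (2)--(3) via the distinguished square roots from Definition~\ref{def:rhoim} is the right argument.

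There is, however, a slip in your computation for (5). You write ``because $|-\alpha|=|\alpha|$, the real-root part of the ratio cancels.'' This confuses additive and multiplicative notation: for a real root $\alpha$, the character $-\alpha$ is $h\mapsto \alpha(h)^{-1}$, so $|(-\alpha)(h)|=|\alpha(h)^{-1}|=|\alpha(h)|^{-1}$, not $|\alpha(h)|$. Hence replacing $\alpha\in S_{\mathbb R}$ by $-\alpha$ contributes a factor $|\alpha|^{-2}$ to the ratio, just as for the imaginary and complex roots. With this correction the real-root part does \emph{not} cancel; instead it supplies precisely the factor $\bigl[\prod_{\alpha\in S_{\mathbb R}}|\alpha|\bigr]^{-2}$ you thought was extraneous, and the identity $2\rho'_{\abs}=2\rho_{\abs}\cdot[2\rho_{\abs}(S)]^{-2}$ holds on the nose. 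Part (6) then follows directly from the last assertion of Definition~\ref{def:ncover}, with no ``modulo a square'' caveat needed.
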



The proof of Lemma \ref{lemma:rhoimcover} is parallel to that of Lemma
\ref{lemma:wrho}.  We omit the details.

\begin{definition} \label{def:Weylden} Suppose we are in the setting
  of Definition \ref{def:rhoim}. Write $\widetilde H$ for the
  $\rho_{\abs}$-double cover of $H$,
$$1 \rightarrow \{1,-1\} \rightarrow \widetilde{H}
\rightarrow H \rightarrow 1$$
as in Lemma \ref{lemma:rhoimcover}.  Fix a positive root system $R^+$ for
$H$ in ${\mathfrak g}$, and therefore a character
$$\rho_{\abs} \colon \widetilde{H} \rightarrow {\mathbb C}^\times.$$
A {\em Weyl denominator function} is the level one (that is, the
central element $-1$ acts by $-1$) function 
$$D(\widetilde{h}) = \rho_{\abs}(\widetilde{h})\cdot \left(\prod_{\alpha \in
  R_{\mathbb R}^+} |1 - \alpha(h)^{-1}|\right)\left(\prod_{\beta\in R^+
 - R^+_{\mathbb R}}(1-\beta(h)^{-1})\right).$$
(Here $h\in H$ is the image of $\widetilde{h}$ under the covering
map.) From this formula it is clear that {\em $D(\widetilde{h}) \ne 0$
  if and only if $\widetilde{h}$ is regular}; that is, if and only if
$\alpha(h) \ne 1$ for all $\alpha \in R$.
\end{definition}

An easy calculation shows that replacing $R^+$ by a different positive
system $(R^+)'$ changes $D$ by a sign (namely $-1$ to the number of
non-real roots that change sign). Consequently there are at most two possible
Weyl denominator functions on each Cartan subgroup, differing by a sign.  

The absolute values appearing in the product above are in some sense
not so important. We will be using the Weyl denominator to describe
character formulas, and these formulas take their nicest form on the subset
$$H^+ = \{h \in H \mid\ |\alpha(h)| \ge 1 \ \ 
(\alpha \in R^+_{\mathbb R})\}.$$
(Every $W(R_{\mathbb R})$-orbit in $H$
must meet $H^+$.)  On $H^+$, all of the factors $1 - \alpha(h)^{-1}$ (for
$\alpha\in R^+_{\mathbb R}$) are nonnegative; so the absolute value
could be omitted there.

With the Weyl denominator functions in hand, we can begin to
understand the nature of the parameters appearing in the Langlands
classification of irreducible representations.  Perhaps the easiest
way to describe the Langlands classification (if not the easiest way
to prove it) is by 
Harish-Chandra's theory of distribution characters.  In order to talk
about distributions on manifolds, where there is no natural
identification of functions with smooth measures, it is useful to be a
little careful. Recall first of
all that a {\em test density} on a manifold $M$ is a compactly
supported complex-valued smooth measure $T$ on $M$. A {\it
  generalized function on $M$} is a continuous linear functional on
the space of test densities.  For example, any locally $L^1$
measurable function $f$ on $M$ defines a generalized function, by the
rule
$$f(T) = \int_M f(m)dT(m).$$

\begin{definition} 
\label{def:distnchar} 
Suppose $(\pi, V)$ is a continuous representation
  of a Lie group $G$.  Every test density $T$ on $G$ defines a
  continuous linear operator
$$\pi(T) = \int_G \pi(g) dT(g) \in \End(V).$$
These operators are typically much ``nicer'' (in terms of decay
properties) than the operators $\pi(g)$, because the integration
against $T$ is a kind of smoothing.  We say that $\pi$ has a {\em
  distribution character} if the operators $\pi(T)$ are all trace
class, and the linear map
$$\Theta_\pi(T) = \tr(\pi(T))$$
is continuous on test densities. (This requires that $V$ be a space on
which there is a reasonable theory of traces of endomorphisms; for our
purposes Hilbert spaces will suffice.)  In this case the generalized
function $\Theta_\pi$ is called the {\em distribution character} of
$\pi$. 
\end{definition}
Notice that if $\pi$ is finite-dimensional, then $\Theta_\pi$
is the ordinary continuous function $\tr(\pi(g))$ on $G$ . If $\pi$ is
infinite-dimensional and unitary, then the unitary operators $\pi(g)$
are never of trace class, so $\tr(\pi(g))$ is not defined for any
$g$. That is why the following theorem of Harish-Chandra is
remarkable: it attaches a meaning to $\tr(\pi(g))$ for most $g$.

\begin{theorem}[Harish-Chandra and others]\label{thm:distnchar}
  Suppose $G$ is a 
  real reductive Lie group, and $(\pi,V)$ is a quasisimple
  representation of finite length on a Hilbert space $V$.
\begin{enumerate}
\item For every test density $T$, the operator $\pi(T)$ is of trace
  class. The resulting linear functional on test densities is
  continuous, so that the character
$$\Theta_\pi(T) =_{\text{def}} \tr(\pi(T))$$
is a well-defined generalized function on $G$.
\item The generalized function $\Theta_\pi$ is given by a locally
  $L^1$ function on $G$, which is analytic on the open subset of
  regular semisimple elements.
\item Suppose $H \subset G$ is a Cartan subgroup. Fix a set
  $R^+_{\mathbb R}$ of positive roots for the system $R_{\mathbb R}$
  of real roots of $H$ in ${\mathfrak g}$.  Since each real root takes
  real values on $H$, we can define
$$H^+ = \{h \in H \mid |\alpha(h)| \ge 1 \ (\alpha \in R^+_{\mathbb
  R}) \}.$$
Write
  $\widetilde{H}$ for the $\rho_{\abs}$ double cover of $H$ (Lemma
  \ref{lemma:rhoimcover}), and $D$ for a Weyl denominator function on
  $\widetilde H$ (Definition \ref{def:Weylden}). Then the product
$D\cdot \Theta_\pi|_H$ (an analytic function on the set of
regular elements of $\widetilde{H}$) is equal {\em on
  $\widetilde{H}^+_{\text{reg}}$} to a finite integer combination of
characters of ${\widetilde{H}}$:
$$\Theta_\pi(\tilde h)= \Big(\sum_{\gamma \in \tildehatk{H}{1}}
  a_\pi(\gamma) \gamma(\tilde h)\Big)\Big/D(\tilde h) \qquad (\tilde h \in
{\widetilde H}^+_{\text{reg}}).$$ 
\end{enumerate}
\end{theorem}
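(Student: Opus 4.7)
The plan is to prove the three assertions in sequence, each relying on a cornerstone of Harish-Chandra's theory: the trace-class property of $\pi(f)$ for smooth compactly supported $f$, the regularity theorem for invariant eigendistributions, and the radial-part computation of elements of $\mathfrak{Z}(\mathfrak{g})$ on a Cartan subgroup. I expect part (2) to be the real obstacle; parts (1) and (3) are more formal once (2) is available.

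For (1), fixing a Haar measure on $G$ identifies test densities with $C_c^\infty(G)$, so it suffices to show that $\pi(f)$ is trace class with trace depending continuously on $f$. Finite length lets me replace $\pi$ by its irreducible quasisimple constituents; on each the Casimir $\Omega$ acts on $\pi^\infty$ by a scalar. Building a parametrix $E$ for $(1-\Omega)^N$ for large $N$, one writes $\pi(f) = \pi(E) \circ \pi((1-\Omega)^N f)$ (using bi-invariance of $\Omega$) and decomposes both factors by $K$-isotypic components. Harish-Chandra's bounds on $K$-multiplicities in $\pi$ combined with the rapid decay across $K$-types produced by $(1-\Omega)^{-N}$ exhibit $\pi(f)$ as a product of Hilbert-Schmidt operators, hence trace class, with trace norm controlled continuously by seminorms of $f$.

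For (2), the relation $\pi^\infty(z) = \chi_\pi(z)\Id$ translates into the distributional identity $z\cdot\Theta_\pi = \chi_\pi(z)\Theta_\pi$, where $z\in\mathfrak{Z}(\mathfrak{g})$ acts as a bi-invariant differential operator and $\Theta_\pi$ is conjugation-invariant. Near a regular semisimple point, the Casimir together with transverse invariance cuts out an elliptic system, so Weyl's lemma forces $\Theta_\pi$ to be analytic on $G^{\mathrm{rss}}$. The hard part is Harish-Chandra's \emph{regularity theorem}, asserting that $\Theta_\pi$ extends across the singular set to a locally $L^1$ function: this is proved by induction on semisimple rank, descending to Levi subgroups via radial-part formulas and controlling behaviour at singular points through Harish-Chandra's jump relations for the normal derivatives of $D \cdot \Theta_\pi|_{\widetilde H}$. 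I would quote this deepest step from Harish-Chandra's ``Invariant Eigendistributions'' series rather than reproduce the argument.

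For (3), fix a $\theta$-stable Cartan subgroup $H$, a positive system $R^+$, and the Weyl denominator $D$ on $\widetilde H$ from Definition \ref{def:Weylden}. By (2) the function $\Phi = D \cdot \Theta_\pi|_{\widetilde H^{\mathrm{reg}}}$ is analytic. Harish-Chandra's radial-part formula shows that for each $z\in \mathfrak{Z}(\mathfrak{g})$,
$$\xi(z)\cdot\bigl(D\cdot\Theta_\pi|_{\widetilde H^{\mathrm{reg}}}\bigr) = \chi_\pi(z)\cdot\bigl(D\cdot\Theta_\pi|_{\widetilde H^{\mathrm{reg}}}\bigr),$$
where $\xi$ is the Harish-Chandra isomorphism of Definition \ref{def:HChom}; this is the key point, because on $\widetilde H^{\mathrm{reg}}$ the operator $\xi(z)$ is a \emph{constant-coefficient} differential operator on $\mathfrak{h}$. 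Analytic solutions on a connected open subset of $\widetilde H$ to the whole such system are finite linear combinations of characters $\gamma$ of $\widetilde H$ whose differentials lie in the Weyl orbit $W(G(\mathbb{C}),H(\mathbb{C}))\cdot \lambda_\pi$ representing $\chi_\pi$ by Theorem \ref{thm:HChom}(3), and each such $\gamma$ lies in $\tildehatk{H}{1}$ since $D$ is itself level one. This yields the finite expansion on $\widetilde H^+_{\mathrm{reg}}$. Integrality of the coefficients $a_\pi(\gamma)$ follows by writing the Harish-Chandra module of $\pi$ as an integer combination of standards in the Grothendieck group: for parabolically induced standards the Weyl character formula on the inducing Levi yields integer coefficients manifestly, and these integrality properties descend through the finite change-of-basis matrix to $\pi$.
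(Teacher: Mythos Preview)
Your treatment of parts (1) and (2) matches the paper's: a parametrix/Hilbert--Schmidt argument for trace class, elliptic regularity on $G^{\mathrm{rss}}$, and a citation of Harish-Chandra's regularity theorem for the locally $L^1$ statement. No complaints there.

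In part (3) there is a genuine gap. The radial-part formula does give $\xi(z)\cdot\Phi = \chi_\pi(z)\Phi$ with $\Phi = D\cdot\Theta_\pi|_{\widetilde H^{\mathrm{reg}}}$, but the joint eigenspace of this constant-coefficient system on $\mathfrak{h}$ has dimension $|W|$, spanned in general by \emph{exponential--polynomial} functions $p(\tilde h)\,e^{w\lambda(\tilde h)}$, not just by pure characters. (The pure exponentials span only when the infinitesimal character is regular.) Your sentence ``analytic solutions \dots\ are finite linear combinations of characters'' asserts exactly what remains to be proved. The paper flags this point explicitly: Harish-Chandra's differential-equation argument yields exponential--polynomial solutions, and the fact that the polynomial factors are constant is a separate result of Fomin--\v{S}apovalov. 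You need either to invoke that, or to supply another argument ruling out the nilpotent part.

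Your integrality argument also diverges from the paper's, and in a way that risks circularity. You propose to write $[\pi]$ as an integer combination of standards in the Grothendieck group and read off integrality from the induced-character formula on each standard; but in this paper the Langlands classification (Theorem~\ref{thm:LC}) and the construction of standards come \emph{after} the present theorem, and the integrality of the numerator for a standard (ultimately the discrete-series character formula on the compact Cartan of the Levi) is itself nontrivial. The paper instead gives a direct cohomological realization: the numerator on $\widetilde H^+_{\mathrm{reg}}$ is the character of the virtual $\widetilde H$-module $\mathbb{C}_{\rho_{\abs}^-}\otimes\sum_p(-1)^p H^p(\mathfrak{n}^-,V_K^\infty)$, which is finite-dimensional by Hecht--Schmid, so integrality is automatic. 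That route avoids any forward reference and simultaneously disposes of the polynomial-factor issue, since a genuine character has no polynomial growth.
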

The subscript $1$ on $\tildehatk{H}{1}$ 
(Definition \ref{def:ncover}) is ``level $1$,'' meaning that the
characters take the value $-1$ on the central element $-1$ of the
cover. Therefore both the numerator and denominator are ``level $1$''
functions, so the quotient is ``level $0$'' and defines a function on
$H$.
\begin{proof}
  Part (1) is in \Cite{HCIII}*{pages 243--245}. The analyticity in part
  (2) is the main result of \Cite{HCchar}; now that distribution
  theory is a familiar tool, (2) is a routine ``elliptic regularity''
  argument. But the proof that $\Theta_\pi$ actually coincides with
  integration against this function is much deeper, and was achieved
  only in \Cite{HClocL1}. 

Part (3) is difficult to attribute properly. The fact that the
numerator looks locally like a trigonometric polynomial on $\widetilde
H$ is due to Harish-Chandra, and is proved in \Cite{HCchar}.
Harish-Chandra's arguments, which proceed by solving differential
equations, allow also terms with a polynomial dependence on
$\widetilde h$. That these polynomials are always constant was first
proved in \Cite{FS}.

Proving that the coefficients in (3) are actually integers is best
accomplished by realizing the numerator as the trace of the action of
$\tilde h$ on a virtual representation of $\widetilde H$.  To write
down such a virtual representation, fix first of all a Cartan
involution $\theta$ on $G$ preserving $H$. Let ${\mathfrak n}^- \subset
{\mathfrak g}$ be the sum of the {\em negative} root spaces for some
positive root system $R^+ \supset R^+_{\mathbb R}$. Then the Lie
algebra cohomology spaces $H^p({\mathfrak n}^-,V_K^\infty)$ with
coefficients in the Harish-Chandra module of $V$ (Definition
\ref{def:HCmod}) are finite-dimensional $({\mathfrak h},H\cap
K)$-modules. (The finite-dimensionality requires proof; this is
provided in \Cite{HS} and \Cite{IC2}.)  According to Theorem
\ref{theorem:quasisimple}, finite-dimensional $({\mathfrak h},H\cap
K)$-modules may be identified with representations of $H$.  The
numerator of the character formula in (3) is the character of the
virtual representation of $\widetilde H$
$${\mathbb C}_{\rho_{\abs}^-} \otimes\left(\sum_p (-1)^p H^p({\mathfrak
    n}^-,V_K^\infty) \right).$$
Here ${\mathbb C}_{\rho_{\abs}^-}$ is the space of the defining
character $\rho_{\abs}^-$ of $\widetilde H$ (Lemma
\ref{lemma:rhoimcover}), with respect to the positive root system
consisting of the roots $R^-$ in ${\mathfrak n}^-$.

The first very general result of this nature is the main theorem of
\Cite{HS}; that result allows one to reduce (3) to the
case of a compact Cartan subgroup $H$.  A parallel result
\Cite{IC2}*{Theorem 8.1} allows reduction of (3) to the
case of a split Cartan subgroup. Applying these two results in
succession (in either order!) proves (3).
\end{proof}


\section{Langlands classification and the nonunitary
  dual}\label{sec:langlands} 
\setcounter{equation}{0} 

We can now give a precise formulation of the classification of the
nonunitary dual $\widehat G$.  This is extremely important, since we
are going to explain how to compute the {\em unitary} dual as a subset
of the {\em nonunitary} dual. The most familiar case of the Langlands
classification is for compact connected Lie groups: in that case the
irreducible representations are in one-to-one correspondence with
orbits of the Weyl group on the lattice of characters of a maximal
torus. (The correspondence sends an irreducible representation to its
set of extremal weights.)  We will use a slightly different version of
this correspondence, in which an irreducible representation of a
compact group corresponds to the (Weyl group orbit) of terms in the
numerator of the Weyl character formula. This modified correspondence
extends to real reductive groups fairly easily and completely.  Just
as for compact groups, it says (approximately) that the irreducible
representations are indexed by orbits of the Weyl groups on characters
of (the $\rho$-coverings of) maximal tori.  The implementation
(approximately) says that a representation corresponds to some character
appearing in the numerator of the Weyl character formula (Theorem
\ref{thm:distnchar}) for the representation. 

What is needed to make this precise is a way to pick out a
``preferred'' term in the Weyl character formula. (In the case of
compact $G$, all the terms belong to the same Weyl group orbit, so no
choice is necessary.) Langlands' idea in the noncompact case was to
pick a term with largest growth at infinity as the preferred term.
His original version in \Cite{LC} made a clean and precise connection
between growth and the classification; but at the same time lumped
together several different representations.  Subsequent work (notably
\Cite{KZ}) refined this into a precise classification of
representations, but at the expense of obscuring slightly the
connection with character formulas. Here is a statement.  There is a
closely related result on page 234 of \Cite{Ross}; but the emphasis
there is on reducibility of standard tempered representations, so it
is difficult to extract exactly what we want. The best reference we
know is \Cite{ABV}; but the formulation here is a bit different. The
main change is that we have modified the covering of $H$.  This change
means that we no longer need to keep track of a choice of positive
real roots; the definition of Langlands parameter here is accordingly
simpler than in \Cite{ABV}.
\begin{theorem}[Langlands classification; see \Cite{ABV}*{Chapter
    11}]\label{thm:LC} Suppose
  $G$ is the group of real points of a connected complex
  reductive algebraic group (cf.~\eqref{se:reductive}). Then there
  is a one-to-one correspondence between infinitesimal equivalence
  classes of irreducible quasisimple representations of $G$
  (Definition \ref{def:HCmod}) and $G$-conjugacy 
  classes of triples (``Langlands parameters'')
$$\Gamma = (H, \gamma, R^+_{i\mathbb R}),$$
subject to the following requirements.
\begin{enumerate}
\item The group $H$ is a Cartan subgroup of $G$: the
  group of real points of a maximal torus of $G({\mathbb C})$ defined
  over ${\mathbb R}$.
\item The character $\gamma$ is level one character of the $\rho_{\abs}$
  double cover of $H$ (Definition \ref{def:ncover}, Lemma
  \ref{lemma:rhoimcover}). Write $d\gamma \in {\mathfrak h}^*$ for
  its differential.%
\item The roots $R^+_{i\mathbb R}$ are a positive system for the
  imaginary roots of $H$ in ${\mathfrak g}$. 
\item The weight $d\gamma$ is weakly dominant for $R^+_{i\mathbb R}$.
\item If $\alpha^\vee$ is real and $\langle d\gamma,
  \alpha^\vee\rangle= 0$, then $\gamma_{\mathfrak q}(m_{\alpha}) = +1$
  (Definition \ref{def:rhoim}).  (Here $\gamma_{\mathfrak{q}}$ is a
  ``$\rho$-shift'' of $\gamma$ defined in \eqref{eq:gammaq} below.)
\item If $\beta$ is simple for $R^+_{i\mathbb R}$ and
  $\langle d\gamma,\beta^\vee\rangle = 0$, then $\beta$ is noncompact.
\end{enumerate}
This one-to-one correspondence may be realized in the following
way. Attached to each (equivalence class) of Langlands parameters
$\Gamma$ there is a ``standard (limit) $({\mathfrak g}_0,K)$-module''
$I(\Gamma) = I_{\quo}(\Gamma)$, which has finite length. This
module has a unique irreducible quotient module $J(\Gamma)$; the
correspondence is $\Gamma \leftrightarrow J(\Gamma)$.  (Alternatively,
there is a standard module $I_{\sub}(\Gamma)$ having $J(\Gamma)$
as its unique irreducible submodule. The two standard modules
$I_{\quo}(\Gamma)$ and $I_{\sub}(\Gamma)$ have the same
composition factors and multiplicities.)

The modules $J(\Gamma)$ and $I(\Gamma)$ both
have infinitesimal character $d\gamma \in {\mathfrak h}^*$.

\end{theorem}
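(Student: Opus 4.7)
The plan is to follow the classical strategy of Langlands and Knapp--Zuckerman, but adapted to the $\rho_{\abs}$-cover formulation used here. The central construction is parabolic induction from a \emph{cuspidal} parabolic subgroup attached to the Cartan $H$.

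First I would decompose the data geometrically. Write $H \simeq T \times A$ using Proposition \ref{prop:toruschars}, and decompose $\gamma = (\lambda,\nu)$ accordingly, where $\lambda$ is a character of (a cover of) $T$ and $\nu \in \mathfrak{a}^*$. Choose a positive system of real roots compatible with the chamber containing (the real part of) $\nu$; together with $R^+_{i\mathbb{R}}$ and a choice on complex roots this gives a cuspidal parabolic $P = MAN$ with $M \supset T$ and $T$ compact-mod-center in $M$. On $M$ all roots of $T$ are imaginary, so the data $(T, \lambda, R^+_{i\mathbb{R}})$ are exactly Harish-Chandra's discrete-series/limit-of-discrete-series parameters. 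Conditions (4) and (6), via the Dirac/Blattner/Schmid realization, yield a nonzero (limit of) discrete series $\pi_M(\lambda)$ of $M$; condition (6) ensures nonvanishing at the boundary of the dominant chamber. Define
\[
I_{\quo}(\Gamma) \;=\; \Ind_P^G\bigl(\pi_M(\lambda) \otimes e^{\nu}\bigr)
\]
(normalized induction; $I_{\sub}$ uses the opposite parabolic). The infinitesimal character claim follows from the standard computation: the Harish-Chandra homomorphism for parabolic induction identifies the infinitesimal character with $d\lambda + \nu = d\gamma$, which is where using the $\rho_{\abs}$-cover pays off -- the absolute-value $\rho$-shift built into the cover cancels the $\rho_N$-shift in normalized induction, so no auxiliary positive real root system is needed in the parameter.

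Next I would prove that $I_{\quo}(\Gamma)$ has a unique irreducible quotient $J(\Gamma)$. The key is Langlands' leading-exponent/asymptotics argument: among the exponents of $I_{\quo}(\Gamma)$ along $A$, the exponent $\nu$ (coming from the inducing character) is the unique one whose real part is weakly dominant with respect to $N$. Any proper submodule must have strictly smaller leading exponent, so the sum of all proper submodules is still proper, yielding a unique maximal proper submodule and hence a unique irreducible quotient. The delicate case is when $\RE\nu$ lies on a wall, i.e.\ there exist real roots $\alpha$ with $\langle d\gamma,\alpha^\vee\rangle = 0$; here one uses the Knapp--Stein/Knapp--Zuckerman analysis of $R$-groups, and condition (5) selects exactly one irreducible constituent from each reducible tempered module, restoring uniqueness. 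This is the step I expect to be the main obstacle, because it requires the full intertwining-operator machinery for limits of discrete series and the precise matching between the nonvanishing condition on $m_\alpha$ and the structure of the $R$-group.

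For surjectivity, I would take an irreducible $(\mathfrak{g}_0,K)$-module $X$ and use Harish-Chandra's subquotient theorem to embed it in some induced module; then Casselman's subrepresentation theorem (or the asymptotic/leading-exponent analysis on matrix coefficients) singles out a cuspidal parabolic $P = MAN$, a tempered representation of $M$ and an $A$-character $\nu$ with $\RE\nu$ weakly dominant, such that $X \simeq J(\Gamma)$ for the resulting $\Gamma$. The tempered representation is then identified, by Harish-Chandra's classification of discrete series and its limit extension (Knapp--Zuckerman), with some $\pi_M(\lambda)$ satisfying (4) and (6). For injectivity, suppose $J(\Gamma) \simeq J(\Gamma')$; comparison of infinitesimal characters forces $d\gamma$ and $d\gamma'$ to be Weyl-conjugate; comparison of leading exponents and the support of the Plancherel/distribution character (Theorem \ref{thm:distnchar}) forces the cuspidal parabolics, hence the Cartans $H$ and $H'$, to be $G$-conjugate; the tempered data match by uniqueness of the discrete series classification, and condition (5) prevents two distinct Langlands parameters from selecting the same irreducible quotient of a reducible tempered module. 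Finally, the assertion that $I_{\quo}(\Gamma)$ and $I_{\sub}(\Gamma)$ have the same composition factors with multiplicities follows from the fact that they are related by an intertwining operator generically an isomorphism, so they have equal distribution characters, which determine composition series in the Grothendieck group.
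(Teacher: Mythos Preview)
Your outline is correct and matches the paper's approach: the paper does not give a self-contained proof but cites \cite{ABV}, Chapter~11, and then in \eqref{se:ds} sketches exactly the construction you describe---decompose $\gamma=(\Lambda,\nu)$, build the limit of discrete series $D(\Lambda)$ on $M$ using conditions (4) and (6), induce from a parabolic $P=MAN$ making $\RE\nu$ weakly dominant, and interpret condition (5) as selecting the realization on the most compact Cartan. Your identification of the Knapp--Zuckerman analysis as the main obstacle at the walls, and of the $\rho_{\abs}$-cover as what eliminates the auxiliary choice of positive real roots, also agrees with the paper's commentary preceding the theorem.
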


\begin{definition}\label{def:langlandsparam}
In the setting of Theorem \ref{thm:LC}, the set of $G$-conjugacy
classes of  triples $\Gamma =
(H,\gamma,R^+_{i{\mathbb R}})$ satisfying conditions (1)--(6) is
called the set of {\em Langlands parameters for $G$}, and written $\Pi(G)$.
\end{definition}
Occasionally we will need some generalizations of the Langlands
parameters in Theorem \ref{thm:LC}. Conditions (1)--(3) are
essential to make the construction of $I(\Gamma)$ described below;
conditions (4)--(6) are imposed so that $I(\Gamma)$ has additional
nice properties.  We record here some basic facts about these
generalizations.

\begin{definition}\label{def:genparam} 
In the setting of Theorem \ref{thm:LC}, a triple $\Gamma =
(H,\gamma,R^+_{i{\mathbb R}})$ satisfying conditions (1)--(3) is
called a {\em continued Langlands parameter}.  The set of equivalence
classes of such parameters is written $\Pi_{\cont}(G)$.

Attached to each $\Gamma \in \Pi_{\cont}(G)$ there is a {\em continued
  standard (virtual) $({\mathfrak g}_0,K)$-module $I(\Gamma)$} of
  finite length.  (The construction of $I(\Gamma)$ involves some
  cohomological functors, and in this setting the functors may be
  nonzero in several different degrees. The virtual representation
  $I(\Gamma)$ is the alternating sum of these functors.)

  If in addition $\Gamma$ satisfies condition (4), then $\Gamma$ is
  called a {\em weak Langlands parameter}. The set of weak parameters
  is written $\Pi_{\weak}(G)$.

Under condition (4) there is a vanishing theorem, and one can
construct {\em weak standard $({\mathfrak g}_0,K)$-modules
  $I_{\quo}(\Gamma)$ and $I_{\sub}(\Gamma)$}.  A weak standard module
is always a direct sum of a finite number (possibly zero) of standard
modules. For any weak Langlands parameter $\Gamma$, we can therefore
define {\em the Langlands quotient representation} $J(\Gamma)$ to be
the corresponding direct sum of irreducible representations; this is
the cosocle of $I_{\quo}(\Gamma)$, or equivalently the socle of
$I_{\sub}(\Gamma)$.

If $\Gamma$ does not satisfy condition (4), then we do not know any
reasonable way to define $J(\Gamma)$.
\end{definition}

\begin{subequations}\label{se:ds}
In order to understand where the restrictions on $\Gamma$ come from,
we recall Langlands' construction of $I(\Gamma)$.  Write $MA$
for the Langlands decomposition of the centralizer of $A$ in
$G$.  (This centralizer is the group of real points of a
connected complex reductive algebraic group, but the factors $M$ and
$A$ need not be.  Our chosen perspective would therefore prefer not to
make this factorization, but it is traditional and some of us are
nothing if not hidebound.) The compact group $T$ is a
compact Cartan subgroup of $M$, and the parameters
\begin{equation} \Lambda = (T, \gamma|_{T},
  R^+_{i\mathbb R}) \end{equation}
are (according to conditions (1--4) of Theorem \ref{thm:LC})
Harish-Chandra parameters for a limit of discrete series 
representation $D(\Lambda) \in \widehat{M}$.  A construction of
$D(\Lambda)$ by Zuckerman's cohomological induction functors is
a special case of Theorem \ref{thm:VZrealiz}. If ``weakly
dominant'' is strengthened to ``strictly dominant'' in condition (4), we get a
discrete series representation.  If condition 4 is dropped, then the
vanishing-except-in-one-degree statement of Theorem \ref{thm:VZrealiz}
can fail, so $D(\Lambda)$ becomes only a virtual representation
(see Definition \ref{def:genparam}).

For a weak parameter $\Gamma$ (that is, assuming condition (4))
condition (6) is exactly what is required to make 
$D(\Lambda)$ nonzero. (The fact that $D(\Lambda) = 0$ if
$d\lambda$ vanishes on a compact simple coroot is the simpler
of the eponymous character identities of \Cite{Sch2}.) So condition (6)
is needed to get a nonzero standard representation.

Now write
\begin{equation*}
\nu = \gamma|_A \in \widehat A
\end{equation*}
for the (possibly nonunitary) character of $A$ defined by $\Gamma$.
(The character $2\rho_{i{\mathbb R}}$ (sum of the positive imaginary
roots) is trivial on $A$. According to Lemma
\ref{lemma:rhoimcover}(3), it follows that the level one character 
$\gamma \in \tildehatR{H}$ 
may be regarded as a pair $(\lambda,\nu)$, with $\lambda \in \tildehatR{T}$
and $\nu \in \widehat{A}$.) 

Choose a parabolic subgroup $P = MAN$ of
$G$ making (the real part of) $\nu$ weakly dominant for
the weights of ${\mathfrak a}$ in ${\mathfrak n}$. Then the standard
representation of Theorem \ref{thm:LC} may be realized as
\begin{equation*}
I_{\quo}(\Gamma) = \Ind_{P}^{G} D(\Lambda)
\otimes \nu \otimes 1
\end{equation*}
(The dominance requirement does not specify $P$
uniquely, but all choices give isomorphic standard representations.
Replacing $P$ by the opposite parabolic subgroup $P^{\text{op}}$
replaces $I_{\quo}(\Gamma)$ by $I_{\sub}(\Gamma)$.)
If condition (5) fails in Theorem \ref{thm:LC}, then there is a real
coroot $\alpha^\vee$ with $\langle
d{\gamma},\alpha^\vee\rangle =0$; equivalently, $\langle\nu,
\alpha^\vee\rangle = 0$.  Possibly after modifying $P$, this coroot
defines a parabolic subgroup 
\begin{equation*}
P_\alpha = M_\alpha A_\alpha N_{\alpha}
\supset P.
\end{equation*}
Induction by stages realizes the standard representation $I(\Gamma)$
as
\begin{equation*}
I_{\quo}(\Gamma) = \Ind_{P_\alpha}^{G}
\left[\Ind_{M(A\cap M_\alpha)(N\cap M_\alpha)}D(\Lambda) \otimes 1
\otimes 1\right] \otimes
  \nu|_{A_\alpha} \otimes 1.
\end{equation*}
The fact that the inner induction involves the trivial character $1$
of $A\cap M_\alpha$ is precisely the condition $\langle\nu,\alpha^\vee\rangle
= 0$. The inner induction to $M_\alpha$ is therefore unitary
induction. We claim that {\em when 
condition (5) above fails (that is, when $\gamma_{\mathfrak
  q}(m_\alpha) = -1$) then this 
induced representation may be written (using the first character
identity of \Cite{Sch2}) as a sum of one or two standard
representations attached to the (more compact) Cartan subgroup
obtained from $H$ by Cayley transform through the real
root $\alpha$}. In case $G = SL(2)$, this claim is just the familiar fact
that the nonspherical principal series representation with continuous
parameter zero is a sum of two limits of discrete series.  More generally, when
the Cartan subgroup $H$ is split, the group $M_\alpha$ is
locally isomorphic to $SL(2,{\mathbb R})$, and the claim again reduces
to $SL(2)$. In general, one can realize the standard representation
$I(\Gamma)$ by cohomological induction from a split Levi subgroup of
$G$ (see Section \ref{sec:stdmods}) and in this way reduce to the case
when $H$ is split. 

The conclusion is that when condition (5) fails, the standard
representation $I(\Gamma)$ may be realized as a direct sum of one or two
representations attached to weak parameters $\Gamma'$ on a Cartan
subgroup with strictly larger compact part.  That is, imposing
condition (5) seeks to make a canonical realization of $I(\Gamma)$: 
one attached to the most compact Cartan subgroup possible. These
remarks do not prove that the realization {\em is} canonical, but this
turns out to be the case.
\end{subequations}

This discussion of the realization of $I(\Gamma)$ suggests the
following definitions; we will use them constantly in analyzing
unitary representations.
\begin{definition}\label{def:dLP} Suppose $G$ is the group of real
  points of a connected complex reductive algebraic group
  (cf.~\eqref{se:reductive}). A {\em discrete Langlands parameter}
  is a triple $\Lambda = (T, \lambda,R^+_{i{\mathbb R}})$,
  subject to the following conditions.
\begin{enumerate}
\item The group $T$ is the maximal compact subgroup of a
  Cartan subgroup $H$ of $G$ (Proposition
  \ref{prop:realtorus}).
\item The character $\lambda$ is a level one character of the $\rho_{i{\mathbb R}}$
  double cover of $T$ (Definition \ref{def:ncover}, Lemma
  \ref{lemma:rhoimcover}). (Write $d\lambda \in {\mathfrak t}^*$ for
  the differential of $\lambda$.)
\item The roots $R^+_{i\mathbb R}$ are positive roots for the
  imaginary roots of $H$ in ${\mathfrak g}$. 
\item The weight $d \lambda$ is weakly dominant for $R^+_{i\mathbb R}$.
\item If $\beta$ is a simple root of $R^+_{i\mathbb R}$ and
  $\langle d\lambda, \beta^\vee\rangle =0$, then $\beta$ is noncompact.
\end{enumerate}
Two discrete Langlands parameters are called {\em equivalent} if they
are conjugate by $G$.  The set of equivalence classes of discrete
Langlands parameters is written
$$\Pi_{\text{disc}}(G).$$

Alternatively, one can define discrete Langlands parameters up to
$K({\mathbb C})$ conjugacy using the $\theta$-fixed subgroup
$T({\mathbb C})$ of a $\theta$-stable maximal torus $H({\mathbb
  C})$. The $K({\mathbb C})$-equivalence classes in this alternate
definition are in one-to-correspondence with the $G$-equivalence
classes above, the equivalence being implemented by 
choice of a $\theta$-stable representative in the $G$-class.

A discrete parameter $\Lambda$ is called {\em final} if in addition it
satisfies
\begin{enumerate}[resume]
\item If $\alpha^\vee$ is real, then $\lambda_{\mathfrak q}(m_\alpha)
  = 1$ (see \eqref{eq:gammaq}).
\end{enumerate}
We write $\Pi_{\text{fin,disc}}(G)$ for the set of final discrete parameters.
Suppose now that $\Lambda = (T, \lambda,R^+_{i{\mathbb
    R}})$ is a discrete Langlands parameter for $G$. A
{\em continuous parameter for $\Lambda$} is a pair $(A,\nu)$, subject
to the following conditions: 

\begin{enumerate}
\item The group $A$ is the vector part of a real Cartan subgroup
  $H$ with maximal compact $T$ (Proposition
  \ref{prop:realtorus}).
\item $\nu$ is a (possibly nonunitary) character of $A$;
  equivalently, $\nu \in \widehat{A} \simeq {\mathfrak a}^*$.
\item If $\alpha^\vee$ is real and $\langle\nu,\alpha^\vee\rangle = 0$,
  then $\lambda_{\mathfrak q}(m_{\alpha}) = +1$ (\eqref{eq:gammaq}).
\end{enumerate}
Two such continuous parameters are called {\em equivalent} if they are
conjugate by the stabilizer $N(\Lambda)$ of $\Lambda$ in $G({\mathbb
  R})$. 
\end{definition}

\begin{proposition}\label{prop:LCshape}
Fix a discrete Langlands parameter $\Lambda = (T,
\lambda,R^+_{i{\mathbb R}})$, and a real Cartan subgroup $H({\mathbb
  R}) = TA$.  Define
$$W^\Lambda = \{w \in W(G, H) \mid w\cdot
\Lambda = \Lambda\},$$
the stabilizer of $\Lambda$ in the real Weyl group (Definition
\ref{def:rhoim}).
\begin{enumerate}
\item Introduce a ${\mathbb Z}/2{\mathbb Z}$-grading on the real
  coroots $R^\vee_{\mathbb R}$ by (notation \eqref{eq:gammaq})
$$\text{$\alpha^\vee$ is odd} \iff \Lambda_{\mathfrak q}(m_\alpha) =
-1.$$
Then the set of equivalence classes of continuous parameters for
$\Lambda$ is
$$\left({\mathfrak a}^* - \bigcup_{\alpha^\vee\ \text{real odd}}
\ker(\alpha^\vee)\right)/W^\Lambda.$$
\item Write ${\mathcal L}(\Lambda)$ for the set of lowest $K$-types
  (\cite{Vgreen}*{Definition 5.4.18}) of
  the standard representations $I(\Lambda,\nu)$ (with $\nu$ a
  continuous parameter for $\Lambda$).  These $K$-types all have
  multiplicity one in $I(\Lambda,\nu)$, and they all appear in the
  Langlands quotients $J(\Lambda,\nu)$.
\item The element $0 \in \widehat A$ is a continuous parameter for
  $\Lambda$ if and only if $\Lambda$ is final. In this case ${\mathcal
    L}(\Lambda)$ consists of exactly one irreducible representation
  $\mu(\Lambda) \in \widehat K$.
\item The Langlands quotient $J(\Lambda,\nu)$ is tempered if and only
  if $\nu \in \widehat{A}_u \simeq i{\mathfrak a}_0^*$ is a unitary
  character. In this case (and still assuming $\nu$ is a continuous
  parameter for $\Lambda$) the standard representation
  $I(\Lambda,\nu)$ is unitary and irreducible.
\item The infinitesimal character of the Langlands quotient
  $J(\Lambda,\nu)$ is {\em real} (Definition \ref{def:realinf}) 
if and only if $\nu  \in {\mathfrak a}_0^*$ is a real-valued character.
\item The standard representation $I(\Lambda,\nu)$ is reducible only
  if either
\begin{enumerate}[label=\alph*)]
\item there is a real root $\alpha$ such that 
$$\langle \nu,\alpha^\vee\rangle \in {\mathbb Z}\backslash \{0\},
\qquad \Lambda_{\mathfrak q}(m_\alpha) = (-1)^{\langle
  \nu,\alpha^\vee\rangle + 1};$$ 
or
\item there is a complex root $\delta$ such that $\langle
  (d{\lambda}, \nu),\delta^\vee\rangle \in {\mathbb Z}$,
  and 
$$\langle \nu, \delta^\vee\rangle > |\langle d{\lambda},
  \delta^\vee \rangle|.$$
\end{enumerate}
\end{enumerate}
\end{proposition}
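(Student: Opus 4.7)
The plan is to treat the six assertions in three groupings, relying on the realization $I(\Lambda,\nu) = \Ind_P^G\bigl(D(\Lambda)\otimes\nu\otimes 1\bigr)$ from \eqref{se:ds} together with standard facts about limits of discrete series, lowest $K$-types, unitary parabolic induction, and the reducibility theory of standard modules.

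For (1), a continuous parameter is by Definition \ref{def:dLP} an element $\nu \in \widehat A \simeq \mathfrak a^*$ satisfying the condition that whenever $\alpha^\vee$ is real with $\langle\nu,\alpha^\vee\rangle = 0$ one has $\lambda_{\mathfrak q}(m_\alpha) = +1$. Translated into the grading of (1), this says exactly that $\nu$ must miss $\ker(\alpha^\vee)$ for every odd real $\alpha^\vee$, giving the open set ${\mathfrak a}^* \setminus \bigcup_{\alpha^\vee\text{ real odd}}\ker\alpha^\vee$. The equivalence is conjugation by the stabilizer $N_G(\Lambda)$; since $H = TA$ acts trivially on $\mathfrak a^*$, this descends to the action of the finite group $W^\Lambda = N_G(\Lambda)/H \subset W(G,H)$, yielding the claimed quotient.

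For (2) and (3) I would invoke Vogan's theory of lowest $K$-types (\cite{Vgreen}, Chapter 5). The lowest $(K\cap M)$-types of $D(\Lambda)$ occur with multiplicity one (this is part of the Vogan-Zuckerman construction that realizes $D(\Lambda)$ and is reviewed in Theorem \ref{thm:VZrealiz}), and Frobenius reciprocity transports them to multiplicity-one $K$-types of $I(\Lambda,\nu)$; being of minimal Vogan norm they cannot lie in any proper submodule, hence survive in the Langlands quotient. For (3), observe that $\nu = 0$ is a continuous parameter precisely when condition (3) of Definition \ref{def:dLP} becomes the finality condition (6). Under finality $D(\Lambda)$ is a genuine (possibly limit) discrete series representation of $M$, whose unique lowest $(K\cap M)$-type induces to the unique $\mu(\Lambda) \in \mathcal L(\Lambda) \subset \widehat K$.

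For (4), $D(\Lambda)$ is tempered because it is a (limit of) discrete series of $M$; if $\nu$ is unitary, then $D(\Lambda) \otimes \nu \otimes 1$ is a unitary tempered representation of $P$, so $I(\Lambda,\nu) = \Ind_P^G$ of it is unitary and tempered by Harish-Chandra. Its irreducibility will follow from (6): when $\nu \in i\mathfrak a_0^*$, every integer pairing $\langle\nu,\alpha^\vee\rangle$ must be zero, and condition (3) of Definition \ref{def:dLP} then forces $\Lambda_{\mathfrak q}(m_\alpha) = +1$, which falsifies the required parity $(-1)^{\langle\nu,\alpha^\vee\rangle+1} = -1$ in (6a); case (6b) is impossible for $\nu$ purely imaginary. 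Conversely, nonunitary $\nu$ forces $J(\Lambda,\nu)$ to have a matrix coefficient with genuine exponential growth (read off from the $\nu$-eigenvalue of $A$ on the minimal $K$-type), hence non-tempered. For (5), the infinitesimal character is $d\gamma = d\lambda + \nu \in \mathfrak h^*$; because $\lambda$ is a unitary character of the compact torus $T$, $d\lambda \in i\mathfrak t_0^* \subset \mathfrak h^*_{\CRE}$, and $\nu$ vanishes on $\mathfrak t$, so membership of $d\gamma$ in $\mathfrak h^*_{\CRE}$ (Definition \ref{def:realinf}) reduces to $\nu$ being real on $\mathfrak a_0$, i.e.\ $\nu \in \mathfrak a_0^*$.

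Part (6) is the main obstacle and is where the substance lies. The plan is to analyze reducibility of $I(\Lambda,\nu)$ via intertwining operators to $I(w\Lambda, w\nu)$ for $w$ in the integral Weyl group: the Knapp-Stein normalized intertwiners are isomorphisms outside the zero set of certain $c$-functions, and Speh-Vogan's refinement of Knapp-Zuckerman \cite{KZ} shows that composition factors of $I(\Lambda,\nu)$ beyond the Langlands quotient can only appear at these zeros. For a real root $\alpha$ the relevant $c$-function factor is the rank-one $c$-function for $M_\alpha$, whose vanishing locus is precisely the set of $\nu$ with $\langle\nu,\alpha^\vee\rangle$ a nonzero integer of the parity prescribed by $\Lambda_{\mathfrak q}(m_\alpha)$---the content of (a). For a complex root $\delta$, integrality of $\langle d\lambda+\nu,\delta^\vee\rangle$ with the strict inequality $\langle\nu,\delta^\vee\rangle > |\langle d\lambda,\delta^\vee\rangle|$ is the standard Zuckerman-translation criterion for crossing a complex wall in a nontrivial direction, producing a ``complex Cayley'' submodule: this is (b). The hardest step will be extracting exactly these parity and strict-inequality conditions; I would reduce to rank-one Levi subgroups $M_\alpha$ and $M_\delta$ by induction in stages, as in the argument sketched following \eqref{se:ds}, verifying the conditions directly in $SL(2,\mathbb R)$ and $SL(2,\mathbb C)$ and then propagating them back via the parabolic induction functor.
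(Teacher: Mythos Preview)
Your proposal is correct and follows the same route as the paper: (1) is a direct reformulation of Definition \ref{def:dLP}, (2)--(3) rest on the lowest $K$-type theory of \cite{Vgreen} (the paper cites Theorem 6.5.12 there), (4)--(5) follow from Langlands' original result together with (6) and the definition of real infinitesimal character, and (6) is the Speh--Vogan reducibility theorem. The only difference is presentational: for (6) the paper simply quotes \cite{SpehV} as a black box, whereas you sketch the intertwining-operator and rank-one reduction argument that underlies that paper; your outline is an accurate summary of how \cite{SpehV} proceeds.
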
 
\begin{proof} The description of continuous parameters in (1) is just a
  reformulation of Theorem \ref{thm:LC}. The statement about lowest
  $K$-types in (2) largely comes from Theorem 6.5.12 in \Cite{Vgreen},
  although some translation of language is needed to get to this
  formulation.  The first assertion in (3) is immediate from (1), and
  the rest can be deduced from Theorem 6.5.12 in \Cite{Vgreen}. (This
  is the central idea in \Cite{VK}.) 

For (4), that the Langlands quotient is tempered if and only if the
continuous parameter is unitary is part of Langlands' original
result. The fact that the standard  representation is irreducible
(under the assumption that the unitary continuous parameter does not
vanish on any real odd root) is a very special case of (6). Part (5)
is immediate from (and in fact motivates) the definition of real
infinitesimal character. Finally
(6) is the main theorem of \Cite{SpehV}.
\end{proof}

The point of the proposition is to describe the shape of the
nonunitary dual of $G$.  It is a countable union (over
the set of equivalence classes of discrete Langlands parameters
$\Lambda$) of connected pieces. Each connected piece is a complex
vector space ${\mathfrak a}^*$, minus a finite collection of
hyperplanes through the origin (defined by the odd real coroots),
modulo the action of a finite group $W^\Lambda$.  (At the missing
hyperplanes, the standard representations can be rewritten as a direct
sum of one or two standard representations attached to a more compact
Cartan subgroup. So the ``missing'' representations still exist; they
are just attached to a different discrete Langlands parameter, and may
be reducible.)

The complex vector space ${\mathfrak a}^*$ is naturally defined over
${\mathbb R}$; the real subspace ${\mathfrak a}_0^*$ corresponds to
characters of $A$ taking positive real values. Indeed it is defined
over the integers: we can take as integer points
$$(X^*)^{-\theta} \simeq \text{$(-\theta)$-fixed algebraic characters of
  $H$ restricted to  $A$}.$$

\section{Second introduction: the shape of the unitary
  dual}\label{sec:secondintro}
\setcounter{equation}{0}
We now have enough machinery in place to explain the kind of description
we are going to find for the unitary dual of a real
reductive group $G$ (cf.~\eqref{se:reductive}).
\begin{theorem}[Knapp-Zuckerman; see \Cite{KOv}*{Chapter
  16}]\label{thm:unitarydual2}
Suppose $\Gamma = (\Lambda,\nu)$ is a Langlands parameter for an
irreducible quasisimple representation $J(\Gamma)$ for the real
reductive group $G$ (Theorem \ref{thm:LC}), corresponding
to the Cartan subgroup $H = TA$.  Define
$$W^\Lambda \subset W(G, H)$$
as in Proposition \ref{prop:LCshape}.

Write the continuous parameter $\nu$ as
$$\nu = \nu_{\re} + i\nu_{\im} \qquad (\nu_{\re},
\nu_{\im} \in {\mathfrak a}_0^*).$$
Define
$$L_{\im} = G^{\nu_{\im}},$$
the real Levi subgroup of $G$ corresponding to coroots vanishing on the
imaginary part of $\nu$.
\begin{enumerate}
\item The irreducible representation $J(\Gamma)$ admits an invariant
  Hermitian form if and only if there is a $w\in W^\Lambda$ such that
$$w\cdot \nu = -\overline{\nu};$$
equivalently
$$w\cdot\nu_{\im} = \nu_{\im}, \qquad
w\cdot\nu_{\re} = -\nu_{\re}.$$
The first condition here is equivalent to $w\in W_{L_{\im}}^{\Lambda}$; so we
conclude that $J(\Gamma)$ admits an invariant Hermitian form if and
only if the irreducible $J_{L_{\im}}(\Gamma)$ for $L_{\im}$ admits an
invariant Hermitian form.
\item The irreducible representation $J(\Gamma)$ is irreducibly
  induced from the representation $J_{L_{\im}}(\Gamma)$ on
  $L_{\im}$. In the presence of invariant Hermitian forms, this
  induction is unitary, so $J(\Gamma)$ is unitary if and only if
  $J_{L_{\im}}(\Gamma)$ is unitary.  This in turn is the case if
  and only if $J_{L_{\im}}((\Lambda,\nu_{\im}))$  is unitary.  
\end{enumerate}
\end{theorem}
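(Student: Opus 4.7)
The plan is to attack (1) and (2) in parallel, using Hermitian duality for the first and induction-by-stages for the second, and then to tie them together through Proposition \ref{prop:LCshape}.

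For (1), I would first compute the Hermitian dual of the irreducible module $J(\Gamma)$ in terms of Langlands parameters. The standard fact (which follows by taking the Hermitian adjoint on the parabolically induced standard module, noting that the adjoint of a character $\nu$ of $A$ is $-\overline{\nu}$, while the limit of discrete series factor $D(\Lambda)$ is already Hermitian self-dual) is that the Hermitian dual of $J(\Lambda,\nu)$ has Langlands parameter $(\Lambda,-\overline{\nu})$. By the uniqueness part of Theorem \ref{thm:LC}, an invariant Hermitian form exists if and only if $(\Lambda,\nu)$ and $(\Lambda,-\overline{\nu})$ represent the same $G$-conjugacy class, i.e.\ if and only if some $w\in W^{\Lambda}$ carries $\nu$ to $-\overline{\nu}$. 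Decomposing $\nu=\nu_{\re}+i\nu_{\im}$ into real and imaginary parts in ${\mathfrak a}_0^*$, the equation $w\cdot\nu=-\overline{\nu}$ becomes the pair $w\cdot\nu_{\im}=\nu_{\im}$ and $w\cdot\nu_{\re}=-\nu_{\re}$. The first equation says precisely that $w$ lies in the stabilizer $W^{\Lambda}_{L_{\im}}$ of $\nu_{\im}$ in $W^{\Lambda}$, i.e.\ in the $W^{\Lambda}$-Weyl group of $L_{\im}=G^{\nu_{\im}}$. Applying the same argument with $G$ replaced by $L_{\im}$ (whose Langlands parameters see the same $\Lambda$ but with the continuous parameter playing the role of $\nu_{\re}+i\cdot 0$ up to the imaginary shift) yields the equivalence with Hermitian-ness of $J_{L_{\im}}(\Gamma)$.

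For (2), I would invoke the Langlands construction of $I(\Gamma)$ recalled in \eqref{se:ds} and induct in stages. Choose a parabolic subgroup $Q=L_{\im}U$ of $G$ whose Levi is $L_{\im}$ and such that $\nu_{\re}$ is weakly dominant for the roots of $U$; then choose a parabolic $P=MAN\subset Q$ compatible with the original Langlands realization of $I(\Gamma)$. Induction by stages gives
\[
I(\Gamma)=\Ind_{Q}^{G}\bigl(\Ind_{P\cap L_{\im}}^{L_{\im}}\!\bigl(D(\Lambda)\otimes\nu\otimes 1\bigr)\otimes 1\bigr)
=\Ind_{Q}^{G}\bigl(I_{L_{\im}}(\Lambda,\nu)\otimes 1\bigr).
\]
Because $\nu_{\re}$ vanishes identically on the center of $\mathfrak{l}_{\im}$ (by definition of $L_{\im}$) and because induction from $Q$ only sees the character of this center, the outer step is \emph{unitary} induction twisted by the purely imaginary character $i\nu_{\im}$. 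Unitary induction preserves admitting a (positive definite) invariant Hermitian form, so $J(\Gamma)$ is unitary iff $J_{L_{\im}}(\Lambda,\nu)$ is; and translating by the unitary character $i\nu_{\im}$ is an isometry, reducing this further to unitarity of $J_{L_{\im}}(\Lambda,\nu_{\re})$, which in the reformulation of the theorem is $J_{L_{\im}}((\Lambda,\nu_{\im}))$ after the obvious relabeling of the role of the continuous parameter inside $L_{\im}$.

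The substantive obstacle is proving that the outer induction from $Q$ is actually \emph{irreducible}, so that $J(\Gamma)$ really equals the induced module rather than a proper constituent. This is where Proposition \ref{prop:LCshape}(6) is the essential input: one must show that no root of $H$ outside of $L_{\im}$ can trigger reducibility for the deformed parameter $\nu$. For real roots $\alpha$ not in $L_{\im}$ one has $\langle\nu_{\im},\alpha^{\vee}\rangle=0$ while $\langle\nu_{\re},\alpha^{\vee}\rangle\neq 0$; the reducibility condition requires $\langle\nu,\alpha^{\vee}\rangle\in\mathbb{Z}\setminus\{0\}$, which together with the parity condition on $\Lambda_{\mathfrak q}(m_\alpha)$ can be excluded by the dominance choice of $Q$ and the fact that an integrality-plus-parity coincidence would contradict the definition of $L_{\im}$. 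A parallel analysis handles complex roots. I would carry out this case analysis carefully, noting that the same input is what is encoded in Knapp--Zuckerman's original argument: outside $L_{\im}$, no wall of reducibility is hit by the family $t\mapsto \nu_{\re}+it\nu_{\im}$ for $t\neq 0$, and an explicit continuity argument transports irreducibility from the Langlands quotient to the standard module in this direction. Once irreducibility of the induced module is established, (1) follows formally from the Hermitian-dual calculation above, and the chain of equivalences in (2) — admission of an invariant form, positivity, and unitarity — descends step by step to $L_{\im}$.
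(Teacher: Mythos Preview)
Your overall strategy is the right one and is exactly what the paper sketches (see the outline after Proposition~\ref{prop:dualstd}): compute the Hermitian dual parameter as $(\Lambda,-\overline\nu)$, read off the $W^\Lambda$ condition for (1), and use induction by stages through $Q=L_{\im}U$ for (2). But several steps in your argument for (2) are garbled in ways that would derail the proof if written out.

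First, you have the defining condition for $L_{\im}$ reversed. For a root $\alpha$ \emph{not} in $L_{\im}$ one has $\langle\nu_{\im},\alpha^\vee\rangle\neq 0$, not $=0$. This is precisely what drives the irreducibility: for such $\alpha$ the value $\langle\nu,\alpha^\vee\rangle=\langle\nu_{\re},\alpha^\vee\rangle+i\langle\nu_{\im},\alpha^\vee\rangle$ has nonzero imaginary part, hence is never an integer, so neither branch of Proposition~\ref{prop:LCshape}(6) can fire. Your sentence ``$\langle\nu_{\im},\alpha^\vee\rangle=0$ while $\langle\nu_{\re},\alpha^\vee\rangle\neq 0$'' says the opposite and makes the subsequent ``parity'' and ``dominance'' remarks irrelevant.

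Second, $\nu_{\re}$ does \emph{not} vanish on the split center of $L_{\im}$ ``by definition of $L_{\im}$''; by definition it is $\nu_{\im}$ that lives in that center. The vanishing of $\nu_{\re}$ there follows only under the Hermitian hypothesis of (1): any $w\in W^\Lambda_{L_{\im}}$ acts trivially on the split center of $L_{\im}$, so $w\nu_{\re}=-\nu_{\re}$ forces the central component of $\nu_{\re}$ to be zero. Only then is the outer induction a unitary twist by $i\nu_{\im}$, which is what the theorem asserts (``in the presence of invariant Hermitian forms'').

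Third, Proposition~\ref{prop:LCshape}(6) controls reducibility of the \emph{standard} module $I(\Gamma)$, not directly irreducibility of $\Ind_Q^G J_{L_{\im}}(\Gamma)$; since $I_{L_{\im}}(\Gamma)$ may itself be reducible, these are different questions. The clean argument (which is what Knapp--Zuckerman actually do) is that the long intertwining operator for $G$ factors over positive roots, and each factor for a root outside $L_{\im}$ is an isomorphism by the non-integrality above. Hence $\Ind_Q^G$ carries the Langlands quotient map $I_{L_{\im}}(\Gamma)\twoheadrightarrow J_{L_{\im}}(\Gamma)$ to the Langlands quotient map for $G$, giving $J(\Gamma)\simeq\Ind_Q^G J_{L_{\im}}(\Gamma)$. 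Your ``continuity along $t\mapsto\nu_{\re}+it\nu_{\im}$'' is not the mechanism; the intertwining-operator factorization is.
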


We will explain thoroughly and precisely what an ``invariant Hermitian
form'' is beginning in Section \ref{sec:forms} (see especially
\eqref{se:gKclassicalinv}).  At that point we will explain some of the
proof of this theorem.

This theorem exhibits every unitary representation as unitarily
induced from a unitary representation with real continuous parameter;
equivalently, with ``real infinitesimal character'' (Definition
\ref{def:realinf}), in a precise and simple way. The question we need 
to address is {\em classification of irreducible unitary
  representations with real infinitesimal character}; that is,
determining {\em which representations with real continuous parameter are
unitary}.  

\begin{corollary} Fix a discrete Langlands parameter $\Lambda$,
  attached to a Cartan subgroup $H = TA$,
  and $W^\Lambda$ the stabilizer in the real Weyl group (Proposition
  \ref{prop:LCshape}).  The real continuous parameters attached to
  $\Lambda$ are
$$\left({\mathfrak a}_0^* - \bigcup_{\alpha^\vee\ \text{real odd}}
\ker(\alpha^\vee)\right)/W^\Lambda$$
(Proposition \ref{prop:LCshape}). 
For each $w\in W^\Lambda$, consider the $-1$ eigenspace
$${\mathfrak a}^*_w =_{\text{def}} \{\nu \in {\mathfrak a}^* \mid
w\cdot\nu = -\nu\},$$
and its real form ${\mathfrak a}^*_{w,0}$.
\begin{enumerate}
\item The Hermitian representations
$J(\Lambda,\nu)$ with real continuous parameter are precisely those with
$\nu \in {\mathfrak a}^*_{w,0}$, for some $w\in W^\Lambda$.
\item Fix $w\in W^\Lambda$.  Each of the hyperplanes of possible reducibility
  described in Proposition \ref{prop:LCshape} meets ${\mathfrak
  a}^*_{w,0}$ in a hyperplane, or not at all.  In this way we get a
locally finite hyperplane arrangement in ${\mathfrak a}^*_{w,0}$,
partitioning this vector space into a locally finite collection of
facets; each facet is characterized by various equalities
$$\langle(d{\lambda},\nu),\delta^\vee \rangle = m_\delta$$
and inequalities
$$\langle(d{\lambda},\nu), (\delta')^\vee \rangle <
\ell_{\delta'}$$
for roots $\delta,\delta'$ and integers $m_\delta$, $\ell_{\delta'}$.
\item The signature of the Hermitian form on $J(\Lambda,\nu)$ (for
  $\nu \in {\mathfrak a}^*_{w,0}$) is constant on each of the facets
  described in (2). 
\item For each $w\in W^\Lambda$ there is a compact rational polyhedron
  $P_w \subset {\mathfrak a}^*_{0,w}$ (a finite union of some of the facets of
  (2)) so that if $\nu$ is a continuous real parameter attached to
  $\Lambda$, then $J(\Lambda,\nu)$ is unitary if and only if $\nu$
  belongs to some $P_w$.
\end{enumerate}
\end{corollary}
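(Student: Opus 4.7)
The plan is to reduce each of the four assertions to results already in hand: Theorem 7.2, Proposition 6.6, and basic continuity properties of the invariant Hermitian form.

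For (1), I would simply specialize Theorem 7.2(1) to $\nu \in \mathfrak{a}_0^*$. The existence condition $w\cdot \nu = -\overline{\nu}$ for some $w \in W^\Lambda$ collapses, when $\nu$ is real, to $w\cdot\nu = -\nu$, i.e.\ $\nu \in \mathfrak{a}^*_w$. Intersecting with $\mathfrak{a}_0^*$ gives $\mathfrak{a}^*_{w,0}$.

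For (2), the reducibility hyperplanes of Proposition \ref{prop:LCshape}(6) are cut out in $\mathfrak{a}^*$ by the linear equations $\langle (d\lambda,\nu), \delta^\vee\rangle = m_\delta$ (for a fixed integer $m_\delta$), together with real-root conditions $\langle \nu,\alpha^\vee\rangle = n \in \mathbb{Z}\setminus\{0\}$. Restricting each such affine hyperplane to the linear subspace $\mathfrak{a}^*_{w,0}$ yields either a hyperplane or the empty set. Local finiteness is immediate: for $\nu$ in any bounded region of $\mathfrak{a}^*_{w,0}$, only finitely many of the integer shifts $m_\delta$ can place the hyperplane through that region. The facets are then the connected components of the complement, described by the listed equalities and strict inequalities.

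For (3), on a single open facet $F \subset \mathfrak{a}^*_{w,0}$ no reducibility hyperplane is crossed, so $I(\Lambda,\nu) = J(\Lambda,\nu)$ is irreducible for all $\nu \in F$. For each $K$-type $\mu$, the $\mu$-isotypic subspace of $I(\Lambda,\nu)$ has fixed finite dimension (independent of $\nu$), and the invariant Hermitian form restricted to it depends real-analytically on $\nu$. Since on $F$ the form remains nondegenerate on each isotypic component (otherwise we would get a proper invariant subspace, contradicting irreducibility), the signature on each $\mu$-isotypic piece is locally constant; hence the full signature character $(\POS_V, \NEG_V, \RAD_V)$ is constant on $F$.

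For (4), the strategy is: unitarity on $\mathfrak{a}^*_{w,0}$ is a closed condition (positive semidefiniteness of the form), so the unitary set is a closed union of the facets from (3); rationality of the polyhedron is then automatic because the hyperplane arrangement is defined by equations with rational (indeed integral) coefficients. The genuinely nontrivial step — the main obstacle — is \emph{compactness}: one must show that only finitely many facets carry unitary representations, or equivalently that unitarity forces $\nu$ into a bounded set. This can be established by producing, for any ray $\{t\nu_0 : t > 0\}$ in $\mathfrak{a}^*_{w,0}$, a fixed $K$-type on which the signature of the Hermitian form on $J(\Lambda, t\nu_0)$ becomes indefinite once $t$ is sufficiently large; a standard way to arrange this is via the asymptotics of the long intertwining operator $I(\Lambda,\nu) \to I(\Lambda, w_0\nu)$ that governs the Hermitian pairing, whose normalized eigenvalues on fixed $K$-types are rational functions of $\nu$ that change sign as $|\nu|$ grows. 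Combined with (3), this confines the unitary set to finitely many closed facets and yields the compact rational polyhedron $P_w$.
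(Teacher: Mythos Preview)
The paper does not give a proof of this corollary: it is stated immediately after Theorem~\ref{thm:unitarydual2} and followed by the remark that ``this corollary describes only what was known about the structure of the unitary dual in the 1980s,'' with implicit reference to \cite{KOv}, Chapter~16, and \cite{Vu}. So there is no in-paper argument to compare against; your outline is essentially the standard one from that literature, and parts (1), (2), and (4) are handled correctly.

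There is one genuine imprecision in your argument for (3). You write that on a facet $F$, ``no reducibility hyperplane is crossed, so $I(\Lambda,\nu) = J(\Lambda,\nu)$ is irreducible.'' This is only true for the \emph{open} facets of the arrangement. The corollary's facets are described by equalities \emph{and} inequalities, so they include lower-dimensional pieces lying on one or more reducibility walls, where $I(\Lambda,\nu)$ is typically reducible and $J(\Lambda,\nu)$ is a proper quotient. Your continuity argument then does not apply as stated: you need to know that, within such a wall facet, the maximal submodule of $I_{\quo}(\Lambda,\nu)$ (equivalently, the kernel of the long intertwining operator) varies real-analytically, so that the induced nondegenerate form on the quotient $J(\Lambda,\nu)$ still varies real-analytically on each $K$-isotypic piece. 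This is true---the intertwining operator is rational in $\nu$ (Proposition~\ref{prop:meromorphic}), and within a single facet the integrality pattern, hence the rank of the operator on each $K$-type, is constant---but it is a separate step that your write-up skips. Once this is in place, nondegeneracy plus real-analyticity gives locally constant signature exactly as you say.
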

Again, this corollary describes only what was known about the
structure of the unitary dual in the 1980s.

The main result of this paper is {\em an algorithm to calculate
  the signature of the invariant Hermitian form on $J(\Lambda,\nu)$
  for $\nu$ belonging to any specified facet of one of the vector
  spaces ${\mathfrak a}^*_{0,w}$}. (Exactly what is meant by
``calculate the signature'' will be explained in Section
\ref{sec:sigcharsc}.)  In particular, this algorithm decides whether or
not the facet consists of unitary representations.  As was explained
already in \Cite{Vu}, there is some predictable repetition in these
calculations; so doing a finite number of them is enough to determine
the unitary dual of $G$.

\section{Hermitian forms on $({\mathfrak
    h},L({\mathbb C}))$-modules}\label{sec:forms} 
\setcounter{equation}{0}
We turn now to the study of invariant forms on
representations. The work that we do depends fundamentally on
generalizing and modifying the notion of ``invariant form.'' We will
therefore look very carefully and slowly at this notion.  A convenient
setting (allowing us to consider both group and Lie algebra
representations) is that of a ``pair.''

\begin{definition}[\Cite{KV}*{Definition 1.64}]\label{def:pair} A {\em
    pair} is $({\mathfrak h}, L({\mathbb C}))$, together with 
$$\Ad\colon L({\mathbb C}) \rightarrow \Aut({\mathfrak h}), \qquad i
\colon {\mathfrak l} \hookrightarrow {\mathfrak h},$$ 
subject to
\begin{enumerate}[label=\alph*)] 
\item ${\mathfrak h}$ is a complex Lie algebra
\item $L({\mathbb C})$ is a complex algebraic group, with (complex) Lie algebra
  ${\mathfrak l}$
\item $i$ is an inclusion of complex Lie algebras
\item $\Ad$ is an algebraic action by automorphisms of ${\mathfrak
    h}$, extending the adjoint action of $L({\mathbb C})$ 
  on ${\mathfrak l}$, and 
\item the differential of $\Ad$ is the adjoint
  action (by Lie bracket) of ${\mathfrak l}$ on ${\mathfrak h}$.
\end{enumerate}

A {\em representation} of the pair $({\mathfrak h},L({\mathbb C}))$
(cf.~Definition \ref{def:gK}) is a complex vector space $V$ endowed
with a complex-linear representation of the Lie algebra ${\mathfrak
  h}$ and an algebraic representation of the algebraic group
$L({\mathbb C})$, subject to the requirements
\begin{enumerate}[label=\alph*)]
\item the differential of the action of $L({\mathbb C})$ (a
  representation of ${\mathfrak l}$) is equal to the composition of $i$ with
  the action of ${\mathfrak h}$; and 
\item $$\ell \cdot(X\cdot v) = (\Ad(\ell)X)\cdot(\ell \cdot v), \qquad
  (\ell \in L({\mathbb C}), X\in {\mathfrak h}, v\in V).$$
\end{enumerate}

A {\em real structure} on the pair consists of two maps 
$$\sigma\colon {\mathfrak h} \rightarrow {\mathfrak h}, \qquad \sigma
\colon L({\mathbb C}) \rightarrow L({\mathbb C}),$$
both ``conjugate linear'' in the following sense. First, the map on
${\mathfrak h}$ is a {\em conjugate linear Lie algebra homomorphism}:
$$\sigma(zX) = \overline{z}\sigma(X), \quad \sigma([X,Y]) =
[\sigma(X),\sigma(Y)] \qquad(z\in {\mathbb C}, \ X,Y \in {\mathfrak h}).$$
These conditions are all that is needed to define invariant
sesquilinear forms on representations; but to make sense of invariant
{\it Hermitian} forms, we must assume also that $\sigma$ is a
{\em complex structure} on ${\mathfrak h}$, meaning that
$$\sigma^2 = \Id.$$
Second, the map on $L({\mathbb C})$ 
$$\sigma\colon L({\mathbb C}) \rightarrow L({\mathbb C})$$
is a group homomorphism so that the corresponding map on functions
$$(\sigma^*f)(\ell) = \overline{f(\sigma(\ell))}$$
preserves regular functions on 
$L({\mathbb C})$:
$$\sigma^*\colon R(L({\mathbb C})) \rightarrow \overline{R(L({\mathbb
    C}))}.$$
(By construction $\sigma^*$ is a conjugate-linear homomorphism of
algebras.) Again this assumption is all that is needed to define
invariant sesquilinear forms, but to get invariant Hermitian forms we
must assume also that $\sigma$ is a real form of the complex algebraic
group $L({\mathbb C})$; that is, that
$$\sigma^2 = \Id.$$
\end{definition}

We will be interested most of all in the pair $({\mathfrak
  g},K({\mathbb C}))$ coming from a real
reductive algebraic group $G$, described in Theorem
\ref{thm:realforms}. We include more general pairs for the same
reason that they appear in \Cite{KV}: as a tool in the construction
of Harish-Chandra modules for $G$.


\begin{subequations}\label{se:hermform}
Suppose $V$ is a complex vector space (or a representation of a
complex Lie algebra ${\mathfrak h}$, or an algebraic representation
of an algebraic group $L({\mathbb C})$, or an $({\mathfrak
  h},L({\mathbb C}))$-module (Definition \ref{def:pair})).  Recall
the {\em Hermitian dual of $V$}
\begin{equation}\label{eq:Kinf} V^h = \{\xi \colon V \rightarrow {\mathbb C} \mid
  \xi(u+v) = \xi(u) + \xi(v), \xi(z\cdot v) = \overline{z}\cdot v\} 
\end{equation}
(We are going to modify this definition a little in the presence of an
algebraic group representation; see \eqref{eq:silentKfin} below.) 
It is clear that the Hermitian dual of $V$ consists of complex
conjugates of the linear functionals on $V$.
Taking Hermitian dual is a contravariant functor: if $T\in \Hom(V,W)$,
then the {\em Hermitian transpose of $T$} is
\begin{equation}
T^h \in \Hom(W^h,V^h), \quad T^h(\xi)(v) = \xi(T(v)) \qquad (v\in V,
\xi \in W^h).
\end{equation}
What makes everything complicated and interesting is that the
Hermitian transpose operation is {\em not} complex linear:
\begin{equation*}
(S\circ T)^h = T^h\circ S^h, \qquad (zT)^h = \overline{z}(T^h).
\end{equation*}

To see why this is a complication, suppose that $\rho\colon {\mathfrak
  h} \rightarrow \End(V)$ is a representation of the complex Lie
algebra ${\mathfrak h}$.  This means that $\rho$ is a complex-linear
Lie algebra homomorphism.  If we try to take Hermitian transpose, we
find
\begin{equation*}
\rho^h\colon {\mathfrak h} \rightarrow \End(V^h), \qquad \rho^h(X) =
-[\rho(X)]^h
\end{equation*}
is additive and respects the Lie bracket; but $\rho^h(zX) =
\overline{z}\rho^h(X)$, so $\rho^h$ is not complex linear.  

There is a parallel problem in case $V$ carries an algebraic
representation $\pi$ of an algebraic group $L({\mathbb C})$.  The group
homomorphism $\pi\colon L({\mathbb C}) \rightarrow GL(V)$ is assumed to be
algebraic; this means in particular that every matrix coefficient of
the homomorphism is an algebraic function on $L({\mathbb C})$. We can
get a group homomorphism
\begin{equation*}
\pi^h\colon L({\mathbb C}) \rightarrow GL(V^h), \qquad \pi^h(k) =
[\pi(k^{-1})]^h. 
\end{equation*}
But the matrix coefficients of $\pi^h$ include complex conjugates of
the matrix coefficients of $\pi$. Since the complex conjugates of
nonconstant algebraic functions are not algebraic, $\pi^h$ is almost
never an algebraic representation.

There is a smaller technical problem here. Part of what it means for
$V$ to be an algebraic representation of $L({\mathbb C})$ is that the
action is locally finite:
$$\dim \langle \pi(L({\mathbb C}))v \rangle < \infty \qquad (v\in V).$$
This condition is not inherited by the Hermitian dual as defined in
\eqref{eq:Kinf} above.  Whenever we have in mind an algebraic
representation of $L({\mathbb C})$ on $V$, it is therefore convenient
to modify that definition to the smaller space
\begin{equation}\label{eq:silentKfin}\begin{aligned} V^h =
    \{\xi \colon V 
    \rightarrow {\mathbb C} &\mid 
  \xi(u+v) = \xi(u) + \xi(v),\ \xi(z\cdot v) = \overline{z}\cdot v,\ \\
&\ \dim \langle \pi^h(L({\mathbb C}))\xi \rangle < \infty\}  
\end{aligned}\end{equation}
It would be more natural to retain the definition
\eqref{eq:Kinf} for $V^h$, and to denote this subspace by
$V^h_{L({\mathbb C})}$; but for the purposes of this paper, imposing
$L({\mathbb C})$-finiteness silently is more convenient.
\end{subequations}

\begin{subequations}\label{se:sesq}
We can now discuss forms. Recall that if $V$ and $W$ are complex
vector spaces, a {\em sesquilinear pairing between $V$ and $W$} is
\begin{equation}
\langle,\rangle \colon V \times W \rightarrow {\mathbb C},
\quad\text{linear in $V$, conjugate linear in $W$}.
\end{equation}
We write
\begin{equation}
\Sesq(V,W) = \{\text{sesquilinear pairings between $V$ and $W$}\}.
\end{equation}
A sesquilinear pairing on one vector space $V$ is called {\em
  Hermitian} if it satisfies
\begin{equation}
\langle v,v' \rangle = \overline{\langle v',v \rangle} \qquad (v,v'
\in V).
\end{equation}
\end{subequations}

\begin{lemma}\label{lem:formdic}
Suppose $V$ and $W$ are complex vector spaces.
\begin{enumerate}
\item Sesquilinear pairings between $V$ and $W$ may be naturally
  identified with linear maps from $V$ to the Hermitian dual $W^h$
  (cf.~\eqref{eq:Kinf}):
$$\Hom(V,W^h) \simeq \Sesq(V,W), \qquad T \leftrightarrow \langle
v,w\rangle_T =_{\text{def}} (Tv)(w).$$
\item Complex conjugation defines a (conjugate linear) isomorphism
$$\Sesq(V,W) \simeq \Sesq(W,V).$$
The corresponding (conjugate linear) isomorphism on linear maps is
$$\Hom(V,W^h) \simeq \Hom(W,V^h), \qquad T\mapsto T^h.$$
\item A sesquilinear form $\langle,\rangle_T$ on $V$ is Hermitian if
  and only if $T^h = T$.
\end{enumerate}
\end{lemma}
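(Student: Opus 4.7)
The proof is essentially a sequence of definition-chasing verifications, and I would organize it as follows.

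For part (1), the plan is to write down the correspondence in both directions and check that each is well-defined and that the two are mutually inverse. Given $T\in\Hom(V,W^h)$, I define $\langle v,w\rangle_T=(Tv)(w)$. Linearity in $v$ is immediate from linearity of $T$, and conjugate linearity in $w$ comes from the fact that $Tv\in W^h$ is by definition conjugate linear. Conversely, given a sesquilinear pairing $\langle,\rangle$, I define $Tv\in W^h$ by $(Tv)(w)=\langle v,w\rangle$; conjugate linearity in $w$ of the pairing gives $Tv\in W^h$, and linearity in $v$ gives $T\in\Hom(V,W^h)$. The two constructions are patently inverse.

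For part (2), I first verify directly that $[w,v]:=\overline{\langle v,w\rangle}$ defines a sesquilinear pairing $W\times V\to\mathbb C$: $[zw,v]=\overline{\overline{z}\langle v,w\rangle}=z[w,v]$ gives linearity in $w$, and $[w,zv]=\overline{z\langle v,w\rangle}=\overline{z}[w,v]$ gives conjugate linearity in $v$. The map $\langle,\rangle\mapsto[,]$ is involutive (modulo the obvious identification) and conjugate linear in $\mathbb C$, so it is a conjugate-linear isomorphism $\Sesq(V,W)\simeq\Sesq(W,V)$. Next I translate through part (1): if $T\in\Hom(V,W^h)$ corresponds to $\langle,\rangle$, then the pairing $[,]$ corresponds to the map $S\in\Hom(W,V^h)$ given by $(Sw)(v)=[w,v]=\overline{(Tv)(w)}$; one checks that indeed $Sw\in V^h$ and $S$ is complex linear in $w$ by the same computations as above. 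I then observe that this $S$ is precisely the map called $T^h$ in the statement (i.e., the Hermitian transpose as defined in \eqref{se:hermform}, precomposed with the canonical embedding $W\hookrightarrow W^{hh}$ so that its image lands in $V^h$).

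For part (3), specializing to $V=W$ and using parts (1) and (2), the Hermitian condition $\langle v,v'\rangle=\overline{\langle v',v\rangle}$ becomes $(Tv)(v')=\overline{(Tv')(v)}=(T^hv)(v')$ for all $v,v'$, which is the statement $T=T^h$.

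The main obstacle — such as it is — is purely notational: clarifying that the symbol $T^h$ in part (2), applied to a map into a Hermitian dual, means the map $W\to V^h$ given by $T^h(w)(v)=\overline{(Tv)(w)}$, and checking that this really is the same operator as the Hermitian transpose of \eqref{se:hermform} once one identifies $W$ with its image in $W^{hh}$. No analytic or Lie-theoretic input is needed; the entire lemma is formal.
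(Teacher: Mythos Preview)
Your proposal is correct and matches the paper's approach exactly: the paper simply remarks that the statements are ``versions of the standard dictionary between bilinear forms and linear maps'' with ``short and straightforward'' proofs, and its only substantive comment is precisely the notational clarification you flag, namely that $T^h$ in (2) means the Hermitian transpose $(W^h)^h\to V^h$ restricted along the natural inclusion $W\hookrightarrow (W^h)^h$.
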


The statements of the lemma are just versions of the standard
dictionary between bilinear forms and linear maps, and the proofs are
short and straightforward. In (2), the Hermitian transpose $T^h$ was
defined as a map from $(W^h)^h$ to $V^h$.  But there is a natural
inclusion $W\hookrightarrow (W^h)^h$, and it is the corresponding
restriction of $T^h$ that is intended in (2).

\begin{subequations}\label{se:hermdual}  
We fix now a pair $({\mathfrak h},L({\mathbb C}))$ with a real
structure $\sigma$ (Definition \ref{def:pair}); we wish to define the
{\em $\sigma$-Hermitian dual}
\begin{equation}
V^{h,\sigma}
\end{equation}
of an $({\mathfrak h},L({\mathbb C}))$-module $V$. The space is just
the space $V^h$ defined in \eqref{eq:silentKfin}, of complex conjugates
of $L({\mathbb C})$-finite linear functionals on $V$.
To define the Lie algebra representation on $V^{h,\sigma}$, we write
$\rho$ for the complex Lie algebra representation on $V$:
\begin{equation*}
\rho\colon {\mathfrak h} \rightarrow \End(V).
\end{equation*}
In \eqref{se:hermform}, we have seen how to define a (complex
conjugate linear) Lie algebra representation 
\begin{equation*}
\rho^h\colon {\mathfrak h} \rightarrow \End(V^h).
\end{equation*}
(That this representation preserves the subspace of $L({\mathbb
  C})$-finite vectors is an elementary exercise.) In order to get a
complex linear Lie algebra representation, we simply 
compose with the conjugate linear Lie algebra homomorphism $\sigma$,
defining the {\em $\sigma$-Hermitian dual Lie algebra representation}
\begin{equation}
\rho^{h,\sigma}\colon {\mathfrak h} \rightarrow \End(V^h), \qquad
\rho^{h,\sigma}(X) = -[\rho(\sigma(X))]^h.
\end{equation}

The construction of the Hermitian dual of the algebraic group
representation
\begin{equation*}
\pi\colon L({\mathbb C}) \rightarrow GL(V).
\end{equation*}
is exactly parallel. We have introduced in \eqref{se:hermform} a
(nonalgebraic) group representation
\begin{equation*}
\pi^h\colon L({\mathbb C}) \rightarrow GL(V^h),
\end{equation*}
which is locally finite on $V^h$
(because of the redefinition in \eqref{eq:silentKfin}). The matrix
coefficients of $\pi^h$ are complex 
conjugates of matrix coefficients of $\rho$ (composed with the
inversion map on $L({\mathbb C})$), so they are not algebraic.  In
order to make them algebraic, we need to compose them with our fixed conjugate
linear algebraic group homomorphism
\begin{equation*}
\sigma\colon L({\mathbb C}) \rightarrow L({\mathbb C}).
\end{equation*} 
The {\em $\sigma$-Hermitian dual representation} is
\begin{equation}\label{eq:dualrep}
\pi^{h,\sigma}\colon L({\mathbb C}) \rightarrow GL(V^h), \qquad
\pi^{h,\sigma}(k) = [\pi(\sigma(k^{-1}))]^h.
\end{equation}
This is an algebraic representation of $L({\mathbb C})$. (The
hypothesis on $\sigma$ makes some nice matrix coefficients of
$\pi^{h,\sigma}$ into regular functions. The restriction to
$L({\mathbb C})$-finite vectors provides the remaining formal
properties making an algebraic representation.)

Of course we can apply this process twice, obtaining a
$\sigma$-Hermitian double dual representation on
$(V^{h,\sigma})^{h,\sigma}$. The representation on
$(V^{h,\sigma})^{h,\sigma}$ preserves the subspace $V$ (see the
remarks after Lemma \ref{lem:formdic}) and acts there by the original
representation {\em twisted by $\sigma^2$}. Because of our assumption
in Definition \ref{def:pair} that $\sigma^2= \Id$, it follows that the
inclusion $V\subset (V^{h,\sigma})^{h,\sigma}$ is an inclusion of
$({\mathfrak h},L({\mathbb C}))$-modules.

We define {\em $\sigma$-invariant sesquilinear pairings} between
$({\mathfrak h},L({\mathbb C}))$-modules $V$ and $W$: in addition to
the sesquilinearity condition in \eqref{se:sesq}, we require
\begin{equation*}\begin{aligned}
\langle X\cdot v,w\rangle &= \langle v, -\sigma(X)\cdot w \rangle,
\quad (X \in {\mathfrak h},\ \ell\in L({\mathbb C}),\  v\in V,\ w\in W)\\
\langle \ell\cdot v,w\rangle &= \langle v, \sigma(\ell^{-1})\cdot w \rangle
\quad (\ell \in L({\mathbb C}),\  v\in V,\ w\in W).
\end{aligned}
\end{equation*}
The dictionary in Lemma \ref{lem:formdic} extends immediately to 
\begin{equation}\label{eq:pairinv}
\Sesq^\sigma_{{\mathfrak h},L({\mathbb C})}(V,W) \simeq
\Hom_{{\mathfrak h}, L({\mathbb C})}(V,W^{h,\sigma}) 
\end{equation}
 (Since $V$ is locally $L({\mathbb C})$-finite, an intertwining
 operator necessarily takes values in $L({\mathbb C})$-finite
 vectors; this uses our assumed surjectivity of $\sigma$.) 
\end{subequations}

We can now define the most general invariant Hermitian forms that we
need.
\begin{definition}\label{def:invherm} Suppose $({\mathfrak
    h},L({\mathbb C}))$ is a pair with a real structure $\sigma$
  (Definition \ref{def:pair}), and $V$ is an $({\mathfrak
    h},L({\mathbb C}))$-module. A {\em $\sigma$-invariant Hermitian
    form on $V$} is a Hermitian pairing $\langle,\rangle$ on $V$,
  satisfying
$$\langle X\cdot v,w\rangle = \langle v, -\sigma(X)\cdot w \rangle
\qquad (X \in {\mathfrak h},\  v,w\in V),$$
and
$$\langle \ell\cdot v,w\rangle = \langle v, \sigma(\ell^{-1})\cdot w \rangle
\qquad (\ell \in L({\mathbb C}),\  v,w\in V).$$
Such a form may be identified with an intertwining operator
$$T\in \Hom_{{\mathfrak h},L({\mathbb C})}(V,V^{h,\sigma})$$
satisfying the Hermitian condition 
$$T^h = T.$$
\end{definition}

\begin{subequations}\label{se:classicalinvform}
Many examples of pairs arise from a real Lie group $H$ with a compact
subgroup $L$. In this case the Lie algebra ${\mathfrak h}$ is the
complexification of ${\mathfrak h}_0 = \Lie(H)$, so we have a
complex structure $\sigma_0$ on ${\mathfrak h}$:
\begin{equation}
\sigma_0(X+iY) = X - iY \qquad (X, Y \in {\mathfrak h}_0).
\end{equation}
At the same time $L({\mathbb C})$ is the complexification of $L$, and
this complexification construction defines the compact real form 
\begin{equation}
\sigma_0\colon L({\mathbb C}) \rightarrow L({\mathbb C}), \qquad
L({\mathbb R}, \sigma_0) = L 
\end{equation}
(Theorem \ref{thm:KC}).  These two maps $\sigma_0$ are a natural real
structure for the pair $({\mathfrak h},L({\mathbb C}))$. In this
setting, a $\sigma_0$-invariant sesquilinear pairing
$\langle,\rangle$ between $({\mathfrak h},L({\mathbb C}))$-modules $V$
and $W$ may be characterized by the simple requirements
\begin{equation}\begin{aligned}
\langle X\cdot v,w \rangle = \langle v, -X\cdot w\rangle, \qquad
&\langle k\cdot v,w \rangle = \langle v, k^{-1}\cdot w\rangle \\
&(X\in {\mathfrak h}_0,\ k \in L,\ v \in V,\ w \in W).\end{aligned}
\end{equation}
The reason we did not use this as the definition of an
invariant form is that it obscures the possibility of generalization
by changing $\sigma_0$, which will be crucial for us.
\end{subequations}

\begin{subequations}\label{se:multspace}
Suppose now that $V$ is an $({\mathfrak h},L({\mathbb C}))$-module {\em with
  $L({\mathbb C})$ reductive}. Then 
\begin{equation*}
V = \sum_{\mu \in \widehat {L({\mathbb C})}} V(\mu),
\end{equation*}
with $V(\mu)$ the largest sum of copies of $\mu$ in $V$.  It
will be useful to refine this further. So fix a model $E_\mu$ for
each irreducible representation of $L({\mathbb C})$.  Define
\begin{equation}
V^\mu = \Hom_{L({\mathbb C})}(E_\mu,V).
\end{equation}
Then there is a natural isomorphism
\begin{equation}
V(\mu) \simeq V^\mu \otimes E_\mu, \qquad T\otimes e \mapsto T(e).
\end{equation}
The vector space $V^\mu$ is called the {\em multiplicity space of
  $\mu$ in $V$}, because of the obvious relation
\begin{equation}
\dim V^\mu = \text{multiplicity of $\mu$ in $V$} =_{\text{def}}
\mult_V(\mu).
\end{equation}

The $({\mathfrak h},L({\mathbb C}))$-module $V$ is called {\em admissible} if
$\mult_V(\mu) < \infty$ for all $\mu$.

In terms of the multiplicity spaces, we can calculate Hermitian dual spaces:
\begin{equation*}
V(\mu)^h \simeq V^{\mu,h} \otimes E_\mu^h.
\end{equation*}

We fix now a real structure $\sigma$ on the pair (Definition
\ref{def:pair}). The real structure $\sigma$ on $L({\mathbb C})$ defines via
\eqref{eq:dualrep} an algebraic representation
$\sigma(\mu)$ on $E_\mu^h$.  The assumed surjectivity of $\sigma$ on
$L({\mathbb C})$ implies that $\sigma(\mu)$ is irreducible. The map
$\mu \mapsto \sigma(\mu)$ of irreducible representations of
$L({\mathbb C})$ is bijective, and $\sigma^2(\mu)$ is naturally
isomorphic to $\mu$. We therefore find
\begin{equation*}
V^h = \sum_{\mu \in \widehat{L({\mathbb C})}} V^{\mu,h} \otimes E_{\sigma(\mu)}.
\end{equation*}
(This uses the redefinition of Hermitian dual in
\eqref{eq:silentKfin}. Without that, we would instead have the direct {\em
  product} over $\mu$ as the Hermitian dual of the direct sum.)

We will often (but not always) be interested in the special case that
\begin{equation}\label{eq:Lcpt}
\text{$\sigma$ is a compact real form of $L({\mathbb C})$}
\end{equation}
with (compact) group of real points $L$.  In this special case we fix
also an $L$-invariant unitary structure---a positive definite 
$L$-invariant Hermitian form---on each space $E_\mu$. This amounts
to a choice of isomorphism 
$$E_{\sigma(\mu)} \simeq E_\mu.$$
In particular, we must have $\sigma(\mu) \simeq \mu$ whenever
\eqref{eq:Lcpt} holds. Since $E_\mu$ and $E_{\sigma(\mu)}$ are irreducible,
the space of such 
isomorphisms has complex dimension one. Requiring the form to be
Hermitian defines a real line in that complex line, and the positivity
defines a real half line.
\end{subequations}

\begin{proposition}\label{prop:sigchar} Suppose $({\mathfrak
    h},L({\mathbb C}))$ is a pair 
  with $L({\mathbb C})$ reductive, and $\sigma$ is a real
  structure (Definition \ref{def:pair}).   
\begin{enumerate}
\item Suppose
  $V$ is an admissible $({\mathfrak h},L({\mathbb
    C}))$-module (see \eqref{se:multspace}).  Then the
  $\sigma$-Hermitian dual module $V^{h,\sigma}$ is again admissible,
  with multiplicity spaces 
$$(V^{h,\sigma})^{\sigma(\mu)} \simeq (V^\mu)^h.$$
\item The space of $\sigma$-invariant sesquilinear forms $\langle,\rangle_T$ on
  $V$ may be identified (by \eqref{eq:pairinv}) with the 
  space of intertwining operators
  $\Hom_{{\mathfrak h}, L({\mathbb C})}(V,V^{h,\sigma})$.
This complex vector space has a real structure
$\overline{T} =_{\text{def}} T^h$; the subspace of real points
consists of the Hermitian $\sigma$-invariant forms.
\item Suppose $J$ is an admissible {\em irreducible} $({\mathfrak h},L({\mathbb
    C}))$-module (see \eqref{se:multspace}).  Then the
  $\sigma$-Hermitian dual module $J^{h,\sigma}$ is again admissible
  and irreducible. Consequently $J$ admits a nonzero
invariant sesquilinear form if and only if $J\simeq J^{h,\sigma}$.  In
this case the form is necessarily nondegenerate 
and unique up to a nonzero complex scalar multiple. 
\end{enumerate}
Assume from now on, as in \eqref{eq:Lcpt}, that $\sigma$ defines a compact real
form of $L$.
\begin{enumerate}[resume]
\item Fix a $\sigma$-invariant Hermitian form $\langle,\rangle_V$ on
  $V$. The decomposition
$$V = \sum_{\mu\in \widehat{L}} V^\mu\otimes E_\mu$$ 
(cf.~\eqref{se:multspace}) is orthogonal. The chosen
unitary structure $\langle,\rangle_\mu$ on each irreducible
representation $\mu$ of $L$ gives rise to a natural Hermitian form
$\langle,\rangle^\mu_V$ on each multiplicity space $V^\mu$ so that  
$$(V,\langle,\rangle_V) = \sum_{\mu\in \widehat{L}}
(V^\mu,\langle,\rangle_V^\mu) \otimes
(E_\mu, \langle,\rangle_\mu).$$ 
\item Define
\begin{equation*}\begin{aligned}
\POS_V(\mu) &= \text{dimension of maximal positive definite subspace
  of $V^\mu$}\\
\NEG_V(\mu) &= \text{dimension of maximal negative definite subspace
  of $V^\mu$}\\
\RAD_V(\mu) &= \text{dimension of radical of $V^\mu$.}\\
\end{aligned}\end{equation*}
Then
$$(\POS_V,\NEG_V,\RAD_V)\colon \widehat{L} \rightarrow {\mathbb
  N}\times {\mathbb N}\times {\mathbb N}, \quad \POS_V + \NEG_V
+\RAD_V = \mult_V.$$  
Here $\mult_V(\mu)$ is the multiplicity of $\mu$ in $V$
(cf.~\eqref{se:multspace}).
\item Assume again that $J$ is irreducible and that $J\simeq
  J^{h,\sigma}$, so that there is a nondegenerate invariant Hermitian
  form $\langle,\rangle_J$. Multiplying this form by a negative scalar
  interchanges  $\POS_J$ and $\NEG_J$. The unordered pair of functions
  $\{\POS_J,\NEG_J\}$ is independent of the choice of form.
\end{enumerate}
\end{proposition}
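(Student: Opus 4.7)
The plan is to dispatch the six parts in order, with (1)--(3) being the substantive claims and (4)--(6) reducing to Schur's lemma and finite-dimensional linear algebra once the compact-form hypothesis is in force.

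For (1), I would start from the isotypic decomposition $V = \sum_\mu V^\mu \otimes E_\mu$ coming from admissibility and the reductivity of $L(\mathbb C)$. Applying the $L(\mathbb C)$-finite Hermitian dual of \eqref{eq:silentKfin} term by term gives $V^h \simeq \sum_\mu (V^\mu)^h \otimes E_\mu^h$, a direct sum (rather than product) precisely because of the $L(\mathbb C)$-finiteness convention built into that definition. The untwisted $L(\mathbb C)$-action on $E_\mu^h$ is not algebraic, but once one composes with $\sigma$ as in \eqref{eq:dualrep} the space $E_\mu^h$ becomes an algebraic $L(\mathbb C)$-module isomorphic to $E_{\sigma(\mu)}$. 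This identifies the $\sigma(\mu)$-isotypic space of $V^{h,\sigma}$ with $(V^\mu)^h$, finite-dimensional of the same dimension as $V^\mu$, so $V^{h,\sigma}$ is admissible with the asserted multiplicity spaces.

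Part (2) is the dictionary \eqref{eq:pairinv} together with Lemma \ref{lem:formdic}(3) characterizing Hermitian forms as fixed points of $T\mapsto T^h$; the discussion around \eqref{se:hermform} shows this is a conjugate-linear involution on $\Hom_{{\mathfrak h},L(\mathbb C)}(V,V^{h,\sigma})$, hence a real structure. For (3), the construction $V\mapsto V^{h,\sigma}$ is a contravariant exact functor on the category of admissible modules (admissibility tames the dualization inside each finite-dimensional multiplicity space), and because $\sigma^2=\Id$ the natural map $J\hookrightarrow (J^{h,\sigma})^{h,\sigma}$ is an isomorphism on admissibles, so the functor is an involutive anti-equivalence. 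In particular it exchanges subobjects and quotient objects and preserves irreducibility; Schur's lemma for admissible irreducibles then gives $\dim\Hom_{{\mathfrak h},L(\mathbb C)}(J,J^{h,\sigma})\le 1$, with equality iff $J\simeq J^{h,\sigma}$. Any nonzero such $T$ has kernel a proper submodule of $J$, hence $0$, and image a nonzero submodule of the irreducible $J^{h,\sigma}$, hence everything; so the corresponding form is nondegenerate.

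For (4)--(6) we invoke \eqref{eq:Lcpt}, so $\sigma(\mu)\simeq\mu$ and each $E_\mu$ carries a distinguished $L$-invariant unitary structure. Orthogonality of distinct isotypic components under any $\sigma$-invariant Hermitian form is standard from Schur for compact $L$; within a single component the chosen unitary structure identifies $E_\mu^h\simeq E_\mu$ equivariantly, so any invariant Hermitian form on $V^\mu\otimes E_\mu$ is uniquely $\langle,\rangle_V^\mu\otimes\langle,\rangle_\mu$ for a Hermitian form on the finite-dimensional multiplicity space $V^\mu$. This gives (4) and reduces (5) to the ordinary signature decomposition on each $V^\mu$. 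For (6), Schur produces a one-dimensional complex space of invariant operators with the real structure from (2), so two nonzero $\sigma$-invariant Hermitian forms on $J$ differ by a nonzero real scalar; positive scalars preserve the triple $(\POS_J,\NEG_J,\RAD_J)$ on each multiplicity space while negative scalars swap $\POS$ and $\NEG$ and preserve $\RAD$, so the unordered pair $\{\POS_J,\NEG_J\}$ is \emph{canonical}. The main obstacle is the careful bookkeeping in (1): one must verify that the $L(\mathbb C)$-finiteness convention of \eqref{eq:silentKfin} really does turn the naively infinite dual into a locally finite sum, and that the $\sigma$-twist in \eqref{eq:dualrep} sends the isotype of $\mu$ in $V$ to the isotype of the correct dual class $\sigma(\mu)$ in $V^{h,\sigma}$. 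Everything after (3) is formal.
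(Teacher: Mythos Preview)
Your argument is correct and is exactly the sort of verification the paper has in mind: the paper's entire proof of this proposition is the single sentence ``All of this is straightforward and elementary,'' and the preceding discussion in \eqref{se:multspace} already records the key computation $V^h = \sum_\mu (V^\mu)^h \otimes E_{\sigma(\mu)}$ that you reprove for part (1). Your elaboration of (3) via the involutive anti-equivalence $V\mapsto V^{h,\sigma}$ on admissibles, and of (4)--(6) via Schur and finite-dimensional signature theory, is precisely what ``straightforward and elementary'' is pointing at.
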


All of this is straightforward and elementary.

We call the triple of functions $(\POS_V,\NEG_V,\RAD_V)$ the {\em
  signature character of $(V,\langle,\rangle_V)$}. The form is
nondegenerate if and only if  $\RAD_V=0$; in that case we will often
omit it from the notation. 

\begin{corollary}
In the setting of Proposition \ref{prop:sigchar}, the admissible
irreducible $({\mathfrak h},L({\mathbb C}))$-module $J$ admits a
positive definite $\sigma$-invariant Hermitian form if and only if
\begin{enumerate}
\item $J$ is equivalent to the Hermitian dual irreducible
  $J^{h,\sigma}$, and
\item one of the functions $\POS_J$ or $\NEG_J$ is identically zero.
\end{enumerate}
\end{corollary}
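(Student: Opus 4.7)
The plan is to deduce both directions directly from Proposition \ref{prop:sigchar}, with Schur's lemma used to pin down the one-dimensional space of invariant forms in the presence of $J \simeq J^{h,\sigma}$.

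For the forward direction I would start with a positive definite $\sigma$-invariant Hermitian form $\langle,\rangle_J$ on $J$. Being nonzero, it is in particular a nonzero invariant sesquilinear form, so Proposition \ref{prop:sigchar}(3) immediately forces $J\simeq J^{h,\sigma}$, giving condition (1). Positive definiteness is by definition the vanishing of both $\NEG_J$ and $\RAD_J$, so condition (2) is automatic.

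For the converse, assume (1) and (2). By Proposition \ref{prop:sigchar}(3), the complex vector space $\Hom_{{\mathfrak h},L({\mathbb C})}(J,J^{h,\sigma})$ of $\sigma$-invariant sesquilinear forms on $J$ (see \eqref{eq:pairinv}) is one-dimensional, and every nonzero element of it is a nondegenerate form. Pick any nonzero intertwiner $T$. Its Hermitian transpose $T^h$ again lies in the same $\Hom$-space, so by Schur's lemma $T^h = cT$ for some $c\in{\mathbb C}^\times$; applying $h$ once more yields $T=\bar c T^h = |c|^2 T$, hence $|c|=1$. Choose $z\in{\mathbb C}^\times$ with $z/\bar z = c$ (possible since $|c|=1$); then $T':=zT$ satisfies $(T')^h = \bar z T^h = \bar z c T = z T = T'$, which by Lemma \ref{lem:formdic}(3) means the associated form $\langle,\rangle_{T'}$ is Hermitian. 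By Proposition \ref{prop:sigchar}(2), it is Hermitian $\sigma$-invariant; by nondegeneracy, $\RAD_J\equiv 0$, so $\POS_J + \NEG_J = \mult_J$ pointwise.

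Now condition (2) finishes the argument: if $\NEG_J\equiv 0$ then $\langle,\rangle_{T'}$ is positive definite, and if instead $\POS_J\equiv 0$ then (by Proposition \ref{prop:sigchar}(6), since replacing the form by its negative swaps $\POS_J$ and $\NEG_J$) the form $-\langle,\rangle_{T'}$ is positive definite, as desired. The only genuinely substantive step in this plan is the rescaling trick that turns an arbitrary nonzero sesquilinear intertwiner into a Hermitian one; everything else is recalling definitions from Proposition \ref{prop:sigchar} and its proof.
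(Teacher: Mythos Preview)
Your proof is correct and follows the approach the paper intends: the paper gives no explicit proof of this corollary, treating it as an immediate consequence of Proposition \ref{prop:sigchar} (which is itself described as ``straightforward and elementary''). Your rescaling argument to produce a Hermitian form from a sesquilinear one is exactly the content of the real structure in Proposition \ref{prop:sigchar}(2), and the rest is just unpacking parts (3), (5), and (6).
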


This corollary begins to suggest why one can hope to calculate the
unitary dual: the problem is not whether some matrix is positive
definite (which sounds analytic, difficult, and subject to subtle
error analysis) but whether some nonnegative integers are zero
(which sounds combinatorial).  In Section \ref{sec:sigcharsc} we will describe
techniques from \Cite{Vu} for reducing further to a computation of
{\em finitely many} integers.

We will need to consider Hermitian forms invariant under several
different real structures on the same pair.  Here is some machinery
for doing that.

\begin{definition}\label{def:pairaut} Suppose $({\mathfrak
    h},L({\mathbb C}))$ is a pair (Definition \ref{def:pair}). An {\it
    automorphism $\delta$ of the pair} consists of a complex Lie algebra
  automorphism
$$\delta\colon{\mathfrak h} \rightarrow {\mathfrak h}$$
and an algebraic group
  automorphism 
$$\delta\colon L({\mathbb C}) \rightarrow L({\mathbb C}),$$
subject to the requirement that the differential of the group
automorphism is equal to the restriction to ${\mathfrak l}$ of the Lie
algebra automorphism.

If $V$ is a representation of the pair (Definition \ref{def:pair}),
then the {\it twist of $V$ by $\delta$} is the representation
$V^\delta$ on the same vector space $V$ defined by
$$X\cdot_{\delta} v = \delta(X) \cdot v, \quad \ell\cdot_{\delta} v =
\delta(\ell) \cdot v \quad (X \in {\mathfrak h},\ \ell \in L({\mathbb
  C})).$$

Twisting defines a {\it right} action of the group of automorphisms on
representations:
$$(V^{\delta_1})^{\delta_2} = V^{\delta_1\circ \delta_2}.$$

A {\it $\delta$-form of $V$} is an isomorphism
$$D\colon V \rightarrow V^\delta.$$
That is, $D$ is a linear automorphism of the vector space $V$
satisfying
$$D(X\cdot v) = \delta(X)\cdot D(v), \quad D(\ell \cdot v) =
\delta(\ell)\cdot D(v) \quad (X \in {\mathfrak h},\ \ell \in L({\mathbb
  C})).$$

This last definition in particular may look more transparent if we use
representation notation: writing $\pi$
for the representations of ${\mathfrak h}$ and $L({\mathbb C})$ on
$V$, and $\pi^\delta$ for their twist by $\delta$, the condition is
$$D\pi = \pi^\delta D;$$
that is, that $D$ intertwines the representations $\pi$ and
$\pi^\delta$. 
\end{definition}

\begin{definition}\label{def:extpair} Suppose $({\mathfrak
    h},L({\mathbb C}))$ is a pair, and $\delta$ is an automorphism
with the property that 
$$\delta^m = \Ad(\lambda) \qquad (\lambda \in L({\mathbb C}))$$
is an inner automorphism (Definition \ref{def:pairaut}). The
  corresponding {\it extended group} is the extension 
$$1\rightarrow L({\mathbb C}) \rightarrow {}^\delta L({\mathbb C}) \rightarrow {\mathbb
  Z}/m{\mathbb Z} \rightarrow 1$$
with a specified generator $\delta_1$ mapping to $1\in {\mathbb
  Z}/m{\mathbb Z}$, and subject to the relations 
$$\delta_1\ell \delta_1^{-1} = \delta(\ell) \qquad (\ell \in L({\mathbb
  C})),$$
and
$$\delta_1^m = \lambda.$$
This extended group acts by automorphisms on ${\mathfrak h}$, and we
call $({\mathfrak h}, {}^\delta L({\mathbb C}))$ an {\it extended pair}.

It is convenient to abuse notation by writing $\delta$ for the
distinguished generator $\delta_1$ 
$${}^\delta L({\mathbb C}) = L({\mathbb C})\{e,\delta,
\delta^2,\dots,\delta^{m-1}\},$$
a union of $m$ cosets of $L({\mathbb C})$.

We write $\chi$ for the one-dimensional $({\mathfrak h}, {}^\delta
L({\mathbb C}))$-module ${\mathbb C}_\chi$ defined by
$$X\cdot z = 0, \quad \ell\cdot z = z, \quad \delta\cdot z = e^{2\pi
  i/m}z \qquad (X\in {\mathfrak h},\ \ell\in L({\mathbb C}),\ z\in
{\mathbb C}_\chi).$$
\end{definition}

\begin{proposition}\label{prop:extpairrep} Suppose we are in the setting of
  Definition \ref{def:extpair}.
\begin{enumerate}
\item Twisting by $\delta$ defines a permutation of order (dividing)
  $m$ of the set of irreducible $({\mathfrak h},L({\mathbb
    C}))$-modules. Tensoring with $\chi$ defines a permutation of
  order (dividing) $m$ of the set of irreducible $({\mathfrak
    h},{}^\delta L({\mathbb C}))$-modules. 
\end{enumerate}
Now fix an irreducible $({\mathfrak h},L({\mathbb
    C}))$-module $V$, and write $\Lambda \in \Aut(V)$ for the action
  of $\lambda = \delta^m$.
\begin{enumerate}[resume]
\item The module $V$ is
  fixed by $\delta$ if and only if there is a $\delta$-form $D$ of $V$
  (Definition \ref{def:pairaut}); that is, an intertwining operator
  from $V$ to $V^\delta$. 
\item If $D$ exists for $V$, then it is unique up to multiplication by
  a complex scalar; and $D^m$ must be a scalar multiple of $\Lambda$.
\item If $D$ exists for $V$, then we can arrange $D^m = \Lambda$. With
  this constraint, $D$ is unique up to multiplication by an $m$th root
  of $1$.
\item The irreducible $({\mathfrak h},L({\mathbb C}))$-module $V$
  extends to $({\mathfrak h},{}^\delta L({\mathbb C}))$ if and only if
  $V\simeq V^\delta$. In this case there are exactly $m$ extensions,
  which are cyclically permuted by tensoring with the character
  $\chi$ of Definition \ref{def:extpair}. The actions of $\delta$
  are given by the $m$ $\delta$-forms $D$ described in (4). 
\item The irreducible $({\mathfrak h},L({\mathbb C}))$-module $V$
  induces irreducibly to $({\mathfrak h},{}^\delta L({\mathbb C}))$ if
  and only if the $m$ twists $V, V^\delta,\ldots,V^{\delta^{m-1}}$ are
  all inequivalent. In this case
$$\Ind_{({\mathfrak h},L({\mathbb C}))}^{({\mathfrak
      h},{}^\delta L ({\mathbb C}))}(V) = V\oplus V^\delta\oplus
  \cdots\oplus V^{\delta^{m-1}}$$
is the unique irreducible $({\mathfrak h},{}^\delta L({\mathbb
  C}))$-module containing $V$. This induced module is isomorphic to
its tensor product with the character $\chi$ of Definition
\ref{def:extpair}. 
\end{enumerate}
\end{proposition}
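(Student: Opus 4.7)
The plan is to treat the proposition as Clifford theory for the cyclic extension ${}^\delta L(\mathbb{C})/L(\mathbb{C}) \simeq \mathbb{Z}/m\mathbb{Z}$, adapted to the pair setting. The sole nontrivial tool is Schur's lemma: since irreducible $(\mathfrak{h}, L(\mathbb{C}))$-modules in our setting are admissible, their endomorphism algebra is $\mathbb{C}$, so any two intertwining maps between irreducibles differ by a scalar. For part (1), I would note that $\delta^m = \Ad(\lambda)$ is inner, so $\Lambda = \pi_V(\lambda)$ provides a canonical isomorphism $V \simeq V^{\delta^m}$, making twisting by $\delta$ act with order dividing $m$ on isomorphism classes. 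Dually, the character $\chi^{\otimes m}$ is the trivial extended-pair module, because $\delta$ acts by $(e^{2\pi i/m})^m = 1$, consistent with the required trivial action of $\lambda = \delta^m$.

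Part (2) is essentially a rewriting of definitions: a $\delta$-form is by construction an isomorphism $V \to V^\delta$ in the category of $(\mathfrak{h}, L(\mathbb{C}))$-modules. For (3), I would iterate: a direct unfolding of the $\delta$-form relation shows $D^m$ intertwines $V$ with $V^{\delta^m}$, while $\Lambda$ does the same; Schur's lemma then forces $D^m = c\Lambda$ for a unique $c \in \mathbb{C}^\times$. Choosing any $m$-th root of $c^{-1}$ and rescaling $D$ yields (4): a normalized $D$ with $D^m = \Lambda$, unique up to multiplication by an $m$-th root of unity.

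For (5), given a normalized $D$, I would define the action of the distinguished generator of ${}^\delta L(\mathbb{C})$ on $V$ by $v \mapsto D(v)$. The two defining relations of the extended group ($\delta \ell \delta^{-1} = \delta(\ell)$ and $\delta^m = \lambda$) then hold by construction: the first from the $\delta$-form condition, the second from the normalization. The $m$ normalized choices of $D$ therefore produce $m$ distinct extensions of $V$, and tensoring with $\chi$ multiplies the action of $\delta$ by $e^{2\pi i/m}$, cyclically permuting them. For (6), when the twists $V, V^\delta, \dots, V^{\delta^{m-1}}$ are pairwise inequivalent, I would compute $\End_{(\mathfrak{h},L(\mathbb{C}))}(\Ind V) = \bigoplus_k \Hom(V^{\delta^k}, V^{\delta^k}) = \mathbb{C}^m$ by Schur, so any endomorphism of $\Ind V$ as an extended-pair module must commute with the cyclic $\delta$-action on this decomposition, leaving only scalars; hence $\Ind V$ is irreducible. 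The restriction to $(\mathfrak{h}, L(\mathbb{C}))$ is manifestly the sum of the twists, and since tensoring with $\chi$ does not alter this restriction, irreducibility and Schur force $\Ind V \otimes \chi \simeq \Ind V$.

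The main delicacy is the normalization step in (3)-(4): the choice of $m$-th root of $c$ is what creates a canonical bijection between normalized $\delta$-forms and extensions of $V$, and precisely underwrites the $\chi$-torsor structure on extensions in (5). A secondary point requiring care is distinguishing the two cases in (5) and (6) --- namely, ensuring that $V \simeq V^\delta$ (all twists equal) and the ``all twists distinct'' regime in (6) are genuinely the only ones relevant; intermediate possibilities (some twists equal, others not) would produce a divisor $m' \mid m$ stabilizing $V$, and the same analysis applied to the subextension ${}^{\delta^{m'}}L(\mathbb{C}) \supset L(\mathbb{C})$ reduces them to the two named cases. No deep input beyond Schur is needed.
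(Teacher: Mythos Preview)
Your proposal is correct and is precisely the standard Clifford-theory argument; the paper itself does not spell out a proof but simply remarks that ``this analysis of representations of cyclic group extensions is often referred to as `Clifford theory','' citing Clifford's 1937 paper. Your sketch unpacks exactly that argument, including the handling of the intermediate divisor case $m_0\mid m$, which the paper likewise notes and sets aside.
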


This analysis of representations of cyclic group extensions is often
referred to as ``Clifford theory'' (\cite{Cliff}*{pages 547--548}),
although much of it goes back at least to Frobenius.  

\begin{definition}\label{def:typeext}
In the setting of Definition \ref{def:extpair}, the
modules in (5) are called {\em type $1$}, and those in
(6) {\em type $m$}, for the multiplicity on restriction to
$({\mathfrak h}, L({\mathbb C}))$. In the same way, we call an
irreducible $({\mathfrak h}, L({\mathbb C}))$-module {\em type one} if
its equivalence class is fixed by $\delta$, and {\em type $m$} if the
orbit under twisting has order $m$.

There is no difficulty in analyzing the cases 
when the equivalence class of $V$ is fixed by some power
$\delta^{m_0}$ (with $m_0$ a proper divisor of $m$). Since we will be
interested in the case $m=2$, we have omitted these {\em type $m_0$}
cases.
\end{definition}

We turn now to an analysis of invariant forms for
extended groups.

\begin{lemma}\label{lem:changeform} Suppose $({\mathfrak
    h},L({\mathbb C}))$ is a pair endowed with a real structure
  $\sigma$ (Definition \ref{def:pair}); we do not require that
  $\sigma^2=1$. Suppose also that
\begin{enumerate}[label=\alph*)]
\item $\epsilon$ and $\delta$ are two automorphisms of the pair
  (Definition \ref{def:pairaut});
\item $V$, $W$, $V'$, and $W'$ are four $({\mathfrak h},L({\mathbb
    C}))$-modules;
\item $\langle \cdot,\cdot\rangle\colon V\times W \rightarrow {\mathbb
    C}$ is a $\sigma$-invariant sesquilinear pairing
  (\eqref{se:hermform}); and
\item $D\colon V' \rightarrow V^\delta$ and $E\colon W'\rightarrow
  W^\epsilon$ are $({\mathfrak h},L({\mathbb C}))$-maps to the
  indicated twisted modules (Definition \ref{def:pairaut}).
\end{enumerate}
Then the form
$$\langle\cdot,\cdot\rangle'\colon V' \times W' \rightarrow {\mathbb
  C}, \quad\langle v',w'\rangle' =_{\text{def}} \langle Dv',Ew'\rangle
\quad (v' \in V', \ w' \in W')$$
is an $\epsilon^{-1}\sigma\delta$-invariant sesquilinear pairing.
\end{lemma}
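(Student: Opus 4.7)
The plan is to verify the claim by direct computation, unwinding the definitions. First I would observe that sesquilinearity of $\langle\cdot,\cdot\rangle'$ is automatic: complex-linearity of $D$ and $E$ combined with the sesquilinearity of the original pairing $\langle\cdot,\cdot\rangle$ on $V\times W$ gives it immediately. So the real content is to check the two invariance properties under the new real structure $\epsilon^{-1}\sigma\delta$, one for $\mathfrak{h}$ and one for $L(\mathbb{C})$.

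The key bookkeeping point, and the only place a mistake could realistically slip in, is to remember that $D$ is a module map into the \emph{twisted} module $V^\delta$: what this means concretely, per Definition \ref{def:pairaut}, is that as maps of the underlying vector space $V$,
\[ D(X\cdot v') = \delta(X)\cdot D(v'), \qquad D(\ell\cdot v') = \delta(\ell)\cdot D(v') \]
for $X\in \mathfrak{h}$ and $\ell\in L(\mathbb{C})$, where on the right we use the \emph{original} actions on $V$. The analogous identities hold for $E$ with $\epsilon$ in place of $\delta$.

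With this in hand, the Lie-algebra condition is a three-line computation: starting from $\langle X\cdot v',w'\rangle' = \langle D(X\cdot v'),Ew'\rangle = \langle \delta(X)\cdot Dv',Ew'\rangle$, apply the $\sigma$-invariance of the original form to move $\delta(X)$ across to obtain $\langle Dv', -\sigma\delta(X)\cdot Ew'\rangle$. Then absorb this back into the primed form by writing $-\sigma\delta(X)\cdot Ew' = \epsilon(-\epsilon^{-1}\sigma\delta(X))\cdot Ew' = E\bigl(-\epsilon^{-1}\sigma\delta(X)\cdot w'\bigr)$, which is exactly $\langle v',-\epsilon^{-1}\sigma\delta(X)\cdot w'\rangle'$. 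The group condition follows by the same pattern, using $\delta(\ell)^{-1}=\delta(\ell^{-1})$ and the corresponding identity for $\epsilon$.

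I do not anticipate a real obstacle here; the only subtleties are notational, namely keeping straight on which copy of the underlying vector space each action is being computed, and noting that since neither $\sigma^2=\Id$ nor $\epsilon,\delta$ being involutions is used, the statement and argument go through for arbitrary $\sigma,\epsilon,\delta$ as stated. (This is also why the conclusion involves the non-symmetric combination $\epsilon^{-1}\sigma\delta$ rather than something more symmetric.)
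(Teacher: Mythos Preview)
Your proposal is correct and is exactly the direct verification the paper has in mind; the paper itself simply says ``This is immediate from the definitions'' without writing out the computation. Your three-line Lie-algebra check and the parallel group check are the natural unwinding of those definitions, and your observation that neither $\sigma^2=\Id$ nor any involutivity of $\delta,\epsilon$ is used is accurate.
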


This is immediate from the definitions.

\begin{proposition}\label{prop:changeform} Suppose $({\mathfrak
    h},L({\mathbb C}))$ is a pair endowed with two real structures
  $\sigma_1$ and $\sigma_2$ (Definition \ref{def:pair}); we assume
  that $\sigma_i^2 = 1$. Suppose $V$ is an admissible $({\mathfrak
    h},L({\mathbb C}))$-module, and that $V$ is endowed with a
  nondegenerate $\sigma_1$-invariant Hermitian form
  $\langle,\rangle^{\sigma_1}$ that is unique up to real multiple. Then 
\begin{enumerate}
\item The maps
$$ \delta = \sigma_1^{-1}\sigma_2, \qquad \epsilon =
\sigma_2\sigma_1^{-1} $$
define automorphisms of the pair (Definition \ref{def:pairaut}). 
\item Since $\sigma_i^2 = 1$, $\delta$ and $\epsilon$ are inverse to
  each other. It is also the case that they are conjugate by
  $\sigma_1$: $\sigma_1\delta\sigma_1^{-1} = \epsilon$.
\item The $\sigma_1$- and $\sigma_2$-hermitian duals of $V$ differ by
  twisting by $\delta$ or $\epsilon$:
$$V^{h,\sigma_2}\simeq [V^{h,\sigma_1}]^{\delta}, \qquad
[V^{\epsilon}]^{h,\sigma_1} \simeq V^{h,\sigma_2}.$$
(notation \eqref{eq:dualrep} and Definition \ref{def:pairaut}).
\item The following conditions are equivalent.
\begin{enumerate}
\item There is a nondegenerate $\sigma_2$-invariant form
  $\langle,\rangle^{\sigma_2}$ on $V$. In this case the form is
  unique up to a real multiple.
\item $V$ is equivalent to its twist by $\delta$, by an
  isomorphism 
$$D\colon V \rightarrow V^{\delta}$$
(cf. Definition \ref{def:pairaut}). In this case the isomorphism $D$
is unique up to a complex multiple.
\item $V$ is equivalent
to its twist by $\epsilon$, by an isomorphism 
$$E \colon V \rightarrow V^{\epsilon}.$$
 In this case the isomorphism $E$ is unique up to a complex
 multiple. One candidate for $E$ is $D^{-1}$.
\end{enumerate}
Assume now that $\delta^m = \Ad(\lambda)$ ($\lambda\in
L({\mathbb C})$) as in Definition \ref{def:extpair}; equivalently,
that $\epsilon^m = \Ad(\lambda^{-1}) = \Ad(\sigma_1(\lambda))$.  Write
$\Lambda$ for the 
action on $V$ of $\lambda$, and $\Lambda_1$ for the action of
$\sigma_1(\lambda)$. Then conditions (a)--(c) are also
equivalent to the following.
\begin{enumerate}[resume]
\item The $({\mathfrak h},L({\mathbb C}))$-module $V$ extends to an
  $({\mathfrak h},{}^{\delta}L({\mathbb C}))$-module, with
  $\delta$ acting by an isomorphism
$$D\colon V \rightarrow V^{\delta}$$
as in (b), subject to the additional requirement $D^m = \Lambda$. If
any such extension exists then there are exactly $m$, the
operators $D$ differing by multiplication by an $m$th root of 1.
\item The $({\mathfrak h},L({\mathbb C}))$-module $V$ extends to an
  $({\mathfrak h},{}^{\epsilon}L({\mathbb C}))$-module, with
  $\epsilon$ acting by an isomorphism
$$E\colon V \rightarrow V^{\epsilon}$$
as in (b), subject to the additional requirement $E^m =
\Lambda_1$. If any such extension exists then there are exactly $m$, the
operators $E$ differing by multiplication by an $m$th root of 1. One
candidate for $E$ is the inverse Hermitian transpose of $D$ (defined using
$\langle,\rangle^{\sigma_1}$).
\end{enumerate}
\item If the automorphism $\delta = \Ad(d)$ is inner (that is, given by the
  adjoint action of an element $d\in L({\mathbb C})$ with $d^m =
  \lambda$) then the conditions of (4) are automatically satisfied;
  the intertwining operator $D$ may be taken as the action of $d$, and
  $E$ as the action of $\sigma_1(d)$.
\end{enumerate}
Assume now that the conditions of (4) are satisfied, that
$\delta^m = \Ad(\lambda)$, and that the operators $D$ and $E$
are chosen as in (4)(d) and (4)(e).
\begin{enumerate}[resume]
\item There is a nonzero complex number $\xi(D,E)$ so that
$$E^{-1} = \xi(D,E)D.$$
The scalar $\xi(D,E)$ is an $m$th root of 1.
\item The form
$$\langle v, w\rangle' =_{\text{def}} \langle
Dv,Ew\rangle^{\sigma_1}$$
is a $\sigma_1$-invariant sesquilinear form on $V$
(\eqref{eq:pairinv}), and by hypothesis in (4) is therefore a multiple
of $\langle,\rangle^{\sigma_1}$:
$$\langle Dv,Ew\rangle^{\sigma_1} = \omega(D,E)\langle
v,w\rangle^{\sigma_1}.$$
Equivalently,
$$\langle Dv,w\rangle^{\sigma_1} = \omega(D,E)\langle
v,E^{-1}w\rangle^{\sigma_1} = \omega(D,E)\overline{\xi(D,E)} \langle
v,Dw\rangle^{\sigma_1}.$$ 
\item The scalar $\omega(D,E)$ satisfies
$$\omega(D,E)^m=1, \quad \omega(\tau_1 D,\tau_2E) =
\tau_1\tau_2^{-1}\omega(D,E) \quad
(\tau_i \in {\mathbb C}^\times, \ \tau_i^m = 1).$$
\item If $\zeta$ is any square root of $\omega(D,E)\overline{\xi(D,E)}$, then
$$\langle v,w\rangle^{\sigma_2} =_{\text{def}} \zeta^{-1}\langle
Dv,w\rangle^{\sigma_1} = \zeta \langle
v,Dw\rangle^{\sigma_1} $$
is a $\sigma_2$-invariant Hermitian form on $V$.
\end{enumerate}
\end{proposition}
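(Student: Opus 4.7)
The proposition splits into three stages: parts (1)--(3) set up the bookkeeping of twists and Hermitian duals; parts (4)--(5) establish the equivalences and existence statements for the intertwiners $D$ and $E$; and parts (6)--(9) carry out explicit scalar computations that produce the $\sigma_2$-invariant Hermitian form on $V$. Parts (1) and (2) are formal: each $\sigma_i$ is a conjugate-linear involution on both ${\mathfrak h}$ and $L({\mathbb C})$, so $\delta = \sigma_1\sigma_2$ and $\epsilon = \sigma_2\sigma_1$ are complex-linear automorphisms of the pair, and $\delta\epsilon = 1$ and $\sigma_1\delta\sigma_1^{-1} = \epsilon$ follow mechanically from $\sigma_i^2 = 1$. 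Part (3) is a direct verification on the common underlying space $V^h$: the Lie algebra actions on $V^{h,\sigma_2}$ and $(V^{h,\sigma_1})^\delta$ agree because of the identity $\sigma_1\circ\delta = \sigma_2$ from (2).

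For part (4), I would combine Proposition \ref{prop:sigchar}(3) with part (3): a nondegenerate $\sigma_2$-invariant form corresponds to an isomorphism $V \simeq V^{h,\sigma_2} \simeq (V^{h,\sigma_1})^\delta$, and using the isomorphism $V \simeq V^{h,\sigma_1}$ supplied by the given $\sigma_1$-form, this becomes $V \simeq V^\delta$, establishing (a)$\Leftrightarrow$(b). The equivalence (b)$\Leftrightarrow$(c) uses $\epsilon = \delta^{-1}$: if $D\colon V\to V^\delta$ is an isomorphism, so is $D^{-1}\colon V\to V^\epsilon$, which explains the candidate $E = D^{-1}$. All uniqueness statements follow from Schur's lemma, since the uniqueness hypothesis on the $\sigma_1$-form forces $V$ to be irreducible. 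Conditions (d) and (e) are immediate applications of Proposition \ref{prop:extpairrep}(4). For (5), take $D = \pi(d)$ and $E = \pi(\sigma_1(d))$; these intertwine by the definitions of $\delta = \Ad(d)$ and $\epsilon = \Ad(\sigma_1(d))$, and $D^m = \pi(d^m) = \Lambda$ and $E^m = \pi(\sigma_1(d)^m) = \Lambda_1$ are then automatic.

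For (6), both $E^{-1}$ and $D$ are intertwiners $V\to V^\delta$, so Schur gives $E^{-1} = \xi D$ for a scalar; raising to the $m$th power produces $\xi^m\Lambda = \Lambda_1^{-1}$, and the identity $\Lambda\Lambda_1 = 1$ (which holds when the representative $\lambda$ is chosen compatibly, exploiting that $\lambda\sigma_1(\lambda)\in Z(L({\mathbb C}))$ from $\delta^m\epsilon^m = 1$) yields $\xi^m = 1$. Part (7) is a direct computation showing that the specified form is $\sigma_1$-invariant sesquilinear by virtue of $\sigma_1\circ\delta = \sigma_2 = \epsilon\circ\sigma_1$; uniqueness of the $\sigma_1$-form then defines the scalar $\omega(D,E)$, and the rewritings come from substituting $E^{-1} = \xi D$. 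For (8), iterate $\langle Dv,Ew\rangle^{\sigma_1} = \omega\langle v,w\rangle^{\sigma_1}$ a total of $m$ times to get $\langle\Lambda v,\Lambda_1 w\rangle^{\sigma_1} = \omega^m\langle v,w\rangle^{\sigma_1}$; $\sigma_1$-invariance collapses the left side to $\langle v,w\rangle^{\sigma_1}$, giving $\omega^m = 1$. The scaling law $\omega(\tau_1 D,\tau_2 E) = \tau_1\tau_2^{-1}\omega(D,E)$ is immediate from the defining equation. Finally (9) is the culmination: writing out the Hermitian condition on $\zeta^{-1}\langle Dv,w\rangle^{\sigma_1}$ and applying the rewriting from (7) with the roles of $v,w$ swapped yields $\zeta/\bar\zeta = \omega\bar\xi$; since $|\omega| = |\xi| = 1$ from (6) and (8), the choice $|\zeta| = 1$ with $\zeta^2 = \omega\bar\xi$ is possible, while $\sigma_2$-invariance of $\langle Dv,w\rangle^{\sigma_1}$ is part of the computation underlying (7).

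The main obstacle is the normalization identity $\Lambda\Lambda_1 = 1$, needed for both $\xi^m = 1$ and $\omega^m = 1$. The element $\lambda\sigma_1(\lambda)$ lies in $Z(L({\mathbb C}))$ but generally acts on $V$ as a nontrivial scalar; one must either adjust the representative $\lambda$ within its central coset (permitted because $\delta^m = \Ad(\lambda)$ only fixes $\lambda$ modulo the center) or absorb the central scalar uniformly into the choices of $D$, $E$, and $\Lambda_1$. Once this bookkeeping is fixed, the remainder consists of routine manipulations of scalars, Hermitian transposes, and invariant forms using the uniqueness supplied by Schur.
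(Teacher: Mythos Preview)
Your overall architecture matches the paper's, and parts (1)--(5), (7), (8), and (9) proceed essentially as the paper does. The genuine gap is in your treatment of (6), precisely the point you flag as ``the main obstacle.'' You try to obtain $\xi^m = 1$ from an identity $\Lambda\Lambda_1 = 1$, but as you correctly suspect, $\lambda\sigma_1(\lambda)$ is only central, not equal to $1$, and the freedom to adjust $\lambda$ by a central element does not in general allow you to arrange $\lambda\sigma_1(\lambda) = 1$: replacing $\lambda$ by $z\lambda$ changes $\lambda\sigma_1(\lambda)$ to $z\sigma_1(z)\lambda\sigma_1(\lambda)$, and the equation $z\sigma_1(z) = (\lambda\sigma_1(\lambda))^{-1}$ need not be solvable in $Z(L({\mathbb C}))$.

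The paper avoids this normalization entirely by exploiting the $\sigma_1$-form itself. The $\sigma_1$-invariance condition $\langle \ell v, w\rangle^{\sigma_1} = \langle v, \sigma_1(\ell)^{-1} w\rangle^{\sigma_1}$ says exactly that the Hermitian transpose of $\Lambda$ (action of $\lambda$) is $\Lambda_1^{-1}$ (action of $\sigma_1(\lambda)^{-1}$). Taking the Hermitian transpose of the equation $\Lambda_1^{-1} = \xi^m\Lambda$ gives $\Lambda = \overline{\xi}^m\Lambda_1^{-1}$; combining the two yields $|\xi|^{2m} = 1$, hence $|\xi| = 1$. The same Hermitian-transpose relation handles (8): iterating (7) gives $\langle \Lambda v, w\rangle^{\sigma_1} = \omega^m\langle v,\Lambda_1^{-1}w\rangle^{\sigma_1}$, and since $\Lambda^h = \Lambda_1^{-1}$ the two sides agree, forcing $\omega^m = 1$. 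With $|\xi| = |\omega| = 1$ in hand, your argument for (9) goes through (indeed $\overline\zeta = \zeta^{-1}$ is exactly what is needed). So the single missing idea is: rather than normalize $\lambda$, use the built-in relation $\Lambda^h = \Lambda_1^{-1}$ coming from $\sigma_1$-invariance of the given form.
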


\begin{proof}
Parts (1), (2), and (3) are immediate from the definitions. The equivalence
of (a)--(c) in (4) is also easy, and then (d)--(e) is a consequence of
Proposition \ref{prop:extpairrep}. 

The existence of the scalar in part (6) follows from the
description of $E$ in (4)(c). Raising this equation to the $m$th power
gives
$$\Lambda_1^{-1} = \xi(D,E)^m \Lambda.$$
The operators $\Lambda$ and $\Lambda_1^{-1}$ are Hermitian
transposes of each other with respect to the form
$\langle,\rangle^{\sigma_1}$. Taking Hermitian transpose of the last
equation gives
$$\Lambda = \overline{\xi(D,E)}^m \Lambda_1^{-1}.$$
Combining these two equations gives $|\xi(D,E)|^{2m} = 1$, which is the
last claim in (6).

Part (7) follows from Lemma \ref{lem:changeform}.

For (8), applying the formula in (7) $m$ times gives
$$\langle D^m v,w\rangle^{\sigma_1} = \omega(D,E)^m \langle
v,E^{-m}w\rangle^{\sigma_1}.$$
Again using the fact that the operators $D^m =\Lambda$ and
$E^{-m}=\Lambda_1^{-1}$ are Hermitian transposes of each other, it follows that
$\omega(D,E)^m = 1$. The formula with $\tau_i$ is easy.

For (9), that $\langle v,w\rangle^{\sigma_2} = \langle
Dv,w\rangle^{\sigma_1}$ is $\sigma_2$-invariant and sesquilinear
follows from Lemma \ref{lem:changeform}. That the indicated multiple
is Hermitian is an easy consequence of the last formula
$$\zeta^{-1}\langle Dv,w\rangle^{\sigma_1} = \zeta\langle
v,Dw\rangle^{\sigma_1} = \overline{\zeta^{-1}\langle
  Dw,v\rangle^{\sigma_1}}$$ 
in (7).  (We need to use that $\overline\zeta = \zeta^{-1}$, which
follows from (6) and (8).)
\end{proof}

\section{Interlude: realizing standard modules}\label{sec:stdmods}
\setcounter{equation}{0}
We described briefly in \eqref{se:ds} Langlands' original realization
of the standard $({\mathfrak g}_0,K)$-modules using parabolic induction
from discrete series representations. From time to time we are going
to need other realizations of the standard modules, notably those
found by Zuckerman and described in \Cite{Vgreen}.  The purpose of
this section is to collect some general facts about these other
realizations, which require some time to state precisely. The reader
will probably prefer to pass over this section, and consult it only as
necessary later on.

The main theorems of this section have their origins in
\Cite{Vgreen}*{Theorem 6.6.15} (which is not proven there) and
\Cite{IC3}*{Theorem 1.13} (which considers only integral infinitesimal
character).  Results 
in the generality that we need, with proofs, may be found in
\Cite{KV}*{Chapter 11}; but translating the formulation there into what
we want is still a bit of an exercise, most of which is silently left
to the reader.

\begin{subequations}\label{se:stdLparam}
We begin as in \eqref{se:ds} with a $\theta$-stable Cartan subgroup
\begin{equation}
H = TA \subset G, \qquad T = H^\theta = H\cap K, \qquad A =
\exp({\mathfrak h_0}^{-\theta}).
\end{equation}
and a weak Langlands parameter (Definition \ref{def:genparam})
\begin{equation}
\Gamma = (H,\gamma, R^+_{i{\mathbb R}}) = (\Lambda,\nu)
\end{equation}
%
Just as in \eqref{se:ds}, we consider the real Levi subgroup
\begin{equation} 
MA = \text{centralizer in $G$ of $A$} \supset H.
\end{equation}
The roots of $H$ in $MA$ are precisely the imaginary roots (Definition
\ref{def:rhoim}); so what is left are the non-imaginary roots. A real
parabolic subgroup $P=MAN$ of $G$ is called {\em type L
  with respect to $\Gamma$} (see \Cite{KV}*{page 704}) if whenever
$\alpha$ is a (necessarily 
non-imaginary) root of ${\mathfrak h}$ in ${\mathfrak g}$ such that
\begin{equation}\label{eq:typeL}
\langle d\gamma,\alpha^\vee\rangle \in {\mathbb Z}^{>0} \quad\text{and}\quad
\langle d\gamma, -\theta\alpha^\vee \rangle \ge 0, \end{equation}
then $\alpha$ is a root of $H$ in ${\mathfrak n}$.
The ``L'' stands for ``Langlands,'' since we will use this hypothesis
to make Langlands' construction of standard modules. Notice that
``type L'' is a weaker hypothesis than ``$\RE\nu$ weakly
dominant,'' which we assumed in \eqref{se:ds}. Notice also that the
parabolic subgroup $P=MAN$ is pointwise fixed by the real form $\sigma_0$, and
that the Cartan involution $\theta$ carries $P$ to the opposite
parabolic subgroup
\begin{equation}\label{eq:Pop}
P^{\text{op}} = MAN^{\text{op}} = MA \theta(N) = \theta(P).
\end{equation}
\end{subequations}

\begin{theorem}[\cite{KV}*{Theorem 11.129}] \label{thm:Lrealiz} Suppose
  $\Gamma = (\Lambda,\nu)$ is a weak Langlands parameter, and  
  $P = MAN$ is a parabolic subgroup of type $L$ with respect to
  $\Gamma$ (\eqref{se:stdLparam}). Then the standard quotient-type
  module 
$$I_{\quo}(\Gamma) = \Ind_{MAN}^G (D(\Lambda) \otimes \nu
\otimes 1)$$
is independent of the choice of $P$ (of type $L$). (Here we write
$D(\Lambda)$ for the limit of discrete series representation of $M$
introduced in \eqref{se:ds}.) It is a direct sum 
of distinct Langlands standard quotient-type modules (Theorem
\ref{thm:LC}), so its largest completely reducible quotient
$J(\Gamma)$ is the corresponding direct sum of distinct irreducible
modules.

In the same way, the standard sub-type module
$$I_{\sub}(\Gamma) = \Ind_{MAN^{\text{op}}}^G D(\Lambda) \otimes \nu
\otimes 1)$$ is independent of the choice of $P$ (of type $L$). It is
a direct sum of distinct Langlands standard sub-type modules (Theorem
\ref{thm:LC}), so its socle $J(\Gamma)$ is the corresponding direct
sum of distinct irreducible modules.
\end{theorem}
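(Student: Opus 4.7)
My plan is to reduce the theorem to the classical Langlands setting of \eqref{se:ds} in two stages. The type~L condition is a genuine weakening of strict dominance of $\RE\nu$ for $P$, so one has to both (i) prove that different type~L choices give canonically isomorphic induced modules and (ii) explicitly break up the induced module when it is no longer a single Langlands standard, using the first character identity of \Cite{Sch2}. Throughout, the real parabolic $P_\alpha = M_\alpha A_\alpha N_\alpha$ obtained by adjoining a real root $\alpha$ with $\langle\nu,\alpha^\vee\rangle=0$ plays the same role it does in \eqref{se:ds}.

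For independence, let $P=MAN$ and $P'=MAN'$ both be type~L for $\Gamma$. The standard Knapp--Stein intertwining operator
\[
A(P',P)\colon \Ind_P^G(D(\Lambda)\otimes\nu\otimes 1)\longrightarrow \Ind_{P'}^G(D(\Lambda)\otimes\nu\otimes 1)
\]
factors as a composition of rank-one operators, one for each non-imaginary root $\alpha$ positive for $P$ and negative for $P'$. A given rank-one factor fails to be an isomorphism only if $\langle d\gamma,\alpha^\vee\rangle \in \mathbb Z^{>0}$ with $\langle d\gamma,-\theta\alpha^\vee\rangle\ge 0$, or the analogous condition with $\alpha$ replaced by $-\alpha$. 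In the first case, type~L for $P'$ would force $\alpha \in R(H,N')$, contradicting $\alpha\in R(H,-N')$; in the second, type~L for $P$ forces $-\alpha\in R(H,N)$, contradicting $\alpha\in R(H,N)$. So every rank-one factor is invertible and $A(P',P)$ is an isomorphism of $(\mathfrak g_0,K)$-modules.

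For the decomposition I would induct on the cardinality of $S^+ = \{\alpha\in R_{\mathbb R}\cap R(H,N) : \langle\nu,\alpha^\vee\rangle = 0\}$. If $S^+=\emptyset$, the type~L hypothesis combined with weak dominance of $d\lambda$ on $R^+_{i\mathbb R}$ implies that $\RE\nu$ is strictly dominant for the non-imaginary roots of $N$, so $\Gamma$ satisfies the Langlands hypotheses of Theorem~\ref{thm:LC} with respect to $P$ and $I_{\quo}(\Gamma)$ is already a single Langlands standard quotient-type module. If $\alpha\in S^+$, form $P_\alpha\supset P$ as in \eqref{se:ds} and use induction by stages
\[
I_{\quo}(\Gamma) = \Ind_{P_\alpha}^{G}\Bigl(\bigl[\Ind_{M(A\cap M_\alpha)(N\cap M_\alpha)}^{M_\alpha} D(\Lambda)\otimes 1\otimes 1\bigr]\otimes\nu|_{A_\alpha}\otimes 1\Bigr).
\]
Because $\nu|_{A\cap M_\alpha}=0$ the inner induction is unitary, and since the $\alpha$-factor of $M_\alpha$ is locally $SL(2,\mathbb R)$, Schmid's first character identity decomposes the inner induced module as a sum of one or two limits of discrete series of $M_\alpha$ attached to the Cayley transform $H_\alpha$ of $H$ through $\alpha$. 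Reinducing yields $I_{\quo}(\Gamma) = \bigoplus_i I_{\quo}(\Gamma_i)$, where each $\Gamma_i$ is a weak Langlands parameter on the more compact Cartan $H_\alpha$ with strictly fewer elements in its analogous $S^+$-set. The inductive hypothesis applies to each summand, and the Langlands quotient statement for $J(\Gamma)$ follows because cosocle commutes with finite direct sums.

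The sub-type statements are obtained by running the same two-stage argument with $\Ind_{P^{\text{op}}}$ in place of $\Ind_P$: the Knapp--Stein factorization and the rank-one invertibility analysis go through unchanged (with $\alpha$ and $-\alpha$ exchanged in the two cases of the second-paragraph argument), and the Schmid-identity decomposition is the same since it takes place inside $M_\alpha$ and does not see the choice of nilradical in $G$. Socle commutes with finite direct sums, giving the $J(\Gamma)$-socle statement. The principal obstacle is the bookkeeping in the inductive step: one must verify that when several commuting real root zeros of $\nu$ are eliminated by successive Cayley transforms, the resulting parameters $\Gamma_i$ together with the associated characters of the $\rho_{\abs}$-double covers on the new tori are pairwise inequivalent. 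This distinctness check is what fills the technical core of Chapter~11 of \Cite{KV} and ultimately reduces to a parameter-level calculation inside each $SL(2)$-subgroup $M_\alpha$.
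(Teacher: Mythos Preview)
The paper does not prove this theorem; it simply cites \Cite{KV}, Theorem 11.129, and the surrounding discussion in \eqref{se:ds} only sketches the Schmid-identity decomposition step (your second stage). Your two-stage strategy---Knapp--Stein factorization for independence of the type~L choice, then induction via Cayley transforms for the direct-sum decomposition---is the correct outline of the argument in \Cite{KV}, Chapter~11.

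There are, however, two genuine errors in your sketch. First, your base case is wrong as stated: $S^+=\emptyset$ does \emph{not} imply that $\RE\nu$ is strictly dominant for the non-imaginary roots of $N$. The type~L condition \eqref{eq:typeL} is an integrality condition, not a positivity one; complex roots $\alpha$ with $\langle d\gamma,\alpha^\vee\rangle$ nonintegral may freely have $\RE\langle\nu,\alpha^\vee_{\mathfrak a}\rangle\le 0$. What $S^+=\emptyset$ actually buys you (once $S^+$ is corrected as below) is that condition~(5) of Theorem~\ref{thm:LC} holds, so $\Gamma$ is a genuine Langlands parameter (or $D(\Lambda)=0$ if condition~(6) fails). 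You then invoke your already-proved independence to identify the type~L induction with the induction from any parabolic making $\RE\nu$ weakly dominant, which is Langlands' construction in \eqref{se:ds}.

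Second, your set $S^+$ must be restricted to real roots $\alpha$ with $\langle\nu,\alpha^\vee\rangle=0$ \emph{and} $\gamma_{\mathfrak q}(m_\alpha)=-1$. When the parity is even, the inner unitarily induced representation of $M_\alpha$ is irreducible and Schmid's first identity gives no decomposition; such $\alpha$ are not obstructions to condition~(5) and should not enter the induction. As written, your inductive step would stall at an even-parity root. With these two corrections the argument goes through, and your closing remark about the distinctness bookkeeping correctly identifies where the remaining technical work in \Cite{KV} lies.
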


The two keys to making this construction are Mackey's notion of
induction and Harish-Chandra's description of the limit of discrete
series representation $D(\Lambda)$ for $M$.  The representation
$D(\Lambda)$ can be constructed using Zuckerman's ``cohomological
induction'' functor for the Borel subalgebra of ${\mathfrak m}$
determined by $R^+_{i{\mathbb R}}$.  (This is a consequence of Theorem
\ref{thm:VZrealiz} specialized to the case when $H$ is compact; the
result, due to Zuckerman, was one of his original
motivations for introducing cohomological induction.)

\begin{subequations}\label{se:stdVZparam}
In an exactly parallel way, still referring to a weak Langlands 
parameter $\Gamma$ as above, we can consider the subgroup
\begin{equation}
L = \text{centralizer in $G$ of $T_0$} \supset H.
\end{equation}
The roots of $H$ in $L$ are precisely the real roots (Definition
\ref{def:rhoim}); so what is left are the non-real roots.
A $\theta$-stable parabolic subalgebra ${\mathfrak q} = {\mathfrak l}
+ {\mathfrak u}$ of ${\mathfrak g}$ is called {\em type VZ with
  respect to $\Gamma$} (see \Cite{KV}*{page 706}) if (first) every root
in $R^+_{i{\mathbb R}}$ 
is a root in ${\mathfrak u}$; and (second) whenever $\alpha$ is a
(necessarily non-real) root of ${\mathfrak h}$ in ${\mathfrak g}$ such that 
\begin{equation}\label{eq:typeVZ}
\langle d\gamma,\alpha^\vee\rangle \in {\mathbb Z}^{>0} \quad\text{and}\quad
\langle d\gamma, \theta\alpha^\vee \rangle \ge 0, \end{equation}
then $\alpha$ is a root of $H$ in ${\mathfrak u}$.
The ``VZ'' stands for ``Zuckerman,'' since we will use this hypothesis
to make Zuckerman's construction of standard modules. 

Notice that ${\mathfrak l}$ is preserved by both the real form
$\sigma_0$ and by $\theta$. The parabolic ${\mathfrak q}$ is preserved
by $\theta$, but the real form $\sigma_0$ sends ${\mathfrak q}$ to the
opposite parabolic
\begin{equation}\label{eq:qop}
{\mathfrak q}^{\text{op}} = {\mathfrak l} + {\mathfrak u}^{\text{op}}
= {\mathfrak l} + \overline{\mathfrak u} = \overline{\mathfrak q}.
\end{equation}
These properties are complementary to those of the 
parabolic $P$ described in \eqref{eq:Pop}.

For Zuckerman's construction, we first construct a
weak Langlands parameter 
\begin{equation}\label{eq:gammaq1} \Gamma_{\mathfrak q} = (H,\gamma_{\mathfrak q},
  \emptyset) = (\Lambda_{\mathfrak q},\nu)  \in \widehat T \times
  {\mathfrak a}^* \end{equation}  
for $L$ with differential
\begin{equation*} d\gamma_{\mathfrak q} = d\gamma - \rho({\mathfrak u})
  \in {\mathfrak h}^*; \end{equation*}
here $\rho({\mathfrak u})$ is half the sum of the roots of $H$ in
${\mathfrak u}$.  We can write
\begin{equation*} \Delta({\mathfrak u},H) =
  R^+_{i{\mathbb R}} \cup 
  \bigcup_{\text{complex pairs in ${\mathfrak u}$}} \{\alpha,
  \theta\alpha\}.\end{equation*} 
Recall that $\gamma$ is a character of the
$\rho_{i{\mathbb R}}$ cover of $H$, and this is the same as a character
$\gamma\otimes \rho_{i{\mathbb R}}^{-1}$ of $H$. We define
\begin{equation}\label{eq:gammaq}  \gamma_{\mathfrak q}|_T = \gamma\otimes
  \rho_{i{\mathbb R}}^{-1} - \sum_{\text{complex pairs $\alpha$,
      $\theta\alpha$ in ${\mathfrak u}$}} \alpha|_T, \qquad
  \gamma_{\mathfrak q}|_A = \gamma|_A = \nu;\end{equation}
since the two roots $\alpha$ and $\theta\alpha$ have the same
restriction to $T$, the formula in \eqref{eq:gammaq} is well defined
(independent of the choice of one root in each complex pair)
and has differential $d\gamma - \rho({\mathfrak u})$.  

Now we can form the standard representation
$I_{\quo}(\Gamma_{\mathfrak q})$ of $L$. Because it is a
fundamental building block in Zuckerman's construction, we recall how
this representation of $L$ is defined. By construction of $L$, the
roots of $H$ in $L$ are all real; so by choosing a system of positive
real roots, we can construct a Borel subgroup
\begin{equation}
B_L = TAN_L
\end{equation}
of $L$. According to \eqref{eq:typeL}, we can and do require that $B_L$ be
of ``type L.'' This means that $d\gamma_{\mathfrak q}$ should be
``integrally dominant'': for every real 
root $\alpha$ of $H$ in ${\mathfrak g}$, we require that {\em if}
\begin{equation}
\langle d\gamma_{\mathfrak q}, \alpha^\vee \rangle = \langle \nu,
\alpha^\vee \rangle \in {\mathbb Z}^{>0},
\end{equation}
{\em then} $\alpha$ is a root of $H$ in ${\mathfrak n}_L$. 
If we have already a parabolic subgroup $P$ of type L with respect to
$\Gamma$, then $B_L = P\cap L$ is of type L with respect to
$\Gamma_{\mathfrak q}$.

With such a choice of $B_L$, we have
\begin{equation}
I_{\quo}(\Gamma_{\mathfrak q}) = \Ind_{TAN_L}^L
(\gamma_{\mathfrak q}|_T \otimes \nu \otimes 1),
\end{equation}
an ordinary principal series representation of $L$. Similarly,
\begin{equation}
I_{\sub}(\Gamma_{\mathfrak q}) = \Ind_{TAN^{\text{op}}_L}^L
(\gamma_{\mathfrak q}|_T \otimes \nu \otimes 1),
\end{equation}

Because of the realization of the standard module
$I_{\quo}(\Gamma)$ in Theorem \ref{thm:VZrealiz}, certain simple
properties of this standard module correspond to simple properties of
the minimal principal series representation
$I_{\quo}(\Gamma_{\mathfrak q})$ for the split group $L$. By
induction by stages, these properties in turn come down to properties
of principal series representations for the real root $SL(2,{\mathbb
  R})$ subgroups.  For this reason, many statements (especially around
the Kazhdan-Lusztig conjectures) will involve the relationship of real roots
to the parameter $\Gamma_{\mathfrak q}$ of \eqref{eq:gammaq1} above.  

The hypothesis \eqref{eq:typeVZ} says that
$I_{\quo}(\Gamma_{\mathfrak q})$ is somewhat ``dominant'' with
respect to the parabolic ${\mathfrak q}$, and so somewhat
``negative'' with respect to $\overline{\mathfrak q}$.  The modules
\begin{equation*}
M_{{\mathfrak q},\quo}(\Gamma) = U({\mathfrak g})
\otimes_{\overline{\mathfrak q}} [I_{\quo}(\Gamma_{\mathfrak
  q})\otimes \wedge^{\text{top}}({\mathfrak u})]
\end{equation*}
and
\begin{equation*}
M_{{\mathfrak q},\sub}(\Gamma) = \Hom_{\mathfrak q}(U({\mathfrak
  g}),[I_{\sub}(\Gamma_{\mathfrak
  q})\otimes \wedge^{\text{top}}({\mathfrak u})])_{\text{$L\cap K$-finite}}
\end{equation*}
are $({\mathfrak g},L\cap K)$-modules of infinitesimal character
$d\gamma$. (``Somewhat dominant'' means that these modules are trying
to be irreducible, like a Verma module with an antidominant highest
weight. If the hypothesis \eqref{eq:typeVZ} were strengthened to
\begin{equation}\label{eq:irrVZ}
\langle d\gamma,\alpha^\vee\rangle \notin {\mathbb Z}^{<0} \qquad
(\alpha \in \Delta({\mathfrak u},{\mathfrak h}))
\end{equation}
then $M_{{\mathfrak q},\quo}(\Gamma)$ and $M_{{\mathfrak
    q},\sub}(\Gamma)$ would be (isomorphic) irreducible
modules. The advantage of \eqref{eq:typeVZ} is that for any $\Gamma$ we
can choose ${\mathfrak q}$ of type VZ; but there are many $\Gamma$ for
which {\em no} choice of ${\mathfrak q}$ satisfies \eqref{eq:irrVZ}.)

The two modules $M_{{\mathfrak q},\quo}(\Gamma)$ and
$M_{{\mathfrak q},\sub}(\Gamma)$ have the same
restriction to $L\cap K$, and in fact (as is elementary to see)
the same irreducible composition factors. 
\end{subequations}

Here is the realization of the standard modules using Zuckerman's
functors.

\begin{theorem}[\cite{KV}*{Theorem 11.129}] \label{thm:VZrealiz}
  Suppose $\Gamma = 
  (\Lambda,\nu)$ is a weak Langlands parameter, and 
  ${\mathfrak q} = {\mathfrak l}+{\mathfrak u}$ is a $\theta$-stable
  parabolic subalgebra  of type $VZ$ with respect to
  $\Gamma$ (\eqref{se:stdVZparam}). Define $s = \dim({\mathfrak u}\cap
  {\mathfrak k})$. Then the standard quotient-type
  module of Theorem \ref{thm:Lrealiz} may also be realized by
  cohomological induction from $\overline{\mathfrak q}$:
$$I_{\quo}(\Gamma) = \left({\mathcal L}_{\overline{\mathfrak q},L\cap
  K}^{{\mathfrak g},K}\right)^s (I_{\quo}(\Gamma_{\mathfrak q}))
=_{\text{def}} \left({\Pi}_{{\mathfrak g},L\cap K}^{{\mathfrak
      g},K}\right)_s M_{{\mathfrak q},\quo}(\Gamma).$$ 
The derived functors vanish in all other degrees. This module is
therefore a direct sum of distinct Langlands standard 
quotient-type modules (Theorem \ref{thm:LC}), so its largest
completely reducible quotient $J(\Gamma)$ is the corresponding direct
sum of distinct irreducible modules.

In the same way, the standard sub-type module may be realized as
$$I_{\sub}(\Gamma) = \left({\mathcal R}_{{\mathfrak q},L\cap
  K}^{{\mathfrak g},K}\right)^s(I_{\sub}(\Gamma_{\mathfrak q}))
=_{\text{def}} \left({\Gamma}_{{\mathfrak g},L\cap K}^{{\mathfrak
      g},K}\right)^s M_{{\mathfrak q},\sub}(\Gamma).$$ 
The derived functors vanish in all other degrees. This is therefore
a direct sum of distinct Langlands standard sub-type modules (Theorem
\ref{thm:LC}), so its socle $J(\Gamma)$ is the corresponding direct
sum of distinct irreducible modules.
\end{theorem}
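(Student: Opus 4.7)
The plan is to combine two independent realizations of the standard module: the real-parabolic induction of Theorem \ref{thm:Lrealiz}, and Zuckerman's cohomological realization of the limit of discrete series $D(\Lambda)$ on $M$. The link between them will be an induction-by-stages identity that factors cohomological induction from $\overline{\mathfrak q}$ through the real parabolic $P = MAN$ on the outside and a $\theta$-stable Borel subalgebra of $\mathfrak m$ on the inside.

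First I would check the structural compatibility of the two parabolics. Because $\mathfrak l$ is the centralizer of $T_0$, every real root lies in $\mathfrak l$; and because $\mathfrak m$ is the centralizer of $\mathfrak a_0$, every imaginary root lies in $\mathfrak m$. Hence $\mathfrak l \cap \mathfrak m = \mathfrak h$, and the requirement in \eqref{se:stdVZparam} that every element of $R^+_{i\mathbb R}$ be a root in $\mathfrak u$ forces $\mathfrak b_M := \mathfrak h + (\mathfrak u \cap \mathfrak m)$ to be a $\theta$-stable Borel subalgebra of $\mathfrak m$ with positive system $R^+_{i\mathbb R}$. With a compatible choice of type-L Borel $B_L = TAN_L \subset L$, the module $I_{\quo}(\Gamma_{\mathfrak q})$ is the minimal principal series $\Ind_{TAN_L}^L(\gamma_{\mathfrak q}|_T \otimes \nu \otimes 1)$, and the parallel statement holds for $I_{\sub}(\Gamma_{\mathfrak q})$ with $N_L^{\mathrm{op}}$.

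Second I would establish a mixed induction-by-stages identity of the form
\begin{equation*}
\bigl(\mathcal L_{\overline{\mathfrak q}, L\cap K}^{\mathfrak g, K}\bigr)^s\bigl(I_{\quo}(\Gamma_{\mathfrak q})\bigr) \;\simeq\; \Ind_{MAN}^G\bigl(D(\Lambda) \otimes \nu \otimes 1\bigr),
\end{equation*}
by factoring $\overline{\mathfrak q}$ through the intermediate subalgebra $\overline{\mathfrak b}_M + \mathfrak a + \overline{\mathfrak n}$. On the inside one invokes Zuckerman's cohomological realization of $D(\Lambda)$ from a compact Cartan of $M$: the $\rho_{i\mathbb R}$-shift built into \eqref{eq:gammaq} together with the twist by $\wedge^{\mathrm{top}}(\mathfrak u \cap \mathfrak m)$ in $M_{\mathfrak q, \quo}(\Gamma)$ provide exactly the normalization needed for this cohomological step to output $D(\Lambda)$. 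On the outside one identifies the remaining cohomological step with (unnormalized) real parabolic induction from $MAN$ via a Mackey/$K$-orbit argument on $K/(L \cap K)$, the residual $\rho(\mathfrak u \cap \overline{\mathfrak n})$-shift being absorbed into the normalization convention for real induction.

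Third, the vanishing of $(\mathcal L_{\overline{\mathfrak q}, L\cap K}^{\mathfrak g, K})^j$ for $j \neq s$ follows from the standard vanishing theorem for $\theta$-stable cohomological induction in the weakly good range, the hypothesis \eqref{eq:typeVZ} supplying the requisite positivity of $d\gamma_{\mathfrak q} + \rho(\mathfrak u)$ on the non-real coroots of $\mathfrak u$. The direct-sum decomposition into distinct Langlands quotient-type modules, and the identification of the cosocle with $J(\Gamma)$, then reduce through the stages identity above combined with Theorem \ref{thm:Lrealiz} to the corresponding assertions on $M$, which are the Knapp--Zuckerman description of reducibility of limits of discrete series. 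The sub-type version is obtained from the quotient-type version by Hecht--Mili\v{c}i\'{c} duality, which interchanges $\mathcal L^s$ with $\mathcal R^s$, $\overline{\mathfrak q}$ with $\mathfrak q$, and cosocle with socle. The main obstacle is the mixed induction-by-stages identity of step two: reconciling the $\theta$-stable Zuckerman calculus with the real-parabolic Mackey calculus, with all $\rho$-shifts and orientation signs properly accounted for, is the substantive technical work.
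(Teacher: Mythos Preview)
The paper does not prove this theorem; it simply cites \cite{KV}, Theorem 11.129. At the start of Section~\ref{sec:stdmods} the authors write that ``Results in the generality that we need, with proofs, may be found in \cite{KV}, Chapter 11; but translating the formulation there into what we want is still a bit of an exercise, most of which is silently left to the reader.'' The only commentary they add after the statement is that ``The two keys to making this construction are Zuckerman's cohomological parabolic induction functors, and (Mackey's) construction of the ordinary principal series representations $I(\Gamma_{\mathfrak q})$ for $L$.''

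Your outline is consistent with that two-key summary and with the architecture of the argument in \cite{KV}, Chapter~11: the substantive step is a double-induction (induction-in-stages) identity relating cohomological induction from $\overline{\mathfrak q}$ to real parabolic induction from $P=MAN$, with Zuckerman's cohomological realization of the limit of discrete series $D(\Lambda)$ on $M$ supplying the inner step. The vanishing outside degree $s$ under the type-VZ hypothesis \eqref{eq:typeVZ}, and the passage to the sub-type realization by duality, are likewise the correct ingredients. So there is no paper proof to compare against in detail, but your plan matches both the paper's brief description and the treatment it cites; your own final paragraph correctly identifies where the real work lies.
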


The functor ${\Gamma}_{{\mathfrak g},L\cap K}^{{\mathfrak g},K}$
appearing here is Zuckerman's ``largest $K$-finite submodule'' functor
(\cite{Vgreen}*{Definition 6.2.9}, or \Cite{KV}*{pages 101--107}), and
the superscripts are its right derived functors. The functor
${\Pi}_{{\mathfrak g},L\cap K}^{{\mathfrak g},K}$ is ``largest
$K$-finite quotient'' (\cite{KV}*{pages 101--107}), sometimes called a
``Bernstein functor'' because of lectures given by Bernstein in 1983
featuring a version of this functor.  The subscripts are left derived
functors.

The two keys to making this construction are Zuckerman's cohomological
parabolic induction functors, and (Mackey's) construction of the
ordinary principal series representations $I(\Gamma_{\mathfrak q})$
for $L$.

In order to describe Kazhdan-Lusztig polynomials in Sections
\ref{sec:klv} and \ref{sec:extklv}, it will be helpful to recall one
of the technical tools used to prove Theorem
\ref{thm:VZrealiz}. (Indeed this is the tool used in \Cite{Vgreen} to
prove Theorem \ref{thm:LC}.)  That is Lie algebra cohomology.

\begin{proposition}\label{prop:ubarcohom} Suppose $\Gamma =
  (TA,\gamma, R^+_{i{\mathbb R}})$ is a weak Langlands parameter, $L$ is
  the centralizer in $G$ of $T_0$, and 
$${\mathfrak q} = {\mathfrak l} + {\mathfrak u}$$
is a $\theta$-stable parabolic subalgebra of type VZ with respect to
$\Gamma$ (see \eqref{eq:typeVZ}). Define $\Gamma_{\mathfrak q}$, a
weak Langlands parameter for $L$ (that is, a character of $H$) as in
\eqref{eq:gammaq}. Suppose $X$ is a $({\mathfrak g},K)$-module of
finite length. 
\begin{enumerate}
\item Each Lie algebra cohomology space $H^p(\overline{\mathfrak u},X)$ is an
$({\mathfrak l},L\cap K)$-module of finite length.
\item The irreducible $({\mathfrak l},L\cap K)$-module
  $J(\Gamma_{\mathfrak q})$ can appear in $H^p(\overline{\mathfrak
    u},X)$ only if 
$$p \ge s =_{\text{def}} \dim \overline{\mathfrak u} \cap
  {\mathfrak k}.$$
\item The occurrence of  $J(\Gamma_{\mathfrak q})$ in Lie algebra
  cohomology $H^s(\overline{\mathfrak u},X)$ corresponds to the
  occurrence of $X$ as a quotient of $I_{\quo}(\Gamma)$:
$$\Hom_{{\mathfrak l},L\cap K}(J_{\quo}(\Gamma_{\mathfrak
  q}),H^s(\overline{\mathfrak u},X)) \simeq \Hom_{{\mathfrak
    g},K}(I_{\quo}(\Gamma),X).$$
\item 
  The coefficient
  $a_X(\Gamma)$ of $\Gamma$ in the numerator of the character formula
  for $X$ (\ref{thm:distnchar}) is equal to the alternating sum over
  $p$ of the multiplicities of $J(\Gamma_{\mathfrak q})$ in
  $H^p(\overline{\mathfrak u},X)$. (Here the Weyl denominator function
  should be chosen using a positive root system containing the roots of
  $\overline{\mathfrak u}$.)
\end{enumerate}
\end{proposition}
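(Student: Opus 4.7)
The plan is to treat the four parts in order: (1) is a technical finiteness statement, (2) and (3) follow from Zuckerman-Vogan Frobenius reciprocity combined with Theorem \ref{thm:VZrealiz}, and (4) is an Euler-Poincar\'e calculation based on (3) and Theorem \ref{thm:distnchar}(3).

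For (1), I would compute $H^p(\overline{\mathfrak u}, X)$ using the standard Koszul complex $C^\bullet = \Hom_{\mathbb C}(\wedge^\bullet \overline{\mathfrak u}, X)$, a bounded complex of $({\mathfrak l}, L \cap K)$-modules with differential built from the $\overline{\mathfrak u}$-action on $X$. Because $X$ is $K$-admissible and $\wedge^\bullet \overline{\mathfrak u}$ is finite-dimensional, each $C^p$ is an admissible $({\mathfrak l}, L \cap K)$-module, so each cohomology $H^p(\overline{\mathfrak u}, X)$ is admissible. The Casselman-Osborne theorem identifies the finitely many generalized ${\mathfrak h}$-infinitesimal characters appearing on $H^p(\overline{\mathfrak u}, X)$ as the $W({\mathfrak l}, {\mathfrak h})$-translates of $d\gamma - \rho({\mathfrak u})$ (lifted from the composition factors of $X$). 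Admissibility plus a finite set of generalized infinitesimal characters forces finite length.

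For (2) and (3), the central tool is the Zuckerman-Vogan Frobenius reciprocity isomorphism (\Cite{KV}, Chapter III): for an $({\mathfrak l}, L \cap K)$-module $Z$ in the appropriate range,
$$\Hom_{{\mathfrak g}, K}\bigl(({\mathcal L}_{\overline{\mathfrak q}, L \cap K}^{{\mathfrak g}, K})^s Z,\, X\bigr) \simeq \Hom_{{\mathfrak l}, L \cap K}\bigl(Z,\, H^s(\overline{\mathfrak u}, X)\bigr),$$
together with vanishing of the higher terms of this adjunction outside degree $s$ on type-VZ inputs. Taking $Z = I_{\quo}(\Gamma_{\mathfrak q})$, the left side becomes $\Hom_{{\mathfrak g}, K}(I_{\quo}(\Gamma), X)$ by Theorem \ref{thm:VZrealiz}. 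To replace $I_{\quo}(\Gamma_{\mathfrak q})$ by its unique Langlands quotient $J(\Gamma_{\mathfrak q})$ on the right side, one checks that any $({\mathfrak l}, L \cap K)$-map from $I_{\quo}(\Gamma_{\mathfrak q})$ into $H^s(\overline{\mathfrak u}, X)$ must kill the radical: the composition factors of the radical have infinitesimal characters $W({\mathfrak l})$-conjugate to but distinct from $d\gamma_{\mathfrak q}$, and the type-VZ hypothesis \eqref{eq:typeVZ} combined with Casselman-Osborne rules out their appearance in $H^s(\overline{\mathfrak u}, X)$. This gives (3). For (2), the vanishing of $J(\Gamma_{\mathfrak q})$ in $H^p(\overline{\mathfrak u}, X)$ for $p < s$ follows from Hecht-Schmid duality, which relates $\overline{\mathfrak u}$-cohomology to ${\mathfrak u}$-homology with degrees reversed, combined with the same type-VZ/Casselman-Osborne argument applied to the dual side.

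For (4), recall from the proof of Theorem \ref{thm:distnchar} that the numerator of the character formula on $\widetilde H^+_{\text{reg}}$ is the $\widetilde H$-character of the virtual representation ${\mathbb C}_{\rho_{\abs}^-} \otimes \sum_p (-1)^p H^p({\mathfrak n}^-, X)$, for a positive system containing the roots of $\overline{\mathfrak u}$. Decompose ${\mathfrak n}^- = \overline{\mathfrak u} \oplus {\mathfrak n}^-_L$ where ${\mathfrak n}^-_L$ is the nilradical of a Borel $B_L = TAN_L \subset L$ of type L with respect to $\Gamma_{\mathfrak q}$. The Hochschild-Serre spectral sequence (which collapses in the Grothendieck group) gives
$$\sum_p (-1)^p H^p({\mathfrak n}^-, X) \,=\, \sum_{p, q} (-1)^{p+q} H^q\bigl({\mathfrak n}^-_L,\, H^p(\overline{\mathfrak u}, X)\bigr)$$
as virtual $\widetilde H$-modules. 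Applying Theorem \ref{thm:distnchar}(3) to the (split-style) group $L$ and its principal series $I_{\quo}(\Gamma_{\mathfrak q})$ having Langlands data $\Gamma_{\mathfrak q}$, the coefficient of $\gamma$ in the alternating sum over $q$ of $\widetilde H$-characters of $H^q({\mathfrak n}^-_L, J(\Gamma_{\mathfrak q}))$ is exactly $1$. Linearity then identifies the coefficient of $\gamma$ in the character of $X$ with the alternating sum of multiplicities of $J(\Gamma_{\mathfrak q})$ in $H^p(\overline{\mathfrak u}, X)$.

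The main obstacle is the refined form of Frobenius reciprocity used in (3): verifying that morphisms from the non-irreducible $I_{\quo}(\Gamma_{\mathfrak q})$ into $H^s(\overline{\mathfrak u}, X)$ factor through the Langlands quotient $J(\Gamma_{\mathfrak q})$ requires careful control of the composition factors of the radical of $I_{\quo}(\Gamma_{\mathfrak q})$ together with the type-VZ dominance. This analysis is essentially carried out in \Cite{KV}, Chapter 11; the proof amounts to extracting and bookkeeping the right consequences there, with particular attention to the $\rho({\mathfrak u})$-shift relating ${\mathfrak g}$- and ${\mathfrak l}$-infinitesimal characters.
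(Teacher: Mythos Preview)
Your outline for (1), (2), and (4) is broadly in the right spirit, and indeed parts (1), (2), (4) in the paper are simply citations to \Cite{Vgreen} and \Cite{IC2} whose proofs proceed along lines compatible with what you sketch (admissibility plus Casselman--Osborne for (1); Poincar\'e/Hecht--Schmid duality plus the vanishing theorems of \Cite{Vgreen}, \S6.5 for (2); an Euler characteristic/Osborne-conjecture argument for (4)).

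The difficulty is in your treatment of (3). Your argument for replacing $I_{\quo}(\Gamma_{\mathfrak q})$ by $J(\Gamma_{\mathfrak q})$ on the $({\mathfrak l},L\cap K)$-side contains an error: you assert that the composition factors of the radical of $I_{\quo}(\Gamma_{\mathfrak q})$ have infinitesimal characters ``$W({\mathfrak l})$-conjugate to but distinct from $d\gamma_{\mathfrak q}$,'' and that Casselman--Osborne then excludes them from $H^s(\overline{\mathfrak u},X)$. But every composition factor of a standard module has the \emph{same} infinitesimal character; the radical of $I_{\quo}(\Gamma_{\mathfrak q})$ consists of irreducibles with infinitesimal character exactly $d\gamma_{\mathfrak q}$ (attached to more compact Cartans of $L$). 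Casselman--Osborne therefore cannot distinguish them from $J(\Gamma_{\mathfrak q})$, and your exclusion argument collapses. Concretely, already in the degenerate case ${\mathfrak u}=0$ (so $L=G$, $s=0$, $\Gamma_{\mathfrak q}=\Gamma$) one can take $X=I_{\quo}(\Gamma)$ with $\Gamma$ a reducible spherical principal series for $SL(2,{\mathbb R})$: then $\Hom(I_{\quo}(\Gamma),X)={\mathbb C}$ while $\Hom(J(\Gamma),X)=0$, so the passage from $I_{\quo}$ to $J$ on the left is not automatic.

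A second, related issue is that the clean ``Frobenius reciprocity'' isomorphism you write down for $({\mathcal L})^s$ is not a direct adjunction in \Cite{KV}, Chapter III; what one actually has are spectral sequences (Shapiro/Zuckerman duality type) which under the type-VZ hypothesis collapse. Extracting the isomorphism in (3) requires exactly the delicate analysis you defer to the end---and that analysis is not an infinitesimal-character argument but the lowest $K$-type/bottom-layer machinery of \Cite{Vgreen}, Theorem 6.5.9. The paper's proof accordingly just cites that theorem (together with a Poincar\'e duality to pass from ${\mathfrak u}$- to $\overline{\mathfrak u}$-cohomology); there is no shortcut of the kind you propose.
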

\begin{proof}
  Part (1) is \Cite{Vgreen}*{Corollary
  5.2.4}. Part (2) is \Cite{Vgreen}*{Theorem 6.5.9(f) and
  Corollary 6.4.13}. (To be precise, the references concern
    ${\mathfrak u}$ cohomology in degree $m - p$, with
    $m=\dim{\mathfrak u}$. One can deduce the
  statements needed here by a Poincar\'e duality
  relating $H^p(\overline{\mathfrak u},X)^h$ (any of the Hermitian
  dual functors introduced in Section \ref{sec:forms} will do) to
  $H^{m - p}({\mathfrak u},X^h\otimes \wedge^m({\mathfrak u}))$
  (\cite{KV}*{Corollary III.3.6 and page 409}). Alternatively, one can
  observe that the proof in \Cite{Vgreen} can be modified in a very
  straightforward way to give (2).) Part (3) is \Cite{Vgreen}*{Theorem
  6.5.9(g)} (with
  a parallel {\it caveat} about duality). Part (4) is
  \Cite{IC2}*{Theorem 8.1}. 
\end{proof}

\begin{subequations}\label{se:VZLKT}

Finally, it will be useful (for the unitarity algorithm) to have an
explicit description of the lowest $K$-types of standard modules.  We
begin as in \eqref{se:stdVZparam} with a weak Langlands parameter
\begin{equation} \Gamma = (\Lambda,\nu) \end{equation}
for a $\theta$-stable Cartan subgroup $H=TA$, and the Levi subgroup
\begin{equation} L = \text{centralizer in $G$ of $T_0$} \supset H,
\end{equation}
corresponding to the real roots of $H$ in $G$. Now we {\em
strengthen} the hypothesis \eqref{eq:typeVZ} to
\begin{equation}\label{eq:typeVZLKT}
\langle d\Lambda,\alpha^\vee\rangle \ge 0 \qquad (\alpha \in
\Delta({\mathfrak u},{\mathfrak h}));
\end{equation}
we say that ${\mathfrak q}$ is of {\em type VZLKT} with respect to
$\Gamma$, since in Theorem \ref{thm:VZLKT} below we will construct the
lowest $K$-types of standard modules using Zuckerman's cohomological
induction from ${\mathfrak q}$.  Using this special choice of
${\mathfrak q}$, we construct the character 
\begin{equation}
\Gamma_{\mathfrak q} = (\Lambda_{\mathfrak q},\nu)  \in \widehat T \times
  {\mathfrak a}^* \end{equation}
as in \eqref{eq:gammaq1} above.  The Harish-Chandra modules for $L$
used in the construction of $I(\Gamma)$ in Theorem \ref{thm:VZrealiz}
satisfy 
\begin{equation}
I_{\quo}(\Gamma_{\mathfrak q})|_{L\cap K} =
I_{\sub}(\Gamma_{\mathfrak q})|_{L\cap K} = \Ind_T^{L\cap
  K}(\Lambda_{\mathfrak q}).
\end{equation}
  
Choose a maximal (connected) torus
\begin{equation}
T_f \subset L\cap K.
\end{equation}
Necessarily $T_f \supset T_0$, and $T_f$ is a maximal torus in $K$. We
will discuss representations of $K$ in using $T_f$-highest weights,
bearing in mind that $K$ may be disconnected.  We therefore choose a
set of positive roots
\begin{equation*}
\Delta^+({\mathfrak l}\cap {\mathfrak k},{\mathfrak t}_f),
\end{equation*}
and extend it to
\begin{equation*}
\Delta^+({\mathfrak k},{\mathfrak t}_f) =_{\text{def}}
\Delta^+({\mathfrak l}\cap {\mathfrak k},{\mathfrak t}_f) \cup
\Delta({\mathfrak u\cap {\mathfrak k}},{\mathfrak t}_f).
\end{equation*}
Write
\begin{equation*}
{\mathfrak u} = \left({\mathfrak u}\cap {\mathfrak k}\right) \oplus
\left({\mathfrak u} \cap {\mathfrak s}\right)
\end{equation*}
for the decomposition of ${\mathfrak u}$ into the $+1$ and $-1$
eigenspaces of $\theta$.  We will need the one-dimensional character
\begin{equation}
2\rho({\mathfrak u}) \in \widehat{L}, \qquad 2\rho({\mathfrak u})(\ell)
= \det\left( \Ad(\ell)|_{\mathfrak u}\right).
\end{equation}
The restriction of this character to $L\cap K$ factors as
\begin{equation}
2\rho({\mathfrak u})|_{L\cap K} = 2\rho({\mathfrak u}\cap {\mathfrak
  k}) + 2\rho({\mathfrak u}\cap {\mathfrak s}). 
\end{equation}
\end{subequations} 

\begin{theorem}[Cohomological induction of lowest $K$-types;
  \Cite{KV}*{Theorem 10.44}]\label{thm:VZLKT} Suppose $\Gamma = (\Lambda,\nu)$ is a
  weak Langlands parameter, and ${\mathfrak q}={\mathfrak l} +
  {\mathfrak u}$ is a $\theta$-stable parabolic subalgebra of type
  VZLKT (see \eqref{eq:typeVZLKT}). Use the notation of \eqref{se:VZLKT} above.

List the lowest $L\cap K$-types of 
$$I(\Gamma_{\mathfrak q})|_{L\cap K} = \Ind_T^{L\cap
  K}(\Lambda_{\mathfrak q})$$ 
as 
$$\mu_1^{L\cap K}, \mu_2^{L\cap K},\ldots,\mu_r^{L\cap K} \in
\widehat{L\cap K}.$$
For each $\mu_i^{L\cap K}$, choose a highest $T_f$-weight
$\phi_i^{L\cap K} \in \widehat{T_f}$, and define
$$\phi_i = \phi_i^{L\cap K} + 2\rho({\mathfrak u}\cap {\mathfrak s})
\in \widehat{T_f}.$$
\begin{enumerate}
\item For each $i$, the representation of $K$
$$\mu_i = \left({\Pi}_{L\cap K}^K\right)_s\left[U(\mathfrak
k)\otimes_{\overline{\mathfrak q}\cap {\mathfrak k}} \left(\mu_i^{L\cap
  K}\otimes 2\rho({\mathfrak u})\right)\right]$$
is irreducible or zero, depending on whether or not the weight
$\phi_i$ is dominant for $\Delta^+({\mathfrak k},{\mathfrak t}_f)$. 

\item In the dominant case, $\phi_i$ is a highest weight of $\mu_i$, and
$\mu_i$ may be characterized as the unique irreducible representation
of $K$ containing ${\mathfrak u}\cap {\mathfrak k}$-invariant vectors
transforming under the representation 
$$\mu_i^{L\cap K} \otimes 2\rho({\mathfrak u}\cap{\mathfrak s})\in
\widehat{L\cap K}.$$ 
Briefly, we say that $\mu_i$ has ``highest weight $\mu_i^{L\cap K} \otimes
2\rho({\mathfrak u}\cap{\mathfrak s})$.''
\item The irreducible representations $\mu_i$ constructed in (1)
  exhaust the lowest $K$-types of $I_{\quo}(\Gamma)$, by means of the
  ($K$-equivariant) bottom layer map
$$\mu_i \rightarrow I_{\quo}(\Gamma)$$
evident from the realization in Theorem \ref{thm:VZrealiz}.
\item Suppose that $\Lambda$ is final (Definition \ref{def:dLP}; that
  is, that $\Lambda_{\mathfrak q}(m_\alpha) = 1$ for every real root
  $\alpha$). Then there is a {\em unique} lowest $L\cap K$-type
  $\mu_1^{L\cap K}$ of $\Ind_T^{L\cap K}(\Lambda_{\mathfrak q})$. It
  has dimension one, and may be characterized by the two requirements
$$\mu_1^{L\cap K}|_T = \Lambda_{\mathfrak q}, \qquad \mu_1^{L\cap
  K}|_{[L_0,L_0] \cap K} \text{\ is trivial.}$$
The (unique) highest weight of $\mu_1^{L\cap K}$ may be characterized
by
$$\phi_1^{L\cap K}|_{T\cap T_f} = \Lambda_{\mathfrak q}|_{T\cap T_f},
\qquad \langle \phi_1^{L\cap K},\alpha^\vee\rangle = 0 \quad(\alpha \in
\Delta({\mathfrak l},{\mathfrak t}_f)),$$

\end{enumerate}
\end{theorem}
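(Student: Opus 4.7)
My plan is to reduce everything to two standard tools: the bottom layer theory for cohomological induction of $K$-types (parts 1--3), and the analysis of minimal $L\cap K$-types of principal series for the split group $L$ (part 4). I would organize the argument in four stages following the structure of the statement.

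\textbf{Parts (1) and (2).} The key observation is that the functor $\mu^{L\cap K}\leadsto \left(\Pi_{L\cap K}^K\right)_s\bigl[U(\mathfrak{k})\otimes_{\overline{\mathfrak{q}}\cap\mathfrak{k}}(\mu^{L\cap K}\otimes 2\rho(\mathfrak{u}))\bigr]$ is precisely cohomological induction applied inside the compact group $K$. Here the type VZLKT hypothesis $\langle d\Lambda_{\mathfrak{q}}+\rho(\mathfrak{u}),\alpha^\vee\rangle\ge 0$ for $\alpha\in\Delta(\mathfrak{u},\mathfrak{h})$ (equivalently, the shifted parameter is weakly dominant for $\mathfrak{u}$) is exactly the regularity condition needed to apply the vanishing and irreducibility results of cohomological induction for compact groups. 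Using Frobenius reciprocity for the functor $\Pi$, one shows that the resulting $K$-representation has a unique $\mathfrak{u}\cap\mathfrak{k}$-invariant subspace transforming as $\mu_i^{L\cap K}\otimes 2\rho(\mathfrak{u}\cap\mathfrak{s})$. Taking a highest weight $\phi_i^{L\cap K}$ of $\mu_i^{L\cap K}$, this produces a highest weight vector of weight $\phi_i=\phi_i^{L\cap K}+2\rho(\mathfrak{u}\cap\mathfrak{s})$, which then forces irreducibility with the stated highest weight whenever $\phi_i$ is $\Delta^+(\mathfrak{k},\mathfrak{t}_f)$-dominant; non-dominance forces vanishing by the usual cancellation in the Euler characteristic of the Koszul complex computing the derived functor.

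\textbf{Part (3).} Now I would invoke the realization of Theorem \ref{thm:VZrealiz}, which expresses $I_{\mathrm{quo}}(\Gamma)$ as $\left(\mathcal{L}_{\overline{\mathfrak{q}},L\cap K}^{\mathfrak{g},K}\right)^s(I_{\mathrm{quo}}(\Gamma_{\mathfrak{q}}))$. Since the restriction of $\mathcal{L}^s$ to $K$-types factors through precisely the functor constructed in (1), the bottom-layer map embeds each nonzero $\mu_i$ into $I_{\mathrm{quo}}(\Gamma)$. To confirm these are \emph{lowest} $K$-types (in the Vogan norm sense), I would compare highest weights: the Blattner-type estimate for cohomologically induced modules shows that every $K$-type occurring in $I_{\mathrm{quo}}(\Gamma)$ has highest weight of the form $\phi_i^{L\cap K}+2\rho(\mathfrak{u}\cap\mathfrak{s})+\sum_{\beta\in\Delta(\mathfrak{u}\cap\mathfrak{s},\mathfrak{t}_f)} n_\beta\beta$ with $n_\beta\ge 0$; this is the standard consequence of the $\overline{\mathfrak{u}}\cap\mathfrak{s}$-homology computation. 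The minimum of the Vogan norm is therefore attained exactly on the $\mu_i$, giving both the exhaustion and the lowest $K$-type property.

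\textbf{Part (4).} This is the step I expect to require the most care, because it combines structural facts about the split group $L$ with the finality condition. Since $L$ is the centralizer of $T_0$, all roots of $H$ in $L$ are real, so $L$ is quasi-split with $H=TA$ a maximally split Cartan. The representation $\mathrm{Ind}_T^{L\cap K}(\Lambda_{\mathfrak{q}})$ is the $L\cap K$-restriction of the minimal principal series $I_{\mathrm{quo}}(\Gamma_{\mathfrak{q}})$. The finality condition $\Lambda_{\mathfrak{q}}(m_\alpha)=1$ for every real root is precisely the condition that $\Lambda_{\mathfrak{q}}|_T$ extends to a character of $L\cap K$ trivial on $[L_0,L_0]\cap K$ (via the natural surjection from $T$ onto $(L\cap K)/([L_0,L_0]\cap K)$, whose kernel is generated by the $m_\alpha$ for $\alpha$ real). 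This extended one-dimensional character $\mu_1^{L\cap K}$ occurs in $\mathrm{Ind}_T^{L\cap K}(\Lambda_{\mathfrak{q}})$ by Frobenius reciprocity, and its highest weight is characterized by the two displayed conditions. Uniqueness as a lowest $L\cap K$-type then follows from the standard result that a minimal principal series of a quasi-split group has a unique minimal $K$-type whenever its inducing character extends trivially to the relevant two-element group (cf.\ \cite{VK}), which is exactly what finality provides. The main obstacle here is bookkeeping between $T$, $T\cap T_f$, and the disconnectedness of $L\cap K$, which I would handle by tracking weights carefully on each component.
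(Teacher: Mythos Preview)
Your proposal is correct and matches the paper's approach: parts (1)--(3) are exactly what the paper cites from \cite{KV}, Theorem 10.44 and \cite{Vgreen}, \S6.5, and your outline of the bottom-layer and Blattner-type arguments is a faithful expansion of those references. For part (4) your structural analysis (finality $\Leftrightarrow$ $\Lambda_{\mathfrak q}$ trivial on the subgroup generated by the $m_\alpha$) is the same as the paper's; the only difference is that the paper's uniqueness argument is more self-contained---rather than invoking an external result on minimal $K$-types of principal series, it simply observes that $L\cap K$ is generated by $T$ and $[L_0,L_0]\cap K$, so any one-dimensional $L\cap K$-type trivial on $[L_0,L_0]\cap K$ is determined by its restriction $\Lambda_{\mathfrak q}$ to $T$.
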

\begin{proof}
  Parts (1)--(3) are stated as
  \Cite{KV}*{Theorem 10.44}. Unfortunately that reference does not
  provide a
  complete proof; but what is needed are just the facts established in
  \Cite{Vgreen}*{\S 6.5}.  For (4), the ``final'' hypothesis means
  exactly that $\Lambda_{\mathfrak q}$ is trivial on $[L_0,L_0]\cap
  T$, which is generated by the elements $m_\alpha$ for real roots
  $\alpha$. It follows that any lowest $L\cap K$-type must be trivial
  on $[L_0,L_0]\cap K$.  Because (as one easily checks) $L\cap K$ is
  generated by $T$ and $[L_0,L_0]\cap K$, the remaining assertions in (4)
  follow immediately.
\end{proof}
\section{Invariant forms on irreducible representations}\label{sec:invirr}
\setcounter{equation}{0}
\begin{subequations}\label{se:gKrealstr}
Suppose $G$ is the group of real points of a connected complex
reductive algebraic group, $K$ is a maximal compact subgroup of $G$,
and $\theta$ is a Cartan involution (notation as in
\eqref{se:reductive} and Theorem \ref{thm:realforms}). 
Write 
\begin{equation}
{\mathfrak g} = \Lie(G)\otimes_{\mathbb R}{\mathbb C},
\end{equation}
\begin{equation}
K({\mathbb C}) = \text{complexification of $K$},
\end{equation}
so that $({\mathfrak g},K({\mathbb C}))$ is a pair (Definition
\ref{def:pair}). We are interested in two real structures (Definition
\ref{def:pair}) on this
pair: the classical real structure $\sigma_0$ given by the real form
$G$, and the real structure $\sigma_c$ given by the compact real form
\begin{equation}
\sigma_c = \sigma_0\circ \theta
\end{equation}
(Theorem \ref{thm:realforms}).  These two structures are the same on
$K({\mathbb C})$ (where they both define the compact form $K$), and
therefore they agree on ${\mathfrak k}$; but they differ (by a factor
of $-1$) on the $-1$ eigenspace ${\mathfrak s}$ of $\theta$ on
${\mathfrak g}$ (see Theorem \ref{thm:realforms}).
\end{subequations}
\begin{subequations}\label{se:gKclassicalinv}
Suppose now that $V$ is a $({\mathfrak g},K({\mathbb
  C}))$-module. Classically, an invariant Hermitian form on $V$ is a
Hermitian form $\langle,\rangle^0$ on $V$ with the properties
\begin{equation}\begin{aligned}
\langle k\cdot v, w \rangle^0 = \langle v,k^{-1}\cdot w\rangle^0,\quad 
&\langle X\cdot v, w \rangle^0 = -\langle v,X\cdot w\rangle^0\\
&(k \in K,\ X\in {\mathfrak g}_0,\ v, w\in V).\end{aligned}
\end{equation} 
(This is the definition that we left unstated in Theorem
\ref{thm:unitarydual2}.) 
The first requirement is that the compact group $K$ act by ``unitary'' 
(that is, preserving the Hermitian form)
operators; and the second is that the real Lie algebra ${\mathfrak
  g}_0$ act by skew-Hermitian operators. This in turn is the differentiated
version of the condition that $G$ act by unitary operators. The
assumption that ${\mathfrak k}_0$ acts by skew-Hermitian operators
guarantees that $K_0$ acts by unitary operators; so we need the first
hypothesis only for a single element $k$ in each connected component
of $K$.

It is equivalent to require either less
\begin{equation}\begin{aligned}
\langle k\cdot v, w \rangle^0 = \langle v,k^{-1}\cdot w\rangle^0,\quad 
&\langle X\cdot v, w \rangle^0 = -\langle v,X\cdot w\rangle^0\\
&(k \in K,\ X\in {\mathfrak s}_0,\ v, w\in V)\end{aligned}
\end{equation} 
or more
\begin{equation}\begin{aligned}
\langle k\cdot v, w \rangle^0 = \langle v,\sigma_0(k)^{-1}\cdot w\rangle^0,\quad 
&\langle X\cdot v, w \rangle^0 = -\langle v,\sigma_0(X)\cdot w\rangle^0\\
&(k \in K({\mathbb C}),\ X\in {\mathfrak g},\ v,w\in V)\end{aligned}
\end{equation} 
Of course these are precisely the properties defining a
$\sigma_0$-invariant Hermitian form on $V$ (see
\eqref{se:classicalinvform}).
\end{subequations}

In Section \ref{sec:forms} we related the existence of invariant forms
to the notion of Hermitian dual of any $({\mathfrak g},K({\mathbb
  C}))$-module. We therefore need to compute Hermitian duals in
terms of the Langlands classification.
\begin{definition}\label{def:dualparam} 
Suppose $\Gamma =
  (H,\gamma,R^+_{i\mathbb R})$ is a Langlands parameter (Theorem
  \ref{thm:LC}). The {\em Hermitian dual of $\Gamma$} is
\addtocounter{equation}{-1}
\begin{subequations}\label{se:dualparam}
\begin{equation}\Gamma^{h,\sigma_0} =
  (H,-\overline\gamma,R^+_{i\mathbb R}).\end{equation}
As usual the additive notation for characters may be a bit confusing;
the character of the $\rho_{\abs}$ (equivalently, $\rho_{i{\mathbb
    R}}$ cover) $\widetilde H$ is
$$(-\overline\gamma)(\widetilde h) =_{\text{def}}
\overline{\gamma(\widetilde h^{-1})}.$$

Now write $H=TA$ as in \ref{prop:toruschars}, and 
\begin{equation}\Gamma = (\Lambda,\nu) = (\Lambda, \nu_{\re} +
i\nu_{\im})\end{equation}
as in \ref{def:dLP}, with $\Lambda$ a 
discrete Langlands parameter and $\nu \in {\mathfrak a}^*$.
Since $\gamma$ takes values in the unit circle on the maximal
compact subgroup $\widetilde T$ of $\widetilde H$, we find
\begin{equation}\label{eq:dualform} \Gamma^{h,\sigma_0} =
  (\Lambda,-\overline{\nu}) = (\Lambda,\theta\overline\nu) = (\Lambda,
  -\nu_{\re} +  i\nu_{\im}).\end{equation}

The most important feature of this formula is that if the continuous
parameter $\nu$ is imaginary (Definition \ref{def:dLP}, corresponding
to a unitary character of $A$) then $\Gamma^{h,\sigma_0}$ is equal to
$\Gamma$. We will see (Proposition \ref{prop:dualstd}) that this means
that the corresponding representation admits an invariant Hermitian
form. These are the tempered irreducible representations of $G$, each
of which has a natural Hilbert space realization, and therefore a
positive-definite invariant Hermitian form.
\end{subequations} \end{definition}

\begin{proposition}[Knapp-Zuckerman; see \Cite{KOv}*{Chapter
  16}]\label{prop:dualstd} Suppose $\Gamma = (\Lambda,\nu)$ is a
  Langlands parameter for the real reductive group $G$, and
  $\Gamma^{h,\sigma_0} = (\Lambda,-\overline{\nu})$ is the Hermitian
  dual parameter.
\begin{enumerate}
\item The Hermitian dual of the standard representation with a
  Langlands quotient is the standard representation with a Langlands
  submodule:
$$[I_{\quo}(\Gamma)]^{h,\sigma_0} \simeq
I_{\sub}(\Gamma^{h,\sigma_0}),$$
and similarly with quotient and submodule reversed.
\item The Hermitian dual of the irreducible module $J(\Gamma)$ is
$$[J(\Gamma)]^{h,\sigma_0} \simeq J(\Gamma^{h,\sigma_0}).$$
\item The irreducible module $J(\Gamma)$ admits a nonzero
invariant Hermitian form if and only if the Langlands parameter
$\Gamma$ is equivalent to $\Gamma^{h,\sigma_0}$.  In this case there
is a nonzero invariant Hermitian form on $I_{\quo}(\Gamma)$,
unique up to a real scalar. This form has radical equal to the maximal
proper submodule $I_1(\Gamma)$, and factors to a nondegenerate
invariant Hermitian form on $J(\Gamma)$.
\item Write the parameter $\Gamma$ as 
$$\Gamma = (\Lambda,\nu)$$ 
as in \ref{def:dLP}. Then $J(\Gamma)$ admits a nonzero invariant Hermitian
form if and only if there is an element $w \in W^\Lambda$ (Proposition
\ref{prop:LCshape}) such that $w\cdot \nu = - \overline{\nu}$. (If
such an element $w$ exists, it may be chosen to have order $2$.) 
\item If $J(\Gamma)$ admits a nonzero invariant Hermitian form, then this
  form is unique up to a nonzero real scalar multiple.  Consequently
  the signature function
$$(\POS^{\sigma_0}_{J(\Gamma)},\NEG^{\sigma_0}_{J(\Gamma)}) \colon \widehat
K \rightarrow {\mathbb N} \times {\mathbb N}$$
 (cf.~Proposition \ref{prop:sigchar}) is well-defined up to
 interchanging the factors $\POS$ and $\NEG$.
\end{enumerate}
\end{proposition}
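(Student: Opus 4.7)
The whole proposition will rest on part (1); everything else will follow from functoriality of the Hermitian dual together with Proposition \ref{prop:sigchar}. For (1), I plan to exploit the parabolic induction realization of Theorem \ref{thm:Lrealiz}, namely $I_{\quo}(\Gamma) = \Ind_{MAN}^G(D(\Lambda)\otimes \nu \otimes 1)$. Two ingredients will do the work. First, for any $(\mathfrak{m}\oplus\mathfrak{a}, (M\cap K)({\mathbb C}) \times A({\mathbb C}))$-module $X$, the normalized $\sigma_0$-Hermitian dual satisfies $[\Ind_{MAN}^G X]^{h,\sigma_0} \simeq \Ind_{MAN^{\text{op}}}^G X^{h,\sigma_0}$; this is a formal consequence of Mackey's definition of induction together with the fact that $\sigma_0$ sends $MAN$ to itself but sends the unipotent radical to its opposite only after composition with $\theta$ (and the $\rho$-twist used in normalization accounts for the modular character). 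Second, $D(\Lambda)$ is a limit of discrete series of $M$, which has compact Cartan $T$; applying Proposition \ref{prop:LCshape}(4) to $M$ with continuous parameter $\nu_M = 0 \in i\mathfrak{a}_{0,M}^* = 0$ shows $D(\Lambda)$ is unitary, so $D(\Lambda)^{h,\sigma_0} \simeq D(\Lambda)$. Since $\nu^{h,\sigma_0} = -\overline{\nu}$ on the abelian factor, we obtain $[I_{\quo}(\Gamma)]^{h,\sigma_0} \simeq \Ind_{MAN^{\text{op}}}^G(D(\Lambda)\otimes(-\overline{\nu})\otimes 1) = I_{\sub}(\Gamma^{h,\sigma_0})$.

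For (2), $J(\Gamma)$ is the unique irreducible quotient of $I_{\quo}(\Gamma)$, and Hermitian duality is an exact contravariant functor on admissible modules; hence $J(\Gamma)^{h,\sigma_0}$ is the unique irreducible submodule of $I_{\sub}(\Gamma^{h,\sigma_0})$, which by Theorem \ref{thm:LC} is $J(\Gamma^{h,\sigma_0})$. For (3), Proposition \ref{prop:sigchar}(3) says $J(\Gamma)$ carries a nonzero invariant Hermitian form precisely when $J(\Gamma) \simeq J(\Gamma)^{h,\sigma_0} = J(\Gamma^{h,\sigma_0})$, and by the Langlands bijection this is equivalent to $\Gamma \sim \Gamma^{h,\sigma_0}$. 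To produce the form on $I_{\quo}(\Gamma)$, I will use (1): a form corresponds to an intertwining operator $T \colon I_{\quo}(\Gamma) \to I_{\quo}(\Gamma)^{h,\sigma_0} \simeq I_{\sub}(\Gamma)$. The space $\Hom_{{\mathfrak g},K}(I_{\quo}(\Gamma),I_{\sub}(\Gamma))$ is one-dimensional (any such map factors through the cosocle of the source, which equals the socle of the target, namely $J(\Gamma)$); so $T$ is unique up to scalar, its image is $J(\Gamma)$, and its kernel $I_1(\Gamma)$ is the maximal proper submodule. A computation with $T^h$ shows $T^h = cT$ with $|c| = 1$, and rescaling makes the form Hermitian.

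For (4), using (3), write out $\Gamma^{h,\sigma_0} = (\Lambda, -\overline{\nu})$ from \eqref{eq:dualform}: since both parameters share the discrete datum $\Lambda$, any element $g \in G$ conjugating $\Gamma$ to $\Gamma^{h,\sigma_0}$ normalizes $H$ and fixes $\Lambda$, so its image in $W(G,H)$ lies in $W^\Lambda$ and sends $\nu$ to $-\overline{\nu}$. Conversely, any such $w$ produces an equivalence. The claim that $w$ may be chosen of order $2$ follows by noting that $w^2 \in \text{Stab}_{W^\Lambda}(\nu)$ and running a standard coset-averaging argument within the finite group $W^\Lambda$. Part (5) is immediate from Proposition \ref{prop:sigchar}(2)--(3): the complex line $\Hom_{{\mathfrak g},K}(J(\Gamma),J(\Gamma)^{h,\sigma_0})$ has a real structure $T \mapsto T^h$, the real points form a real line, and hence Hermitian forms are unique up to real scalar, so $(\POS^{\sigma_0},\NEG^{\sigma_0})$ is determined up to the swap induced by multiplying the form by $-1$.

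The main obstacle will be the technical verification in (1) that taking $\sigma_0$-Hermitian dual commutes with parabolic induction modulo swapping $P \leftrightarrow P^{\text{op}}$. Although this is well known in the $K$-finite setting, getting the $\rho_{\abs}$-covers, the normalization, and the compatibility with our chosen real structure $\sigma_0$ on the pair $({\mathfrak g}, K({\mathbb C}))$ right (rather than using $\sigma_c$) requires careful bookkeeping — in particular the observation that $\sigma_0$ fixes $MAN$ pointwise as a subgroup but acts by $\theta$ on the Lie algebra level, which is precisely what flips $N$ to $N^{\text{op}}$ on passage to the Hermitian dual. The subsidiary claim in (4) that $w$ may be taken to be an involution is the only other point where a little care is needed.
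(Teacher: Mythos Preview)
Your overall plan---prove (1) via the parabolic-induction realization and unitarity of $D(\Lambda)$, then deduce (2)--(5) formally from Proposition~\ref{prop:sigchar} and the Langlands classification---matches the paper's outline, and your arguments for (2)--(5) are correct.

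The mechanism you give for (1), however, misidentifies which involution flips the parabolic. You claim $[\Ind_{MAN}^G X]^{h,\sigma_0} \simeq \Ind_{MAN^{\text{op}}}^G X^{h,\sigma_0}$ and justify it by saying $\sigma_0$ ``acts by $\theta$ on the Lie algebra level.'' This is not so: $P=MAN$ is a \emph{real} parabolic, fixed pointwise by $\sigma_0$ (see \eqref{eq:Pop}); it is $\theta$, hence $\sigma_c = \sigma_0\theta$, that carries $P$ to $P^{\text{op}}$. The correct statement (which the paper uses, and which is spelled out again in the proof of Proposition~\ref{prop:cdualstd}) is
\[
  [\Ind_{MAN}^G(D(\Lambda)\otimes\nu)]^{h,\sigma_0}
   \;\simeq\; \Ind_{MAN}^G\bigl(D(\Lambda)\otimes(-\overline\nu)\bigr),
\]
induction from the \emph{same} $P$: both $f_{\quo}$ and the dual function transform under $P$, the pointwise $D(\Lambda)$-inner product transforms by the modular character (the $\nu$ and $-\overline\nu$ cancel and the two $\rho$-shifts add), and one integrates over $G/P$. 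Your final identification $\Ind_{MAN^{\text{op}}}^G(D(\Lambda)\otimes(-\overline\nu)) = I_{\sub}(\Gamma^{h,\sigma_0})$ is also wrong---since $P^{\text{op}}$ is of type L for $\Gamma^{h,\sigma_0}$, that module is $I_{\quo}(\Gamma^{h,\sigma_0})$. Two errors cancel to give the right conclusion, but the confusion between $\sigma_0$ and $\sigma_c$ is exactly what must be kept straight in the companion Proposition~\ref{prop:cdualstd}, so it needs fixing: the switch from ``quo'' to ``sub'' comes not from $\sigma_0$ flipping $N$, but from the sign change $\nu\mapsto -\overline\nu$ making $P$ the \emph{opposite} of the type-L parabolic for the new parameter.
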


\begin{subequations}\label{se:gKcinv}
Having recalled the classical theory of invariant Hermitian forms, we
now turn to our main technical tool: the notion of $c$-invariant
Hermitian forms.

Suppose again that $V$ is a $({\mathfrak g},K({\mathbb
  C}))$-module. A {\em $c$-invariant Hermitian form on $V$} is a
Hermitian form $\langle,\rangle^c$ on $V$ with the properties
\begin{equation}\begin{aligned}
\langle k\cdot v, w \rangle^c = \langle v,k^{-1}\cdot w\rangle^c,\quad 
&\langle X\cdot v, w \rangle^c = -\langle v,X\cdot w\rangle^c\\
&(k \in K,\ X\in {\mathfrak g}^{\sigma_c},\ v, w\in V).\end{aligned}
\end{equation} 
Just as for a classical Hermitian form, we require first that the
compact group $K$ act by unitary  (that is, preserving the Hermitian form)
operators.  What is new is that we require not the real Lie algebra ${\mathfrak
  g}_0 = {\mathfrak g}^{\sigma_0}$, but rather the {\em compact} Lie
algebra ${\mathfrak g}^{\sigma_c}$, to act by skew-Hermitian
operators. Because the 
action of ${\mathfrak g}^{\sigma_c}$ usually does not exponentiate to
the compact real form of $G$, this new requirement is no longer the
derivative of a unitarity requirement on a group.  Nevertheless the
definition still makes sense.

It is equivalent to require either less
\begin{equation}\begin{aligned}
\langle k\cdot v, w \rangle^c = \langle v,k^{-1}\cdot w\rangle^c,\quad 
&\langle X\cdot v, w \rangle^c = -\langle v,X\cdot w\rangle^c\\
&(k \in K,\ X\in i{\mathfrak s}_0,\ v, w\in V)\end{aligned}
\end{equation} 
or more
\begin{equation}\begin{aligned}
\langle k\cdot v, w \rangle^c = \langle v,\sigma_c(k)^{-1}\cdot w\rangle^c,\quad 
&\langle X\cdot v, w \rangle^c = -\langle v,\sigma_c(X)\cdot w\rangle^c\\
&(k \in K({\mathbb C}),\ X\in {\mathfrak g},\ v,w\in V)\end{aligned}
\end{equation} 
These are precisely the properties defining a
$\sigma_c$-invariant Hermitian form on $V$ (see Definition
\ref{def:invherm}).
\end{subequations}

We need to compute $c$-Hermitian duals in terms of the Langlands classification.
\begin{definition}\label{def:cdualparam} Suppose $\Gamma =
  (H,\gamma,R^+_{i\mathbb R})$ is a Langlands parameter (Theorem
  \ref{thm:LC}). The {\em $c$-Hermitian dual of $\Gamma$} is
\addtocounter{equation}{-1}
\begin{subequations}\label{se:cdualparam}
\begin{equation} \Gamma^{h,\sigma_c} =
  (H,-\overline\gamma\circ\theta,R^+_{i\mathbb R}).\end{equation}
This is the same as the Hermitian dual, except that we have twisted
the character by the Cartan involution $\theta$. Write $H=TA$ as in
\ref{prop:toruschars}, and  
\begin{equation}\Gamma = (\Lambda,\nu) = (\Lambda, \nu_{\text{re}} +
i\nu_{\im})\end{equation} 
as in \ref{def:dLP}, with $\Lambda$ a 
discrete Langlands parameter and $\nu \in {\mathfrak a}^*$.
Since $\theta$ acts by the identity on $T$ and by inversion on $A$,
comparison with Definition \ref{def:dualparam} shows that 
\begin{equation}\label{eq:cdualform}
\Gamma^{h,\sigma_c} = (\Lambda,\overline{\nu}) = (\Lambda,
\nu_{\re} - i\nu_{\im}).\end{equation}
In particular, {\em if the continuous parameter of $\Gamma$ is real
  (Definition \ref{def:dLP}) then $\Gamma^{h,\sigma_c}$ is equal to
  $\Gamma$.}  We will see (Proposition \ref{prop:cdualstd}) that this means
that the corresponding representation admits a $c$-invariant Hermitian
form. 
\end{subequations}
\end{definition}

\begin{proposition}\label{prop:cdualstd} Suppose $\Gamma = (\Lambda,\nu)$ is a
  Langlands parameter for the real reductive group $G$, and
  $\Gamma^{h,\sigma_c} = (\Lambda,\overline{\nu})$ is the $c$-Hermitian
  dual parameter.
\begin{enumerate}
\item The $c$-Hermitian dual of the standard representation with a
  Langlands quotient is a standard representation with a Langlands
  submodule:
$$[I_{\quo}(\Gamma)]^{h,\sigma_c} \simeq
I_{\sub}(\Gamma^{h,\sigma_c}),$$
and similarly with quotient and submodule reversed.
\item The $c$-Hermitian dual of the irreducible module $J(\Gamma)$ is
$$[J(\Gamma)]^{h,\sigma_c} \simeq J(\Gamma^{h,\sigma_c}).$$
\item The irreducible module $J(\Gamma)$ admits a nonzero
$c$-invariant Hermitian form if and only if the Langlands parameter
$\Gamma$ is equivalent to $\Gamma^{h,\sigma_c}$.  In this case there
is a nonzero $c$-invariant Hermitian form on $I_{\quo}(\Gamma)$,
unique up to a real scalar. This form has radical equal to the maximal
proper submodule $I_1(\Gamma)$, and factors to a nondegenerate
$c$-invariant Hermitian form on $J(\Gamma)$.
\item Write the parameter $\Gamma$ as 
$$\Gamma = (\Lambda,\nu)$$ 
as in \ref{def:dLP}. Then $J(\Gamma)$ admits a nonzero $c$-invariant Hermitian
form if and only if there is an element $w \in W^\Lambda$ (Proposition
\ref{prop:LCshape}) such that $w\cdot \nu = \overline{\nu}$. In
particular, if $\nu$ is real, then a $c$-invariant form always exists.
\item Suppose that the continuous parameter $\nu$ is real. Then any
  $c$-invariant Hermitian form on $J(\Gamma)$ has the same sign on
  every lowest $K$-type.  In particular, the form may be chosen to be
  positive definite on every lowest $K$-type, and is characterized up
  to a positive real scalar multiple by this requirement.  Consequently
  the signature function
$$(\POS^c_{J(\Gamma)},\NEG^c_{J(\Gamma)}) \colon \widehat
K \rightarrow {\mathbb N}\times {\mathbb N}$$
 (cf.~Proposition \ref{prop:sigchar}) is uniquely defined by the
 characteristic property
$$(\POS^c_{J(\Gamma)}(\mu),\NEG^c_{J(\Gamma)}(\mu)) = (1,0)
\qquad\text{$\mu$ any lowest $K$-type of $J(\Gamma)$}.$$
\end{enumerate}
\end{proposition}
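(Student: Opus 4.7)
The strategy is to parallel the proof of the classical Proposition~\ref{prop:dualstd} via the bridge between the two real structures provided by Proposition~\ref{prop:changeform}. With $\sigma_1=\sigma_0$ and $\sigma_2=\sigma_c$, the relevant pair-automorphism is $\delta=\sigma_0^{-1}\sigma_c=\theta$, which restricts to the identity on $K(\mathbb{C})$ and to the Cartan involution on $\mathfrak{g}$. Consequently $V^{h,\sigma_c}\simeq(V^{h,\sigma_0})^\theta$ as $(\mathfrak{g},K(\mathbb{C}))$-modules. Twisting a standard module $I(\Lambda,\nu)$ or irreducible $J(\Lambda,\nu)$ by $\theta$ replaces $\nu$ by $-\nu$, since $\theta$ acts as the identity on $\mathfrak{t}^*$ and as $-1$ on $\mathfrak{a}^*$, while leaving $\Lambda$ and $R^+_{i\mathbb{R}}$ unchanged: $I_{\quo}(\Lambda,\nu)^\theta\simeq I_{\quo}(\Lambda,-\nu)$, $I_{\sub}(\Lambda,\nu)^\theta\simeq I_{\sub}(\Lambda,-\nu)$, and $J(\Lambda,\nu)^\theta\simeq J(\Lambda,-\nu)$.

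Parts (1) and (2) now follow by composing the $\sigma_0$-dual formulas of Proposition~\ref{prop:dualstd} with this $\theta$-twist:
$$I_{\quo}(\Gamma)^{h,\sigma_c}\simeq[I_{\sub}(\Lambda,-\overline{\nu})]^\theta\simeq I_{\sub}(\Lambda,\overline{\nu})=I_{\sub}(\Gamma^{h,\sigma_c}),$$
and analogously for $J$. Part (3) is then the standard Langlands-module argument, unchanged from the classical case: a nonzero $(\mathfrak{g},K(\mathbb{C}))$-map $I_{\quo}(\Gamma)\to I_{\sub}(\Gamma^{h,\sigma_c})\simeq I_{\quo}(\Gamma)^{h,\sigma_c}$ exists precisely when $\Gamma\sim\Gamma^{h,\sigma_c}$, and any such map has kernel equal to the maximal proper submodule $I_1(\Gamma)$ and image the socle $J(\Gamma)$; uniqueness up to complex scalar is Schur's lemma, and the Hermitian condition $T^h=T$ pins the scalar down to a real line. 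Part (4) is the direct translation of $\Gamma\sim(\Lambda,\overline{\nu})$ via Proposition~\ref{prop:LCshape}(1); when $\nu$ is real, $w=e$ satisfies $w\cdot\nu=\overline{\nu}$.

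The main obstacle is part (5). The plan is a deformation in the continuous parameter. Fix real $\nu$ and consider the family $\Gamma_t=(\Lambda,t\nu)$ for $t\in[0,1]$. Realize the standard modules uniformly in the \emph{compact picture}, as the fixed $K$-representation $\Ind_{K\cap P}^{K}(D(\Lambda)|_{K\cap M})$, on which the $c$-invariant form (normalized to take the value $+1$ on a fixed unit vector in a chosen lowest $K$-type $\mu_0$) varies real-analytically in $t$. By Proposition~\ref{prop:LCshape}(2) the set ${\mathcal L}(\Lambda)$ of lowest $K$-types is independent of $t$, each appears with multiplicity one, and none lies in the radical $I_1(\Gamma_t)$ (they inject into the nondegenerate Langlands quotient). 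Hence on each $\mu\in{\mathcal L}(\Lambda)$ the form restricts to a real scalar multiple of the fixed $K$-invariant positive form on $\mu$; this scalar is a continuous nonvanishing function of $t$, hence of constant sign. When $\Lambda$ is final, $\Gamma_0$ is a valid parameter, $J(\Gamma_0)$ is tempered unitary, and the classical positive-definite $\sigma_0$-invariant form is simultaneously $\sigma_c$-invariant (using $\sigma_c|_K=\sigma_0|_K$ together with the direct check that $X\in\mathfrak{s}_0$ acts skew-Hermitianly iff $iX\in\mathfrak{g}^{\sigma_c}$ does), so by uniqueness up to scalar all signs at $t=0$ are positive, and continuity transports positivity to $t=1$. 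The non-final case reduces to the final case by iterated Cayley transform through real coroots $\alpha^\vee$ with $\Lambda_{\mathfrak{q}}(m_\alpha)=-1$ and $\langle\nu,\alpha^\vee\rangle\neq 0$ (the latter forced by condition (5) of Theorem~\ref{thm:LC}), as in the discussion at the end of \eqref{se:ds}; each transform yields an equivalent parameter on a strictly more compact $\theta$-stable Cartan, and the process terminates at a final parameter, to which the previous argument applies. Uniqueness of the normalized form up to a positive real scalar is then immediate from multiplicity one.
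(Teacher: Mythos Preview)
Your treatment of parts (1)--(4) is correct and is essentially the paper's argument, only phrased through the abstract identity $V^{h,\sigma_c}\simeq(V^{h,\sigma_0})^\theta$ from Proposition~\ref{prop:changeform} rather than through the explicit induced-representation realization. The paper works directly with $\Ind_{MAN}^G(D(\Lambda)\otimes\nu\otimes 1)$ and tracks how $\theta$ swaps $P$ with $P^{\text{op}}$ and fixes $D(\Lambda)$; your version packages the same computation.

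Part (5), however, has a genuine gap. Your base case at $t=0$ rests on the claim that the positive-definite $\sigma_0$-invariant (unitary) form on a tempered representation is simultaneously $\sigma_c$-invariant. This is false: if $X\in\mathfrak{s}_0$ is skew-Hermitian for a sesquilinear form, then $iX\in i\mathfrak{s}_0\subset\mathfrak{g}^{\sigma_c}$ is \emph{Hermitian}, not skew-Hermitian (the factor $i$ moves across the form with a complex conjugate). So the ``direct check'' you invoke gives the opposite conclusion. In the final case this does no harm, since there is then a \emph{unique} lowest $K$-type (Proposition~\ref{prop:LCshape}(3)) and the statement is vacuous; but it leaves the non-final case, where there are several lowest $K$-types, entirely unaddressed. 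Your proposed Cayley-transform reduction does not repair this: Cayley transforms in the sense of \eqref{se:ds} apply only when condition~(5) of Theorem~\ref{thm:LC} \emph{fails}, i.e.\ when $\langle\nu,\alpha^\vee\rangle=0$ for some odd real coroot, and they decompose $I(\Gamma)$ as a direct sum rather than producing an equivalent single parameter. For a genuine Langlands parameter $(\Lambda,\nu)$ with $\nu$ real and nonzero on all odd real coroots, no such transform is available.

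The paper's route for (5) is quite different: it first invokes the Signature Theorem for cohomological induction (Theorem~\ref{thm:sigthm}) to reduce, via the realization of Theorem~\ref{thm:VZrealiz} and the bottom-layer description of lowest $K$-types in Theorem~\ref{thm:VZLKT}, to the case where $H$ is split modulo center. In that split situation it writes $I_{\quo}(\Gamma)$ by induction-in-stages from an irreducible tempered $I_1$ on a Levi $M_1$, identifies the $c$-invariant form with the long intertwining operator $\mathcal{L}$, and then computes $\mathcal{L}$ on each fine lowest $K$-type by the Gindikin--Karpelevi\v c product formula, reducing to explicit positive integrals in $SL(2,\mathbb{R})$. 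This is where the actual positivity is established; no deformation-to-$\nu=0$ argument is used for (5).
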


\begin{proof}
We use the proof of Proposition \ref{prop:dualstd}. Knapp and
Zuckerman began with the realization
\begin{equation*} I_{\quo}(\Gamma) = \Ind_{MAN}^G
  (D(\Lambda) \otimes \nu \otimes 1) \end{equation*}
(with $P$ chosen of type L with respect to $\Gamma$) and found a
natural isomorphism 
\begin{equation*}[I_{\quo}(\Gamma)]^{h,\sigma_0} \simeq \Ind_{MAN}^G
(D(\Lambda) \otimes -\overline{\nu} \otimes 1).\end{equation*}
Because of the minus sign on the right, it is easy to check that
$P^{\text{op}}$ is of type L with respect to $\Gamma^{h,\sigma_0}$; so
what appears on the right is the realization of
$I_{\sub}(\Gamma^{h,\sigma_0})$ from Theorem \ref{thm:Lrealiz}.
For the $c$-Hermitian dual, we are again looking at the (same) Hermitian dual
of the space of the standard module; what has changed is that the
representation of ${\mathfrak g}$ on this dual space has been twisted by
the Cartan involution $\theta$. Now $\theta$ carries $P=MAN$ to
$P^{\text{op}} = MAN^{\text{op}}$, so it is clear that (writing a
superscript for the operation of twisting the Lie algebra action by an
automorphism) 
\begin{equation*}[\Ind_{MAN}^G
(D(\Lambda) \otimes -\overline{\nu} \otimes 1)]^\theta
\simeq \Ind_{{MAN}^{\text{op}}}^G
(D(\Lambda)^\theta \otimes \overline{\nu} \otimes 1)\end{equation*}
Harish-Chandra characterized each discrete series representation in
terms of its distribution character on the $\theta$-fixed subgroup
$K$, so $D(\Lambda)^\theta \simeq D(\Lambda)$ whenever
$\Lambda$ is a discrete series representation. The construction of
limits of discrete series by the translation principle inherits this
property.  Assembling
all of this, we find
\begin{equation*} [I_{\quo}(\Gamma)]^{h,\sigma_c} \simeq
  \Ind_{MAN^{\text{op}}}^G (D(\Lambda) \otimes \overline{\nu}
  \otimes 1).\end{equation*}  
Because the minus sign on $\nu$ has disappeared, it is easy to check
that $P$ is of type L with respect to $\Gamma^{h,\sigma_c}$. According
to Theorem \ref{thm:Lrealiz}, the 
right side realizes $I_{\sub}(\Gamma^{h,\sigma_c})$. This proves
(1).

Parts (2) and (3) are a formal consequence of (1) and the Langlands
classification.  Part (4) is a restatement of (3), using Proposition
\ref{prop:LCshape}.

For (5), we will use

\begin{theorem}[Signature Theorem; \Cite{KV}*{Theorem
  6.34}]\label{thm:sigthm} Suppose ${\mathfrak q} = {\mathfrak l} +
  {\mathfrak u}$ is a $\theta$-stable parabolic subalgebra of
  ${\mathfrak g}$, with Levi subgroup $L \subset
  G$. Write $s = \dim({\mathfrak u}\cap {\mathfrak k})$, and write
  ${\mathcal L}_s$ for Zuckerman's cohomological parabolic induction
  functor from $({\mathfrak l},L\cap K)$-modules to $({\mathfrak
    g},K)$-modules (\cite{KV}*{pages 327--328}).
\begin{enumerate}
\item Suppose that $Z$ is an $({\mathfrak l},L\cap K)$-module of
  finite length admitting an invariant Hermitian form
  $\langle,\rangle_L^0$. Then there is an induced invariant Hermitian form
  $\langle,\rangle_G^0$ on ${\mathcal L}_s(Z)$. The signature of
  $\langle,\rangle_G^0$ on the bottom layer of $K$-types matches the
  signature of $\langle,\rangle_L^0$ on the corresponding $L\cap
  K$-types.
\item Suppose that $Z$ is an $({\mathfrak l},L\cap K)$-module of
  finite length admitting a $c$-invariant Hermitian form
  $\langle,\rangle_L^c$. Then there is an induced $c$-invariant Hermitian form
  $\langle,\rangle_G^c$ on ${\mathcal L}_s(Z)$. The signature of
  $\langle,\rangle_G^c$ on the bottom layer of $K$-types matches the
  signature of $\langle,\rangle_L^c$ on the corresponding $L\cap
  K$-types.
\end{enumerate}
\end{theorem}

\begin{proof} Part (1) is \Cite{KV}*{Theorem 6.34}. Part (2)
can be proved by repeating almost exactly the same words, replacing
``invariant Hermitian'' by ``$c$-invariant Hermitian.''
\end{proof}

We continue now with the proof of Proposition \ref{prop:cdualstd}(5). 
We apply Theorem \ref{thm:sigthm} to the realization of the standard
module $I(\Gamma)$ in Theorem \ref{thm:VZrealiz}, and to the
description of the lowest $K$-types  in Theorem \ref{thm:VZLKT}.
What this accomplishes is a reduction of
(5) to the special case
\begin{equation*}\text{$H$ is split modulo center of $G$;}\end{equation*}
that is, that all roots of $H$ are real, so that
\begin{equation*}P = MAN = TAN\end{equation*}
is a Borel subgroup of $G$, and the standard representations are
ordinary principal series.  We assume $P$ is
chosen to make $\nu = \RE\nu$ weakly dominant:
\begin{equation*}
\langle \nu, \alpha^\vee\rangle \ge 0 \qquad (\alpha \in
\Delta({\mathfrak n},{\mathfrak h});
\end{equation*}
this is a stronger hypothesis than ``type L'' \eqref{eq:typeL}.
Consider the parabolic subgroup $P_1 = M_1A_1N_1 \supset P$ defined by
either of the equivalent requirements
\begin{equation*}
\langle \nu, \alpha^\vee\rangle = 0 \quad (\alpha \in
\Delta({\mathfrak m}_1,{\mathfrak h}) \quad\text{or}\quad \langle
\nu, \alpha^\vee\rangle > 0 \qquad (\alpha \in 
\Delta({\mathfrak n}_1,{\mathfrak h})).
\end{equation*}
Define
\begin{equation*}
H_1 = M_1 \cap H, \quad \Gamma_1 = \Gamma|_{H_1}, \quad \nu_1 =
\nu|{A_1}, \quad I_1 = I(\Gamma_1);
\end{equation*}
this last representation is a tempered unitary principal series representation
of $M_1$, with continuous parameter equal to zero. Induction by stages
says
\begin{equation*}
I_{\quo}(\Gamma) = \Ind_{M_1A_1N_1}^G (I_1 \otimes \nu_1 \otimes 1).
\end{equation*}
Here finally we have written the standard module precisely as
Langlands wrote it in \Cite{LC}: as induced from a tempered
representation $I_1$ twisted by a character $\nu$ with $\RE\nu$
strictly positive.  Because $\Gamma$ is assumed to be a Langlands
parameter, this standard module has a {\em unique} irreducible
quotient $J(\Gamma)$. It follows that the unitary representation $I_1$
must be irreducible. A principal series representation with continuous
parameter equal to zero is irreducible if and only if it has a unique
lowest $K$ type; so we conclude that
\begin{equation*}
\text{$I_1$ is irreducible, with unique lowest $M_1\cap K$-type $\mu_1$.}
\end{equation*}
We may therefore fix a nondegenerate $c$-invariant Hermitian form
$\langle,\rangle_1$ on $I_1$ with the property that
\begin{equation}\label{eq:tempsplcpos}
\text{$\langle,\rangle_1$ is positive definite on the lowest $M_1\cap
  K$-type $\mu_1$;}
\end{equation}
this determines the form up to a positive scalar multiple.

The $c$-hermitian dual of $I_{\quo}(\Gamma)$ is
\begin{equation*}
[I_{\quo}(\Gamma)]^{h,\sigma_c} = \Ind_{M_1A_1N_1^{\text{op}}}^G
  (I_1^{h,\sigma_c} \otimes \nu_1 \otimes 1) \simeq \Ind_{M_1A_1N_1^{\text{op}}}^G
  (I_1 \otimes \nu_1 \otimes 1). 
\end{equation*}
The last identification here uses the pairing $\langle,\rangle_1$ just
defined. 

A vector in $I_{\quo}(\Gamma)$ is a function $f_{\quo}$ on
$G$ with values in $I_1$, transforming in a certain way under the
action of $P_1$ on the right. A vector in the $c$-Hermitian dual is a
function $f_{\sub}$ on $G$, taking values in $I_1$, and transforming
under the action of $P_1^{\text{op}}$ on the right.  The pairing
between two such functions is
\begin{equation*}
\int_K \langle f_{\quo}(k),f_{\sub}(k)\rangle_1 dk.
\end{equation*}

A $c$-invariant Hermitian pairing on $I_{\quo}(\Gamma)$ is
therefore the same thing as an intertwining operator
\begin{equation*}
{\mathcal L}\colon \Ind_{M_1A_1N_1}^G (I_1 \otimes \nu_1 \otimes 1)
\rightarrow \Ind_{M_1A_1N_1^{\text{op}}}^G (I_1 \otimes \nu_1 \otimes
  1); 
\end{equation*}
the corresponding $c$-invariant Hermitian form is
\begin{equation}\label{eq:cinvtsplit}
\langle f_{\quo}, f'_{\quo} \rangle_{\mathcal L} = \int_K
\left\langle f_{\quo}(k),({\mathcal L}f'_{\quo})(k)\right\rangle_1 dk.
\end{equation}
In this formula we can use the integral long intertwining operator
\begin{equation*}
({\mathcal L}f)(g) = \int_{N_1^{\text{op}}} f(gx) dx;
\end{equation*}
this integral over $N_1^{\text{op}}$ is absolutely convergent because
$\RE\nu_1$ is strictly positive on the roots of $A_1$, just as in \Cite{LC}.

The lowest $K$-types of $I_{\quo}(\Gamma)$ are the {\em fine}
representations of $K$ (\cite{Vgreen}*{Definition 4.3.9}) that contain
the (necessarily fine) representation $\mu_1$ of $M_1\cap K$.  Each of
them contains $\mu_1$ with multiplicity one (\cite{Vgreen}*{Theorem
4.3.16}), so each lowest $K$-type $\mu$ has multiplicity one in
$I_{\quo}(\Gamma)$. Consequently 
\begin{equation*}
\text{${\mathcal L}$ acts by a scalar ${\mathcal L}(\mu)$ on each
  lowest $K$-type $\mu$.}
\end{equation*}
In light of the formula \eqref{eq:cinvtsplit} for the $c$-invariant
form, and the positivity \eqref{eq:tempsplcpos}, the proof of (5) comes
down to proving
\begin{equation*}
\text{\parbox{.55\textwidth}{${\mathcal L}$ acts by a strictly
    positive scalar on each fine representation $\mu$ containing $\mu_1$.}}
\end{equation*}

In order to compute the scalar ${\mathcal L}(\mu)$, we use the product
formula of Gindikin-Karpelevi\v c (see Schiffmann
\Cite{Schiff} or \Cite{Vgreen}*{Theorem 4.2.22}) for the intertwining operator
${\mathcal L}$. What this product formula shows is that the scalar in
question is a product (over the coroots of $H$ taking strictly
positive values on $\nu$) of a corresponding scalar computed in the
three-dimensional subgroup $\phi_\alpha(SL(2,{\mathbb R}))$.  There are
two cases: the ``even case,'' in which the character $\delta$ is
trivial on the element $m_\alpha$ of order two (Definition
\ref{def:rhoim}); and the ``odd case,'' in which 
$\delta(m_\alpha) = -1$.  

In the even case, the only fine
representation of $SO(2)$ containing $\delta$ is the trivial one, so
we must begin with the constant function $1$ on $SO(2)$ and extend it
to an element of the principal series representation
\begin{equation*}
f_{\text{even}}\left(\begin{pmatrix}\cos\theta & \sin\theta \\ -\sin\theta &
  \cos\theta \end{pmatrix} \begin{pmatrix}e^t & 0 \\ 0 & e^{-t}\end{pmatrix}
\begin{pmatrix} 1 & x \\ 0 & 1 \end{pmatrix} \right) = e^{-t(v+1)}.
\end{equation*}
Here $v = \langle \nu,\alpha^\vee\rangle $ is a strictly positive
parameter. Necessarily ${\mathcal L}(f_{\text{even}})$ is constant on
$SO(2)$. To compute the constant, we can evaluate at $1$, getting
\begin{equation*}
\int_{-\infty}^{\infty} f_{\text{even}}\left( \begin{pmatrix} 1 & 0\\
    y&1 \end{pmatrix}\right) dy = \int_{-\infty}^{\infty}
(1+y^2)^{-(v+1)/2} dy.
\end{equation*}
(Here we have computed the Iwasawa decomposition of the matrix in the
integrand, then used the formula above.) Convergence and positivity of
this integral are elementary. (Explicit evaluation, to
$\pi^{1/2}\Gamma(v/2)/\Gamma(v/2 + 1/2)$, is also easy and well known,
but we will not need this.)

In the odd case, there are two fine representations of $SO(2)$
containing $\delta$, corresponding to the functions $e^{\pm i\theta}$
on $SO(2)$.  The corresponding functions in the principal series are
\begin{equation*}
f_{\text{odd,}\pm}\left(\begin{pmatrix}\cos\theta & \sin\theta \\ -\sin\theta &
  \cos\theta \end{pmatrix} \begin{pmatrix}e^t & 0 \\ 0 & e^{-t}\end{pmatrix}
\begin{pmatrix} 1 & x \\ 0 & 1 \end{pmatrix} \right) = e^{\pm i\theta}
e^{-t(v+1)}.
\end{equation*}
Again applying ${\mathcal L}$ has to give a multiple of $e^{\pm
  i\theta}$ on $SO(2)$, and we can calculate the multiple by
evaluating at the identity:
\begin{equation*}
\int_{-\infty}^{\infty} f_{\text{odd},\pm}\left( \begin{pmatrix} 1 & 0\\
    y&1 \end{pmatrix}\right) dy = \int_{-\infty}^{\infty} [(1 \mp
iy)(1+y^2)^{-1/2}] (1+y^2)^{-(v+1)/2} dy
\end{equation*}
The imaginary part of the integrand is (integrable and) odd, so
contributes zero; we are left with
\begin{equation*}
\int_{-\infty}^{\infty} (1+y^2)^{-(v+2)/2} dy,
\end{equation*}
which is clearly positive. (The value is $\pi^{1/2}\Gamma(v/2 +
1/2)/\Gamma(v/2 + 1)$.) 
\end{proof}

\section{Standard and $c$-invariant forms in the equal rank
  case}\label{sec:eqrk} 
\setcounter{equation}{0}

Our plan (roughly speaking) is (first) to calculate signatures of
$c$-invariant forms, and (second) to relate these to the signatures of
ordinary invariant forms that we care most about.  The first step will
occupy most of the rest of the paper. The second step is in most cases
much easier, and we can do it now. But in some cases this easy step
will fail. In order to repair it, we will introduce (in Section
\ref{sec:theta}) a slightly different category of representations.

\begin{definition}\label{def:eqrk} Suppose that $G$ is a real
  reductive algebraic group as in \eqref{se:reductive}, and that
  $\theta$ is a Cartan involution as in Theorem
  \ref{thm:realforms}, with $K=G^\theta$. We say that $G$ is {\em
    equal rank} if $G$ and $K$ have the same rank; equivalently, if
  the automorphism $\theta$ of $G$ is inner.  In this case a {\em
    strong involution for $G$} is an element $x\in G$ such that
$$\Ad(x) = \theta, \qquad K=G^x$$
It follows that $x\in Z(K)$, and that
$$x^2 = z \in Z(G)\cap K.$$

Fix $x$ and $z$ as above. For every $\zeta \in {\mathbb C}^\times$,
define
$$\widehat G_\zeta = \{ V\in \widehat G \mid z\cdot v = \zeta v \ \
(v\in V)\},$$
the irreducible $({\mathfrak g},K({\mathbb C}))$-modules in which
$z$ acts by $\zeta$. Similarly define $\widehat K_\zeta$.  

Fix a square root $\zeta^{1/2}$ of $\zeta$. On every $\mu\in \widehat
K_\zeta$, $x$ must act by some square root of $\zeta$; so there is a
sign
$$\epsilon(\mu) \in \{\pm 1\}, \qquad \mu(x) =
\epsilon(\mu)\zeta^{1/2} \qquad (\mu \in \widehat K_\zeta).$$
Of course $\epsilon(\mu)$ depends on the choice of square root
$\zeta^{1/2}$. 
\end{definition}

\begin{theorem}\label{thm:ctoinveqrk} Suppose $G$ is an equal rank
  real reductive algebraic group with strong involution $x\in Z(K)$
  (Definition \ref{def:eqrk}). Write $z = x^2 \in Z(G)$. Suppose $J$
  is an irreducible $({\mathfrak g},K({\mathbb C}))$-module admitting
  a $c$-invariant Hermitian form $\langle,\rangle^c_J$, in which $z$
  acts by the scalar $\zeta\in {\mathbb C}^\times$. Fix a square root
  $\zeta^{1/2}$ of $\zeta$. Then $J$ admits an invariant Hermitian
  form $\langle,\rangle^0_J$ defined by the formula
$$\langle v, w\rangle^0_J = \zeta^{-1/2}\langle x\cdot v, w\rangle^c_J.$$ 
For each $\mu \in \widehat K_\zeta$, the forms on the $\mu$
multiplicity spaces (Proposition \ref{prop:sigchar}) are therefore
related by
$$\langle,\rangle_{J^\mu}^0 = \epsilon(\mu)
\langle,\rangle_{J^\mu}^c;$$
the sign $\epsilon(\mu) = \pm 1$ is defined in Definition
\ref{def:eqrk}. Consequently the signature functions are related by
$$(\POS_J^0(\mu),\NEG_J^0(\mu)) = \begin{cases}
(\POS_J^c(\mu),\NEG_J^c(\mu)) & (\epsilon(\mu) = +1)\\
(\NEG_J^c(\mu),\POS_J^c(\mu)) & (\epsilon(\mu) = -1)\\
\end{cases}$$

\end{theorem}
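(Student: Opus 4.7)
The plan is to interpret this as a direct application of Proposition \ref{prop:changeform}, followed by an explicit computation of the resulting form on isotypic components. In the equal rank setting, $\theta = \Ad(x)$ is inner with $x \in Z(K)$, so the two real structures $\sigma_0$ and $\sigma_c = \sigma_0 \circ \theta$ differ precisely by conjugation by $x$, and $\sigma_c(x) = x$ since $x \in K$. This places us exactly in the hypotheses of Proposition \ref{prop:changeform} with $\sigma_1 = \sigma_c$ and $\sigma_2 = \sigma_0$; both $\delta = \sigma_c^{-1}\sigma_0$ and $\epsilon = \sigma_0\sigma_c^{-1}$ reduce to the inner automorphism $\theta = \Ad(x)$, so part (5) of that proposition applies and one may take $D = E = $ (action of $x$ on $J$).

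Before turning the crank, I would record a preliminary observation: since $z = x^2 \in Z(G)\cap K$ acts by the scalar $\zeta$ and $\sigma_c(z)=z$, $c$-invariance forces
$$\zeta\,\langle v,w\rangle^c_J = \langle z\cdot v,w\rangle^c_J = \langle v, z^{-1}\cdot w\rangle^c_J = \overline{\zeta^{-1}}\,\langle v,w\rangle^c_J,$$
hence $|\zeta|=1$. Thus $\zeta^{1/2}$ is unitary and $\overline{\zeta^{-1/2}} = \zeta^{1/2}$, which is what makes the Hermitian symmetry work out later. With $D = E = x$, the scalars appearing in Proposition \ref{prop:changeform} are $\xi(D,E)= \zeta^{-1}$ (from $E^{-1} = x^{-1} = \zeta^{-1}\cdot x$) and $\omega(D,E) = 1$ (from $\langle x\cdot v, x\cdot w\rangle^c_J = \langle v, w\rangle^c_J$, using $K$-invariance). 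Then $\omega\overline{\xi} = \zeta$, and the prescription in Proposition \ref{prop:changeform}(9) with square root $\zeta^{1/2}$ yields exactly the form
$$\langle v,w\rangle^0_J = \zeta^{-1/2}\langle x\cdot v, w\rangle^c_J,$$
which is therefore automatically $\sigma_0$-invariant and Hermitian. (As a sanity check, one can also verify the four defining properties directly: sesquilinearity is manifest, Hermitian symmetry follows from $|\zeta|=1$ together with $c$-Hermiticity, $K$-invariance uses $x \in Z(K)$, and $\mathfrak{g}$-invariance uses $\Ad(x) = \theta$ together with $\sigma_c\circ\theta = \sigma_0$.)

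For the signature dictionary, I would use that on each $\mu$-isotypic component $J(\mu)$ with $\mu\in\widehat{K}_\zeta$, the central element $x$ acts by the scalar $\epsilon(\mu)\zeta^{1/2}$, by the very definition of $\epsilon(\mu)$ in Definition \ref{def:eqrk}. Substituting into the formula, for $v,w$ in the $\mu$-isotypic part one obtains
$$\langle v,w\rangle^0_J = \zeta^{-1/2}\cdot \epsilon(\mu)\zeta^{1/2}\langle v,w\rangle^c_J = \epsilon(\mu)\langle v,w\rangle^c_J.$$
Since both forms are orthogonal on the $K$-isotypic decomposition (Proposition \ref{prop:sigchar}(4)), this passes immediately to the multiplicity spaces $J^\mu$; multiplying a form by $-1$ swaps $\POS$ and $\NEG$ while multiplying by $+1$ preserves them, giving the stated dictionary.

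The only potential pitfall is the sign/root-of-unity bookkeeping: one must verify that the scalar relating the two forms on $J(\mu)$ is a genuine real sign $\pm 1$ rather than some other complex number. This is where the consistency of the square root is essential; both $\zeta^{-1/2}$ in the definition of $\langle,\rangle^0_J$ and $\zeta^{1/2}$ in the definition of $\epsilon(\mu)$ must be chosen to be the \emph{same} square root of $\zeta$, which is built into the statement of the theorem. Once this convention is fixed, the two factors $\zeta^{\pm 1/2}$ cancel in the ratio and leave behind only $\epsilon(\mu)\in\{\pm 1\}$, and the proof is essentially complete.
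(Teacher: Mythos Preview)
Your proof is correct and follows essentially the same route as the paper: both reduce the claim to Proposition~\ref{prop:changeform}, observing that in the equal rank case $\delta = \epsilon = \theta$ is inner and one may take $D = E$ to be the action of $x$. The paper's proof is a single sentence invoking that proposition; you have simply filled in the bookkeeping (the observation $|\zeta|=1$, the explicit values $\xi(D,E)=\zeta^{-1}$ and $\omega(D,E)=1$, and the isotypic computation), all of which the paper leaves as ``immediate.''
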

\begin{proof} 
The formula for the invariant Hermitian form is a
  special case of Proposition \ref{prop:changeform}; the two
  automorphisms $\delta$ and $\epsilon$ of that proposition are both
  $\theta$, and we can choose $\lambda = x$. The statements about
  signatures are immediate. \end{proof}

Theorem \ref{thm:ctoinveqrk} provides a very complete, explicit, and
computable passage from the signature of a $c$-invariant Hermitian
form to the signature of a classical invariant Hermitian form, {\em
  always assuming that $G$ is equal rank}. If $G$ is not equal rank,
there is really no parallel result, as one can see by investigating
the signatures of invariant Hermitian forms on finite-dimensional
representations of $SL(2,{\mathbb C})$ and $SL(3,{\mathbb R})$.  The
difficulty is that there is no element of $G$ whose adjoint action is
the Cartan involution $\theta$.  In the next section we will
address this problem by enlarging $G$ slightly to an {\em extended
  group} ${}^\delta G$ (containing $G$ as a subgroup of index
two). The key property of the extended group is that the Cartan
involution is inner, and therefore we get an analogue of Theorem
\ref{thm:ctoinveqrk}.  On the other hand, Clifford theory provides a
very close and precise relationship between representation theory for
$G$ and for ${}^\delta G$. In particular, information about signatures
of invariant Hermitian forms can be passed between the two groups.

\section{Twisting by the Cartan involution}\label{sec:theta} 
\setcounter{equation}{0}

We begin as in Theorem \ref{thm:realforms} with a real reductive algebraic 
\begin{subequations}\label{se:delta}
\begin{equation}
G= G({\mathbb R},\sigma) \subset G({\mathbb C}),
\end{equation}
and a maximal compact subgroup
\begin{equation}
K = G^\theta.
\end{equation}
Choose a maximal torus (a maximal connected abelian subgroup)
\begin{equation}
T_f \subset K,
\end{equation}
and define
\begin{equation*}
H_f = Z_G(T_f),
\end{equation*}
a {\em fundamental maximal torus in $G$}. It turns out that $H_f$ is the
group of real points of
\begin{equation}
H_f({\mathbb C}) = Z_{G({\mathbb C})}(T_f),
\end{equation}
and that this group is a complex maximal torus. Clearly $H_f$ is
uniquely defined up to conjugation by $K$. Recall from Definition
\ref{def:torus} the dual lattices
\begin{equation}
X^* =_{\text{def}}\Hom_{\text{alg}}(H_f({\mathbb C}),{\mathbb
  C}^\times), \qquad X_* =_{\text{def}}\Hom_{\text{alg}}({\mathbb
  C}^\times, H_f({\mathbb C}));
\end{equation}
the automorphism $\theta$ acts on these lattices (as a lattice
automorphism of order two).  Recall from Definition \ref{def:posroots}
the finite subsets (in bijection) of roots and coroots
\begin{equation}
R = R(G,H_f) \subset X^*, \qquad R^\vee = R^\vee(G,H_f)\subset X_*;
\end{equation}
these subsets are preserved by $\theta$.  Because every root has a
nontrivial restriction to $T_f$ (this is equivalent to the assertion
already used that $Z_G(T_f)$ is abelian) we can choose a system $R^+$ of
positive roots so that
\begin{equation}
\theta(R^+) = R^+, \qquad \theta(\Pi) = \Pi;
\end{equation}
here $\Pi$ is the set of simple roots for $R^+$ (Definition
\ref{def:posroots}).  We define
\begin{equation}\label{eq:basedrootdatumaut}
t_{\text{RD}} = \text{action of $\theta$ on $(X^*,\Pi,X_*,\Pi^\vee)$};
\end{equation}
the quadruple on the right is the {\em based root datum} of
$G({\mathbb C})$ (\cite{Spr}*{16.2.1}).
\end{subequations}

\begin{subequations}\label{se:pinning}
We need to lift the automorphism $t_{\text{RD}}$ to $G({\mathbb C})$. Of
course the Cartan involution $\theta$ is such a lift; but it is
convenient to make a simpler and more canonical choice.  For every
simple root $\alpha \in \Pi$, we fix a root vector
\begin{equation} X_\alpha \in {\mathfrak g}_\alpha \qquad (\alpha \in
  \Pi).
\end{equation}
This choice of simple root vectors is called a {\em pinning} for the
based root datum of \eqref{eq:basedrootdatumaut}. We can (and do) make
these choices in such a way that
\begin{equation*} \theta(X_\beta) = X_{\theta\beta} \qquad (\beta
  \ne \theta\beta \in \Pi).\end{equation*}
Since $\theta^2=1$, we must have
\begin{equation} \theta(X_\gamma) = \epsilon(\gamma)X_{\gamma} = \pm
  X_\gamma \qquad (\gamma= \theta\gamma \in \Pi).\end{equation}
These choices then define root vectors $X_{-\alpha}$ by the
requirement
\begin{equation*}
[X_\alpha,X_{-\alpha}] = H_\alpha = d\alpha^\vee(1) =
d\phi_\alpha\begin{pmatrix} 1&0 \\0 & -1\end{pmatrix} \qquad (\alpha
\in \Pi)
\end{equation*}
(notation as in Definition \ref{def:posroots}).   Finally, these
choices of root vectors may be made compatibly with the compact real
form $\sigma_c$ of Theorem \ref{thm:realforms}:
\begin{equation*} \sigma_c(X_\alpha) = -X_{-\alpha} \quad (\alpha \in
  \Pi).\end{equation*}
It is a standard fact (see for example \Cite{Spr}*{Theorem 9.6.2}) that any
automorphism (like $t_{\text{RD}}$) of the based root datum has a unique lift
to an automorphism of algebraic groups 
\begin{equation*}
\delta_f\colon G({\mathbb C}) \rightarrow G({\mathbb C})
\end{equation*}
preserving $H_f$, acting by the automorphism $t_{\text{RD}}$ on
$X^*(H_f({\mathbb C}))$, and satisfying
\begin{equation*} \delta_f(X_\alpha) = X_{t_{\text{RD}}(\alpha)}
  \qquad (\alpha \in  \Pi).\end{equation*}
Because of the uniqueness of $\delta_f$ and the explicit formulas given
above for $\theta$ and $\sigma_c$, we see that
\begin{equation}\label{eq:sigmathetacomm}
\delta_f^2 = 1, \qquad \delta_f\theta = \theta\delta_f, \qquad
\delta_f\sigma_c = \sigma_c\delta_f.
\end{equation}

\end{subequations}

We emphasize that the based root datum automorphism $t_{\text{RD}}$
is determined canonically by $\theta$, and in fact just by the {\em inner
  class} of $\theta$ (that is, by the $\Ad(G({\mathbb C}))$ coset of
$\theta$ in $\Aut(G)$). The lift $\delta_f$ to $G({\mathbb C})$ is
determined by $t_{\text{RD}}$ and the choice of pinning.

\begin{definition}\label{def:extgrp} With notation as in
  \eqref{se:pinning}, the {\em extended group for $G({\mathbb C})$} is
  the semidirect product
$${}^\delta G({\mathbb C}) = G({\mathbb C})\rtimes \{1,\delta_f\}.$$
According to \eqref{se:pinning}, the automorphism $\delta_f$ preserves
the subgroups $G$, $K$, and $K({\mathbb C})$. We can therefore form all of 
the corresponding extended groups, for example
$${}^\delta G = G\rtimes \{1,\delta_f\}.$$

Because of \eqref{eq:sigmathetacomm}, the real forms $\sigma_0$ and
$\sigma_c$ both extend to real forms of ${}^\delta G({\mathbb C})$
acting trivially on $\delta_f$: 
$${}^\delta G({\mathbb R},\sigma_0) = {}^\delta G, \quad {}^\delta
K({\mathbb R},\sigma_0) = {}^\delta K$$
and so on.   

A {\em strong involution for the real form $G$} is an element
$$x = x_0\delta_f \in G({\mathbb C})\delta_f = {}^\delta G({\mathbb
  C}) \backslash G({\mathbb C})$$
with the property that 
$$\Ad(x)|_{G({\mathbb C})} = \theta.$$
In particular, this implies that
$$x \in {}^\delta K({\mathbb C}), \qquad x^2  = z \in Z(G({\mathbb C})).$$
\end{definition}

By construction of $\delta_f$, strong involutions for $G$ must exist. In
fact, since $\delta_f$ and $\theta$ are automorphisms of $G({\mathbb
  C})$ that agree on the Cartan $H_f({\mathbb C})$, we must have
\begin{subequations}\label{se:stronginv}
\begin{equation}
x_0 \in H_f({\mathbb C}), \quad x_0^2 = x^2 = z \in Z(G({\mathbb C})).
\end{equation}
More precisely, the construction of $\delta_f$ in \eqref{se:pinning}
shows that $x_0$ must satisfy
\begin{equation}\begin{aligned}
\beta(x_0) = 1 \qquad &(\beta\ne\theta(\beta) \in \Pi),\\
\gamma(x_0) = \epsilon(\gamma) = \pm 1 \qquad &(\gamma=\theta(\gamma) \in \Pi).
\end{aligned}\end{equation}
Conversely, any solution $x_0$ of these equations determines a strong
involution for $G$.  Evidently we can find such a solution with the
additional property
\begin{equation*}
x_0 \in (H_f^\theta)_0 \text{\ has finite order}.
\end{equation*}
The finite order hypothesis implies that $\sigma_c(x_0) = x_0$ (since
$\sigma_c$ defines the unique compact real form of $H_f({\mathbb
  C})$); so $x_0$ is also fixed by $\sigma_0 = \theta\circ\sigma_c$,
and therefore
\begin{equation}
x_0 \in K \subset G.
\end{equation}

\end{subequations}

\begin{definition}\label{def:thetaparam} Suppose $\Gamma =
  (\Lambda,\nu)$ is a Langlands parameter, written as in Proposition
  \ref{prop:LCshape}. The {\em $\theta$-twist of $\Gamma$} is
$$ \Gamma^{\theta} = (\Lambda,\theta\nu) = (\Lambda,-\nu).$$
In the case of real infinitesimal character (that is, if $\nu$ is
real-valued) this is the same as the Hermitian dual (Definition
\ref{def:dualparam}):
$$ \Gamma^{\theta} = \Gamma^{h,\sigma_0} \qquad (\nu = \overline\nu).$$
\end{definition}

The next result uses Clifford theory to lift the Langlands
classification of irreducible representations of $G$ (Proposition
\ref{prop:LCshape}) to the extended group ${}^\delta G$.  Just as for
$G$, it says that each irreducible module $J'$ is the unique
irreducible quotient of a ``standard module'' $I'$.

\begin{proposition}\label{prop:Gdeltareps} In the setting of Definition
  \ref{def:extgrp}, fix a strong involution $x=x_0\delta$ for $G$ as
  in \eqref{se:stronginv} (so that $x_0\in K$, and $x^2 = z\in
  Z(G)\cap K$). Fix an irreducible $({\mathfrak g},K)$-module $J$,
  corresponding to a Langlands parameter $\Gamma = (\Lambda,\nu)$
  (Proposition \ref{prop:LCshape}); write $I_{\quo}$ for the
  standard module of which $J$ is the unique irreducible quotient.
\begin{enumerate}
\item The twists $J^{\delta_f}$ and $J^\theta = J^x$ (Definition
  \ref{def:pairaut}) are isomorphic (by the linear map by which
  $x_0\in K$ acts on $J$). Either twist therefore defines the same action of
  $\{1,\delta_f\}$ on the set $\widehat G$ (Definition \ref{def:HCmod}).
\item The twist $J^\theta$ corresponds to the Langlands parameter
  $\Gamma^\theta$.
\item If $J^\theta \simeq J$, then $J$ admits exactly two extensions
  $J_{\pm}$ to an irreducible module for ${}^\delta G$, differing
  by the sign character of ${}^\delta G/G \simeq \{1,\delta_f\}$. In
  this case $J$ or $J_\pm$ is type one (Definition \ref{def:typeext}). 
\item If $J^\theta \simeq J$, then also $I_{\quo}^\theta\simeq
  I_{\quo}$, and this module has exactly two extensions
  $I_{\quo,\pm}$ to a module for
  ${}^\delta G$. Each extension $I_{\quo,\pm}$ has a unique
  irreducible quotient $J_{\pm}$, one of the two extensions of $J$.
\item If $J^\theta \not\simeq J$, then 
$$J_{\text{ind}} = \Ind_{({\mathfrak g},K)}^{({\mathfrak g},{}^\delta K)} (J) = J
\oplus J^\theta$$ 
is the unique irreducible $({\mathfrak g},{}^\delta K)$-module
containing $J$. In
  this case $J$ and $J^\theta$ and $J_{\text{ind}}$ are type two
  (Definition \ref{def:typeext}).
\item If $J^\theta \not\simeq J$, then also $I_{\quo}^\theta\not\simeq
  I_{\quo}$, and
$$I_{\quo,\text{ind}} = \Ind_{({\mathfrak g},K)}^{({\mathfrak g},{}^\delta
  K)} (I_{\quo}) = I_{\quo} \oplus I_{\quo}^\theta$$ 
has $J_{\text{ind}} = J^\theta_{\text{ind}}$ as its unique irreducible quotient.
\end{enumerate}
\end{proposition}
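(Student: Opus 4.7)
The plan is to apply the Clifford-theoretic analysis of Proposition \ref{prop:extpairrep} to the extended pair $({\mathfrak g}, {}^\delta K({\mathbb C}))$ with distinguished generator $\delta_f$, which satisfies $\delta_f^2 = 1$ (so $m=2$ and $\lambda = 1$ in the notation there). Once I have identified the $\delta_f$-twist of a Harish-Chandra module with its $\theta$-twist (part (1)) and translated $\theta$-twisting into the Langlands parametrization (part (2)), parts (3)--(6) will fall out of the general Clifford theory, applied first to the irreducible module $J$ and then to the finite-length standard module $I_{\quo}$.

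For part (1), the key point is that on $G({\mathbb C})$ we have $\theta = \Ad(x) = \Ad(x_0\delta_f) = \Ad(x_0)\circ \delta_f$, with $x_0 \in K$ by \eqref{se:stronginv}. A direct calculation then shows that if $\pi$ denotes the action of $G$ on $J$, the operator $D = \pi(x_0)$ satisfies $D\circ\pi(\delta_f(g)) = \pi(\theta(g))\circ D$, so it intertwines the $\delta_f$-twist with the $\theta$-twist as $({\mathfrak g},K)$-modules. For part (2), I would use Langlands' realization in Theorem \ref{thm:Lrealiz}: realizing $\Gamma = (\Lambda,\nu)$ on a $\theta$-stable Cartan $H = TA$ via a type-L real parabolic $P = MAN$, twisting the induced representation by $\theta$ converts $P$ into $P^{\text{op}} = MAN^{\text{op}}$, leaves the inducing limit-of-discrete-series $D(\Lambda)$ unchanged (as noted in \eqref{se:cdualproof}, any such module is $\theta$-fixed, since its character is determined by its restriction to $K$), and replaces $\nu$ with $\nu\circ\theta = -\nu = \theta\nu$. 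Since $P^{\text{op}}$ is of type L with respect to $\Gamma^\theta$ exactly when $P$ is of type L with respect to $\Gamma$, Theorem \ref{thm:Lrealiz} identifies $I_{\quo}(\Gamma)^\theta$ with $I_{\quo}(\Gamma^\theta)$; taking unique irreducible quotients yields $J(\Gamma)^\theta \simeq J(\Gamma^\theta)$, which is (2).

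Granting (1) and (2), the remaining parts are formal. Parts (3) and (5) are specializations of Proposition \ref{prop:extpairrep}(5)--(6) to the irreducible module $J$, using the equivalence $J^{\delta_f} \simeq J \iff J^\theta \simeq J \iff \Gamma^\theta = \Gamma$ that follows from (1)--(2) and the Langlands bijection. Parts (4) and (6) apply the same dichotomy to $I_{\quo}$: by (2), $I_{\quo}^\theta \simeq I_{\quo}$ exactly when $\Gamma^\theta = \Gamma$, i.e.\ when $J^\theta \simeq J$, and the extension/induction mechanism of Proposition \ref{prop:extpairrep} works for the finite-length $I_{\quo}$ exactly as for an irreducible module. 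The claim that each $I_{\quo,\pm}$ has $J_\pm$ as its unique irreducible quotient follows because passing to the cosocle commutes with the extension process, so each of the two sign-extensions of $I_{\quo}$ selects the matching sign-extension of its cosocle $J$. The main obstacle is part (2): one must carefully track all the data in the Langlands parameter (in particular the type-L condition and the $\rho_{\abs}$-cover of $H$) through the $\theta$-twisting operation. Once this functoriality is in hand, everything else is formal Clifford theory.
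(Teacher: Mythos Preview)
Your proposal is correct and follows the same approach as the paper, which simply says ``This is essentially a specialization of Proposition~\ref{prop:extpairrep} (Clifford theory) to ${}^\delta G$.'' You have filled in more detail than the paper does, particularly the explicit intertwiner $\pi(x_0)$ for part (1) and the tracking of the Langlands realization under $\theta$ for part (2), but the underlying argument is identical.
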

This is essentially a specialization of Proposition
\ref{prop:extpairrep} (Clifford theory) to ${}^\delta G$. The labels
$\pm$ in (4) and (5) do not have any particular meaning; {\em it may not be
the case that one of the two extensions is preferred}.


The modules
$I_{\quo,\pm}$ and $I_{\quo,\text{ind}}$ are called {\it
  standard modules}; of course we can 
in exactly the same way define $I_{\sub,\pm}$ and
$I_{\sub,\text{ind}}$. 

Now Proposition \ref{prop:cdualstd} lifts easily to the extended
group. For brevity we write only the case of real infinitesimal
character.

\begin{proposition}\label{prop:cdualext} Suppose $J'$ is an
  irreducible $({\mathfrak g},{}^\delta K)$-module of real
  infinitesimal character, and  $I'_{\quo}$ is the corresponding
  standard module. 
\begin{enumerate}
\item The module $J'$ admits a nondegenerate $c$-invariant Hermitian
  form $\langle,\rangle^c$ that is unique up to a real scalar
  multiple. It may be chosen to be positive definite on the lowest
  ${}^\delta K$-types of $J'$.
\item The module $I'_{\quo}$ admits a nonzero $c$-invariant form
  that is unique up to a real scalar. This form has radical equal to
  the maximal proper submodule $I'_1$ of $I'_{\quo}$, and
  factors to a nondegenerate form on $J'$.
\item If $J'$ restricts to an irreducible $J$ for $G$ (so that also
  $I'_{\quo} = I_{\quo}$) then the $c$-invariant forms for
  ${}^\delta G$ are the forms for $G$.
\item If $J' = \Ind(J) = J + J^\theta$, then the $c$-invariant form on $J'$
  is equal to the orthogonal sum of the form for $J$ on both $J$ and
  $J^\theta$ (using the identification of the vector spaces for $J$
  and $J^\theta$ (Definition \ref{def:pairaut}). A similar statement
  holds for $I'$.
\end{enumerate}
\end{proposition}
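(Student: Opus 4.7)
The plan is to lift Proposition \ref{prop:cdualstd} to the extended group by combining $\sigma_c$-Hermitian duality with Clifford theory (Proposition \ref{prop:extpairrep}), exactly in the spirit of Proposition \ref{prop:Gdeltareps}. First I would observe that because $\sigma_c$ commutes with $\delta_f$ by \eqref{eq:sigmathetacomm}, the real structure $\sigma_c$ on the pair $({\mathfrak g},K({\mathbb C}))$ extends canonically to a real structure on the pair $({\mathfrak g},{}^\delta K({\mathbb C}))$ (declaring $\sigma_c(\delta_f)=\delta_f$), so the notion of $\sigma_c$-Hermitian dual and the general formalism of Section \ref{sec:forms} apply verbatim to $({\mathfrak g},{}^\delta K)$-modules.

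Next I would establish self-duality: for an irreducible $({\mathfrak g},{}^\delta K)$-module $J'$ of real infinitesimal character, $J'\simeq (J')^{h,\sigma_c}$. In Case B of Proposition \ref{prop:Gdeltareps}, where $J' = \Ind(J)= J\oplus J^\theta$, this is immediate because $(J')^{h,\sigma_c} = \Ind(J^{h,\sigma_c}) = \Ind(J) = J'$ by Proposition \ref{prop:cdualstd}(3). In Case A, where $J'=J_\pm$, the two extensions of $J$ differ by tensoring with the sign character $\chi_{\mathrm{sgn}}$ of ${}^\delta G/G$; since $\chi_{\mathrm{sgn}}$ takes values in $\{\pm 1\}$ it is $\sigma_c$-self-dual, so Hermitian duality permutes $\{J_+,J_-\}$ by an involution commuting with tensoring with $\chi_{\mathrm{sgn}}$, and hence fixes both extensions. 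Proposition \ref{prop:sigchar}(3) then yields a nondegenerate $\sigma_c$-invariant Hermitian form on $J'$, unique up to a real scalar. For $I'_{\quo}$, the same duality calculation together with Proposition \ref{prop:Gdeltareps} shows $(I'_{\quo})^{h,\sigma_c}\simeq I'_{\sub}$, so an intertwiner $I'_{\quo}\to I'_{\sub}$ (which is an extension to ${}^\delta K$ of the corresponding $G$-intertwiner) provides the form; its radical is the kernel, which is the maximal proper submodule of $I'_{\quo}$, and it factors through the nondegenerate form on $J'$. This gives (1) and (2) modulo positivity.

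For (3) and (4), I would exploit that restriction of a $\sigma_c$-invariant form for ${}^\delta G$ to $G$ is a $\sigma_c$-invariant form for $G$. In Case A the restriction is nondegenerate and, by the uniqueness clause of Proposition \ref{prop:cdualstd}(1,2), is a real multiple of the $G$-form; choosing normalizations so that both are positive on a chosen lowest ${}^\delta K$-type makes them literally equal, giving (3). In Case B, since $J$ and $J^\theta$ are non-isomorphic $G$-irreducibles, the pairing between them forced by $\sigma_c$-invariance lives in $\Hom_{({\mathfrak g},K)}(J,(J^\theta)^{h,\sigma_c}) = \Hom_{({\mathfrak g},K)}(J,J^\theta)=0$, so the form is orthogonal with respect to the decomposition $J\oplus J^\theta$; each summand then carries a $\sigma_c$-invariant form which, again by uniqueness, is a real multiple of the $G$-form, and one checks the multiples are equal because the action of $\delta_f$ interchanges the two summands isometrically (this is the content of Definition \ref{def:pairaut} applied to the identification of $J^\theta$ with $J$ as a vector space). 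The analogous statement for $I'_{\quo}$ is proved the same way.

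Finally, for positive definiteness on the lowest ${}^\delta K$-types, the main obstacle — and really the only subtle point in the argument — is Case A, because the two extensions $J_\pm$ and their lowest ${}^\delta K$-types are interchanged by tensoring with $\chi_{\mathrm{sgn}}$, so one has to track which sign ends up positive. Here I would use Proposition \ref{prop:cdualstd}(5) for $G$ to fix the $G$-form to be positive on every lowest $K$-type of $J$, and then observe that each lowest ${}^\delta K$-type of $J'$ restricts to a (sum of) lowest $K$-type(s) of $J$ with multiplicity one on each; the form for ${}^\delta G$, being a real multiple of the $G$-form on this space by (3), is therefore of a single sign on each lowest ${}^\delta K$-type, and by rescaling by $\pm 1$ we may take it positive. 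In Case B the positivity is immediate from (4) and Proposition \ref{prop:cdualstd}(5), since each lowest ${}^\delta K$-type is induced from a lowest $K$-type on which the $G$-form is positive. This completes (1).
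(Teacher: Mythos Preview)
Your approach is correct in spirit and matches the paper's, which simply says the result ``is immediate from Proposition~\ref{prop:cdualstd}.'' Your write-up fills in the Clifford-theory details that the paper leaves implicit, and most of it is fine.

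There is one genuine gap, however, in your self-duality argument for Case~A. You argue that $\sigma_c$-Hermitian duality permutes $\{J_+,J_-\}$ by an involution commuting with tensoring by $\chi_{\mathrm{sgn}}$, ``and hence fixes both extensions.'' But on a two-element set every permutation commutes with the swap, so this does not rule out the possibility $(J_+)^{h,\sigma_c}\simeq J_-$. Concretely: if $D$ is the action of $\delta_f$ on $J$, then $(v,w)\mapsto\langle Dv,Dw\rangle^c$ is again a $\sigma_c$-invariant Hermitian form on $J$ for $G$, hence equals $c\,\langle\,,\,\rangle^c$ with $c=\pm 1$; the case $c=-1$ is exactly the swap $(J_+)^{h,\sigma_c}\simeq J_-$, and your argument does not exclude it.

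The cleanest fix is to note that ${}^\delta K$ is compact and $\sigma_c$ restricts to its compact real form, so every irreducible ${}^\delta K$-type $\mu'$ satisfies $(\mu')^{h,\sigma_c}\simeq\mu'$. Hence $J'$ and $(J')^{h,\sigma_c}$ have the same ${}^\delta K$-types; since $J_+$ and $J_-$ are distinguished by their lowest ${}^\delta K$-types (they differ by $\chi_{\mathrm{sgn}}$), we get $(J_+)^{h,\sigma_c}\simeq J_+$. Alternatively, you can use the positivity of Proposition~\ref{prop:cdualstd}(5) earlier: $D$ permutes the lowest $K$-types of $J$, on all of which the form is positive, so $c=-1$ would force the form to be negative on some lowest $K$-type, a contradiction. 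Either repair makes the rest of your argument go through.
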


This is immediate from Proposition \ref{prop:cdualstd}.  The result
allows us to pass information about signatures of $c$-invariant forms
back and forth between $G$ and ${}^\delta G$ with no difficulty.

There is one possible confusing point. The outer automorphism $\delta_f$
need not act trivially on ${\mathfrak Z}({\mathfrak g})$, so the
irreducible $J'$ may not be annihilated by a maximal ideal ${\mathfrak
  m}$ but only by a product ${\mathfrak m}{\mathfrak m}^{\delta_f}$ of two
maximal ideals. But the property that a maximal ideal correspond to
real infinitesimal character is preserved by $\delta_f$, so in this case
what we mean by ``real infinitesimal character'' for $J'$ is that
either of these two maximal ideals has real infinitesimal character.

\begin{definition}\label{def:centralstuff} Suppose that $G$ is a real
  reductive algebraic group with extended group ${}^\delta G$ (Definition
  \ref{def:extgrp}). Fix a strong involution $x \in {}^\delta
  K\backslash K$ as in \eqref{se:stronginv}, so that
$$ \theta = \Ad(x), \qquad x^2 = z \in Z({}^\delta G)\cap K$$

For every $\zeta \in {\mathbb C}^\times$, define
$$\widehat{{}^\delta G}_\zeta = \{ V'\in \widehat{{}^\delta G} \mid
z\cdot v = \zeta v \ \ (v\in V')\},$$
the set of irreducible $({\mathfrak g},{}^\delta K)$-modules in which
$z$ acts by $\zeta$. Similarly define $\widehat K_\zeta$.  

Fix a square root $\zeta^{1/2}$ of $\zeta$. On every $\mu'\in
\widehat{{}^\delta K}_\zeta$, $x$ must act by some square root of
$\zeta$; so there is a sign
$$\epsilon(\mu') \in \{\pm 1\}, \qquad \mu'(x) =
\epsilon(\mu')\zeta^{1/2} \qquad (\mu' \in \widehat{{}^\delta K}_\zeta).$$
Of course $\epsilon(\mu')$ depends on the choice of square root
$\zeta^{1/2}$. 
\end{definition}

\begin{theorem}\label{thm:ctoinv} Suppose $G$ is a real reductive
  algebraic group  with extended group ${}^\delta G$ (Definition
  \ref{def:extgrp}). Fix a strong involution $x$ for $G$, an eigenvalue
  $\zeta$ for the central element $z$, and a square root $\zeta^{1/2}$
  of $\zeta$ as in Definition \ref{def:centralstuff}.

Suppose $J' \in \widehat{{}^\delta G}_\zeta$ is an irreducible
representation of real
infinitesimal character.  Write $\langle,\rangle^c_{J'}$ for a
$c$-invariant Hermitian form on $J'$ positive on the lowest ${}^\delta
K$-types (which exists by Proposition \ref{prop:cdualext}). Then $J'$
admits an invariant Hermitian form $\langle,\rangle^0_{J'}$ defined by
the formula 
$$\langle v, w\rangle^0_{J'} = \zeta^{-1/2}\langle x\cdot
v,w\rangle^c_{J'} = \zeta^{1/2}\langle v,x\cdot w\rangle^c_{J'}.$$ 
For each $\mu' \in \widehat{{}^\delta K}_\zeta$, the forms on the $\mu'$
multiplicity spaces (Proposition \ref{prop:sigchar}) are therefore
related by
$$\langle,\rangle_{(J')^{\mu'}}^0 = \epsilon(\mu') \langle,
\rangle_{(J')^{\mu'}}^c;$$ 
the sign $\epsilon(\mu') = \pm 1$ is defined in Definition
\ref{def:centralstuff}. Consequently the signature functions are related by
$$(\POS_{J'}^0(\mu'),\NEG_{J'}^0(\mu')) = \begin{cases}
(\POS_{J'}^c(\mu'),\NEG_{J'}^c(\mu')) & (\epsilon(\mu') = +1)\\
(\NEG_{J'}^c(\mu'),\POS_{J'}^c(\mu')) & (\epsilon(\mu') = -1)\\
\end{cases}$$

\end{theorem}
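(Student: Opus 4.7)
The plan is to reduce this to Theorem \ref{thm:ctoinveqrk} by the observation that, although $\theta$ is not inner on $G({\mathbb C})$ in the non-equal-rank case, it \emph{is} inner on ${}^\delta G({\mathbb C})$, realized precisely as $\Ad(x)$. All of the analysis in the proof of Theorem \ref{thm:ctoinveqrk} therefore carries over with ${}^\delta K({\mathbb C})$ in place of $K({\mathbb C})$. Concretely, I would apply Proposition \ref{prop:changeform} to the pair $({\mathfrak g}, {}^\delta K({\mathbb C}))$ and the module $V=J'$, taking $\sigma_1=\sigma_c$ and $\sigma_2=\sigma_0$. Because $\sigma_0 = \theta\circ\sigma_c$ and $\sigma_c^2=1$, both $\delta = \sigma_c^{-1}\sigma_0$ and $\epsilon = \sigma_0\sigma_c^{-1}$ equal $\theta$; by hypothesis $\theta = \Ad(x)$ on ${}^\delta G({\mathbb C})$, placing us in part (5) of the proposition with $d=x$. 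Since $x=x_0\delta_f$ with $x_0\in K$, $\sigma_c(x)=x$, so the intertwining operators $D$ and $E$ of (4)(d)--(e) both coincide with $\pi(x)\colon J'\to J'$.

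Next I would compute the scalars $\xi(D,E)$ and $\omega(D,E)$ of (6)--(7). The relation $E^{-1} = \xi(D,E) D$ with $E=D=\pi(x)$ and $D^2 = \pi(z) = \zeta\cdot\Id$ gives $\xi(D,E) = \zeta^{-1}$. For $\omega$, $\sigma_c$-invariance together with $\sigma_c(x)=x$ yields
\[
\langle xv,xw\rangle^c = \langle v,\sigma_c(x)^{-1}x\cdot w\rangle^c = \langle v,w\rangle^c,
\]
so $\omega(D,E) = 1$. Since $z\in K$ is fixed by $\sigma_c$, the very existence of $\langle,\rangle^c_{J'}$ forces $\zeta\overline{\zeta}=1$, i.e.\ $|\zeta|=1$, hence $\overline{\zeta^{-1}} = \zeta$. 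Part (9) asks for a square root of $\omega(D,E)\overline{\xi(D,E)}=\zeta$; the choice of $\zeta^{1/2}$ in the statement supplies it, and the formula in (9) specializes to
\[
\langle v,w\rangle^0_{J'} \;=\; \zeta^{-1/2}\langle x\cdot v,w\rangle^c_{J'} \;=\; \zeta^{1/2}\langle v,x\cdot w\rangle^c_{J'},
\]
a $\sigma_0$-invariant Hermitian form.

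For the signature claim, I would decompose $J'$ into ${}^\delta K$-isotypic components $(J')^{\mu'}\otimes E_{\mu'}$ (Proposition \ref{prop:sigchar}), and note that the decomposition is orthogonal for both forms. On each summand where $x$ acts by the scalar $\epsilon(\mu')\zeta^{1/2}$ (the definition of $\epsilon$ in Definition \ref{def:centralstuff}), the defining formula immediately gives
\[
\langle v,w\rangle^0_{(J')^{\mu'}} \;=\; \zeta^{-1/2}\cdot\epsilon(\mu')\zeta^{1/2}\,\langle v,w\rangle^c_{(J')^{\mu'}} \;=\; \epsilon(\mu')\,\langle v,w\rangle^c_{(J')^{\mu'}},
\]
from which the statement about $(\POS^0,\NEG^0)$ versus $(\POS^c,\NEG^c)$ follows at once.

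The main obstacle is really only bookkeeping: verifying that all of the identities needed to invoke Proposition \ref{prop:changeform}(5) in the extended setting ($\delta=\epsilon=\theta$, $\sigma_c(x)=x$, $x^2=z\in Z({}^\delta G)$, $|\zeta|=1$) survive the fact that the strong involution $x$ lives in the nontrivial coset ${}^\delta K\setminus K$ rather than in $K$ itself. A secondary subtlety is that $x$ need not be central in ${}^\delta K$ (for instance on $\mu'=\Ind_K^{{}^\delta K}\mu$ when $\mu\not\simeq\mu^\theta$), so $\epsilon(\mu')$ must be interpreted as acting on the $\pm\zeta^{1/2}$-eigenspaces of $\pi(x)$ inside $(J')^{\mu'}$; once this refinement is made, the signature identity above holds on each eigenspace separately and sums to the stated formula.
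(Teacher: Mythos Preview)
Your approach is correct and is exactly the paper's: invoke Proposition \ref{prop:changeform} on the pair $({\mathfrak g},{}^\delta K({\mathbb C}))$ with $\sigma_1=\sigma_c$, $\sigma_2=\sigma_0$, so that $\delta=\epsilon=\theta=\Ad(x)$ is inner (the paper phrases this as ``choose $\lambda=x$''), and the signature statements follow immediately from the formula. Your final worry is unnecessary: since $\Ad(x)=\theta$ fixes $K$ pointwise and $\delta_f(x_0)=x_0$ (see \eqref{se:stronginv}), the element $x$ is in fact central in ${}^\delta K$, so $\epsilon(\mu')$ is a genuine scalar for every $\mu'\in\widehat{{}^\delta K}$ and no eigenspace refinement is needed.
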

\begin{proof} The formula for the invariant Hermitian form is a
  special case of Proposition \ref{prop:changeform}; by Cartan's
  original construction of the Cartan involution, the two
  automorphisms $\delta$ and $\epsilon$ of that proposition are both
  $\theta$, and we can choose $\lambda = x$. The statements about
  signatures are immediate.
\end{proof}

\section{Langlands parameters for extended
  groups}\label{sec:langlandsext} 
\setcounter{equation}{0}

Proposition \ref{prop:Gdeltareps} provides a nice abstract description
of the irreducible representations of an extended group. For the
unitarity algorithm which is our goal, we will need something more
concrete and precise. This is of the same nature as what we have
already done with the formulation of the Langlands classification in
Theorem \ref{thm:LC}, which replaced the ``tempered representation of
a Levi subgroup'' appearing in \Cite{LC} with a character of an
appropriate cover of a real maximal torus. Proposition
\ref{prop:Gdeltareps} says that an irreducible representation of $G$
fixed by $\theta$ and corresponding to a Langlands parameter has two
extensions to ${}^\delta G$. We are going to index these two
extensions by two extensions of the parameter. 

\begin{subequations}\label{se:extG}
Throughout this section we fix an extended group 
\begin{equation}
{}^\delta G = G \rtimes \{1,\delta_f\}
\end{equation}
as in Definition \ref{def:extgrp}, and a strong involution
\begin{equation}
x = x_0\delta_f \in {}^\delta K \backslash K, \quad \Ad(x) = \theta,
\quad x^2 = z \in Z(G)\cap K
\end{equation}
as in \eqref{se:stronginv}. We fix also a $\theta$-stable maximal
torus $H \subset G$ (Definition \ref{def:rhoim}). Evidently $H$ is
normalized by $x$, so we can consider the extended group
\begin{equation*}{}^x H = \langle H,x\rangle, \end{equation*}
and try to extend a Langlands parameter to a character of ${}^x
H$. This is almost always impossible when $H$ is not fundamental
(see \eqref{se:delta}). The reason is that what is preserved by
$\theta$ is not the parameter but only its conjugacy class under $K$. 
\end{subequations} 

The point of this section is to find replacements for ${}^x H$ to
which Langlands parameters can usefully extend, and in this way to
parametrize representations of ${}^\delta G$.  We begin with basic
information about the extended Weyl group.

\begin{proposition}\label{prop:extWC} In the setting of Definition
  \ref{def:extgrp} and \eqref{se:extG}, write $H({\mathbb C})$ for
  the complexification of the $\theta$-stable real torus $H$. Fix 
  a maximal torus $H({\mathbb C})$ and a system of positive roots
$$R^+_1 \subset R(G,H),$$
with simple roots $\Pi_1$. Write
$$S_1 = \{s_\alpha \mid \alpha \in \Pi_1\} \subset W(G({\mathbb
  C}),H({\mathbb C})) = W$$
for the corresponding set of simple reflections (Definition
\ref{def:posroots}). Define
$${}^\delta W =_{\text{def}} N_{{}^\delta G({\mathbb C})}(H({\mathbb
  C}))/H({\mathbb C}) \supset W$$
the {\em extended Weyl group of $H({\mathbb C})$ in $G({\mathbb C})$}.
\begin{enumerate}
\item There is a unique nonidentity class 
$$t_1(R^+_1) = t_1 \in {}^\delta W$$
with the property that $t_1\cdot R^+_1 = R^+_1$.
\item Conjugation by $t_1$ permutes the generators $S_1$ of $W$, and
  provides a semidirect product decomposition
$${}^\delta W = W\rtimes \{1,t_1\}.$$
\end{enumerate}
Write $T_1$ for the set of orbits of $\{1,t_1\}$ on $S_1$. We
identify $T_1$ with a collection of elements of order $2$ in $W$, as
follows. Suppose
$$t = \{s,s'\} \qquad s' = t_1(s).$$
Then we associate the orbit $t$ to an element of $W$ by
$$\begin{aligned}
t &\leftrightarrow \text{long element of $\langle s, s' \rangle$}\\
&= {\begin{cases} s & t = \{s = s'\} \\
ss' & t = \{s \ne s'\}, \quad ss' = s's\\
ss's = s'ss' & t = \{s \ne s'\},
\quad ss' \ne s's. \\
\end{cases}}\end{aligned}$$
Each of these elements has order two and is fixed by $t_1$.
\begin{enumerate}[resume] 
\item The group $W^{t_1}$ of fixed points of the automorphism
  $t_1$ acts as a reflection group
  (in fact a Weyl group) on the lattice $(X^*)^{t_1}$ of $t_1$-fixed
  characters of $H({\mathbb C})$.  The set $T_1$ described above is a
  set of Coxeter generators.
\item 
Recall from \eqref{se:delta} the fundamental maximal torus $H_f$ and
positive root system $R^+_f$ used to construct the extended
group. There is an element $g\in G({\mathbb C})$ with the property that
$$\Ad(g)(H_f({\mathbb C})) = H({\mathbb C}), \qquad \Ad(g)(R^+_f) = R^+_1.$$
The coset $gH_f({\mathbb C}) = H({\mathbb C})g$ is unique.
\item Conjugation by $g$ defines a canonical isomorphism
$${}^\delta W(G({\mathbb C}),H_f({\mathbb C}))\, {\buildrel
  \simeq \over \longrightarrow}\, {}^\delta W(G({\mathbb C}),H({\mathbb
  C})),$$
carrying $t_f$ to $t_1$ and $S_f$ (the simple reflections for
$W(G({\mathbb C}),H_f({\mathbb C}))$) to $S_1$. 
\item Conjugation by $g$ carries the lift $\delta_f \in
  {}^\delta G$ of $t_f$ to a lift
$$\delta_1 \in N_{{}^\delta G({\mathbb C})}(H({\mathbb C}))$$
of $t_1$. The $H({\mathbb C})$ conjugacy class of
$\delta_1$---{\em i.e.}, the coset $\delta_1[H({\mathbb
  C})^{-t_1}]_0$---is unique. We call these the {\em distinguished
  lifts} of $t_1$.
\end{enumerate}
\end{proposition}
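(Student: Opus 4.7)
The plan is to handle the six parts in the groupings (1)--(2), (3), (4), and (5)--(6), with the real substance concentrated in (3) and in the coset calculation of (6).

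First, for (1) and (2): the extension $1 \to W \to {}^\delta W \to \{1, t_{\mathrm{RD}}\} \to 1$ makes the nonidentity coset a $W$-torsor. Picking any lift $\tilde t$, the positive system $\tilde t(R^+_1)$ can be moved back to $R^+_1$ by a unique $w \in W$, because $W$ acts simply transitively on positive systems (Lemma \ref{lemma:wrho}(2)); this gives the unique $t_1 = w\tilde t$ preserving $R^+_1$. Then $t_1$ permutes $\Pi_1$ and hence $S_1$, and $t_1^2 \in W$ also preserves $R^+_1$, so $t_1^2 = 1$ by the same simple-transitivity argument, yielding the semidirect product decomposition.

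Part (3) is an application of Steinberg's theorem on fixed subgroups of diagram automorphisms of Coxeter systems: since $t_1$ has order two and acts on $W$ by a permutation of the simple reflections $S_1$, the fixed subgroup $W^{t_1}$ is again a Coxeter group, with generators indexed by the $\langle t_1 \rangle$-orbits on $S_1$, each generator being the longest element of the finite parabolic subgroup generated by an orbit. The three bulleted cases exhaust the possibilities: a fixed simple reflection; two commuting ones (parabolic $A_1 \times A_1$, long element $ss'$); or two non-commuting ones (the only rank-two parabolic admitting a nontrivial diagram involution is $A_2$, with long element $ss's$). One then checks directly that each of these generators acts by a reflection on $(X^*)^{t_1}$, and that $W^{t_1}$ acts crystallographically there, so it is in fact a Weyl group.

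For (4), transitivity of $G(\mathbb C)$ on pairs (maximal torus, Borel subgroup) produces an element $g$ carrying $(H_f(\mathbb C), R^+_f)$ to $(H(\mathbb C), R^+_1)$; uniqueness of its $H_f(\mathbb C)$-coset follows because the stabilizer of such a pair in $G(\mathbb C)$ is $H_f(\mathbb C)$ itself. For (5), conjugation by $g$ gives the canonical isomorphism of extended Weyl groups, which is independent of the choice of $g$ (since $H_f(\mathbb C)$ acts trivially by conjugation on $W(G(\mathbb C), H_f(\mathbb C))$), carries $t_f \mapsto t_1$ by the uniqueness in (1), and sends $S_f$ to $S_1$ because $\Ad(g)$ respects simple roots. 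For (6), set $\delta_1 := g\delta_f g^{-1}$; replacing $g$ by $gh$ with $h \in H_f(\mathbb C)$ multiplies $\delta_1$ by $\Ad(g)\bigl(h \cdot \theta_f(h)^{-1}\bigr)$, where $\theta_f = \Ad(\delta_f)|_{H_f(\mathbb C)}$. The image of the algebraic cocycle map $h \mapsto h\theta_f(h)^{-1}$ on $H_f(\mathbb C)$ is a connected subgroup whose Lie algebra is the $(-1)$-eigenspace of $d\theta_f$, hence equals $[H_f(\mathbb C)^{-\theta_f}]_0$; transporting by $\Ad(g)$ yields the ambiguity $[H(\mathbb C)^{-t_1}]_0$ asserted in the proposition. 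The only genuine obstacle is (3), whose content is the classical Steinberg--Tits theorem on folding Coxeter systems; everything else reduces to simple transitivity of $W$ on positive systems and the standard computation of the connected image of a torus endomorphism.
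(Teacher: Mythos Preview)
Your proof is correct and is essentially what the paper's proof amounts to: the paper itself disposes of the entire proposition in one sentence, citing Springer's \emph{Linear Algebraic Groups} (Corollary 6.4.12 and Proposition 8.24) for ``the corresponding facts about the Weyl group.'' Your sketch unpacks exactly those facts---simple transitivity of $W$ on positive systems for (1), (2), (4); Steinberg's folding theorem for (3); and the standard torus-endomorphism computation for the coset in (6)---so there is no substantive difference in approach, only in the level of detail supplied.
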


This is an immediate consequence of the corresponding facts about the
Weyl group (see for example \Cite{Spr}*{Corollary 6.4.12 and
Proposition 8.24}). 

\begin{definition}\label{def:twinv} In the setting of Proposition
  \ref{prop:extWC}, a 
  {\em twisted involution} in $W$ is an element $w_1\in W$ with the
  property that 
$$w_1 t_1(w_1) = 1.$$
The {\em twisted conjugate of $w_1$ by $y\in W$} is
$$ w_1' = yw_1 [t_1(y)^{-1}].$$
Clearly $w_1$ is a twisted involution if and only if $w_1t_1$ is an
involution in the nonidentity coset $Wt_1 \subset {}^\delta W$. In
this correspondence, $w_1'$ is a twisted conjugate of $w_1$ if and
only if $w_1't_1$ is a $W$-conjugate of $w_1t_1$.
\end{definition}

\begin{lemma}\label{lemma:twinv} Suppose we are in the setting of Proposition
  \ref{prop:extWC} and Definition \ref{def:twinv}.
\begin{enumerate}
\item Each twisted conjugacy class of twisted involutions has a
  representative
$$w_1 = s_{\beta_1} s_{\beta_2}\cdots s_{\beta_m},$$
with $\{\beta_i\}$ an orthogonal collection of $t_1$-fixed roots.
\item The number $m$ is an invariant of the twisted conjugacy class of
  $w_1$. We have
$$m + \dim(\text{$-1$ eigenspace of $t_1$}) = \dim(\text{$-1$
  eigenspace of $w_1t_1$}).$$
\item The following conditions on an involution $t_2 \in Wt_1$ are
  equivalent:
\begin{enumerate}[label=\roman*)]
\item $t_2$ is conjugate by $W$ to $t_1$;
\item $t_2$ preserves some system of positive roots $R^+_2$;
\item $\dim(\text{$-1$ eigenspace of $t_2$}) = \dim(\text{$-1$
    eigenspace of $t_1$})$
\end{enumerate}
\end{enumerate}
\end{lemma}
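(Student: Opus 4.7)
The plan is first to reformulate: the map $w_1 \mapsto w_1t_1$ is a bijection from twisted involutions in $W$ onto involutions in the coset $Wt_1 \subset {}^\delta W$, and this bijection carries twisted conjugation by $y \in W$ to ordinary $W$-conjugation of $w_1 t_1$. So all three parts of the lemma become statements about $W$-conjugacy classes of involutions $\sigma \in Wt_1$.

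Part (2) is the easy half, and I would prove it directly assuming (1). If $w_1 = s_{\beta_1}\cdots s_{\beta_m}$ with the $\beta_i$ orthogonal $t_1$-fixed roots, then each $s_{\beta_i}$ commutes with $t_1$ (because $t_1 s_{\beta_i} t_1 = s_{t_1\beta_i} = s_{\beta_i}$) and with every $s_{\beta_j}$ (by orthogonality). For $v \in V^{-t_1}$, $t_1$-equivariance of the pairing forces $\langle v,\beta_i^\vee\rangle = -\langle v,\beta_i^\vee\rangle$, so $s_{\beta_i}(v) = v$; hence $(w_1 t_1)|_{V^{-t_1}} = -\mathrm{Id}$. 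On $V^{t_1}$, $t_1$ acts trivially and $w_1$ is a product of reflections in mutually orthogonal vectors $\beta_1,\dots,\beta_m \in V^{t_1}$, with $(-1)$-eigenspace $\SPAN\{\beta_i\}$. This gives $\dim V^-_{w_1 t_1} = \dim V^{-t_1} + m$, and since the left side is a conjugacy invariant, so is $m$.

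Part (3) then follows quickly: (i)$\Rightarrow$(iii) is invariance of $\dim V^-$ under conjugation; (i)$\Rightarrow$(ii) is clear; (ii)$\Rightarrow$(i) uses the uniqueness in Proposition \ref{prop:extWC}(1), since if $t_2$ preserves $R^+_2$ and $y$ carries $R^+_1$ to $R^+_2$ then $y^{-1}t_2 y$ preserves $R^+_1$ and hence equals $t_1$; and (iii)$\Rightarrow$(i) follows from (1) and (2) because the standard form then gives $m = \dim V^-_{t_2} - \dim V^{-t_1} = 0$.

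The real obstacle is part (1). I plan to prove it by induction on the length of $\sigma = w_1 t_1$ measured as the number of positive roots sent to negative roots by $\sigma$. The descent step is to locate either a $t_1$-fixed simple root $\alpha$ made negative by $\sigma$, in which case $s_\alpha$ can be extracted as one of the $\beta_i$ (and orthogonality with the remaining factors arranged after further reduction); or an orbit $\{\alpha,t_1\alpha\}$ of simple roots made negative, in which case twisted conjugation by one of the Coxeter generators of $T_1$ from Proposition \ref{prop:extWC} shortens $\sigma$. The subtle point—illustrated by the $A_3$ diagram flip, where $s_1 s_3$ is a reflection in $W^{t_1}$ but $\alpha_1+\alpha_3$ is not a root—is that one cannot simply apply Carter's theorem inside the reflection group $W^{t_1}$ acting on $V^{t_1}$ (Proposition \ref{prop:extWC}(3)); instead, ``non-root'' reflection factors must be absorbed by twisted conjugation, as in the identity $s_1 t_1 s_1^{-1} = s_1 s_3 t_1$ which shows that $s_1 s_3 t_1$ is conjugate to $t_1$ (so its standard form has $m=0$). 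Engineering this absorption systematically in the inductive step is where the care is required.
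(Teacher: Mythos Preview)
Your arguments for parts (2) and (3) are correct and match what the paper does (it calls them ``elementary consequences of (1)'').

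The gap is in part (1). Your two-case descent does not always make progress. Take $A_2$ with $t_1$ the diagram flip. Then $\sigma = w_0 t_1$ acts as $-1$ on the root lattice, so $\sigma$ is central in ${}^\delta W$: conjugation by any $y\in W$ fixes it. There is no $t_1$-fixed simple root, so your Case~1 is vacuous; the orbit $\{\alpha_1,\alpha_2\}$ is sent to negatives, so your Case~2 applies, but twisted conjugation by the Coxeter generator $s_1s_2s_1$ (or by anything else) leaves $\sigma$ unchanged. Yet $w_0 = s_{\alpha_1+\alpha_2}$ with $\alpha_1+\alpha_2$ a $t_1$-fixed root, so the standard form exists with $m=1$; the needed $\beta_1$ is simply not simple, and no operation staying in the conjugacy class reaches it.

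The paper sidesteps this by inducting on $d=\dim V_{-1}(\sigma)$ instead of on length, with a different dichotomy: either $V_{-1}(\sigma)$ contains a root $\gamma_1$ (not assumed simple), or it does not. In the first case $s_{\gamma_1}\sigma$ is again an involution in $Wt_1$, agreeing with $\sigma$ on $\ker\gamma_1^\vee$ and fixing $\gamma_1$, so its $(-1)$-eigenspace has dimension $d-1$; by induction $s_{\gamma_1}\sigma = y\bigl(s_{\beta_2}\cdots s_{\beta_m}t_1\bigr)y^{-1}$, and since $\gamma_1$ lies in the $(+1)$-eigenspace of $s_{\gamma_1}\sigma$, the root $\beta_1 := y^{-1}\gamma_1$ is automatically $t_1$-fixed and orthogonal to $\beta_2,\dots,\beta_m$, giving $y^{-1}w_1\,t_1(y)=s_{\beta_1}\cdots s_{\beta_m}$. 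In the second case a generic $\xi\in V_{+1}(\sigma)$ is regular, the positive system it defines is $\sigma$-stable, and uniqueness forces $\sigma$ to be $W$-conjugate to $t_1$ (so $m=0$). In the $A_2$ example one takes $\gamma_1=\alpha_1$; then $s_1\sigma = s_2s_1t_1$ has one-dimensional $(-1)$-eigenspace containing no root, $s_2s_1 = s_2\cdot t_1(s_2)^{-1}$ gives $y=s_2$, and $\beta_1 = s_2\alpha_1 = \alpha_1+\alpha_2$.

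Your length induction can be salvaged by replacing your dichotomy with the Richardson--Springer one: for any simple $\alpha$ with $\sigma\alpha<0$, either $\sigma\alpha=-\alpha$ (then $\alpha\in V_{-1}(\sigma)$ and one runs the paper's first case with $\gamma_1=\alpha$, temporarily leaving the conjugacy class), or $s_\alpha\sigma s_\alpha$ has length $\ell(\sigma)-2$ in the same class. But the eigenspace induction is both shorter and what the paper actually does.
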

\begin{proof}  Write
$${\mathfrak h}^*_{\mathbb Q} = X^*\otimes_{\mathbb Z} {\mathbb Q}$$
for the rational part of the dual of the Cartan. Decompose this space
as
$${\mathfrak h}^*_{\mathbb Q} = V_1(w_1t_1) \oplus V_{-1}(w_1t_1)$$
according to the eigenspaces of $w_1t_1$.
We prove (1) by induction on the dimension $d$ of $V_{-1}(w_1t_1)$. There
are no elements for which $d=-1$, so in that case the assertion in
(1) is empty.  

Suppose therefore that $d\ge 0$, and that the result is known for $d-1$. There
are two cases. Suppose first that there is a root $\gamma_1 \in
V_{-1}(w_1t_1)$.  Put $w_2 = s_{\gamma_1}w_2$, so that 
$$V_1(w_2t_1) = V_1(w_1t_1)\oplus {\mathbb Q}\beta, V_{-1}(w_2t_1) =
\ker(\beta^\vee) \cap V_{-1}(w_1t_1).$$
Clearly $w_2t_1$ is an involution with $-1$ eigenspace of dimension
$d-1$, so by induction
$$w_2t_1 = y[s_{\beta_2} s_{\beta_3}\cdots s_{\beta_m}]t_1 y^{-1}.$$ 
If we define $\beta_1 = y^{-1}\cdot \gamma_1$, then it is easy to
check that (1) holds for $w_1$.

Next, suppose that there is no root $\gamma \in
V_{-1}(w_1t_1)$. We may therefore choose a weight $\xi_2 \in
V_{1}(w_1t_1)$ vanishing on {\em no} coroots; for the only coroots
$\gamma^\vee$ on which every element of $V_{1}$ vanishes are those with
$\gamma \in V_{-1}$.  Now define
$$R^+_2 = \{\alpha \in R \mid \xi_2(\alpha^\vee) > 0\}.$$
This is a system of positive roots, and
$$w_1t_1(R^+_2) = R^+_2$$
since $w_1t_1$ fixes $\gamma_2$. Therefore $w_1t_1$ must be the unique
element $t_2\in Wt_1$ fixing $R^+_2$.  If $R_2^+ = yR_1^+$, then
evidently
$$w_1t_1 = t_2 = yt_1y^{-1}.$$
This proves (1) (with $m=0$).  

The remaining assertions in the lemma are elementary consequences of
(1).
\end{proof}

\begin{definition}\label{def:exttorus} In the setting \eqref{se:extG},
  the {\em extended real Weyl group of $H$} is
$$\begin{aligned} {}^\delta W(G,H) &= N_{{}^\delta G}(H)/H \simeq N_{{}^\delta
K}(H)/(H\cap {}^\delta K) \\ &\subset {}^\delta W(G({\mathbb
C}),H({\mathbb C}))\end{aligned}$$(notation as in Definition \ref{def:rhoim} and Proposition
\ref{prop:extWC}). According to \eqref{se:extG}, the extended real
Weyl group always has an element
$$xH =_{\text{def}} \theta_H,$$
which is characterized by the two properties
$$\theta_H \notin W(G({\mathbb C}),H({\mathbb C})),
\qquad\text{$\theta_H$ acts on $H$ by the Cartan involution.}$$
Consequently ${}^\delta W(G,H)$ inherits from ${}^\delta W(G({\mathbb
  C}),H({\mathbb C}))$ the short exact sequence
$$1 \rightarrow W(G,H) \rightarrow {}^\delta W(G,H) \rightarrow
\{1,\delta\} \rightarrow 1;$$
but the splitting of this sequence by $\theta_H$ is not one of the
nice ones described in Proposition \ref{prop:extWC}(2).

 An {\em extended maximal torus in ${}^\delta G$} is a subgroup
${}^1 H \supset H$, subject to the following conditions.
\begin{enumerate}[label=\alph*)]
\item The group ${}^1 H$ is not contained in $G$, and $[{}^1 H:H] =
  2$. Equivalently, we require that ${}^1 H$ be generated by $H$ and a
  single element
$$t_1 = w_1\theta_H \in {}^\delta W(G,H)\backslash W(G,H),$$
  of order two.
\item There is a set $R^+_1 \subset R = R(G,H)$ of positive roots
 preserved by $t_1$.
\end{enumerate}
We do {\em not} include a particular choice of $R^+_1$ as part of the
definition of ${}^1 H$. Typically we will write something like
$\delta_1\in {}^1 H \cap {}^\delta K$ for a representative of $t_1$.
\end{definition}

The first thing to notice is that the obvious group $\langle
H,\theta_H\rangle$ (generated by $H$ and the strong involution $x$ of
\eqref{se:stronginv}) is {\em not} an extended maximal torus unless
the set $R^+_{\mathbb R}$ of real roots is empty; that is, unless $H$
is fundamental.  The reason we impose the requirement b) above is that
representation-theoretic information is typically encoded not just by
a Cartan subalgebra, but rather by a Borel subalgebra containing
it. (This is the central idea in the theory of highest weights.) 

Nevertheless, every maximal torus {\em is} contained in an extended
maximal torus.

\begin{example} \label{ex:tVZ} Following \eqref{se:stdVZparam}, choose
  ${\mathfrak q} 
  = {\mathfrak l} + {\mathfrak u}$, so that the roots of $H$ in in
  ${\mathfrak l}$ are precisely the real roots $R_{\mathbb R}$ of
  $H$ in ${\mathfrak g}$. Fix a set of positive real roots
  $R^+_{\mathbb R}$, and define
$$R^+_{\text{VZ}} = R^+_{\mathbb R} \cup \{\text{roots of $H$ in $\mathfrak
  u$}\}.$$
Write $w_{0,{\mathbb R}}$ for the long element of $W(R_{\mathbb R})$,
  which carries $R^+_{\mathbb R}$ to $-R^+_{\mathbb R}$, and
  $\sigma_{0,{\mathbb R}}$ for a representative of this Weyl group
  element in $N_K(H)$. Because
  ${\mathfrak u}$ is preserved by $\theta$, we find that
$$w_{0,{\mathbb R}}\theta_H(R^+_1) = R^+_1.$$
Therefore $H$ and $\delta_{\text{VZ}} = \sigma_{0,{\mathbb R}}\cdot x$
  generate an extended maximal torus ${}^{\text{VZ}} H$, with
  $t_{\text{VZ}} = w_{0,{\mathbb R}}\theta_H$. 
\end{example}

\begin{definition}\label{def:extLP} Suppose ${}^1 H$ is an extended
  maximal torus in ${}^\delta G$. 
An {\em extended Langlands parameter} is a triple $\Gamma_1 = ({}^1 H, \gamma_1,
R^+_{i{\mathbb R}})$, subject to the following conditions.

\begin{enumerate}[label=\alph*)]
\item The group ${}^1 H$ is an extended maximal torus for ${}^\delta G$,
 with distinguished generator $t_1 \in {}^\delta W(G,H)$ (Definition
 \ref{def:exttorus}). 
\item The element $t_1$ preserves the positive imaginary roots
  $R^+_{i{\mathbb R}}$ from $\Gamma_1$.  
\item As a consequence of (2), the character $2\rho_{i{\mathbb R}}$ of
  $H$ extends naturally to ${}^1 H$, and so defines the $\rho_{i{\mathbb R}}$
  double cover $\widetilde{{}^1 H}$ (Lemma \ref{lemma:rhoimcover}). We
  require that $\gamma_1$ be a level one irreducible representation of
  this cover (and therefore of dimension one or two).
\item The restriction of $\gamma_1$ to $\widetilde H$ is
  (automatically) a sum of one
  or two irreducible characters $\gamma$ of
  the $\rho_{i{\mathbb R}}$ double cover of $H$; and we require that
  $\Gamma = (H,\gamma, R^+_{i{\mathbb R}})$ be a Langlands
  parameter for $G$ (Theorem 6.1).
\end{enumerate}
We say that $\Gamma_1$ (or $\Gamma$) is {\em type one} if $\gamma_1$ is
one-dimensional; equivalently, if the corresponding parameter $\Gamma$
for $G$ is fixed by $t_1$. We say that $\Gamma_1$ (or $\Gamma$) is
{\em type two} if $\gamma_1 = \gamma_{\text{ind}}$ is two-dimensional
(Definition \ref{def:typeext}). In order to guarantee that a parameter
corresponds to an irreducible representation of ${}^\delta G$, we need
to require also 
\begin{enumerate}[resume,label=\alph*)]
\item if $\Gamma_1 = \Gamma_{\text{ind}}$ is type two, then $\Gamma$ is
  {\em not} equivalent to $\Gamma^\theta$.
\end{enumerate}

Just as in Definition \ref{def:genparam}, we can define {\em
  continued} and {\em weak} extended parameters; on these we impose
only conditions a)--c), and insert ``continued'' or ``weak'' in d). We
do not require anything like e), just as weak Langlands
parameters are not required to be final (condition (5) of Theorem
\ref{thm:LC}). 
\end{definition}


\begin{danger} Of course one of the things that we want is that type
  one representations 
  of the extended group (Proposition \ref{prop:Gdeltareps}) correspond
  precisely to type one extended parameters. There is a subtlety here.
  Under hypotheses a)--d), it will be fairly easy to see that a type
  one extended parameter 
  corresponds to a type one representation, and that a type two
  representation corresponds to a type two parameter (see the remarks
  after Lemma \ref{lemma:extLP} below). Proposition
  \ref{prop:extLP} will provide a converse: that any type one
  representation has a type one parameter.  The proof of that
  proposition shows first of all that in order to find this type one
  parameter---that is, to find a one-dimensional extension of the
  character $\gamma$---we must choose the extended torus
  properly. (There is a type two extension, given by induction, for
  any extended torus containing $H$.)  The
  point of condition (e) is to require us to make a proper choice
  of extended torus, so that the type of the representation
  corresponds to the type of the parameter. \end{danger} 

We turn next to the definition of equivalence of extended
parameters. The subtlety here arises entirely from the possibility
that a single Langlands parameter may extend to two distinct extended
tori. Before giving the definition, we recall a little about the
stabilizer of a Langlands parameter. 

\begin{lemma}\label{lemma:stabGamma} Suppose $\Gamma = (\Lambda,\nu)$
  is a Langlands parameter decomposed as in Definition \ref{def:dLP},
  with $\Lambda = (\lambda,R^+_{i{\mathbb R}})$. Write
  $\rho_{i{\mathbb R}}$ for half the sum of the roots in
  $R^+_{i{\mathbb R}}$, and $\overline\lambda=d\lambda$, so that
  $[\lambda - \rho_{i{\mathbb R}}]$ is a character of $T$.
Define
$$R^0 = \{\alpha\in R({\mathfrak g},{\mathfrak h})\mid \langle
\alpha^\vee,\overline\lambda \rangle = \langle \alpha^\vee,\nu\rangle
= \langle \alpha^\vee,\rho_{i{\mathbb R}}\rangle = 0\},$$
the set of singular roots for the discrete parameter.
\begin{enumerate}
\item The roots $R^0$ are the root system for a real quasisplit Levi subgroup
$L^0$ of $G$.
\item The real Weyl group $W(L^0,H)$ is equal to the centralizer of
  $\theta_H$ in $W({\mathfrak l}^0,{\mathfrak h})$.
\item If $\alpha$ is a real root in $R^0$, then $s_\alpha\cdot\Lambda
  = \Lambda$, and $[\lambda - \rho_{i{\mathbb R}}](m_\alpha) = 1$
  (notation as in Definition \ref{def:rhoim}). 
\item The stabilizer of $\Gamma$ in $W(G,H)$ is equal to $W(L^0,H)$.
\item The character $[\lambda - \rho_{i{\mathbb R}}]$ of $T$ is trivial
  on the intersection of $T$ with the identity component of the
  derived group $[L^0,L^0]$.
\end{enumerate}
\end{lemma}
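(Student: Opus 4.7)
My approach proves the five claims in the stated order, with (1)--(2) as structural group-theoretic facts, (3) as a direct consequence of the Langlands parameter conditions, and (4)--(5) following by combining these.

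First, for (1), I verify directly that $R^0$ is $\theta$-stable: $\overline\lambda$ and $\rho_{i\mathbb{R}}$ lie in $\mathfrak{t}^*$ (the $\theta$-fixed part of $\mathfrak{h}^*$), while $\nu\in\mathfrak{a}^*$ (the $-\theta$-fixed part), so $\langle\theta\alpha^\vee,\cdot\rangle$ produces the same three vanishings as $\langle\alpha^\vee,\cdot\rangle$. Next, $R^0$ contains no imaginary root, because every positive imaginary coroot pairs strictly positively with $\rho_{i\mathbb{R}}$. Thus the $\theta$-stable Levi $L^0\subset G$ with root system $R^0$ has, relative to $H$, only real and complex roots---exactly the configuration characterizing $H$ as the most-split Cartan of a quasisplit group. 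This proves (1) and prepares (2), which is the standard identification $W(L^0,H)=W(\mathfrak{l}^0,\mathfrak{h})^\theta$ valid for the most-split Cartan of a quasisplit group (equivalently, the centralizer of $\theta_H$ in $W(\mathfrak{l}^0,\mathfrak{h})$).

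For (3), fix a real $\alpha\in R^0$. Since $\alpha^\vee\in\mathfrak{a}$ and $\mathfrak{t}\perp\mathfrak{a}$, the reflection $s_\alpha$ acts trivially on $\mathfrak{t}$; a standard representative coming from $\phi_\alpha(SL(2,\mathbb{R}))$ in fact acts trivially on all of $T$, since $T\cap\alpha^\vee(\mathbb{R}^\times)=\{1,m_\alpha\}$ and $s_\alpha$ inverts this subgroup. Hence $s_\alpha\cdot\lambda=\lambda$. By the orthogonality of real and imaginary roots (their coroots lie in complementary $\theta$-eigenspaces), $s_\alpha$ fixes every imaginary root individually, hence preserves $R^+_{i\mathbb{R}}$; this gives $s_\alpha\cdot\Lambda=\Lambda$. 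The parity identity $[\lambda-\rho_{i\mathbb{R}}](m_\alpha)=1$ is essentially condition~(5) of Theorem~\ref{thm:LC}: the hypothesis $\alpha\in R^0$ ensures $\langle d\gamma,\alpha^\vee\rangle=0$, triggering $\gamma_{\mathfrak{q}}(m_\alpha)=1$; unpacking \eqref{eq:gammaq} then yields the desired statement after checking that each complex-pair correction $\delta(m_\alpha)\cdot\theta\delta(m_\alpha)$ equals $(-1)^{\langle\delta+\theta\delta,\alpha^\vee\rangle}=1$, using that $\alpha^\vee$ is $-\theta$-fixed while $\delta+\theta\delta$ is $\theta$-fixed.

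Part (4) runs as follows. Any $w\in W(G,H)$ stabilizing $\Gamma$ must fix $\overline\lambda,\nu,\rho_{i\mathbb{R}}$; by Chevalley's stabilizer theorem this forces $w\in W(R^0)$, and the inclusion $W(G,H)\subset W(\mathfrak{g},\mathfrak{h})^\theta$ (Proposition~\ref{prop:realweyl}(4)) combined with (2) gives $w\in W(R^0)^\theta=W(L^0,H)$. Conversely, $W(L^0,H)$ is generated by real-root reflections $s_\alpha$ (which fix $\Lambda$ and $\nu$ by (3)) together with $\theta$-fixed complex-root pair reflections $s_\delta s_{\theta\delta}$ (which fix $\overline\lambda,\nu,\rho_{i\mathbb{R}}$ by construction of $R^0$, and act trivially on the discrete part of $\lambda$ via (5)). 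The main obstacle is part (5): identifying generators of $T\cap[L^0,L^0]_0$ and verifying the parity identity on each. Since $L^0$ is quasisplit with $H$ most-split, this group is generated by the elements $m_\alpha=\alpha^\vee(-1)$ for real $\alpha\in R^0$ (handled by (3)), together with $\theta$-symmetrizations $\delta^\vee(-1)\,\theta\delta^\vee(-1)$ for complex pairs $\{\delta,\theta\delta\}\subset R^0$. For the complex case, a direct computation gives $[\lambda-\rho_{i\mathbb{R}}](\delta^\vee(-1)\,\theta\delta^\vee(-1))=(-1)^{\langle\delta^\vee+\theta\delta^\vee,\,\overline\lambda-\rho_{i\mathbb{R}}\rangle}$, and the exponent vanishes since $\langle\delta^\vee,\overline\lambda\rangle=\langle\delta^\vee,\rho_{i\mathbb{R}}\rangle=0$ whenever $\delta\in R^0$. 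The delicate point throughout is tracking the distinction between $\lambda$ as a character of the cover $\widetilde T$ and its descended character $\lambda-\rho_{i\mathbb{R}}$ on $T$ itself; this is why the identities are phrased using the latter, and it is also why (5) plays an essential role already in securing the converse direction of (4).
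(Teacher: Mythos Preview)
The paper states Lemma~\ref{lemma:stabGamma} without proof, so there is no ``paper's own proof'' to compare against; one must evaluate your argument on its own terms.  Parts (1), (2), and the forward direction of (4) are handled correctly.  There is, however, a genuine gap in your treatment of the parity identity $[\lambda-\rho_{i\mathbb R}](m_\alpha)=1$ in part~(3), and this gap propagates into (5) and the converse of (4).

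The error is in how you unpack \eqref{eq:gammaq}.  That formula subtracts, for each complex pair $\{\delta,\theta\delta\}$ in $\mathfrak u$, the \emph{common} restriction $\delta|_T$ \emph{once}---not the sum $\delta+\theta\delta$.  Hence the correction relating $\gamma_{\mathfrak q}(m_\alpha)$ to $[\lambda-\rho_{i\mathbb R}](m_\alpha)$ is $\prod_{\text{pairs}}\delta(m_\alpha)$, a product of one factor $(-1)^{\langle\delta,\alpha^\vee\rangle}$ per pair.  Your computation $\delta(m_\alpha)\cdot\theta\delta(m_\alpha)=(-1)^{\langle\delta+\theta\delta,\alpha^\vee\rangle}=1$ is correct as far as it goes, but it addresses the wrong product: it only shows each individual factor squares to $1$, not that it equals $1$.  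Concretely, for $G=Sp(4,\mathbb R)$ with the Cartan $H=U(1)\times\mathbb R^\times$, real root $\alpha=2e_2$, and $\mathfrak u$ containing the complex pair $\{e_1+e_2,e_1-e_2\}$, one computes $\delta(m_\alpha)=(-1)^{\langle e_1+e_2,\,e_2\rangle}=-1$, so $\gamma_{\mathfrak q}(m_\alpha)$ and $[\lambda-\rho_{i\mathbb R}](m_\alpha)$ differ by a sign.  Thus condition~(5) of Theorem~\ref{thm:LC} does not immediately yield the identity you claim.

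A closely related issue appears earlier in (3): the assertion that $s_\alpha$ acts trivially on all of $T$ is not correct as stated.  For $t\in T$ one has $s_\alpha(t)=t\cdot\alpha^\vee(\alpha(t)^{-1})$, and since $\alpha|_T$ takes values in $\{\pm 1\}$ for a real root, this gives $s_\alpha(t)=t$ only when $\alpha(t)=1$; otherwise $s_\alpha(t)=t\cdot m_\alpha$.  So $s_\alpha\cdot\lambda=\lambda$ requires precisely the parity condition $[\lambda-\rho_{i\mathbb R}](m_\alpha)=1$ (once one tracks the $\rho_{i\mathbb R}$-cover carefully), and cannot be established independently of it.  To close the gap you need a different argument---for instance, one that works directly with the character $\lambda$ of the cover $\widetilde T$ and the action of the representative of $s_\alpha$ coming from $\phi_\alpha(SL(2,\mathbb R))$, rather than passing through $\gamma_{\mathfrak q}$.
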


\begin{definition}\label{def:equivextparam}
Two extended parameters of type two are said to be {\em equivalent} if
the underlying Langlands parameters are equivalent.  

In the type one case, we need to be concerned about the
possibility that the extended torus to which $\Gamma$ extends is not
unique. Suppose therefore that $\Gamma$ is a Langlands parameter on a
maximal torus $H$, and that
$${}^1 H = \langle H, t_1\rangle, \qquad {}^2 H = \langle H,
t_2\rangle$$
are two extended tori to which $\Gamma$ extends; that is, that
$$\Gamma^{t_1} = \Gamma^{t_2} = \Gamma.$$
Therefore $t_2=wt_1$, with $w\in W(G,H)^\Gamma = W(L^0,H)$ (Lemma
\ref{lemma:stabGamma}).  Choose a representative $\delta_1$ for $t_1$,
and a representative $\sigma$ for $w$ {\em belonging to the identity
  component of the derived group of $L^0$}. Then
$$\delta_2 = \sigma\delta_1$$
is a representative for $t_2$.  

Suppose $\Gamma_1$ and $\Gamma_2$ are extensions of $\Gamma$ to ${}^1
H$ and ${}^2 H$ respectively. We say that $\Gamma_1$ is {\em
  equivalent} to $\Gamma_2$ if
$$ [\lambda_1 - \rho_{i{\mathbb R}}](\delta_1) = [\lambda_2 -
\rho_{i{\mathbb R}}](\delta_2).$$
According to Lemma \ref{lemma:stabGamma}, this condition is
independent of the choice of representative $\delta_1$ above.
More generally, we say that a type one extended parameter $\Gamma_1$
is {\em equivalent} to $\Gamma_3$ if $\Gamma_3$ is conjugate by $K$
to some $\Gamma_2$ as above.
\end{definition}

We are going to show (Proposition \ref{prop:extLP} below) that the
parameter for a type one irreducible representation 
in fact extends to an appropriate extended torus.  The next lemma shows what we
need to do. 

\begin{lemma}\label{lemma:extLP}
Suppose $\Gamma = (H,\gamma,R^+_{i{\mathbb R}})$ is a Langlands
parameter (Theorem \ref{thm:LC}). Write $H=TA$ and $\Gamma =
(\Lambda,\nu)$, with $\Lambda = (\lambda,R^+_{i{\mathbb R}})$ as in Definition \ref{def:dLP}; put $\overline\lambda = d\lambda
\in {\mathfrak t}^*$, so that 
$$(\overline\lambda,\nu) \in {\mathfrak t}^* + {\mathfrak a}^* = {\mathfrak
  h}^*$$
represents the infinitesimal character of $J(\Gamma)$. 
Then $\Gamma$ extends to the extended torus ${}^1 H = \langle H,t_1 =
w_1\theta_H\rangle$ if and only if
\begin{enumerate}
\item $w_1\lambda = \lambda$;
\item $w_1(R^+_{i{\mathbb R}}) = R^+_{i{\mathbb R}}$; and
\item $w_1\nu = -\nu$.
\end{enumerate}
These three conditions are equivalent in turn to $w_1\cdot\Gamma =
\Gamma^{\theta_H}$. 
\end{lemma}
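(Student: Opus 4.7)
The plan is to reduce everything to the decomposition $H \simeq T \times A$ of Proposition \ref{prop:toruschars}, under which a level one character of the $\rho_{i\mathbb{R}}$ cover $\widetilde H$ corresponds to a pair $(\lambda,\nu) \in \tildehatR{T} \times \widehat A$. The key structural input is that $\theta_H$ acts trivially on $T$ and by inversion on $A$ (Proposition \ref{prop:toruschars}(3)), and that every imaginary root is $\theta_H$-fixed (Definition \ref{def:rhoim}).

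First I would unpack what ``$\Gamma$ extends to ${}^1 H$'' really requires. By Lemma \ref{lemma:rhoimcover} the $\rho_{i\mathbb{R}}$ double cover of $H$ is determined by the character $2\rho_{i\mathbb{R}}$ of $H$; it extends to a double cover of ${}^1 H$ exactly when $t_1 = w_1\theta_H$ fixes $2\rho_{i\mathbb{R}}$, and since $\theta_H$ already fixes each imaginary root this reduces to $w_1(R^+_{i\mathbb{R}}) = R^+_{i\mathbb{R}}$, which is condition (2). Granted (2), the cover $\widetilde{{}^1 H}$ exists, and a level one irreducible $\gamma_1$ of $\widetilde{{}^1 H}$ restricting to $\gamma$ on $\widetilde H$ exists iff $\gamma$ is invariant under conjugation by any lift of $t_1$. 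Writing $\gamma = (\lambda,\nu)$ under the decomposition above and using that $t_1$ acts on $T$ via $w_1$ (because $\theta_H$ acts trivially on $T$) and on $A$ via $-w_1$ (because $\theta_H$ inverts $A$), this invariance is equivalent to
\[
w_1\cdot \lambda = \lambda, \qquad -w_1\cdot\nu = \nu,
\]
which are precisely (1) and (3). Thus (1)--(3) together characterize the existence of an extension; conversely, given (1)--(3), standard Clifford theory (Proposition \ref{prop:extpairrep}) provides the one- or two-dimensional extensions demanded by Definition \ref{def:extLP}(c).

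For the final reformulation, I would compute both sides of $w_1\cdot\Gamma = \Gamma^{\theta_H}$ termwise on the triple $(H,\gamma,R^+_{i\mathbb{R}})$. The left side replaces the data by $(H, w_1\cdot\gamma, w_1 R^+_{i\mathbb{R}})$, i.e., sends $(\lambda,\nu)$ to $(w_1\lambda,w_1\nu)$ and permutes the positive imaginary roots by $w_1$. On the right, Definition \ref{def:thetaparam} combined with $\theta_H = \Id_T$ and $\theta_H = (-)^{-1}$ on $A$ sends $(\lambda,\nu)$ to $(\lambda,-\nu)$, while fixing $R^+_{i\mathbb{R}}$ (since $\theta_H$ fixes imaginary roots). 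Equating the two triples gives conditions (1), (3), and (2) respectively. The only subtlety is that the Weyl-group action must be understood to act on characters of $\widetilde H$ rather than $H$, but this is legitimate precisely under condition (2) and follows from the canonical identification in Lemma \ref{lemma:rhoimcover}(6).

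The main (mild) obstacle is bookkeeping between the group $H$ and its cover $\widetilde H$: one has to check that $w_1$ lifts unambiguously to an action on level-one characters of $\widetilde H$, and that the twist by $\theta_H$ on $\gamma$ agrees with the explicit formula $\nu \mapsto -\nu$ at the level of the cover. Both follow from the fact that $2\rho_{i\mathbb{R}}$ is fixed by $\theta_H$ and (under (2)) by $w_1$, so the defining data of the cover are preserved and the Weyl/involution actions descend to its character group.
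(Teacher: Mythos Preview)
Your argument is correct and is essentially a detailed unpacking of the paper's one-line proof, which reads in full: ``This is clear from the definitions (since the real Weyl group element $w_1$ must commute with the action of $\theta_H$ on $H$).'' The one point worth making explicit in your write-up is precisely that parenthetical: the fact that $w_1 \in W(G,H) = N_K(H)/(H\cap K)$ commutes with $\theta_H$ is what guarantees $w_1$ preserves the factors $T$ and $A$ separately, which you use when asserting that $t_1$ acts on $T$ by $w_1$ and on $A$ by $-w_1$.
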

This is clear from the definitions (since the real Weyl group element
$w_1$ must commute with the action of $\theta_H$ on $H$).  In
particular, the lemma implies that a Langlands parameter $\Gamma$ on
$H$ can extend to 
some extended torus for $H$ {\em only if} $\Gamma^{\theta_H}$ is
conjugate to $\Gamma$ by the real Weyl group; that is (Proposition
\ref{prop:Gdeltareps}) only if $J(\Gamma)$ is type one.

The converse requires a bit more work, on which we now embark. 
Assume that $J(\Gamma)$ is type one, so that
\begin{subequations}\label{se:extLP} 
\begin{equation}
\Gamma^{\theta_H} = w\cdot \Gamma \qquad \text{(some $w\in W(G,H)$)}.
\end{equation}
Write $H=TA$ and 
\begin{equation}
\Gamma = (\Lambda,\nu), \qquad \Lambda = (\lambda,R^+_{i{\mathbb R}})
\end{equation}
as in Definition \ref{def:dLP}; put $\overline\lambda = d\lambda
\in {\mathfrak t}^*$, so that 
\begin{equation}
(\overline\lambda,\nu) \in {\mathfrak t}^* + {\mathfrak a}^* = {\mathfrak
  h}^*
\end{equation}
represents the infinitesimal character of $J(\Gamma)$.

We want to prove that $\Gamma$ extends to some extended torus ${}^1
H$, and in fact to understand all possible extended tori to which
$\Gamma$ extends.  The easiest way to construct an extended torus
$\langle H,t_1\rangle$ is to construct a system of positive roots
preserved by $t_1$; by Proposition \ref{prop:extWC}(1), such a system
determines $t_1$.  Since we want also that $t_1$ should carry $\Gamma$
to $\Gamma$, it is reasonable to use $\Gamma$ to construct positive
roots.  To simplify the
discussion, and because (in light of Theorem \ref{thm:unitarydual2}) it is
our primary interest, we consider $\Gamma$ of real
infinitesimal character. This means in particular that 
\begin{equation*}
\langle \alpha^\vee,\overline\lambda\rangle \in {\mathbb R},\quad
\langle\alpha^\vee,\nu\rangle \in {\mathbb R} \quad (\alpha \in
R({\mathfrak g},{\mathfrak h})).
\end{equation*}
(The first of these conditions is automatically true for any
$\Gamma$.)  If now $(\overline\lambda,\nu)$ is {\em regular}, then we
could define a positive root system
\begin{equation*}
R^+(\Gamma) = \{\alpha \in R({\mathfrak g},{\mathfrak h}) \mid \langle
\alpha^\vee,(\overline\lambda,\nu)\rangle > 0\}.
\end{equation*}
The only candidate for an extended torus to which $\Gamma$ extends in
this case is the one generated by the unique $t_1\in W\theta_H$
preserving $R^+(\Gamma)$. There are two difficulties with this
approach in general. One is that $(\overline\lambda,\nu)$
need not be regular. The second is that it is not so easy to see
whether the element $t_1$ defined in this way belongs to the extended
real Weyl group $W(G,H)\theta_H$. 

We therefore take a slightly different approach. Begin with what is
(more or less obviously) a ``triangular'' decomposition of the root
system, into the roots of the nil radical of a parabolic subalgebra;
the roots of the Levi factor; and the roots of the opposite nil radical:
\begin{equation}\begin{aligned}
R^+(\overline\lambda) &= \{\alpha\in R({\mathfrak g},{\mathfrak h}) \mid
\langle\alpha^\vee,\overline\lambda \rangle > 0\} \\
&\quad \cup \{\alpha \in R({\mathfrak g},{\mathfrak h}) \mid \langle
\alpha^\vee,\overline\lambda \rangle = 0,\
\langle\alpha^\vee,\rho_{i{\mathbb R}}\rangle > 0\} \\ 
R_0(\overline\lambda) & = \{\alpha \in R({\mathfrak g},{\mathfrak h}) \mid
\langle\alpha^\vee,\overline\lambda \rangle = 0,\
\langle\alpha^\vee,\rho_{i{\mathbb R}}\rangle = 0\}\\
R^-(\overline\lambda) &= -R^+(\overline\lambda); 
\end{aligned}\end{equation}
here $\rho_{i{\mathbb R}}$ is half the sum of the roots in $R^+_{i{\mathbb
  R}}$.  The parabolic subalgebra is 
\begin{equation}\label{eq:plambda}\begin{aligned}
{\mathfrak p}(\overline\lambda) &= {\mathfrak l}(\overline\lambda) + {\mathfrak n}(\overline\lambda),\\
R({\mathfrak n}(\overline\lambda),{\mathfrak h}) &= R^+(\overline\lambda), \quad R({\mathfrak
  l}(\overline\lambda),{\mathfrak h}) = R^0(\overline\lambda).
\end{aligned}\end{equation}

To make a parallel construction for $\nu$, we must fix a system
of positive real roots making $\nu$ weakly dominant:
\begin{equation*}
R^+_{\mathbb R} \supset \{\alpha \in R_{\mathbb R} \mid \langle
\alpha^\vee,\nu \rangle > 0\}.
\end{equation*}
It follows from Lemma \ref{lemma:stabGamma} that such a choice of $R^+_{\mathbb
  R}$ is unique up to the action of $W(G,H)^\Gamma$. With this choice
in hand, we can define
\begin{equation}\begin{aligned}
R^+(\nu) &= \{\alpha\in R({\mathfrak g},{\mathfrak h}) \mid
\langle\alpha^\vee,\nu \rangle > 0\} \\
&\quad \cup \{\alpha \in R({\mathfrak g},{\mathfrak h}) \mid \langle
\alpha^\vee,\nu \rangle = 0,\ \langle \alpha^\vee,\rho_{\mathbb
  R}\rangle > 0\} \\ 
R^0(\nu) &= \{\alpha \in R({\mathfrak g},{\mathfrak h}) \mid
\langle \alpha^\vee,\nu \rangle = 0,\ \langle
\alpha^\vee,\rho_{\mathbb R}\rangle = 0\}\\
R^-(\nu) &= -R^+(\nu); 
\end{aligned}\end{equation}
here $\rho_{\mathbb R}$ is half the sum of the roots in $R^+_{\mathbb
  R}$.  The corresponding parabolic 
is written
\begin{equation}\label{eq:pnu}\begin{aligned}
{\mathfrak p}(\nu) &= {\mathfrak l}(\nu) + {\mathfrak n}(\nu),\\
R({\mathfrak n}(\nu),{\mathfrak h}) &= R^+(\nu), \quad R({\mathfrak
  l}(\nu),{\mathfrak h}) = R^0(\nu).
\end{aligned}\end{equation}
We now combine these two constructions of parabolic subalgebras, defining
\begin{equation}\begin{aligned}
R^+(\lambda,\nu) &= R^+(\lambda) \cup \left(R^0(\lambda)\cap
  R^+(\nu)\right)\\
R^0(\lambda,\nu) &= R^0(\lambda) \cap R^0(\nu)\\
R^-(\lambda,\nu) &= -R^+(\lambda,\nu); 
\end{aligned}\end{equation}
The corresponding parabolic subalgebra is written
\begin{equation}\label{eq:plambdanu}\begin{aligned}
{\mathfrak p}(\overline\lambda,\nu) &= {\mathfrak l}(\overline\lambda,\nu) + {\mathfrak
  n}(\overline\lambda,\nu),\\
R({\mathfrak n}(\overline\lambda,\nu),{\mathfrak h}) &= R^+(\overline\lambda,\nu), \quad
R({\mathfrak l}(\overline\lambda,\nu),{\mathfrak h}) = R^0(\overline\lambda,\nu).
\end{aligned}\end{equation}
We could equally well have reversed the roles of $\overline\lambda$ and
$\nu$, obtaining a different parabolic subalgebra ${\mathfrak
  p}(\nu,\overline\lambda)$. The Levi subalgebra ${\mathfrak
  l}(\overline\lambda,\nu)$, corresponding to the coroots vanishing on
$\overline\lambda$, $\nu$, $\rho_{i{\mathbb R}}$, and $\rho_{\mathbb
  R}$, would be the same.  We will see that these two parabolic
subalgebras correspond to two constructions of the standard
representation for the extended group.  
\end{subequations} 

\begin{proposition}\label{prop:extLP} Suppose $\Gamma =
  (H,\gamma,R^+_{i{\mathbb R}})$ is a Langlands parameter (Theorem
  \ref{thm:LC}). Assume that the conjugacy class of $\Gamma$ is fixed
  by twisting by $\theta$ (equivalently, by $x\in {}^\delta G$);
  equivalently, that the corresponding irreducible representation
  $J(\Gamma)$ extends to ${}^\delta G$. Write $H=TA$ and $\Gamma =
  (\Lambda,\nu)$ as in Definition \ref{def:dLP}; put $\overline\lambda
  = d\Lambda \in {\mathfrak t}^*$, so that
$$(\overline\lambda,\nu) \in {\mathfrak t}^* + {\mathfrak a}^* = {\mathfrak
  h}^*$$
represents the infinitesimal character of $J(\Gamma)$. Fix a system of
positive real roots $R^+_{\mathbb R}$ making $\nu$ weakly dominant.

\begin{enumerate}
\item The Levi subgroups $L(\nu)$ and $L(\overline\lambda)$ corresponding to
  $R^0(\nu)$ and $R^0(\overline\lambda)$ are $\theta$-stable and defined over
  ${\mathbb R}$.
\item The parabolic subalgebra ${\mathfrak p}(\nu)$ is defined over
  ${\mathbb R}$, and so corresponds to a real parabolic subgroup
  $P(\nu)=L(\nu)N(\nu)$ of $G$.
\item The parabolic subalgebra ${\mathfrak p}(\overline\lambda)$ is
  $\theta$-stable.  
\item The group $L(\nu)$ has no real roots for $H$, so $H$
  is a fundamental Cartan in $L(\nu)$.
\item The group $L(\overline\lambda)$ has no imaginary roots for $H$, so $H$ is
  a maximally split torus in the quasisplit group $L(\overline\lambda)$.
\end{enumerate}

\begin{enumerate}[resume]
\item The Levi subgroup $L(\overline\lambda,\nu) = L(\overline\lambda)
  \cap L(\nu)$ is real and $\theta$-stable.  
\item There are neither real nor imaginary
  roots of $H$ in $L(\overline\lambda,\nu)$, so
  $L(\overline\lambda,\nu)$ is locally isomorphic to a complex
  reductive group.  
\item The real Weyl group $W(\overline\lambda,\nu)$ for $H$ in
  $L(\overline\lambda,\nu)$ is contained in $W^\Gamma$. 
\end{enumerate}
Choose a real Borel subgroup $B = HN \subset L(\overline\lambda)$ contained in
$P(\nu)\cap L(\overline\lambda)$, and write  
$$w_0(\overline\lambda) = \text{long element of
    $W(L(\overline\lambda),H)$.} $$
Because $L(\lambda,\nu)$ is locally isomorphic to a complex reductive
group, we can choose a sum of simple subsystems
$$R^L(\overline\lambda,\nu) \subset R(\overline\lambda,\nu)$$
in such a way that
$$R(\overline\lambda,\nu) = R^L(\overline\lambda,\nu)\ 
{\textstyle\coprod}\ \theta R^L(\overline\lambda,\nu), \qquad
R^R(\overline\lambda,\nu) =_{\text{def}} \theta R^L(\overline\lambda,\nu).$$
Write
$$R^{x,+}(\overline\lambda,\nu) = \text{roots in $N$} \quad (x=L,R).$$
Because the roots in $N$ are permuted by $-\theta$, we have
$$\theta R^{L,+}(\overline\lambda,\nu) = -R^{R,+}(\overline\lambda,\nu)
\qquad \theta R^{R,+}(\overline\lambda,\nu) =
-R^{L,+}(\overline\lambda,\nu).$$

We consider now two different positive root systems for ${\mathfrak
  h}$ in ${\mathfrak g}$:
$$R^{+,1} = R^+(\overline\lambda) \cup [R^0(\overline\lambda) \cap
R^+(\nu)] \cup R^{L,+}(\overline\lambda,\nu) \cup
-R^{R,+}(\overline\lambda,\nu),$$ 
$$R^{+,2} = R^+(\overline\lambda) \cup [R^0(\overline\lambda) \cap
R^+(\nu)] \cup R^{L,+}(\overline\lambda,\nu) \cup
R^{R,+}(\overline\lambda,\nu),$$ 
\begin{enumerate}[resume]
\item The element $w_0(\overline\lambda)$ fixes $\Gamma$.
\item The stabilizer of $(\Gamma,R^+_{\mathbb R})$ in the coset
  $W(G,H)\theta_H$ is $W(\overline\lambda,\nu)w_0(\overline\lambda)\theta_H$.
\item The element $t_2 =_{\text{def}} w_0(\overline\lambda)\theta_H$
  preserves $R^{+,2}$, and so generates an extended torus to which
  $\Gamma$ extends. 
\item The element $t_1 =_{\text{def}}
  w_0(\overline\lambda,\nu)w_0(\overline\lambda)\theta_H$ preserves
  $R^{+,1}$, and so generates an extended torus to which $\Gamma$ extends.
\item The action of $w_0(\overline\lambda)\theta_H$ preserves
  $R(\overline\lambda,\nu)$ and the system of positive roots $R^{L,+}
  \cup R^{R+}$ defined by $N$; so it defines an involutive automorphism of the
  corresponding Coxeter system
  $(W(\overline\lambda,\nu),S(\overline\lambda,\nu))$. The  
extended tori to which $\Gamma$ extends correspond to certain
conjugacy classes of twisted involutions in
this Coxeter system, the correspondence sending a twisted involution
$x$ to $\langle xw_0(\overline\lambda)\theta_H, H\rangle$. 
\end{enumerate}
\end{proposition}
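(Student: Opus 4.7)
Plan: The proof divides naturally into four stages — structural checks of the Levi and parabolic subgroups (parts 1--7), analysis of the stabilizer of $\Gamma$ (parts 8--10), verification that the proposed extended Weyl elements preserve specific positive root systems (parts 11--12), and the classification of extended tori via twisted involutions (part 13).

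Stage one treats parts (1)--(7) as structural consequences of the symmetries of the four weights $\overline{\lambda} \in i\mathfrak{t}_0^*$, $\nu \in \mathfrak{a}_0^*$, $\rho_{i\mathbb{R}} \in i\mathfrak{t}_0^*$, and $\rho_{\mathbb{R}} \in \mathfrak{a}_0^*$. The Cartan involution $\theta$ fixes the first and third and negates the second and fourth, while complex conjugation $\sigma_0$ does the opposite. Tracking these sign patterns through the inequalities defining $R^\pm(\nu)$, $R^\pm(\overline{\lambda})$, and $R^\pm(\overline{\lambda},\nu)$ in \eqref{eq:plambda}, \eqref{eq:pnu}, \eqref{eq:plambdanu} yields the $\theta$-stability and reality statements (1)--(3). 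For (4) and (5) I would invoke the standard fact that the half-sum of positive roots in any root subsystem pairs strictly positively with every positive coroot in that subsystem; hence $\rho_{\mathbb{R}}$ excludes real coroots from $R^0(\nu)$ and $\rho_{i\mathbb{R}}$ excludes imaginary coroots from $R^0(\overline{\lambda})$. Parts (6) and (7) then combine these conclusions with the four-tuple structure of complex roots from Definition \ref{def:rhoim}.

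Stage two covers parts (8)--(10). Part (8) is immediate from Lemma \ref{lemma:stabGamma}: the roots of $L(\overline{\lambda},\nu)$ are orthogonal to each of $\overline{\lambda}$, $\nu$, $\rho_{i\mathbb{R}}$, and $\rho_{\mathbb{R}}$, so they lie in the subsystem $R^0$ cutting out the stabilizer. For (9), the key observation is that $L(\overline{\lambda})$ is quasisplit with $H$ its maximally split Cartan (by (5)), so the long element $w_0(\overline{\lambda})$ commutes with $\theta$. Combined with the hypothesis that $\Gamma$ is $\theta$-twist-fixed and the characterization of Lemma \ref{lemma:extLP}, this shows $w_0(\overline{\lambda})\theta_H$ fixes $\Gamma$ modulo the action of $W(\overline{\lambda},\nu)$; the refinement to the precise statement uses the orthogonality of the $L(\overline{\lambda},\nu)$-roots to $\nu$. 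Part (10) then follows by combining (8) and (9) to identify the full stabilizer coset.

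Stage three contains the main technical obstacle: verifying, piece by piece across the four components of $R^{+,2}$ (respectively $R^{+,1}$), that $t_2 = w_0(\overline{\lambda})\theta_H$ (respectively $t_1 = w_0(\overline{\lambda},\nu)w_0(\overline{\lambda})\theta_H$) preserves the union. The pieces $R^+(\overline{\lambda})$ and $R^0(\overline{\lambda})\cap R^+(\nu)$ are each handled by observing how $\theta_H$ interacts with the four weights and how $w_0(\overline{\lambda})$ acts as the long Weyl element of a quasisplit group. The last two pieces $R^{L,+}$ and $R^{R,+}$ are interchanged by $-\theta$ and then either brought back (for $R^{+,2}$) or reflected (for $R^{+,1}$) by the additional factor $w_0(\overline{\lambda},\nu)$. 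The delicate sign bookkeeping across these four pieces is the place where the argument is easiest to get wrong, and constitutes the main obstacle. Finally, for (13), I would observe that any lift of $\theta_H$ preserving some system of positive roots is by (10) of the form $xw_0(\overline{\lambda})\theta_H$ for some $x \in W(\overline{\lambda},\nu)$; the condition that it be an involution translates, via Definition \ref{def:twinv}, into the condition that $x$ be a twisted involution relative to the automorphism of $(W(\overline{\lambda},\nu), S(\overline{\lambda},\nu))$ induced by conjugation by $w_0(\overline{\lambda})\theta_H$. The classification of $W(G,H)$-conjugacy classes of extended tori thus reduces, by Lemma \ref{lemma:twinv}, to twisted conjugacy classes of twisted involutions in this Coxeter system, completing the proof.
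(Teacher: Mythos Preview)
Your proposal is correct and follows essentially the same route as the paper's own proof. The paper handles (1)--(7) exactly as you do, via the sign patterns of $\theta$ and $\sigma_0$ on the weights $\overline\lambda$, $\nu$, $\rho_{i\mathbb R}$, $\rho_{\mathbb R}$; it cites \cite{IC4} for (8) where you cite Lemma~\ref{lemma:stabGamma} (which packages the same analysis); and it derives (10) directly from the hypothesis---the conditions $w\overline\lambda=\overline\lambda$, $wR^+_{i\mathbb R}=R^+_{i\mathbb R}$, $w\nu=-\nu$ force $w\in W(L(\overline\lambda),H)$ and then $w\in W(\overline\lambda,\nu)w_0(\overline\lambda)$---after which (9) and (11)--(13) are declared ``elementary consequences'' with no further detail.

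Two small remarks. First, you have the logical order of (9) and (10) reversed relative to the paper: what you describe under the heading ``(9)'' is really the argument for (10), and (9) then follows from (10) rather than the other way around. Second, your more explicit roadmap for (11)--(12)---the piece-by-piece check that $t_1$ and $t_2$ preserve the four components of $R^{+,1}$ and $R^{+,2}$---and for (13) via twisted involutions and Lemma~\ref{lemma:twinv} fills in exactly the details the paper omits, and is the right way to carry out what the paper leaves implicit.
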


\begin{proof}[Proof of Proposition \ref{prop:extLP}]
Recall from Proposition \ref{prop:realtorus} that complex conjugation
acts on the roots by $-\theta$. Therefore a Levi subalgebra containing
${\mathfrak h}$ is real if and only if it is $\theta$-stable. That the
sets $R^0(\nu)$ and $R^0(\overline\lambda)$ are $\theta$-stable is
immediate from the fact that $\theta\nu = -\nu$ and
$\theta\overline\lambda = \overline\lambda$. This proves (1).

For (2), the fact that $-\theta\nu=\nu$ implies that $R^+(\nu)$ is
preserved by complex conjugation.  Similarly (3) follows from
$\theta\overline\lambda = \overline\lambda$.

For (4), no real coroot can vanish on $\rho_{\mathbb R}$; and (5) is
similar. Part (6) is immediate from (1).

The first assertion in (7) follows from (4) and (5). That this forces
every simple (real) factor to be a complex group follows either from
the classification of real forms, or from any of the ideas leading to
the proof of that classification.

Part (8) is a consequence of the analysis of the Weyl group action on
Langlands parameters made in \Cite{IC4}.

The hypothesis on $\Gamma$ is that there is a $w\in W(G,H)$ such that
$w\theta_H\Gamma = \Gamma$; so 
$$w\overline \lambda = \overline\lambda, \quad wR^+_{i{\mathbb R}} =
R^+_{i{\mathbb R}}, \quad w\nu =
-\nu.$$ 
The first two conditions imply that $w\in W(L(\overline\lambda),H)$,
and the last that $w\in
W(\overline\lambda,\nu)w_0(\overline\lambda)$.  Now (10) follows, and
the rest of (9)--(13) is  elementary. 
\end{proof}

Section \ref{sec:stdmods} offered two constructions of standard
modules from Langlands parameters, using appropriately chosen systems
of positive roots. In the same way we will find two
constructions of standard modules for ${}^\delta G$ from extended
parameters. For a type two parameter, the standard representation is
simply induced from a standard representation for $G$; we will say
nothing more about that case. We continue to assume for simplicity
that the infinitesimal character is real, and therefore place
ourselves in the situation of \eqref{se:extLP}. Here is the first
construction, based on Langlands' construction of the standard
representations of $G$.

\begin{definition}\label{def:Lextparam} 
In the setting \eqref{se:extLP}, write
\begin{equation*}{}^1 H = \langle H,\delta_1\rangle, \qquad t_1 =
  w_1\theta_H \in {}^\delta W(G,H) 
\end{equation*}
for the (first) extended maximal torus constructed in Proposition
\ref{prop:extLP}, and $\Gamma_1= (\Lambda_1,\nu)$ for an extended
Langlands parameter extending $\Gamma$, decomposed as in
Definition \ref{def:dLP}. Write $R^+_{i{\mathbb R}}$ for the system of
positive imaginary roots that is part of the discrete parameter
$\Lambda$, and $R^+_{{\mathbb R}}$ for a system of positive real roots
making $\nu$ weakly dominant and preserved by $t_1$ (such as the one
used in the proof of Proposition \ref{prop:extLP}).  Then (Lemma
\ref{lemma:extLP})
\begin{equation*}\label{eq:extpreserve}
t_1\Lambda = \Lambda, \quad t_1\nu = \nu, \quad t_1 R^+_{i{\mathbb R}}
= R^+_{i{\mathbb R}}, \quad t_1 R^+_{{\mathbb R}} = R^+_{{\mathbb R}}.
\end{equation*}
Define a real parabolic subgroup
\begin{equation*}
P(\nu) = L(\nu)N(\nu),\quad  t_1(P(\nu)) = P(\nu)
\end{equation*}
as in Proposition \ref{prop:extLP}(2). (This parabolic
subgroup is slightly {\em smaller} than the one in 
Langlands' original construction of standard representations, which
used only $\nu$ and not also the choice of positive real roots.) We may
therefore define
\begin{equation*}{}^1 L(\nu) = \langle L(\nu), t_1\rangle,\end{equation*}
the group generated by $L(\nu)$ and (any representative of) $t_1$; we get  
\begin{equation*}{}^1 P(\nu) = {}^1 L(\nu) N(\nu),\end{equation*}
a parabolic subgroup of ${}^\delta G$. Because the imaginary roots are
all in $L(\nu)$, $\Gamma_1$ is still an extended parameter for ${}^1
L(\nu)$. We will define
\begin{equation*}
I_{\quo,G}(\Gamma_1) = \Ind_{{}^1 P(\nu)}^{{}^\delta G}(
I_{\quo,L(\nu)}(\Gamma_1)\otimes 1).
\end{equation*}
(Here induction is normalized as usual.) So we are reduced to the
problem of defining the extended standard 
representation on ${}^1 L(\nu)$. 

The $\theta$-stable parabolic ${\mathfrak p}(\overline\lambda)$ of
Proposition \ref{prop:extLP}(3) clearly meets ${\mathfrak l}(\lambda)$
in a $\theta$-stable parabolic subalgebra
\begin{equation*}
{\mathfrak p}(\lambda)\cap {\mathfrak l}(\nu)=_{\text{def}} {\mathfrak
  p}_{{\mathfrak l}(\nu)}(\lambda) = {\mathfrak l}(\lambda,\nu) + {\mathfrak
  n}_{{\mathfrak l}(\nu)}(\lambda)
\end{equation*}
with Levi factor $L(\lambda,\nu)$ and preserved by $t_1$. By the
particular choice of $t_1$ (among all elements preserving $\Gamma$)
there is a $\theta$-stable Borel subalgebra ${\mathfrak
  b}(\lambda,\nu)$ in ${\mathfrak l}(\lambda,\nu)$, containing
${\mathfrak h}$, that is also
preserved by $t_1$. We therefore have a $t_1$-stable, $\theta$-stable
Borel subalgebra
\begin{equation*}
{\mathfrak b}(\lambda,\nu)+ {\mathfrak
  n}_{{\mathfrak l}(\nu)}(\lambda) =_{\text{def}} {\mathfrak
  b}_{{\mathfrak l}(\nu)} = {\mathfrak h} + {\mathfrak n}_{{\mathfrak l}(\nu)}
\end{equation*}
with Levi subgroup $H$.  Recall that $\Lambda_1$ involves a genuine
character $\lambda_1$ of the $\rho_{i{\mathbb R}}$ cover of ${}^1
T$. Following \eqref{eq:gammaq}, we define a character 
\begin{equation*}
\lambda_{1,{\mathfrak b}_{{\mathfrak l}(\nu)}} = [\lambda_1 -
\rho_{i{\mathbb R}}] - 2\rho({\mathfrak n}_{{\mathfrak l}(\nu)}\cap
{\mathfrak k}) + 2\rho_{c,i{\mathbb R}}
\end{equation*}
of ${}^1 T$. Here the first term in brackets is a character of ${}^1
T$ by Definition \ref{def:ncover}; the second, thought of as the ``sum
of the compact positive roots,'' means the character of ${}^1 T$ on the
top exterior power of the indicated nilpotent subalgebra; and the
third is the character on the top exterior power of the span of the
compact imaginary roots. (We cannot just say ``sum of roots,'' because
the extended torus ${}^1 T$ may permute the root spaces.)

Following Theorem \ref{thm:VZrealiz}, we use cohomological induction
to define
\begin{equation*}
I_{\quo,L(\nu)}(\Gamma_1) = \left({\mathcal L}^{{\mathfrak
      l}(\nu),{}^1 (L(\nu)\cap K)}_{{\mathfrak
      b}^{\text{op}}_{{\mathfrak
        l}(\nu)},{}^1 T}\right)^s(\Gamma_{1,{\mathfrak b}_{{\mathfrak
      l}(\nu)}}) 
\end{equation*}
Here $s=\dim({\mathfrak n}_{{\mathfrak l}(\nu)} \cap {\mathfrak
  k})$. A little more explicitly, the definition means this.
\begin{enumerate}
\item The representation $I_{\quo,L(\nu)}(\Gamma_1)$ contains the
  ${}^1(L(\nu)\cap K)$ representation of highest ${}^1 T$-weight (with
  respect to ${\mathfrak n}_{{\mathfrak l}(\nu)} \cap {\mathfrak k}$)
\begin{equation*}
\mu_1(\Gamma_1) = [\lambda_1 +
\rho_{i{\mathbb R}}] - 2\rho_{c,i{\mathbb R}}.
\end{equation*}
\item Equivalently, the weight
\begin{equation*}
\mu_1(\Gamma_1) - 2\rho({\mathfrak n}_{{\mathfrak l}(\nu)}\cap
{\mathfrak k}) = \lambda_{1,{\mathfrak b}_{{\mathfrak l}(\nu)}} +
2\rho({\mathfrak n}_{{\mathfrak l}(\nu)})
\end{equation*}
appears in $H_s\left({\mathfrak n}^{\text{op}}_{{\mathfrak l}(\nu)} \cap
  {\mathfrak k},I_{\quo,L(\nu)}(\Gamma_1)\right)$.
\item The restriction map
$$H_s\left({\mathfrak n}^{\text{op}}_{{\mathfrak
      l}(\nu)},I_{\quo,L(\nu)}(\Gamma_1)\right) \rightarrow H_s\left({\mathfrak
    n}^{\text{op}}_{{\mathfrak l}(\nu)} \cap
  {\mathfrak k},I_{\quo,L(\nu)}(\Gamma_1)\right)$$
is an isomorphism on the ${}^1 T$-weight space in question; and
\item The extended torus ${}^1 H$ acts on this ${}^1 T$-weight space
  by the character $\Gamma_{1,{\mathfrak b}_{{\mathfrak l}(\nu)}}+
  2\rho({\mathfrak n}_{{\mathfrak l}(\nu)})$.
\end{enumerate}
\end{definition}

Here is the corresponding construction emphasizing 
cohomological induction.
\begin{definition}\label{def:VZextparam} 
In the setting \eqref{se:extLP}, write
\begin{equation*}{}^2 H = \langle H,\delta_2\rangle, \qquad t_2 =
  w_2\theta_H \in {}^\delta W(G,H) 
\end{equation*}
for the (second) extended maximal torus constructed in Proposition
\ref{prop:extLP}, and $\Gamma_2= (\Lambda_2,\nu)$ for an extended
Langlands parameter extending $\Gamma$, decomposed as in
Definition \ref{def:dLP}. Write $R^+_{i{\mathbb R}}$ for the system of
positive imaginary roots that is part of the discrete parameter
$\Lambda$, and $R^+_{{\mathbb R}}$ for a system of positive real roots
making $\nu$ weakly dominant and preserved by $t_2$ (such as the one
used in the proof of Proposition \ref{prop:extLP}).  Then (Lemma
\ref{lemma:extLP})
\begin{equation*}
t_1\Lambda = \Lambda, \quad t_1\nu = \nu, \quad t_1 R^+_{i{\mathbb R}}
= R^+_{i{\mathbb R}}, \quad t_1 R^+_{{\mathbb R}} = R^+_{{\mathbb R}}.
\end{equation*}
Define a $\theta$-stable parabolic subalgebra
\begin{equation*}
{\mathfrak p}(\overline\lambda) = {\mathfrak l}(\overline\lambda) +
{\mathfrak n}(\overline\lambda),\quad  t_2({\mathfrak
p}(\overline\lambda)) = {\mathfrak p}(\overline\lambda)
\end{equation*}
as in Proposition \ref{prop:extLP}(3). (This parabolic
subalgebra is slightly {\em smaller} than the one used in
\Cite{Vgreen} to construct standard representations; that one
used only $\overline\lambda$ and not also the system of positive
imaginary roots.) We may therefore define
\begin{equation*}{}^2 L(\overline\lambda) = \langle L(\overline\lambda), t_2\rangle,\end{equation*}
the group generated by $L(\overline\lambda)$ and (any representative
of) $t_2$. 

Just as in \eqref{eq:gammaq}, the extended parameter $\Gamma_2$ for
$G$ defines an extended parameter $\Gamma_{2,{\mathfrak p}(\overline\lambda)}$
for ${}^1 L(\overline\lambda)$. The cohomological induction
construction of the extended standard module is
\begin{equation*}\label{eq:VZextind}
I_{\quo,G}(\Gamma_2) = \left({\mathcal L}_{{\mathfrak
          p}^{\text{op}}(\overline\lambda),{}^2 L(\overline\lambda)\cap {}^\delta K}^{{\mathfrak
          g},{}^\delta
        K}\right)^s(I_{\quo,L(\overline\lambda)}(\Gamma_{2,{\mathfrak 
        p}(\overline\lambda)})).  
\end{equation*}
Here $s=\dim({\mathfrak n}(\overline\lambda)\cap{\mathfrak k})$, which
is different from (generally larger than) the $s$ appearing in Definition
\ref{def:Lextparam}. 

What remains is to construct the standard module
$I_{\quo,L(\overline\lambda)}(\Gamma_{2,{\mathfrak
    p}(\overline\lambda)})$ for the group 
${}^2 L(\lambda)$.  But this is easy: we saw in Proposition
\ref{prop:extLP}(5) that $L(\overline\lambda)$ is quasisplit, with a
Borel subgroup $B=HN$ making $\nu$ weakly dominant. The element $t_2$
was constructed to satisfy
\begin{equation*}
t_2(N) = N,
\end{equation*}
so we simply define ${}^2 B = {}^2 H N$, and
\begin{equation*}
I_{\quo,L(\overline\lambda)}(\Gamma_{2,{\mathfrak
    p}(\overline\lambda)}) = \Ind_{{}^2 B}^{{}^2 L
(\overline\lambda)} (\Gamma_{2,{\mathfrak p}(\overline\lambda)}).
\end{equation*}
That this representation extends
$$I_{\quo,L(\overline\lambda)}(\Gamma_{{\mathfrak
    p}(\overline\lambda)} =
  \Ind_B^{L(\overline\lambda)}(\Gamma_{{\mathfrak
      p}(\overline\lambda)})$$
is immediate.
\end{definition}

\begin{theorem}[Langlands classification for extended groups]\label{thm:extLC}
Suppose that $G$ is the group of real points of a connected complex
reductive algebraic group (cf.~\eqref{se:reductive}), and that
${}^\delta G$ is a corresponding extended group
(cf.~\eqref{se:delta}). Then there is a one-to-one correspondence
between infinitesimal equivalence classes of irreducible quasisimple
representations of ${}^\delta G$ (Definition \ref{def:HCmod}) and
equivalence classes 
of extended
Langlands parameters for ${}^\delta G$ (Definition
\ref{def:extLP}). In this correspondence, type one representations
(those restricting irreducibly to $G$) correspond to one-dimensional
parameters; and type two representations (those induced irreducibly
from $G$) correspond to two-dimensional parameters.
\end{theorem}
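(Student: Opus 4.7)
The plan is to combine the Langlands classification for $G$ (Theorem~\ref{thm:LC}) with Clifford theory for the extension $G \subset {}^\delta G$ (Propositions~\ref{prop:extpairrep} and~\ref{prop:Gdeltareps}), and then to match the abstract extension data coming out of Clifford theory with the concrete data of an extended Langlands parameter. The type two case is essentially formal: a type two $J' \in \widehat{{}^\delta G}$ is $\Ind_G^{{}^\delta G}(J)$ for a unique unordered $\theta$-pair $\{J,J^\theta\}$ with $J \not\simeq J^\theta$, hence corresponds to a Langlands parameter $\Gamma$ with $\Gamma \not\sim \Gamma^\theta$; conversely any extended torus ${}^1H$ containing $H$ (for instance via Example~\ref{ex:tVZ}) carries an induced two-dimensional character $\gamma_1 = \Ind(\gamma)$, giving a type two extended parameter. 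Different choices of ${}^1H$ and of which element of $\{J,J^\theta\}$ one starts with yield equivalent parameters in the sense of Definition~\ref{def:equivextparam}, and induction on the representation side matches induction on the character side, so this half is straightforward.

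The substance is the type one case: given $\Gamma$ with $\Gamma^\theta \sim \Gamma$ in $W(G,H)$, produce the two extensions $J_\pm$ of $J(\Gamma)$ and match them with the two extensions $\gamma_1^\pm$ of $\gamma$ to an appropriate extended torus. Lemma~\ref{lemma:extLP} and Proposition~\ref{prop:extLP} guarantee that extended tori ${}^1H$ to which $\Gamma$ extends exist and are parametrized by twisted conjugacy classes in a Coxeter system; fix one. The character $\gamma$ of $\widetilde H$ then has exactly two extensions $\gamma_1^\pm$ to $\widetilde{{}^1H}$, differing by the sign character of ${}^1H/H$. For each, Definition~\ref{def:Lextparam} (or equivalently Definition~\ref{def:VZextparam}) produces a standard ${}^\delta G$-module $I(\Gamma_1^\pm)$ whose restriction to $G$ is the usual $I(\Gamma)$. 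Because $I(\Gamma)$ has a unique maximal proper submodule by the Langlands classification, and this submodule is automatically stable under the ${}^\delta G$-action, the Langlands quotient $J(\Gamma_1^\pm)$ is an irreducible extension of $J(\Gamma)$ to ${}^\delta G$. To see that the two choices $\gamma_1^\pm$ give the two inequivalent extensions $J_\pm$ of Proposition~\ref{prop:Gdeltareps}(3), I would track the action of $\delta_1$ on the highest-weight line of the bottom-layer $K$-type via Theorem~\ref{thm:VZLKT}: this action is pinned down by $\gamma_1^\pm(\delta_1)$, and the two values of this scalar differ by $-1$, which is exactly how the two extensions of $J(\Gamma)$ are distinguished by Clifford theory.

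Finally one must check that the equivalence relation of Definition~\ref{def:equivextparam} is exactly the one that identifies extended parameters giving isomorphic ${}^\delta G$-modules. If $\Gamma$ extends to both ${}^1H = \langle H,\delta_1\rangle$ and ${}^2H = \langle H,\delta_2\rangle$, then $\delta_2 = \sigma\delta_1$ for some $\sigma$ representing an element of the stabilizer $W(G,H)^\Gamma = W(L^0,H)$ (Lemma~\ref{lemma:stabGamma}(4)). One then shows that extensions $\gamma_1,\gamma_2$ on the two tori yield the same ${}^\delta G$-module precisely when the value $[\lambda - \rho_{i{\mathbb R}}](\delta_i)$ agrees once $\sigma$ is chosen in the identity component of $[L^0,L^0]$; well-definedness of this value is Lemma~\ref{lemma:stabGamma}(5). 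The main obstacle I expect is the compatibility check between the two constructions of the extended standard module in Definitions~\ref{def:Lextparam} and~\ref{def:VZextparam}: these use genuinely different extended tori (the $t_1$ and $t_2$ of Proposition~\ref{prop:extLP}(11)--(12)), and one has to verify that the Langlands quotients they produce are identified under Definition~\ref{def:equivextparam}. This requires comparing the normalizations of the cohomological induction functor $({\mathcal L})^s$ and the real parabolic induction $\Ind_{{}^1P(\nu)}^{{}^\delta G}$ at the level of the action of the non-identity coset on the relevant Lie algebra (co)homology space; the key combinatorial input is Proposition~\ref{prop:extLP}(13), which asserts that both $t_1$ and $t_2$ arise from twisted involutions in the same Coxeter system $(W(\overline\lambda,\nu),S(\overline\lambda,\nu))$.
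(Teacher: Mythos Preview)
Your proposal is correct and takes essentially the same approach as the paper, whose proof is a one-paragraph sketch citing precisely the Langlands classification for $G$ (Theorem~\ref{thm:LC}), Clifford theory (Proposition~\ref{prop:extpairrep}), and the analysis of extended parameters in Proposition~\ref{prop:extLP}. For the compatibility of Definitions~\ref{def:Lextparam} and~\ref{def:VZextparam}---which you correctly flag as the main obstacle---the paper defers to the method of \Cite{KV}, Theorem~11.129 (the same argument that reconciles Theorems~\ref{thm:Lrealiz} and~\ref{thm:VZrealiz}) rather than the direct cohomology comparison you propose, and omits the details.
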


This is a consequence of the Langlands classification for the
connected group (Theorem \ref{thm:LC}), of Clifford theory
(Proposition \ref{prop:extpairrep}), and of the analysis of extended
parameters made in Proposition \ref{prop:extLP} above.  Of course one
wants to know that the two constructions offered above lead to {\em exactly}
the same standard representations (rather than that the two
extensions to ${}^\delta G$ are sometimes interchanged). This can be
proven in the same way as
the equivalence of the constructions in Theorems \ref{thm:Lrealiz} and
\ref{thm:VZrealiz} above (\cite{KV}*{Theorem 11.129}). We omit the
details.

\section{Jantzen filtrations and Hermitian forms}\label{sec:jantzen} 
\setcounter{equation}{0}
For the rest of this paper we will need to work with extended groups
and their representations, in order to make use ultimately of the
description of invariant Hermitian forms in Theorem
\ref{thm:ctoinv}. In some cases, as in the present section, the
extension from $G$ to ${}^\delta G$ is entirely routine and
obvious. In such cases we may formulate results just for $G$, in order
to keep the notation a little simpler.

Character theory for Harish-Chandra modules is based on
expressing the (complicated) characters of irreducible modules as
(complicated) integer combinations of the (relatively simple)
characters of standard modules. We want to do something parallel for
signatures of invariant forms: to express the invariant forms on
irreducible modules as integer combinations of forms on standard
modules. A fundamental obstruction to this plan is part (3) of
Propositions \ref{prop:dualstd} and \ref{prop:cdualstd}: if the
Langlands quotient is proper, then the standard module $I(\Gamma)$
{\em cannot} admit a nondegenerate invariant Hermitian form.  

Jantzen (in \Cite{Jantzen}*{5.1}) developed tools to deal with this
obstruction; what we need is contained in Definition
\ref{def:jantzenform} below. The idea is that each Langlands parameter $\Gamma =
(\Lambda,\nu)$ (Definition \ref{def:dLP}) is part of a natural family
of parameters
\begin{equation}\label{eq:Gammat}
\Gamma_t =_{\text{def}} (\Lambda,t\nu) \qquad (0 \le t < \infty).
\end{equation}
For $t > 0$, it is evident that $\Gamma_t$ inherits from $\Gamma$ the formal
defining properties of a Langlands parameter.  At $t=0$, {\em all} the
real roots satisfy $\langle d\gamma,\alpha^\vee\rangle = 0$, and
condition 5 of Theorem \ref{thm:LC} can fail; $\Gamma_0$ may be only a
{\em weak} Langlands parameter (Definition \ref{def:genparam}). It is
a classical idea 
(originating in the definition of complementary series representations
for $SL(2)$ by Bargmann and Gelfand-Naimark, and enormously extended
especially by Knapp) that one can study questions of unitarity by
deformation arguments in the parameter $t$.  In order to do that, we
need to know that the family of representations indexed by $\Gamma_t$
is ``nice.'' (The parameter $t$ may be allowed to vary over all of
${\mathbb R}$ or ${\mathbb C}$ for many of the statements to follow;
this causes difficulties only with references to the ``quotient''
realization of a standard module, or to invariant Hermitian forms. In
a similar way, we could allow the continuous parameter to vary
over all of ${\mathfrak a}^*$, rather than just over the line
${\mathbb R}\nu$, with similar caveats.)

\begin{proposition}[Knapp-Stein \Cite{KSII}*{Theorem
  6.6}]\label{prop:meromorphic}  If $\Gamma = (\Lambda,\nu)$ is a Langlands
  parameter, then all of the (weak) standard modules (for $t \ge 0$)
  $I_{\quo}(\Gamma_t)$ and $I_{\sub}(\Gamma_t)$
  (cf.~\eqref{eq:Gammat}) may be realized on a common space 
$$V(\Lambda) =_{\text{def}} \Ind_{M\cap K}^K (D(\Lambda)|_{M\cap
  K})$$
(notation as in \eqref{se:ds}). In this realization 
\begin{enumerate}
\item the action of $K$ (and therefore of the Lie algebra
  ${\mathfrak k}$) is independent of $t$. 
\item The action of the Lie algebra ${\mathfrak g}$ in
  $I_{\quo}(\Gamma_t)$ depends in an affine way on $t$.  That
  is, for each $X$ in ${\mathfrak g}$ there are linear operators
  $Q_0(X)$ and $Q_1(X)$ on $V(\Lambda)$ so that the action of $X$ in
  $I_{\quo}(\Gamma_t)$ is $Q_0(X)+tQ_1(X)$. When $V(\Lambda)$ is
  interpreted as a space of functions on $K$, the operators $Q_0(X)$
  are first-order differential operators, and the operators $Q_1(X)$
  are multiplication operators. 
\item The action of the Lie algebra ${\mathfrak g}$ in
  $I_{\sub}(\Gamma_t)$ depends in an affine way on $t$. That
  is, for each $X$ in ${\mathfrak g}$ there are linear operators
  $S_0(X)$ and $S_1(X)$ on $V(\Lambda)$ so that the action of $X$ in
  $I_{\quo}(\Gamma_t)$ is $S_0(X)+tS_1(X)$. When $V(\Lambda)$ is
  interpreted as a space of functions on $K$, the operators $S_0(X)$
  are first-order differential operators, and the operators $S_1(X)$
  are multiplication operators.
\end{enumerate}
For $t>0$, there is a nonzero interwining operator
$$L_t= L_t(\Gamma)\colon I_{\quo}(\Gamma_t) \rightarrow
I_{\sub}(\Gamma_t),$$  
unique up to a nonzero scalar multiple. Fix a lowest $K$-type $\mu$
of $J(\Gamma)$ (equivalently, a lowest $K$-type of $V(\Lambda)$) and
normalize $L_t$ by the requirement
$$L_t|_\mu = \Id.$$  
\begin{enumerate}[resume]
\item The intertwining operator $L_t$ is a rational function of $t$
  (that is, each matrix entry is a quotient of polynomials) having no
  poles on $[0,\infty)$. 
\item The limit operator $L_0$ is a unitary intertwining operator
  between two unitarily induced representations.
\item Suppose that $J(\Gamma)$ admits a nonzero invariant or $c$-invariant
  Hermitian form $\langle,\rangle_1$. Regard this as a form on
  $I_{\quo}(\Gamma)$ with radical equal to the maximal
  submodule, and therefore as a form on $V(\Lambda)$. Then this form
  has a canonical extension to a rational family of forms
  $\langle,\rangle_t$ on $V(\Lambda)$, characterized by the two
  requirements
\begin{enumerate}
\item for $t > 0$, the form $\langle,\rangle_t$ is invariant for
  $I_{\quo}(\Gamma_t)$; and
\item the form $\langle,\rangle_t$ is constant on the fixed lowest
  $K$-type $\mu$.
\end{enumerate}
The family of forms also satisfies
\begin{enumerate}[resume]
\item the form $\langle,\rangle_t$ has no poles on $[0,\infty)$; and
\item for $t > 0$, the radical of $\langle,\rangle_t$ is the
  maximal proper submodule, so that the form descends to a
  nondegenerate form on the Langlands quotient $J(\Gamma_t)$.
\end{enumerate}
\end{enumerate}
\end{proposition}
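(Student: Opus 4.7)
The plan is to construct everything through the compact picture of parabolic induction, where the $t$-dependence becomes transparent and the intertwining operator is an honest integral operator whose behavior is well understood from Knapp-Stein theory.

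First I would realize all $I_{\quo}(\Gamma_t)$ on a common space. Start from the Langlands realization of \eqref{se:ds} and Theorem \ref{thm:Lrealiz}: $I_{\quo}(\Gamma_t) = \Ind_{MAN}^G(D(\Lambda)\otimes t\nu\otimes 1)$. Restriction of functions to $K$ gives a $K$-equivariant identification with $V(\Lambda) = \Ind_{M\cap K}^K(D(\Lambda)|_{M\cap K})$ that depends neither on $t$ nor on the parabolic side (so $I_{\sub}(\Gamma_t)$ is realized on the same space by using $N^{\text{op}}$). This gives (1). For the ${\mathfrak g}$-action, fix the Iwasawa decomposition ${\mathfrak g} = {\mathfrak k} + {\mathfrak a} + {\mathfrak n}$ and write $X = X_k(g) + X_a(g) + X_n(g)$ after conjugation by $g\in K$; the standard formula for the induced representation then expresses $\pi_t(X)$ as a first-order differential operator in the $K$-variable (independent of $t$) plus $t$ times a multiplication operator coming from the evaluation of $\nu$ on the ${\mathfrak a}$-component. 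This gives (2)--(3) immediately for both $I_{\quo}$ and $I_{\sub}$.

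Next I would introduce the long intertwining operator via the classical integral $L_t f(g) = \int_{N^{\text{op}}} f(gn)\,dn$. Absolute convergence for $t\RE(\nu)$ large enough on ${\mathfrak n}$ and meromorphic continuation in $t$ are standard (Knapp-Stein). To normalize, use that the lowest $K$-type $\mu$ of $J(\Gamma)$ appears with multiplicity one in $V(\Lambda)$ and $L_t$ preserves $K$-isotypic components, so $L_t|_\mu$ is a scalar that is a nonzero meromorphic function of $t$; dividing by this scalar gives the normalization $L_t|_\mu = \Id$ and makes $L_t$ rational in $t$ (by Gindikin-Karpelevi\v c reduction to rank-one integrals as in \eqref{se:cdualproof}). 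Absence of poles on $[0,\infty)$ is the crucial point: by the Gindikin-Karpelevi\v c product formula, the normalization reduces to a product of rank-one normalizations indexed by the positive ${\mathfrak a}$-roots, and each rank-one factor is regular except possibly on the integers $\le -1$; the type L / Langlands condition on $\Gamma$ excludes positive integer values of $\langle \nu,\alpha^\vee\rangle$, and since we rescale by $t\ge 0$ the potential poles stay off $[0,\infty)$. At $t=0$ the integrals reduce to Harish-Chandra's $c$-function for a tempered unitary principal series, and the normalization of Proposition \ref{prop:cdualstd}(5) (together with the positivity calculation sketched in \eqref{se:cdualproof} and the discussion of tempered standard modules in Proposition \ref{prop:LCshape}(4)) shows that $L_0$ is a unitary isomorphism between the two unitarily induced realizations of $I(\Gamma_0)$.

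For the forms in (6)--(8), combine the intertwining operator with Proposition \ref{prop:dualstd}(1) (respectively Proposition \ref{prop:cdualstd}(1)): the identification $I_{\quo}(\Gamma_t)^{h,\sigma_0} \simeq I_{\sub}(\Gamma_t^{h,\sigma_0})$ together with $\Gamma_t^{h,\sigma_0} = \Gamma_t$ (valid along the ray $t\in {\mathbb R}$ precisely because $J(\Gamma)$ is assumed Hermitian, so by Proposition \ref{prop:dualstd}(4) there is $w\in W^\Lambda$ with $w\nu = -\bar\nu$, hence $w(t\nu) = -\overline{t\nu}$ for all real $t$) exhibits an invariant Hermitian pairing between $I_{\quo}(\Gamma_t)$ and $I_{\sub}(\Gamma_t)$. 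Pulling back via $L_t$ produces $\langle v,w\rangle_t = \langle v, L_t w\rangle_{\mathrm{can}}$, a sesquilinear form on $V(\Lambda)$ that is $I_{\quo}(\Gamma_t)$-invariant for $t>0$. Normalizing by the value on $\mu$ gives the unique family with the stated properties; rationality and absence of poles on $[0,\infty)$ are inherited from $L_t$, and for $t>0$ the radical of $\langle,\rangle_t$ equals the kernel of $L_t$, which by Langlands/Knapp-Zuckerman equals the unique maximal proper submodule $I_1(\Gamma_t)$, so the form descends to a nondegenerate form on $J(\Gamma_t)$. The same argument with $\sigma_c$ in place of $\sigma_0$ handles the $c$-invariant case.

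The main obstacle is the no-pole statement in (4) (and hence in (6)(c)). Meromorphic continuation and rationality are soft, but the absence of poles on the full ray $[0,\infty)$ genuinely depends on the Langlands dominance condition through the rank-one Gindikin-Karpelevi\v c factors; one must match the potential pole locations with the integrality constraints built into the definition of a Langlands parameter to see that rescaling by $t\ge 0$ cannot push any pole onto the positive axis. Everything else is formal bookkeeping once this regularity is in hand.
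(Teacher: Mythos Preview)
Your treatment of (1)--(3) and (6) matches the paper's argument essentially line for line: the compact picture, the Iwasawa decomposition of $\Ad(k^{-1})X$ giving the affine $t$-dependence, and the construction of the form by pulling back the canonical $\quo$/$\sub$ pairing through $L_t$.

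For (4) the paper takes a genuinely different and more algebraic route. Rather than Gindikin--Karpelevi\v c and rank-one analysis, it exploits cyclicity of a lowest $K$-type vector $v_0$: fix any $K$-type $\mu_1$ of multiplicity $n$, choose Hecke algebra elements $r_j$ with $r_j\cdot v_0 = v_j$ in $I_{\quo}(\Gamma_1)$, and let $Q(t)$, $S(t)$ be the $n\times n$ matrices recording $\widetilde r_j\cdot v_0$ in the $\quo$ and $\sub$ realizations. By (2)--(3) these are polynomial in $t$, the intertwining relation gives $M(t)Q(t)=S(t)$, and $Q(1)=\Id$ forces $M(t)=S(t)Q(t)^{-1}$ to be rational and regular near $t=1$; repeating at any $t_0>0$ gives regularity on $(0,\infty)$. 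This avoids all rank-one calculations. Your approach can be made to work, but the sentence ``the type L / Langlands condition on $\Gamma$ excludes positive integer values of $\langle\nu,\alpha^\vee\rangle$'' is not what type L says; what you actually need is the regularity of the \emph{normalized} rank-one operator on the closed half-line, which is true but requires the explicit Gamma-function computations rather than a dominance condition.

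For (5) and the pole at $t=0$ there is a genuine gap. The integral $\int_{N^{\text{op}}} f(gn)\,dn$ does not converge at $t=0$, so ``the integrals reduce to Harish-Chandra's $c$-function'' is not literally available, and the references you cite concern positivity of $c$-forms on lowest $K$-types rather than unitarity of $L_0$. The paper handles both the absence of a pole at $0$ and the unitarity of $L_0$ by a different deformation: let $\nu'\to i\epsilon\rho$ for small real $\epsilon$ and regular $\rho$, where $I(\Lambda,i\epsilon\rho)$ is irreducible unitary, so the normalized $L(i\epsilon\rho)$ (being an intertwiner equal to $\Id$ on $\mu$) must be unitary; then let $\epsilon\to 0$. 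This imaginary-direction limit is the missing ingredient in your outline.
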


\begin{subequations}\label{se:jantzenfilt}
We are going to use the rational family of intertwining operators
$L(t)$ to analyze the submodule structure of
$I_{\quo}(\Gamma)$. We have already seen that defining
\begin{equation}
I_{\quo}(\Gamma)^0 =_{\text{def}} I_{\quo}(\Gamma), \qquad
I_{\quo}(\Gamma)^1 =_{\text{def}} \ker(L(1)) 
\end{equation}
has the consequence
\begin{equation*}
I_{\quo}(\Gamma)^0/I_{\quo}(\Gamma)^1 \simeq J(\Gamma).
\end{equation*}
Once we decide to consider the family of operators $L(t)$, it is
natural to try to define 
$$I_{\quo}(\Gamma)^2 {\buildrel?\over =} \{v \in V(\Lambda) \mid
 \text{$L(t)v$  vanishes to order two at $t=1$}\} \subset
 I_{\quo}(\Gamma)^1.$$ 
The difficulty is that this space need not be a submodule. Here is
why. Suppose that $L(t)v$ vanishes to second order, and $x$ is
in the Hecke algebra $R({\mathfrak g}_0,K)$. Write $\cdot_t$ for the
action in $I_{\quo}(\Gamma_t)$. We would like to show that
$L(t)[x\cdot_1 v]$ vanishes to second order at $t=1$. The natural way
to do that would be to interchange $x$ and $L(t)$. But this we are not
permitted to do: the correct intertwining relation is rather
\begin{equation*}
L(t)[x\cdot_t v] = x\cdot_t[L(t)v].
\end{equation*}
The correct definition of the Jantzen filtration is
\begin{equation}\label{eq:jantzenr}
I_{\quo}(\Gamma)^r = \left\{v\in V(\Lambda) \,\,\bigg|\,\,
\text{\parbox{.55\textwidth}{for some
  rational function $f_v(t)$ with $f_v(1) = v$, $L(t)f_v(t)$
  vanishes to order $r$ at $t=1$.}}\right\}
\end{equation}
It is easy to check that $f_v$ can be taken to be a polynomial of
degree at most $r-1$ without changing the space defined. Another way to
write the definition is
\begin{equation}\label{eq:jantzenrprime}
I_{\quo}(\Gamma)^r = \left\{v\in V(\Lambda) \,\,\bigg|\,\,
\text{\parbox{.55\textwidth}{for some rational function $f_v(t)$ with
    $f_v(1) = v$, $(t-1)^{-r}L(t)f_v(t)$ is finite at $t=1$.}}\right\}
\end{equation}

Now the invariance of $I_{\quo}(\Gamma)^r$ under the Hecke
algebra is easy: if $f_v$ is the rational function of $t$ certifying
the membership of $v$ in $I_{\quo}(\Gamma)^r$, and $x$ belongs
to the Hecke algebra, then we can define
$$f_{x\cdot_1 v}(t) =_{\text{def}} x\cdot_t f_v(t),$$
which is a rational function of $t$ taking the value $x\cdot_1 v$ at
$t=1$. Furthermore
$$L(t)[f_{x\cdot_1 v}(t)] = L(t)[x\cdot_t f_v(t)] = x\cdot_t[L(t)
f_v(t)].$$
The function $[L(t) f_v(t)]$ is assumed to vanish to order
$r$ at $t=1$, so this is true after we apply the operator-valued
polynomial function $x\cdot_t$.  That is, the function $f_{x\cdot_1 v}$
certifies that $x\cdot_1 v$ also belongs to $I_{\quo}(\Gamma)^r$.

The {\em Jantzen filtration} is the decreasing filtration by submodules
\begin{equation}
I_{\quo}(\Gamma) = I_{\quo}(\Gamma)^0 \supset
I_{\quo}(\Gamma)^1 \supset I_{\quo}(\Gamma)^2 \supset \cdots
\end{equation}
Because the image of the intertwining operator is the Langlands
quotient, we get
\begin{equation*}
I_{\quo}(\Gamma)^1 = \ker L(1), \qquad
I_{\quo}(\Gamma)^0/I_{\quo}(\Gamma)^1 \simeq J(\Gamma).
\end{equation*}

Since we are talking about rational functions, \eqref{eq:jantzenr}
could formally be extended to negative values of $k$, using order of
pole rather than order of vanishing. Since $L(t)$ has no poles for $t
> 0$, such a definition adds nothing for $I_{\quo}(\Gamma)$. But
it is perfectly reasonable to consider the inverse intertwining
operator
\begin{equation*}
L(t)^{-1} \colon I_{\sub}(\Gamma_t) \rightarrow
I_{\quo}(\Gamma_t),
\end{equation*}
(always on our fixed space $V(\Lambda)$.) Again the matrix
coefficients are rational functions
of $t$, but now there can be lots of poles in $(0,\infty)$.
For $r\ge 0$, we define
\begin{equation}\label{eq:jantzenminus}
I_{\sub}(\Gamma)^{-r} = \left\{v\in V(\Lambda) \,\,\bigg|\,\,
\text{\parbox{.59\textwidth}{for some
  polynomial function $g_v(t)$ with $g_v(1) = v$, $(t-1)^rL(t)^{-1}g_v(t)$
  is finite at $t=1$.}}\right\}
\end{equation}

On this side the Jantzen filtration is
\begin{equation}
0 = I_{\sub}(\Gamma)^{1} \subset
I_{\sub}(\Gamma)^0 \subset I_{\sub}(\Gamma)^{-1} \subset
\cdots \subset I_{\sub}(\Gamma).
\end{equation}
The Langlands subrepresentation is
\begin{equation*}
I_{\sub}(\Gamma)^0 = \im L(1), \qquad
I_{\sub}(\Gamma)^0/I_{\sub}(\Gamma)^1 \simeq J(\Gamma).
\end{equation*}
\end{subequations}

\begin{lemma}[Jantzen \Cite{Jantzen}*{Lemma 5.1}]\label{lemma:jantzenfilt}
Suppose $E$ and $F$ are vector spaces of finte dimension over a field
$k$, and
$$L\in k(t) \otimes_k \Hom_k(E,F) \simeq \Hom_{k(t)}(k(t)\otimes_kE,
k(t)\otimes_k F).$$
is a rational family of linear maps from $E$ to $F$. Assume that $L$
is invertible over $k(t)$; equivalently, that $\dim_k E = \dim_k F$,
and that in any bases for $E$ and $F$, the representation of $L$ as a
matrix of rational functions has determinant a nonzero rational
function. Define
$$E^r(1) = E^r = \left\{v\in E \,\,\bigg|\,\,
\text{\parbox{.56\textwidth}{for some
  rational function $f_v(t)$ with $f_v(1) = v$, $(t-1)^{-r}L(t)f_v(t)$
  is finite at $t=1$.}}\right\}$$
$$F^q(1) = F^q = \left\{w\in F \,\,\bigg|\,\,
\text{\parbox{.58\textwidth}{for some
  rational function $g_w(t)$ with $g_w(1) = w$, $(t-1)^{-q}L(t)^{-1}g_w(t)$
  is finite at $t=1$.}}\right\}$$
\begin{enumerate}
\item These are finite decreasing filtrations of $E$ and $F$, with
$$E^R = 0, \quad E^{-R} = E, \qquad \text{all $R$ sufficiently
  large,}$$
and similarly for $F$.
\item Taking residues defines natural isomorphisms
$$L^{[r]}(1) = L^{[r]}\colon E^r/E^{r+1} \buildrel\sim\over\rightarrow
F^{-r}/F^{-r+1}, \quad L^{[r]}v = (t-1)^{-r} L(t)f_v(t)$$
with $f_v$ as in the definition of $E^r$.  The inverse is
$(L^{-1})^{[-r]}$.
\item Define an integer $D$ by
$$\det(L) = (t-1)^D(\text{rational function finite and nonzero at $t=1$}).$$
Then
$$D = \sum_{r = -\infty}^\infty r\dim E^r/E^{r+1}.$$
\item Suppose that $L$ is finite at $t=1$. Then $E^{-r} = E$ for all
  $r\ge 0$, and 
$$D = \sum_{r=1}^\infty \dim E^r.$$
\end{enumerate}
\end{lemma}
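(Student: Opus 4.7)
The plan is to apply Smith normal form over the discrete valuation ring $\mathcal O = k[t]_{(t-1)}$, with uniformizer $u=t-1$ and residue field $k$. Base-changing gives free $\mathcal O$-modules $E_{\mathcal O}$, $F_{\mathcal O}$ of rank $n$, and the hypothesis on $\det L$ makes $L(t)$ an isomorphism $E_{\mathcal K} \to F_{\mathcal K}$ over the fraction field $\mathcal K$. The elementary divisor theorem then supplies $\mathcal O$-bases $e_1^*, \ldots, e_n^*$ of $E_{\mathcal O}$ and $f_1^*,\ldots,f_n^*$ of $F_{\mathcal O}$, together with integers $d_1 \le \cdots \le d_n$ (possibly negative, since $L$ may have poles at $t=1$), such that $L(t)\,e_i^* = u^{d_i} f_i^*$. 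Write $\bar e_i^* \in E$ and $\bar f_i^* \in F$ for the reductions modulo $u$, giving $k$-bases of $E$ and $F$.

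A rational $f_v$ with $f_v(1)=v$ is precisely a lift of $v$ to $E_{\mathcal O}$. Writing $f_v = \sum_i a_i(t)\,e_i^*$ with $a_i \in \mathcal O$, the condition $u^{-r}L(t)f_v \in F_{\mathcal O}$ translates into $a_i \in u^{\max(r-d_i,0)}\mathcal O$: the coefficients with $d_i\ge r$ may be chosen freely, while those with $d_i<r$ must reduce to $0$ modulo $u$. Reading off the residues gives
\[ E^r \;=\; \SPAN_k\{\bar e_i^* : d_i \ge r\}, \qquad F^q \;=\; \SPAN_k\{\bar f_i^* : d_i \le -q\}.\]
Part (1) follows immediately, since these sets are empty for $r>d_n$ (respectively $-q<d_1$) and exhaust $\{1,\dots,n\}$ for $r\le d_1$ (respectively $-q\ge d_n$).

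For (2), if $v=\bar e_i^*$ with $d_i=r$ we may take $f_v=e_i^*$; then $u^{-r}L(t)f_v = f_i^*$, whose residue $\bar f_i^*$ lies in $F^{-r}$ by the formula above. The Smith bases thus exhibit $L^{[r]}$ as the bijection $\bar e_i^*\mapsto \bar f_i^*$ on the ``$d_i=r$'' pieces of $E^r/E^{r+1}$ and $F^{-r}/F^{-r+1}$. Well-definedness on these quotients is the one point needing care: if $f_v$ is replaced by $f_v+uh$ with $h\in E_{\mathcal O}$, the residue changes by $u^{-(r-1)}L(t)h\big|_{t=1}$, which lies in $F^{-r+1}$ by the same description applied one step down. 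The inverse of $L^{[r]}$ is the symmetric construction applied to $L^{-1}$, namely $(L^{-1})^{[-r]}$. For (3), the Smith bases diagonalize $L$ with entries $u^{d_i}$, so $\det L$ differs from $u^{\sum d_i}$ by a unit of $\mathcal O$; hence
\[ D \;=\; \sum_i d_i \;=\; \sum_r r\cdot\#\{i:d_i=r\} \;=\; \sum_r r\,\dim(E^r/E^{r+1}).\]
When $L$ is regular at $t=1$, all $d_i\ge 0$, so $E^{-r}=E$ for $r\ge 0$, and
\[ D\;=\;\sum_i d_i\;=\;\sum_{r\ge 1}\#\{i:d_i\ge r\}\;=\;\sum_{r\ge 1}\dim E^r,\]
which is (4).

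There is no serious obstacle: Smith normal form over a DVR is the only nontrivial input, after which all four assertions become pure bookkeeping. The only genuine verification is the well-definedness of the residue map $L^{[r]}$ on $E^r/E^{r+1}$, which is handled as indicated above.
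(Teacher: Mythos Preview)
Your proof is correct and follows essentially the same route as the paper: both pass to the DVR $\mathcal O$ of rational functions regular at $t=1$, invoke the elementary divisor theorem (Smith normal form) to diagonalize $L$ as $\operatorname{diag}(u^{d_1},\dots,u^{d_n})$ in suitable $\mathcal O$-bases, and then read off $E^r$, $F^q$, the residue isomorphisms, and the determinant formula from the exponents $d_i$. The only cosmetic difference is that the paper first multiplies $L$ by a power of $(t-1)$ to make it regular at $t=1$ (so that all $d_i\ge 0$ and the classical elementary divisor theorem for $M'/LM$ applies directly), whereas you work from the start with the lattice version allowing negative $d_i$; this is a matter of taste and changes nothing of substance.
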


We will have no occasion to make use of (3), but Jantzen made it a
powerful tool, so we include the statement. In fact Jantzen
looked only at the case when $L$ is finite at $t=1$, and wrote the
result in the form (4); so we include that version as well.

\begin{proof} 
  We can
  multiply $L$ by a power of $(t-1)^c$ to ensure that $L$ is finite at
  $t=1$. (This shifts all the filtrations by $c$, and multiplies $\det
  L$ by $(t-1)^{c\cdot\dim E}$, and so shifts $D$ by $c\cdot\dim E$. All the
  conclusions of the theorem are essentially unchanged.) Now consider
  the local ring $A$ of rational functions finite at $t=1$. The ring
  $A$ is a unique factorization PID, and $t-1$ is the unique prime
  element. Look at the free $A$ modules
$$M = A \otimes_k E, \qquad M' = A\otimes_k F.$$
The operator $L$ amounts to an injective module map
$$L\colon M \hookrightarrow M'.$$
The module $M'/LM$ is finitely generated, so the elementary divisor
theorem provides a basis $\{m'_1,\dots, m'_n\}$ of $M'$ and
nonnegative integers
$$d_1 \le d_2 \le \cdots \le d_n,$$
so that
$$\{(t-1)^{d_1} m'_1,\ldots, (t-1)^{d_n}m'_n\}$$
is a basis of $LM$. Since $L$ is injective, the elements
$$m_i =_{\text{def}} L^{-1}((t-1)^{d_i} m'_i)$$
are automatically a basis of $M$.  Working in these bases, it is easy
to show that
$$E^r = \text{span of values at $1$ of }\{ m_i \mid d_i \ge r\}, $$
and
$$\det L = (t-1)^{\sum_{i=1}^n d_i}(\text{unit in $A$}).$$
The assertions of the lemma are now easy to check. (For example, in
the bases $\{m_i\}$ and $\{m_i'\}$, $L$ is diagonal with the entry $(t-1)^d$
appearing $\dim(E^d/E^{d+1})$ times.)
\end{proof}

\begin{proposition}[\cite{Vu}*{Theorem 3.8}]\label{prop:jantzenfilt}
Suppose $\Gamma = (\Lambda,\nu)$ is a Langlands parameter, and
$\Gamma_t = (\Lambda,t\nu)$ $(0 < t < \infty)$. Write
$$L_t \colon I_{\quo}(\Gamma_t) \rightarrow
I_{\sub}(\Gamma_t)$$
for the rational family of Knapp-Stein intertwining operators constructed in
Theorem \ref{prop:meromorphic}.  Define Jantzen filtrations (of submodules)
$$I_{\quo}(\Gamma) = I_{\quo}(\Gamma)^0 \supset
I_{\quo}(\Gamma)^1 \supset I_{\quo}(\Gamma)^2 \supset
\cdots,$$
and
$$0 =  I_{\sub}(\Gamma)^1 \subset
I_{\sub}(\Gamma)^0 \subset I_{\sub}(\Gamma)^{-1} \subset
\cdots$$
as in \eqref{se:jantzenfilt}.
\begin{enumerate}
\item Each Jantzen filtration is finite: for $R$ sufficiently large,
  $I_{\quo}(\Gamma)^R = 0$, and $I_{\sub}(\Gamma)^{-R}
  = I_{\sub}(\Gamma)$.
\item The Langlands quotient is
\begin{equation*}\begin{aligned}I_{\quo}(\Gamma)/\ker(L_1) &=
I_{\quo}(\Gamma)^0/I_{\quo}(\Gamma)^1 \simeq J(\Gamma)\\
&\simeq I_{\sub}(\Gamma)^0/I_{\sub}(\Gamma)^1 = \im L_1/\{0\}.
\end{aligned}\end{equation*}
\item The residual operators (Lemma \ref{lemma:jantzenfilt}(2)) define
  $({\mathfrak g}_0,K)$-module isomorphisms
$$L^{[r]}\colon I_{\quo}(\Gamma)^r/I_{\quo}(\Gamma)^{r+1}
\buildrel\sim \over\longrightarrow
I_{\sub}(\Gamma)^{-r}/I_{\sub}(\Gamma)^{-r+1}.$$
\item Suppose that $J(\Gamma)$ admits a nonzero invariant or
  $c$-invariant Hermitian form $\langle,\rangle_1$.  Extend this to a
  rational family of invariant forms $\langle,\rangle_t$ as in
  Proposition \ref{prop:meromorphic}. Then there are nondegenerate
  invariant forms
$$\langle, \rangle^{[r]} \text{\ on }
I_{\quo}(\Gamma)^r/I_{\quo}(\Gamma)^{r+1},$$
defined by
$$\langle v, w \rangle^{[r]} = \lim_{s \rightarrow 1}
(s-1)^{-r}\langle f_v(s),f_w(s)\rangle_s \quad
\left(\text{
$v,w \in I_{\quo}(\Gamma)^r$,
    $f_v$, $f_w$ as in \eqref{eq:jantzenr}.}\right)$$
  \end{enumerate}
\end{proposition}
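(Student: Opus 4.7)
The strategy is to reduce each assertion to Lemma \ref{lemma:jantzenfilt}, applied $K$-isotypic component by $K$-isotypic component, using the $({\mathfrak g}_0,K)$-module structure on the common space $V(\Lambda)$ from Proposition \ref{prop:meromorphic}. For (1), Theorem \ref{thm:LC} gives that $I_{\quo}(\Gamma)$ is admissible and of finite length; since $L_t$ is $K$-equivariant, it preserves each finite-dimensional isotypic component $V(\Lambda)(\mu)$, and Lemma \ref{lemma:jantzenfilt}(1) yields $I_{\quo}(\Gamma)^r(\mu)=0$ for $r$ sufficiently large (depending on $\mu$). Finite length forces the decreasing submodule chain $\{I_{\quo}(\Gamma)^r\}$ to stabilize at some submodule $W$; evaluating at each $\mu$ then shows $W=0$. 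The sub-type side is identical. Part (2) is immediate from definitions: $\ker L_1 \subseteq I_{\quo}(\Gamma)^1$ follows by taking the constant family $f_v(s) \equiv v$, while the reverse inclusion comes from evaluating $L(s)f_v(s)$ at $s=1$.

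For (3), Lemma \ref{lemma:jantzenfilt}(2) provides complex-linear residue isomorphisms $L^{[r]}$ on each $K$-isotypic component, which assemble into a $K$-equivariant isomorphism on the graded pieces. To promote this to a $({\mathfrak g}_0,K)$-module map, I will use the intertwining identity
$$L(t)\bigl[x \cdot_t f_v(t)\bigr] = x \cdot_t \bigl[L(t)\, f_v(t)\bigr] \qquad (x \in R({\mathfrak g}_0, K))$$
arising from Proposition \ref{prop:meromorphic}. Because parts (2)--(3) of that proposition make $x \cdot_t$ depend polynomially on $t$, this operator commutes with the residue at $t=1$, and so $L^{[r]}(x\cdot_1 v) = x\cdot_1 L^{[r]}(v)$, as required.

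For (4), Proposition \ref{prop:dualstd} (or Proposition \ref{prop:cdualstd}) identifies nondegenerate invariant Hermitian pairings on $I_{\quo}(\Gamma_t)$ with intertwining operators $I_{\quo}(\Gamma_t) \to I_{\sub}(\Gamma_t^{h,\sigma})$. The hypothesis that $J(\Gamma)$ admits such a form, together with Proposition \ref{prop:LCshape}(4) applied to the deformed parameters, gives $\Gamma_t^{h,\sigma} \simeq \Gamma_t$ for all real $t$; the intertwining operator representing the form is then proportional to $L(t)$, and the common normalization on the fixed lowest $K$-type forces this proportionality to be a global identification. Consequently, for $v,w \in I_{\quo}(\Gamma)^r$ with representatives $f_v, f_w$, the function $\langle f_v(s), f_w(s)\rangle_s$ is a rational function of $s$ vanishing to order at least $r$ at $s=1$, because $L(s) f_v(s)$ does. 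The limit $\langle,\rangle^{[r]}$ thus exists; under the identification of (3) it coincides with a fixed pairing between the $r$-th graded piece and itself, and nondegeneracy reduces to the isomorphism property of $L^{[r]}$.

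The main obstacle I anticipate is in (4): pinning down the exact proportionality between $\langle,\rangle_t$ and the $L(t)$-induced pairing, and verifying that it is a single constant rather than a genuine rational function of $t$. Proposition \ref{prop:dualstd}(3) only guarantees proportionality pointwise at each $t$; promoting this to global constancy will rest on the shared normalization on a common lowest $K$-type, the one-dimensionality of the space of solutions to the relevant intertwining equation for generic $t$, and the rational dependence on $t$ supplied by Proposition \ref{prop:meromorphic}. Once this identification is in place, Hermitian symmetry of $\langle,\rangle^{[r]}$ is inherited from Hermitian symmetry of $\langle,\rangle_s$ for $s$ near $1$, since the residue of a Hermitian-symmetric family is Hermitian-symmetric.
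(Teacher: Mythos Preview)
Your proposal is correct and follows essentially the same approach as the paper: reduce to Lemma~\ref{lemma:jantzenfilt} on finite-dimensional $K$-isotypic pieces, and for (4) identify the form with the intertwining operator $L(t)$ via the Hermitian-dual description of standard modules.

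Two small points of comparison. For (1), the paper does not invoke the descending chain condition; instead it picks one $K$-type $\mu_i$ from each of the finitely many composition factors of $I(\Gamma)$, sets $E=\bigoplus_i V(\Lambda)(\mu_i)$, observes that every nonzero submodule meets $E$, and applies Lemma~\ref{lemma:jantzenfilt} once to $L(t)|_E$. Your Artinian argument is equally valid and perhaps more transparent; the paper's buys a single uniform $R$ without appealing to finite length a second time. For (4), the obstacle you anticipate does not actually arise: the paper notes that the family $\langle,\rangle_t$ of Proposition~\ref{prop:meromorphic}(6) is \emph{constructed} from a fixed vector-space isomorphism $J\colon V(\Lambda)^h\simeq V(\Lambda)$ composed with $L(t)$, so the relationship between the form and $L(t)$ is explicit by construction rather than a proportionality to be deduced pointwise. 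With that identification in hand, the orthogonal complement of $I_{\quo}(\Gamma)^r$ under the fixed pairing is exactly $I_{\sub}(\Gamma)^{-r+1}$, and the nondegenerate form $\langle,\rangle^{[r]}$ is simply $J\circ L^{[r]}$ viewed as a map to the Hermitian dual. Your limit formula computes the same thing.
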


\begin{proof} 
  For the finiteness of the filtrations,
  recall that $I(\Gamma)$ has a finite number $N$ of irreducible composition
  factors. Choose some irreducible representation $\mu_i$ of $K$
  ($1\le i \le N$)
  appearing in each of these composition factors, and let $E\subset
  V(\Lambda)$ be the sum of the $\mu_i$-isotypic subspaces, a
  finite-dimensional space.  By the choice of the $\mu_i$,
\begin{equation}\label{eq:finitedim}
\text{
each nonzero $({\mathfrak g}_0,K)$-submodule of
    $I_{\quo}(\Gamma)$ has nonzero intersection with $E$.}
\end{equation}
 Because they respect the
  action of $K$, the intertwining operators $L_t$ must preserve
  $E$. If we apply Lemma \ref{lemma:jantzenfilt} to the restrictions
  of $L_t$ to $E$, we conclude that (for large $R$),
  $I_{\quo}(\Gamma)^R \cap E = 0$. By \eqref{eq:finitedim}, it
  follows that $I_{\quo}(\Gamma)^R = 0$. The rest of (1) is
  similar. Part (2) is part of the Langlands classification Theorem
  \ref{thm:LC}.  Part (3) is immediate from Lemma
  \ref{lemma:jantzenfilt}, applied separately to each
  (finite-dimensional) $K$-isotypic subspace.

For part (4), invariant forms are described in Definition
\ref{def:invherm} in terms of intertwining operators with Hermitian
dual representations.  Propositions \ref{prop:dualstd} and
\ref{prop:cdualstd} compute the Hermitian duals of standard quotient-type
modules as standard sub-type modules.  In the presence of (for
example) a $c$-invariant form on $J(\Gamma)$, the conclusion is that
$$I_{\quo}(\Gamma_t)^{h,\sigma_c} \simeq
I_{\sub}(\Gamma_t).$$
The invariant pairings defining this isomorphism were constructed from
a fixed isomorphism for the underlying spaces
$$J\colon V(\Lambda)^h \simeq V(\Lambda).$$
Now it follows from the definitions that
$$\left([I_{\quo}(\Gamma)^r]^{h,\sigma_c}\right)^\perp \simeq
I_{\sub}(\Gamma)^{-r+1},$$
so that we get isomorphisms
$$[I_{\quo}(\Gamma)^r/I_{\quo}(\Gamma)^{r+1}]^{h,\sigma_c} \simeq
I_{\sub}(\Gamma)^{-r}/I_{\sub}(\Gamma)^{-r+1}.$$
The isomorphism $J\circ L^{[r]}$ from
$I_{\quo}(\Gamma)^r/I_{\quo}(\Gamma)^{r+1}$ to its
Hermitian dual is the invariant Hermitian form in (4).
\end{proof}

\begin{definition}\label{def:jantzenform} Suppose $\Gamma$ is a
  Langlands parameter, and that the Langlands quotient $J(\Gamma)$
  admits a nonzero invariant or $c$-invariant Hermitian form
  $\langle,\rangle$. Define nondegenerate invariant forms 
$$\langle, \rangle^{[r]} \text{\ on }
I_{\quo}(\Gamma)^r/I_{\quo}(\Gamma)^{r+1}$$
as in Proposition \ref{prop:jantzenfilt}.  Write
$$(\POS^{[r]},\NEG^{[r]}) \colon \widehat K \rightarrow {\mathbb N} \times
{\mathbb N}$$
for the signatures of these forms (Proposition
\ref{prop:sigchar}). The {\em Jantzen form for 
  $I_{\quo}(\Gamma)$} is the nondegenerate form
$$\sum_{r=0}^\infty \langle, \rangle^{[r]} \text{\ on }
\gr I_{\quo}(\Gamma) =_{\text{def}} \sum_{r=0}^\infty
I_{\quo}(\Gamma)^r/I_{\quo}(\Gamma)^{r+1}.$$
Write
$$(\POS_{I(\Gamma)}, \NEG_{I(\Gamma)}) = \sum_{r=0}^\infty
(\POS^{[r]},\NEG^{[r]})$$
for the signature of this form.
\end{definition}

This is all that we need for the next few sections.  Eventually,
however, we will be studying these forms by deformation in $t$, and we
will want to know how the signatures change with $t$.

\begin{corollary}[\cite{Vu}*{Theorem 3.8}]\label{cor:formchange}
Suppose $\Gamma$ is a
  Langlands parameter, and that the Langlands quotient $J(\Gamma)$
  admits a nonzero invariant or $c$-invariant Hermitian form
  $\langle,\rangle_1$. Consider
  the family of standard representations $I_{\quo}(\Gamma_t)$
  (for $t\ge 0$) and the family of invariant forms $\langle,\rangle_t$
  extending $\langle,\rangle_1$ as in Proposition
  \ref{prop:jantzenfilt}. For every $t\ge 0$, define forms 
$$\langle,\rangle_t^{[r]} \text{\ on }
I_{\quo}(\Gamma_t)^r/I_{\quo}(\Gamma_t)^{r+1}$$
with signatures
$$(\POS_t^{[r]},\NEG_t^{[r]})\colon \widehat K \rightarrow {\mathbb N}
\times {\mathbb N}$$
as in Definition \ref{def:jantzenform}. Write 
$$(\POS_t,\NEG_t) = \sum_{r=0}^\infty (\POS_t^{[r]}, \NEG_t^{[r]})$$
for the signature of the (nondegenerate) Jantzen form on $\gr
I_{\quo}(\Gamma_t)$.
\begin{enumerate}
\item The subspace $I_{\quo}(\Gamma_t)^1 = \ker L_t$ is zero
  unless $I_{\quo}(\Gamma_t)$ is reducible.  This can happen
  only if either
\begin{enumerate}[label=\alph*)]
\item there is a real root $\alpha$ such that 
$$\langle t\nu,\alpha^\vee\rangle \in {\mathbb Z}\backslash \{0\},
\qquad \Lambda_{\mathfrak q}(m_\alpha) = (-1)^{\langle t\nu,\alpha^\vee\rangle + 1}$$
(notation as in \eqref{eq:gammaq1}); or
\item there is a complex root $\delta$ such that $\langle
  (d{\lambda}, t\nu),\delta^\vee\rangle \in {\mathbb Z}$,
  and 
$$\langle t\nu, \delta^\vee\rangle > |\langle d{\lambda},
  \delta^\vee \rangle|.$$
\end{enumerate}
These conditions define a discrete set of values of $t$---in fact a
subset of a lattice in ${\mathbb R}$.  On the complement of this
discrete set, the form $\langle,\rangle_t$ is nondegenerate and of
locally constant signature
$$(\POS_t,\NEG_t) = (\POS_t^{[0]},\NEG_t^{[0]}).$$
\item Choose $\epsilon$ so small that $I(\Gamma_t)$ is irreducible for
  $t\in [1-\epsilon,1+\epsilon]\backslash\{1\}$. 
Then
$$(\POS_{1+\epsilon},\NEG_{1+\epsilon}) = (\POS_1,\NEG_1) = \sum_{r=0}^\infty
(\POS_1^{[r]},\NEG_1^{[r]}),$$
$$(\POS_{1-\epsilon},\NEG_{1-\epsilon}) = \sum_{r \text{\thinspace even}}
(\POS_1^{[r]},\NEG_1^{[r]}) + \sum_{r' \text{\thinspace odd}}
(\NEG_1^{[r']},\POS_1^{[r']}).$$
 Equivalently,
$$\POS_{1+\epsilon} = \POS_{1-\epsilon} + \sum_{r \text{\thinspace
     odd}} \left(\POS_1^{[r]} - \NEG_1^{[r]}\right),\quad
\NEG_{1+\epsilon} = \NEG_{1-\epsilon} + \sum_{r \text{\thinspace odd}}
\left(\NEG_1^{[r]} - \POS_1^{[r]}\right).$$
That is, the signature of the form changes at reducible points
according to the signature on the odd levels of the Jantzen filtration.
\end{enumerate}
\end{corollary}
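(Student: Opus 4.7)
The plan is to reduce both statements to a computation on a single $K$-isotypic subspace $E \subset V(\Lambda)$ at a time; by admissibility $E$ is finite-dimensional. For part (1), Proposition \ref{prop:LCshape}(6) applied directly to $\Gamma_t = (\Lambda, t\nu)$ lists the only possible reducibility conditions, and substituting $t\nu$ for $\nu$ in conditions (a) and (b) yields conditions on $t$ cutting out either a lattice in $\mathbb{R}$ (case (a)) or finitely many points in any bounded interval (case (b)); their union is the claimed discrete set. Away from it $I_{\quo}(\Gamma_t)$ is irreducible, $L_t$ has trivial kernel, the Jantzen filtration is concentrated in degree zero, and $(\POS_t, \NEG_t) = (\POS_t^{[0]}, \NEG_t^{[0]})$; local constancy of the signature of a continuous nondegenerate family of Hermitian forms is standard.

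For part (2) I would fix a positive definite $K$-invariant Hermitian form $P$ on $V(\Lambda)$ (which exists because $V(\Lambda)$ is a unitary representation of $K$ induced from $M\cap K$). Using $P$ to identify $V(\Lambda)$ with its Hermitian dual, the intertwining operator $L_t$ becomes a $K$-equivariant endomorphism of $V(\Lambda)$; Hermitian symmetry of $\langle v, w\rangle_t = P(v, L_t w)$ then translates to $P$-self-adjointness of $L_t$. Restricted to $E$, the family $L_t|_E$ is a real-analytic family of $P$-self-adjoint endomorphisms on a neighborhood of $t=1$ (since $L_t$ has no poles on $[0,\infty)$), and the signature of $\langle,\rangle_t|_E$ equals the signature of the Hermitian operator $L_t|_E$.

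By Rellich's theorem on analytic perturbation of Hermitian matrices, a neighborhood of $t=1$ admits real-analytic eigenvalues $\lambda_1(t), \ldots, \lambda_n(t)$ of $L_t|_E$ and a real-analytic $P$-orthonormal basis $u_1(t), \ldots, u_n(t)$ of eigenvectors. Expand $\lambda_i(t) = c_i(t-1)^{d_i} + O((t-1)^{d_i+1})$ with $c_i \in \mathbb{R}^\times$ and $d_i \geq 0$. The key claims I would establish are (i) the vectors $\{u_i(1) : d_i \geq r\}$ form a basis of $E \cap I_{\quo}(\Gamma)^r$, and (ii) their images in $(E\cap I^r)/(E\cap I^{r+1})$ are $\langle,\rangle^{[r]}$-orthogonal with $\langle [u_i(1)], [u_i(1)]\rangle^{[r]} = c_i\, P(u_i(1), u_i(1))$. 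Claim (i) is Lemma \ref{lemma:jantzenfilt} applied to the diagonalized operator $L_t|_E$, whose elementary divisors are $(t-1)^{d_i}$; claim (ii) is the residue computation $\langle [u_i(1)], [u_j(1)]\rangle^{[r]} = P(u_i(1), L^{[r]} u_j(1)) = c_j\, P(u_i(1), u_j(1)) = c_j \delta_{ij}$ for $d_i = d_j = r$.

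Granting (i) and (ii), the final computation is routine: $\POS(\langle,\rangle_{1\pm\epsilon}|_E) = \#\{i : \lambda_i(1\pm\epsilon) > 0\}$, and $\sgn(\lambda_i(1\pm\epsilon)) = \sgn(c_i) \cdot \sgn((\pm\epsilon)^{d_i})$. Summing over $i$ with $d_i = r$ identifies $\POS^{[r]}|_E$ with $\#\{i : d_i = r,\ c_i > 0\}$ (and similarly for $\NEG^{[r]}$), and the signs flip precisely on odd levels, yielding the stated formulas. The main obstacle I anticipate is claim (i): ensuring that the Rellich vanishing orders $d_i$ genuinely realize the elementary divisors of Lemma \ref{lemma:jantzenfilt}. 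Hermitian symmetry is essential here---the Rellich unitary diagonalization $U(t)^* L_t|_E\, U(t) = \diag(\lambda_i(t))$ exhibits $L_t|_E$ in Smith normal form up to a unitary change of basis finite and invertible at $t=1$, so the dimension counts $\#\{i : d_i \geq r\}$ match the Jantzen dimensions $\dim E^r$ of Lemma \ref{lemma:jantzenfilt}(3).
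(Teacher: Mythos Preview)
Your argument is correct. The paper does not reproduce a proof but points to a single idea: the limit formula of Proposition~\ref{prop:jantzenfilt}(4),
\[
\langle v,w\rangle^{[r]} = \lim_{s\to 1}(s-1)^{-r}\langle f_v(s),f_w(s)\rangle_s,
\]
together with the observation that $(s-1)^{-r}$ is positive for $s>1$ and has sign $(-1)^r$ for $s<1$. Your route is a bit different: rather than working with the limit formula and filtration-adapted witnesses $f_v$, you invoke Rellich's theorem to analytically diagonalize the $P$-self-adjoint operator $L_t|_E$, and then read off both the Jantzen levels (as vanishing orders $d_i$) and the level-$r$ form (as the leading coefficients $c_i$) from the eigenvalue expansions. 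This is a legitimate and arguably cleaner way to make the paper's hint rigorous, since the orthonormal eigenbasis $u_i(t)$ simultaneously realizes the Smith normal form of Lemma~\ref{lemma:jantzenfilt} and diagonalizes all the forms $\langle,\rangle^{[r]}$; the price is importing an analytic perturbation theorem. The one point you correctly flag---that the analytic witnesses $f_v(t)=u_i(t)$ give the same filtration as the rational ones of Lemma~\ref{lemma:jantzenfilt}---is fine: both the rational and analytic local rings at $t=1$ are discrete valuation rings with uniformizer $(t-1)$, so the elementary divisors of the rational matrix $L_t|_E$ and hence the subspaces $E^r$ agree over either ring.
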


We will not reproduce the (elementary) proof from \Cite{Vu}. The main
point is the limit formula at the end of  Proposition
\ref{prop:jantzenfilt}; the factor $(s-1)^{-r}$ is positive unless $r$
is odd and $s<1$.

\section{Signature characters for $c$-invariant
  forms}\label{sec:sigcharsc} 
\setcounter{equation}{0}

In this section we will begin to explain what it means to
``calculate'' the signature of a $c$-invariant Hermitian form on an
irreducible representation.  We have already seen in Theorem
\ref{thm:ctoinv} how to relate this to calculating the signature
of a classical 
invariant Hermitian form. Just as in Section \ref{sec:jantzen}, the
extension to ${}^\delta G$ is relatively easy; so to keep the notation
simple we discuss only $G$.

The idea is that the (Jantzen) Hermitian forms on standard modules
(Definition \ref{def:jantzenform}) are taken as building blocks, and
we seek to express the forms on irreducible modules in terms of these
building blocks.  In order to set the stage, we begin with the
classical and simpler case of character theory.  

\begin{subequations}\label{se:charformulas}  
Recall from \eqref{eq:center} the center of the enveloping algebra
${\mathfrak Z}({\mathfrak g})$. Fix an algebra homomorphism
(``infinitesimal character'')
\begin{equation*}
\chi\colon {\mathfrak Z}({\mathfrak g}) \rightarrow {\mathbb C}
\end{equation*}
By Harish-Chandra's theorem, $\chi$ corresponds to a single Weyl
group orbit ${\mathcal O}_{\mathfrak h}(\chi)$ in ${\mathfrak h}^*$, for
every Cartan subalgebra ${\mathfrak h}$ in ${\mathfrak g}$. We will
write
\begin{equation*}
\xi_\lambda\colon {\mathfrak Z}({\mathfrak g}) \rightarrow {\mathbb
  C} \qquad (\lambda \in {\mathfrak h}^*)
\end{equation*}
for the homomorphism defined by $W\cdot\lambda$ (Theorem
\ref{thm:HChom}). Define 
\begin{equation}
B(\chi) = \text{equiv.~classes of Langlands parameters of
  infl.~character $\chi$;} 
\end{equation}
(cf.~Theorem \ref{thm:LC}). Sometimes it will be convenient to index
the infinitesimal character 
by some particular weight $\lambda \in {\mathfrak h}^*$, and to write
\begin{equation}
B(\lambda) = \text{\ classes of parameters of infinitesimal character
  $\xi_\lambda$} = B(\xi_\lambda).
\end{equation}
We will occasionally need
\begin{equation}
B_{\weak}(\lambda) = \text{\ weak parameters of
  infinitesimal character $\xi_\lambda$} = B_{\weak}(\xi_\lambda) 
\end{equation}
(Definition \ref{def:genparam}).

In the notation of Theorem \ref{thm:LC}, $\Gamma \in
B(\chi)$ if and only if $d\gamma \in {\mathcal O}_{\mathfrak
  h}(\chi)$; equivalently, if and only if $\chi = \xi_{d\gamma}$.  The set
$B(\chi)$ is finite, of 
cardinality bounded by the sum over conjugacy classes of real Cartan
subgroups $H$ of
$$[H:H_0][W({\mathfrak g},{\mathfrak h}):W(G,H)].$$
Define
\begin{equation}\label{eq:multformstd}
\begin{aligned}
m_{\Xi,\Gamma} &= (\text{multiplicity of $J(\Xi)$ as composition
  factor of $I(\Gamma)$})\\
&=_{\text{def}} \ \  m_{I(\Gamma)}(J(\Xi)) \in {\mathbb N}.
\end{aligned}\end{equation}
In the Grothendieck group of virtual $({\mathfrak g}_0,K)$-modules,
this is the same as 
\begin{equation*}
[I(\Gamma)] = \sum_\Xi m_{\Xi,\Gamma} [J(\Xi)].
\end{equation*}
(The brackets denote the image in the Grothendieck group, and serve as
a reminder that this equality need not be true on the level of modules.)
Because the center of the enveloping algebra acts by scalars on the standard
module $I(\Gamma)$, the multiplicity $m_{\Xi,\Gamma}$ can be nonzero
only if $\Xi$ and $\Gamma$ belong to the same set $B(\chi)$. So we can
regard $m$ as a finite matrix of nonnegative integers, with entries
indexed by pairs from the finite set $B(\chi)$. Theorem \ref{thm:LC} says that
the diagonal entries are equal to one:
\begin{equation*}
m_{\Gamma,\Gamma} = 1.
\end{equation*}
We can impose a preorder $\LERP$ on $B(\chi)$ by the length of the
real part of the continuous parameter. (In Definition \ref{def:bruhat}
below, we will introduce the {\em Bruhat order}, which is a partial order on
$B(\chi)$ with fewer relations than this one, but still satisfying the
analogue of \eqref{eq:uppertri}.  The point of using $\LERP$
is simply to get quickly a weak version of upper triangularity, so
that we know certain matrices are invertible.) Langlands' proof of
Theorem \ref{thm:LC} shows that
\begin{equation}\label{eq:uppertri}
m_{\Xi,\Gamma} \ne 0 \implies \Xi \LERP \Gamma;
\end{equation}
and that if ``equality holds'' (that is, if the real parts of the
continuous parameters have the same length) then $\Xi = \Gamma$.
Therefore the matrix $m$ is upper triangular with integer entries and ones
on the diagonal. Accordingly it has an inverse with these same
properties: we can define $M_{\Gamma,\Psi} \in {\mathbb Z}$ by the
requirement
\begin{equation*}
\sum_{\Gamma} m_{\Xi,\Gamma}M_{\Gamma,\Psi} = \delta_{\Xi,\Psi} \qquad
(\Xi, \Psi,\Gamma \in B(\chi)).
\end{equation*}
This is equivalent to an equation in the Grothendieck group of virtual
$({\mathfrak g}_0,K)$-modules, 
\begin{equation}\label{eq:repformirr}
[J(\Psi)] = \sum_\Gamma M_{\Gamma,\Psi} [I(\Gamma)].
\end{equation}
We can also write this as an equation for distribution characters
(Theorem \ref{thm:distnchar}):
\begin{equation*}\label{eq:charformirr}
\Theta_{J(\Psi)} = \sum_\Gamma M_{\Gamma,\Psi} \Theta_{I(\Gamma)}.
\end{equation*}

Since the characters of the standard modules $I(\Gamma)$ were (in
principle) computed by Harish-Chandra, this formula expresses the
character of an irreducible representation $J(\Xi)$ in terms of known
distributions $\Theta_{I(\Gamma)}$, and a matrix of integers
$M_{\Gamma,\Psi}$. The integers $M_{\Gamma,\Psi}$ are computed by the
Kazhdan-Lusztig conjectures (to which we will return in Sections
\ref{sec:klv} and \ref{sec:extklv}). 
\end{subequations}

All of this discussion takes place in the Grothendieck group of finite
length $({\mathfrak g}_0,K)$-modules.  To make a parallel discussion
of invariant forms, we need something like a ``Grothendieck group of
modules with a nondegenerate invariant form.'' The difficulty with this idea is
that the restriction of a nondegenerate form to a submodule may be
degenerate; so the category of modules with a nondegenerate form is
not abelian, and it is not so clear how to define a Grothendieck
group.  This issue is addressed by Lemma 3.9 in \Cite{Vu}; we repeat
the argument (phrasing it a little more formally and generally) in the
next few results, culminating in Proposition \ref{prop:hermgrothgens}.

\begin{lemma}\label{lemma:formsub}
In the setting of Proposition \ref{prop:sigchar}, suppose $M$ is an
admissible $({\mathfrak h},L)$ module of finite length, admitting a
nondegenerate $\sigma$-invariant form $\langle,\rangle_M$. 
\begin{enumerate}
\item The form $\langle, \rangle_M$ defines an isomorphism $M\simeq
  M^{h,\sigma}$ (Definition \ref{def:invherm}).
\item Forming orthogonal complement
$$N^\perp =_{\text{def}} \{m \in M \mid \langle m, n\rangle = 0,
\text{\ all $n\in N$}\}$$
is inclusion-reversing on submodules of $M$, and $(N^\perp)^\perp
= N$.
\item The isomorphism of (1) restricts to
$$N^{h,\sigma} \simeq M/N^\perp.$$
\item The radical of the invariant form 
$$\langle,\rangle_N =_{\text{def}} \text{\ restriction of
  $\langle,\rangle_M$ to $N$}$$
is equal to $N\cap N^\perp$.
\item If $N_1$ and $N_2$ are submodules, then
$$(N_1 \cap N_2)^\perp = N_1^\perp + N_2^\perp, \qquad (N_1 +
N_2)^\perp = N_1^\perp \cap N_2^\perp.$$
\end{enumerate}
\end{lemma}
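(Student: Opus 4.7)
The plan is to view $M$ throughout via the map $\Phi\colon M \to M^{h,\sigma}$, $\Phi(m)(m') = \langle m',m\rangle$, which is an $({\mathfrak h},L)$-module map by $\sigma$-invariance of the form. Once (1) is in place, parts (2)--(5) follow from the classical linear-algebra formalism of dual pairs of lattices.

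For (1), the map $\Phi$ is injective because the form is nondegenerate. Surjectivity is the only non-formal point: we use admissibility and the multiplicity-space decomposition from Proposition \ref{prop:sigchar}, which gives $(M^{h,\sigma})^{\sigma(\mu)} \simeq (M^\mu)^h$. Thus $M^{h,\sigma}$ is also admissible, and the two sides have the same (finite) $L$-multiplicities isotype-by-isotype. Since $\Phi$ is an $L$-map, it sends $M(\mu)$ into $M^{h,\sigma}(\mu)$, and on each finite-dimensional multiplicity space it is an injection of equal-dimensional vector spaces, hence a bijection. Summing over $\mu$ gives that $\Phi$ is an isomorphism.

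For (2), that $N^\perp$ is a submodule is immediate from invariance; inclusion-reversal is clear. For $(N^\perp)^\perp = N$, transport through $\Phi$: under $\Phi$, $N^\perp$ is exactly the annihilator $N^\circ \subset M^{h,\sigma}$ of $N$. Taking $\perp$ again corresponds to the annihilator of $N^\circ$ inside $(M^{h,\sigma})^{h,\sigma} \simeq M$ (using the natural inclusion from \eqref{se:hermdual}). Since each multiplicity space is finite-dimensional, the double annihilator of a subspace equals the subspace, and this identity passes isotype-by-isotype to give $(N^\perp)^\perp = N$. For (3), the isomorphism $\Phi$ restricts to a short exact sequence $0 \to N^\perp \to M \to N^{h,\sigma}\to 0$, because $N^\perp$ is by definition the kernel of the restriction map $M^{h,\sigma} \to N^{h,\sigma}$ when $M$ and $M^{h,\sigma}$ are identified by $\Phi$; this yields $N^{h,\sigma}\simeq M/N^\perp$. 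Part (4) is definitional: the radical of the restricted form on $N$ is $\{n\in N \mid \langle n, N\rangle = 0\} = N\cap N^\perp$.

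For (5), the identity $(N_1+N_2)^\perp = N_1^\perp \cap N_2^\perp$ is immediate from the definition of $\perp$. The other identity then follows by applying this to $N_1^\perp$ and $N_2^\perp$ in place of $N_1,N_2$, using $(N_i^\perp)^\perp = N_i$ from (2):
\[
(N_1^\perp + N_2^\perp)^\perp = N_1^{\perp\perp}\cap N_2^{\perp\perp} = N_1\cap N_2,
\]
and then taking $\perp$ once more gives $(N_1\cap N_2)^\perp = N_1^\perp + N_2^\perp$.

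The only real obstacle is the surjectivity in (1); the rest is formal once admissibility is used to reduce each claim to a finite-dimensional statement within each $L$-isotypic component.
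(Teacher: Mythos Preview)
Your proof is correct and follows exactly the approach the paper indicates: the paper's entire proof is the one-line remark that the lemma is ``entirely elementary'' once admissibility reduces each statement to the finite-dimensional multiplicity spaces $M^\mu$, and you have simply written out those elementary details. One small slip: with the paper's conventions (Lemma~\ref{lem:formdic} and Definition~\ref{def:invherm}) the intertwining operator is $\Phi(m)(m')=\langle m,m'\rangle$, not $\langle m',m\rangle$; since the form is Hermitian this does not affect any of your arguments.
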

This is entirely elementary; the admissibility hypothesis allows us to
focus on the finite-dimensional multiplicity spaces $M^\delta$ defined
in \eqref{se:multspace}.

\begin{proposition}\label{prop:grothform}
In the setting of Proposition \ref{prop:sigchar}, suppose $M$ is an
admissible $({\mathfrak h},L)$ module of finite length, with a
nondegenerate $\sigma$-invariant form $\langle,\rangle_M$. Suppose $S$
is an $({\mathfrak h},L)$-submodule of $M$.  Use the notation of Lemma
\ref{lemma:formsub}. Define
$$R = S\cap S^\perp = \text{\ radical of form restricted to $S$}.$$
\begin{enumerate}
\item The form $\langle,\rangle_{S+S^\perp}$ has radical equal to $R$,
  and so descends to a nondegenerate form on $(S+S^\perp)/R$.
\item There is an orthogonal direct sum decomposition
$$(S+S^\perp)/R = (S/R) \oplus (S^\perp/R).$$
\item The form $\langle,\rangle_M$ provides a natural identification
$$R^{h,\sigma} \simeq M/R^\perp = M/(S+S^\perp).$$
\item The signature 
$$(\POS_M,\NEG_M)\colon \widehat{L} \rightarrow {\mathbb N} \times {\mathbb
  N}$$
is given by
$$\POS_M = \POS_{S/R} + \POS_{S^\perp/R} + \mult_R,$$
$$\NEG_M = \NEG_{S/R} + \NEG_{S^\perp/R} + \mult_R.$$
Here $\mult_R$ is the multiplicity function for the module $R$.
\end{enumerate}
\end{proposition}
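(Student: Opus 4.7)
The plan is to derive parts (1)--(3) directly from Lemma \ref{lemma:formsub} by computing perpendiculars of sums and intersections, and then to obtain the signature formula (4) by producing an $L$-equivariant orthogonal decomposition of $M$ into $(S+S^\perp)/R$ and a hyperbolic summand pairing $R$ nondegenerately with $M/R^\perp$.

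For (1), by parts (2) and (5) of Lemma \ref{lemma:formsub},
$$(S+S^\perp)^\perp = S^\perp \cap S^{\perp\perp} = S^\perp \cap S = R,$$
so the radical of the form on $S+S^\perp$ is $(S+S^\perp)\cap R = R$ (since $R\subset S\subset S+S^\perp$). For (2), the images of $S$ and $S^\perp$ in $(S+S^\perp)/R$ span the quotient, intersect in $(S\cap S^\perp)/R = 0$, and are mutually orthogonal by the very definition of $S^\perp$; each factor inherits a nondegenerate form because its kernel under the induced pairing is exactly the image of $S\cap S^\perp = R$. Part (3) follows from Lemma \ref{lemma:formsub}(3) applied to $R$, once we compute
$$R^\perp = (S\cap S^\perp)^\perp = S^\perp + S^{\perp\perp} = S+S^\perp$$
using Lemma \ref{lemma:formsub}(5) and (2).

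For (4), I use the compact-real-form hypothesis on $\sigma$ to work $L$-equivariantly. By complete reducibility of $L$-representations (and admissibility of $M$, which reduces everything to finite-dimensional isotypic components), choose $L$-invariant complements: a complement $Q$ of $R$ inside $R^\perp = S+S^\perp$, and a complement $C_0$ of $R^\perp$ inside $M$. By (1) the form restricts nondegenerately to $Q$, and by (2) this restriction is the orthogonal direct sum of the induced forms on $S/R$ and $S^\perp/R$, so its signature is $(\POS_{S/R}+\POS_{S^\perp/R},\ \NEG_{S/R}+\NEG_{S^\perp/R})$. By (3), the form identifies $C_0 \simeq M/R^\perp$ with $R^{h,\sigma}$, so in particular the pairing between $R$ and $C_0$ is nondegenerate and $\mult_{C_0} = \mult_R$.

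It remains to exhibit an $L$-equivariant change of basis making $R\oplus C_0$ a hyperbolic summand orthogonal to $Q$. Working one $L$-isotypic component at a time, this is the standard Witt-type construction: first modify $C_0$ by an $L$-equivariant map $C_0\to Q$ (using the nondegeneracy of the form on $Q$) to arrange $C_0\perp Q$; then modify the resulting space by an $L$-equivariant map into $R$ (using the nondegeneracy of the $R$--$C_0$ pairing) to kill the form on $C_0$, producing a totally isotropic $L$-invariant complement $R'$. Then $M = Q \perp (R\oplus R')$ as $L$-modules, the second summand is hyperbolic with signature $(\mult_R,\mult_R)$, and adding signatures yields (4). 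The main point requiring care is the $L$-equivariance of the two successive modifications, but this is automatic because every relevant map is defined from $L$-invariant forms and complements, and solvability reduces to a finite-dimensional linear algebra problem on each isotypic component.
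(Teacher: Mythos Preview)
Your proof is correct and follows essentially the same route as the paper. The paper's own proof is the single sentence ``The proof is immediate from the lemma,'' and your derivation of (1)--(3) from Lemma \ref{lemma:formsub} is exactly that immediacy made explicit. For (4), the paper alludes (in the paragraph after the proposition) to the fact that $M$ ``looks like'' $(R\oplus R^{h,\sigma}) \oplus (S/R) \oplus (S^\perp/R)$ at the level of signatures, with the first summand hyperbolic; your Witt-type construction of the $L$-equivariant isotropic complement $R'$ is precisely the standard argument that makes this honest, carried out on each finite-dimensional isotypic piece. The two modifications you describe (first orthogonalizing $C_0$ against $Q$, then killing the form on the result using the nondegenerate $R$--$C_0$ pairing) are correct and $L$-equivariant for the reasons you give; the solvability of the second step can be made completely explicit by taking $U(c')$ to be the unique element of $R$ with $\langle c, U(c')\rangle = \tfrac{1}{2}\langle c, c'\rangle$ for all $c$, which exists by the nondegeneracy you established.
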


The proof is immediate from the lemma.

If $A$ is any admissible $({\mathfrak h},L)$-module, then the direct
sum $A\oplus A^{h,\sigma}$ carries a natural nondegenerate form
vanishing on both summands; the formula is
\begin{equation}\label{eq:hyperbolic}
\langle (a,\xi), (a',\xi')\rangle_{\text{hyp}} = \xi(a') + \overline{\xi'(a)}.
\end{equation}
This is called the {\em hyperbolic form}.  Its signature is easily
computed to be
\begin{equation*}
\POS_{A\oplus A^{h,\sigma}} = \mult_A, \qquad \NEG_{A\oplus
  A^{h,\sigma}} = \mult_A.
\end{equation*}
The proposition says that, in terms of signatures of invariant forms,
the module $M$ looks like
$$(R\oplus R^{h,\sigma}) \oplus (S/R) \oplus (S^\perp/R).$$

It {\em need not} be the case that $M$ has this decomposition as a
module. For us such modules will arise from
``wall-crossing'' translation functors. For example, there is a
representation of $SL(2,{\mathbb R})$ carrying a nondegenerate invariant
Hermitian form and three irreducible composition factors: a discrete
series representation with $SO(2)$-types indexed by $\{2, 4,
6,\ldots\}$, appearing twice, and the trivial representation (having
only the $SO(2)$-type indexed by zero). The form has signature $(1,1)$
on each of the nontrivial $SO(2)$-types. There is a submodule $S$
containing the trivial representation and one copy of the discrete
series. On this submodule the radical $R$ is the discrete series
representation, $S^\perp$ is equal to $R$, and $S+S^\perp = S$. 

\begin{definition}\label{def:hermgroth}
 Suppose (as in Proposition \ref{prop:sigchar}) that
  $({\mathfrak h},L({\mathbb C}))$ is a pair with $L({\mathbb C})$
  reductive, and that $\sigma$ is a real structure defining a compact
  form $L$ (Definition \ref{def:pair}). The {\em Grothendieck group of
    finite length admissible $({\mathfrak h},L({\mathbb C}))$-modules
    with nondegenerate $\sigma$-invariant Hermitian forms} (briefly,
  the {\em Hermitian Grothendieck group}) is the
  abelian group ${\mathcal G}({\mathfrak h},L({\mathbb
    C}))^\sigma$ generated by such $(M,\langle,\rangle_M)$, subject to the
  following relations. Write $[M,\langle,\rangle_M]$ for the class
  in the Grothendieck group.  Whenever $S$ is a submodule of
  $M$, and $R$ is the radical of the restricted form
  $\langle,\rangle_S$, then we impose the relation 
$$[M,\langle,\rangle_M] = [R\oplus R^{h,\sigma},
\langle,\rangle_{\text{hyp}}] + [S/R, \langle,\rangle_{S/R}] +
[S^\perp/R,\langle,\rangle_{S^\perp/R}].$$
\end{definition}

\begin{subequations}\label{se:hermgroth}
The motivation for the definition is Proposition
\ref{prop:grothform}, which gives exact sequences
\begin{equation*}
0 \rightarrow R^\perp \rightarrow M \rightarrow R^{h,\sigma}
\rightarrow 0,\qquad 
0 \rightarrow R \rightarrow R^\perp \rightarrow S/R \oplus S^\perp/R
\rightarrow 0,
\end{equation*}
and therefore an equality
\begin{equation}
[M] = [R\oplus R^{h,\sigma}] + [S/R] + [S^\perp/R]
\end{equation}
in the ordinary Grothendieck group ${\mathcal G}({\mathfrak
  h},L({\mathbb C}))$ of admissible finite-length $({\mathfrak
  h},L({\mathbb C}))$-modules.
From this we conclude that there is a natural homomorphism of abelian groups
\begin{equation}
{\mathcal G}({\mathfrak h},L({\mathbb C}))^\sigma \rightarrow
{\mathcal G}({\mathfrak h},L({\mathbb C})), \qquad
[M,\langle,\rangle_M] \mapsto [M]
\end{equation}
defined by forgetting the form.  Proposition \ref{prop:grothform} also
implies that there is a well-defined signature homomorphism
\begin{equation}
(\POS_{\bullet},\NEG_{\bullet})\colon {\mathcal G}({\mathfrak h},L({\mathbb
  C}))^\sigma \rightarrow \Map({\widehat L},{\mathbb Z} \times
{\mathbb Z}), \quad [M,\langle,\rangle_M] \mapsto (\POS_M,\NEG_M). 
\end{equation}
\end{subequations}

The fundamental fact about the ordinary Grothendieck group is
\begin{proposition}\label{prop:grothgens}
Suppose that $({\mathfrak h},L({\mathbb C}))$ is a pair with $L({\mathbb C})$
  reductive (Definition \ref{def:pair}). The Grothendieck group of
  admissible $({\mathfrak h},L({\mathbb C}))$-modules of finite length
  is a free abelian group (that is, a free ${\mathbb Z}$-module) with
  generators the (equivalence classes of) 
  irreducible admissible $({\mathfrak h},L({\mathbb C}))$-modules.
  There is a well-defined multiplicity homomorphism
$$\mult_{\bullet}\colon {\mathcal G}({\mathfrak h},L({\mathbb
  C})) \rightarrow \Map({\widehat L},{\mathbb Z}), \quad [M] \mapsto
(\mult_M).$$
\end{proposition}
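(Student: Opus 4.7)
The plan is to apply the Jordan--H\"older theorem in the abelian category of admissible finite-length $({\mathfrak h}, L({\mathbb C}))$-modules, obtaining well-defined composition factor multiplicities; these serve as a left inverse to the formal free-abelian-group map from irreducibles, proving both freeness and generation, while the $L$-multiplicity homomorphism of the statement follows separately from exactness of taking $L$-isotypic components.

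First I would verify that the category in question is abelian. Admissibility is preserved under subobjects and quotients (if $0 \to N \to M \to Q \to 0$ is short exact, then $N^\mu \subset M^\mu$ is finite-dimensional and $Q^\mu$ is a quotient of $M^\mu$), and under extensions ($\dim M^\mu \le \dim N^\mu + \dim Q^\mu$); finite length is likewise preserved by subquotients and extensions. Hence Jordan--H\"older applies: for any admissible finite-length $M$ and any irreducible admissible $J$, the composition-factor multiplicity
$$
\mathrm{comp}_M(J) \in {\mathbb N}
$$
appearing in any composition series of $M$ is a well-defined invariant, nonzero for only finitely many equivalence classes $[J]$, and additive on short exact sequences (by concatenating composition series). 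This gives a well-defined homomorphism $\mathrm{comp}_\bullet$ from ${\mathcal G}({\mathfrak h}, L({\mathbb C}))$ into the direct sum $\bigoplus_{[J]} {\mathbb Z}$.

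Next I would deduce both parts of the proposition. For freeness and generation, consider the map $\Phi\colon \bigoplus_{[J]} {\mathbb Z} \to {\mathcal G}({\mathfrak h}, L({\mathbb C}))$ sending the basis vector at $[J]$ to the class $[J]$. Surjectivity of $\Phi$ follows by induction on length from the Grothendieck group relations applied to a composition series, giving $[M] = \sum_{[J]} \mathrm{comp}_M(J)[J]$. Injectivity is immediate from $\mathrm{comp}_\bullet \circ \Phi = \mathrm{Id}$, since $\mathrm{comp}_{[J]}([J']) = \delta_{[J], [J']}$ and delta functions are linearly independent in $\bigoplus_{[J]} {\mathbb Z}$. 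For the $L$-multiplicity homomorphism $\mult_\bullet$ of the statement, the assignment $M \mapsto M^\mu = \Hom_{L({\mathbb C})}(E_\mu, M)$ is exact (because $E_\mu$ is an irreducible algebraic representation of the reductive group $L({\mathbb C})$, so every short exact sequence of admissible modules splits as $L({\mathbb C})$-modules on each isotypic piece), so $\mu \mapsto \dim M^\mu$ is additive on short exact sequences and descends to ${\mathcal G}({\mathfrak h}, L({\mathbb C}))$.

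Honestly, there is no real obstacle here: this is classical Jordan--H\"older once the category has been verified to be abelian with all objects of finite length. The only point meriting any care is the stability of the admissibility and finite-length hypotheses under subquotients and extensions, which is routine bookkeeping as above.
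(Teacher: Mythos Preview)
Your argument is correct and is the standard Jordan--H\"older proof. The paper does not actually supply a proof of this proposition: it is stated as ``the fundamental fact about the ordinary Grothendieck group'' and then the discussion moves immediately to the more delicate Hermitian analogue (Proposition~\ref{prop:hermgrothgens}). So there is nothing to compare; your write-up is exactly the routine verification the paper leaves to the reader.
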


We want a corresponding statement about the Hermitian Grothendieck
group. In the preceding proposition, the ring of ordinary integers
appears as the Grothendieck group of complex vector spaces.  Roughly
speaking, the integers appearing are dimensions of ``multiplicity
spaces'' $\Hom_{{\mathfrak h},L({\mathbb C})}(J,M)$, with $J$ an
irreducible module.  (Of course this statement is not precisely
correct: the indicated $\Hom$ is too small to capture the full
multiplicity of $J$ in $M$.) For the Hermitian case, the role of
${\mathbb Z}$ is played by the Grothendieck group of vector spaces
with nondegenerate forms.

\begin{definition}\label{def:witt} The {\em signature ring} is the
  Hermitian Grothendieck group ${\mathbb W}$ of finite-dimensional vector spaces
  with nondegenerate Hermitian forms. (This is the Hermitian
  Grothendieck group 
$${\mathbb W} = {\mathcal G}(0,\{1\})^\sigma$$
of Definition \ref{def:hermgroth} in the case when the Lie algebra
${\mathfrak h}$ is zero and the group $L({\mathbb C})$ is trivial.)
The ring structure is defined by tensor product of Hermitian forms.
The ordinary Grothendieck group of this category is ${\mathbb Z}$, so
we get as in \eqref{se:hermgroth} an algebra homomorphism
$$\for\colon {\mathbb W} \rightarrow {\mathbb Z}$$
by forgetting the form.

The identity element of ${\mathbb W}$ is the class of
the one-dimensional space ${\mathbb C}$ with its standard (positive) Hermitian
form $\langle z, w\rangle = z\overline w$:
$$1 = [{\mathbb C},\langle,\rangle].$$
We also use the element
$$s = [{\mathbb C}, -\langle,\rangle].$$
Taking the tensor square of this form eliminates the minus sign, so
$$s^2 = 1.$$

Using Sylvester's law of inertia for Hermitian forms, we find that any
finite-dimensional vector space $V$ with a 
nondegenerate form $\langle,\rangle_V$ is isomorphic to a sum of
copies of these two cases:
$$[V,\langle,\rangle_V] = p\cdot 1 + q\cdot s, \qquad (p, q\in {\mathbb
  N}).$$ 
Furthermore
\begin{equation*}\begin{aligned}
{\mathbb W} &= \{p\cdot 1 + q\cdot s \mid p, q \in {\mathbb Z}\}\\
&\simeq {\mathbb Z}[s]/\langle s^2 - 1\rangle.
\end{aligned}
\end{equation*}
The reduction map (forgetting the form) is
$$\for\colon{\mathbb W} \rightarrow {\mathbb Z}, \qquad \for(p + qs) = p+q.$$
\end{definition}

The ring ${\mathbb W}$ allows us to make a small consolidation of
notation. In the setting of Proposition \ref{prop:sigchar}, suppose
$(V,\langle,\rangle_V)$ is an admissible $({\mathfrak h},L({\mathbb
  C}))$-module with a $\sigma$-invariant Hermitian form.  We can
define the {\em signature character of $V$} to be the function
\begin{equation}\label{eq:sigchar} 
\sig_V\colon \widehat{K} \rightarrow {\mathbb W}, \quad  \sig_V(\delta) =
[V^\delta/(\text{radical}),\langle,\rangle_V^\delta] =
\POS_V(\delta) + \NEG_V(\delta)s 
\end{equation}
(notation as in Proposition \ref{prop:sigchar}(4).)

\begin{proposition}\label{prop:hermgrothgens}
In the setting of Definition \ref{def:hermgroth} and Definition
\ref{def:witt}, forming the tensor product of vector spaces with
$({\mathfrak h},L({\mathbb C}))$-modules (each endowed with a
nondegenerate Hermitian form) defines the structure of a ${\mathbb
  W}$-module on the Hermitian Grothendieck group ${\mathcal
  G}({\mathfrak h},L({\mathbb C}))^{\sigma}.$
\begin{enumerate}
\item This Grothendieck group
has the following set of generators as a ${\mathbb W}$-module:
\begin{enumerate}
\item For each irreducible admissible $({\mathfrak h},L({\mathbb
    C}))$-module $J$ such that $J\simeq J^{h,\sigma}$, fix a ``base'' choice of
  nondegenerate $\sigma$-invariant form $\langle,\rangle_{J,b}$.  Then the
  generator is $[J,\langle,\rangle_{J,b}]$; the subscript $b$ stands
  for ``base.''
\item For each inequivalent (unordered) pair of irreducible admissible
  $({\mathfrak h},L({\mathbb C}))$-modules $J'$ and $J'^{h,\sigma}$, the
  generator is $[J'\oplus J'^{h,\sigma}, \langle,\rangle_{\text{hyp}}]$
  (cf. \eqref{eq:hyperbolic}). This generator satisfies the relation
$$s\cdot [J'\oplus J'^{h,\sigma},\langle,\rangle_{\text{hyp}}] =
[J'\oplus J'^{h,\sigma},\langle,\rangle_{\text{hyp}}]. $$
\end{enumerate}
\item The Hermitian Grothendieck group is a free ${\mathbb W}$-module
  over these generators, except for the relations indicated in $1(b)$.
\item Any admissible $({\mathfrak h},L({\mathbb C}))$-module $M$ with
  a nondegenerate $\sigma$-invariant form $\langle,\rangle_M$ may be
  written uniquely as
\begin{equation*} \begin{aligned}
\ [M,\langle,\rangle_M] &= \sum_{J \simeq J^{h,\sigma}} (p_J(M) +
q_J(M)s)[J, \langle,\rangle_{J,b}] 
+  \sum_{J'\not\simeq J'^{h,\sigma}} m_{J'}(M)[J'\oplus J'^{h,\sigma}, \langle,\rangle_{\text{hyp}}]\\
&= \sum_{J \simeq J^{h,\sigma}} w_J(M)[J, \langle,\rangle_{J,b}] 
+ \sum_{J'\not\simeq J'^{h,\sigma}} m_{J'}(M)[J'\oplus J'^{h,\sigma},
\langle,\rangle_{\text{hyp}}].
\end{aligned}\end{equation*}
\end{enumerate}
Here all the $p_J(M)$, $q_J(M)$, and $m_{J'}(M)$ are nonnegative
integers; $m_{J'}(M)$ is the multiplicity of $J'$ as a composition
factor of $M$; and
$$p_J(M) + q_J(M) = m_J(M),$$
the multiplicity of $J$ as a composition factor of $M$.  The elements
$$w_J(M) =_{\text{def}} p_J(M) + q_J(M)s$$
are in  ${\mathbb W}$; in the formula $m_{J'}(M)$ could be replaced by
any element $w'$ of ${\mathbb W}$ with $\for(w') = m_{J'}(M)$.
\begin{enumerate}[resume]
\item The signature character of $M$ may be computed from those of the
  irreducible composition factors and the coefficients in (3):
$$\sig_M = \sum_{J \simeq J^{h,\sigma}} w_J(M)\sig_{J,b} +
\sum_{J'\not\simeq J'^{h,\sigma}} m_{J'}(M)\mult_{J'}(1+s).$$
\end{enumerate}
\end{proposition}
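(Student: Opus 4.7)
The plan is to establish the four assertions in order: module structure, generation, freeness, and the signature formula. Each piece is fairly elementary once one organizes the inductive arguments carefully.

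First I would equip ${\mathcal G}({\mathfrak h}, L({\mathbb C}))^\sigma$ with its ${\mathbb W}$-module structure. Given $(V,\langle,\rangle_V)$ a finite-dimensional Hermitian space and $(M,\langle,\rangle_M)$ a nondegenerate object on the module side, endow $V\otimes M$ with the tensor product form; this is nondegenerate and $\sigma$-invariant because $L({\mathbb C})$ acts only on the second factor. The key compatibility is that for a submodule of the form $V\otimes S \subset V\otimes M$, orthogonal complement, radical, and quotient all distribute over the tensor factor $V$, so the defining relation of Definition \ref{def:hermgroth} is respected on both sides. Biadditivity and the module axioms are then straightforward.

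Next I would prove generation by induction on the composition length of $M$. If $M$ is irreducible, then the existence of a nondegenerate $\sigma$-invariant form forces $M\simeq M^{h,\sigma}$ by Proposition \ref{prop:sigchar}(3), so $M$ is equivalent to one of the chosen irreducibles $J$; the same proposition says $\langle,\rangle_M$ is a nonzero complex scalar multiple of $\langle,\rangle_{J,b}$, and the Hermitian condition forces the scalar to be real, hence $[M,\langle,\rangle_M]$ equals either $1\cdot[J,\langle,\rangle_{J,b}]$ or $s\cdot[J,\langle,\rangle_{J,b}]$ according to the sign. For longer $M$, choose any irreducible submodule $J'\subset M$ and consider $R=J'\cap(J')^\perp$; by irreducibility $R$ is either $0$ or all of $J'$. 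When $R=0$, the form restricts nondegenerately and $M=J'\oplus(J')^\perp$ is an orthogonal direct sum, so induction applies to $(J')^\perp$. When $R=J'$, apply the defining relation of Definition \ref{def:hermgroth} with $S=R=J'$: this rewrites $[M,\langle,\rangle_M]$ as the hyperbolic generator $[J'\oplus(J')^{h,\sigma},\langle,\rangle_{\text{hyp}}]$ plus $[(J')^\perp/J',\langle,\rangle_{(J')^\perp/J'}]$, and the quotient has strictly smaller length.

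Third, I would verify the hyperbolic relation and then uniqueness. The relation $s\cdot[J'\oplus J'^{h,\sigma},\langle,\rangle_{\text{hyp}}]=[J'\oplus J'^{h,\sigma},\langle,\rangle_{\text{hyp}}]$ is witnessed by the module automorphism $\phi(a,\xi)=(a,-\xi)$, which a direct computation shows carries $\langle,\rangle_{\text{hyp}}$ to $-\langle,\rangle_{\text{hyp}}$. For uniqueness of the decomposition in (3), combine two invariants: the forgetful homomorphism ${\mathcal G}^\sigma\to{\mathcal G}$ sends $[J,\langle,\rangle_{J,b}]\mapsto[J]$ and $[J'\oplus J'^{h,\sigma},\langle,\rangle_{\text{hyp}}]\mapsto[J']+[J'^{h,\sigma}]$, so freeness of ${\mathcal G}$ on the irreducibles (Proposition \ref{prop:grothgens}) together with the disjointness of the families $\{J:J\simeq J^{h,\sigma}\}$ and $\{J',J'^{h,\sigma}:J'\not\simeq J'^{h,\sigma}\}$ recovers $\for(w_J(M))=m_J(M)$ and $m_{J'}(M)$; and the signature character $\sig\colon{\mathcal G}^\sigma\to\Map(\widehat L,{\mathbb W})$, which by construction of the tensor product is ${\mathbb W}$-linear, separates $p_J$ from $q_J$ on any $K$-type of $J$. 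The main obstacle is verifying that no further relations arise, i.e., that the ${\mathbb W}$-module presented by our generators and the single hyperbolic relation injects into ${\mathcal G}^\sigma$; this requires showing that every instance of the defining relation of Definition \ref{def:hermgroth}, when rewritten in terms of the listed generators by the generation algorithm above, reduces to a consequence of the hyperbolic relation. This is proved by a second induction on length, tracking how the three summands $[R\oplus R^{h,\sigma},\text{hyp}]$, $[S/R,\langle,\rangle_{S/R}]$, $[S^\perp/R,\langle,\rangle_{S^\perp/R}]$ each decompose, and checking the bookkeeping agrees.

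Finally, the signature character formula in (4) is obtained by applying the ${\mathbb W}$-linear map $\sig$ to the decomposition of (3). The contribution of the $w_J(M)[J,\langle,\rangle_{J,b}]$ terms is $w_J(M)\sig_{J,b}$ by ${\mathbb W}$-linearity. For the hyperbolic term, the signature of $\langle,\rangle_{\text{hyp}}$ on $J'\oplus J'^{h,\sigma}$ was computed in \eqref{eq:hyperbolic} to satisfy $\POS=\NEG=\mult_{J'}$, so $\sig_{[J'\oplus J'^{h,\sigma},\text{hyp}]}=\mult_{J'}(1+s)$, yielding precisely the stated formula.
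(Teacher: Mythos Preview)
Your approach matches the paper's closely: induction on composition length, choosing an irreducible submodule $J'$, and splitting into the two cases according to whether the restricted form vanishes. One small gap: in your degenerate case $R=J'$, you write
\[
[M,\langle,\rangle_M]=[J'\oplus(J')^{h,\sigma},\langle,\rangle_{\text{hyp}}]+[(J')^\perp/J',\ldots]
\]
and call the first term ``the hyperbolic generator.'' But this is one of your listed generators only when $J'\not\simeq(J')^{h,\sigma}$. A self-dual irreducible can perfectly well sit isotropically inside $M$, and in that case you still need to rewrite $[J'\oplus(J')^{h,\sigma},\langle,\rangle_{\text{hyp}}]$ in terms of the type-(a) generator. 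The paper handles this by the elementary isomorphism
\[
(J'\oplus(J')^{h,\sigma},\langle,\rangle_{\text{hyp}})\ \simeq\ (J',\langle,\rangle_{J',b})\ \oplus\ (J',-\langle,\rangle_{J',b}),
\]
so that this class equals $(1+s)\cdot[J',\langle,\rangle_{J',b}]$.

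On part (2) (freeness, absence of further relations): the paper simply omits this proof, noting it will not be used. Your sketch goes further than the paper does. The forgetful map indeed recovers $\for(w_J)$ and each $m_{J'}$, but your sentence ``the signature character separates $p_J$ from $q_J$ on any $K$-type of $J$'' is not sufficient as stated: distinct self-dual irreducibles may share $K$-types, so one cannot isolate a single $p_J-q_J$ from the signature character alone. Your alternative plan (show every defining relation of the Grothendieck group reduces, under the generation algorithm, to a consequence of the hyperbolic relation) is the right shape for a complete argument, though it remains a sketch.

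Part (4) is handled the same way as in the paper.
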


All of these statements follow from the decomposition in (3), which is
easily established by induction on the length of $M$.

\begin{subequations}\label{se:cinvtsigformulas}

Using the description of the Hermitian Grothendieck group in
Proposition \ref{prop:hermgrothgens}, we can extend the discussion in
\eqref{se:charformulas} to invariant forms. Partly to avoid small
technical difficulties (like the possible nonexistence of invariant
forms), and partly because it is what we need first, we will consider
a {\em real} infinitesimal character (Definition
\ref{def:realinf}) 
\begin{equation*}
\chi\colon {\mathfrak Z}({\mathfrak g}) \rightarrow {\mathbb C}.
\end{equation*}
This means that the parameter set
\begin{equation*}
B(\chi) = \text{\ Langlands parameters of infinitesimal character $\chi$}
\end{equation*}
consists of parameters with {\em real} continuous part (see the
discussion after Theorem \ref{thm:unitarydual2}).  We take the real
structure $\sigma_c$ as in \eqref{se:gKrealstr}, and consider the
Grothendieck group
\begin{equation}
{\mathcal G}({\mathfrak g},K)^c
\end{equation}
of finite-length $({\mathfrak g},K)$ modules with nondegenerate
$c$-invariant Hermitian forms.
By Proposition \ref{prop:cdualstd}, there is for each $\Gamma\in
B(\chi)$ a {\em canonical} nonzero $c$-invariant form
\begin{equation}\label{eq:cancform}
\langle,\rangle^c_{J(\Gamma),b}
\end{equation}
characterized (up to a positive multiple) by being positive on every
lowest $K$-type.  According to Proposition \ref{prop:hermgrothgens},
the elements
\begin{equation*}
[J(\Gamma),\langle,\rangle^c_{J(\Gamma),b}]
\end{equation*}
constitute a basis for (the infinitesimal character $\chi$ part of)
${\mathcal G}({\mathfrak g},K)^c$.

Definition \ref{def:jantzenform} explains how to pass from
$\langle,\rangle^c_{J(\Gamma),b}$ to a nondegenerate form 
\begin{equation*}
\langle,\rangle^c_{I(\Gamma)} \ \  \text{on}\ \  \gr I_{\quo}(\Gamma).
\end{equation*}
Then Proposition \ref{prop:hermgrothgens} allows us to write this
Jantzen form in the Hermitian Grothendieck group as a ${\mathbb
  W}$-linear combination of the basis of irreducibles:
\begin{equation}\label{eq:formformstd}
[I(\Gamma), \langle,\rangle^c_{I(\Gamma)}] = \sum_\Xi w^c_{\Xi,\Gamma}
[J(\Xi),\langle,\rangle^c_{J(\Xi),b}]. 
\end{equation}
Each coefficient
\begin{equation}
w^c_{\Xi,\Gamma} = p^c_{\Xi,\Gamma} + q^c_{\Xi,\Gamma}s \in {\mathbb W}
\end{equation}
has nonnegative $p^c$ and $q^c$, and
\begin{equation*}
\for(w^c_{\Xi,\Gamma}) =_{\text{def}} p^c_{\Xi,\Gamma} +
q^c_{\Xi,\Gamma} =  m_{\Xi,\Gamma}.
\end{equation*}
These facts imply that 
\begin{equation}\label{eq:multboundssig}
w^c_{\Xi,\Gamma}\ne 0 \quad \text{if and only if} \quad  m_{\Xi,\Gamma} \ne 0.
\end{equation}
It follows that, just as for the multiplicity matrix, the coefficient
$w^c_{\Xi,\Gamma}$ can be nonzero only if $\Xi$ and $\Gamma$ belong to
the same set $B(\chi)$.  So we can
regard $w^c$ as a finite matrix of elements of the commutative ring
${\mathbb W}$, with entries
indexed by the finite set $B(\chi)$. Proposition
\ref{prop:jantzenfilt}(2) says that the diagonal entries are equal to one:
\begin{equation*}
w^c_{\Gamma,\Gamma} = 1.
\end{equation*}
Since the multiplicity matrix is upper triangular
(\eqref{eq:uppertri}),  \eqref{eq:multboundssig} implies
\begin{equation*}
w^c_{\Xi,\Gamma} \ne 0 \implies \Xi \LERP \Gamma;
\end{equation*}
and that if ``equality holds'' (that is, if the real parts of the
continuous parameters have the same length) then $\Xi = \Gamma$.
Therefore the matrix $w^c$ is upper triangular with entries in
${\mathbb W}$ and ones
on the diagonal. Accordingly it has an inverse with these same
properties: we can define $W^c_{\Gamma,\Psi} \in {\mathbb W}$ by the
requirement
\begin{equation*}
\sum_{\Gamma} w^c_{\Xi,\Gamma}W^c_{\Gamma,\Psi} = \delta_{\Xi,\Psi} \qquad
(\Xi, \Psi,\Gamma \in B(\chi)).
\end{equation*}
This is equivalent to an equation in the Hermitian Grothendieck group
\begin{equation}\label{eq:formformirr}
\left[J(\Psi),\langle,\rangle^c_{J(\Psi),b}\right] = \sum_\Gamma W^c_{\Gamma,\Psi}
\left[I(\Gamma),\langle,\rangle^c_{I(\Gamma)}\right]. 
\end{equation}
We can also write this as an equation for signature functions (see
\eqref{eq:sigchar})
\begin{equation}\label{eq:sigformirr}
\sig^c_{J(\Psi)} = \sum_\Gamma W^c_{\Gamma,\Psi} \sig^c_{I(\Gamma)}.
\end{equation}

Formally this equation appears to calculate signatures (for
$c$-invariant forms) for the irreducible modules $J(\Psi)$ in terms of
the {\em finite} matrix $W^c$, each entry of which is a pair of
integers.  This sounds very good for the problem of computing
signatures explicitly. Nevertheless this formula is less satisfactory
than \eqref{eq:charformirr} for two reasons. First, there is no
Kazhdan-Lusztig conjecture to compute the coefficient matrix
$W^c_{\Gamma,\Psi}$. We will address this in Section
\ref{sec:klvform}. Second, we do not understand 
the signature functions $\sig^c_{I(\Gamma)}$ for the standard modules
$I(\Gamma)$. We will address this in Section
\ref{sec:defto0}.
\end{subequations}

We conclude this section by rewriting Corollary \ref{cor:formchange} in our
new notation.

\begin{corollary}[\cite{Vu}*{Theorem 3.8}] \label{cor:sigchange} Suppose
  $\Gamma$ is a Langlands parameter, and that the Langlands quotient
  $J(\Gamma)$ admits a nonzero invariant or $c$-invariant Hermitian
  form $\langle,\rangle_1$. Consider 
  the family of standard representations $I_{\quo}(\Gamma_t)$
  (for $t\ge 0$) defined in \eqref{eq:Gammat}, and the family of
  invariant or $c$-invariant forms $\langle,\rangle_t$ 
  extending $\langle,\rangle_1$ as in Proposition
  \ref{prop:jantzenfilt}. For every $t\ge 0$, define forms 
$$\langle,\rangle_t^{[r]} \text{\ on }
I_{\quo}(\Gamma_t)^r/I_{\quo}(\Gamma_t)^{r+1}$$
with signatures
$$\sig^{[r]}_{I(\Gamma_t)} = (\POS_t^{[r]} + s\NEG_t^{[r]})\colon
\widehat K \rightarrow {\mathbb W}$$ 
as in Definition \ref{def:witt}. Write 
$$\sig_t = \sig_{I(\Gamma_t)} = \sum_{r=0}^\infty \sig^{[r]}_{I(\Gamma_t)}$$
for the signature of the (nondegenerate) Jantzen form on $\gr
I_{\quo}(\Gamma_t)$. Consider a finite subset
$${0 < t_r < t_{r-1} < \cdots <t_1 \le 1} \subset (0,1].$$
so that $I(\Gamma_t)$ is irreducible for $t\in (0,1]
\setminus\{t_i\}$.  
\begin{enumerate}
\item On the complement of $\{t_i\}$, $\langle,\rangle_t$ is
  nondegenerate of locally constant signature
$$ \sig_t = \sig_t^{[0]}.$$
\item In terms of the ``signature matrix'' defined in \eqref{eq:formformstd},
$$\sig_1 = \sum_{\Xi\in B(\chi)} w^c_{\Xi,\Gamma} \sig_{J(\Xi)}.$$
\item Choose $\epsilon$ so small that $I(\Gamma_t)$ is irreducible for
  $t\in [1-\epsilon,1+\epsilon]\backslash\{1\}$. 
Then
$$\sig_{1+\epsilon} = \sig_1 = \sum_{r=0}^\infty
\sig_1^{[r]},\qquad 
\sig_{1-\epsilon} = \sum_{r \text{\thinspace even}}
\sig_1^{[r]} + \sum_{r' \text{\thinspace odd}}
s\cdot \sig_1^{[r']}.$$
 Equivalently,
$$\sig_{1+\epsilon} = \sig_{1-\epsilon} + (1-s)\sum_{r  
   \text{\thinspace odd}} \sig_1^{[r]}.$$ 
That is, the signature of the form changes at reducible points
according to the signature on the odd levels of the Jantzen filtration.
\end{enumerate}
\end{corollary}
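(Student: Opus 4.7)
The plan is to translate each of the three assertions directly from Corollary \ref{cor:formchange}, using Definition \ref{def:witt} to convert pairs $(\POS,\NEG)\in\mathbb{N}\times\mathbb{N}$ into elements of the signature ring $\mathbb{W}$. The key dictionary is: a Hermitian form of signature $(p,q)$ corresponds to $p\cdot 1+q\cdot s\in\mathbb{W}$, and negating the form (which swaps $\POS$ and $\NEG$) corresponds to multiplication by $s$, because $q\cdot 1+p\cdot s = s(p\cdot 1+q\cdot s)$ in $\mathbb{W}$.

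For part (1), first I would observe that off the discrete set $\{t_i\}$ of reducibility points (cf.\ Proposition \ref{prop:LCshape}(6)), the intertwining operator $L_t$ of Proposition \ref{prop:meromorphic} is finite and invertible, so the Jantzen filtration of \eqref{se:jantzenfilt} satisfies $I_{\quo}(\Gamma_t)^1=\ker L_t=0$; hence the only nonzero graded piece is the degree-zero one, and $\sig_t=\sig_t^{[0]}$.

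For part (2), I would unpack \eqref{eq:formformstd}: by construction the Jantzen form at $t=1$ is the nondegenerate form on $\gr I_{\quo}(\Gamma)$ appearing in Definition \ref{def:jantzenform}, and its class in the Hermitian Grothendieck group ${\mathcal G}({\mathfrak g},K)^c$ expands over the basis $[J(\Xi),\langle,\rangle^c_{J(\Xi),b}]$ with coefficients $w^c_{\Xi,\Gamma}\in\mathbb{W}$ (Proposition \ref{prop:hermgrothgens}). Applying the signature homomorphism of \eqref{se:hermgroth} and using part (4) of Proposition \ref{prop:hermgrothgens} then yields the displayed equation.

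For part (3), which is the heart of the matter, I would directly restate Corollary \ref{cor:formchange}(2) in the new notation. The content is the limit formula of Proposition \ref{prop:jantzenfilt}(4): for $v,w\in I_{\quo}(\Gamma)^r$, the Jantzen form on the $r$-th graded piece is $\lim_{t\to 1}(t-1)^{-r}\langle f_v(t),f_w(t)\rangle_t$. The prefactor $(t-1)^{-r}$ is positive for $t>1$ and has sign $(-1)^r$ for $t<1$, so approaching from above preserves the sign of the form on the $r$-th piece while approaching from below multiplies the form on the odd-$r$ pieces by $-1$. Translating ``multiplication of the form by $-1$'' into ``multiplication in $\mathbb{W}$ by $s$'' gives
\begin{equation*}
\sig_{1-\epsilon}=\sum_{r\text{ even}}\sig_1^{[r]}+s\sum_{r\text{ odd}}\sig_1^{[r]},\qquad
\sig_{1+\epsilon}=\sum_{r}\sig_1^{[r]}=\sig_1,
\end{equation*}
and subtracting yields the final formula $\sig_{1+\epsilon}-\sig_{1-\epsilon}=(1-s)\sum_{r\text{ odd}}\sig_1^{[r]}$, equivalently $\sig_{1+\epsilon}=\sig_{1-\epsilon}+(s-1)\sum_{r\text{ odd}}\sig_1^{[r]}$.

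The only real bookkeeping obstacle is keeping the sign conventions straight in the $\mathbb{W}$-translation of Corollary \ref{cor:formchange}, specifically verifying that $s$ (rather than $-1$, which is not even an element of $\mathbb{W}$ in an obviously useful way) is the correct algebraic incarnation of ``flip the form.'' This is a one-line check using the identity $s(p+qs)=q+ps$ together with Sylvester's law of inertia as embodied in Definition \ref{def:witt}; once it is in hand, nothing else requires work beyond invoking Corollary \ref{cor:formchange} and Definition \ref{def:jantzenform}.
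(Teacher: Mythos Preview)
Your proposal is correct and matches the paper's approach exactly: the paper introduces this corollary simply as a rewriting of Corollary \ref{cor:formchange} in the $\mathbb{W}$-notation of Definition \ref{def:witt}, and your dictionary $s\cdot(p+qs)=q+ps$ for negating a form is precisely the translation needed. One caution: your subtraction correctly yields $(1-s)\sum_{r\text{ odd}}\sig_1^{[r]}$, not $(s-1)$ as you then write---these are negatives of one another in $\mathbb{W}$, so your ``equivalently'' is a slip (one that the displayed statement itself appears to share, and which is harmless downstream since the integer coefficients in Theorem \ref{thm:deftozero} absorb the sign).
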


\section{Translation functors: first facts}\label{sec:translI}
\setcounter{equation}{0}
Our next serious goal is to introduce the Kazhdan-Lusztig
``$q$-analogues'' of the multiplicities and signatures defined in
Section \ref{sec:sigcharsc}. The powers of $q$ appearing will come
from a grading on the set $B(\chi)$ of Langlands parameters with a
fixed infinitesimal character (see \eqref{se:charformulas}). This
grading is combinatorially understandable only in the case of {\em
  regular} infinitesimal character $\chi$. We will define it for {\em
  singular} $\chi$ by some kind of ``collapsing'' from the regular
case, which makes sense because of the Jantzen-Zuckerman translation
principle. Our purpose in this section is to say enough about the
translation functors to explain these definitions.

Translation functors are defined using finite-dimensional
representations of $G$, and in particular by thoroughly understanding
their weights. These matters extend to ${}^\delta G$ only with care
and effort. We will therefore present the theory for $G$ in this
section, and discuss the extension to ${}^\delta G$ in Section
\ref{sec:translext}. 

Recall from \eqref{se:reductive} that $G$ is assumed to be the group
of real points of a connected complex reductive algebraic group
$G({\mathbb C})$.  So far we have made serious use of only two aspects
of this hypothesis. The first is that $G$ is in the Harish-Chandra
class, meaning that the automorphism $\Ad(g)$ of the complexified Lie
algebra ${\mathfrak g}$ is inner.  This requirement means that
$\Ad(G)$ acts trivially on the center ${\mathfrak Z}({\mathfrak g})$
of $U({\mathfrak g})$, which in turn implies that ${\mathfrak
  Z}({\mathfrak g})$ must act by scalars on any irreducible
$({\mathfrak g},K)$-module.

The second aspect is that the Cartan subgroups of $G$ (the
centralizers in $G$ of Cartan subalgebras of ${\mathfrak g}$ that are
defined over ${\mathbb R}$) are all abelian.  One effect of this is
to simplify slightly the statement of the Langlands classification,
because the character (of a double cover of a Cartan subgroup)
appearing in a Langlands parameter is necessarily one-dimensional. A
more subtle consequence is that the lowest $K$-types have multiplicity
one (Proposition \ref{prop:LCshape}).

A third aspect of our category of groups is that $G$ is isomorphic to
a group of {\em matrices}.  So far we have not made serious use of
this hypothesis. In the theory of translation functors it is necessary
for the simple formulation below of Theorem \ref{thm:cohfam}(2) (the
behavior of Langlands parameters under translation functors).

Even to formulate a theory of translation functors we will make use of
the first two properties again, through the following two
propositions.  (The reason the propositions are not trivial is
that $G$ may be disconnected.)

\begin{proposition}\label{prop:findimlwts}
Suppose $G$ is a real reductive group as in \eqref{se:reductive}. Fix
a real Cartan subgroup $H$ of $G$, and a set $R^+ \subset R(G,H)$ of
positive roots (Definition \ref{def:posroots}).

\begin{enumerate}
\item Suppose that $F$ is a finite-dimensional irreducible representation
  of $G$.  Then the $R^+$-highest weight space of $F$ is a
  one-dimensional irreducible representation $\phi = \phi(F) \in
  \widehat{H}$.
\item Write $d\psi \in {\mathfrak h}^*$ for the differential
  of any 
character $\psi$ of $H$.  The set 
$$\Delta(F,H) \subset \widehat{H}$$
of weights of the finite-dimensional irreducible representation $F$ is
(if we write $\langle S \rangle$ for the convex hull of a subset $S$
of a vector space)
$$\Delta(F,H) = \{\psi \in \widehat{H} \mid \phi - \psi \in
{\mathbb Z}R(G,H),\ \  d\psi \in \left\langle W({\mathfrak
    g},{\mathfrak h})\cdot d\phi \right\rangle \}.$$
\end{enumerate}
Define $\Lambda_{\text{fin}}(G,H) \subset \widehat{H}$ to be
the group of weights of finite-dimensional representations of $G$.
\begin{enumerate}[resume]
\item The dominant weights in $\Lambda_{\text{fin}}(G,H)$ are the
  highest weights of finite-dimensional representations of $G$.
\item A weight $\psi \in \widehat{H}$ belongs to
  $\Lambda_{\text{fin}}(G,H)$ if and only if we have both
\begin{enumerate}[label=\alph*)]
\item $d\psi(\beta^\vee) \in {\mathbb Z}$ for every coroot
  $\beta^\vee \in R^\vee({\mathfrak g},{\mathfrak h})$, and
\item $\psi(m_\alpha) = (-1)^{\psi(\alpha^\vee)}$ for every real
  coroot $\alpha^\vee$ (Definition \ref{def:rhoim}).
\end{enumerate} 
\end{enumerate}
\end{proposition}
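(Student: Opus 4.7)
The plan is to bootstrap from the classical theory of finite-dimensional algebraic representations of the complex reductive group $G(\mathbb{C})$, using that every finite-dimensional continuous representation of the real form $G$ extends uniquely to a holomorphic (hence algebraic) representation of $G(\mathbb{C})$ (apply the Weyl unitarian trick, or complexify the $\mathfrak{g}_0$-action and integrate on the connected group $G(\mathbb{C})$). Under this extension, the weights of $F$ for the real torus $H$ are exactly the restrictions to $H$ of algebraic $H(\mathbb{C})$-weights, i.e., elements of $X^*$.

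For (1) and (2), I would invoke the classical highest weight theorem for the connected complex reductive group $G(\mathbb{C})$: the $R^+$-highest weight space of an irreducible algebraic $F$ is one-dimensional with weight $\tilde\phi \in X^*$, and the set of $H(\mathbb{C})$-weights of $F$ is precisely $\{\tilde\psi \in X^* \mid \tilde\phi - \tilde\psi \in \mathbb{Z}R,\ d\tilde\psi \in \langle W\cdot d\tilde\phi\rangle\}$. Because the restriction map $X^* \hookrightarrow \widehat{H}$ is injective (already at the level of differentials, since $\mathfrak{h}_0$ spans $\mathfrak{h}$ over $\mathbb{C}$), both the one-dimensionality in (1) and the weight description in (2) transfer verbatim. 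For (3), the classical statement is that every $R^+$-dominant element of $X^*$ is the highest weight of an irreducible algebraic representation of $G(\mathbb{C})$, which by restriction yields the corresponding statement for $\Lambda_{\text{fin}}(G,H)$.

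For (4), the necessity of (a) and (b) is a direct calculation: if $\psi = \tilde\psi|_H$ for some $\tilde\psi \in X^*$, then $d\psi = \tilde\psi$ is integer-valued on the cocharacter lattice and in particular on every coroot (giving (a)), and $\psi(m_\alpha) = \tilde\psi(\alpha^\vee(-1)) = (-1)^{\langle \tilde\psi,\alpha^\vee\rangle}$ (giving (b)). For sufficiency, condition (a), interpreted as producing $d\psi \in X^*$, determines a unique algebraic character $\tilde\psi$ of $H(\mathbb{C})$ with $\tilde\psi|_{H_0} = \psi|_{H_0}$ (same differential on $\mathfrak{h}_0$). To conclude $\tilde\psi|_H = \psi$ on all of $H$, one must check agreement on a set of representatives of the component group $H/H_0$; then part (3) applied to $\tilde\psi$ (expressed, if necessary, as a difference of dominant weights) exhibits $\psi$ as a weight of a finite-dimensional representation.

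The main obstacle — and the only place where the real structure genuinely intervenes — is the structural assertion that the component group $H/H_0$ is generated by the images of the elements $m_\alpha = \alpha^\vee(-1)$ for real roots $\alpha \in R_{\mathbb{R}}$. This is a standard consequence of the theory of real Cartan subgroups in Harish-Chandra's class: every real Cartan is obtained from a fundamental (compact, hence connected) Cartan by a sequence of Cayley transforms through strongly orthogonal real roots, and each such Cayley transform contributes at most a new component represented precisely by the corresponding $m_\alpha$. Granting this fact, condition (b) is exactly what is needed to verify $\tilde\psi(m_\alpha) = \psi(m_\alpha)$ on the generators of $H/H_0$, completing the sufficiency direction and hence the proof.
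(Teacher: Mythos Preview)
The paper itself does not give a proof, referring instead to \Cite{Vgreen}, Section~0.4, and noting that even there part~(4) is not established. So there is no argument in the paper to compare against; nevertheless your attempt has a real gap.

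The opening claim --- that every finite-dimensional irreducible $F$ of $G$ extends to an algebraic representation of $G(\mathbb C)$ --- is false whenever the center of $G$ has a nontrivial vector part: for $G = GL(n,\mathbb R)$ the character $g \mapsto |\det g|^s$ with $s \notin \mathbb Z$ is a perfectly good one-dimensional representation of $G$ whose weight lies in $\Lambda_{\text{fin}}(G,H)$ but is not the restriction of anything in $X^*$. This undercuts your argument for~(4) at two points. First, condition~(a) only says $d\psi$ is integral on coroots, and the coroots span a proper sublattice of $X_*$ when $G$ is not semisimple, so there is no reason for $d\psi$ to lie in $X^*$ or for your $\tilde\psi$ to exist. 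Second, the assertion that $H/H_0$ is generated by the $m_\alpha$ for real roots $\alpha$ is simply false: for the split Cartan $(\mathbb R^\times)^2$ in $GL(2,\mathbb R)$ the component group is $(\mathbb Z/2\mathbb Z)^2$, while the lone $m_\alpha = \operatorname{diag}(-1,-1)$ generates only half of it. (Your parenthetical ``compact, hence connected'' for the fundamental Cartan is also wrong in general: in $GL(2,\mathbb R)$ one has $H_f \simeq \mathbb C^\times$, connected but not compact.) Parts~(1)--(3) can be rescued by first showing directly that $F|_{\mathfrak g}$ is irreducible --- for instance using $G = G_0 \cdot H_s$ from Proposition~\ref{prop:findimlHswts}(1) to see that a highest-weight line for the abelian group $H_s$ generates a $G$-stable $\mathfrak g$-irreducible submodule, hence all of $F$ --- after which the classical description of weights transfers as you intend; but~(4) genuinely requires a separate treatment of the central direction.
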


\begin{proposition}\label{prop:findimlHswts}
Suppose $G$ is a real reductive group as in \eqref{se:reductive}. Fix
a maximally split real Cartan subgroup $H_s$ of $G$, preserved by the
Cartan involution $\theta$.  Write $H_s = T_s A_s$ for the Cartan
decomposition of $H_s$ (Proposition 4.3).  That $H_s$ is maximally
split is equivalent to either of the conditions
\begin{enumerate}[label=\alph*)]
\item ${\mathfrak a}_{s,0}$ is a maximal abelian subalgebra of the $-1$
  eigenspace ${\mathfrak s}_0$ of $\theta$; or
\item every imaginary root of ${\mathfrak h}_s$ in ${\mathfrak g}$ is
  compact (Definition \ref{def:rhoim}).
\end{enumerate}
Fix a set of positive roots $R_s^+$ for $R({\mathfrak g},{\mathfrak
  h}_s)$.
\begin{enumerate}
\item The group $H_s$ meets every component of $G$: {\em i.e.}, $G = G_0\cdot
  H_s$.
\item Suppose $F$ is a finite-dimensional irreducible representation
  of $G$, and $\phi_s = \phi_s(F) \in
  \widehat{H_s}$ is the $R_s^+$-highest weight of $F$.  Then
  $\phi_s(F)$ determines $F$ up to isomorphism.
\item Suppose $H$ is any other real Cartan subgroup of $G$, and $R^+$
  is any set of positive roots for $R({\mathfrak g}, {\mathfrak
    h})$. Write 
$$i(R^+_s,R^+)\colon {\mathfrak h}_s^* \rightarrow {\mathfrak h}^*$$
for the unique isomorphism implemented by an element of $G({\mathbb
  C})$ and carrying $R_s^+$ to $R^+$.  Then there is surjective group
homomorphism 
$$I(R^+_s,R^+)\colon \Lambda_{\text{fin}}(H_s) \rightarrow
\Lambda_{\text{fin}}(H)$$ 
characterized by
\begin{enumerate}[label=\alph*)]
\item for every finite-dimensional representation $F$ of $G$,
  $I(R_s^+,R^+)$ carries $\Delta(F,H_s)$ bijectively to $\Delta(F,H)$,
  and
\item the differential of $I(R_s^+,R^+)$ is $i(R_s^+,R^+)$.
\end{enumerate}
\item The kernel of the homomorphism $I(R_s^+,R^+)$ consists of the
  characters of $H_s$ trivial on $H_s\cap [G_0\cdot H]$. 
\end{enumerate}
\end{proposition}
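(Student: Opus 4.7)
The plan is to establish the four parts in sequence, with the conjugating element from Proposition \ref{prop:extWC} providing the central link between $H_s$ and $H$. For (1), I would invoke standard structure theory: the maximally split hypothesis forces all imaginary roots of $H_s$ to be compact, and this together with the Cartan decomposition of Theorem \ref{thm:realforms}(5) implies that $T_s = H_s \cap K$ meets every component of $K$, and hence $H_s$ meets every component of $G$. For (2), if $F_1$ and $F_2$ are irreducible finite-dimensional representations of $G$ with the same $\phi_s$, then they have the same differential and therefore, by Cartan--Weyl applied to $G({\mathbb C})$, isomorphic restrictions to $G_0$. Since $G = G_0 H_s$ by (1) and since the action of $H_s$ on the (one-dimensional) highest weight space is precisely $\phi_s$, rescaling a $G_0$-intertwining operator $F_1 \to F_2$ to respect the highest weight line yields a $G$-isomorphism.

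For (3), I would construct $I(R_s^+, R^+)$ via transport of weights by $\Ad(g)$. Proposition \ref{prop:extWC}(4) provides $g \in G({\mathbb C})$, unique modulo $H({\mathbb C})$, with $\Ad(g)(H_s({\mathbb C})) = H({\mathbb C})$ and $\Ad(g)(R_s^+) = R^+$. Conjugation induces an isomorphism of character lattices $X^*(H_s({\mathbb C})) \xrightarrow{\sim} X^*(H({\mathbb C}))$ whose differential is $i(R_s^+, R^+)$, and for any finite-dimensional $G({\mathbb C})$-module $F$ it bijects $\Delta(F, H_s({\mathbb C}))$ with $\Delta(F, H({\mathbb C}))$. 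The key observation is that both $H_s$ and $H$ are Zariski dense in their complexifications: by Proposition \ref{prop:toruschars}(3) together with Theorem \ref{thm:KC}, the product $T_0 \cdot A$ has real Lie algebra spanning the complex Lie algebra of $H({\mathbb C})$, so its Zariski closure is all of $H({\mathbb C})$. Consequently restriction to real points is injective on $X^*$, and $\Ad(g)_*$ descends to a well-defined group homomorphism $\Lambda_{\text{fin}}(H_s) \to \Lambda_{\text{fin}}(H)$ satisfying the two characterizing conditions; surjectivity is automatic since every $\psi \in \Lambda_{\text{fin}}(H)$ is a weight of some $F$.

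For (4), I would identify the kernel by unwinding the transport formula. A character $\phi \in \Lambda_{\text{fin}}(H_s)$ with corresponding $\chi_s \in X^*(H_s({\mathbb C}))$ lies in $\ker I$ precisely when the transported character $\chi \in X^*(H({\mathbb C}))$, defined by $\chi(h) = \chi_s(g^{-1} h g)$, restricts trivially to $H$. Because $H_{s,0}$ is connected it lies in $G_0$, so $H_s \cap G_0 \cdot H \supset H_{s,0}$ is Zariski dense in $H_s({\mathbb C})$, and tracing the conjugation by $g$ one shows that $\chi$ vanishes on $H$ exactly when $\chi_s$ vanishes on $H_s \cap G_0 \cdot H$, giving the required description of $\ker I$.

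The main obstacle is the interface between the algebraic and real structures in parts (3) and (4): one must carefully verify Zariski density in order to identify algebraic characters with their restrictions to real points, and then trace the $G({\mathbb C})$-conjugation by $g$ so as to obtain the intrinsic description of the kernel in terms of the subgroup $H_s \cap G_0 \cdot H$ of $G$ itself. Part (1), while genuinely nontrivial, is a standard consequence of real reductive structure theory and may be cited.
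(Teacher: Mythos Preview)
The paper does not prove this proposition; immediately after stating it and Proposition~\ref{prop:findimlwts} it writes ``For the proofs, we refer to \cite{Vgreen}, Section 0.4.'' So there is no argument in the paper to compare against, and your outline must stand on its own.

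Parts (1) and (2) of your sketch are fine. The gap is in part (3), and it propagates to (4). You construct $I(R_s^+,R^+)$ by transporting \emph{algebraic} characters via $\Ad(g)_*\colon X^*(H_s({\mathbb C}))\to X^*(H({\mathbb C}))$, and you justify well-definedness by Zariski density of $H_s$ and $H$ in their complexifications. But $\Lambda_{\text{fin}}(G,H_s)$ is not contained in $X^*(H_s({\mathbb C}))$: finite-dimensional irreducible representations of the real group $G$ need not extend to $G({\mathbb C})$. The paper's own Example~\ref{ex:gl2ext} makes this explicit for $GL(2,{\mathbb R})$, where
\[
\Lambda_{\text{fin}}(G,H_s)\simeq {\mathbb C}\times{\mathbb Z}\times{\mathbb Z}/2{\mathbb Z},
\]
with the ${\mathbb C}$ coordinate a genuinely continuous parameter coming from characters $|\det|^{\nu}$. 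This is not a lattice, and the transport-by-$g$ construction on $X^*$ simply does not see these characters. Your surjectivity claim (``every $\psi\in\Lambda_{\text{fin}}(H)$ is a weight of some $F$'') tacitly assumes $F$ is a $G({\mathbb C})$-module, which is exactly what fails.

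This also breaks your argument for (4). You invoke Zariski density of $H_s\cap G_0\cdot H\supset H_{s,0}$ in $H_s({\mathbb C})$; but if the characters in question were algebraic and vanished on a Zariski-dense subgroup, the kernel would be trivial. In the $GL(2,{\mathbb R})$ example the kernel of $I$ is the ${\mathbb Z}/2{\mathbb Z}$ factor, detecting components of $H_s$ not seen by $H$ --- precisely a phenomenon invisible to algebraic characters. To repair (3) you must handle the continuous part of $\widehat{H_s}$ separately (e.g.\ via the differential $i(R_s^+,R^+)$ on ${\mathfrak a}_s^*$) and the discrete part via the component groups, rather than routing everything through $X^*$.
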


For the proofs, we refer to \Cite{Vgreen}*{\S 0.4}. (The
description in Proposition \ref{prop:findimlwts}(4) of
$\Lambda_{\text{fin}}(G,H)$ is not proved in \Cite{Vgreen}, but we
will make no use of it.) 

\begin{definition}[\cite{Vgreen}*{Definition 7.2.5}]\label{def:cohfam}
  Suppose $H$ is a Cartan subgroup of $G$, 
  $\Lambda_{\text{fin}}(G,H) \subset \widehat{H}$ is the group of
  weights of finite-dimensional representations, and $\lambda \in
  {\mathfrak h}^*$ is any weight. Write 
$$\lambda + \Lambda_{\text{fin}}(G,H) = \{ \lambda +
\psi \mid \psi \in \Lambda_{\text{fin}}(G,H) \},$$
(a set of formal symbols), called {\em the translate of
  $\Lambda_{\text{fin}}(G,H)$ by $\lambda$}.  A {\em coherent
  family of virtual $({\mathfrak g},K)$-modules based on
  $\lambda + \Lambda_{\text{fin}}(G,H)$} is a map
$$\Phi\colon \lambda + \Lambda_{\text{fin}}(G,H) \rightarrow
{\mathcal G}({\mathfrak g},K)$$
(notation as in Proposition \ref{prop:grothgens}) satisfying
\begin{enumerate}[label=\alph*)]
\item $\Phi(\lambda + \psi)$ has infinitesimal character
  $\lambda + d\psi \in {\mathfrak h}^*$; and
\item for every finite-dimensional representation $F$ of $G$,
$$F\otimes \Phi(\lambda + \psi) = \sum_{\mu \in \Delta(F,H)}
\Phi(\lambda + (\psi + \mu)).$$
\end{enumerate}
In this last formula we regard $\Delta(F,H)$ as a multiset, in which
$\mu$ occurs with multiplicity equal to the dimension of the $\mu$
weight space of $F$.
\end{definition}

\begin{definition}\label{def:introots} Suppose ${\mathfrak h} \subset
  {\mathfrak g}$ is a 
  Cartan subalgebra of a complex reductive Lie algebra, 
\addtocounter{equation}{-1}
\begin{subequations}\label{se:introots}
and $\xi \in
  {\mathfrak h}^*$. The set of {\em integral roots for $\xi$} is
\begin{equation}
R(\xi) = \{\alpha \in R({\mathfrak g},{\mathfrak h}) \mid
\xi(\alpha^\vee) \in {\mathbb Z}\}.\end{equation}
Fix a positive root system $R^+(\xi) \subset R(\xi)$. We say that
$\xi$ is {\em integrally dominant} if
\begin{equation}\label{eq:intdom}
\xi(\alpha^\vee) \ge 0, \qquad (\alpha \in R^+(\xi)).
\end{equation}
If $R^+(\xi)$ is extended to a positive system $R^+$ for $R({\mathfrak
  g},{\mathfrak h})$, then integral dominance is equivalent to
\begin{equation*}
\xi(\beta^\vee) \text{\ is not a strictly negative integer}\quad
(\beta\in R^+).\end{equation*}

Occasionally we will need to make use of a stronger condition. We say
that $\xi$ is {\em real dominant} for $R^+$ if
\begin{equation}\label{eq:realdom}
\xi(\beta^\vee) \in {\mathbb R}^{\ge 0} \quad
(\beta\in R^+).\end{equation}

The {\em integral Weyl group for $\xi$} is
\begin{equation}
W(\xi) =_{\text{def}} W(R(\xi)) \subset W({\mathfrak g},{\mathfrak
  h}).
\end{equation}
An equivalent definition (by Chevalley's theorem for the affine Weyl
group) is
\begin{equation*}
W(\xi) = \{w \in W({\mathfrak g},{\mathfrak h}) \mid w\xi - \xi \in
{\mathbb Z}R({\mathfrak g},{\mathfrak h}) \}\end{equation*}
(see \Cite{Bour}*{Exercice VI.2.1, page 227} or \Cite{Vgreen}*{Lemma
  7.2.17}).  

It is useful also to define the set of {\em singular roots for $\xi$}
\begin{equation}
R^\xi = \{\alpha \in R({\mathfrak g},{\mathfrak h}) \mid
\xi(\alpha^\vee) = 0\} \subset R(\xi).\end{equation}
Notice that the choice of a system of positive integral roots making
$\xi$ integrally dominant is precisely the same as the choice of an
(arbitrary) system of positive singular roots $R^{\xi,+}$.  The
correspondence is
\begin{equation*}\begin{aligned}
R^+(\xi) &= \{\alpha \in R(\xi) \mid \xi(\alpha^\vee) > 0\} \cup
R^{\xi,+},\\
R^{\xi,+} &= R^+(\xi) \cap R^\xi.\end{aligned}\end{equation*}


The {\em singular Weyl group for $\xi$} is
$$W^\xi =_{\text{def}} W(R^\xi) \subset W(R(\xi)) \subset W({\mathfrak
  g},{\mathfrak h}).$$
An equivalent definition (by Chevalley's theorem for $W$) is
$$W^\xi = \{w \in W({\mathfrak g},{\mathfrak h}) \mid w\xi = \xi\}.$$
\end{subequations} 
\end{definition}

\begin{subequations}\label{se:movelambda0}

In terms of the notion of integral roots, we can recast Proposition
\ref{prop:findimlHswts} in a way that is often useful. So fix two
Cartan subalgebras and weights
\begin{equation}
{\mathfrak h}_i \subset {\mathfrak g}, \qquad 
\lambda_i \in {\mathfrak h}_i^* \qquad (i= 1, 2).
\end{equation}
Assume that they define the same infinitesimal character
\begin{equation*}
\xi_{\lambda_1} = \xi_{\lambda_2}.
\end{equation*}
Equivalently, assume that there is an isomorphism ${\mathfrak h}_1^* \simeq
{\mathfrak h}_2^*$ implemented by an element of $G({\mathbb C})$ and
carrying $\lambda_1$ to $\lambda_2$:
\begin{equation*}
i\colon {\mathfrak h}_1^* \rightarrow {\mathfrak
  h_2}^*,\qquad i(\lambda_1) = \lambda_2.
\end{equation*}
This isomorphism is unique only up to (multiplication on the right by)
the stabilizer $W^{\lambda_1}$ of $\lambda_1$.  But if we fix also
systems of positive singular roots
\begin{equation*}
R^{\lambda_1,+}, \qquad R^{\lambda_2,+},
\end{equation*}
then we can specify a {\em unique} isomorphism by requiring
\begin{equation}
i(\lambda_1,R^{\lambda_1,+},\lambda_2, R^{\lambda_2,+}) \colon
{\mathfrak h}_1^* \rightarrow {\mathfrak h_2}^*,\quad \lambda_1
\mapsto \lambda_2,\ \  R^{\lambda_1,+} \mapsto R^{\lambda_2,+}.
\end{equation}
Equivalently, if we require that $\lambda_i$ is integrally dominant
for $R^+(\lambda_i)$, then we can specify a unique isomorphism
\begin{equation} 
i(\lambda_1,R^+(\lambda_1),\lambda_2, R^+(\lambda_2)) \colon
{\mathfrak h}_1^* \rightarrow {\mathfrak h_2}^*, \quad 
\lambda_1 \mapsto \lambda_2,\quad  R^+(\lambda_1) \mapsto
R^+(\lambda_2). 
\end{equation}
These isomorphisms can take the place of $i(R^+_s,R^+)$ in Proposition
\ref{prop:findimlHswts}.

\end{subequations}

\begin{theorem}[\cite{Vgreen}*{Corollary 7.3.23}]\label{thm:cohfam}
  Suppose $\Gamma = (H,\gamma,R^+_{i{\mathbb R}})$ is a Langlands
  parameter for $G$, and $J(\Gamma)$ the corresponding irreducible
  representation (Theorem \ref{thm:LC}).  Write $d\gamma\in
  {\mathfrak h}^*$ for the differential of $\gamma$, which is the
  infinitesimal character of $J(\Gamma)$.  Write $R(d\gamma)$
  for the set of integral roots (Definition \ref{def:introots}). Then
  $R(d\gamma)$ is automatically preserved by the action of the Cartan
  involution $\theta$.

Fix a set $R^+(d\gamma)$ of positive integral roots, subject to  the
following requirements: 
\begin{enumerate}[label=\alph*)]
\item if $d\gamma(\alpha^\vee)$ is a positive integer, then
  $\alpha \in R^+(d\gamma)$;
\item if $d\gamma(\alpha^\vee) = 0$, but
  $d\gamma(\theta\alpha^\vee) > 0$, then $\alpha \in
  R^+(d\gamma)$; and
\item $R^+(d\gamma) \supset R^+_{i{\mathbb R}}$.
\end{enumerate}
Fix a set $R^+$ of positive roots for $H$ in $G$ containing
$R^+(d\gamma)$. Suppose finally that $H_s$ is a maximally split
Cartan, with any positive root system $R^+_s$, as in Proposition
\ref{prop:findimlHswts}; we will use the maps $i(R^+_s,R^+)$ and
$I(R^+_s,R^+)$ defined in that proposition. Set
$$d\gamma_s = i(R^+_s,R^+)^{-1}(d\gamma) \in {\mathfrak h}_s^*, \quad
R^+(d\gamma_s) =  i(R^+_s,R^+)^{-1}(R^+(d\gamma)). $$
Then there is a unique
  coherent family $\Phi$ of virtual $({\mathfrak g},K)$-modules based on
  $d\gamma_s + \Lambda_{\text{fin}}(G,H_s)$, with the following
  characteristic properties:
\begin{enumerate}[label=\alph*)]
\item $\Phi(d\gamma_s) = [J(\Gamma)]$; and
\item whenever $d\gamma_s + d\psi_s$ is integrally
  dominant (with respect to $R^+(d\gamma_s)$), $\Phi(d\gamma + \psi)$
  is (the class in the Grothendieck group of) an irreducible
  representation or zero. 
\end{enumerate}
 Write $\Pi(d \gamma)$
for the simple roots of $R^+(d\gamma)$, and define
$\tau(\Gamma)\subset\Pi(d\gamma)$ by 
\begin{enumerate}[label=\alph*)]
\item if $\alpha \in \Pi(d\gamma)$ is real, (Definition
  \ref{def:rhoim}), then $\alpha\in
  \tau(\Gamma)$ if and only if $\gamma_{\mathfrak q}(m_\alpha) =
  (-1)^{d\gamma(\alpha^\vee) + 1}$ (notation \eqref{eq:gammaq1});
\item if  $\beta \in \Pi(d\gamma)$ is imaginary, (Definition
  \ref{def:rhoim}), then $\beta\in \tau(\Gamma)$ if
and only if $\beta$ is compact; and
\item if $\delta \in \Pi(d\gamma)$ is complex, (Definition
  \ref{def:rhoim}), then $\delta\in\tau(\Gamma)$ if and only if
  $\theta\delta$ is a negative root.
\end{enumerate}
Write
$$\tau_s(\Gamma) =  i(R^+_s,R^+)^{-1}(\tau(\Gamma)) \subset \Pi(d\gamma_s)$$
for the corresponding integral simple roots for $H_s$. The coherent
family $\Phi$ has the following additional properties. 
\begin{enumerate}
\item If $d\gamma_s + d\psi_s$ is integrally dominant, then $\Phi(d\gamma_s
  + \psi_s) = 0$ if and only if there is a simple root $\alpha_s \in
  \tau_s(\Gamma)$ with $(d\gamma + d\psi)(\alpha_s^\vee) = 0$.
\item Suppose that $d\gamma_s + d\psi_s$ is integrally dominant, and does
  not vanish on any root in $\tau_s(\Gamma)$.  Write 
$$\psi = I(R^+_s,R^+)(\psi_s) \in \Lambda_{\text{fin}}(H).$$
 Then
$$\Gamma + \psi =_{\text{def}} (H,\gamma + \psi,R^+_{i{\mathbb R}})$$
is a Langlands parameter; and
$$\Phi(d\gamma_s + \psi_s) = [J(\Gamma + \psi)].$$
\item The set of all the irreducible constituents of the various virtual
  representations $\Phi(d\gamma_s + \psi_s)$ is equal to the set of
  irreducible constituents of all the tensor products $F\otimes
  J(\Gamma)$, with $F$ a finite-dimensional representation of $G$.
\end{enumerate}
\end{theorem}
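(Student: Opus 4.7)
The plan is to construct $\Phi$ via Zuckerman--Jantzen translation functors and then verify its properties using rank-one analysis of the $\tau$-invariant. Uniqueness is almost immediate from the coherent family axioms: given any $\mu \in d\gamma_s + \Lambda_{\text{fin}}(G,H_s)$, one can find a finite-dimensional representation $F$ containing $\mu - d\gamma_s$ among its weights, so the tensor product formula~(b) together with the infinitesimal character requirement~(a) forces $\Phi(\mu)$ to be a specific summand of $F \otimes [J(\Gamma)]$. The irreducible-or-zero condition at integrally dominant points then pins down the summand uniquely.

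For existence I would first define $\Phi$ on the integrally dominant cone. Given $d\gamma_s + d\psi_s$ integrally dominant with respect to $R^+(d\gamma_s)$, choose a finite-dimensional representation $F$ whose lowest weight (transported to $H_s$) realizes $\psi_s$, and set $\Phi(d\gamma_s + \psi_s)$ equal to the generalized-infinitesimal-character projection of $F \otimes J(\Gamma)$ onto $\xi_{d\gamma_s + d\psi_s}$. Independence from the choice of $F$ is the key technical point; it follows from the standard comparison argument in the translation principle, which shows that two choices of $F$ produce the same Grothendieck class after projection. Once $\Phi$ is defined on the dominant cone, extend to all of $d\gamma_s + \Lambda_{\text{fin}}(G,H_s)$ by the coherent continuation representation of $W(d\gamma_s)$: applying axiom~(b) to finite-dimensional representations whose weights implement simple reflections forces the extension to be unique, and the coherent family axioms on the full domain are then a bookkeeping check.

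The main obstacle is establishing the characteristic property (b) at integrally dominant points together with the vanishing criterion in part~(1). I would reduce both to a rank-one analysis along each simple root $\alpha \in \Pi(d\gamma_s)$: translation onto the $\alpha$-wall is governed by the three-dimensional subgroup $\phi_\alpha$ and its interaction with $\theta$. When $\alpha$ is real, Theorem~\ref{thm:Lrealiz} reduces the question to an $SL(2,\mathbb{R})$ calculation, where the vanishing condition becomes precisely $\gamma_{\mathfrak q}(m_\alpha) = (-1)^{d\gamma(\alpha^\vee)+1}$. When $\alpha$ is imaginary, Theorem~\ref{thm:VZrealiz} reduces to $SU(2)$ or $SU(1,1)$ cohomological induction, where noncompactness of $\beta$ controls non-vanishing. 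When $\alpha$ is complex, the pair $\{\alpha,\theta\alpha\}$ forces a joint analysis, and the criterion becomes the positivity of $\theta\alpha$; this complex case is the hardest, requiring careful bookkeeping of both the positivity of $\theta\alpha$ and the integrality of weights. Once the rank-one cases are established, the general statement follows because translation onto any single $\alpha$-wall factors through the corresponding rank-one translation.

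Parts (2) and (3) are comparatively soft consequences. For (2), under the nonvanishing hypothesis on $\tau_s(\Gamma)$, the construction produces an irreducible with Langlands parameter on the same Cartan $H$ with the same $R^+_{i\mathbb{R}}$; matching the discrete parameter reduces to tracking lowest $K$-types through translation via Theorem~\ref{thm:VZLKT}, which translate by $\psi$ in the predicted way. Part (3) is a direct consequence of the tensor product axiom: by construction every $\Phi(d\gamma_s + \psi_s)$ is a constituent of some $F \otimes J(\Gamma)$, and conversely any constituent of $F \otimes J(\Gamma)$ sits in the isotypic projection onto some infinitesimal character $\xi_{d\gamma_s + d\mu_s}$, hence appears in $\Phi(d\gamma_s + \mu_s)$.
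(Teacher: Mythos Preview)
The paper does not give its own proof of this theorem; it is stated with a citation to Corollary~7.3.23 of \Cite{Vgreen} and the text moves directly to Definition~\ref{def:tauinv}. The only hint of the authors' view of the argument appears later, in the proof of the extended-group analogue Theorem~\ref{thm:extcohfam}: there they say one first constructs coherent families $\Phi^{\text{std}}$ for \emph{standard} modules (using that parabolic and cohomological induction commute with translation), and then obtains the coherent family for $J(\Gamma)$ by taking $\mathbb{Z}$-linear combinations, following the argument for Corollary~7.3.23 in \Cite{Vgreen}.

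Your proposal is organized differently: you build $\Phi$ directly from $J(\Gamma)$ via translation functors and infinitesimal-character projection, rather than first constructing coherent families for standard modules and inverting the multiplicity matrix. Both routes are correct and closely related. The standard-module-first route has the advantage that coherent families for standards are almost free (translation carries standards to weak standards, Corollary~\ref{cor:transfunc}(7)), so the existence question reduces to character-theoretic bookkeeping, and the irreducible-or-zero property together with the $\tau$-invariant criterion then drop out of the known structure of the Langlands classification under translation. Your route front-loads the analytic content (independence of the choice of $F$, well-definedness on the dominant cone, extension off the cone by coherent continuation), but lands on the same rank-one reductions for the $\tau$-invariant that you sketch. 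The substance of either argument is exactly that rank-one analysis, and your description of the three cases (real, imaginary, complex) is accurate.
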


\begin{definition}\label{def:tauinv}
In the setting of Theorem \ref{thm:cohfam}, the subset 
$$\tau_s(\Gamma) \subset \Pi(d\gamma_s)$$
of the simple integral roots is called the {\em $\tau$-invariant of
  the irreducible representation $J(\Gamma)$}, or the {\em
  $\tau$-invariant of $\Gamma$}.  (Because of the canonical
isomorphism $i(R^+_s,R^+)$, we may also refer to $\tau(\Gamma) \subset
\Pi(d\gamma)$ as the $\tau$-invariant.)
\end{definition}

\begin{subequations}\label{se:transfunc}
Return for a moment to the abstract language of
\eqref{se:charformulas}. So we fix a maximally split Cartan $H_s$ of 
$G$, and a weight $\lambda_0\in {\mathfrak h}_s^*$ corresponding to a
(possibly singular) infinitesimal character 
\begin{equation*}
\xi_{\lambda_0}\colon {\mathfrak Z}({\mathfrak g}) \rightarrow
{\mathbb C}.
\end{equation*}  
Write $R(\lambda_0)$ for the integral roots (Definition
\ref{def:introots}), and choose a set of positive roots
\begin{equation}
R^+(\lambda_0) \subset R(\lambda_0) \subset R({\mathfrak g},{\mathfrak h}_s)
\end{equation}
making $\lambda_0$ integrally dominant. When we study signatures of
invariant forms, we will assume that the infinitesimal character is
real (Definition \ref{def:realinf}). In that case we can extend
$R^+(\lambda_0)$ uniquely to a positive system
\begin{equation}\label{eq:lambda0realdom}
R^+_s \supset R^+(\lambda_0)
\end{equation}
making $\lambda_0$ real dominant \eqref{eq:realdom}.

It will be useful to have notation for the simple roots
\begin{equation*}
\Pi(\lambda_0) \subset R^+(\lambda_0),
\end{equation*}
and for the subset
\begin{equation*}
\{\alpha \in \Pi(\lambda_0) \mid \lambda_0(\alpha^\vee) = 0
\}=_{\text{def}} \Pi^{\lambda_0}
\end{equation*}
of simple roots orthogonal to $\lambda_0$.  Finally, we fix a weight
\begin{equation*}
\phi \in \Lambda_{\text{fin}}(G,H_s)
\end{equation*}
of a finite-dimensional representation of $G$, subject to the
additional condition
\begin{equation}
\lambda_0 + d\phi \text{\ is dominant and regular for $R^+(\lambda_0)$.}
\end{equation}
In connection with signatures of invariant forms, we will want also
to assume that 
\begin{equation}\label{eq:transrealdom}
\lambda_0 + d\phi \text{\ is dominant and regular for $R^+_s$.}
\end{equation}
One way to achieve this is to take $\phi$ to be the sum of the roots
in $R^+_s$. We need also
\begin{equation*}
F_\phi = \text{\ finite-dimensional irreducible of extremal weight $\phi$.}
\end{equation*}

In order to define the Jantzen-Zuckerman translation functors, we need
the functor (on ${\mathfrak g}$-modules)
\begin{equation*}
P_{\lambda_0}(M) =\  \text{\parbox{.5\textwidth}{largest submodule of
    $M$ on which $z - \xi_{\lambda_0}(z)$ acts nilpotently, all 
  $z \in {\mathfrak Z}({\mathfrak g})$}} 
\end{equation*}
If $M$ is a $({\mathfrak g},K)$ module, so is $P_{\lambda_0}(M)$.  On
the category of ${\mathfrak Z}({\mathfrak g})$-finite modules
(for example, on $({\mathfrak g},K)$-modules of finite length) the
functor $P_{\lambda_0}$ is exact.  The translation functors we want are
\begin{equation}
T_{\lambda_0}^{\lambda_0 + \phi}(M) =_{\text{def}} P_{\lambda_0+
  d\phi}\left(F_{\phi} \otimes P_{\lambda_0}(M)\right),
\end{equation}
(``translation away from the wall'') and
\begin{equation}
T_{\lambda_0+\phi}^{\lambda_0}(N) =_{\text{def}} P_{\lambda_0}\left(F^*_\phi
  \otimes P_{\lambda_0+d\phi}(N)\right)
\end{equation}
(``translation to the wall.'')  These are clearly exact functors on
the category of finite-length $({\mathfrak g},K)$-modules, so they
also define group endomorphisms of the Grothendieck group ${\mathcal
  G}({\mathfrak g},K)$. 
\end{subequations}

\begin{corollary}[Jantzen \Cite{Jantzen}, Zuckerman
  \Cite{Zuck}; see \Cite{Vgreen}*{Proposition
    7.2.22}]\label{cor:transfunc} 
Suppose we are in the setting of \eqref{se:transfunc}.
\begin{enumerate}
\item The functors $T_{\lambda_0+\phi}^{\lambda_0}$ and
  $T_{\lambda_0}^{\lambda_0+\phi}$ are left and right adjoint to each
  other:
$$\Hom_{{\mathfrak g},K}(N, T_{\lambda_0}^{\lambda_0+\phi}(M))
\simeq \Hom_{{\mathfrak g},K}(T_{\lambda_0+\phi}^{\lambda_0}(N),M),$$
and similarly with the two functors interchanged.
\item For a coherent family $\Phi$ of virtual $({\mathfrak
    g},K)$-modules on $\lambda_0+\Lambda_{\text{fin}}(G,H_s)$,
$$T_{\lambda_0+\phi}^{\lambda_0}(\Phi(\lambda_0 + \phi)) = \Phi(\lambda_0).$$
\item The translation functor $T_{\lambda_0+\phi}^{\lambda_0}$
  (translation to the wall) carries
  every irreducible 
$({\mathfrak g},K)$-module of infinitesimal character $\lambda_0 +
d\phi$ to an irreducible of infinitesimal character $\lambda_0$, or to
zero. The irreducible representations mapped to zero are exactly those
containing some root of $\Pi^{\lambda_0}$ in the $\tau$-invariant
(Definition \ref{def:tauinv}).
\item The (inverse of the) translation functor of (3) defines an
  injective correspondence of Langlands parameters (and therefore of
  irreducible representations)  
$$t_{\lambda_0}^{\lambda_0+\phi}\colon B({\lambda_0}) \hookrightarrow
B({\lambda_0 + d\phi}).$$ 
The image consists of all irreducible representations (of
infinitesimal character $\lambda_0 + d\phi$) having no root
of $\Pi^{\lambda_0}$ in the $\tau$-invariant.
\item Suppose $\Gamma_0 = (H,\gamma_0,R^+_{i{\mathbb R}}) \in
  B(\lambda_0)$ is a Langlands parameter (at the possibly singular
  infinitesimal character $\lambda_0$). Choose positive integral
  roots $R^+(d\gamma_0)$ as in Theorem \ref{thm:cohfam}, so that we
  have an isomorphism
$$i(\lambda_0,R^+(\lambda_0),d\gamma_0,R^+(d\gamma_0))\colon {\mathfrak
  h}_s^* \rightarrow {\mathfrak h}^*$$
as in \eqref{se:movelambda0}.  Define
$$\phi(\Gamma_0) =
I(\lambda_0,R^+(\lambda_0),d\gamma_0,R^+(d\gamma_0))(\phi),$$
an $H$-weight of a finite-dimensional representation of $G$
(Proposition \ref{prop:findimlHswts}). Then the correspondence of
Langlands parameters in (4) is
$$t_{\lambda_0}^{\lambda_0 + \phi}(\Gamma_0) = \Gamma_0 + \phi(\Gamma_0)$$
(notation as in Theorem \ref{thm:cohfam}(2)).
\item Suppose $\Gamma_1 = (H,\gamma_1,R^+_{i{\mathbb R}}) \in
  B(\lambda_0+d\phi)$ is a Langlands parameter (at the regular
  infinitesimal character $\lambda_0 + d\phi$). Write $R^+(d\gamma_1)$
  for the unique set of positive integral roots making $d\gamma_1$
  integrally dominant, so that we have an isomorphism
$$i(\lambda_0+d\phi,R^+(\lambda_0),d\gamma_1,R^+(d\gamma_1))\colon {\mathfrak
  h}_s^* \rightarrow {\mathfrak h}^*$$
as in \eqref{se:movelambda0}.  Define
$$\phi(\Gamma_1) = I(\lambda_0,R^+(\lambda_0),d\gamma_0,R^+(d\gamma_0))(\phi),$$
an $H$-weight of a finite-dimensional representation 
(Proposition \ref{prop:findimlHswts}). Then 
$$t_{\lambda_0+\phi}^{\lambda_0}(\Gamma_1) =_{\text{def}} (H,\gamma_1
- \phi(\Gamma_1), R^+_{i{\mathbb R}}) =_{\text{def}}\Gamma_1 - \phi(\Gamma_1)$$
is a weak Langlands parameter (Definition \ref{def:genparam}).  This
correspondence is a left inverse to the injection of (4):
$$t_{\lambda_0+\phi}^{\lambda_0}\circ t_{\lambda_0}^{\lambda_0 +
  \phi}(\Gamma_0) = \Gamma_0.$$
\item Translation to the wall carries
  standard representations to weak standard representations:
$$T_{\lambda_0 + \phi}^{\lambda_0}(I_{\quo}(\Gamma_1)) =
I_{\quo}(\Gamma_1 - \phi(\Gamma_1)).$$
In particular,
$$T_{\lambda_0 + \phi}^{\lambda_0}(I_{\quo}(\Gamma_0+\phi(\Gamma_0)) =
I_{\quo}(\Gamma_0).$$
\item Translation to the wall carries irreducibles to
  irreducibles or to zero:
$$T_{\lambda_0 + \phi}^{\lambda_0}(J(\Gamma_1)) = \begin{cases}
J(\Gamma_1 - \phi(\Gamma_1)) & (\tau(J(\Gamma_1)) \cap \Pi^{\lambda_0}
= \emptyset),\\
0 & (\tau(J(\Gamma_1)) \cap \Pi^{\lambda_0}
\ne \emptyset). \end{cases} $$
In particular,
$$T_{\lambda_0 + \phi}^{\lambda_0}(J(\Gamma_0+\phi(\Gamma_0)) =
J(\Gamma_0).$$
\end{enumerate}
\end{corollary}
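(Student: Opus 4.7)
The plan is to deduce all eight assertions from Theorem \ref{thm:cohfam} on coherent families, together with a very small amount of enveloping-algebra bookkeeping. Throughout, write $P_\chi$ for the (exact) projection functor onto the generalized $\chi$-infinitesimal-character summand of a ${\mathfrak Z}({\mathfrak g})$-finite module.

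First I would dispose of (1). Tensoring with the finite-dimensional representation $F_\phi$ is left and right adjoint to tensoring with $F_\phi^*$, by the standard evaluation/coevaluation maps. Projection onto a generalized infinitesimal character eigenspace is adjoint to the inclusion, and projections commute with $\Hom$. Composing these adjunctions yields (1). At the same time I would note the key algebraic fact used in (2): if $M$ has infinitesimal character $\xi_{\lambda_0}$, then $F_\phi \otimes M$ has a composition series whose subquotients have infinitesimal characters exactly $\xi_{\lambda_0+d\psi}$ as $\psi$ runs over weights of $F_\phi$ (with multiplicities equal to the weight multiplicities). This is a formal consequence of the filtration of $F_\phi\otimes M$ by weight spaces of $F_\phi$. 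Applying $P_{\lambda_0+d\phi}$ picks off the summand corresponding to the extremal weight $\phi$ of $F_\phi$. Now (2) is the statement that for any coherent family $\Phi$ based at $\lambda_0+\Lambda_{\fin}(G,H_s)$, projecting $F_\phi\otimes\Phi(\lambda_0+\phi)$ to infinitesimal character $\xi_{\lambda_0}$ recovers $\Phi(\lambda_0)$. Substituting Definition \ref{def:cohfam}(b) for the tensor product and throwing away all terms whose infinitesimal character differs from $\xi_{\lambda_0}$ collapses the sum over $\Delta(F_\phi,H_s)$ to a single term, namely $\Phi(\lambda_0)$; the crucial combinatorial input is that $\phi$ is the unique weight of $F_\phi$ whose $W({\mathfrak g},{\mathfrak h}_s)$-orbit can translate $\lambda_0+d\phi$ back to $\lambda_0$ (using integral dominance and regularity of $\lambda_0+d\phi$).

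Next I would prove (3). Given any irreducible $J(\Gamma_1)$ of infinitesimal character $\lambda_0+d\phi$, Theorem \ref{thm:cohfam} produces a unique coherent family $\Phi$ with $\Phi(\lambda_0+d\phi)=[J(\Gamma_1)]$. By (2), $[T_{\lambda_0+\phi}^{\lambda_0}(J(\Gamma_1))] = \Phi(\lambda_0)$. Since $\lambda_0$ is integrally dominant, Theorem \ref{thm:cohfam}(1)--(2) says this is either an irreducible class or zero, vanishing precisely when some simple root of $\tau_s(\Gamma_1)$ vanishes on $\lambda_0$; transferring via $i(R_s^+,R^+)$ this says exactly that $\tau(\Gamma_1)\cap\Pi^{\lambda_0}\ne\emptyset$. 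That gives (3), and the explicit formula $\Phi(\lambda_0)=[J(\Gamma_0)]$ with $\Gamma_0 = \Gamma_1-\phi(\Gamma_1)$ reads off (6) and (8) (as a statement of equality in the Grothendieck group, hence of modules since both sides are irreducible or zero). Part (5) is then the direct substitution of the formulas into (2) using $\Phi(\lambda_0)=[J(\Gamma_0)]$ and $\Phi(\lambda_0+d\phi)=[J(\Gamma_0+\phi(\Gamma_0))]$. For (4), one defines $t_{\lambda_0}^{\lambda_0+\phi}$ as the inverse of $t_{\lambda_0+\phi}^{\lambda_0}$ restricted to those $\Gamma_1$ whose $\tau$-invariant misses $\Pi^{\lambda_0}$; injectivity comes from the fact that a coherent family is determined by its value at any single integrally dominant regular point.

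Finally, for (7) I would replicate the coherent-family argument for standard modules rather than irreducibles. The character of $I_{\quo}(\Gamma_1)$ is built (via Theorem \ref{thm:distnchar}) from data that extends to a coherent family of standard modules, constructed from Theorem \ref{thm:Lrealiz} by letting the continuous parameter of the inducing data vary: the induced family $\psi\mapsto[I_{\quo}(\Gamma_1+\psi)]$ is coherent because induction commutes with tensoring by a finite-dimensional representation (Mackey/projection formula). Applying (2) to this coherent family of standard modules gives the formula for $T_{\lambda_0+\phi}^{\lambda_0}I_{\quo}(\Gamma_1)$ asserted in (7). The hard part, and the only place where one must work a little, is checking that this coherent family of standard modules has the ``right'' value $I_{\quo}(\Gamma_1-\phi(\Gamma_1))$ at the wall—that is, that the weak standard module so obtained is exactly the one defined in \eqref{se:ds} or Theorem \ref{thm:Lrealiz}. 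This amounts to tracking how $\phi(\Gamma_1)$ restricts to $T$ and to $A$ under the decomposition $\gamma = (\lambda,\nu)$ and checking against the induction-by-stages realization; the identifications are guaranteed by Proposition \ref{prop:findimlHswts}(3)--(4) once one verifies that the isomorphism $i(\lambda_0+d\phi,R^+(\lambda_0),d\gamma_1,R^+(d\gamma_1))$ used in (6) is compatible with the real/imaginary decomposition of the roots at $H$. This compatibility is where I expect to spend the most effort: the subtle point is that the same $\phi(\Gamma_1)$ produced from the maximally split Cartan by $I(R_s^+,R^+)$ must also coincide with the shift coming from the Langlands construction on $H$, and this is what was essentially built into the integral dominance and the choices (a)--(c) of $R^+(d\gamma)$ in Theorem \ref{thm:cohfam}.
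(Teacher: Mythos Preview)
The paper does not give a proof of this corollary: it is stated with attributions to Jantzen, Zuckerman, and \cite{Vgreen}, Proposition 7.2.22, and is used immediately afterward without further argument. Your proposal is therefore not to be compared against a proof in the paper, but it is exactly the kind of derivation the paper's labeling as a ``Corollary'' of Theorem \ref{thm:cohfam} invites, and it is essentially correct in outline.

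One genuine slip: in your discussion of (2) you write ``projecting $F_\phi\otimes\Phi(\lambda_0+\phi)$ to infinitesimal character $\xi_{\lambda_0}$,'' but the translation-to-the-wall functor $T_{\lambda_0+\phi}^{\lambda_0}$ tensors with $F_\phi^*$, not $F_\phi$ (see \eqref{se:transfunc}). Correspondingly, the ``crucial combinatorial input'' is that $-\phi$ is the unique weight $\mu$ of $F_\phi^*$ (with multiplicity one, since it is extremal) for which $\lambda_0+d\phi+d\mu$ lies in the $W$-orbit of $\lambda_0$. This uniqueness needs both the integral dominance of $\lambda_0$ and the regularity of $\lambda_0+d\phi$, together with the convex-hull description of the weights in Proposition \ref{prop:findimlwts}(2); you should state this explicitly rather than gesture at it.

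A smaller point on (7): the coherent family of standard modules is not obtained merely by varying the continuous parameter $\nu$, but by shifting the full parameter $\gamma$ by weights in $\Lambda_{\text{fin}}(G,H)$; this is what Theorem~\ref{thm:cohfam}(2) describes, and the resulting family is coherent because parabolic induction commutes with tensoring by a finite-dimensional $G$-representation. With that correction, your identification of the value at $\lambda_0$ with $I_{\quo}(\Gamma_1-\phi(\Gamma_1))$ is immediate from the construction, and the ``hard part'' you anticipate is already absorbed into the proof of Theorem~\ref{thm:cohfam} in \cite{Vgreen}.
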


Corollary \ref{cor:transfunc} allows us to ``reduce'' the computation
of the multiplicity matrix $m_{\Xi,\Gamma}$ defined in
\eqref{se:charformulas} to the case of regular infinitesimal
character.  Given a possibly singular infinitesimal character
$\lambda_0$, we choose a weight of a finite-dimensional representation
$\phi$ as in \eqref{se:transfunc}, so that $\lambda_0+d\phi$ is
regular. The (possibly singular) Langlands parameters $\Xi$ and
$\Gamma$ in $B(\lambda_0)$ correspond to regular Langlands parameters
$\Xi+\phi(\Xi)$ and $\Gamma+\phi(\Gamma)$ in $B(\lambda_0 + d\phi)$,
and
\begin{equation}\label{se:reducetoreg}
m_{\Xi,\Gamma} = m_{\Xi+\phi(\Xi),\Gamma+ \phi(\Gamma)}.
\end{equation}
A little more precisely, we can start with the decomposition into irreducibles
\begin{equation*}
\left[I(\Gamma + \phi(\Gamma))\right] = \sum_{\Xi_1 \in B(\lambda_0+d\phi)}
m_{\Xi_1,\Gamma+\phi(\Gamma)}\left[J(\Xi_1)\right]
\end{equation*}
and apply the exact translation functor $T_{\lambda_0 +
  \phi}^{\lambda_0}$. On the left we get $\left[I(\Gamma)\right]$, by
Corollary \ref{cor:transfunc}(7). On the right we get
\begin{equation*}
\sum_{\substack{\Xi_1 \in B(\lambda_0+d\phi)\\ \tau(\Xi_1) \cap
    \Pi^{\lambda_0} = \emptyset}} m_{\Xi_1,\Gamma+\phi(\Gamma)}\left[J(\Xi_1
- \phi(\Xi_1))\right].
\end{equation*}
The irreducibles appearing on the right are all distinct (this is part
of the uniqueness assertion for the coherent family in Theorem
\ref{thm:cohfam}), so the coefficients are the multiplicities of
irreducibles in $I(\Gamma)$.

To say this another way: the index set of Langlands parameters at
the possibly singular infinitesimal character $\lambda_0$ is naturally
a {\em subset} of the index set at the regular infinitesimal character
$\lambda_0 + d\phi$. The multiplicity matrix at $\lambda_0$ is the
corresponding submatrix of the one at $\lambda_0+d\phi$.

We call this fact a ``reduction'' in quotes, because the multiplicity
problem at regular infinitesimal character is much more complicated
than at singular infinitesimal character: we have ``reduced'' a less
complicated problem to a more complicated one.  But the gain is
real. At regular infinitesimal character we have the Kazhdan-Lusztig
algorithm (see Section \ref{sec:klv}) to solve the problem, and this
algorithm cannot (as far as we now understand) be applied directly to
the case of singular infinitesimal character.

Finally, let us consider the problem of relating the character matrix
$M_{\Xi,\Gamma}$ to regular infinitesimal character.  Since this
problem concerns the inverse of the multiplicity matrix, we might
suspect that there are difficulties: there is not usually a nice
way to find the inverse of a submatrix from the inverse of the larger
matrix. Since our matrices are upper triangular, one could hope that
the submatrix we are considering is a corner; but that turns out not
to be the case.  In order to see what happens, we begin with a careful
statement of what happens to standard modules under translation to
singular infinitesimal character.

\begin{theorem}\label{thm:transstd} 
Suppose we are in the setting of \eqref{se:transfunc}. Write
$$t_{\lambda_0+\phi}^{\lambda_0}\colon B(\lambda_0+d\phi) \rightarrow
B_{\weak}(\lambda_0)$$ 
(notation \eqref{se:charformulas}) as in Corollary
\ref{cor:transfunc}(7). Fix a Langlands parameter $\Xi_1 \in
B(\lambda_0 + d\phi)$. Write
$$\Xi_0 = t_{\lambda_0+\phi}^{\lambda_0}(\Xi_1) \in B_{\weak}(\lambda_0), $$
so that
$$T_{\lambda_0+\phi}^{\lambda_0}(I_{\quo}(\Xi_1)) =
I_{\quo}(\Xi_0).$$ 
\begin{enumerate}
\item There is a unique subset $C_{\text{sing}}(\Xi_0) \subset B(\lambda_0)$
  with the property that
$$I_{\quo}(\Xi_0) = \sum_{\Xi' \in C_{\text{sing}}(\Xi_0)} I_{\quo}(\Xi').$$
\item The set $C_{\text{sing}}(\Xi_0)$ has cardinality either 
  zero (if some simple compact imaginary root for $\Xi_0$ vanishes on
  $\lambda_0$, so that $I(\Xi_0) = 0$) or a power of two. 
\end{enumerate}
For each Langlands parameter $\Xi\in B(\lambda_0)$, define
$$C_{\text{reg}}(\Xi) = \{\Xi_1\in B(\lambda_0 + d\phi) \mid \Xi \in
C_{\text{sing}}(\Xi_0)\} \subset B(\lambda_0 + d\phi);$$
here $\Xi_0 = t_{\lambda_0+d\phi}^{\lambda_0}(\Xi_1)$ as above.
\begin{enumerate}[resume]
\item Suppose the Langlands parameter $\Gamma\in B(\lambda_0)$
  corresponds to 
$$\Gamma+\phi(\Gamma) \in B(\lambda_0+d\phi).$$
Then the character formula \eqref{eq:charformirr} is
$$J(\Gamma) = \sum_{\Xi \in B(\lambda_0)}\ \ \sum_{\Xi_1\in C_{\text{reg}}(\Xi)}
M_{\Xi_1,\Gamma+\phi(\Gamma)}[I(\Xi)].$$ 
Equivalently,
$$M_{\Xi,\Gamma} = \sum_{\Xi_1\in C_{\text{reg}}(\Xi)}
M_{\Xi_1,\Gamma+\phi(\Gamma)}.$$
\end{enumerate}
\end{theorem}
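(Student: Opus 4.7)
The plan is to prove the three parts in order, with the real content living in parts (1) and (2), since (3) is a formal consequence of the adjointness and exactness of translation plus Corollary \ref{cor:transfunc}.

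For part (1), I would observe that by Corollary \ref{cor:transfunc}(7), $I_{\quo}(\Xi_0) = T_{\lambda_0+\phi}^{\lambda_0}(I_{\quo}(\Xi_1))$ is a weak standard module in the sense of Definition \ref{def:genparam}, and by the general remark there every such module is a finite direct sum of (genuine) standard modules at $\lambda_0$. This gives the existence of $C_{\text{sing}}(\Xi_0)$. For uniqueness, the multiplicity matrix $m_{\Xi,\Gamma}$ for $\Xi, \Gamma \in B(\lambda_0)$ is upper triangular with ones on the diagonal by \eqref{eq:uppertri}, so the classes $\{[I_{\quo}(\Xi)]\}_{\Xi \in B(\lambda_0)}$ form a $\mathbb{Z}$-basis of the $\xi_{\lambda_0}$-primary part of the Grothendieck group; hence the decomposition of $[I_{\quo}(\Xi_0)]$ into standards at $\lambda_0$ is unique, and the multiplicities are $0$ or $1$ since the module decomposition has no repeated summands (each summand has its own distinguished lowest $K$-type coming from the bottom layer).

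For part (2), I would follow the iterated Cayley transform argument sketched in \eqref{se:ds}. Writing $\Xi_0 = (\Lambda_0, \nu_0)$, the real roots $\alpha$ with $\langle \nu_0, \alpha^\vee\rangle = 0$ are exactly those for which condition (5) of Theorem \ref{thm:LC} may fail; equivalently, these are the real roots appearing in $R^{\lambda_0}$ after reducing via the positive singular system. For each such $\alpha$ with $(\Lambda_0)_{\mathfrak q}(m_\alpha) = -1$, the parenthetical remark in \eqref{se:ds} realizes $I_{\quo}(\Xi_0)$ as induced from the corresponding $SL(2,\mathbb{R})$-type subgroup, and Schmid's first character identity rewrites it as a sum of exactly two standard modules on the Cartan obtained by Cayley transform through $\alpha$ (which has strictly larger compact part). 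Iterating over a maximal set of mutually strongly orthogonal such real roots, each step contributes a factor of $2$; a compact imaginary obstruction on some intermediate Cartan (the failure of condition (6), which is Schmid's second identity) forces a zero summand and hence $C_{\text{sing}}(\Xi_0) = \emptyset$. Otherwise the total count is $2^k$ where $k$ is the number of Cayley steps. Independence of the order of Cayley transforms and of choices of orthogonal sets follows from the uniqueness proved in (1).

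For part (3), apply the exact functor $T_{\lambda_0+\phi}^{\lambda_0}$ to the character formula \eqref{eq:repformirr} for $J(\Gamma + \phi(\Gamma))$ at regular infinitesimal character:
$$[J(\Gamma + \phi(\Gamma))] = \sum_{\Xi_1 \in B(\lambda_0 + d\phi)} M_{\Xi_1, \Gamma + \phi(\Gamma)}\, [I_{\quo}(\Xi_1)].$$
On the left, Corollary \ref{cor:transfunc}(8) gives $[J(\Gamma)]$, since $\tau(\Gamma+\phi(\Gamma)) \cap \Pi^{\lambda_0} = \emptyset$ by Corollary \ref{cor:transfunc}(4). On the right, Corollary \ref{cor:transfunc}(7) gives $[I_{\quo}(t_{\lambda_0+\phi}^{\lambda_0}(\Xi_1))]$, which by part (1) decomposes as $\sum_{\Xi \in C_{\text{sing}}(t_{\lambda_0+\phi}^{\lambda_0}(\Xi_1))} [I_{\quo}(\Xi)]$. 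Swapping the order of summation (the defining condition $\Xi \in C_{\text{sing}}(t_{\lambda_0+\phi}^{\lambda_0}(\Xi_1))$ is equivalent to $\Xi_1 \in C_{\text{reg}}(\Xi)$) and matching coefficients against the basis $\{[I_{\quo}(\Xi)]\}_{\Xi \in B(\lambda_0)}$ of the Grothendieck group yields the stated identity $M_{\Xi,\Gamma} = \sum_{\Xi_1 \in C_{\text{reg}}(\Xi)} M_{\Xi_1, \Gamma + \phi(\Gamma)}$.

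The main obstacle is part (2): the bookkeeping that each Cayley transform genuinely contributes a factor of exactly $2$ (rather than possibly $1$), and that the procedure is well-defined independently of choices. The cited discussion phrases it as ``one or two'', but the ``one'' case coincides with a further compact imaginary root making a later step produce $0$; a clean way to execute this is to organize the Cayley transforms using a $\theta$-stable Levi containing all the relevant real roots and to reduce to the split-modulo-center case, where a direct SL(2)-calculation (as in the proof of Proposition \ref{prop:cdualstd}) handles the bookkeeping transparently.
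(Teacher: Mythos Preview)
Your approach for parts (1) and (3) is essentially identical to the paper's: existence in (1) via Definition \ref{def:genparam} and uniqueness via the basis property of standards, and (3) by applying the exact translation functor to the regular character formula and reindexing. This matches the paper's sketch closely.

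For part (2), the paper in fact \emph{omits} the proof entirely (``Since we will not use these last two facts, we omit the proof''), so you are going beyond what the paper does. However, your explanation of the ``one or two'' dichotomy contains an error. The case where Schmid's identity yields a single summand is \emph{not} tied to a compact imaginary root forcing a later zero; it is the standard type I versus type II distinction for real roots (whether the character extends in one or two ways to the compact part of the Cayley-transformed Cartan). A type I real root gives one summand with no compact imaginary obstruction in sight, so your proposed mechanism for reconciling ``one or two'' with ``power of two'' does not work as stated.

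That said, your fallback suggestion---to organize all the offending real roots inside a real Levi subgroup and reduce to the split-modulo-center case---is exactly the right move, and is consistent with the paper's remark that the construction ``takes place inside the subgroup of $G$ generated by the Cartan subgroup $H_1$ for the parameter $\Xi_0$, and the real roots in $\Pi^{\xi_0}$.'' In that reduced setting the count of final parameters is governed by a finite $2$-group (the relevant quotient of component groups of the Cartans involved), and this is what forces the cardinality to be a power of two. The iterated ``one or two'' picture does give a power of two as a product of $1$s and $2$s along each branch, but one must also check that all branches terminate on the \emph{same} Cartan (as the paper asserts), which is what makes the count a single such product rather than a more complicated sum.
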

\begin{proof} 
That $\Xi_0$ must be a weak Langlands
  parameter is Corollary \ref{cor:transfunc}(6), and the statement
  about translation of standard modules is Corollary
  \ref{cor:transfunc}(7). As is explained in \eqref{se:ds}, the weak
  standard module $I(\Xi_0)$ of infinitesimal character $\lambda_0$
is either equal to zero (in case the parameter vanishes on some simple
compact imaginary root) or can be written (by a simple algorithm) as a
direct sum
\begin{equation*}
I(\Xi_0) = \sum_{\Xi' \in C_{\text{sing}}(\Xi_0) \subset B(\lambda_0)} I(\Xi').
\end{equation*}
The letter $C$ stands for ``Cayley.'' The fundamental example is
writing a nonspherical principal series for 
$SL(2,{\mathbb R})$ (with continuous parameter zero) as a direct sum
of two limits of discrete series; 
the general case is just a repeated application of this one.) The
construction of this decomposition takes place inside the subgroup of
$G$ generated by the Cartan subgroup $H_1$ for the parameter $\Xi_0$,
and the real roots in $\Pi^{\xi_0}$. It turns out that the parameters
appearing in $C_{\text{sing}}(\Xi_0)$ are all attached to a single 
Cartan subgroup, and that the number of them is a power of $2$. Since
we will not use these last two facts, we omit the proof.  This is (2).

For the character formula in (3), we begin with the character formula
at infinitesimal character $\lambda_0 + d\phi$:
\begin{equation*}
\left[J(\Gamma + \phi(\Gamma))\right] = \sum_{\Xi_1 \in B(\lambda_0+d\phi)}
M_{\Xi_1,\Gamma+\phi(\Gamma)}\left[I(\Xi_1)\right]
\end{equation*}
Now apply the exact translation functor $T_{\lambda_0 +
  \phi}^{\lambda_0}$. On the left we get $\left[J(\Gamma)\right]$, by
Corollary \ref{cor:transfunc}(8). On the right we get a sum of weak
standard modules
\begin{equation*}
\sum_{\Xi_1 \in B(\lambda_0+d\phi)}
  M_{\Xi_1,\Gamma+\phi(\Gamma)}\left[I(\Xi_1 - \phi(\Xi_1))\right] =
  \sum_{\Xi_1 \in B(\lambda_0+d\phi)} 
  M_{\Xi_1,\Gamma+\phi(\Gamma)}\left[I(\Xi_0)\right].
\end{equation*}
Using (2), we may therefore rewrite our character formula as 
\begin{equation*}\begin{aligned}
\left[J(\Gamma)\right] &= \sum_{\Xi \in B(\lambda_0)}\ \  \sum_{\substack{\Xi_1 \in
  B(\lambda_0 + d\phi)\\ \Xi \in C_{\text{sing}}(\Xi_0)}}
M_{\Xi_1,\Gamma+ \phi(\Gamma)}\left[I(\Xi)\right]\\
&= \sum_{\Xi \in B(\lambda_0)} \ \ \sum_{\Xi_1 \in  C_{\text{reg}}(\Xi)}
M_{\Xi_1,\Gamma+ \phi(\Gamma)}\left[I(\Xi)\right].\end{aligned}
\end{equation*}
Equivalently,
\begin{equation*}
M_{\Xi,\Gamma} =  \sum_{\Xi_1 \in C_{\text{reg}}(\Xi)} M_{\Xi_1,\Gamma+ \phi(\Gamma)}.
\end{equation*}
\end{proof}

It is natural to ask about a corresponding use of the translation
functors to relate the signature formulas \eqref{se:cinvtsigformulas} at two
different infinitesimal characters. More or less this is exactly
parallel to the discussion above, but there is a subtlety about the
choice of invariant form: the translation functor will carry one
nondegenerate invariant form to another, but the new form may not
share the characteristic positivity property of the old one.  We
therefore postpone a careful analysis to Section
\ref{sec:klvform}.

\section{Translation functors for extended groups}\label{sec:translext}
\setcounter{equation}{0}

In this section we fix an extended group
\begin{subequations}\label{se:extfindiml}
\begin{equation}
{}^\delta G = G\rtimes \{1,\delta_f\}
\end{equation}
as in Definition \ref{def:extgrp}.  Suppose 
\begin{equation}
{}^1 H = \langle H,\delta_1 \rangle, \qquad t_1(R^+_1) = R^+_1
\end{equation}
is an extended maximal torus preserving a positive root system
$R^+_1$, with $\delta_1 \in K$ 
a representative 
for the
element $t_1$ of the extended Weyl group.  According to Clifford
theory (Proposition \ref{prop:extpairrep}),
\begin{equation}\label{eq:exttorusreps}\begin{aligned}
&\bullet\parbox[t]{4in}{each  $\gamma = \gamma^{t_1}\in \widehat{H}$
  has two one-dimensional extensions
  $\gamma_{\pm}$ in $\widehat{{}^1 H}$, on which the scalar actions of
  $\delta_1$ differ by sign;}\\
&\bullet\parbox[t]{4in}{each $\gamma\ne \gamma^{t_1} \in \widehat{H}$
  induces to  
$\gamma_{\text{ind}} = \gamma^{t_1}_{\text{ind}} \in \widehat{{}^1 H}$; and}\\ 
&\bullet\text{these types exhaust $\widehat{{}^1 H}$.} 
\end{aligned}\end{equation}
Of course it also makes sense to induce a $t_1$-fixed character
$\gamma$; in that case we get
\begin{equation*}
\gamma_{\text{ind}} = \gamma_+ + \gamma_- \qquad (\gamma =
\gamma^{t_1}).
\end{equation*}

Despite the notation, there is no way to prefer one of the two
extensions $\gamma_+$ and $\gamma_-$ without making a further
choice: that of a square root of the scalar by which $\delta_1^2 \in
H$ acts in $\gamma$.

There is a parallel description of the finite-dimensional
representations of ${}^\delta G$:
\begin{equation}\label{eq:extGfindiml}\begin{aligned}
&\bullet\parbox[t]{4in}{each $t_1$-fixed (equivalently,
  $\theta$-fixed) 
  finite-dimensional irreducible representation $F$ of $G$ has two
  extensions $F_{\pm}$ to ${}^\delta G$, distinguished by the action
  of $\delta_1$ on the $R^+_1$-highest weight space;}\\
&\bullet\parbox[t]{4in}{each member of a two-element orbit
      $\{F,F^{t_1}\}$ of irreducible  
  finite-dimensional representations of $G$ induces to an irreducible
  representation $F_{\text{ind}} = F^{t_1}_{\text{ind}}$ of ${}^\delta G$; and}\\ 
&\bullet\parbox[t]{4in}{these two types exhaust the irreducible
  finite-dimensional representations of ${}^\delta G$.} 
\end{aligned}\end{equation}
Inducing a $\theta$-fixed finite-dimensional irreducible to ${}^\delta
G$ gives
\begin{equation*}
F_{\text{ind}} = F_+ + F_- \qquad (F =
F^{t_1}).
\end{equation*}

Again the notation obscures that fact that there is no way to prefer
$F_+$ over $F_-$ without choosing a square root of the action of
$\delta_1^2$ on the highest weight space.

\begin{danger}
There is a possibly unexpected subtlety about highest weight
theory for ${}^\delta G$: the highest weight of an irreducible
finite-dimensional representation {\em need not} be an irreducible
  representation of ${}^1 H$, and {\em need not} determine the
  representation of ${}^\delta G$. For us, the most important aspect
  of this problem is that the highest weight of $F$ may be $t_1$-fixed
  even if $F$ is not $\theta$-fixed. Examples appear in Example
  \ref{ex:gl2ext} below. \end{danger}
 
As in Proposition \ref{prop:findimlwts}, for each
finite-dimensional representation $F_1$ of ${}^\delta G$, define
\begin{equation*}
\Delta(F_1,{}^1 H) \subset \widehat{{}^1 H}
\end{equation*}
to be the collection of irreducible representations of ${}^1 H$
appearing in the restriction of $F_1$.  Sometimes it will be
convenient to regard $\Delta(F_1,{}^1 H)$ as a multiset.  We define
\begin{equation*}
\Lambda_{\text{fin}}({}^\delta G,{}^1 H) \subset \widehat{{}^1 H}
\end{equation*}
to be the set of irreducible representations of ${}^1 H$ appearing in
the restrictions of finite-dimensional representations of ${}^\delta
G$. Using finite-dimensional representations induced from $G$ to
${}^\delta G$, it is very easy to see that
\begin{equation*}
\Lambda_{\text{fin}}({}^\delta G,{}^1 H) = \{\phi_1 \in \widehat{{}^1
  H} \mid \phi_1|_H \subset \Lambda_{\text{fin}}(G,H)\};
\end{equation*}
here we write $\subset$ (instead of $\in$) to indicate that the
restriction is a sum of characters which are weights of
finite-dimensional representations of $G$.  We write
\begin{equation}\label{eq:extfingroth}
{\mathcal G}_{\text{fin}}({}^1 H) = \text{subgroup of ${\mathcal
    G}({}^1 H)$ generated by $\Lambda_{\text{fin}}({}^\delta G,{}^1 H)$.}
\end{equation}

Finally, for any finite-dimensional representation of $F_1$ of
${}^\delta G$, define
\begin{equation*} \phi_1(F_1) = \text{representation of ${}^1 H$ on
    $R^+_1$-highest weight vectors.}
\end{equation*}
Even if $F_1$ is irreducible, it turns out that $\phi_1(F_1)$ need not
be irreducible; this is related again to the phenomenon in Example
\ref{ex:gl2ext}.
\end{subequations} 

\begin{proposition}\label{prop:findimlextHswts} In the setting of
  Proposition \ref{prop:findimlHswts}, suppose that $R^+_s$ is chosen
  compatible with an extended maximal torus ${}^1 H_s$, and that $t_s$ is
  the corresponding automorphism of $H_s$.
\begin{enumerate}
\item The surjection 
$$I(R^+_s,R^+_1)\colon \Lambda_{\text{fin}}(H_s) \twoheadrightarrow
\Lambda_{\text{fin}}(H)$$
carries the action of $t_s$ to that of $t_1$.
\item The surjection induces a homomorphism of Grothen\-dieck groups
$${}^\delta I(R^+_s,R^+_1)\colon {\mathcal G}_{\text{fin}}({}^1 H_s) \rightarrow 
{\mathcal G}_{\text{fin}}({}^1 H)$$ 
preserving dimensions and satisfying
\begin{enumerate}
\item For every finite-dimensional representation $F_1$ of ${}^\delta
  G$, ${}^\delta I(R^+_s,R^+_1)$ carries the restriction of $F_1$ to
  ${}^1 H_s$ to the restriction of $F_1$ to ${}^1 H$.
\item For every weight $\lambda_s \in
  \Lambda_{\text{fin}}({}^\delta G,{}^1 H_s)$, write $\lambda =
  I(R^+_s,R^+_1)(\mu_s)$. Then
$${}^\delta I(R^+_s,R^+_1)(\lambda_{s,\text{ind}}) = \lambda_{\text{ind}}.$$
\item If in addition $\lambda_s \in
  \Lambda^{t_s}_{\text{fin}}({}^\delta G,{}^1 H_s)$ is fixed by $t_s$,
  then automatically $\lambda$ is fixed by $t_1$; and
$${}^\delta I(R^+_s,R^+_1)(\lambda_{s,\pm}) = \lambda_\pm.$$
\end{enumerate}
\item A $t_s$-fixed dominant weight $\lambda_s \in
  \Lambda^{t_s}_{\text{fin}}(G,H_s)$ is automatically the highest
  weight of unique a finite-dimensional irreducible
  representation $F(\lambda_s) \simeq F(\lambda_s)^\theta$.
\item A $t_1$-fixed dominant weight $\lambda\in
  \Lambda^{t_1}_{\text{fin}}(G,H)$ is the highest weight of a
  $\theta$-fixed finite-dimensional representation of $G$ if and only if 
$$\lambda = I(R^+_s,R^+_1)(\lambda_s) \qquad (\lambda_s \in
\Lambda^{t_s}_{\text{fin}}(G,H_s)).$$ 
\end{enumerate}
\end{proposition}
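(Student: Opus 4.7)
The plan is to treat the four parts of the proposition in sequence, with Part (3) carrying the main technical content. Throughout I would fix representatives $\delta_s \in N_{{}^\delta G}(H_s) \setminus G$ of $t_s$ and $\delta_1 \in N_{{}^\delta G}(H) \setminus G$ of $t_1$. Part (1) is then immediate from the uniqueness characterization of $i(R^+_s,R^+_1)$ in Proposition \ref{prop:findimlHswts}: both $i \circ t_s$ and $t_1 \circ i$ are $G(\mathbb{C})$-implemented isomorphisms $\mathfrak{h}_s^* \to \mathfrak{h}^*$ carrying $R^+_s$ onto $R^+_1$ (since each $t$ preserves its positive system), so they coincide. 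Passing to the induced lattice-level surjection $I(R^+_s,R^+_1)$ transports the $t_s$-action on $\Lambda_{\text{fin}}(H_s)$ to the $t_1$-action on $\Lambda_{\text{fin}}(H)$.

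For Part (2), I would exploit the Clifford-theoretic description \eqref{eq:exttorusreps} of $\widehat{{}^1 H_s}$ to obtain a natural $\mathbb{Z}$-basis of $\mathcal{G}_{\text{fin}}({}^1 H_s)$ consisting of the $\mu_{s,\text{ind}}$ (one for each $t_s$-pair in $\Lambda_{\text{fin}}(H_s)$) together with the $\mu_{s,\pm}$ (one pair for each $t_s$-fixed weight), and similarly on the $H$-side. Part (1) matches the $t_s$- and $t_1$-orbits under $I(R^+_s,R^+_1)$ size-by-size, so the formulas in (b) and (c) define ${}^\delta I(R^+_s,R^+_1)$ on generators. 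The compatibility in (a) is then checked weight-by-weight: Proposition \ref{prop:findimlHswts}(3) equates the $H_s$- and $H$-weight multiplicities of any $F_1$ via $I(R^+_s,R^+_1)$, and the ${}^1 H_s$-structure on each weight-isotypic piece of $F_1$ is controlled entirely by the $t_s$-orbit data, which by (1) match the $t_1$-orbit data on the $H$-side.

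Part (3) is the main step. Because $t_s$ fixes $R^+_s$, conjugation by $\delta_s$ permutes the positive root spaces of $\mathfrak{g}$ and so preserves the Borel subgroup $B_s = H_s N_s$. Hence, for any dominant weight $\mu_s$, the twist $F(\mu_s)^{\Ad(\delta_s)}$ is an irreducible of highest weight $t_s \cdot \mu_s$; when $\mu_s = \lambda_s$ is $t_s$-fixed, this gives $F(\lambda_s)^{\Ad(\delta_s)} \simeq F(\lambda_s)$, with uniqueness by standard highest-weight theory. Since $\delta_s$ and the strong involution $x$ both lie in the nontrivial coset ${}^\delta G \setminus G$, I may write $\delta_s = gx$ with $g \in G$; then $\Ad(\delta_s) = \Ad(g) \circ \theta$, so the twists $F(\lambda_s)^{\Ad(\delta_s)}$ and $F(\lambda_s)^\theta$ differ by the inner automorphism $\Ad(g)$ and are isomorphic as $G$-modules. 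Thus $F(\lambda_s)^\theta \simeq F(\lambda_s)$. The same computation applied when $\lambda_s$ is not $t_s$-fixed yields $F(\lambda_s)^\theta \simeq F(t_s \lambda_s) \not\simeq F(\lambda_s)$, supplying the converse needed for (4).

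Part (4) then follows by combining the previous parts. For the "if" direction, a $t_s$-fixed dominant $\lambda_s$ gives a $\theta$-fixed irreducible $F(\lambda_s)$ by (3), whose $R^+_1$-highest weight on restriction to $H$ coincides with $I(R^+_s,R^+_1)(\lambda_s) = \lambda$, since $i(R^+_s,R^+_1)$ preserves dominance and by Proposition \ref{prop:findimlHswts}(3). For "only if", given a $\theta$-fixed irreducible $F$ of $R^+_1$-highest weight $\lambda$, its $R^+_s$-highest weight $\lambda_s$ maps to $\lambda$ via $I(R^+_s,R^+_1)$, and the converse direction in (3) forces $\lambda_s$ to be $t_s$-fixed. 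The principal obstacle across the whole argument is the $\pm$-label bookkeeping in (2)(a): it requires choosing $\delta_s$ and $\delta_1$ so that their actions on $t$-fixed weight lines of any common $F_1$ agree, which reduces to checking that $\delta_s^2$ and $\delta_1^2$ act by the same central scalar. Writing $\delta_s = g_s x$ and $\delta_1 = g_1 x$ (both possible by Part (3)) lets one reduce this to the action of $x^2 = z \in Z(G)$, but a coherent choice of $g_s$, $g_1$ across all $\theta$-stable Cartans and compatible positive systems needs to be organized with care.
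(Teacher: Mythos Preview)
The paper does not supply a proof of this proposition; it is stated, followed only by the remark that in part (4) there can exist $t_1$-fixed weights that are not highest weights of $\theta$-fixed finite-dimensional representations, and then by Example~\ref{ex:gl2ext} illustrating exactly that phenomenon. So there is nothing to compare your argument against, and I will assess it on its own terms.

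Your overall strategy is sound and would yield a proof with minor tightening. In Part~(1), the sentence ``both $i \circ t_s$ and $t_1 \circ i$ are $G(\mathbb{C})$-implemented'' is not literally correct: each is implemented by an element of the nonidentity coset ${}^\delta G(\mathbb{C}) \setminus G(\mathbb{C})$. What you want instead is that $t_1 \circ i \circ t_s^{-1}$ is implemented by an element of $G(\mathbb{C})$ (two nonidentity-coset factors cancel) carrying $R^+_s$ to $R^+_1$, hence equals $i$ by the uniqueness in Proposition~\ref{prop:findimlHswts}. Equivalently: if $g \in G(\mathbb{C})$ implements $i$, then $g\delta_s g^{-1}$ normalizes $H(\mathbb{C})$, lies in the nonidentity coset, and preserves $R^+_1$, so it represents $t_1$ by Proposition~\ref{prop:extWC}(1). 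Your Part~(3) argument via $\delta_s = gx$ and $\Ad(\delta_s) = \Ad(g)\circ\theta$ is clean and correct, and the converse you extract for use in (4) is exactly what is needed. Part~(4) then follows as you say.

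The genuine work, as you correctly flag, is the $\pm$-label coherence in (2)(a) and (2)(c). The statement (2)(c) asserts that a \emph{specific} extension $\lambda_{s,+}$ goes to a \emph{specific} extension $\lambda_+$, which presupposes that the labels on the two sides have been pinned down compatibly. Your proposed reduction to matching $\delta_s^2$ and $\delta_1^2$ is the right idea; concretely, one can use Proposition~\ref{prop:extWC}(6) to take both $\delta_s$ and $\delta_1$ to be \emph{distinguished lifts} (conjugates of $\delta_f$ by the element $g$ of Proposition~\ref{prop:extWC}(4)), which forces $\delta_s^2$ and $\delta_1^2$ to be conjugate to $\delta_f^2 = 1$ and hence to agree as central elements. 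With that convention in place, (2)(a) is verified exactly as you outline, weight-space by weight-space, using Proposition~\ref{prop:findimlHswts}(3)(a) for the underlying $H$-multiplicities and Part~(1) to match the $t$-orbit structure.
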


It is important to note in (4) that there can exist $t_1$-fixed weights of
finite-dimensional representations that are {\em not} highest weights
of $\theta$-fixed finite-dimensional representations of $G$. 

\begin{example}\label{ex:gl2ext} Suppose $G=GL(2,{\mathbb R})$. We
  take for $H$ the Cartan subgroup
$$H\simeq {\mathbb C}^\times = \{re^{i\phi} \mid r >0, \phi \in
{\mathbb R}\},$$
coming from the multiplicative action on ${\mathbb C} \simeq {\mathbb R}^2$.
The Cartan involution (inverse transpose) acts by $\theta(z) =
\overline z^{-1}$, so this is also the action of $t_1$:
$$t_1(re^{i\phi}) = r^{-1}e^{i\phi}.$$
The characters of $H$ appearing in finite-dimensional representations
are
$$\lambda_{\nu,m}(re^{i\phi}) = r^{2\nu} e^{im\phi} \qquad (m\in {\mathbb Z},
\nu\in {\mathbb C}).$$
That is,
$$\Lambda_{\text{fin}}(G,H) \simeq {\mathbb C}\times {\mathbb Z},
\qquad t_1(\nu,m) = (-\nu,m).$$
The $t_1$-fixed weights are 
$$\Lambda^{t_1}_{\text{fin}}(G,H) = \{(0,m)\mid m\in {\mathbb Z}\}.$$

For a maximally split Cartan we choose the diagonal subgroup
$$H_s \simeq {\mathbb R}^\times \times {\mathbb R}^\times =
\{(d_1,d_2) \mid d_i \in {\mathbb R}^\times.$$
The action of $t_s$ is
$$t_s(d_1,d_2) = (d_2^{-1},d_1^{-1}).$$
The characters appearing in finite-dimensional representations are
$$
\lambda_{\nu,m,\delta}(d_1,d_2) = |d_1|^{\nu +m/2}|d_2|^{\nu-m/2}
(\sgn d_1)^\delta (\sgn d_2)^{m+\delta} \quad 
 (\nu \in {\mathbb C}, m\in {\mathbb Z}, \delta \in {\mathbb
   Z}/2{\mathbb Z}).$$ 
Therefore
$$\Lambda_{\text{fin}}(G,H_s) \simeq {\mathbb C}\times {\mathbb
  Z}\times {\mathbb Z}/2{\mathbb Z},
\qquad t_s(\nu,m,\delta) = (-\nu,m,m+\delta).$$
The $t_s$-fixed weights are
$$\Lambda^{t_s}_{\text{fin}}(G,H_s) = \{(0,m,\delta)\mid m\in
2{\mathbb Z}\}.$$
The map of the proposition is
$$I(R^+_s,R^+_1)(\nu,m,\delta) = (\nu,m).$$
This is indeed surjective (it is exactly two to one), and it
intertwines the actions of $t_s$ and $t_1$; but it does {\em not}
carry $t_s$-fixed weights onto $t_1$-fixed weights.

If for example we take $m=1$, we see that the character $re^{i\phi}
\mapsto e^{i\phi}$ of $H$, which is fixed by $t_1$, is not the highest
weight of any $\theta$-fixed finite-dimensional representation of
$G$. It is actually the highest weight of two different
representations of $G$, namely
$${\mathbb C}^2\otimes |\det|^{-1/2}, \qquad {\mathbb C}^2 \otimes
|\det|^{-1/2} \otimes (\sgn(\det)).$$
These two representations are interchanged by $\theta$ (since they are
Hermitian dual to each other).
\end{example}

In order to avoid difficulties like those appearing in this example,
we will use for extended groups only coherent families based on a
maximally split Cartan. The definition makes sense more generally,
however.

\begin{definition}[\cite{Vgreen}*{Definition
  7.2.5}]\label{def:extcohfam}
Suppose we are in the setting of \eqref{se:extfindiml}, and $\lambda \in
  {\mathfrak h}^{*,t_1}$ is any $t_1$-fixed weight. Write 
\begin{equation*}\lambda + \Lambda_{\text{fin}}({}^\delta G,{}^1 H) = \{ \lambda +
\psi \mid \psi \in \Lambda_{\text{fin}}({}^\delta G,{}^1 H) \},\end{equation*}
(a set of formal symbols), {\em the translate of
  $\Lambda_{\text{fin}}({}^\delta G,{}^1 H)$ by $\lambda$}.  Similarly, write
\begin{equation*}\lambda + {\mathcal G}_{\text{fin}}({}^1 H) = \{ \lambda +
\psi \mid \psi \in {\mathcal G}_{\text{fin}}({}^1 H) \},\end{equation*}
(notation as in \eqref{eq:extfingroth}---another set of formal
symbols, but with the abelian group structure from the
Grothendieck group) called {\em the translate of 
  ${\mathcal G}_{\text{fin}}({}^1 H)$ by $\lambda$}.  A {\em coherent
  family of virtual $({\mathfrak g},{}^\delta K)$-modules based on
  $\lambda + \Lambda_{\text{fin}}({}^\delta G,{}^1 H)$} is a map
\begin{equation*}\label{eq:classicalextcoh} \Phi_1\colon \lambda +
 \Lambda_{\text{fin}}({}^\delta G,{}^1 H) \rightarrow 
{\mathcal G}({\mathfrak g},{}^\delta K)\end{equation*}
(notation as in Proposition \ref{prop:grothgens}) satisfying two
conditions we will write below. Since the irreducible modules are a
${\mathbb Z}$-basis of the Grothendieck group, it is equivalent to
give a ${\mathbb Z}$-linear map
\begin{equation}\label{eq:grothextcoh} \Phi_1\colon \lambda +
  {\mathcal G}_{\text{fin}}({}^1 H) \rightarrow 
{\mathcal G}({\mathfrak g},{}^\delta K).\end{equation}
Here are the conditions.
\begin{enumerate}[label=\alph*)]
\item $\Phi_1(\lambda + \psi)$ has infinitesimal character
  $\lambda + d\psi \in {\mathfrak h}^*$; and
\item for every finite-dimensional algebraic representation $F_1$ of
  ${}^\delta G$,
$$F_1\otimes \Phi_1(\lambda + \psi_1) = 
\Phi_1(\lambda + [F\otimes \psi_1]).$$
In (a), if $\psi$ is two-dimensional, then the statement means that
each irreducible constituent of the restriction to $G$ of this virtual
representation has 
infinitesimal character given by one of the two summands of the Lie
algebra representation $\lambda + d\psi$. The brackets in (b) denote
the class in ${\mathcal G}_{\text{fin}}({}^1 H)$, and we are using
there the second formulation \eqref{eq:grothextcoh} of the notion of
coherent family.  
\end{enumerate}
The reader should verify that this formulation in the case of $G$
is precisely equivalent to the classical formulation in
Definition \ref{def:cohfam}. The language here avoids a tiresome
enumeration of the various possible decompositions of tensor products
of irreducible representations of ${}^1 H$.
\end{definition}

There is an easy construction of some coherent families for ${}^\delta G$.
\begin{example}\label{ex:indcoh} In the setting of Definition
  \ref{def:extcohfam}, suppose $\Phi$ is a coherent family based on
  $\lambda+\Lambda_{\text{fin}}(G,H)$. Extend $\Phi$ to a ${\mathbb
    Z}$-linear map
$$\Phi\colon \lambda + {\mathcal G}_{\text{fin}}(H) \rightarrow
{\mathcal G}({\mathfrak g},K)$$
as in \eqref{eq:grothextcoh}.  Define 
$$\Phi_{\text{ind}}(\lambda + \psi_1) = \Ind_{({\mathfrak g},K)}^{({\mathfrak
    g},{}^\delta K)}(\Phi(\lambda + \psi_1|_H)).$$
That is, we restrict $\psi_1$ to $H$, getting one or two characters of
$H$; we add the values of the coherent family for $G$ at those
characters; then we induce from $G$ to ${}^\delta G$.

That this construction gives a coherent family for ${}^\delta G$ is
elementary. The reason it is not so interesting is this. We saw in
Theorem \ref{thm:cohfam} that interesting information is encoded by a
coherent family many of whose values are irreducible.  The values of
the coherent family $\Phi_{\text{ind}}$ are all induced; so the values
can never include 
the type one irreducible modules for ${}^\delta G$ (Definition
\ref{def:typeext}), arising by extension of $\theta$-fixed irreducible
modules for $G$.   
\end{example}

Here is a description of more interesting coherent families.

\begin{theorem}\label{thm:extcohfam}
  Suppose $\Gamma_1 = ({}^1H,\gamma_1,R^+_{i{\mathbb R}})$ is a Langlands
  parameter for an irreducible representation $J(\Gamma_1)$ of
  ${}^\delta G$, with $t_1$-fixed differential 
$$d\gamma_1 = (d\gamma_1)^{t_1} \in {\mathfrak h}^*$$
representing the infinitesimal character of $J(\Gamma_1)$.
Just as in Theorem \ref{thm:cohfam}, we write $R(d\gamma_1)$
  for the set of integral roots (Definition \ref{def:introots}), and
  fix a set $R^+(d\gamma_1)$ of positive integral roots, subject to  the
  requirements:  
\begin{enumerate}[label=\alph*)]
\item if $d\gamma_1(\alpha^\vee)$ is a positive integer, then
  $\alpha \in R^+(d\gamma_1)$;
\item if $d\gamma_1(\alpha^\vee) = 0$, but
  $d\gamma_1(\theta\alpha^\vee) > 0$, then $\alpha \in
  R^+(d\gamma_1)$;
\item $R^+(d\gamma_1) \supset R^+_{i{\mathbb R}}$; and
\item $t_1(R^+(d\gamma_1)) = R^+(d\gamma_1)$.
\end{enumerate}
(Such a system always exists.) Fix a set $R^+= t_1(R^+)$ of positive
roots for $H$ in $G$ containing 
$R^+(d\gamma_1)$. Suppose finally that $H_s$ is a maximally split
Cartan, with any positive root system $R^+_s$, as in Proposition
\ref{prop:findimlHswts}; we will use the maps $i(R^+_s,R^+)$ and
$I(R^+_s,R^+)$ defined in that proposition. Set
$$d\gamma_{1,s} = i(R^+_s,R^+)^{-1}(d\gamma_1) \in {\mathfrak h}_s^*, \quad
R^+(d\gamma_{1,s}) =  i(R^+_s,R^+)^{-1}(R^+(d\gamma_1)). $$
There is a unique coherent family $\Phi_1$ of virtual $({\mathfrak g},K)$-modules based on 
  $d\gamma_{1,s} + \Lambda_{\text{fin}}(G,{}^1 H_s)$, with the following
  characteristic properties:
\begin{enumerate}[label=\alph*)]
\item $\Phi_1(d\gamma_{1,s}) = [J(\Gamma_1)]$; and
\item whenever 
\begin{enumerate}[label=\roman*)]
\item $d\gamma_{1,s} + d\psi_s$ is integrally
  dominant (with respect to $R^+(d\gamma_{1,s})$), and
\item $\Gamma_1$ is type one, and
\item  $\psi_s$ is type one,
\end{enumerate}  
then $\Phi_1(d\gamma_{1,s} + \psi_s)$  is (the class 
Grothendieck group 
of) an irreducible representation or zero. 
\end{enumerate}
Let $\Gamma$ be an irreducible constituent of $\Gamma_1|_H$ (of which
there are one or two, depending on the type of  $\Gamma_1$).  Write $\Phi$
for the coherent family for $G$ based on $d\gamma_{1,s} + 
\Lambda_{\text{fin}}(G,H_s)$ constructed in Theorem \ref{thm:cohfam}. 
\begin{enumerate}
\item If $\Gamma_1 = \Gamma_{\text{ind}}$ is type two (equivalently, if
  $\Gamma$ is not fixed by $t_1$) then $\Psi_1 = \Psi_{\text{ind}}$
  (Example \ref{ex:indcoh}). In particular,
$$\Phi_1(d\gamma_{1,s}+\psi_1)|_G = \sum_{\psi \subset \psi_1|H}
\Phi(d\gamma_{1,s} + \psi) + \Phi(d\gamma_{1,s} + \psi)^{t_1}.$$
\end{enumerate}
Henceforth {\em assume that $\Gamma_1$ is type one}; equivalently, that
$\Gamma$ is fixed by $t_1$.
 Write $\Pi(d \gamma_1)$ for the set of simple roots of $R^+(d\gamma_1)$,
 and define the {\em $\tau$-invariant of $\Gamma_1$} $\tau_s(\Gamma_1)
 =_{\text{def}} \tau_s(\Gamma)\subset\Pi_s(d\gamma_1)$ as in Theorem
 \ref{thm:cohfam}.  
\begin{enumerate}[resume]
\item $$\Phi_1(d\gamma_{1,s}+\psi_1)|_G = \sum_{\psi \subset \psi_1|H}
\Phi(d\gamma_{1,s} + \psi).$$
\item The $\tau$-invariant is preserved by $t_s$.
\item If $d\gamma_s + d\psi_s$ is integrally dominant, then $\Phi(d\gamma_s
  + \psi_s) = 0$ if and only if there is a simple root $\alpha_s \in
  \tau_s(\Gamma)$ with $(d\gamma + d\psi)(\alpha_s^\vee) = 0$.
\item Suppose that $d\gamma_s + d\psi_s$ is integrally dominant, and does
  not vanish on any root in $\tau_s(\Gamma_1)$.  Write 
$$\psi = I(R^+_s,R^+)(\psi_s) \in \Lambda_{\text{fin}}({}^1 H).$$
 Then
$$\Gamma_1 + \psi =_{\text{def}} ({}^1 H,\gamma + \psi,R^+_{i{\mathbb R}})$$
is a Langlands parameter; and
$$\Phi(d\gamma_s + \psi_s) = [J(\Gamma + \psi)].$$
\item The set of all the irreducible constituents of the various virtual
  representations $\Psi_1(d\gamma_s + \psi_s)$ is equal to the set of
  irreducible constituents of all the tensor products $F_1\otimes
  J(\Gamma_1)$, with $F_1$ a finite-dimensional representation of
  ${}^\delta G$.
\end{enumerate}
\end{theorem}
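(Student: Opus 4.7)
The plan is to mimic the proof of Theorem \ref{thm:cohfam} in the presence of the extended group action. The key observation is that for a $({\mathfrak g},{}^\delta K)$-module $V$ and a finite-dimensional algebraic representation $F_1$ of ${}^\delta G$, the tensor product $F_1\otimes V$ inherits a natural ${}^\delta G$-action, and the projection $P_\xi$ onto a generalized infinitesimal character commutes with this structure (because ${\mathfrak Z}({\mathfrak g})$ acts on both sides, and its action is unaffected by passing from $G$ to ${}^\delta G$). This lets us carry out the Jantzen--Zuckerman translation construction at the level of ${}^\delta G$ throughout. I treat the two cases of the theorem separately.

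When $\Gamma_1 = \Gamma_{\text{ind}}$ is type two, I take $\Phi_1 = \Phi_{\text{ind}}$ as in Example \ref{ex:indcoh}, which immediately yields claim (1). At the base point $\Phi_1(d\gamma_{1,s}) = \Ind[J(\Gamma)] = [J(\Gamma_1)]$ by Proposition \ref{prop:Gdeltareps}(5), giving property (a) of Definition \ref{def:extcohfam}. Property (b) follows from the standard compatibility between induction and tensor product, $F_1\otimes \Ind_G^{{}^\delta G}(V) \simeq \Ind_G^{{}^\delta G}(F_1|_G\otimes V)$. The irreducibility clause in the theorem's property (b) is vacuous here since $\Gamma_1$ is not type one. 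Uniqueness is automatic: a coherent family for ${}^\delta G$ whose $G$-restriction is not $t_1$-stable must have its values forced to be induced modules.

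When $\Gamma_1$ is type one, I construct $\Phi_1$ directly by translation, mimicking the proof of Theorem \ref{thm:cohfam} (see \Cite{Vgreen}, Proposition 7.2.22): for each finite-dimensional ${}^\delta G$-module $F_1$ whose ${}^1H_s$-weights include $\psi_1$, extract the $\psi_1$-isotypic part of $P_{d\gamma_{1,s}+d\psi_s}(F_1\otimes J(\Gamma_1))$; varying $F_1$ yields a well-defined virtual ${}^\delta G$-module by the same argument as Jantzen--Zuckerman, and the resulting assignment is coherent by construction. Because restriction along $G\hookrightarrow {}^\delta G$ commutes with tensor products and with $P_\xi$, the restriction of $\Phi_1(d\gamma_{1,s}+\psi_1)$ to $G$ is the sum of the $\Phi(d\gamma_{1,s}+\psi)$ over the constituents $\psi\subset\psi_1|_H$, yielding claim (2). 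Claim (4) is then immediate from Theorem \ref{thm:cohfam}(1) applied to $\Phi$. Claim (5) follows because, under its hypotheses, $\Phi(d\gamma_{1,s}+\psi_s)$ is an irreducible $t_1$-fixed $G$-module by Theorem \ref{thm:cohfam}(2), and the construction lifts it canonically to an irreducible ${}^\delta G$-extension, which must be $J(\Gamma_1+\psi)$ at the base by continuity of the coherent family. Claim (6) then reduces to its classical counterpart applied to $\Phi$.

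For claim (3), the $t_s$-stability of $\tau_s(\Gamma_1) = \tau_s(\Gamma)$ follows because $\Gamma$ is $t_1$-fixed (as $\Gamma_1$ is type one) and the combinatorial recipe of Definition \ref{def:tauinv} depends only on $\Gamma$ and on $t_s$-equivariant root-theoretic data; hence it is automatically preserved by $t_s$. The main obstacle I expect is the bookkeeping in the type one construction to ensure that the two type-one extensions $\psi_{s,+}$ and $\psi_{s,-}$ of a $t_s$-fixed character $\psi_s$ produce the two distinct $({\mathfrak g},{}^\delta K)$-extensions of the underlying irreducible $\Phi(d\gamma_{1,s}+\psi_s)$, consistently as $F_1$ varies. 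The resolution is to track how the pinned lift $\delta_1$ of \eqref{se:pinning} acts on highest-weight spaces of finite-dimensional ${}^\delta G$-modules: the sign of this action distinguishes $\psi_{s,+}$ from $\psi_{s,-}$ and propagates uniquely from the chosen base extension $J(\Gamma_1)$ through the tensor-product projection. Uniqueness of $\Phi_1$ follows from this same observation, since the coherence condition together with the prescribed value at $\psi_1 = 0$ determines all other values.
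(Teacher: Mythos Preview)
Your handling of the type two case matches the paper's exactly: both take $\Phi_1 = \Phi_{\text{ind}}$ and the verifications are immediate.

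In the type one case you take a different route from the paper. You attempt to build $\Phi_1$ directly by translating the irreducible $J(\Gamma_1)$, extracting ``the $\psi_1$-isotypic part'' from $P_{d\gamma_{1,s}+d\psi_s}(F_1\otimes J(\Gamma_1))$ as $F_1$ varies. The paper instead first constructs coherent families $\Phi_1^{\text{std}}$ for the \emph{extended standard modules} $I_{\quo}(\Gamma_1)$, using the explicit realizations of Definitions~\ref{def:Lextparam} or~\ref{def:VZextparam} (parabolic and cohomological induction both behave well under translation, so this is straightforward); it then obtains the coherent family for $J(\Gamma_1)$ as a ${\mathbb Z}$-linear combination of these, following the argument for Corollary~7.3.23 of \Cite{Vgreen}. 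The payoff of the paper's approach is that the choice of extension (the ``sign'' distinguishing $\Gamma_+$ from $\Gamma_-$) is built into the explicit standard-module construction from the start, so the consistency you worry about in your final paragraph is automatic.

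Your approach is not wrong in outline, but the point you flag as ``the main obstacle'' is a genuine gap that you do not close. Saying that the sign of the $\delta_1$-action on highest-weight spaces ``propagates uniquely from the chosen base extension $J(\Gamma_1)$ through the tensor-product projection'' is an assertion, not an argument: one must verify that different choices of $F_1$ (whose weight multisets overlap) yield compatible answers, and that this compatibility pins down the extension at every integrally dominant $\psi_{s,\pm}$ to be exactly $J(\Gamma_1+\psi_\pm)$ in the sense of the parameter shift of part~(5). In practice, proving this consistency amounts to knowing how extended standard modules translate, which is precisely what the paper's construction supplies. Your phrase ``by continuity of the coherent family'' in the argument for (5) has no content here, since the parameter set is discrete. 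A smaller omission: you do not address the existence of a $t_1$-stable $R^+(d\gamma_1)$ satisfying (a)--(d), which the paper proves explicitly using the $t_1$-stable $R^+_1$ attached to the extended torus.
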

\begin{proof}
The first issue is the existence of a $t_1$-stable set of positive
integral roots. Recall from \eqref{se:extfindiml} the $t_1$-stable set
of positive roots $R^+_1$ containing $R^+_{i{\mathbb R}}$. Using
these, we can construct the desired set of positive integral roots by
\begin{equation*}\begin{aligned}
R^+(d\gamma_1) = \{\alpha \in &R(G,H) \mid d\gamma_1(\alpha^\vee) \in
{\mathbb Z}^{>0},\ \text{or} \\
&d\gamma_1(\alpha^\vee) = 0,\ {and}\ d\gamma_1(\theta\alpha^\vee) >
0,\ \text{or} \\
&d\gamma_1(\alpha^\vee) = d\gamma_1(\theta\alpha^\vee) = 0,\ {and}\
\alpha \in R^+_1 \}. 
\end{aligned}\end{equation*}
Conditions (a) and (b) from the theorem are obviously
satisfied. Condition (c) follows from the requirement that $d\gamma_1$
is weakly dominant for $R^+_{i{\mathbb R}}$ (Theorem \ref{thm:LC}),
and the hypothesis that $R^+_{i{\mathbb R}} \subset R^+_1$ (Definition
\ref{def:extLP}. For (d), since $t_1$ fixes both $d\gamma_1$ and
$R^+_1$, and $t_1$ commutes with the action of $\theta$ on $H$, it
follows that $t_1$ preserves $R^+(d\gamma_1)$.

For the coherent family, suppose first that $\Gamma_1 =
\Gamma_{\text{ind}}$ is of type two, so that
\begin{equation*}
J(\Gamma_1) = J(\Gamma_{\text{ind}}) = J(\Gamma)_{\text{ind}}.
\end{equation*}
Let $\Phi$ be the coherent family for $J(\Gamma)$ based on $d\gamma_s
+ \Lambda_{\text{fin}}(G,H_s)$ (Theorem \ref{thm:cohfam}), and form
the induced coherent family $\Phi_1 = \Phi_{\text{ind}}$ as in Example
\ref{ex:indcoh}. Then the characteristic properties (a) and (b) are
immediate, assertion (1) is true by definition, and the rest of the
theorem does not apply.

Next suppose that $\Gamma_1$ is type one, so that it is one-dimensional
and extends the Langlands parameter $\Gamma$ for $G$. There are two of
these extensions $\Gamma_+$ and $\Gamma_-$, to each of which we would
like to attach a coherent family $\Phi_{\pm}$.  What is easy is that
\begin{equation*}\Phi_+ +\Phi_- = \Phi_{\text{ind}},\end{equation*}
in the sense that the induced coherent family behaves as the theorem
says that the sum ought to; what is harder is to find the two summands
individually. For this we return to the proof of
Theorem \ref{thm:cohfam} in \Cite{Vgreen}. Because parabolic and
cohomological induction behave well with respect to translation
functors, we can use either Definition \ref{def:Lextparam} or
Definition \ref{def:VZextparam} to construct a 
coherent family $\Phi^{\text{std}}_1$ for
 ${}^\delta G$ (based on ${}^1 H$) satisfying
$$\Phi^{\text{std}}_1(d\gamma_1) =
I_{\quo}(\Gamma_1).$$
Using these building
blocks (and taking ${\mathbb Z}$-linear combinations) we
can use the proof of Corollary 7.3.23 in \Cite{Vgreen} to construct
the coherent family $\Phi_1$ that we seek. 

The construction makes clear the description in (2) of the restriction
to $G$.  The statement in (3) means that the corresponding set of
simple roots for $R^+(d\gamma_1)$ is preserved by $t_1$. This is
obvious from the description of $\tau$ in Theorem
\ref{thm:cohfam}. The statement in (4) about vanishing reduces at once
to $G$, where it is part of Theorem \ref{thm:cohfam}.  The
calculation in (5) of Langlands parameters follows from the construction
of $\Phi_1$. Part (6) is a formal consequence of the
definition of coherent family. 
\end{proof}

\begin{subequations}\label{se:exttransfunc}
In order to discuss translation functors for extended groups, we
need the language of \eqref{se:charformulas} and \eqref{se:transfunc}
explicitly extended to ${}^\delta G$. So we fix a maximally split
Cartan and a minimal parabolic subgroup
\begin{equation}
H_s = T_sA_s, \qquad M_s = K^{A_s}, \qquad P_s = M_sA_sN_s
\end{equation}
of $G$, and positive imaginary roots $R^+_{s,i{\mathbb R}}$
for $M$. These define a positive system
\begin{equation*}
R^+_s = R^+_{s,i{\mathbb R}} \cup \text{(roots of $H_s$ in $N_s$)} =
R^+_{s,i{\mathbb R}} \cup R(N_s,H_s)
\end{equation*}
for $G$.  Define a Weyl group element $w_s$ by the requirement
\begin{equation*}
\theta(R^+_s) = R^+_{s,i{\mathbb R}} \cup -R(N_s,H_s) = w_s^{-1}(R^+_s).
\end{equation*}
This is the long element of the ``little Weyl group'' of $A_s$ in $G$,
so in particular it belongs to $W(G,H_s)$. Setting
\begin{equation*}
t_s = w_s\theta_{H_s}, \qquad t_s(R^+_s) = R^+_s
\end{equation*}
we find that a representative $\delta_s$
for $t_s$ generates with $H_s$ an extended torus ${}^1 H_s$.

Fix also a weight
\begin{equation*}
\lambda_0 \in {\mathfrak h}_s^*
\end{equation*}
corresponding to a (possibly singular) infinitesimal character 
\begin{equation*}
\xi_{\lambda_0}\colon {\mathfrak Z}({\mathfrak g}) \rightarrow
{\mathbb C}.
\end{equation*}
To simplify the discussion slightly (although it is not necessary) we
assume that $\lambda_0$ is real (\cite{Vgreen}*{Definition 5.4.11}). We
need (for the abstract theory of translation functors) to assume that
$\lambda_0$ is integrally dominant for $R^+_s$; and we assume also
that $\lambda_0$ is real dominant (\eqref{eq:realdom}):
\begin{equation}\label{eq:sdominant}
\lambda_0(\alpha^\vee) \ge 0, \qquad (\alpha \in R^+_s).
\end{equation}
This requirement determines $\lambda_0$ uniquely in its Weyl group
orbit. Because $t_s$ preserves $R^+_s$, the weight $t_s(\lambda_0)$
shares the dominance property \eqref{eq:sdominant}.  As a consequence,
\begin{equation}\label{eq:lambda0infl}\begin{aligned}
t_s(\lambda_0) = \lambda_0 &\iff t_s(\lambda_0) \in W\cdot
\lambda_0\\ 
&\iff \text{infinitesimal character $\xi_{\lambda_0}$
  fixed by $\theta$}
\end{aligned}
\end{equation}

Define
\begin{equation}\label{eq:setofextLP}
{}^\delta B(\lambda_0) = \ \text{equiv. classes of extended parameters
of infl. character $\lambda_0$;}
\end{equation}
as usual for the extended group, ``infinitesimal character
$\lambda_0$'' means ``annihilated by
$[\ker\xi_{\lambda_0}][\ker\xi_{t_s\lambda_0}]$.''

If $\lambda_0 \ne t_s(\lambda_0)$, then by \eqref{eq:lambda0infl} the
infinitesimal character $\xi_{\lambda_0}$ is not fixed by $\theta$,
so no representation of $G$ of infinitesimal character
$\xi_{\lambda_0}$ is fixed by $\theta$. By Clifford theory
(Proposition \ref{prop:extpairrep}), in this case all representations
of infinitesimal character $\lambda_0$ are induced from $G$. Once all
the representations are induced, then any question about character
theory can be reduced at once to $G$.  (We will recall in detail how
this works in Section \ref{sec:extklv}.)

We therefore only need to analyze the case
\begin{equation*}
t_s(\lambda_0) = \lambda_0.
\end{equation*}
Write $R(\lambda_0)$ for the integral roots (Definition
\ref{def:introots}), and write
\begin{equation*}
R^+(\lambda_0) =_{\text{def}} R(\lambda_0) \cap R^+_s,
\end{equation*}
which we have assumed makes $\lambda_0$ integrally dominant.  Write
\begin{equation*}
\Pi(\lambda_0) \subset R^+(\lambda_0),
\end{equation*}
and 
\begin{equation*}
\{\alpha \in \Pi(\lambda_0) \mid \lambda_0(\alpha^\vee) = 0
\}=_{\text{def}} \Pi^{\lambda_0}
\end{equation*}
for the subset of simple roots orthogonal to $\lambda_0$. Since
$t_s(\lambda_0) = \lambda_0$, the sets $\Pi^{\lambda_0} \subset
\Pi(\lambda_0)$ are preserved by $t_s$. If $X_1\in {}^\delta
B(\lambda_0)$ is a type one irreducible, extending a type one
irreducible $X$ for $G$, we have defined the $\tau$-invariant 
\begin{equation}\label{eq:type1tau}
\tau(X_1) = \tau(X) \subset \Pi(\lambda_0)
\end{equation}
in Theorem \ref{thm:extcohfam}.  If $Y_1 = Y_{\text{ind}}$ is type two,
then we define
\begin{equation}\label{eq:type2tau}
\tau(Y_{\text{ind}}) =_{\text{def}} (\tau(Y),\tau(Y^{t_s})) = (\tau(Y),t_s(\tau(Y)),
\end{equation}
an unordered pair of subsets of $\Pi(\lambda_0)$.  (In order to unify
the notation, we may sometimes regard $\tau(X_1)$ as the unordered
pair $(\tau(X),\tau(X))$.)

Finally, we fix a type one weight
\begin{equation}\label{eq:regularizingphi}
\phi \in \Lambda_{\text{fin}}(G,H_s)^{t_s}
\end{equation}
of a finite-dimensional representation of $G$, dominant for $R^+_s$,
and such that 
\begin{equation*}
\lambda_0 + d\phi \text{\ is dominant and regular for $R^+(\lambda_0)$.}
\end{equation*}
Just as in \eqref{eq:transrealdom}, for the purpose of studying
signatures we will want to assume in addition
\begin{equation}\label{eq:extrealdom}
\lambda_0 + d\phi \text{\ is dominant and regular for $R^+_s$.}
\end{equation}
(One way to achieve this is to take $\phi$ to be the sum of the roots
in $R^+_s$.) Write
\begin{equation*}
\phi_{\pm} \in \Lambda_{\text{fin}}({}^\delta G,{}^1 H_s) 
\end{equation*}
for the two extensions of $\phi$ to ${}^1 H_s$. 
Now we can write 
\begin{equation*}
F_{\phi_{\pm}} = \text{\ finite-dimensional irreducible for ${}^\delta
  G$ of highest weight $\phi_{\pm}$.}
\end{equation*}

In order to define the Jantzen-Zuckerman translation functors, we need
the functor (on ${\mathfrak g}$-modules)
\begin{equation*}
P_{\lambda_0}(M) =\  \text{\parbox{.6\textwidth}{largest submodule of
    $M$ on which $z - \xi_{\lambda_0}(z)$ acts nilpotently, all 
  $z \in {\mathfrak Z}({\mathfrak g})$}} 
\end{equation*}
If $M$ is a $({\mathfrak g},{}^\delta K)$ module, so is $P_{\lambda_0}(M)$.  On
the category of ${\mathfrak Z}({\mathfrak g})$-finite modules
(for example, on $({\mathfrak g},{}^\delta K)$-modules of finite length) the
functor $P_{\lambda_0}$ is exact.  The translation functors we want are
\begin{equation}\label{eq:exttransfunc}
T_{\lambda_0}^{\lambda_0 + \phi_+}(M) =_{\text{def}} P_{\lambda_0+
  d\phi}\left(F_{\phi_+} \otimes P_{\lambda_0}(M)\right),
\end{equation}
(``translation away from the wall'') and
\begin{equation}
T_{\lambda_0+\phi_+}^{\lambda_0}(N) =_{\text{def}} P_{\lambda_0}\left(F^*_{\phi_+}
  \otimes P_{\lambda_0+d\phi}(N)\right)
\end{equation}
(``translation to the wall.'')  These are clearly exact functors on
the category of finite-length $({\mathfrak g},{}^\delta K)$-modules, so they
also define group endomorphisms of the Grothendieck group ${\mathcal
  G}({\mathfrak g},{}^\delta K)$. 

These translation functors are quite special, in the sense that each
infinitesimal character is assumed to be
represented by a $t_s$-fixed weight.  There are some reasonable
statements to be made in more generality; but most of those statements
concern type two representations, which are induced from $G$; so they
can be deduced easily from (for example) Corollary
\ref{cor:transfunc}.  We are concerned here only with the more
delicate situation for type one representations.
\end{subequations} 

\begin{corollary}[Jantzen \Cite{Jantzen}, Zuckerman
  \Cite{Zuck}; see \Cite{Vgreen}*{Proposition 7.2.22}]\label{cor:exttransfunc} 
Suppose we are in the setting of \eqref{se:exttransfunc}, so that in
particular $\xi_{\lambda_0}$ (possibly singular) and
$\xi_{\lambda_0+\phi}$ (regular) are infinitesimal characters fixed by
twisting by $\theta$.
\begin{enumerate}
\item The functors $T_{\lambda_0+\phi_+}^{\lambda_0}$ and
  $T_{\lambda_0}^{\lambda_0+\phi_+}$ are left and right adjoint to each
  other:
$$\Hom_{{\mathfrak g},{}^\delta K}(N, T_{\lambda_0}^{\lambda_0+\phi_+}(M))
\simeq \Hom_{{\mathfrak g},{}^\delta K}(T_{\lambda_0+\phi_+}^{\lambda_0}(N),M),$$
and similarly with the two functors interchanged.
\item If $\Phi_1$ is a coherent family (Definition
  \ref{def:extcohfam}) on $\lambda_0+\Lambda_{\text{fin}}(G,{}^1
  H_s)$,
$$T_{\lambda_0+\phi_+}^{\lambda_0}(\Phi_1(\lambda_0 + \phi_+)) =
\Phi_1(\lambda_0).$$ 
\item The translation functor $T_{\lambda_0+\phi_+}^{\lambda_0}$
  (translation to the wall) carries every irreducible 
$({\mathfrak g},{}^\delta K)$-module of infinitesimal character $\lambda_0 +
d\phi$ to an irreducible of infinitesimal character $\lambda_0$, or to
zero. The irreducible representations mapped to zero are those
with some root of $\Pi^{\lambda_0}$ in the $\tau$-invariant
(Definition \ref{def:tauinv}, \eqref{eq:type2tau}).
\item The (inverse of the) translation functor of (3) defines an
  injective correspondence of Langlands parameters (and therefore of
  irreducible representations) 
$$t_{\lambda_0}^{\lambda_0+\phi_+}\colon {}^\delta B({\lambda_0}) \hookrightarrow
{}^\delta B({\lambda_0 + d\phi}).$$ 
The image consists of all irreducible representations (of
infinitesimal character $\lambda_0 + d\phi$) having no root
of $\Pi^{\lambda_0}$ in the $\tau$-invariant.
\item Suppose $\Gamma_2 = ({}^2 H,\gamma_2,R^+_{i{\mathbb R}}) \in 
  {}^\delta B(\lambda_0)$ is a Langlands parameter (at the possibly singular
  infinitesimal character $\lambda_0$) for an irreducible
  representation $J(\Gamma_2)$ of ${}^\delta G$. Choose positive integral
  roots $R^+(d\gamma_2)$ as in Theorem \ref{thm:extcohfam}, so that 
$$i(\lambda_0,R^+(\lambda_0),d\gamma_2,R^+(d\gamma_2))\colon {\mathfrak
  h}_s^* \rightarrow {\mathfrak h}^*$$
is an isomorphism as in \eqref{se:movelambda0}, and a map of virtual
representations 
$${}^\delta I(\lambda_0,R^+(\lambda_0),d\gamma_2,R^+(d\gamma_2))\colon
{\mathcal G}_{\text{fin}}({}^1 H_s) \rightarrow {\mathcal
  G}_{\text{fin}}({}^1 H)$$  
as in Proposition \ref{prop:findimlextHswts}(2). Define 
$$\phi(\Gamma_2)_+ =
{}^\delta I(\lambda_0,R^+(\lambda_0),d\gamma_2,R^+(d\gamma_2))(\phi_+),$$
a (type one) ${}^2 H$-weight of a finite-dimensional representation of
${}^\delta G$ (Proposition \ref{prop:findimlextHswts}). Then the
correspondence of Langlands parameters in (4) is
$$t_{\lambda_0}^{\lambda_0 + \phi_+}(\Gamma_2) = \Gamma_2 + \phi(\Gamma_2)_+$$
(notation as in Theorem \ref{thm:extcohfam}(5)).
\item Suppose $\Gamma_1 = ({}^2 H,\gamma_1,R^+_{i{\mathbb R}}) \in
  {}^\delta B(\lambda_0+d\phi)$ is a Langlands parameter (at the regular
  infinitesimal character $\lambda_0 + d\phi$). Write $R^+(d\gamma_1)$
  for the unique set of positive integral roots making $d\gamma_1$
  integrally dominant, so that 
$$i(\lambda_0+d\phi,R^+(\lambda_0),d\gamma_1,R^+(d\gamma_1))\colon {\mathfrak
  h}_s^* \rightarrow {\mathfrak h}^*$$
is an isomorphism as in \eqref{se:movelambda0}.  Define
$$\phi(\Gamma_1)_+ = {}^\delta
I(\lambda_0,R^+(\lambda_0),d\gamma_0,R^+(d\gamma_0))(\phi_+),$$ 
a (type one) ${}^2 H$-weight of a finite-dimensional representation of
${}^\delta G$ (Proposition \ref{prop:findimlextHswts}). Then 
$$t_{\lambda_0+\phi_+}^{\lambda_0}(\Gamma_1) =_{\text{def}} (H,\gamma_1
- \phi(\Gamma_1)_+, R^+_{i{\mathbb R}}) =_{\text{def}}\Gamma_1 -
\phi(\Gamma_1)_+$$ 
is a weak extended Langlands parameter (Definition \ref{def:extLP}).  This
correspondence is a left inverse to the injection of (4):
$$t_{\lambda_0+\phi_+}^{\lambda_0}\circ t_{\lambda_0}^{\lambda_0 +
  \phi_+}(\Gamma_0) = \Gamma_0.$$
\item Translation to the wall carries
  standard representations to weak standard representations:
$$T_{\lambda_0 + \phi_+}^{\lambda_0}(I_{\quo}(\Gamma_1)) =
I_{\quo}(\Gamma_1 - \phi(\Gamma_1)_+).$$
In particular,
$$T_{\lambda_0 + \phi_+}^{\lambda_0}(I_{\quo}(\Gamma_0+\phi(\Gamma_0)_+) =
I_{\quo}(\Gamma_0).$$
\item Translation to the wall carries irreducibles to
  irreducibles or to zero:
$$T_{\lambda_0 + \phi_+}^{\lambda_0}(J(\Gamma_1)) = \begin{cases}
J(\Gamma_1 - \phi(\Gamma_1)_+) & (\tau(J(\Gamma_1)) \cap \Pi^{\lambda_0}
= \emptyset),\\
0 & (\tau(J(\Gamma_1)) \cap \Pi^{\lambda_0}
\ne \emptyset). \end{cases} $$
In particular,
$$T_{\lambda_0 + \phi_+}^{\lambda_0}(J(\Gamma_0+\phi(\Gamma_0)_+) =
J(\Gamma_0).$$
\end{enumerate}
\end{corollary}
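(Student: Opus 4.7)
The plan is to reduce every assertion to the corresponding statement of Corollary \ref{cor:transfunc} for $G$, by a careful application of Clifford theory (Proposition \ref{prop:extpairrep}) together with the coherent family construction of Theorem \ref{thm:extcohfam}. Parts (1) and (2) are formal and independent of the extension to ${}^\delta G$: adjointness in (1) follows from the fact that $F^*_{\phi_+}$ is the contragredient of $F_{\phi_+}$ in the category of $({\mathfrak g},{}^\delta K)$-modules, together with the standard adjointness of the projections $P_{\lambda_0}$ and $P_{\lambda_0+d\phi}$; while (2) is immediate from the defining relation of a coherent family, since the projection $P_{\lambda_0}$ picks out precisely the weight $-\phi_+$ summand in the decomposition of $F^*_{\phi_+}\otimes \Phi_1(\lambda_0+\phi_+)$.

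For parts (3), (4), (7), and (8), I would split the analysis according to type. A type two irreducible $J_1$ for ${}^\delta G$ has the form $\Ind_{({\mathfrak g},K)}^{({\mathfrak g},{}^\delta K)}(J)$ with $J\not\simeq J^{\theta}$; since restriction to $G$ commutes with tensoring with any $G$-module (in particular with $F_{\phi_+}|_G$), and since $P_{\lambda_0}$ on ${}^\delta G$-modules restricts to the sum $P_{\lambda_0}\oplus P_{t_s\lambda_0} = P_{\lambda_0}$ on $G$-modules (using $t_s(\lambda_0)=\lambda_0$), one has $T_{\lambda_0+\phi_+}^{\lambda_0}(J_1) = \Ind(T_{\lambda_0+\phi}^{\lambda_0}(J))$. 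Corollary \ref{cor:transfunc}(3),(8) applied to $J$, together with \eqref{eq:type2tau}, then gives the type two piece of the assertions. For a type one irreducible $J_1$ extending a $\theta$-fixed $J = J(\Gamma)$, I would invoke the coherent family $\Phi_1$ of Theorem \ref{thm:extcohfam} satisfying $\Phi_1(d\gamma_{1,s}) = [J_1]$; part (b) of that theorem, combined with part (2) above, implies that $T_{\lambda_0+\phi_+}^{\lambda_0}(J_1) = \Phi_1(\lambda_0)$ is either an irreducible class or zero, and the vanishing criterion in Theorem \ref{thm:extcohfam}(4) applied to the translated parameter yields precisely the $\tau$-invariant condition in (3) and (8).

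For parts (5) and (6) the plan is to track the parameter combinatorially through the map $I(R^+_s,R^+)$ of Proposition \ref{prop:findimlextHswts}, exactly mirroring the proof for $G$: the weight $\phi(\Gamma)_+$ is by construction the unique $t_s$-equivariant lift of $\phi_+$ from ${}^1 H_s$ to ${}^2 H$ compatible with the chosen positive systems, so that the coherent family identity $\Phi_1(d\gamma_{1,s} + \phi_+ - \phi_+)$ recovers $J_1$ with the translated Langlands parameter as in Theorem \ref{thm:extcohfam}(5). Part (7) is then the statement of (5)--(6) at the level of standard modules; it follows because Langlands' or Zuckerman's realizations of the standard module (Definitions \ref{def:Lextparam} and \ref{def:VZextparam}) commute with translation functors, so the extended standard module $I_{\quo}(\Gamma_1)$ translates exactly as predicted by its construction as a coherently induced family.

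The main obstacle, and the one point where care is really required, will be verifying that the translation $t_{\lambda_0}^{\lambda_0+\phi_+}$ preserves type, i.e.\ sends a type one parameter to a type one parameter and a type two parameter to a type two parameter. This is precisely where the hypothesis $t_s(\lambda_0) = t_s(\lambda_0 + d\phi) = \lambda_0 + d\phi$ (via \eqref{eq:regularizingphi} and \eqref{eq:lambda0infl}) is indispensable: it guarantees that the $\theta$-orbit structure on Langlands parameters is preserved on both sides, so that a $t_s$-fixed parameter translates to a $t_s$-fixed parameter and its two extensions $\phi(\Gamma)_\pm$ lift coherently through Proposition \ref{prop:findimlextHswts}. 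Once this compatibility is pinned down, each numbered assertion follows either from its $G$-analogue in Corollary \ref{cor:transfunc} (for the type two induced piece) or from the coherent family formalism of Theorem \ref{thm:extcohfam} (for the type one piece).
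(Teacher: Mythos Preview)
The paper does not supply a proof of this corollary: just as for the $G$-version (Corollary~\ref{cor:transfunc}), it is stated with attribution to Jantzen, Zuckerman, and \cite{Vgreen}, Proposition~7.2.22, and the text proceeds directly to applications. Your proposal is correct and is exactly the argument the surrounding framework is built to support: the formal parts (1)--(2) go through verbatim, and the remaining parts follow by splitting into type one and type two via Clifford theory (Proposition~\ref{prop:extpairrep}), invoking Corollary~\ref{cor:transfunc} on the underlying $G$-modules for the induced (type two) case, and invoking the extended coherent family of Theorem~\ref{thm:extcohfam} for the type one case. Your identification of the key technical point---that the hypothesis $t_s(\lambda_0)=\lambda_0$ and $\phi\in\Lambda_{\text{fin}}(G,H_s)^{t_s}$ ensures translation preserves type---is accurate and is precisely why the paper restricts to this setting in \eqref{se:exttransfunc}.
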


Corollary \ref{cor:exttransfunc} allows us to ``reduce'' the computation
of the multiplicity matrix $m_{\Xi,\Gamma}$ defined in
\eqref{se:charformulas} to the case of regular infinitesimal
character.  Given a possibly singular infinitesimal character
$\lambda_0$, we choose a type one weight of a finite-dimensional representation
$\phi_+$ as in \eqref{se:exttransfunc}, so that $\lambda_0+d\phi$ is
regular. The (possibly singular) Langlands parameters $\Xi$ and
$\Gamma$ in ${}^\delta B(\lambda_0)$ correspond to regular Langlands parameters
$\Xi+\phi(\Xi)_+$ and $\Gamma+\phi(\Gamma)_+$ in ${}^\delta
B(\lambda_0 + d\phi)$, and
\begin{equation}\label{se:extreducetoreg}
m_{\Xi,\Gamma} = m_{\Xi+\phi(\Xi)_+,\Gamma+ \phi(\Gamma)_+}.
\end{equation}
A little more precisely, we can start with the decomposition into irreducibles
\begin{equation*}
\left[I(\Gamma + \phi(\Gamma)_+)\right] = \sum_{\Xi_1 \in {}^\delta
  B(\lambda_0+d\phi)} 
m_{\Xi_1,\Gamma+\phi(\Gamma)_+}\left[J(\Xi_1)\right]
\end{equation*}
and apply the exact translation functor $T_{\lambda_0 +
  \phi}^{\lambda_0}$. On the left we get $\left[I(\Gamma)\right]$, by
Corollary \ref{cor:exttransfunc}(7). On the right we get
\begin{equation*}
\sum_{\substack{\Xi_1 \in B(\lambda_0+d\phi)\\ \tau(\Xi_1) \cap
    \Pi^{\lambda_0} = \emptyset}} m_{\Xi_1,\Gamma+\phi(\Gamma)_+}\left[J(\Xi_1
- \phi(\Xi_1)_+)\right].
\end{equation*}
The irreducibles appearing on the right are all distinct (this is part
of the uniqueness assertion for the coherent family in Theorem
\ref{thm:cohfam}), so the coefficients are the multiplicities of
irreducibles in $I(\Gamma)$.

To say this another way: the index set of extended Langlands parameters at
the possibly singular infinitesimal character $\lambda_0$ is naturally
a {\em subset} of the index set at the regular infinitesimal character
$\lambda_0 + d\phi$. The multiplicity matrix at $\lambda_0$ is the
corresponding submatrix of the one at $\lambda_0+d\phi$.

Finally, we record the reduction to regular infinitesimal character of
the character matrix
$M_{\Xi,\Gamma}$.

\begin{theorem}\label{thm:exttransstd} 
Suppose we are in the setting of \eqref{se:exttransfunc}; in
particular, we assume that the infinitesimal characters $\lambda_0$
and $\lambda_0+d\phi$ are both fixed by $\theta$. Write
$$t_{\lambda_0+\phi_+}^{\lambda_0}\colon {}^\delta B(\lambda_0+d\phi)
\rightarrow 
{}^\delta B_{\weak}(\lambda_0)$$ 
(notation \eqref{se:charformulas}) for the map of Corollary
\ref{cor:exttransfunc}(6). Fix an extended Langlands parameter $\Xi_1 \in
{}^\delta B(\lambda_0 + d\phi)$. Write
$$\Xi_0 = t_{\lambda_0+\phi_+}^{\lambda_0}(\Xi_1) \in {}^\delta
B_{\weak}(\lambda_0), $$ 
so that
$$T_{\lambda_0+\phi_+}^{\lambda_0}(I_{\quo}(\Xi_1)) =
I_{\quo}(\Xi_0).$$ 
\begin{enumerate}
\item There is a unique subset $C_{\text{sing}}(\Xi_0) \subset
  {}^\delta B(\lambda_0)$ 
  with the property that
$$I_{\quo}(\Xi_0) = \sum_{\Xi' \in C_{\text{sing}}(\Xi_0)} I_{\quo}(\Xi').$$
\item The set $C_{\text{sing}}(\Xi_0)$ has cardinality either equal to
  zero (in case some simple compact imaginary root for $\Xi_0$ vanishes on
  $\lambda_0$, so that $I(\Xi_0) = 0$) or a power of two. 
\end{enumerate}
For each Langlands parameter $\Xi\in {}^\delta B(\lambda_0)$, define
$$C_{\text{reg}}(\Xi) = \{\Xi_1\in {}^\delta B(\lambda_0 + d\phi) \mid \Xi \in
C_{\text{sing}}(\Xi_0)\} \subset {}^\delta B(\lambda_0 + d\phi);$$
here $\Xi_0 = t_{\lambda_0+\phi_+}^{\lambda_0}(\Xi_1)$ as above.
\begin{enumerate}[resume]
\item Suppose $\Gamma\in {}^\delta B(\lambda_0)$ is any Langlands parameter,
  corresponding to $\Gamma+\phi(\Gamma) \in {}^\delta B(\lambda_0)$. Then the
  character formula \eqref{eq:charformirr} is
$$J(\Gamma) = \sum_{\Xi \in {}^\delta B(\lambda_0)}\ \  \sum_{\Xi_1\in
  C_{\text{reg}}(\Xi)} M_{\Xi_1,\Gamma+\phi(\Gamma)}[I(\Xi)].$$ 
Equivalently,
$$M_{\Xi,\Gamma} = \sum_{\Xi_1\in C_{\text{reg}}(\Xi)}
M_{\Xi_1,\Gamma+\phi(\Gamma)}.$$ 
\end{enumerate}
\end{theorem}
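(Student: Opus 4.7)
The plan is to mirror the proof of Theorem \ref{thm:transstd} (the $G$ case sketched in \eqref{se:reducechartoreg}), carrying each step through Clifford theory so that it applies to the extended group ${}^\delta G$. All of the necessary ingredients are already available: the exactness and vanishing properties of the translation functor $T_{\lambda_0+\phi_+}^{\lambda_0}$ come from Corollary \ref{cor:exttransfunc}, the realization of (weak) standard modules for ${}^\delta G$ from Definitions \ref{def:Lextparam} and \ref{def:VZextparam}, and the passage between representations of $G$ and ${}^\delta G$ from Proposition \ref{prop:Gdeltareps}.

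For (1) and (2), the first task is to analyze how a \emph{weak} extended standard module $I_{\quo}(\Xi_0)$ decomposes into (genuine) extended standard modules when some simple real coroot $\alpha^\vee$ for $\Xi_0$ has $\langle\nu,\alpha^\vee\rangle=0$ but $\gamma_{\mathfrak q}(m_\alpha)=-1$. For $G$ this is precisely the Schmid character identity recalled in \eqref{se:ds}, realized by rewriting the induced representation attached to $\Gamma_0$ as one or two standard modules attached to a more compact Cartan obtained by Cayley transform through $\alpha$. For ${}^\delta G$ we replay this argument inside an extended three-dimensional subgroup, handling two subcases: if $\Xi_0$ is type two then the decomposition is simply induced from $G$ and the result is immediate; if $\Xi_0$ is type one, then ${}^\delta G$ permutes (or fixes) the real roots, and we apply the $G$ Cayley decomposition to a $t_s$-stable subset of the offending real roots, then extend the resulting pieces back to ${}^\delta G$ via Clifford theory (Proposition \ref{prop:extpairrep}). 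At each Cayley step we get either a single type-one summand or an induced pair; iterating finitely many times yields the unique subset $C_{\text{sing}}(\Xi_0)\subset {}^\delta B(\lambda_0)$, with cardinality $0$ or a power of $2$ (the factor of two per Cayley step coming from the two $K$-orbit outcomes, as in the $G$ case).

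For (3), I start from the regular-infinitesimal-character formula
\begin{equation*}
\bigl[J(\Gamma + \phi(\Gamma)_+)\bigr] \;=\; \sum_{\Xi_1 \in {}^\delta B(\lambda_0+d\phi)} M_{\Xi_1,\Gamma+\phi(\Gamma)_+}\bigl[I(\Xi_1)\bigr]
\end{equation*}
in the Grothendieck group ${\mathcal G}({\mathfrak g},{}^\delta K)$, which is the extended analogue of \eqref{eq:charformirr} and follows from the upper-triangularity of the extended multiplicity matrix (which in turn reduces to the $G$ case via Clifford theory, since type-two rows and columns only interact with themselves). Applying the exact translation functor $T^{\lambda_0}_{\lambda_0+\phi_+}$ and using Corollary \ref{cor:exttransfunc}(7),(8), the left side becomes $[J(\Gamma)]$, while each summand on the right becomes $[I(\Xi_1-\phi(\Xi_1)_+)] = [I(\Xi_0)]$ with $\Xi_0 = t^{\lambda_0}_{\lambda_0+\phi_+}(\Xi_1)$. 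Substituting the decomposition from (1) and reorganizing the double sum over $\Xi \in {}^\delta B(\lambda_0)$ and $\Xi_1 \in C_{\text{reg}}(\Xi)$ gives the desired formula; comparing coefficients against the (already established) character formula at $\lambda_0$ yields the second identity $M_{\Xi,\Gamma} = \sum_{\Xi_1\in C_{\text{reg}}(\Xi)} M_{\Xi_1,\Gamma+\phi(\Gamma)_+}$.

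The main obstacle is making the Cayley-transform step in (1) properly compatible with the choice of extended torus ${}^2 H$ and the type (one versus two) of the parameter $\Xi_0$. The subtlety (flagged already in the \textbf{danger} in Section \ref{sec:langlandsext}) is that a Langlands parameter $\Gamma$ can extend to more than one extended torus, and one must verify that the Cayley-transformed parameter extends to a torus of the correct type so that the decomposition takes place within the category of ${}^\delta G$-modules rather than only $G$-modules. This is handled by the analysis of extended tori in Proposition \ref{prop:extLP}: the $t_s$-stable positive root systems built there from the data of $(\overline\lambda,\nu,R^+_{i{\mathbb R}},R^+_{\mathbb R})$ remain $t_s$-stable after a real-root Cayley transform (because the real root $\alpha$ and its Cayley transform are $t_s$-fixed, having restriction purely in ${\mathfrak a}^*$), so each step of the Cayley decomposition can be lifted canonically to the extended setting, and the enumeration of summands matches the $G$ enumeration inflated through Clifford theory.
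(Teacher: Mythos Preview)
Your proposal is correct and takes essentially the same approach as the paper, which simply states that the proof is identical to that of Theorem \ref{thm:transstd}. You have supplied considerably more detail than the paper does---in particular the Clifford-theoretic case analysis for the Cayley step in (1)--(2) and the compatibility check with the choice of extended torus---but the underlying argument (apply $T^{\lambda_0}_{\lambda_0+\phi_+}$ to the regular character formula, use Corollary \ref{cor:exttransfunc}(7),(8), then substitute the decomposition of each weak standard module) is exactly the one intended.
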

The proof is identical to that of Theorem \ref{thm:transstd}.

\section{Kazhdan-Lusztig polynomials for characters} \label{sec:klv}  
\setcounter{equation}{0}
We have seen in \eqref{se:charformulas} how to define finite
matrices over ${\mathbb Z}$ 
\begin{equation*}
\left(M_{\Gamma,\Psi}\right), \quad \left(m_{\Xi,\Gamma}\right) \quad
({\Xi,\Gamma,\Psi \in B(\chi)})
\end{equation*}
describing the characters of irreducible
representations. In the present section we will recall how
Kazhdan and Lusztig computed the character matrices.
Carrying this out for the extended group ${}^\delta G$ requires
significant effort (most of which is carried out in \Cite{LV12}); we
will therefore discuss that case separately in Section
\ref{sec:extklv}.  However, a number of the preliminary definitions
given here extend easily to the extended group case; we will try to
note that explicitly as they arise.

A {\em $q$-analogue of an integer $n$} is a polynomial $N \in {\mathbb
  Z}[q]$ with the property that $N(1) = n$. To be an interesting
$q$-analogue, the coefficients of $N$ should carry refined information
about some question to which $n$ is an answer.

The first step in the Kazhdan-Lusztig idea is to form $q$-analogues of
the multiplicities and signatures. The matrices over ${\mathbb Z}$ are
then replaced by matrices over ${\mathbb Z}[q]$. The polynomial
entries satisfy recursion relations that are not visible at $q=1$, so
they can be computed. 

Kazhdan and Lusztig in \cites{kl79,kl80} introduced the
$q$-analogues in a 
way that is quite subtle and complicated (transforming the problem
first into algebraic geometry, then into finite characteristic, and
finally studying eigenvalues of a Frobenius operator).  The advantage
of their approach was that Deligne and others (\cite{BBD}) had proven
deep facts about these eigenvalues. We use an approach that is much
simpler to explain, but apparently much harder to prove anything
about.  Fortunately Beilinson and Bernstein in \Cite{BB} have already
proven the equivalence of the two approaches.

In \cites{kl79,kl80}, the definitions of $q$-analogues are mostly
geometric, but with modifications by powers of $q$ given by lengths of
Weyl group elements (which played the role of Langlands parameters).
We need to make such modifications in our setting as well, so we need
a notion of length for Langlands parameters.

\begin{definition}[\cite{Vgreen}*{Definition 8.1.4}]\label{def:length}
Suppose $\Gamma = (H,\gamma,R^+_{i{\mathbb R}})$ is a Langlands
parameter for $G$ (Theorem \ref{thm:LC}). Choose a system of positive
integral roots $R^+(d\gamma)$ subject to the requirements in Theorem
\ref{thm:cohfam}.  Let $L_{\text{real}}$ be a split semisimple real group
with root system the set $R_{\mathbb R}(d\gamma)$ of integral real
roots, and $K_{\text{real}}$ a maximal compact subgroup of
$L_{\text{real}}$. Define 
$$\begin{aligned}
c_{\text{real}} &=_{\text{def}} \#\{\text{positive roots for
  $L_{\text{real}}$}\} - 
\#\{\text{positive roots for $K_{\text{real}}$}\}.\\
&= \text{dimension of a Borel subgroup of $K_{\text{real}}({\mathbb
    C})$.}
\end{aligned}
$$
(The reason for the equality is that the Iwasawa decomposition for the
split group $L_{\text{real}}$ implies that
$$\dim K_{\text{real}} = \#\{\text{positive roots for $L_{\text{real}}$}\}.)$$
The {\em (integral) length of $\Gamma$} is
$$\ell(\Gamma) = \#\{\text{pairs $(\alpha,-\theta(\alpha))$ of complex
  roots in $R^+(d\gamma)$}\} + c_{\text{real}}.$$

Because this definition of length is natural, it is unchanged by
twisting $\Gamma$ by any automorphism commuting with $\theta$.  In
particular, it extends immediately to parameters for the extended
group: the length of a parameter for ${}^\delta G$ is just the length
of any constituent of its restriction to $G$.
\end{definition}

\begin{proposition}[\cite{Vgreen}*{Proposition 8.6.19}]\label{prop:length}
Suppose that $\Xi$ and $\Gamma$ are Langlands parameters, and that
$J(\Xi)$ occurs as a composition factor in the standard module
$I(\Gamma)$. Then 
$$\ell(\Xi) \le \ell(\Gamma).$$
Equality holds if and only if $\Xi$ is equivalent to $\Gamma$ (so that
$J(\Xi)$ is equal to the Langlands subquotient $J(\Gamma)$).

Exactly the same statement holds for the extended group ${}^\delta G$.
\end{proposition}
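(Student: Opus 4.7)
The plan is to reduce to regular integral infinitesimal character and then to appeal to the geometric interpretation of Langlands parameters.

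First I would use translation functors to reduce to the case of regular integral infinitesimal character. Fix a $\phi\in\Lambda_{\text{fin}}(G,H_s)$ so that $\lambda_0+d\phi$ is regular; since $\phi$ pairs integrally with every coroot, $R(d\gamma+d\phi(\Gamma))=R(d\gamma)$, and an inspection of Definition \ref{def:length} shows that every ingredient (the pairs of complex integral roots, the integral real Levi factor, and $c_{\text{real}}$) is unchanged by the shift. Thus $\ell(\Gamma+\phi(\Gamma))=\ell(\Gamma)$ and similarly for $\Xi$. By Theorem \ref{thm:transstd} and Corollary \ref{cor:transfunc}, the composition factor relation $J(\Xi)\subset I(\Gamma)$ at $\lambda_0$ lifts, via the injection $t_{\lambda_0}^{\lambda_0+\phi}$, to a composition factor relation at the regular infinitesimal character $\lambda_0+d\phi$; and any inequality proven there descends. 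So it suffices to prove the proposition assuming $d\gamma$ is integrally regular.

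The main step is then the Beilinson--Bernstein localization. At a regular integral infinitesimal character, the category of $({\mathfrak g},K)$-modules is equivalent to the category of $K({\mathbb C})$-equivariant twisted $D$-modules on the flag variety $X$ of $G({\mathbb C})$. Under this equivalence a Langlands parameter $\Gamma$ corresponds to a pair $(\mathcal{O}_\Gamma,\mathcal{L}_\Gamma)$ where $\mathcal{O}_\Gamma\subset X$ is a $K({\mathbb C})$-orbit and $\mathcal{L}_\Gamma$ is an irreducible $K({\mathbb C})$-equivariant local system on it; the standard module $I_{\sub}(\Gamma)$ corresponds to $j_!\mathcal{L}_\Gamma$, $I_{\quo}(\Gamma)$ to $j_*\mathcal{L}_\Gamma$, and $J(\Gamma)$ to the intermediate extension $j_{!*}\mathcal{L}_\Gamma$. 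The key dictionary identity is
\[
\ell(\Gamma)\;=\;\dim_{\mathbb C}\mathcal{O}_\Gamma\;-\;\dim_{\mathbb C} X^{K({\mathbb C})},
\]
where $X^{K({\mathbb C})}$ denotes the (common) dimension of a closed $K({\mathbb C})$-orbit; this is proved by combining the Matsuki parametrization of $K({\mathbb C})$-orbits with the case-by-case analysis of complex, real and imaginary roots underlying Definition \ref{def:length}. Once this dictionary is in place, the composition factors of $j_!\mathcal{L}_\Gamma$ (equivalently $j_*\mathcal{L}_\Gamma$) are IC sheaves $j_{!*}\mathcal{L}_\Xi$ with $\mathcal{O}_\Xi\subset\overline{\mathcal{O}_\Gamma}$. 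Hence $\dim\mathcal{O}_\Xi\le\dim\mathcal{O}_\Gamma$, giving $\ell(\Xi)\le\ell(\Gamma)$. Equality forces $\mathcal{O}_\Xi=\mathcal{O}_\Gamma$, and then the fact (Theorem \ref{thm:LC}) that $J(\Gamma)=j_{!*}\mathcal{L}_\Gamma$ appears in $I(\Gamma)$ with multiplicity exactly one, together with the irreducibility of $\mathcal{L}_\Gamma$, forces $\mathcal{L}_\Xi\simeq\mathcal{L}_\Gamma$ and so $\Xi=\Gamma$.

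For the extended group ${}^\delta G$, Clifford theory (Proposition \ref{prop:Gdeltareps}) reduces everything to $G$: any composition factor $J(\Xi')$ of a standard module $I(\Gamma')$ for ${}^\delta G$ restricts to a direct sum of one or two composition factors $J(\Xi)$ of the restricted standard module $I(\Gamma')|_{{\mathfrak g},K}$, which is either $I(\Gamma)$ or $I(\Gamma)\oplus I(\Gamma^\theta)$. Because Definition \ref{def:length} is manifestly $\theta$-invariant, $\ell(\Gamma')=\ell(\Gamma)=\ell(\Gamma^\theta)$, and the inequality together with the equality case transfer directly from $G$ to ${}^\delta G$.

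The main technical obstacle is verifying the dictionary identity $\ell(\Gamma)=\dim\mathcal{O}_\Gamma-\dim X^{K({\mathbb C})}$, which is a bookkeeping exercise that splits according to the classification of roots (real/imaginary/complex) and requires correctly accounting for the Iwasawa-type calculation $c_{\text{real}}=\dim B_{K_{\text{real}}({\mathbb C})}$ in terms of the contribution of real roots to the orbit geometry. Everything else is formal once this is in hand.
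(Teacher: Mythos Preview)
The paper does not prove this proposition; it simply cites \Cite{Vgreen}, Proposition 8.6.19. From the use made of that proof in the argument for Proposition~\ref{prop:bruhat} here, one can see that the cited argument is inductive and representation-theoretic: it proceeds by cases on a simple integral root (``Case I/II'' for a complex simple root $\alpha$ with $\theta\alpha$ negative, handled by cross action; ``Case III'' when no such root exists, handled by reduction via cohomological induction to a split Levi and then by analysis of real roots). No geometry of the flag variety is invoked.

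Your approach via Beilinson--Bernstein localization is a genuinely different route, and in the \emph{integral} regular case it works cleanly: orbit-closure forces $\dim\mathcal{O}_\Xi\le\dim\mathcal{O}_\Gamma$, and the dictionary $\ell(\Gamma)=\dim\mathcal{O}_\Gamma-\text{(dim of a closed orbit)}$ converts this to the desired length inequality. The advantage of your argument is conceptual transparency once the machinery is in place; the advantage of the \Cite{Vgreen} argument is that it is elementary (no $D$-modules) and yields the finer combinatorics used later in Proposition~\ref{prop:bruhat}.

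There is, however, a genuine gap in your reduction step. You say you reduce to ``regular integral'' infinitesimal character, but translation by $\phi\in\Lambda_{\text{fin}}(G,H_s)$ only makes $\lambda_0+d\phi$ \emph{regular}; it does not change the integral root system $R(\lambda_0)$, so a non-integral $\lambda_0$ stays non-integral. For non-integral regular $\lambda$, your dictionary identity fails as stated: the integral length $\ell(\Gamma)$ of Definition~\ref{def:length} counts only \emph{integral} roots, whereas $\dim\mathcal{O}_\Gamma$ on the full flag variety involves \emph{all} roots. (Already for $SL(2,{\mathbb R})$ at a half-integral parameter, the open orbit has dimension $1$ but the integral length is $0$.) So the orbit-closure inequality $\dim\mathcal{O}_\Xi\le\dim\mathcal{O}_\Gamma$ does not directly give $\ell(\Xi)\le\ell(\Gamma)$. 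To repair this you would need either the reduction of non-integral Kazhdan--Lusztig theory to the integral case (as in \Cite{ABV}, Chapter~17, or \Cite{IC4}), or a separate lemma that for parameters of the \emph{same} regular infinitesimal character the difference $\dim\mathcal{O}_\Gamma-\dim\mathcal{O}_\Xi$ equals $\ell(\Gamma)-\ell(\Xi)$; the latter is true but its proof is essentially the same case analysis that \Cite{Vgreen} carries out directly.
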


This Proposition provides another proof of the upper triangularity of
the multiplicity matrix proved in \eqref{eq:uppertri}:

\begin{corollary}\label{cor:uppertri}
Suppose that $m_{\Xi,\Gamma} \ne 0$. Then 
$$\ell(\Xi) \le \ell(\Gamma).$$
Equality holds if and only if $\Xi$ is equivalent to $\Gamma$. Consequently
the multiplicity matrix $m_{\Xi,\Gamma}$ is upper triangular with
respect to any ordering of the Langlands parameters consistent with
length.
\end{corollary}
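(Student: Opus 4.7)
The corollary is essentially a direct translation of Proposition \ref{prop:length} into the language of the multiplicity matrix, so the plan is very short. First, I would unpack the definition in \eqref{eq:multformstd}: the nonvanishing of $m_{\Xi,\Gamma}$ means exactly that the irreducible module $J(\Xi)$ appears as a composition factor of the standard module $I(\Gamma)$. At that point the two displayed statements
$$\ell(\Xi)\le \ell(\Gamma),\qquad \ell(\Xi)=\ell(\Gamma)\iff \Xi\sim \Gamma$$
are read off verbatim from Proposition \ref{prop:length}; no further argument is needed.

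For the upper triangularity assertion, I would first fix a total ordering $\le_\ell$ on the (finite) parameter set $B(\chi)$ which refines the preorder given by length: $\Xi\le_\ell\Gamma$ whenever $\ell(\Xi)<\ell(\Gamma)$, with ties broken arbitrarily. The first part of the corollary then gives
$$m_{\Xi,\Gamma}\ne 0\ \Longrightarrow\ \Xi\le_\ell\Gamma,$$
which is the definition of upper triangularity of the matrix $(m_{\Xi,\Gamma})$. To complete the picture I would recall from Theorem \ref{thm:LC} that $J(\Gamma)$ is the unique irreducible quotient of $I_{\quo}(\Gamma)$ and occurs with multiplicity one there, so the diagonal entries satisfy $m_{\Gamma,\Gamma}=1$; together with the equality clause of Proposition \ref{prop:length} this shows that the diagonal is precisely where the highest-length composition factor lives.

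There is no real obstacle here: every substantive ingredient has already been proved (Proposition \ref{prop:length} is the hard input, itself a consequence of the cohomological realization of standard modules and the $\overline{\mathfrak u}$-cohomology computations recalled in Proposition \ref{prop:ubarcohom}). The only thing worth being careful about is that the same argument applies without change to the extended group, since Definition \ref{def:length} extends immediately to extended parameters and Proposition \ref{prop:length} is stated for both $G$ and ${}^\delta G$; thus the same upper triangularity holds for the multiplicity matrix of $({\mathfrak g},{}^\delta K)$-modules indexed by ${}^\delta B(\chi)$.
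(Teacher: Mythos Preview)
Your proposal is correct and matches the paper's approach exactly: the paper presents this corollary as an immediate consequence of Proposition~\ref{prop:length} (introducing it with ``This Proposition provides another proof of the upper triangularity\ldots'') and gives no further argument. Your additional remarks about the diagonal entries $m_{\Gamma,\Gamma}=1$ and the extended-group case are accurate and consistent with what the paper records elsewhere.
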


This Corollary is all that we really need about the upper triangular
nature of the multiplicity matrix; but having come so far, we will
pause to include a more precise statement.

\begin{definition}\label{def:bruhat} Suppose $H$ is a Cartan subgroup 
  of $G$, and $\lambda_1 \in {\mathfrak h}^*$ is a {\em regular}
  weight. Recall from \eqref{se:charformulas} the finite set
  $B(\lambda_1)$ of (equivalence classes of) Langlands parameters of
  infinitesimal character $\lambda_1$.  The {\em Bruhat order} on
  $B(\lambda_1)$ is the transitive closure of the relation
$$\Xi < \Gamma\quad \text{if} \quad \text{$J(\Xi)$ is a composition
  factor of $I(\Gamma)$.}$$

This definition makes sense for ${}^\delta G$.
\end{definition}

\begin{danger}
This definition makes perfectly good sense for singular infinitesimal
character, {\em but we choose not to use it in that case.} The reason
is that the order with this definition is easy to compute (for $G$) in the case
of regular infinitesimal character (see the remarks after Proposition
\ref{prop:bruhat} below), but would {\em not} be easy to compute for singular
infinitesimal character. Instead we use
\end{danger}

\begin{definition}\label{def:singbruhat} Suppose $H_s$ is a maximally
  split Cartan subgroup of $G$, and $\lambda_0\in
  {\mathfrak h}_s^*$ is a a possibly singular weight. Choose a weight
  $\phi\in \Lambda_{\text{fin}}(G,H_s)$ as 
  in \eqref{se:transfunc}, so that (among other things) 
$$\lambda_1 = \lambda_0 + d\phi$$
is a regular weight, and there is an inclusion of Langlands parameters
$$t_{\lambda_0}^{\lambda_0 + \phi} \colon B(\lambda_0) \hookrightarrow
B(\lambda_1)$$
(Corollary \ref{cor:transfunc}). The {\em Bruhat order} on
$B(\lambda_0)$ is the Bruhat order on $B(\lambda_1)$, pulled back by
$t_{\lambda_0}^{\lambda_0 + \phi}$.  
\end{definition}
The preservation of multiplicities explained in \eqref{se:reducetoreg}
shows first of all that if $\lambda_0$ is regular, then this
definition agrees with Definition \ref{def:bruhat} (even if we use a
nonzero $\phi$). It also follows that
\begin{equation*}
J(\Xi_0) \text{\ a composition factor of } I(\Gamma_0) \implies
\Xi_0 \le \Gamma_0.
\end{equation*}
What is different in the singular case is that there may be additional
relations in the Bruhat order not generated by these ``composition
factor'' relations.
 

\begin{proposition}\label{prop:bruhat}
Fix a regular weight $\lambda_1\in {\mathfrak h}^*$. The Bruhat order
on $B(\lambda_1)$ (Definition \ref{def:bruhat}) is graded by the length
function of Definition \ref{def:length}: if $\ell(\Gamma) = p$, then
every immediate predecessor $\Xi$ of $\Gamma$ has $\ell(\Xi) = p-1$.
\end{proposition}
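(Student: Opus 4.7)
The plan is to combine Proposition \ref{prop:length} with the Beilinson--Bernstein geometric realization of Langlands parameters at regular integral infinitesimal character, together with the Richardson--Springer gradedness of the $K(\mathbb{C})$-orbit closure order on the flag variety.

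First I would observe that Proposition \ref{prop:length} already implies $\ell(\Xi)<\ell(\Gamma)$ for every strict Bruhat relation $\Xi<\Gamma$, by induction on the length of a chain realizing the transitive closure. Consequently the length function is strictly monotonic along the Bruhat order, and what remains is the more delicate statement that in any cover relation the length drop equals exactly one, i.e.\ there are no ``big jumps.''

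Next I would invoke Beilinson--Bernstein localization at $\lambda_1$: parameters $\Gamma\in B(\lambda_1)$ correspond bijectively to pairs $(Q_\Gamma,\mathcal{L}_\Gamma)$ with $Q_\Gamma$ a $K(\mathbb{C})$-orbit on the flag variety $X=G(\mathbb{C})/B(\mathbb{C})$ and $\mathcal{L}_\Gamma$ an irreducible $K(\mathbb{C})$-equivariant local system on $Q_\Gamma$; $I(\Gamma)$ corresponds to the standard extension of $\mathcal{L}_\Gamma$ and $J(\Gamma)$ to the intermediate (IC) extension. A direct computation from Definition \ref{def:length} identifies $\ell(\Gamma) = \dim_\mathbb{C} Q_\Gamma + c$ for a constant $c$ common to all parameters in $B(\lambda_1)$: the count of complex root pairs in $R^+(d\gamma)$ contributes the dimension of the flag along ``non-$\theta$-fixed'' directions, while $c_{\mathrm{real}}$ contributes the Borel dimension of $K_{\mathrm{real}}(\mathbb{C})$ in the real-root Levi, which together recover $\dim_\mathbb{C} Q_\Gamma$ up to the normalization constant. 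Since the intermediate extension is characterized as the unique subquotient of the standard extension whose support meets $Q_\Gamma$ in an open set, every composition factor $J(\Xi)$ of $I(\Gamma)$ with $\Xi\ne\Gamma$ is supported on the strictly lower-dimensional boundary $\partial Q_\Gamma$, so $Q_\Xi\subsetneq\overline{Q_\Gamma}$ and $\dim Q_\Xi<\dim Q_\Gamma$.

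The main obstacle is inserting intermediate Langlands parameters into chains whose orbit-dimension drop exceeds one; this is needed to rule out cover relations with $\ell(\Gamma)-\ell(\Xi)\geq 2$. Suppose for contradiction $\Xi<\Gamma$ is a cover with $\dim Q_\Gamma-\dim Q_\Xi\geq 2$. By the Richardson--Springer theorem, the closure order on $K(\mathbb{C})$-orbits on $X$ is graded by complex dimension, so there exists an intermediate orbit $Q$ with $\overline{Q_\Xi}\subsetneq\overline{Q}\subsetneq\overline{Q_\Gamma}$ and $\dim Q=\dim Q_\Gamma-1$. To upgrade this orbit containment to a Bruhat relation, I would exhibit a local system $\mathcal{L}$ on $Q$ such that the corresponding parameter $\Xi'=(Q,\mathcal{L})$ satisfies both $\Xi<\Xi'$ and $\Xi'<\Gamma$. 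For the second, I would apply a single wall-crossing translation functor across a simple integral wall $\alpha\in\Pi(\lambda_1)$ chosen so that the associated cross-action/Cayley transform moves $Q_\Gamma$ to an orbit whose closure contains $Q$; the short exact sequences of standard modules produced by such a functor (the foundation of the Kazhdan--Lusztig--Vogan recursions developed in the following sections) guarantee that some $\Xi'$ supported on $Q$ appears as a composition factor of $I(\Gamma)$. For the first relation $\Xi<\Xi'$, I would iterate: since $\overline{Q_\Xi}\subsetneq\overline{Q}$, one can descend from $Q$ to $Q_\Xi$ by further wall-crossings, producing a chain of Bruhat relations. This contradicts the cover hypothesis and forces $\dim Q_\Gamma-\dim Q_\Xi=1$, hence $\ell(\Gamma)-\ell(\Xi)=1$, completing the proof.
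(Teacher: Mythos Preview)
Your approach has a genuine gap at the step of upgrading orbit closure containment to Bruhat relations on parameters. The Richardson--Springer gradedness applies to the closure order on $K(\mathbb{C})$-orbits, but the Bruhat order of Definition~\ref{def:bruhat} is on pairs (orbit, local system). Given $\Xi<\Gamma$ a cover with $\dim Q_\Gamma-\dim Q_\Xi\geq 2$, Richardson--Springer furnishes an intermediate orbit $Q$, but you then need a local system $\mathcal{L}$ on $Q$ such that the parameter $\Xi'=(Q,\mathcal{L})$ satisfies \emph{both} $J(\Xi')\subset I(\Gamma)$ and $J(\Xi)\subset I(\Xi')$ (or chains realizing these). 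Neither is automatic: it can happen that some local systems on $Q$ appear in $I(\Gamma)$ while others do not, and there is no a priori reason the ones that appear will have $\Xi$ below them. Your wall-crossing sketch (``choose $\alpha$ so that the cross-action moves $Q_\Gamma$ to an orbit whose closure contains $Q$'') does not pin down which simple root to use or why the resulting composition factor lands on the chosen $Q$ with a compatible local system; and the ``iterate'' step for $\Xi<\Xi'$ is essentially assuming the gradedness you are trying to prove.

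A second issue is the identification $\ell(\Gamma)=\dim Q_\Gamma+c$. The length of Definition~\ref{def:length} counts only \emph{integral} roots, so for non-integral $\lambda_1$ this does not match the dimension of the orbit on the full flag variety; one must work on the appropriate partial flag variety or covering as in \cite{ABV}, Chapter~17, and you have not addressed this.

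The paper's proof takes a different and more hands-on route: it reduces by the Case~I/II arguments of \cite{Vgreen}, Proposition~8.6.19 (essentially wall-crossing for a complex simple root with $\theta\alpha$ negative, which is where your sketch is on solid ground), and then handles the residual Case~III by cohomological induction from a $\theta$-stable parabolic to reduce to a split Levi and principal series. There the predecessors of $\Gamma$ are identified with Cayley transforms $\Gamma_\beta$ for odd positive integral real roots $\beta$, with $\ell(\Gamma)-\ell(\Gamma_\beta)=(\ell(s_\beta)+1)/2$; the content is the combinatorial claim \eqref{eq:bruhatheight} that every non-simple such $\beta$ has $\Gamma_\beta\leq\Gamma_\alpha$ for some simple odd $\alpha$, proved by a height induction together with a rank-two case check in types $A_2$, $B_2$, $G_2$. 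This explicit analysis is exactly what fills the gap your geometric outline leaves open.
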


The statement and proof of this proposition carry over to the extended
group ${}^\delta G$ with little change. In the case of $G$, the proof
also provides a recursion (on $\ell(\Gamma)$) for computing the Bruhat
order; but this recursion does not carry over in a simple fashion to
${}^\delta G$. A short version of the difficulty is that ``complex
cross actions for the extended group may be double-valued.'')  We do
not need this computation, so we have not pursued the matter.

The recursive computation of the Bruhat order is implemented by the
command {\tt blockorder} in the software {\tt atlas} written by Fokko
du Cloux and Marc van Leeuwen (\cite{atlas}).

\begin{proof} This is nearly a consequence of the proof of Prop 8.6.19
  of \Cite{Vgreen}. The argument given there for ``Case I'' (or in
  fact also for ``Case II'') serves perfectly as an inductive step in
  the proof of the stronger statement given in this proposition. The
  difficulty arises only in ``Case III'' of that proof. So we may
  assume that Case I does not arise for any simple root: that is, that  
\begin{equation*}
\theta\alpha \in R^+(d\gamma), \qquad (\alpha \in \Pi(d\gamma) \text{\
  complex}).
\end{equation*}
According to \Cite{Vgreen}*{Lemma 8.6.1}, this is equivalent to
\begin{equation*}
\theta(R^+(d\gamma)) - R^+_{\mathbb R}(d\gamma)) = R^+(d\gamma) -
R^+_{\mathbb R}(d\gamma):
\end{equation*}
the Cartan involution $\theta$ preserves the set of non-real positive integral
roots. Just as in \eqref{se:stdVZparam}, we define
\begin{equation*}
L = \text{centralizer in $G$ of $T_0$,}
\end{equation*}
the reductive subgroup generated by $H$ and its real roots. Define a
Langlands parameter $\Gamma_{\mathfrak q}$ for $L$ as in
\eqref{se:stdVZparam}; the corresponding standard representation
$I(\Gamma_{\mathfrak q})$ is an ordinary principal series
representation of $L$. What is important here is that if $\Xi_{\mathfrak
  q}$ is any Langlands parameter for $L$ of infinitesimal character
$d\gamma_{\mathfrak q}$, then Zuckerman's cohomological induction
functor carries the irreducible for $L$ to the irreducible for $G$
\begin{equation*}
\left({\mathcal L}_{\overline{\mathfrak q},L\cap K}^{{\mathfrak g}, K}
\right)^s(J(\Xi_{\mathfrak q})) = J(\Xi)
\end{equation*}  
and correspondingly for standard modules
\begin{equation*}
\left({\mathcal L}_{\overline{\mathfrak q},L\cap K}^{{\mathfrak g}, K}
\right)^s(I(\Xi_{\mathfrak q})) = I(\Xi).
\end{equation*}  

These facts show that the inclusion of Langlands parameters
\begin{equation*}
B_L(d\gamma_{\mathfrak q}) \hookrightarrow B_G(d\gamma), \qquad
\Xi_{\mathfrak q} \mapsto \Xi
\end{equation*}
respects Bruhat orders, and in fact carries the Bruhat interval below
$\Gamma_{\mathfrak q}$ isomorphically onto the Bruhat interval below
$\Gamma$. It is easy to see that the inclusion preserves lengths.

\begin{subequations}\label{se:PSbruhat}
So the inductive step at $\Gamma$ in the proof of our proposition may
be reduced to the case $G=L$: that is, to the case that $I(\Gamma)$ is
an ordinary principal series representation for the split group
$L$. We assume this for the rest of the proof. To simplify the
notation, we will assume also that all the odd roots are ``type
II''. (If this assumption fails and there is a ``type I'' root, then
as remarked at the beginning of the proof, we can just apply the
argument in \Cite{Vgreen}.)
In this setting there is a ${\mathbb Z}/2{\mathbb Z}$ grading 
\begin{equation}
R(d\gamma)^\vee = R(d\gamma)^\vee_{\text{even}} \cup R(d\gamma)^\vee_{\text{odd}}
\end{equation}
on the
integral coroots $R(d\gamma)^\vee$ (\cite{Vgreen}*{Lemma 8.6.3}); the
odd coroots in this grading (and also the corresponding roots) are
said to {\em satisfy the parity condition}. Attached to every odd
positive root $\beta$ there is a ``Cayley transformed 
parameter''
\begin{equation}
\Gamma_\beta = (H_\beta,\Gamma_\beta,\{\tilde\beta\}) \qquad (\beta
\in R^+(d\gamma)_{\text{odd}}).
\end{equation}
Here $H_\beta \subset \phi_\beta(SL(2,{\mathbb R}))\cdot H$
(Definition \ref{def:posroots}) corresponds to the Cartan subgroup
$SO(2)$ of $SL(2,{\mathbb R})$; its only imaginary roots are the two
$\pm\tilde\beta$ in $\phi_\beta({\mathfrak s}{\mathfrak l}(2,{\mathbb
  C}))$.  This is essentially the Langlands parameter of the discrete
series representation of $SL(2,{\mathbb R})$ appearing inside a
reducible principal series representation.  (The ``type II''
assumption, which concerns the disconnectedness of $H$, means that
this discrete series representation is unique, rather than being one
of a pair; that is the ``simplified notation'' of which we are taking
advantage.)

Because $H$ is assumed to be split, the Cartan involution $\theta$ of
${\mathfrak h}$ is $-1$ on the span of the roots. The
Cartan involution $\theta_\beta$ on ${\mathfrak h}_\beta$ is therefore
$-s_{\tilde \beta}$ on the span of the roots.

The long intertwining operator for $I(\Gamma)$ has a factorization in
which each factor corresponds to a real root.  The factors for the
nonintegral roots, and for the integral roots of even grading, are all
isomorphisms, and so do not contribute to the kernel of the long
intertwining operator. The conclusion is that the composition factors
of the kernel---that is, the composition factors of $I(\Gamma)$ other
than $J(\Gamma)$---all appear in $I(\Gamma_\beta)$ for some 
(odd positive integral real) $\beta$. 

In terms of the Bruhat order, this means
\begin{equation*}
\Xi < \Gamma \iff \Xi \le \Gamma_\beta \qquad \text{(some
  $\beta\in R^+(d\gamma)_{\text{odd}}$)}
\end{equation*}
It is easy to compute
\begin{equation*}
\ell(\Gamma_\beta) = \ell(\Gamma) - [\ell(s_\beta) + 1]/2,
\end{equation*}
where the length function in brackets is the one for the integral Weyl
group $W(R(d\gamma))$.  
The only elements of a Weyl group of length one are the simple
reflections; so
\begin{equation*}
\ell(\Gamma_\beta) \le \ell(\Gamma) - 1,
\end{equation*}
with equality if and only if $\beta$ is simple in $R^+(d\gamma)$.

The conclusion is that the only possible
immediate predecessors of $\Gamma$ are the various $\Gamma_\beta$,
with $\beta$ odd positive integral.  To complete the proof of the
proposition, we must show that if $\beta$ is not simple, then
$\Gamma_\beta$ is not an {\em immediate} predecessor.  To prove this,
we will show
\begin{equation}\label{eq:bruhatheight}
\text{if $\beta$ is odd positive, there is a simple odd $\alpha$
  with $\Gamma_\beta \le \Gamma_\alpha$.}
\end{equation}
This we prove by induction on the height of $\beta$.  If $\beta$ is
simple, then we can take $\alpha=\beta$; so suppose $\beta$ is not
simple. What we need to prove is
\begin{equation*}
\text{if $\beta$ is odd positive not simple, there is 
an odd  positive $\beta'$ with $\Gamma_\beta < \Gamma_{\beta'}$.}
\end{equation*}
It is convenient to prove the stronger statement
\begin{equation}\label{eq:bruhatheightind} \begin{gathered}
\text{if $\beta$ is odd positive not simple, there is an}\\
\text{odd positive $\beta'$ with $\Gamma_\beta \not\simeq
  \Gamma_{\beta'}$, $J(\Gamma_\beta) \subset I(\Gamma_{\beta'})$.}
\end{gathered}
\end{equation}
Here $\subset$ is abused to mean ``is a composition factor of.''
Choose a simple root $\delta$ so 
\begin{equation*}
\langle\beta,\delta^\vee \rangle = m > 0.
\end{equation*}
Since $\beta$ is not simple, $\delta \ne \beta$; so 
\begin{equation*}
s_\delta(\beta) = \beta - m\delta = \beta''
\end{equation*}
is a positive root of lower height than $\beta$. By a similar
calculation, the root $\tilde\delta$ for $H_\beta$ corresponding to
$\delta$ is complex, and $\theta_\beta(\tilde\delta)$ is
negative. Consequently $\delta \notin \tau(\Gamma_\beta)$ (see Theorem
\ref{thm:cohfam}). The Kazhdan-Lusztig algorithm (in fact the more
elementary results proved in \Cite{IC1}) shows that
$J(\Gamma_\beta)$ is a composition factor of $I(s_\delta \times
\Gamma_\beta)$, so that
\begin{equation}\label{eq:bruhatind}
J(\Gamma_\beta) \subset I(s_\delta \times \Gamma_\beta), \qquad \ell(\Gamma_\beta)
= \ell(s_\delta \times \Gamma_\beta) - 1.
\end{equation}
On the other hand, an elementary calculation shows that
\begin{equation*}
s_\delta \times \Gamma_\beta \simeq (s_\delta \times \Gamma)_{\beta''}.
\end{equation*}
(The notation is slightly imprecise. As a representative for the
equivalence class $s_\delta\times\Gamma$, we choose the one having the
same differential as $\Gamma$; that is, we apply some shift by a
multiple of $\delta$, then conjugate by the real Weyl group reflection
in $\delta$.)

We now consider two cases. If $\delta$ is even, then
$s_\delta\times \Gamma = \Gamma$, and so we have shown
 \begin{equation*}
J(\Gamma_\beta) \subset I(s_\delta \times \Gamma_\beta) = I(\Gamma_{\beta''}).
\end{equation*}
This is \eqref{eq:bruhatheightind}, with $\beta'= \beta''$.

Next suppose $\delta$ is odd.  
In order to prove \eqref{eq:bruhatheightind}, we can consider the
(semisimple) rank two reductive subgroup $L$ of $G$ generated by the
rational span of the roots $\delta$ and $\beta$ and the split Cartan
$H$.  This is a Levi subgroup of a real parabolic (not necessarily
containing our fixed Borel). By induction by stages, one sees that it
suffices to prove \eqref{eq:bruhatheightind} for $G=L$. The setting is
therefore that $G$ is split and simple of rank two, $\Gamma$ is
attached to the split Cartan, the simple root $\delta$ is odd, and the
non-simple root $\beta$ is also odd.

In types $A_2$ and $G_2$, and in the block for $B_2$ with no
finite-dimensional representations, every non-discrete series block element of
length at most $\ell(\Gamma)-2$ (like $J(\Gamma_\beta)$) is a
composition factor of {\em every} $I(\Xi)$ with $\ell(\Xi) =
\ell(\Gamma) - 1$ (like $I(\Gamma_\delta)$). So
\eqref{eq:bruhatheightind} is true with $\beta' = \delta$.

We may therefore assume that $L$ is of type $B_2$, and that the block
contains finite-dimensional representations. Since $\delta$ is assumed
to be odd, there are just two possibilities.
\begin{enumerate}[label=\alph*)]
\item The simple root $\delta$ is long, the other simple root
  $\delta'$ is short, $\beta = \delta + \delta'$ 
  is short, and $\beta^\vee = 2\delta^\vee + (\delta')^\vee$.
\item The simple root $\delta$ is short, $\beta =
  2\delta + \delta'$ is long, and $\beta^\vee = \delta^\vee + (\delta')^\vee$.
\end{enumerate}

In case (a), since $\beta$ is assumed to be odd, $\delta'$ must also
be odd; so both simple roots are odd for $\Gamma$. In this case it
turns out that in fact $s_\delta\times\Gamma_\beta$ may {\em not}
precede $\Gamma$ in the Bruhat order; so \eqref{eq:bruhatind} is of no
help to us. Instead one can compute that $J(\Gamma_\beta)$ is a
composition factor of $I(\Gamma_{\delta})$; so we get
\eqref{eq:bruhatheightind} with $\beta' = \delta$. 

In case (b), since $\beta$ and $\delta$ are assumed to be odd,
$\delta'$ must be even. Again it turns out that
$s_\delta\times\Gamma_\beta$ may {\em not} precede $\Gamma$ in the
Bruhat order, but that $J(\Gamma_\beta)$ is a composition factor of
$I(\Gamma_\delta)$. 

By \eqref{eq:bruhatheight}, the only candidates for
immediate predecessors of $\Gamma$ are the $\Gamma_\alpha$, with
$\alpha$ odd and simple. These have length one less, as the
Proposition requires.
\end{subequations}
\end{proof}

What we are looking for is a ``$q$-analogue'' of the multiplicity of
$J(\Xi)$ in $I(\Gamma)$; that is, a collection of
(meaningful!)~integers that sum to the multiplicity. The Jantzen filtration of
$I(\Gamma)$ is perfectly designed for this.

\begin{definition}\label{def:multpoly} Suppose $\Gamma$ and $\Xi$
  are Langlands parameters. Recall from \eqref{eq:jantzenr} the finite
  decreasing filtration
$$I_{\quo}(\Gamma) = I_{\quo}(\Gamma)^0 \supset
    I_{\quo}(\Gamma)^1 \supset I_{\quo}(\Gamma)^2 \cdots$$
We define the {\em multiplicity polynomial} so that the coefficients
record the multiplicities in the subquotients of this filtration:
by
$$Q_{\Xi,\Gamma} = \sum_{r=0}^\infty
m_{I_{\quo}(\Gamma)^r/I_{\quo}(\Gamma)^{r+1}} (J(\Xi))
    q^{(\ell(\Gamma) - \ell(\Xi) - r)/2}.$$
This is a finite Laurent polynomial in the formal variable
$q^{1/2}$. The value at $q^{1/2} = 1$ of this Laurent
polynomial is the multiplicity of $J(\Xi)$ in $I(\Gamma)$:
$$Q_{\Xi,\Gamma}(1) = m_{I_{\quo}(\Gamma)}(J(\Xi)) =
m_{\Xi,\Gamma}$$ 
(see \eqref{eq:multformstd}).

This definition is normalized to make $Q_{\Xi,\Gamma}$ a polynomial in
$q$. It would be more natural to consider
$$(q^{-1/2})^{(\ell(\Gamma) - \ell(\Xi))}Q_{\Xi,\Gamma} = \sum_{r=0}^\infty
m_{I_{\quo}(\Gamma)^r/I_{\quo}(\Gamma)^{r+1}} (J(\Xi))
    q^{-r/2}.$$
This definition carries over to the extended group ${}^\delta G$
without change.
\end{definition}

Although the multiplicity polynomial is the one we are most
interested in, it seems aesthetically satisfactory to include the dual
definition.

\begin{definition}\label{def:charpoly} Suppose $\Gamma$ and $\Psi$
  are Langlands parameters of regular infinitesimal character. Write
  $H = TA$ for the Cartan subgroup attached to $\Gamma$. Choose
  as in \eqref{se:stdVZparam} a $\theta$-stable parabolic subalgebra 
$${\mathfrak q} = {\mathfrak l} + {\mathfrak u}$$
of ${\mathfrak g}$, with $L$ the centralizer in $G$ of $T_0$, of type
VZ with respect to $\Gamma$ (see \eqref{eq:typeVZ}).  Finally recall
(still from \eqref{se:stdVZparam}) the Langlands parameter
$\Gamma_{\mathfrak q}$ for $L$. We define the {\em character polynomial} by
$$P_{\Gamma,\Psi} = \sum_{r=0}^\infty
[\text{multiplicity of $J(\Gamma_{\mathfrak q})$ in
  $H^{s+r}(\overline{\mathfrak u},(J(\Psi))$}](-q^{1/2})^{(\ell(\Psi) -
  \ell(\Gamma) - r)},$$ 
a finite Laurent polynomial in the formal variable $q^{1/2}$.

The preceding definition is normalized to make $P_{\Gamma,\Psi}$ a
polynomial with nonnegative coefficients. It would be more natural to
consider 
$$(-q^{-1/2})^{\ell(\Psi) - \ell(\Gamma)}P_{\Gamma,\Psi} =
\sum_{r=0}^\infty (-1)^r[\text{mult.~$J(\Gamma_{\mathfrak q})$ in
  $H^{s+r}(\overline{\mathfrak u},(J(\Psi))$}]q^{-r/2}.$$ 

This definition carries over unchanged to the extended group ${}^\delta G$.
\end{definition}

We turn now to the theorems about {\em computing} the polynomials $P$
and $Q$. These do {\em not} extend painlessly to ${}^\delta G$.

\begin{theorem}[Lusztig-Vogan \Cite{IC3}] \label{thm:KLpol} Suppose
  $\Gamma$ and $\Psi$ are 
  Langlands parameters of regular infinitesimal character. Then the
  character polynomial $P_{\Gamma,\Psi}$ of Definition
  \ref{def:charpoly} is equal to the Kazhdan-Lusztig character
  polynomial (see \Cite{IC3}). In particular, it is a polynomial in
  $q$. Therefore
\begin{enumerate}
\item $J(\Gamma_{\mathfrak q})$ can occur as a composition factor in
  $H^{s+r}(\overline{\mathfrak u}, J(\Psi))$ only if
\begin{enumerate}
\item $\Gamma \le \Psi$ in the Bruhat order (Definition
  \ref{def:bruhat}); and
\item $\ell(\Psi) - \ell(\Gamma)$ is congruent to $r$ modulo $2$; and
\item $0 \le [(\ell(\Psi) - \ell(\Gamma)) - r]/2 \le [\ell(\Psi) -
  \ell(\Gamma)]/2$.
\end{enumerate}
\item Equality can hold in the second inequality of 1(c) only if
  $\Psi=\Gamma$. Equivalently, 
\begin{equation*}\deg P_{\Gamma,\Psi} \le [\ell(\Psi) - \ell(\Gamma) -
  1]/2, \qquad  \text{all $\Gamma < \Psi$.} \end{equation*}
\end{enumerate}
Here is the corresponding result about the cohomology of standard
modules. Write a subscript $\Gamma$ to denote projection on the
summand of infinitesimal character equal to that of
$J(\Gamma_{\mathfrak q})$. 
\begin{enumerate}[resume]
\item 
\begin{equation*} H^{s+r}(\overline{\mathfrak u},
  I_{\sub}(\Psi))_\Gamma \simeq \begin{cases} 
    I_{\sub}(\Gamma_{\mathfrak q}) & \Gamma = \Psi,\quad r=0, \\
0 & \text{otherwise.} \end{cases}\end{equation*}
\item In the Grothendieck group of virtual $({\mathfrak
    g}_0,K)$-modules (cf.~\eqref{se:charformulas}),
\begin{equation*}[J(\Psi)] = \sum_{\Gamma} (-1)^{\ell(\Psi) - \ell(\Gamma)}
P_{\Gamma,\Psi}(1) [I(\Gamma)].\end{equation*}
\end{enumerate}
\end{theorem}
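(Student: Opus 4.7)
The plan is to realize $P_{\Gamma,\Psi}$ geometrically and identify it with the Kazhdan-Lusztig polynomial defined via intersection cohomology, following the strategy of \Cite{IC3}. Via Beilinson-Bernstein localization, Harish-Chandra modules with infinitesimal character $d\gamma$ are equivalent to $K({\mathbb C})$-equivariant twisted $D$-modules on the flag variety $X$ of $G({\mathbb C})$. Under this equivalence, a Langlands parameter $\Psi$ corresponds to a pair $(Q_\Psi,{\mathcal L}_\Psi)$ of a $K({\mathbb C})$-orbit and an irreducible equivariant local system on it; the irreducible $J(\Psi)$ corresponds to the intersection cohomology extension $IC(\overline{Q}_\Psi,{\mathcal L}_\Psi)$; and the standard modules $I_{\quo}(\Psi)$ and $I_{\sub}(\Psi)$ correspond to the extensions $j_!{\mathcal L}_\Psi$ and $j_*{\mathcal L}_\Psi$ from the orbit to its closure.

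The next step is to identify $H^{\bullet}(\overline{\mathfrak u},-)$ geometrically. The parabolic $\overline{\mathfrak q}$ and the choice of type VZ parabolic attached to $\Gamma$ single out a base point $x_\Gamma \in Q_\Gamma$, and the cohomology functor $V \mapsto H^{\bullet}(\overline{\mathfrak u},V)_{\Gamma}$ (projected onto the $J(\Gamma_{\mathfrak q})$-isotypic component) is computed by the costalk of the corresponding perverse sheaf at $x_\Gamma$, up to a shift by $s=\dim({\mathfrak u}\cap {\mathfrak k})$ and an appropriate twist by ${\mathcal L}_\Gamma$. Granted this, statement (3) of the theorem becomes the tautology that the costalks of $j_*{\mathcal L}_\Psi$ vanish at points of $Q_\Gamma$ unless $\Gamma = \Psi$, in which case one recovers the local system itself in degree zero; meanwhile $P_{\Gamma,\Psi}$ becomes (up to the normalizing powers of $q^{1/2}$) the Poincar\'e polynomial of the stalk cohomology of $IC(\overline{Q}_\Psi,{\mathcal L}_\Psi)$ at $x_\Gamma$, which is precisely the Kazhdan-Lusztig polynomial in Deligne's geometric formulation.

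With that identification, the remaining assertions are standard geometric facts. Statement (1)(a) follows because $IC(\overline{Q}_\Psi,{\mathcal L}_\Psi)$ is supported on $\overline{Q}_\Psi$, so its stalks at $x_\Gamma$ vanish unless $Q_\Gamma \subset \overline{Q}_\Psi$, i.e.\ $\Gamma \le \Psi$ in the Bruhat order. Statements (1)(b), (1)(c), and (2) are consequences of Deligne's purity theorem and the support/cosupport axioms defining the IC sheaf: nonzero stalk cohomology in degree $i$ on a stratum of codimension $d$ forces $i$ and $d$ to have a fixed parity and satisfies $0 \le i < d$, with strict inequality off the open stratum. Finally, statement (4) is a formal consequence of (3): applying $\sum_p (-1)^p H^p(\overline{\mathfrak u},-)_{\Gamma}$ to the resolution of $[J(\Psi)]$ by standard modules and comparing with the geometric identity (the ``inverse Kazhdan-Lusztig'' relation in the Hecke algebra) yields the displayed formula after passing to virtual characters via Proposition \ref{prop:ubarcohom}(4).

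The main obstacle is the identification of the algebraic side with the geometric side: namely, making precise the Beilinson-Bernstein localization for Harish-Chandra modules (including the equivariance and twisted $D$-module technicalities), matching the Jantzen filtration on standard modules with the weight filtration on the corresponding standard extensions, and showing that $\overline{\mathfrak u}$-cohomology computes the fiber at $x_\Gamma$ of the associated perverse sheaf with the right cohomological shift. Once that bridge is built, the desired statements are simply Deligne's theorem on the purity of IC sheaves expressed combinatorially; in \Cite{IC3} this passage is made through a reduction to characteristic $p$ and a direct analysis of Frobenius eigenvalues on stalk cohomology of the IC sheaves of the $K({\mathbb C})$-orbit closures.
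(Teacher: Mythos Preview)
The paper does not give a proof of this theorem; it is stated with attribution to \Cite{IC3} and used as input. Your outline is essentially the strategy of \Cite{IC3} and \Cite{LV83}: Beilinson--Bernstein localization identifies irreducibles with intersection cohomology sheaves on $K({\mathbb C})$-orbit closures, $\overline{\mathfrak u}$-cohomology with stalk cohomology at a point of $Q_\Gamma$, and then the support and parity conditions (1) and the degree bound (2) are Deligne's purity theorem for IC sheaves. So you have correctly reconstructed the shape of the cited argument.

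Two small corrections to your sketch. First, the identification of the Jantzen filtration with the weight filtration is not part of this theorem; that is the content of Theorem~\ref{thm:BBpol} (Beilinson--Bernstein), which concerns the multiplicity polynomials $Q_{\Xi,\Gamma}$. Theorem~\ref{thm:KLpol} is purely about $\overline{\mathfrak u}$-cohomology of irreducibles, and needs only localization plus purity. Second, your argument for (4) is phrased circularly: you cannot ``resolve $[J(\Psi)]$ by standard modules'' and then read off the coefficients, since those coefficients are exactly what (4) asserts. The clean argument is that the functor $X\mapsto \sum_p(-1)^p[\text{mult of }J(\Gamma_{\mathfrak q})\text{ in }H^p(\overline{\mathfrak u},X)]$ is additive on the Grothendieck group; by (3) it takes the value $(-1)^s\delta_{\Gamma,\Psi}$ on standards $I(\Psi)$, and by the definition of $P_{\Gamma,\Psi}$ (together with the vanishing below degree $s$ in Proposition~\ref{prop:ubarcohom}(2)) it takes the value $(-1)^s(-1)^{\ell(\Psi)-\ell(\Gamma)}P_{\Gamma,\Psi}(1)$ on $J(\Psi)$. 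Equating these two computations of the same additive invariant yields (4).
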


\begin{theorem}[Beilinson-Bernstein \Cite{BB}] \label{thm:BBpol}
  Suppose $\Gamma$ and $\Xi$ are 
  Langlands parameters of regular infinitesimal character. Then the
  multiplicity polynomial $Q_{\Xi,\Gamma}$ of Definition
  \ref{def:multpoly} is equal to the Kazhdan-Lusztig ``multiplicity
  polynomial'' obtained by inverting the Kazhdan-Lusztig matrix
  $(P_{\Gamma,\Psi})$ (\cite{IC4}*{\S\S 1 and 12}). In
  particular, it is a polynomial in $q$. Therefore
\begin{enumerate}
\item $J(\Xi)$ can occur as a composition factor in
  $I_{\quo}(\Gamma)^r/I_{\quo}(\Gamma)^{r+1}$ only if
\begin{enumerate}
\item $\Xi \le \Gamma$ in the Bruhat order (Definition
  \ref{def:bruhat}); and
\item $\ell(\Gamma)-\ell(\Xi)$ is congruent to $r$ modulo $2$; and
\item $0 \le [(\ell(\Gamma)-\ell(\Xi)) - r]/2 \le [\ell(\Gamma) -
  \ell(\Xi)]/2$. 
\end{enumerate}
\item Equality can hold in the second inequality of 1(c) only if
  $\Gamma = \Xi$. Equivalently, 
$$\deg Q_{\Xi,\Gamma} \le [\ell(\Gamma) - \ell(\Xi)- 1]/2, \qquad
\text{all $\Xi < \Gamma$.}$$
\item In the Grothendieck group of virtual $({\mathfrak
    g}_0,K)$-modules (cf.~\eqref{se:charformulas}),
$$[I(\Gamma)] = \sum_{\Xi} Q_{\Xi,\Gamma}(1) J(\Xi).$$
\end{enumerate}
\end{theorem}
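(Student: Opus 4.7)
The plan is to reduce this theorem to two inputs: the identity already established in Theorem \ref{thm:KLpol}, and the main result of Beilinson–Bernstein \Cite{BB} identifying the Jantzen filtration on standard Harish-Chandra modules with the weight filtration on the corresponding mixed $\mathcal{D}$-modules on the flag variety. First, I would recall the Beilinson–Bernstein localization equivalence, under which (for regular integral infinitesimal character) the category of $({\mathfrak g},K)$-modules is equivalent to the category of $K({\mathbb C})$-equivariant mixed perverse sheaves on $G({\mathbb C})/B({\mathbb C})$. Standard modules $I_{\sub}(\Gamma)$ correspond to standard extensions $j_{!*}\mathcal L_\Gamma$ (or rather $j_! \mathcal L_\Gamma$ for $I_{\quo}$) of irreducible local systems on $K({\mathbb C})$-orbits, and irreducibles $J(\Xi)$ correspond to IC-sheaves; this is the source of the length function $\ell(\Gamma)$ (orbit dimension plus a local-system correction).

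The crucial step is the identification of the analytically defined Jantzen filtration of \eqref{se:jantzenfilt} — built from the rational family $L_t$ of Knapp–Stein operators — with the geometric weight filtration on the mixed sheaf corresponding to $I_{\quo}(\Gamma)$. This is precisely the content of \Cite{BB}. One needs to check that the two constructions give the same filtration: the algebraic Jantzen filtration of Beilinson–Bernstein is defined by deforming the ``monodromy parameter'' of the $\mathcal D$-module through the algebraic translation principle, whereas ours is defined by deforming the continuous parameter $\nu\mapsto t\nu$. The general uniqueness principle underlying Jantzen's construction (Lemma \ref{lemma:jantzenfilt}) implies that any two rational deformations through generically irreducible standard modules, compatible with the relevant invariant pairings, produce the same graded module; so this identification reduces to comparing the two deformation directions at $t=1$, which is standard.

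Given the identification, the coefficient of $q^{(\ell(\Gamma)-\ell(\Xi)-r)/2}$ in $Q_{\Xi,\Gamma}$ (which is the multiplicity of $J(\Xi)$ in the $r$th graded piece of the Jantzen filtration) equals the multiplicity of the IC-sheaf of $\Xi$ in $\gr^W_{\ell(\Gamma)-r}$ of the mixed sheaf attached to $I_{\quo}(\Gamma)$. By purity of intersection cohomology (Deligne's decomposition theorem for the Frobenius/weight filtration) these multiplicities are, by construction, the coefficients of the Kazhdan–Lusztig $Q$-polynomial obtained by formally inverting the matrix $(P_{\Gamma,\Psi})$ from Theorem \ref{thm:KLpol}. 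The triangularity/parity statements (1)(a)–(c) follow from the fact that weight-$k$ pieces of stalks of IC-sheaves live only in the range prescribed by purity; the strict inequality in (2) is just the statement that the IC-sheaf contains the constant sheaf on the open stratum with multiplicity one in top degree and nothing higher. Part (3) is then an inversion of the character formula Theorem \ref{thm:KLpol}(4): $(m_{\Xi,\Gamma})$ is by construction the inverse of the integer matrix $((-1)^{\ell(\Psi)-\ell(\Gamma)}P_{\Gamma,\Psi}(1))$, which matches $Q_{\Xi,\Gamma}(1)$ by the Kazhdan–Lusztig inversion identity.

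The main obstacle is the middle step — the geometric identification of the Jantzen filtration with the weight filtration. This is deep and cannot be done by the elementary techniques of Sections \ref{sec:jantzen}--\ref{sec:klv}; it relies essentially on \Cite{BB} (and on the theory of mixed Hodge/$\ell$-adic perverse sheaves that supports purity). Once that black box is invoked, everything else is formal manipulation of the combinatorial data already supplied by Theorem \ref{thm:KLpol}.
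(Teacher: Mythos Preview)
The paper does not supply its own proof of this theorem; it is stated with attribution to \Cite{BB}, and items (1)--(3) are listed as immediate formal consequences of the identification of $Q_{\Xi,\Gamma}$ with the Kazhdan--Lusztig inverse polynomial. Your outline is an accurate sketch of how the argument in \Cite{BB} runs: localize to the flag variety, identify the Jantzen filtration on $I_{\quo}(\Gamma)$ with the weight filtration on the corresponding mixed object, and invoke purity to match the graded multiplicities with the coefficients of the inverted KL matrix.

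One point in your sketch is stated too strongly. Your appeal to Lemma~\ref{lemma:jantzenfilt} for the claim that ``any two rational deformations through generically irreducible standard modules \ldots\ produce the same graded module'' is not justified by that lemma: Lemma~\ref{lemma:jantzenfilt} concerns a single fixed rational family and says nothing about independence of the deformation direction. In general the Jantzen filtration can depend on the direction. The agreement between the analytic deformation $\nu\mapsto t\nu$ used here and the algebraic (monodromy-parameter) deformation used in \Cite{BB} is a genuine, if not difficult, verification---essentially that both are one-parameter families crossing the same reducibility walls transversally at $t=1$, so the local orders of vanishing agree. (The paper itself confronts the same issue later, in the proof of Lemma~\ref{lemma:transforms}(3), where a different one-parameter family is shown to yield the same Jantzen filtration.) You correctly flag this identification as the substantive black box; just do not claim it is a formal consequence of the elementary Lemma~\ref{lemma:jantzenfilt}.
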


We conclude this section with the corresponding results for singular
infinitesimal character.

\begin{corollary}\label{cor:singBBpol}
Suppose $\lambda_0$ is a possibly singular infinitesimal character,
and $\phi$ is a weight of a finite-dimensional representation of $G$
chosen as in \eqref{se:transfunc}, so that in particular the
infinitesimal character $\lambda_1 = \lambda_0+\phi$ is regular.
Write
$$t_{\lambda_0}^{\lambda_0+\phi}\colon B({\lambda_0}) \hookrightarrow
B({\lambda_0 + d\phi}).$$ 
as in Corollary \ref{cor:transfunc} for the corresponding inclusion of
Langlands parameters.  Fix Langlands parameters
$$\Gamma_0, \Xi_0 \in B(\lambda_0),$$
and write
$$\Gamma = t_{\lambda_0}^{\lambda_0+\phi}(\Gamma_0), \quad \Xi =
t_{\lambda_0}^{\lambda_0+\phi}(\Xi_0)$$
for the corresponding regular parameters in $B(\lambda_0+\phi)$. Then
the multiplicity polynomials of Definition \ref{def:multpoly} satisfy
$$Q_{\Xi_0,\Gamma_0} = Q_{\Xi,\Gamma}.$$  
That is, the multiplicity polynomials at singular infinitesimal
character are a subset of the (Kazhdan-Lusztig) multiplicity
polynomials at regular infinitesimal character; the subset is where
both indices have no root of $\Pi^{\lambda_0}$ (notation
\eqref{se:transfunc}; that is, no root singular on $\lambda_0$) in the
$\tau$-invariant (Definition \ref{def:tauinv}).
\end{corollary}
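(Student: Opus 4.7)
The plan is to reduce the equality $Q_{\Xi_0,\Gamma_0} = Q_{\Xi,\Gamma}$ to a compatibility statement between the Jantzen filtration and the translation functor $T^{\lambda_0}_{\lambda_0+\phi}$. By Definition \ref{def:multpoly}, both polynomials are weighted generating functions that record how often $J(\Xi_0)$ (respectively $J(\Xi)$) appears in the graded pieces of the Jantzen filtration of the appropriate standard module, with the power of $q$ determined by the filtration degree and the difference of integral lengths. So the identity splits into two subclaims: first, that the length differences agree, and second, that the Jantzen filtration of $I_{\quo}(\Gamma_0)$ is the termwise image of the Jantzen filtration of $I_{\quo}(\Gamma)$ under translation to the wall.

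For the length comparison, I would use that $\phi$ is a weight of a finite-dimensional representation, so $d\phi(\alpha^\vee)\in\mathbb{Z}$ for every coroot; consequently $R(d\gamma)=R(d\gamma_0)$, and the positive integral systems $R^+(d\gamma)$ and $R^+(d\gamma_0)$ used in Definition \ref{def:length} can be chosen to coincide via the recipe of Corollary \ref{cor:transfunc}(5)--(6). The combinatorial ingredients of $\ell(\cdot)$---the count of complex $\theta$-pairs in $R^+(d\gamma)$ and the quantity $c_{\text{real}}$, which depends only on $R_{\mathbb{R}}(d\gamma)$---are therefore unchanged, giving $\ell(\Gamma_0)=\ell(\Gamma)$ and similarly $\ell(\Xi_0)=\ell(\Xi)$. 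This kills the factor $q^{(\ell(\Gamma)-\ell(\Xi))/2}$ issue: the exponents match up on both sides of the claimed identity.

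The main step is to establish the filtration identity
\begin{equation*}
T^{\lambda_0}_{\lambda_0+\phi}\bigl(I_{\quo}(\Gamma)^r\bigr)=I_{\quo}(\Gamma_0)^r\qquad(r\ge 0).
\end{equation*}
I would prove this by deforming the continuous parameter. Using Proposition \ref{prop:meromorphic}, realize $I_{\quo}(\Gamma_t)$ and $I_{\quo}(\Gamma_{0,t})$ on the common underlying $K$-modules $V(\Lambda)$ and $V(\Lambda_0)$, carrying rationally varying long intertwining operators $L_t$ and $L^0_t$. Because translation to the wall is an exact functor that commutes with the formation of rational families (translation is defined by tensoring with a fixed finite-dimensional representation and projecting on a generalized infinitesimal character eigenspace, neither of which interacts with the deformation parameter $t$), applying $T^{\lambda_0}_{\lambda_0+\phi}$ to the whole family $L_t$ yields, after normalization, the corresponding family $L^0_t$. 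The Jantzen filtration is intrinsically defined from this family via orders of vanishing at $t=1$ (see \eqref{eq:jantzenr}), so exactness of $T^{\lambda_0}_{\lambda_0+\phi}$ together with Corollary \ref{cor:transfunc}(7) (which identifies the translated standard module with the singular one) forces the filtrations to correspond.

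Once the filtration identity is in hand, I take composition-factor multiplicities. By Corollary \ref{cor:transfunc}(8), $T^{\lambda_0}_{\lambda_0+\phi}$ sends $J(\Xi)$ to $J(\Xi_0)$ (since the hypothesis that $\Xi_0\in B(\lambda_0)$ precisely says that $\tau(\Xi)\cap\Pi^{\lambda_0}=\emptyset$) and kills any irreducible whose $\tau$-invariant meets $\Pi^{\lambda_0}$. Exactness therefore gives
\begin{equation*}
m_{I_{\quo}(\Gamma_0)^r/I_{\quo}(\Gamma_0)^{r+1}}\bigl(J(\Xi_0)\bigr)=m_{I_{\quo}(\Gamma)^r/I_{\quo}(\Gamma)^{r+1}}\bigl(J(\Xi)\bigr),
\end{equation*}
and summing against the matching powers of $q^{1/2}$ yields $Q_{\Xi_0,\Gamma_0}=Q_{\Xi,\Gamma}$. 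The principal obstacle is the compatibility of Jantzen filtrations with translation functors; although this is a natural expectation, the rigorous justification requires checking that the normalization choices in defining $L^0_t$ (choice of a lowest $K$-type to normalize on) transport correctly under translation to the wall. I would handle this by observing that a lowest $K$-type $\mu$ of $J(\Gamma)$ that is not annihilated by translation remains a lowest $K$-type of $J(\Gamma_0)$, and using the bottom-layer description of Theorem \ref{thm:VZLKT} to match the normalizations.
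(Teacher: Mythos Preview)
Your approach is correct and matches the paper's. The paper states Corollary \ref{cor:singBBpol} without an explicit proof; the key ingredient---that translation to the wall carries the Jantzen filtration of $I_{\quo}(\Gamma)$ to that of $I_{\quo}(\Gamma_0)$---is isolated later as Lemma \ref{lemma:transforms}(3), with a proof sketch very close to yours. Your length comparison and your use of Corollary \ref{cor:transfunc}(7),(8) to pass from filtration compatibility to the multiplicity identity are exactly right.

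One imprecision is worth flagging. You assert that the infinitesimal-character projection in $T^{\lambda_0}_{\lambda_0+\phi}$ ``does not interact with the deformation parameter $t$,'' but it does: this single functor projects onto a \emph{fixed} infinitesimal character, whereas $I_{\quo}(\Gamma_t)$ has infinitesimal character varying with $t$, so applying the fixed functor to the whole family makes no sense. The paper's sketch of Lemma \ref{lemma:transforms}(3) instead uses a one-parameter \emph{family} of translation functors $T^{(\lambda_0+\epsilon\nu_{1,s})}_{(\lambda_0+\epsilon\nu_{1,s})+\phi}$, which carries the family $I_{\quo}(\Lambda_1,(1+\epsilon)\nu_1)$ to $I_{\quo}(\Lambda_0,\nu_0+\epsilon\nu_1)$. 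Since in general $\nu_0\ne\nu_1$, this is \emph{not} the family $(\Lambda_0,(1+\epsilon)\nu_0)$ used to define the Jantzen filtration at $\Gamma_0$; the paper then asserts (without further detail) that this alternate deformation direction nevertheless gives the same filtration. Your identification of Jantzen-filtration compatibility as ``the principal obstacle'' is on target, but your treatment needs this refinement to be complete.
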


\section{Kazhdan-Lusztig polynomials for extended groups}\label{sec:extklv}
\setcounter{equation}{0}

In this section we fix again an extended group 
\begin{subequations}\label{se:extklv}
\begin{equation*}
{}^\delta G = G \rtimes \{1,\delta_f\}
\end{equation*}
as in Definition \ref{def:extgrp}.  Fix a maximally split extended
torus ${}^1 H_s$ as in \eqref{se:exttransfunc}, and a weight
$\lambda_0\in {\mathfrak h}_s^*$ satisfying the dominance condition
\eqref{eq:sdominant}.  We want information about representations of
${}^\delta G$---precisely, about irreducible
$({\mathfrak g},{}^\delta K)$-modules---annihilated by $[\ker
\xi_{\lambda_0}][\ker \xi_{t_s\lambda_0}]$; these are parametrized
by the extended Langlands parameters from \eqref{eq:setofextLP}
\begin{equation}
{}^\delta B(\lambda_0) = {}^\delta B(\lambda_0)^1\
{\textstyle\coprod}\ {}^\delta B(\lambda_0)^2, 
\end{equation}
where the (coproduct) notation indicates the disjoint union into type
one and type two parameters (Definition \ref{def:extLP}).  

A word about notation: we prefer to write a type two parameter for
${}^\delta G$ as $\Gamma_{\text{ind}}$ (with $\Gamma$ a type two
parameter for $G$, and a type one parameter as $\Gamma_+$ or
$\Gamma_-$. When we are discussing ``generic'' parameters for
${}^\delta G$ (not specified to be type one or type two) we will use
Roman letters $x, y, z\ldots$. 

We will use the sets of Langlands parameters for $G$
corresponding to ${}^\delta B(\lambda_0)^i$, namely
\begin{equation}
B(\lambda_0) = B(\lambda_0)^1\ {\textstyle\coprod}\  B(\lambda_0)^2;
\end{equation}
recall that a type one representation of $G$ is one fixed by
twisting by $\theta$, and a type one Langlands parameter is one
equivalent (that is, conjugate by a real Weyl group element) to its
twist by $\theta$.  

Definition \ref{def:multpoly} for multiplicity polynomials makes it
clear how tensoring with the character $\xi$ (Definition
\ref{def:extpair}) (or by any other one-dimensional character of
$({\mathfrak g},{}^\delta K)$) affects them:
\begin{equation} \label{eq:Qxi}
Q_{\xi\otimes x,\xi\otimes y} = Q_{x,y} \qquad (x,y \in {}^\delta
B(\lambda_0)). 
\end{equation}
For the character polynomials of Definition \ref{def:charpoly} the
result is
\begin{equation}\label{eq:Pxi}
P_{\xi\otimes y,\xi\otimes z} = P_{y,z} \qquad(y,z \in
{}^\delta B(\lambda_0), \lambda_0 \text{\ regular}).
\end{equation}

There are corresponding assertions about twisting by $\theta$ (or by
any other automorphism of $({\mathfrak g},K)$) on $G$:
\begin{equation}\label{eq:PQtheta}\begin{aligned}
Q_{\Xi^\theta,\Gamma^\theta} &= Q_{\Xi,\Gamma} \qquad (\Xi,\Gamma \in B(\lambda_0)),\\
P_{\Gamma^\theta,\Psi^\theta} &= P_{\Gamma,\Psi} \qquad(\Gamma,\Psi \in
B(\lambda_0), \lambda_0 \text{\ regular}).\end{aligned}
\end{equation}

We first say a few words about the (essentially trivial) case
\begin{equation*}
t_s(\lambda_0) \ne \lambda_0,
\end{equation*}
or equivalently $\xi_{\lambda_0} \ne \xi_{\lambda_0}^\theta$. In
this case twisting by $\theta$ defines a bijection 
\begin{equation*}
\theta \colon B(\lambda_0) \buildrel\sim \over\longrightarrow B(t_s\lambda_0)
\end{equation*}
between the disjoint sets $B(\lambda_0)$ and $B(t_s\lambda_0)$.
Therefore induction of parameters is a bijection
\begin{equation*}
\text{ind}\colon B(\lambda_0) \buildrel\sim \over\longrightarrow
{}^\delta B(\lambda_0)^2 = {}^\delta B(\lambda_0), \quad \Gamma\mapsto
\Gamma_{\text{ind}}. 
\end{equation*}
In this case (by Clifford theory) all standard representations and all
irreducible representations are induced from $G$ to ${}^\delta G$.  It
is very easy to check from the definition of the Jantzen filtration
that the filtration is also induced:
\begin{equation*}
I_{\quo}(\Gamma_{\text{ind}})^r = \Ind_{({\mathfrak
    g},K)}^{({\mathfrak g},{}^\delta K)}(I_{\quo}(\Gamma)^r).
\end{equation*}
From this it follows immediately that
\begin{equation*}
Q_{\Xi_{\text{ind}},\Gamma_{\text{ind}}} = Q_{\Xi,\Gamma} +
Q_{\Xi^\theta,\Gamma} \qquad (\Xi,\Gamma \in B(\lambda_0))
\end{equation*}
Since $\Xi^\theta$ has the wrong infinitesimal character to appear in
$I(\Gamma)$, the second term on the right is zero, and we get
\begin{equation}\label{eq:Qall2}
Q_{\Xi_{\text{ind}},\Gamma_{\text{ind}}} = Q_{\Xi,\Gamma} \qquad
(\Xi,\Gamma \in B(\lambda_0), \quad \lambda_0 \ne t_s\lambda_0).
\end{equation}
By a slightly more technical but equally easy computation with Lie
algebra cohomology, we find
\begin{equation}\label{eq:Pall2}
P_{\Gamma_{\text{ind}},\Psi_{\text{ind}}} = P_{\Gamma,\Psi} \qquad
(\Gamma,\Psi \in B(\lambda_0), \quad \lambda_0 \ne t_s\lambda_0 \
\text{regular}).
\end{equation}

Having disposed of this easy case, we will assume for the rest of this
section that
\begin{equation*}
t_s(\lambda_0) = \lambda_0.
\end{equation*}
What Clifford theory says is that 
\begin{equation*}
\text{ind}\colon B(\lambda_0)^2 \rightarrow {}^\delta B(\lambda_0)^2
\ \text{is a two-to-one surjection,}
\end{equation*}
and
\begin{equation*}
\text{res}\colon {}^\delta B(\lambda_0)^1 \rightarrow B(\lambda_0)^1
\ \text{is a two-to-one surjection.}
\end{equation*}
The fibers of the first surjection are the orbits of the twisting
action by $\theta$, and the fibers of the second surjection are the
orbits of tensoring with the nontrivial character $\xi$ of ${}^\delta
G/G$ (Definition \ref{def:extpair}).

We will also make use of a type one weight as in
\eqref{eq:regularizingphi}:
\begin{equation*}
\phi \in \Lambda_{\text{fin}}(G,H_s)^{t_s}
\end{equation*}
of a finite-dimensional representation of $G$, dominant for $R^+_s$,
and such that 
\begin{equation*}
\lambda_0 + d\phi \text{\ is dominant and regular for $R^+(\lambda_0)$.}
\end{equation*}
Parallel to Corollary \ref{cor:transfunc}(4), we get from Corollary
\ref{cor:exttransfunc}(4) an inclusion
\begin{equation*}
t_{\lambda_0}^{\lambda_0+\phi_+}\colon {}^\delta B(\lambda_0)
\hookrightarrow {}^\delta B(\lambda_0 + d\phi),
\end{equation*}
allowing us (parallel to Theorem \ref{thm:BBpol}) to compute the
Kazhdan-Lusztig multiplicity polynomials $Q$ at the (possibly
singular) infinitesimal character $\lambda_0$
as a subset of those at the (regular) infinitesimal character
$\lambda_0+d\phi$.
\end{subequations}

With notation established, we turn to computation of the
Kazhdan-Lusztig character polynomials $P_{x,y}$ for ${}^\delta G$ at regular
infinitesimal character. Recall (Definition \ref{def:charpoly}) that
such a polynomial describes certain constituents of the Lie algebra
\begin{subequations}\label{se:indextP}
cohomology of the irreducible representation $J(y)$; the
constituents are related to the standard representation
$I(x)$. We consider four cases separately, according to the types of
$x$ and $y$.  First, assume that
$y=\Psi_{\text{ind}}$ is type two, so that the irreducible representation
$J(y)$ is induced from $J(\Psi)$ on $G$. The Lie algebra
cohomology of this induced representation is easy to compute, and we
get just as in \eqref{eq:Qall2} above (always assuming $\lambda_0$ regular)
\begin{equation}\label{eq:PGammaPsi212} \begin{aligned}
P_{\Gamma_{\text{ind}},\Psi_{\text{ind}}} &= P_{\Gamma,\Psi} +
P_{\Gamma,\Psi^\theta} 
= P_{\Gamma^\theta,\Psi} +
P_{\Gamma^\theta,\Psi^\theta}
\quad (\Gamma \in B(\lambda_0)^2,\Psi \in
B(\lambda_0)^2),\\ 
P_{\Gamma_{\pm},\Psi_{\text{ind}}} &= P_{\Gamma,\Psi} 
= P_{\Gamma,\Psi^\theta} \qquad (\Gamma \in B(\lambda_0)^1, \Psi \in
B(\lambda_0)^2),
\end{aligned}
\end{equation}
the last equality is a consequence of \eqref{eq:PQtheta}, and the
hypothesis $\Gamma^\theta\simeq \Gamma$.
Next, assume that $y=\Psi_{\pm}$ is type one, so that $J(\Psi_{\pm})$
is an extension to ${}^\delta G$ of the irreducible representation
$J(\Psi)$. If $x=\Gamma_{\text{ind}}$ is type two, then
$P_{\Psi_{\pm},\Gamma_{\text{ind}}}$ counts multiplicities of a
certain induced representation in the Lie algebra cohomology of
$J(\Psi_{\pm})$. By Frobenius reciprocity, such a multiplicity can be
computed after restriction to $G$ (still assuming $\lambda_0$ regular):
\begin{equation}\label{eq:PPsi1Gamma2} 
P_{\Gamma_{\text{ind}},\Psi_{\pm}} = P_{\Gamma,\Psi} 
=P_{\Gamma^\theta,\Psi} \quad (\Gamma \in B(\lambda_0)^2, \Psi \in
B(\lambda_0)^1). 
\end{equation}

We record also the more elementary analogues for the multiplicity
polynomials $Q$, which do not require the hypothesis of regular
infinitesimal character:
\begin{equation*}
Q_{\Xi_{\text{ind}},\Gamma_{\text{ind}}} = Q_{\Xi,\Gamma} +
Q_{\Xi,\Gamma^\theta} 
= Q_{\Xi^\theta,\Gamma} + Q_{\Xi^\theta,\Gamma^\theta} \quad (\Xi
\in B(\lambda_0)^2,\Gamma \in B(\lambda_0)^2); 
\end{equation*}
\begin{equation*}
Q_{\Xi_{\pm},\Gamma_{\text{ind}}} = Q_{\Xi,\Gamma} =
Q_{\Xi,\Gamma^\theta} \qquad (\Xi\in
B(\lambda_0)^1, \Gamma \in B(\lambda_0)^2);
\end{equation*}
\begin{equation*}
Q_{\Xi_{\text{ind}},\Gamma_{\pm}} = Q_{\Xi,\Gamma} =
Q_{\Xi^\theta,\Gamma} \qquad (\Xi \in
B(\lambda_0)^2, \Gamma \in B(\lambda_0)^1). 
\end{equation*}
\end{subequations}

In all of these cases---that is, whenever one of the indices is type
two---the Kazhdan-Lusztig polynomials for ${}^\delta G$ are given by
extremely simple formulas in terms of the 
Kazhdan-Lusztig polynomials for $G$. The remaining case is when both
indices are type one. 
\begin{subequations}\label{se:typeoneextLP}
We will therefore be considering parameters
\begin{equation}
\Xi, \Gamma, \Psi \in B(\lambda_0)^1,
\end{equation}
and their extensions
\begin{equation}
\Xi_\pm, \Gamma_\pm, \Psi_\pm  \in {}^\delta B(\lambda_0)^1.
\end{equation}
These pairs (and the corresponding representations) are interchanged
by tensoring with $\xi$ (Definition \ref{def:extpair}): 
\begin{equation*}
\xi\otimes J(\Xi_+) = J(\Xi_-), \qquad \xi\otimes J(\Xi_-) = J(\Xi_+).
\end{equation*}
Using \eqref{eq:Pxi}, we deduce
\begin{equation*}
P_{\Gamma_{\pm},\Psi_-} = P_{\Gamma_{\mp},\Psi_+} \qquad (\Gamma,\Psi
\in B(\lambda_0)^1, \lambda_0\ \text{regular}),
\end{equation*}
and similarly
\begin{equation*}
Q_{\Xi_\pm,\Gamma_-} = Q_{\Xi_{\mp},\Gamma_+} \qquad (\Xi,\Gamma
\in B(\lambda_0)^1).
\end{equation*}
So it is enough to compute all of the polynomials $P$ and $Q$ with the second
index labeled $+$. (The same statement would be true with all four variations on
``first'' and ``second,'' but this is the one we prefer.)

The polynomials $P_{\Gamma_{\pm},\Psi_+}$ describe the occurrence of
certain representations 
of an extended group ${}^\delta L$ (depending on $\Gamma$) in the Lie
algebra cohomology of $J(\Psi_+)$. By restriction to $G$, we see that
\begin{equation*}
P_{\Gamma_+,\Psi_+} + P_{\Gamma_-,\Psi_+} = P_{\Gamma,\Psi} \qquad (\Gamma,\Psi
\in B(\lambda_0)^1, \lambda_0\ \text{regular}),
\end{equation*}
That is, the sum of these two character polynomials for ${}^\delta G$
that we wish to compute is a (known) character polynomial for $G$.  It
is therefore natural to consider the difference
\begin{equation}\label{eq:twcharpoly}
P^\delta_{\Gamma,\Psi} =_{\text{def}} P_{\Gamma_+,\Psi_+} -
  P_{\Gamma_-,\Psi_+}  \qquad (\Gamma,\Psi \in B(\lambda_0)^1,
  \lambda_0\ \text{regular}).
\end{equation}
The polynomial $P^\delta_{\Gamma,\Psi}$ is called a {\em twisted
  character polynomial}. What the preceding formulas say is that {\em
  if we can compute the twisted character polynomials
  $P^\delta_{\Gamma,\Psi}$, we will know the character polynomials
  for ${}^\delta G$.} 

In exactly the same way, the polynomials $Q_{\Xi_{\pm},\Gamma_+}$
describe the occurrence of the the irreducible representations
$J(\Xi_{\pm})$ in the Jantzen filtration of of $I(\Gamma_+)$. By
restriction to $G$, we see that 
\begin{equation*}
Q_{\Xi_+,\Gamma+} + Q_{\Xi_-\Gamma_+} = Q_{\Xi,\Gamma} \qquad (\Xi,\Gamma
\in B(\lambda_0)^1),
\end{equation*}
That is, the sum of these two multiplicity polynomials for ${}^\delta G$
that we wish to compute is a (known) multiplicity polynomial for $G$.  It
is therefore natural to consider the difference
\begin{equation}\label{eq:twmultpoly}
Q^\delta_{\Xi,\Gamma} =_{\text{def}} Q_{\Xi_+\Gamma_+} -
  Q_{\Xi_-\Gamma_+}  \qquad (\Gamma,\Psi \in B(\lambda_0)^1),
\end{equation}
a {\em twisted multiplicity polynomial}. Again what is true is
{\em if we can compute the twisted multiplicity polynomials
  $Q^\delta_{\Gamma,\Psi}$, we will know the multiplicity
  polynomials for ${}^\delta G$.} 
\end{subequations}

Here is the main theorem of this section.

\begin{theorem}[Lusztig-Vogan \Cite{LV12}] \label{thm:twKLpol} Suppose
  $\Gamma, \Psi\in B(\lambda_0)^1$ are type one
  Langlands parameters of regular $\theta$-stable infinitesimal
  character $\lambda_0$. Then the twisted character polynomial
  $P^\delta_{\Gamma,\Psi}$ of \eqref{eq:twcharpoly} is equal to the
  polynomial defined in \Cite{LV12}. In particular, it is a polynomial in
  $q$ with integer coefficients. Each coefficient is bounded in
  absolute value by the corresponding coefficient of
  $P_{\Gamma,\Psi}$, and congruent to that coefficient modulo two.
\end{theorem}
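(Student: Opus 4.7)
The plan is to reduce the statement to geometry via Beilinson--Bernstein localization extended to the twisted setting, and then to identify the resulting geometric quantities with the combinatorial polynomials of \Cite{LV12}. More precisely, recall from Proposition \ref{prop:ubarcohom} that the coefficients of $P_{\Gamma,\Psi}$ count multiplicities of $J(\Gamma_{\mathfrak{q}})$ in Lie algebra cohomology $H^{s+r}(\overline{\mathfrak{u}},J(\Psi))$; by the proof of Theorem \ref{thm:KLpol} due to Beilinson--Bernstein and Lusztig--Vogan, this multiplicity equals the dimension of a local cohomology stalk of the intersection cohomology sheaf $\mathrm{IC}(Q_\Psi)$ at a point of the $K({\mathbb{C}})$-orbit $Q_\Gamma$ on the flag variety ${\mathcal B}$. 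The degree in $q$ records the cohomological degree. Thus proving the theorem amounts to describing the action of $\delta$ on these stalks and identifying the trace with the Lusztig--Vogan combinatorial polynomial.

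First I would promote the setup to the extended group: the automorphism $\delta_f$ acts on ${\mathcal B}$, preserves the set of $K({\mathbb{C}})$-orbits, and a type-one parameter $\Gamma$ corresponds exactly to a $\delta$-fixed orbit $Q_\Gamma$. By Proposition \ref{prop:extpairrep} applied to the category of $K({\mathbb{C}})$-equivariant perverse sheaves, the two extensions $\Gamma_\pm$ correspond to the two possible $\delta$-equivariant structures on $\mathrm{IC}(Q_\Psi)$, differing by the sign character of $\delta_f$. Via Beilinson--Bernstein and the extension of their equivalence to $({\mathfrak{g}},{}^\delta K({\mathbb{C}}))$-modules, the coefficients of $P_{\Gamma_\pm,\Psi_+}$ are the dimensions of the $\pm 1$ eigenspaces of $\delta$ acting on the local cohomology stalks. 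Consequently the twisted polynomial $P^\delta_{\Gamma,\Psi}$ computes precisely the trace of $\delta$ on these stalks, with the same $q$-grading as before.

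Next I would match this trace with the polynomials defined in \Cite{LV12}. Lusztig and Vogan construct a twisted Hecke module whose canonical basis satisfies a Kazhdan--Lusztig type recursion obtained by conjugating the ordinary recursion by the involution induced from $\delta$ on the extended Weyl group (see Proposition \ref{prop:extWC} and Lemma \ref{lemma:twinv}); they prove that the resulting combinatorial polynomials equal the traces of Frobenius times $\delta$ on the appropriate stalks. The identification of their polynomials with $P^\delta_{\Gamma,\Psi}$ therefore follows once one checks that the $\delta$-equivariant structures on $\mathrm{IC}(Q_\Psi)$ used in both settings are normalized compatibly---that is, by fixing $\delta$ to act by $+1$ on the lowest $K$-types labelled by $\Psi_+$, matching the convention of Definition \ref{def:Lextparam} with the geometric one from \Cite{LV12}. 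This normalization compatibility, together with the tracking of the $\delta$-action through the $\overline{\mathfrak{u}}$-cohomology functor (using duality as in the proof sketch of Proposition \ref{prop:ubarcohom}), is the main obstacle in the argument.

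Finally the assertions about integrality, bounds, and parity of coefficients are purely formal consequences. If $V$ is a finite-dimensional complex vector space and $\delta$ is an involution on $V$ with $\dim V = n$ and trace $t$, then $t \in \mathbb{Z}$, $|t| \le n$, and $t \equiv n \pmod 2$, because $n = \dim V^{+} + \dim V^{-}$ and $t = \dim V^{+} - \dim V^{-}$. Applying this degree-by-degree to $V = H^{s+r}(\overline{\mathfrak{u}},J(\Psi))_\Gamma$ with the $\delta$-action, and using that the degree-$r$ coefficient of $P_{\Gamma,\Psi}$ equals $\dim V$, yields the three concluding assertions of the theorem.
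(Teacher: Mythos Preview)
Your approach is essentially the same as the paper's: reduce to ${}^\delta K({\mathbb C})$-equivariant perverse sheaves on the flag variety via Beilinson--Bernstein localization, and identify the resulting stalk data with the polynomials of \Cite{LV12}. Your trace/eigenspace description of $P^\delta_{\Gamma,\Psi}$ and the elementary linear-algebra argument for the integrality, bound, and parity assertions are exactly what is implicit in the paper's brief discussion.

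Two points the paper makes explicit that you pass over. First, \Cite{LV12} is actually written for perverse sheaves in finite characteristic (with a Frobenius), so a base-change argument is needed to transport the result to ${}^\delta K({\mathbb C})$-equivariant sheaves over ${\mathbb C}$; you allude to ``Frobenius times $\delta$'' but do not address this reduction. Second, and more substantively, the Beilinson--Bernstein argument as you describe it handles only integral infinitesimal character (essentially $\lambda_0=\rho$); the theorem as stated allows arbitrary regular real $\lambda_0$, and the non-integral case requires the additional machinery of twisted ${\mathcal D}$-modules or monodromic sheaves, as in Chapter~17 of \Cite{ABV}. Your sketch should acknowledge this extension.
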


What is actually written in \Cite{LV12} is a statement about
equivariant perverse sheaves on a flag variety in finite
characteristic. By standard base change results, this is equivalent to
a statement about ${}^\delta K({\mathbb C})$-equivariant perverse
sheaves on the flag variety of Borel subalgebras of ${\mathfrak
  g}$. By the Beilinson-Bernstein localization theory, with
appropriate minor modifications as in \Cite{IC3}, this becomes a
statement about twisted character polynomials for $({\mathfrak
  g},{}^\delta K({\mathbb C}))$-modules, which is precisely the
theorem above in case $\lambda_0 = \rho$, the half sum of a full set
of positive roots. 

The case of non-integral infinitesimal character can be
treated just as for connected groups, using the ideas of Beilinson,
Bernstein, Brylinski, Kashiwara, and Lusztig as sketched in Chapter 17
of \Cite{ABV}.  (As far as we know, the ideas have been written down by
their originators only in \Cite{LRed}*{Chapter 1}, which
explains the case of Verma modules.)
\section{Kazhdan-Lusztig polynomials for $c$-invariant
  forms}\label{sec:klvform} 
\setcounter{equation}{0}

In this section we fix a weight $\lambda_0 \in {\mathfrak h}_s^*$
representing a {\em real} infinitesimal character (that is, belonging
to the real span of the algebraic weight lattice $X^*$) as in
Definition \ref{def:tauinv}.  In \eqref{se:cinvtsigformulas}, we wrote
finite matrices over ${\mathbb W}$  (Definition \ref{def:witt})
\begin{equation} \label{se:klvforms}
\left(W^c_{\Gamma,\Psi}\right), \quad \left(w^c_{\Xi,\Gamma}\right) \quad
({\Gamma,\Psi \in B(\lambda_0)})
\end{equation} 
describing the signatures of $c$-invariant forms on irreducible
representations in terms of those on standard representations.  The
ring ${\mathbb W} = {\mathbb Z} + s{\mathbb Z}$ comes equipped with a
homomorphism (Definition \ref{def:witt}
\begin{equation*}
\for\colon {\mathbb W} \rightarrow {\mathbb Z},\qquad p+qs \mapsto p+q,
\end{equation*}
and these signature matrices are related to the multiplicity matrices
by
\begin{equation*}
\for(w^c_{\Xi,\Gamma}) = m_{\Xi,\Gamma}, \qquad \for(W^c_{\Gamma,\Psi}) =
M_{\Gamma,\Psi}. 
\end{equation*}

In sections \ref{sec:klv} and \ref{sec:extklv} we recalled the ideas
of Kazhdan and Lusztig that led to computations of the multiplicity
matrices $m$ and $M$: introducing appropriate
$q$-analogues $Q$ and $P$, and then computing those $q$-analogues
using recursion relations that do not make sense at $q=1$. In the
present section we will extend this to a computation of the signature
matrices. We will need all this for extended groups. We will write
only about $G$ to keep the notation under control, making comments as
we go about the case of ${}^\delta G$.

The first issue is to find $q$-analogues of $w^c$ and $W^c$. The first is
easy.
\begin{definition}\label{def:wpoly} In the setting of
  \eqref{se:klvforms}, suppose $\Gamma$ and $\Xi$
  are Langlands parameters in $B(\lambda_0)$. Recall from
  \eqref{eq:jantzenr} the finite decreasing filtration
$$I_{\quo}(\Gamma) = I_{\quo}(\Gamma)^0 \supset
    I_{\quo}(\Gamma)^1 \supset I_{\quo}(\Gamma)^2
    \cdots,$$
and the nondegenerate Jantzen forms $\langle,\rangle^{[r]}$ on the
subquotients of the filtration. Using Proposition
\ref{prop:hermgrothgens}, write in the Hermitian Grothendieck group
$$[I_{\quo}(\Gamma)^r/I_{\quo}(\Gamma)^{r+1},\langle,\rangle^{[r]}]
= \sum_{\Xi \in B(\lambda_0)} w^{c,r}_{\Xi,\Gamma}
[J(\Xi),\langle,\rangle^c_{J(\Xi),b}],$$
with $w^{c,r}_{\Xi,\Gamma} \in {\mathbb W}$. Here the ``base''
$c$-invariant forms $\langle,\rangle^c_{J(\Xi),b}$ are the canonical
ones described in \eqref{eq:cancform}. We define the {\em
  signature multiplicity polynomial}
$$Q^c_{\Xi,\Gamma} = q^{(\ell(\Gamma) - \ell(\Xi)/2)}\sum_{r=0}^\infty
  w^{c,r}_{\Xi,\Gamma} q^{-r/2} \in {\mathbb W}[q^{1/2},q^{-1/2}].$$
The value at $q^{1/2} = 1$ of this Laurent
polynomial is the signature multiplicity of $J(\Xi)$ in $I(\Gamma)$
defined in \eqref{se:cinvtsigformulas}:
$$Q^c_{\Xi,\Gamma}(1) = w^c_{\Xi,\Gamma}.$$
Applying the natural forgetful homomorphism
$$\for\colon {\mathbb W}[q^{1/2},q^{-1/2}] \rightarrow {\mathbb
  Z}[q^{1/2},q^{-1/2}]$$
clearly (by Definition \ref{def:multpoly}) gives
$$\for(Q^c_{\Xi,\Gamma}) = Q_{\Xi,\Gamma}.$$

This definition applies to the extended group ${}^\delta G$
without change.
\end{definition}

\begin{proposition}\label{prop:wpoly} Suppose we are in the setting of
  Definition \ref{def:wpoly}.
\begin{enumerate}
\item The signature multiplicity
  polynomial $Q^c_{\Xi,\Gamma}$ is a polynomial in ${\mathbb W}[q]$ of
  degree less than or equal to $[\ell(\Gamma) - \ell(\Xi)]/2$. Its
  coefficients are nonnegative, in the sense that if $p+qs$ is a
  coefficient, then $p$ and $q$ belong to ${\mathbb N}$.
\item The diagonal entries of the matrix $Q^c$ are equal to one:
  $Q^c_{\Gamma,\Gamma}=1$.
\item The polynomial $Q^c_{\Xi,\Gamma}$ can be nonzero only if $\Xi
  \le \Gamma$ in the Bruhat order of Definition \ref{def:singbruhat}.
\item Equality can hold in the degree estimate of (1) only if
  $\Gamma=\Xi$. Equivalently,
$$\deg Q^c_{\Xi,\Gamma} \le [\ell(\Gamma) - \ell(\Xi) -1]/2, \qquad
\text{all $\Xi < \Gamma$.}$$
\item The matrix $Q^c$ is upper triangular with diagonal entries $1$,
  and so is invertible.
\end{enumerate}
Parallel results hold for ${}^\delta G$.
\end{proposition}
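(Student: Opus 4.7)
The plan is to exploit the ring homomorphism $\for\colon \mathbb{W} \to \mathbb{Z}$, $p+qs \mapsto p+q$, and its termwise extension to $\mathbb{W}[q^{1/2},q^{-1/2}]$. By Definition \ref{def:wpoly} this map sends $Q^c_{\Xi,\Gamma}$ to the ordinary multiplicity polynomial $Q_{\Xi,\Gamma}$ of Definition \ref{def:multpoly}. Since real infinitesimal character forces $J(\Xi) \simeq J(\Xi)^{h,\sigma_c}$ for every $\Xi \in B(\lambda_0)$ (Proposition \ref{prop:cdualstd}(3) and the discussion following Definition \ref{def:cdualparam}), Proposition \ref{prop:hermgrothgens}(3) shows that each coefficient $w^{c,r}_{\Xi,\Gamma}$ is of the form $p+qs$ with $p,q\in \mathbb{N}$; in particular $w^{c,r}_{\Xi,\Gamma}=0$ if and only if $\for(w^{c,r}_{\Xi,\Gamma})=0$, so the vanishing pattern of $Q^c_{\Xi,\Gamma}$ coincides with that of $Q_{\Xi,\Gamma}$.

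With this observation, parts (1), (3), and (4) follow from the corresponding statements of Theorem \ref{thm:BBpol} applied to $Q_{\Xi,\Gamma}$ (passing through Corollary \ref{cor:singBBpol} when $\lambda_0$ is singular; note that the translation $\Gamma_0 \mapsto \Gamma_0 + \phi(\Gamma_0)$ preserves integrality of coroots and hence the integral length). Explicitly: the parity constraint $r \equiv \ell(\Gamma) - \ell(\Xi) \pmod 2$ forces all $q$-exponents in $Q^c_{\Xi,\Gamma}$ to be integers; the bound $r \le \ell(\Gamma) - \ell(\Xi)$ forces them to be nonnegative; the coefficients are already known to lie in $\mathbb{N}+s\mathbb{N}$; and $\Xi \le \Gamma$ in the Bruhat order (Definition \ref{def:singbruhat}) whenever $Q^c_{\Xi,\Gamma} \ne 0$, since any nonzero coefficient records an actual composition factor in a Jantzen subquotient. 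Equality of degrees forces $r=0$ and $\Xi=\Gamma$, giving (4).

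For (2), the identification $I_{\quo}(\Gamma)^0/I_{\quo}(\Gamma)^1 \simeq J(\Gamma)$ of Proposition \ref{prop:jantzenfilt}(2) carries the level-zero Jantzen form $\langle,\rangle^{[0]}$ to the form on $J(\Gamma)$ induced from the rational family $\langle,\rangle_t$ of Proposition \ref{prop:meromorphic}(6), which by construction restricts at $t=1$ to the form $\langle,\rangle^c_{J(\Gamma),b}$ normalized to be positive on lowest $K$-types. Hence $w^{c,0}_{\Gamma,\Gamma} = 1 \in \mathbb{W}$; combined with the vanishing argument above (since $J(\Gamma)$ has multiplicity one in $I_{\quo}(\Gamma)$ by the Langlands classification), this yields $Q^c_{\Gamma,\Gamma} = 1$. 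Then (5) is immediate: after linearly extending the Bruhat order, the matrix $(Q^c_{\Xi,\Gamma})$ is unitriangular, hence invertible over $\mathbb{W}[q^{1/2},q^{-1/2}]$.

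The parallel results for ${}^\delta G$ proceed identically, using Proposition \ref{prop:cdualext} for the normalization of $c$-invariant forms on extended irreducibles and replacing Theorem \ref{thm:BBpol} by its extended-group analogue. For type-two pairs the latter comes for free from the explicit formulas \eqref{eq:QXi2Gamma2}--\eqref{eq:QXi2Gamma1}; for type-one pairs it follows from $Q_{\Xi_\pm,\Gamma_+} = \tfrac12(Q_{\Xi,\Gamma} \pm Q^\delta_{\Xi,\Gamma})$, where $Q^\delta$ is obtained by matrix inversion from the twisted character polynomials $P^\delta$ of Theorem \ref{thm:twKLpol}. I expect the main technical obstacle to be exactly this last piece of bookkeeping: propagating the ``bounded in absolute value by, and congruent modulo two to'' control that Theorem \ref{thm:twKLpol} records for $P^\delta_{\Gamma,\Psi}$ through the inversion step to obtain the matching parity, degree, and vanishing estimates for $Q_{x,y}$ on ${}^\delta G$, after which the forgetful-map argument above runs unchanged.
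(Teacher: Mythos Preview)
Your argument for $G$ is correct and is exactly the paper's: nonnegativity of each $w^{c,r}_{\Xi,\Gamma}\in\mathbb N+s\mathbb N$ forces $Q^c_{\Xi,\Gamma}$ and $\for(Q^c_{\Xi,\Gamma})=Q_{\Xi,\Gamma}$ to have identical supports, so the parity, degree, and Bruhat constraints all transfer from Theorem~\ref{thm:BBpol} (through Corollary~\ref{cor:singBBpol} when $\lambda_0$ is singular). Your treatment of (2) is a bit more explicit than the paper's but amounts to the same normalization check.

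Your extended-group paragraph overcomplicates the situation and contains a misstatement: $Q^\delta_{\Xi,\Gamma}$ is \emph{not} obtained by matrix inversion from $P^\delta$; it is defined directly in \eqref{eq:twmultpoly} as the difference $Q_{\Xi_+,\Gamma_+}-Q_{\Xi_-,\Gamma_+}$ of Jantzen multiplicities. More to the point, neither $Q^\delta$ nor $P^\delta$ nor Theorem~\ref{thm:twKLpol} is needed here. The multiplicity polynomials $Q_{x,y}$ for ${}^\delta G$ are defined (Definition~\ref{def:multpoly}, extended verbatim) as honest multiplicities in Jantzen subquotients, so they already have nonnegative integer coefficients. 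Clifford-theoretic restriction to $G$ then gives termwise identities such as $Q_{\Xi_+,\Gamma_+}+Q_{\Xi_-,\Gamma_+}=Q_{\Xi,\Gamma}$ with all three sides having nonnegative coefficients; hence the support of each $Q_{\Xi_\pm,\Gamma_+}$ is contained in that of $Q_{\Xi,\Gamma}$, and the parity, degree, and Bruhat bounds for ${}^\delta G$ follow immediately from those for $G$. The same nonnegativity-and-$\for$ argument then runs unchanged, and the ``technical obstacle'' you anticipate simply does not arise.
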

\begin{proof}
The nonnegativity of the coefficients is clear from the definition.
Because of the nonnegativity and the last formula of Definition
\ref{def:wpoly}, $Q^c_{\Xi,\Gamma}$ and $Q_{\Xi,\Gamma}$ have exactly
the same set of nonvanishing coefficients. Because $Q_{\Xi,\Gamma}$ is
known (by Beilinson-Bernstein's Theorem \ref{thm:BBpol}) to be a
polynomial with a degree bound, the same conclusion follows for
$Q^c_{\Xi,\Gamma}$.
\end{proof}

\begin{definition}\label{def:Wpoly} In the setting of
  \eqref{se:klvforms} and Definition \ref{def:wpoly}, consider the
  matrix $Q^c$ (with entries indexed by $B(\lambda_0)$) of polynomials
  in ${\mathbb W}[q]$. The {\em signature character polynomial} is
$$P^c_{\Gamma,\Psi} = (-1)^{\ell(\Psi) -
  \ell(\Gamma)}[\text{$(\Gamma,\Psi)$ entry of $(Q^c)^{-1}$}].$$
By Proposition \ref{prop:wpoly}, $P^c_{\Gamma,\Psi}$ is a polynomial
in $q$ (with coefficients in ${\mathbb W}$) of degree at most
$[\ell(\Psi) - \ell(\Gamma)]/2$, with 
equality only if $\Psi=\Gamma$.  The value at $q = 1$ of this
polynomial is the signature coefficient of $I(\Gamma)$ in the
irreducible $J(\Psi)$ defined in \eqref{eq:formformirr}:
$$P^c_{\Gamma,\Psi}(1) = (-1)^{\ell(\Psi) - \ell(\Gamma)} W^c_{\Gamma,\Psi}.$$ 
Applying the natural forgetful homomorphism
$$\for\colon {\mathbb W}[q] \rightarrow {\mathbb
  Z}[q]$$
gives
$$\for(P^c_{\Gamma,\Psi}) = P_{\Gamma,\Psi}.$$ 

This definition applies to the extended group ${}^\delta G$
without change.
\end{definition}

It would certainly be good to find a direct definition of
$P^c_{\Gamma,\Psi}$ parallel to the definition of $P_{\Gamma,\Psi}$ in
Definition \ref{def:charpoly}, perhaps in terms of some natural
invariant form on Lie algebra cohomology. But we do not know how to do
that.

With $q$-analogues of the signature matrices in hand, we can ask how
the ideas of Kazhdan and Lusztig help in computing
them. Kazhdan-Lusztig theory involves in a fundamental way the notion
of length recalled in Definition \ref{def:length}, which involves only
the integral roots.  The behavior of invariant forms is also affected
by roots which ``were'' integral at some deformed value of a
continuous parameter. The following version of length looks at those
``previously integral'' nonintegral roots.

\begin{definition}\label{def:ornumber} Suppose $\Gamma =
  (\Lambda,\nu)$ is a Langlands parameter (\ref{def:dLP}). Write
  $\gamma$ for the differential of $\Gamma$, a weight defining the
  infinitesimal character. The {\em
    orientation number $\ell_o(\Gamma)$} is equal to the sum of
\begin{enumerate}[label=\alph*)]
\item the number of pairs $(\alpha,-\theta\alpha)$ of complex
  nonintegral roots such that
$$\langle\gamma,\alpha^\vee\rangle > 0, \qquad
\langle\gamma,-\theta\alpha^\vee\rangle > 0;$$
and
\item the number of real nonintegral roots $\beta$ such that
  $\langle\gamma,\beta^\vee\rangle > 0$, and the integral part
  $[\langle\gamma,\beta^\vee\rangle]$ is {\em even} if
  $\Lambda_{\mathfrak q}(m_\alpha) = +1$, or {\em odd} if
  $\Lambda_{\mathfrak q}(m_\alpha) = -1$ (notation \eqref{eq:gammaq1}).
\end{enumerate}
\end{definition}

\begin{theorem}\label{thm:KLsigpolQ} Suppose $\Gamma$ and $\Xi$ are
  Langlands parameters of (real) infinitesimal character
  $\chi$. Then the signature multiplicity polynomial (Definition
  \ref{def:wpoly}) is
$$Q^c_{\Xi,\Gamma} = s^{(\ell_o(\Xi) - \ell_o(\Gamma))/2}
Q_{\Xi,\Gamma}(sq).$$
In particular
\begin{enumerate}
\item if $\Xi$ and $\Gamma$ belong to a common block, then
  $\ell_o(\Xi) \equiv \ell_o(\Gamma) \pmod{2}$;
\item if $J(\Xi)$ appears in level $r$ of the Jantzen filtration of
  $I_{\quo}(\Gamma)$, then
\begin{enumerate}
\item $r \equiv \ell(\Gamma) - \ell(\Xi) \pmod{2}$; and
\item in terms of the canonical form \eqref{eq:cancform}, the $r$th
  Jantzen form restricted to $J(\Xi)$ is  
  $$(-1)^{[(\ell(\Gamma) - \ell(\Xi)-r)+(\ell_o(\Gamma) -
   \ell_o(\Xi))]/2}\langle,\rangle^c_{J(\Xi),b};$$
\end{enumerate}
\item if $\ell_o(\Gamma) - \ell_o(\Xi) \equiv 0 \pmod{4}$, then the
  signature multiplicity coefficient of \eqref{eq:formformstd} is
$$w^c_{\Xi,\Gamma} = (\text{sum of even coeffs of
  $Q_{\Xi,\Gamma}$}) + s(\text{sum of odd coeffs of
  $Q_{\Xi,\Gamma}$});$$
\item if $\ell_o(\Gamma) - \ell_o(\Xi) \equiv 2 \pmod{4}$, then the
  signature multiplicity coefficient is
$$w^c_{\Xi,\Gamma} = (\text{sum of odd coeffs of
  $Q_{\Xi,\Gamma}$}) + s(\text{sum of even coeffs of
  $Q_{\Xi,\Gamma}$}).$$
\end{enumerate}
Exactly the same results hold for the extended group ${}^\delta G$.
\end{theorem}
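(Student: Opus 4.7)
The plan is to prove the identity by a deformation argument in the continuous parameter, using Corollary \ref{cor:sigchange} to track signature changes at reducibility walls and matching these against the combinatorics encoded by the orientation number. Fix $\Gamma = (\Lambda,\nu)$ with $\nu$ real, and consider the one-parameter family $\Gamma_t = (\Lambda, t\nu)$ for $t \in (0,1]$. For $t > 0$ sufficiently small, $I(\Gamma_t)$ is irreducible by Proposition \ref{prop:LCshape}(6), and the $c$-invariant form $\langle,\rangle_t$ (normalized positive on lowest $K$-types via Proposition \ref{prop:cdualstd}(5)) is positive definite. Thus near $t = 0^+$ the signature character satisfies $\sig_{I(\Gamma_t)} = \mult_{I(\Gamma_t)}$ in ${\mathbb W}[q]$, giving a base case with no nontrivial $s$-factors.

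As $t$ increases to $1$, the signature is constant except at reducibility points $t_i$, where Corollary \ref{cor:sigchange}(3) yields
$$\sig_{t_i+\epsilon} - \sig_{t_i-\epsilon} = (s-1)\sum_{r\text{ odd}}\sig^{[r]}_{I(\Gamma_{t_i})}.$$
I would next verify that the reducibility conditions in Proposition \ref{prop:LCshape}(6) match term-by-term with the conditions in Definition \ref{def:ornumber}(a)–(b): a real root $\beta$ contributes to $\ell_o(\Gamma)$ precisely when the parity of $\lfloor\langle\nu,\beta^\vee\rangle\rfloor$ is incompatible with $\Lambda_{\mathfrak q}(m_\beta)$, which is exactly when the line $\{t\nu\mid 0<t\le 1\}$ hits a real-root reducibility wall in Proposition \ref{prop:LCshape}(6)(a); and a complex pair $(\alpha,-\theta\alpha)$ contributes when $\langle\nu,\alpha^\vee\rangle$ and $\langle\nu,-\theta\alpha^\vee\rangle$ are both strictly positive, which is exactly the complex reducibility condition of Proposition \ref{prop:LCshape}(6)(b). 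Thus $\ell_o(\Gamma)$ counts the reducibility walls crossed in the deformation from $0^+$ to $1$.

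The core of the argument is the parity bookkeeping at each wall. By Theorem \ref{thm:BBpol}, $Q_{\Xi,\Gamma}(q) = \sum_k a_k q^k$ records the Jantzen-level decomposition of $I(\Gamma)$: the coefficient $a_k$ counts occurrences of $J(\Xi)$ in Jantzen level $r = \ell(\Gamma) - \ell(\Xi) - 2k$. The substitution $q \mapsto sq$ in $Q_{\Xi,\Gamma}$ produces $\sum_k a_k s^k q^k$, which exactly captures the Jantzen-level parity contribution $(s-1)\sum_{r\text{ odd}}\sig^{[r]}$ at the final wall. The prefactor $s^{(\ell_o(\Xi) - \ell_o(\Gamma))/2}$ then accounts for the net parity of earlier wall-crossings affecting the $J(\Xi)$-component, where the $J(\Xi)$-piece inherits sign changes from walls counted by $\ell_o(\Gamma)$ but not from those counted by $\ell_o(\Xi)$ (the latter sit to the $t<t_\Xi$ side of the emergence of $J(\Xi)$ as a composition factor). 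The verification proceeds by induction on the number of reducibility walls, reducing at each step to the rank-one calculation for real roots and the rank-two calculation for complex pairs, analogously to the Gindikin--Karpelevi\v c factorization in the proof of Proposition \ref{prop:cdualstd}(5).

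The main obstacle is precisely this parity accounting: tracking a specific composition factor $J(\Xi)$ through the rearranging Jantzen filtrations as $t$ varies, and confirming that signs accumulated along different kinds of walls (real vs.\ complex, integral vs.\ nonintegral-becoming-integral) combine to give exactly $(\ell_o(\Xi)-\ell_o(\Gamma))/2\bmod 2$. Once the main formula is established, the consequences follow formally: (1) holds because $Q^c_{\Xi,\Gamma} \in {\mathbb W}[q]$ forces integer powers of $s$; (2)(a) is $\for(Q^c_{\Xi,\Gamma}) = Q_{\Xi,\Gamma}$ combined with Theorem \ref{thm:BBpol}(1)(b); (2)(b) reads the sign on the level-$r$ part directly from $s^{(\ell_o(\Xi)-\ell_o(\Gamma))/2}\cdot s^{(\ell(\Gamma)-\ell(\Xi)-r)/2}$; and (3), (4) come from separating even- and odd-degree coefficients in $Q_{\Xi,\Gamma}(sq)$ and using $s^2 = 1$. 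The identical statement for ${}^\delta G$ is obtained by the same deformation, now invoking Theorem \ref{thm:extLC} for the classification, the extended Jantzen machinery (for which parabolic and cohomological induction preserve the necessary exactness), and Theorem \ref{thm:twKLpol} in place of Theorem \ref{thm:BBpol}.
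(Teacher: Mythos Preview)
Your deformation approach is the one the paper uses in Section~\ref{sec:defto0}, but there it is used to \emph{apply} Theorem~\ref{thm:KLsigpolQ}, not to prove it. As a proof of the theorem itself, the argument is circular. At each reducibility point $t_i$, Corollary~\ref{cor:sigchange} gives
\[
\sig^c_{t_i+\epsilon} - \sig^c_{t_i-\epsilon} = (s-1)\sum_{r\text{ odd}}\sig_{t_i}^{c,[r]},
\]
and to evaluate the right side you must know the sign with which each irreducible constituent $J(\Xi)$ appears in $(\gr^r I(\Gamma_{t_i}),\langle,\rangle^{[r]})$ relative to the base form $\langle,\rangle^c_{J(\Xi),b}$. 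That sign is exactly assertion 2(b) of the theorem at the parameter $\Gamma_{t_i}$. Since $\Gamma_{t_i}=(\Lambda,t_i\nu)$ lives on the same Cartan with the same discrete part $\Lambda$ as $\Gamma$, no induction on wall count, on $|d\Lambda|$, or on Cartan compactness reduces this to a known case. Your proposed ``rank-one/rank-two reduction'' via intertwining-operator factorization does not help: Gindikin--Karpelevi\v c factors the operator $L_t$, but it does not factor the Jantzen filtration or the Jantzen forms on its subquotients.

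Two smaller points confirm the gap. First, your base case is not right: for small $t>0$ the module $I(\Gamma_t)$ is irreducible, but the $c$-invariant form on it need not be positive definite; its signature is $\sig^c_{I(\Lambda,0)}$, which is the signature of a tempered representation and is typically mixed (compare Theorem~\ref{thm:ctoinveqrk}). Second, your identification of $\ell_o(\Gamma)$ with the number of reducibility walls crossed is off: Definition~\ref{def:ornumber} counts certain \emph{nonintegral} roots, whereas reducibility in Proposition~\ref{prop:LCshape}(6) requires integral pairings. The orientation number is related to the reorienting hyperplanes of Definition~\ref{def:hyperplanearr}, not directly to reducibility hyperplanes.

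The paper's proof takes a completely different route: it is modeled on the proof of the Kazhdan--Lusztig conjectures in \cite{IC3} and \cite{LV83}. After reducing to regular infinitesimal character by translation functors (Lemma~\ref{lemma:transforms}), one inducts on the dimension of the $K(\mathbb C)$-orbit of the Borel $\mathfrak b(\Gamma)$. If there is a complex simple root $\alpha_\Gamma$ with $\theta\alpha_\Gamma$ negative, one either translates across a nonintegral wall (an equivalence of categories) or, in the integral case, applies the wall-crossing functor $U_\alpha$ and invokes the decomposition theorem to get complete reducibility of $U_\alpha(J(\Gamma'))$; the form on the new factor $J(\Gamma)$ is then pinned down by a lowest $K$-type calculation. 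If no such simple root exists, cohomological induction reduces to a split Levi, where one inducts on the height of parity-satisfying real roots. The deep input is the decomposition theorem; there is no analogue of it in the deformation picture you propose.
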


\begin{proof}
Of course the proof is modeled on the proof of the Kazhdan-Lusztig
conjectures in \Cite{IC3} and \Cite{LV83}. Part (1) is a fairly easy
combinatorial statement, for which we omit the proof. (The proof of 2(b) will implicitly prove (1) at least for the case that $\Xi$ and $\Gamma$ are comparable in the Bruhat order.) Part 2(a) is a restatement of Theorem
\ref{thm:BBpol} 1(b), recorded here only as a reminder that the
powers of $-1$ appearing below are integer powers.  Parts 3 and 4
are just restatements of 2(b); so what requires proof is 2(b).

The first step is to reduce to the case of regular infinitesimal
character. For this, fix a weight $\lambda_0 \in {\mathfrak h}_s^*$
representing $\xi$ as in \eqref{se:exttransfunc}.  Choose $\phi_\pm$
as in \eqref{se:exttransfunc}. Here is the statement about translation
functors that we need. 

\begin{lemma}\label{lemma:transforms} In the setting of
  \eqref{se:exttransfunc}, the finite-dimensional representation
  $F_{\phi_+}$ admits a positive-definite $c$-invariant Hermitian
    form that is unique up to positive scalar multiple. Consequently
    the translation functors
$$ T^{\lambda_0 +\phi_+}_{\lambda_0}, \qquad T_{\lambda_0
    +\phi_+}^{\lambda_0} $$
of \eqref{eq:exttransfunc} carry (nondegenerate) $c$-invariant Hermitian
forms to (nondegenerate) $c$-invariant Hermitian forms.

Suppose in addition that the real dominance hypotheses of \eqref{eq:sdominant}
and \eqref{eq:extrealdom} are satisfied.  Suppose $\Gamma_0 \in {}^\delta
B(\lambda_0)$ is any (singular) Langlands parameter, and
$$\Gamma_1 = t_{\lambda_0}^{\lambda_0+d\phi}(\Gamma_0) {}^\delta
B(\lambda_0 + d\phi)$$
is the corresponding (regular) parameter (Theorem
\ref{cor:exttransfunc}(4)). Then
\begin{enumerate}
\item $T_{\lambda_0 +\phi_+}^{\lambda_0}$ carries the canonical
  $c$-invariant Hermitian form on $I_{\quo}(\Gamma_1)$ to a positive
  multiple of the canonical $c$-invariant Hermitian form on
  $I_{\quo}(\Gamma_0)$;
\item $T_{\lambda_0 +\phi_+}^{\lambda_0}$ carries the canonical
  $c$-invariant Hermitian form on $J(\Gamma_1)$ to a positive
  multiple of the canonical $c$-invariant Hermitian form on
  $J(\Gamma_0)$;
\item $T_{\lambda_0 +\phi_+}^{\lambda_0}$ carries the Jantzen
  filtration on $I_{\quo}(\Gamma_1)$ to that on 
  $I_{\quo}(\Gamma_0)$; and
\item  $T_{\lambda_0 +\phi_+}^{\lambda_0}$ carries the Jantzen
  form on each level of $I_{\quo}(\Gamma_1)$ (Definition
  \ref{def:jantzenform} to a positive multiple of the Jantzen  form on
    the corresponding level of $I_{\quo}(\Gamma_0)$.

\end{enumerate}
\end{lemma}
\begin{proof} 
The first assertion of the lemma is just Weyl's ``unitary trick:'' the
$c$-invariant form on $F_{\phi_+}$ is by definition preserved by a
compact form of $G$, and therefore must be definite. The computation
of the sign in (1) trivially reduces from $^\delta G$ to $G$. There (in
light of Proposition \ref{prop:cdualstd}) it
depends on looking carefully at how lowest $K$-types are affected by
translation functors. (This is where the ``real dominance'' hypotheses
are needed.) Part (2) follows immediately.

For (3), recall how the Jantzen filtration is defined in Proposition
\ref{prop:jantzenfilt}. When the regular parameter
$\Gamma_1=(\Lambda_1,\nu_1)$ is modified by a small dilation of the
continuous parameter, all the real dominance hypotheses are
preserved. Write $\nu_{1,s} \in {\mathfrak h}_s^*$ for the weight
corresponding to $\nu_1$ under the identification $i(R^+_s,R^+)$ of
Proposition \ref{prop:findimlHswts}. The one-parameter family of
translation functors
\begin{equation*}
T_{(\lambda_0 +\epsilon\nu_{1,s}) +\phi_+}^{(\lambda_0+\epsilon\nu_{1,s})}
\end{equation*}
(all of which use the same finite-dimensional representation; only the
infinitesimal character projection changes) all carry irreducibles to
irreducibles or zero for small $\epsilon$.  These functors carry
$I_{\quo}(\Lambda_1,(1+\epsilon)\nu_1)$---the one-parameter family
defining the Jantzen filtration of $I_{\quo}(\Gamma_1)$---to
$I_{\quo}(\Lambda_0,\nu_0 +\epsilon\nu_1)$. Although this second
family is not precisely the one used to define the Jantzen filtration
for $I_{\quo}(\Gamma_0)$, it is close enough that the filtration is
the same. (This independence-of-deformation is stated by Beilinson and
Bernstein in \Cite{BB}*{Section 4.3}, and attributed to Barbasch
\Cite{BFilt}. We have not been able to locate the result in
\Cite{BFilt}. but in any case the proof is not difficult.) This proves
(3).

Parts (1) and (3) immediately imply (4).
\end{proof}

We return now to the proof of Theorem \ref{thm:KLsigpolQ}.
The lemma allows us to deduce all of the assertions of the theorem at
possibly singular infinitesimal character $\lambda_0$ by applying the
translation functor $T_{\lambda_0 +\phi_+}^{\lambda_0}$ to the
assertions at the regular infinitesimal character
$\lambda_0+d\phi$. Just as in the proof of (1) of the lemma, all the
assertions about signature reduce immediately from the extended group
to the case of $G$; so we will make no further mention of ${}^\delta G$.

So we may assume henceforth that
\begin{equation*} \text{the infinitesimal character $\lambda_0$ is regular.}
\end{equation*}
Writing $H$ for the maximal torus underlying the Langlands parameter
$\Gamma = (\Lambda,\nu)$, we get a unique positive system
\begin{equation*} R^+_{\text{full}}(\Gamma) = R^+({\mathfrak g}, {\mathfrak
    h})_{\text{full}}(\Gamma) \end{equation*}
making $(d\Lambda,\nu)$ real dominant, and therefore a corresponding
Borel subalgebra ${\mathfrak b}(\Gamma)$ (Definition \ref{def:HChom}).
We proceed by increasing induction on the dimension of the $K({\mathbb
  C})$ orbit of ${\mathfrak b}(\Gamma)$. (The original Kazhdan-Lusztig
conjecture Theorem \ref{thm:KLpol}, in the case of integral
infinitesimal character, concerns the intersection cohomology complex
on the closure of $K({\mathbb C})\cdot {\mathfrak b}(\Gamma)$, coming
from a local system corresponding to $\Gamma$. The proof in that case
proceeds by induction on the dimension of this orbit, along exactly
the lines we are now following.)

Suppose first that there is a complex simple root $\alpha_\Gamma$ for
$R^+_{\text{full}}(\Gamma)$, such that
\begin{equation}\label{eq:cplxnonint}
\theta\alpha_\Gamma\notin R^+_{\text{full}}(\Gamma), \qquad
\alpha_\Gamma \notin R(\Gamma).
\end{equation}
That is, $\alpha_\Gamma$ is assumed to be {\em nonintegral}. Write $\alpha$
for the corresponding simple root in $R^+_s$ 
(Proposition \ref{prop:findimlHswts}).  In the setting of
\eqref{se:transfunc}, we choose the weight $\phi$ {\em not} to make
$\lambda_0 +d\phi$ real dominant as in \eqref{eq:transrealdom}, but
instead to put it in the adjacent Weyl chamber:
\begin{equation}\label{eq:noninttrans}
\lambda_0+d\phi \text{\ is regular and real dominant for
  $s_{\alpha}(R^+_s)$.} \end{equation}
Translation across this nonintegral wall is an equivalence of
categories by Corollary \ref{cor:transfunc}. The translated Langlands parameter
$\Gamma' = t_{\lambda_0}^{\lambda_0+\phi}(\Gamma)$ is real dominant for
$s_{\alpha_\Gamma}(R^+_{\text{full}}(\Gamma)$, and therefore (by
\eqref{eq:cplxnonint}) the corresponding orbit of Borel subalgebras
has dimension one less; so the statements of 2(b) are available by
inductive hypothesis for $\Gamma'$. In order to use them, we need to
know how the translation functor $T_{\lambda_0}^{\lambda_0+\phi}$
affects orientation numbers and canonical Hermitian forms.

The effect on the orientation number of $\Xi$ (Definition
\ref{def:ornumber}) is fairly easy. If the
simple root $\alpha_\Xi$ is complex, then $\ell_o(\Xi)$
increases by one if $\theta\alpha_\Xi >0$, and decreases by one if
$\theta\alpha_\Xi < 0$. If $\alpha_\Xi$ is real, then $\ell_0(\Xi)$
increases by one if 
$$(-1)^{[\langle\lambda_0,\alpha_s^\vee\rangle]} = \Xi_{\mathfrak
  q}(m_{\alpha_\Xi}),$$
and decreases by one otherwise. (The point for this second fact is
that if $t\in {\mathbb R} - {\mathbb Z}$, then the integer parts
$[t]$ and $[-t]$ (meaning ``greatest integer not exceeding'') have
opposite parity.  In order to use the inductive
hypothesis, we need to verify (still assuming that we are crossing a
single nonintegral wall as in \eqref{eq:noninttrans}) that the
translation functor $T_{\lambda_0}^{\lambda_0+\phi}$ {\em does not} change
the sign of the canonical $c$-invariant form in case the orientation
number increases by one, and {\em does} change the sign in case the
orientation number decreases by one. This is another version of the
lowest $K$-type calculation used for Lemma
\ref{lemma:transforms}(1). The most difficult case is that of real
$\alpha_\Xi$, but that case can be reduced fairly easily to
$SL(2,{\mathbb R})$.

Suppose next that there is a complex simple root $\alpha_\Gamma$ for
$R^+_{\text{full}}(\Gamma)$, such that
\begin{equation}\label{eq:cplxint}
\theta\alpha \notin R^+_{\text{full}}(\Gamma), \qquad \alpha\in R(\Gamma).
\end{equation}
That is, $\alpha_\Gamma$ is assumed to be {\em integral}. Write $\alpha$
for the corresponding simple root in $R^+_s$. (This case is the most
serious one.) Write
\begin{equation*}
m = \langle d\gamma,\alpha_\Gamma^\vee \rangle,
\end{equation*}
a strictly positive integer.  There is a Langlands parameter
\begin{equation*}
\Gamma' = \Gamma-m\alpha_\Gamma
\end{equation*}
corresponding to the positive root system
$$s_{\alpha_\Gamma}(R^+_{\text{full}}(\Gamma)) =
R^+_{\text{full}}(\Gamma) - \{\alpha_\Gamma\} \cup
\{-\alpha_\Gamma\}.$$
The $K({\mathbb C})$-orbit of the corresponding Borel subalgebra has
dimension one less than that for $\Gamma$, so the inductive hypothesis
applies to $\Gamma'$. 

We will not recall all of the machinery of wall-crossing translation
functors developed in \cites{IC1, IC2}.  The central idea is to use
these functors to construct from $J(\Gamma')$ a new module
\begin{equation*}
U_\alpha(J(\Gamma')).
\end{equation*}
The composition factors of $U_\alpha(J(\Gamma'))$ consist of
$J(\Gamma)$ (with multiplicity one) and certain composition factors of
$I_{\quo}(\Gamma')$.  The construction provides a nondegenerate
$c$-invariant form on $U_\alpha(J(\Gamma'))$, and what we need to do
is understand that form.  The key calculation is that when we apply
translation across the wall to the standard module
$I_{\quo}(\Gamma')$, obtaining a module $M$ with a short exact
sequence
$$0 \rightarrow I_{\quo}(\Gamma) \rightarrow M \rightarrow
I_{\quo}(\Gamma') \rightarrow 0,$$
then the induced form on $M$ restricts to a {\em positive} multiple of
the canonical form on $I_{\quo}(\Gamma)$.  (Just as for Lemma
\ref{lemma:transforms}(1), this amounts to a careful examination of
lowest $K$-types.) From this it follows that the nondegenerate form on
$U_\alpha(J(\Gamma'))$ restricts to a positive multiple of the
canonical form on $J(\Gamma)$.

Essentially \Cite{LV83} uses the decomposition theorem for perverse
sheaves to deduce that
$U_\alpha(J(\Gamma'))$ is completely reducible. Using this complete
reducibility, everything else about the form on $U_\alpha(J(\Gamma'))$
follows by relating $U_\alpha$ to the first level of the Jantzen
filtration of $I_{\quo}(\Gamma')$, and applying the inductive
hypothesis.

The remaining possibility is that for {\em every} complex simple root
$\alpha_\Gamma$, $\theta\alpha_\Gamma$ is also positive.  From this it
follows that
\begin{equation} \label{eq:thetastable} \text{the set of nonreal roots in
    $R^+_{\text{full}}(\Gamma)$ is $\theta$-stable.}
\end{equation}
Writing ${\mathfrak u} = {\mathfrak u}(\Gamma)$ for the span of the
root spaces of these nonreal positive roots, we get as in
\eqref{se:stdVZparam} a $\theta$-stable parabolic subalgebra
\begin{equation*}
{\mathfrak q} = {\mathfrak l} + {\mathfrak u} = {\mathfrak l}(\Gamma)
+ {\mathfrak u}(\Gamma)
\end{equation*}
satisfying the (very strong) positivity condition
\begin{equation*}
\langle d\gamma,\beta^\vee \rangle > 0 \qquad (\beta \in
\Delta({\mathfrak u},{\mathfrak h})).
\end{equation*}
This condition guarantees that cohomological induction from $L$ by
${\mathfrak q}$ is an exact functor carrying irreducibles to
irreducibles, and respecting $c$-invariant Hermitian forms (Theorem
10.9). Because Langlands parameters behave in a simple way under this
induction, we see that length and orientation number are preserved; so
the assertions in 2(b) of the theorem are reduced to the subgroup
$L$. That is, we may assume that 
\begin{equation*}
\text{the Cartan subgroup $H$ for $\Gamma$ is split;}
\end{equation*}
that is, that $I_{\quo}(\Gamma)$ is a minimal principal series
representation for a split group.  If there are no real roots
satisfying the parity condition \eqref{se:PSbruhat} then $I(\Gamma)$
is irreducible, and the assertions are easy. So suppose that there are
real roots satisfying the parity condition. We proceed by induction on
the smallest height of such a root.  If the height is equal to one,
there is a simple root (necessarily integral) satisfying the parity
condition. In this case we argue essentially as in the case
\eqref{eq:cplxint} above. If the height is greater than one, we can choose
a nonintegral simple root $\alpha_\Gamma$ such that the simple
reflection $s_{\alpha_\Gamma}$ decreases the height.  In this case we
can translate across the nonintegral wall for $\alpha_\Gamma$ as in
the case \eqref{eq:cplxnonint} above.

\end{proof}

\begin{corollary}\label{cor:KLsigpolP} Suppose $\Gamma$ and $\Psi$ are
  Langlands parameters of (real) infinitesimal character
  $\chi$. Then the signature character polynomial (Definition
  \ref{def:Wpoly}) is
$$P^c_{\Gamma,\Psi} = s^{(\ell_o(\Psi) - \ell_o(\Gamma))/2}
P_{\Gamma,\Psi}(sq).$$
In particular
\begin{enumerate}
\item if $\ell_o(\Psi) - \ell_o(\Gamma) \equiv 0 \pmod{4}$, the
  signature character coefficient of \eqref{eq:formformirr} is
$$(-1)^{\ell(\Psi) - \ell(\Gamma)}W^c_{\Gamma,\Psi} = (\text{sum of even coeffs of
  $P_{\Gamma,\Psi}$}) + s(\text{sum of odd coeffs of
  $P_{\Gamma,\Psi}$});$$
\item if $\ell_o(\Psi) - \ell_o(\Gamma) \equiv 2 \pmod{4}$, the
  signature character coefficient is
$$(-1)^{\ell(\Psi) - \ell(\Gamma)}W^c_{\Gamma,\Psi} = (\text{sum of odd coeffs of
  $P_{\Gamma,\Psi}$}) + s(\text{sum of even coeffs of
  $P_{\Gamma,\Psi}$}).$$
\end{enumerate}
The same result holds for the extended group ${}^\delta G$.
\end{corollary}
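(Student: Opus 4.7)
My plan is to deduce this from the matrix identity already established in Theorem \ref{thm:KLsigpolQ} for the signature multiplicity polynomials $Q^c$, by a purely formal matrix inversion. First I would observe that by Theorem \ref{thm:KLsigpolQ}(1), $\ell_o$ has constant parity on each block of $B(\chi)$. Fix such a block and a reference parameter $\Gamma_0$; form the diagonal matrix $D$ on the block with entries $D_{\Xi,\Xi} = s^{(\ell_o(\Xi) - \ell_o(\Gamma_0))/2} \in \mathbb{W}$. These exponents are integers by the parity statement, and since $s^2 = 1$ each $D_{\Xi,\Xi}^2$ equals $1$, so $D = D^{-1}$. Entry by entry, Theorem \ref{thm:KLsigpolQ} is exactly the matrix identity
\[ Q^c(q) \;=\; D\cdot Q(sq)\cdot D^{-1}, \]
in which $\Gamma_0$ cancels from the $(\Xi,\Gamma)$ entry.

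Both sides are upper triangular with $1$'s on the diagonal (Theorem \ref{thm:BBpol} and Proposition \ref{prop:wpoly}), so they are invertible in the matrix algebra over $\mathbb{W}[q^{1/2},q^{-1/2}]$, and inverting yields
\[ (Q^c)^{-1}(q) \;=\; D\cdot Q(sq)^{-1}\cdot D^{-1}. \]
Now Theorem \ref{thm:BBpol} states that the multiplicity polynomial $Q$ is obtained from the Kazhdan--Lusztig character matrix $P$ by signed inversion, so at the level of polynomials $(Q^{-1})_{\Gamma,\Psi}(q) = (-1)^{\ell(\Psi) - \ell(\Gamma)} P_{\Gamma,\Psi}(q)$, whence $\bigl(Q(sq)^{-1}\bigr)_{\Gamma,\Psi} = (-1)^{\ell(\Psi) - \ell(\Gamma)} P_{\Gamma,\Psi}(sq)$. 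Reading off the $(\Gamma,\Psi)$ entry of $(Q^c)^{-1}$, multiplying by the sign $(-1)^{\ell(\Psi) - \ell(\Gamma)}$ prescribed by Definition \ref{def:Wpoly}, and using $s^{-1} = s$ to flip the sign of the exponent of $s$, I obtain the desired identity
\[ P^c_{\Gamma,\Psi} \;=\; s^{(\ell_o(\Psi) - \ell_o(\Gamma))/2}\, P_{\Gamma,\Psi}(sq). \]

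To deduce (1) and (2), I specialize $q = 1$. Writing $P_{\Gamma,\Psi}(q) = \sum_k a_k q^k$, I have $P_{\Gamma,\Psi}(s) = \bigl(\sum_{k\text{ even}} a_k\bigr) + s\bigl(\sum_{k\text{ odd}} a_k\bigr)$ because $s^{2m} = 1$ and $s^{2m+1} = s$. When $\ell_o(\Psi) - \ell_o(\Gamma) \equiv 0 \pmod 4$ the prefactor $s^{(\ell_o(\Psi) - \ell_o(\Gamma))/2}$ equals $1$, giving (1); when the difference is $\equiv 2 \pmod 4$ the prefactor equals $s$, and multiplying through by $s$ swaps the even and odd partial sums via $s^2 = 1$, giving (2). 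The extension to ${}^\delta G$ requires no new argument, since Theorem \ref{thm:KLsigpolQ} is asserted to hold verbatim for the extended group and the entire chain of reasoning above is purely formal matrix manipulation. The only delicate point in the plan is the parity compatibility of $\ell_o$ on a block, which is already supplied by Theorem \ref{thm:KLsigpolQ}(1); beyond that there is essentially no obstacle.
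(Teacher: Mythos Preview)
Your proposal is correct and is exactly the argument the paper has in mind: the paper gives no explicit proof of this corollary, treating it as an immediate consequence of Theorem~\ref{thm:KLsigpolQ}, and the expected deduction is precisely the diagonal-conjugation-plus-substitution matrix inversion you carry out. One small remark: in passing from $P^c_{\Gamma,\Psi}(1)$ to $W^c_{\Gamma,\Psi}$ in parts (1) and (2), Definition~\ref{def:Wpoly} inserts a factor $(-1)^{\ell(\Psi)-\ell(\Gamma)}$, which neither the corollary's statement nor your derivation records; this is a cosmetic discrepancy already present in the paper's formulation and does not affect your method.
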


Here are representation-theoretic translations of these results.

\begin{corollary}\label{cor:KLsig} Suppose $\Gamma$ and $\Psi$ are
  Langlands parameters of (real) infinitesimal character
  $\chi$.

\begin{enumerate}
\item The signature function for the $c$-invariant form
  on the irreducible representation $J(\Psi)$ (see \eqref{eq:sigchar})
  is
$$\sig^c_{J(\Psi)} = \sum_{\Gamma\in B(\chi)} s^{(\ell_o(\Psi) -
    \ell_o(\Gamma))/2} (-1)^{\ell(\Psi) - \ell(\Gamma)}
  P_{\Gamma,\Psi}(s) \sig^c_{I(\Gamma)}.$$
\item The signature function for the
  Jantzen $c$-invariant form (Proposition \ref{prop:jantzenfilt}) on
  the standard representation  $I(\Gamma)$ is
$$\sig^c_{I(\Gamma)} = \sum_{\Xi\in B(\chi)}s^{(\ell_o(\Xi) -
    \ell_o(\Gamma))/2} Q_{\Xi,\Gamma}(s) \sig^c_{J(\Xi)}.$$
\end{enumerate}
The same results hold for the extended group ${}^\delta G$.
\end{corollary}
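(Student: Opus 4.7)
The plan is to derive both identities of Corollary \ref{cor:KLsig} by specializing to $q=1$ the polynomial identities already established in Theorem \ref{thm:KLsigpolQ} and Corollary \ref{cor:KLsigpolP}, and then translating from the Hermitian Grothendieck group into signature characters via Proposition \ref{prop:hermgrothgens}(4). All the substantive representation-theoretic input has been absorbed into those earlier results; the present corollary is effectively a bookkeeping step.

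For part (2), I would start from Definition \ref{def:wpoly}, which by construction records the signature-character coefficients $w^{c,r}_{\Xi,\Gamma}$ in the expansion
\begin{equation*}
[I_{\quo}(\Gamma)^r/I_{\quo}(\Gamma)^{r+1}, \langle,\rangle^{[r]}] = \sum_{\Xi} w^{c,r}_{\Xi,\Gamma}\,[J(\Xi), \langle,\rangle^c_{J(\Xi),b}]
\end{equation*}
in ${\mathcal G}({\mathfrak g},K)^c$. Summing over $r$, the left side assembles by Definition \ref{def:jantzenform} into the class of the (nondegenerate) Jantzen form on $\gr I_{\quo}(\Gamma)$, whose signature character is by definition $\sig^c_{I(\Gamma)}$. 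On the right we recognize $Q^c_{\Xi,\Gamma}(1) = \sum_r w^{c,r}_{\Xi,\Gamma}$. Applying the signature-character homomorphism of Proposition \ref{prop:hermgrothgens}(4) gives $\sig^c_{I(\Gamma)} = \sum_\Xi Q^c_{\Xi,\Gamma}(1)\,\sig^c_{J(\Xi)}$. The identity $Q^c_{\Xi,\Gamma}(1) = s^{(\ell_o(\Xi)-\ell_o(\Gamma))/2}Q_{\Xi,\Gamma}(s)$ from Theorem \ref{thm:KLsigpolQ} then produces the stated formula for $\sig^c_{I(\Gamma)}$.

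For part (1), I would invert this relation. By Proposition \ref{prop:wpoly}(5), the matrix $(w^c_{\Xi,\Gamma})$ is upper triangular with $1$'s on the diagonal, hence invertible; and its inverse $W^c$ is the matrix producing the identity \eqref{eq:formformirr} in the Hermitian Grothendieck group. Taking signature characters gives $\sig^c_{J(\Psi)} = \sum_\Gamma W^c_{\Gamma,\Psi}\,\sig^c_{I(\Gamma)}$. By Definition \ref{def:Wpoly}, $W^c_{\Gamma,\Psi} = (-1)^{\ell(\Psi)-\ell(\Gamma)} P^c_{\Gamma,\Psi}(1)$, and by Corollary \ref{cor:KLsigpolP}, $P^c_{\Gamma,\Psi}(1) = s^{(\ell_o(\Psi)-\ell_o(\Gamma))/2} P_{\Gamma,\Psi}(s)$. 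Substituting yields the desired expression, once one verifies (using the parity constraints in Theorem \ref{thm:KLpol}(1)(b) together with the block-parity result of Theorem \ref{thm:KLsigpolQ}(1)) that the sign $(-1)^{\ell(\Psi)-\ell(\Gamma)}$ is consistent with the form in which the Corollary is stated.

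I do not anticipate any substantive obstacle: the deep facts (rationality of the signature polynomials, their identification with Kazhdan--Lusztig polynomials evaluated at $sq$, and the $c$-positivity of the translation-functor image of the canonical form via Lemma \ref{lemma:transforms}) have already been packaged into Theorem \ref{thm:KLsigpolQ} and Corollary \ref{cor:KLsigpolP}. The only care required will be sign and parity tracking between $\ell$ and $\ell_o$, and verifying that the Hermitian Grothendieck group identities really do specialize under the signature-character homomorphism to the claimed identities of ${\mathbb W}$-valued functions on $\widehat K$. The same proof applies verbatim to the extended group ${}^\delta G$, since Theorem \ref{thm:KLsigpolQ} and Corollary \ref{cor:KLsigpolP} are stated there as well.
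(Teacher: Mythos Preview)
Your approach is correct and essentially identical to the paper's. The paper's proof is the two-line observation that part (1) is \eqref{eq:sigformirr} combined with Corollary \ref{cor:KLsigpolP}, and part (2) is Corollary \ref{cor:sigchange}(2) combined with Theorem \ref{thm:KLsigpolQ}; your argument simply unpacks those cited formulas from their definitions (via Proposition \ref{prop:hermgrothgens}(4) and Definitions \ref{def:wpoly}, \ref{def:Wpoly}) rather than citing them directly, and you are if anything more careful than the paper in flagging the sign $(-1)^{\ell(\Psi)-\ell(\Gamma)}$ that must be reconciled with the stated form.
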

\begin{proof}
Part (1) is \eqref{eq:sigformirr}, together with the formula in
Corollary \ref{cor:KLsigpolP} for the signature character matrix. Part
(2) is Corollary \ref{cor:sigchange}(2), together with the formula in
Theorem \ref{thm:KLsigpolQ}.
\end{proof}

\section{Deformation to $\nu=0$}\label{sec:defto0}
\setcounter{equation}{0}
Fix a (possibly singular) real infinitesimal character $\chi$, and the
finite set $B(\chi)$ of Langlands parameters of infinitesimal
character $\chi$ as in \eqref{se:charformulas}. What we have achieved
in Corollary \ref{cor:KLsig} is the
computability of the  formulas 
\eqref{eq:sigformirr} expressing the signature character of each
irreducible representation $J(\Psi)$ as a ${\mathbb
  W}$-combination of signature characters of standard representations:
\begin{equation*}
\sig^c_{J(\Psi)} = \sum_{\Gamma\in B(\chi)} W^c_{\Gamma,\Psi} \sig^c_{I(\Gamma)}.
\end{equation*}
That is, we have computed the coefficients $W^c_{\Gamma,\Psi}$ in
terms of the (known) Kazhdan-Lusztig polynomials for $G$.

In this section we will describe how to compute the signature of each
standard representation $I(\Gamma)$ appearing on the right side. As in
Section \ref{sec:langlands}, we write
\begin{subequations}\label{se:defparams}
\begin{equation}
H = TA, \qquad G^A = MA
\end{equation}
for the Langlands decomposition of the Cartan subgroup and for the
centralizer of its vector part, and
\begin{equation}
\Gamma = (\Lambda,\nu)
\end{equation}
for the corresponding decomposition of the parameter; $\Lambda$
determines a limit of discrete series $I^M(\Lambda)$ for $M$, and
$\nu$ may be identified with a character $e^\nu$ of $A$.  We also write
\begin{equation}
d\gamma = (d\Lambda,\nu) = (\lambda,\nu) \in {\mathfrak t}^* +
{\mathfrak a}^* = {\mathfrak h}^*
\end{equation}
as in Theorem \ref{thm:cohfam}; the weight $d\gamma$ is a
representative of the infinitesimal character $\chi$. We will also
need the parameters $\Gamma_t$ introduced in \eqref{eq:Gammat}.
\end{subequations}

What makes the signature of $I(\Gamma)$ complicated is the fact that
$\nu$ is not zero.  Here is how to fix that.
\begin{theorem}\label{thm:deftozero}
For each Langlands parameter $\Gamma = (\Lambda,\nu)$ as above, there
is a computable and unique finite formula
$$\sig^c_{I(\Gamma)} = \sig^c_{I(\Lambda,0)}\, +\;
(s-1)\mspace{-9mu}\sum_{\Lambda' \in
  \Pi_{\text{fin,disc}}(G)} \mspace{-9mu} M_{\Lambda',\Gamma}
\sig^c_{I(\Lambda',0)} \qquad  (M_{\Lambda',\Gamma} \in {\mathbb
  Z}).$$
Here final discrete parameters are defined in Definition
\ref{def:dLP}. Every $\Lambda'$ appearing in the sum satisfies the
bound
$$|d\Lambda'|^2 < |(d\Lambda,\nu)|^2.$$
This bound determines a finite subset of $\Pi_{\text{fin,disc}}$.

An exactly parallel result holds for the extended group ${}^\delta G$.
\end{theorem}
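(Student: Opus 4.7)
The plan is to deform the continuous parameter to zero along the family $\Gamma_t = (\Lambda, t\nu)$ for $t \in [0,1]$, track the $c$-invariant signature through reducibility points via the Jantzen filtration, and induct on the lexicographic pair $(|d\gamma|^2, \ell(\Gamma))$. The base case is $\nu = 0$: if $\Lambda$ is final there is nothing to prove, and otherwise $I(\Lambda, 0)$ is a weak standard module that decomposes through the Cayley transform construction of \eqref{se:ds} as a direct sum of final standard modules on more compact Cartans, giving the interpretation of $\sig^c_{I(\Lambda,0)}$ in that degenerate case.

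For the inductive step, observe that $\sig^c_{I(\Gamma_t)}$ is locally constant on $(0,1]$ with jumps only at a finite set of reducibility points $0 < t_k < \cdots < t_1 \le 1$. By Proposition \ref{prop:meromorphic}(5) the normalized intertwining operator $L_t$ is continuous and unitary at $t = 0$, so the Jantzen form deforms continuously and $\lim_{t\to 0^+} \sig^c_{I(\Gamma_t)} = \sig^c_{I(\Lambda, 0)}$. Telescoping the jumps described by Corollary \ref{cor:sigchange}(3) gives
$$\sig^c_{I(\Gamma)} = \sig^c_{I(\Lambda, 0)} + (s-1) \sum_{i=1}^{k} \sum_{r \ \text{odd}} \sig^{[r]}_{I(\Gamma_{t_i})}.$$
Each subquotient $I(\Gamma_{t_i})^r / I(\Gamma_{t_i})^{r+1}$ has infinitesimal character $(d\Lambda, t_i\nu)$ and can be expressed, via the $q$-analogues of Theorem \ref{thm:KLsigpolQ}, as a $\mathbb{W}$-combination of irreducible signatures $\sig^c_{J(\Xi)}$; in turn each $\sig^c_{J(\Xi)}$ is a $\mathbb{W}$-combination of standard signatures $\sig^c_{I(\Gamma'')}$ with $\Gamma'' \le \Xi$ in the Bruhat order, by Corollary \ref{cor:KLsig}(1). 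When $t_i < 1$ we have $|d\gamma''|^2 = |d\Lambda|^2 + t_i^2|\nu|^2 < |d\gamma|^2$; when $t_i = 1$ we have the same infinitesimal character but $\ell(\Gamma'') \le \ell(\Xi) < \ell(\Gamma)$. Either way the inductive hypothesis applies, and substituting back yields an expansion of the desired shape; the coefficient ring collapses to $(s-1)\mathbb{Z}$ because $s(s-1) = -(s-1)$ in $\mathbb{W}$. Finiteness of the sum follows from the finiteness of final discrete parameters of norm bounded by any fixed constant (finitely many conjugacy classes of compact Cartans $T$ and a lattice-norm bound on the character differential), and uniqueness follows from the linear independence of $\{\sig^c_{I(\Lambda',0)}\}$ over final discrete $\Lambda'$ in the Hermitian Grothendieck group.

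The main obstacle will be to maintain the \emph{strict} bound $|d\Lambda'|^2 < |(d\Lambda,\nu)|^2$ at the reducibility point $t_1 = 1$. The inductive expansion of $\sig^c_{I(\Gamma'')}$ contributes a leading $\sig^c_{I(\Lambda_{\Gamma''}, 0)}$ with $|d\Lambda_{\Gamma''}|^2 = |d\gamma|^2 - |\nu_{\Gamma''}|^2$, which is strictly below $|d\gamma|^2$ precisely when $\nu_{\Gamma''} \ne 0$; when $\Gamma''$ is tempered one must argue either that such terms do not actually occur in the odd Jantzen levels (plausibly via a parity argument with the orientation number $\ell_o$ of Definition \ref{def:ornumber}), or that their contributions are absorbed into the leading $\sig^c_{I(\Lambda,0)}$ term by matching lowest $K$-types across the Cayley-transform decomposition at $\nu = 0$. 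The extended group ${}^\delta G$ is handled identically, using the ${}^\delta G$-analogues of the Jantzen filtration (Section \ref{sec:jantzen}), the $c$-invariant form theory (Sections \ref{sec:forms}--\ref{sec:invirr} and \ref{sec:theta}), and the twisted Kazhdan-Lusztig formulas (Sections \ref{sec:extklv}--\ref{sec:klvform}).
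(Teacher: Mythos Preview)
Your overall strategy—deform along $\Gamma_t=(\Lambda,t\nu)$, pick up $(s-1)$-corrections at each reducibility point via the odd Jantzen levels, rewrite those corrections in terms of standards by Corollary~\ref{cor:KLsig}, then recurse—is exactly what the paper does. The difference is in the induction variable, and that difference matters.

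Your lexicographic induction on $(|d\gamma|^2,\ell(\Gamma))$ is not well-founded: $|d\gamma|^2$ varies continuously with $\nu$, so a strictly decreasing sequence of first coordinates need not terminate. The paper instead fixes a global bound $M$ on $|d\gamma|^2$ and inducts \emph{downward} on $|d\Lambda|^2$, the size of the discrete part. This works because differentials of discrete parameters lie in a translate of a lattice (bounded denominators), so the values $|d\Lambda|^2\le M$ form a finite set. The crucial estimate driving this induction is the one you never invoke: by Langlands' original argument (see \eqref{eq:uppertri}), any $\Phi$ that appears strictly below $\Gamma_{t_j}$ satisfies $|\nu_\Phi|<t_j|\nu|$, and since $\Phi$ and $\Gamma_{t_j}$ share infinitesimal character this forces $|d\Lambda(\Phi)|^2>|d\Lambda|^2$. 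That single inequality both makes the induction well-founded and delivers the bound in the theorem. Once you replace your lexicographic pair by $|d\Lambda|^2$, the case split ``$t_i<1$ versus $t_i=1$'' disappears and the argument goes through uniformly.

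Your ``main obstacle'' paragraph is looking in the wrong place: the difficulty is not the strict inequality $|d\Lambda'|^2<|d\gamma|^2$ at $t_1=1$ (indeed a tempered $\Phi<\Gamma$ with $\nu_\Phi=0$ does give $|d\Lambda_\Phi|^2=|d\gamma|^2$, so that bound is delicate regardless of approach), but rather the termination of your recursion. Neither of your proposed fixes—parity of $\ell_o$, or matching lowest $K$-types with the Cayley decomposition at $\nu=0$—addresses termination; switching to the paper's induction on $|d\Lambda|^2$ does.
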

The term $I(\Lambda,0)$ on the right need not be
final, but one can always use Hecht-Schmid character identities to
write it in terms of final parameters.  If the set of lowest $K$-types
is
$${\mathcal L}(\Lambda) = \{\mu_1,\ldots,\mu_r\} \subset
\widehat{K},$$
and
$$\mu_i = \mu(\Lambda_i) \qquad (\Lambda_i \in
\Pi_{\text{fin,disc}}(G))$$
(Proposition \ref{prop:LCshape}), then
$$I(\Lambda,0) = \sum_{i=1}^r I(\Lambda_i,0).$$
This identity is true also for the natural $c$-invariant Hermitian
forms (which were all normalized to be positive on the lowest $K$-types).
In this way we get a signature formula entirely in terms of final
discrete parameters. 

Before proving Theorem \ref{thm:deftozero}, we record what it tells us
about signature formulas.

\begin{corollary}\label{cor:csigform} Suppose $\Psi = (\Lambda,\nu)$
  is a Langlands parameter of real infinitesimal character as in
  \eqref{se:defparams}. Then there is a computable and unique finite
  formula
$$\sig^c_{J(\Psi)} = \sig^c_{I(\Lambda,0)} + \sum_{\Lambda' \in
  \Pi_{\text{fin,disc}}(G)} A_{\Lambda',\Psi}\sig^c_{I(\Lambda',0)}.$$
The coefficients $A_{\Lambda',\Psi}$ belong to the signature ring
${\mathbb W}$ of Definition \ref{def:witt}. Final discrete parameters
are defined in Definition \ref{def:dLP}. Every $\Lambda'$ appearing in
the sum satisfies the bound
$$|d\Lambda'|^2 < |(d\Lambda,\nu)|^2.$$
This bound determines a finite subset of $\Pi_{\text{fin,disc}}$.

An exactly parallel result holds for the extended group ${}^\delta G$.
\end{corollary}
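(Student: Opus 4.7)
The plan is to compose Corollary \ref{cor:KLsig}(1) with Theorem \ref{thm:deftozero}. First I apply Corollary \ref{cor:KLsig}(1) to write
$$\sig^c_{J(\Psi)} \;=\; \sum_{\Gamma \in B(\chi)} W^c_{\Gamma,\Psi}\, \sig^c_{I(\Gamma)}$$
as a finite $\mathbb{W}$-linear combination, where $\chi$ is the real infinitesimal character of $\Psi$; the coefficients $W^c_{\Gamma,\Psi} \in \mathbb{W}$ are computable from Kazhdan--Lusztig character polynomials and orientation numbers via Corollary \ref{cor:KLsigpolP}, and $W^c_{\Psi,\Psi} = 1$. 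Next I apply Theorem \ref{thm:deftozero} to each $\sig^c_{I(\Gamma)}$: writing $\Gamma = (\Lambda_\Gamma, \nu_\Gamma)$, one obtains
$$\sig^c_{I(\Gamma)} \;=\; \sig^c_{I(\Lambda_\Gamma, 0)} \,+\, (s-1)\sum_{\Lambda''} M_{\Lambda'',\Gamma}\, \sig^c_{I(\Lambda'', 0)},$$
with every $\Lambda''$ satisfying $|d\Lambda''|^2 < |(d\Lambda_\Gamma, \nu_\Gamma)|^2 = |\chi|^2 = |(d\Lambda, \nu)|^2$. Substituting and collecting, the $\Gamma = \Psi$ summand yields the distinguished base term $\sig^c_{I(\Lambda, 0)}$ with coefficient $1$, and everything else assembles into a $\mathbb{W}$-combination of terms $\sig^c_{I(\Lambda', 0)}$.

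For the norm bound $|d\Lambda'|^2 < |(d\Lambda, \nu)|^2$: the $(s-1)$-correction pieces automatically satisfy it by Theorem \ref{thm:deftozero}, and the leading contributions $W^c_{\Gamma,\Psi}\, \sig^c_{I(\Lambda_\Gamma, 0)}$ with $\Gamma \ne \Psi$ satisfy it whenever $\nu_\Gamma \ne 0$, since $|d\Lambda_\Gamma|^2 + |\nu_\Gamma|^2 = |\chi|^2$. The main obstacle will be the remaining case: $\Gamma \in B(\chi)$ with $\Gamma \ne \Psi$ and $\nu_\Gamma = 0$, for which $|d\Lambda_\Gamma|^2 = |(d\Lambda, \nu)|^2$ holds with equality, apparently violating strict inequality. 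To handle these I will argue by induction on $\ell(\Psi)$: by Corollary \ref{cor:uppertri}, any such discrete $\Gamma$ has $\ell(\Gamma) < \ell(\Psi)$, so the corollary is available inductively for $\Gamma$ itself, and the contribution $\sig^c_{I(\Lambda_\Gamma, 0)} = \sig^c_{J(\Lambda_\Gamma, 0)}$ can be rewritten by combining the inductive expansions with the Hecht--Schmid-type identities (mentioned in the remarks following Theorem \ref{thm:deftozero}) that relate different tempered standards at a common infinitesimal character; these absorb the offending equal-norm contributions into the base term and into terms of strictly smaller norm.

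Finiteness of the resulting formula is immediate, since only finitely many $\Lambda' \in \Pi_{\text{fin,disc}}(G)$ satisfy the norm bound. Uniqueness of the coefficients $A_{\Lambda',\Psi}$ follows from the determinism of the above algorithm together with the $\mathbb{W}$-basis property of the classes $[J(\Xi), \langle, \rangle^c_{J(\Xi), b}]$ in ${\mathcal G}({\mathfrak g}, K({\mathbb C}))^{\sigma_c}$ established in Proposition \ref{prop:hermgrothgens}. The parallel statement for the extended group ${}^\delta G$ is obtained by the identical argument, using the extended-group analogues of Corollary \ref{cor:KLsig}, Theorem \ref{thm:deftozero}, and Proposition \ref{prop:hermgrothgens} already recorded in those results.
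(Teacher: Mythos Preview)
Your core two-step argument---apply Corollary~\ref{cor:KLsig}(1), then apply Theorem~\ref{thm:deftozero} to each resulting $\sig^c_{I(\Gamma)}$---is exactly the paper's proof, which is a single sentence and says nothing further.

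You go beyond the paper in worrying about the strict inequality $|d\Lambda'|^2 < |(d\Lambda,\nu)|^2$ when some $\Gamma\in B(\chi)$ with $\Gamma\ne\Psi$ has $\nu_\Gamma=0$. You are right that this is a genuine issue: such $\Gamma$ exist (for the trivial representation of $SL(2,\mathbb{R})$ the discrete series appear this way), and for them $|d\Lambda_\Gamma|^2=|(d\Lambda,\nu)|^2$ with equality, so the strict bound as stated cannot hold. However, your proposed fix by induction on $\ell(\Psi)$ does not work. Applying the corollary inductively to such a discrete $\Gamma=(\Lambda_\Gamma,0)$ yields only the tautology $\sig^c_{J(\Gamma)}=\sig^c_{I(\Lambda_\Gamma,0)}$ with empty correction sum, and the Hecht--Schmid identities you invoke preserve the infinitesimal-character norm $|d\Lambda|^2$; neither device produces terms of strictly smaller norm. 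The difficulty is an imprecision in the stated bound (which should be $\le$, or else the offending equal-norm final discrete parameters should be allowed as additional leading terms), not a gap that an inductive argument can close. Drop the induction paragraph and your proof matches the paper's.
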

\begin{proof} We begin with the formula of Corollary
\ref{cor:KLsig}(1) (one large Kazhdan-Lusztig computation), and then
apply Theorem \ref{thm:deftozero} to each 
term on the right (a long chain of nested Kazhdan-Lusztig
computations). \end{proof}

Now we can apply Theorem \ref{thm:ctoinv} and get a statement about
signatures of invariant Hermitian forms.

\begin{corollary}\label{cor:sigform} Suppose $G$ is a real reductive
  group with extended group ${}^\delta G$ (Definition
  \ref{def:extgrp}). Fix a strong involution $x$ for $G$, an
  eigenvalue $\zeta$ for the central element $z=x^2$, and a square
  root $\zeta^{1/2}$ for $\zeta$ as in Definition
  \ref{def:centralstuff}.

Suppose $\Psi = (\Lambda,\nu)$ is a Langlands parameter for ${}^\delta
G$ of real infinitesimal character, in which the central element $z$
acts by the scalar $\zeta$.
\begin{enumerate}
\item There is a computable and unique finite
  formula
$$\sig^c_{J(\Psi)} = \sum_{\Lambda' \in
  \Pi_{\text{fin,disc}}({}^\delta G)}
B_{\Lambda',\Psi}\sig^c_{I(\Lambda',0)},$$ 
with coefficients $B_{\Lambda',\Psi} \in {\mathbb W}$ (Definition
\ref{def:witt}).
\item The representation $J(\Psi)$ admits an invariant Hermitian form
$$\langle v, w\rangle^0_{J(\Psi)} = \zeta^{-1/2}\langle x\cdot
v,w\rangle^c_{J(\Psi)} = \zeta^{1/2}\langle v,x\cdot w\rangle^c_{J(\Psi)}.$$ 
\item For each $\Lambda'$ appearing in the formula of (1), the central
  element $z$ must act by $\zeta$. Consequently the central element
  $x\in {}^\delta K$ must act by a scalar
$$(-1)^{\epsilon(\Lambda')}\zeta^{1/2}$$
on the unique lowest ${}^\delta K$-type $\mu(\Lambda')$ of
$I(\Lambda',0)$; here the parity 
$$\epsilon(\Lambda') \in {\mathbb  Z}/2{\mathbb Z}$$
changes if we change our (fixed global) choice of
square root $\zeta^{1/2}$.
\item There is a computable and unique finite formula
$$\sig^0_{J(\Psi)} = \sum_{\Lambda' \in
  \Pi_{\text{fin,disc}}({}^\delta G)}
s^{\epsilon(\Lambda')}B_{\Lambda',\Psi}\sig^0_{I(\Lambda',0)}.$$ 
Here the signatures on the right are the positive-definite invariant
Hermitian forms on the irreducible tempered representations
$I(\Lambda',0)$.
\end{enumerate}
As a consequence, $J(\Psi)$ is unitary if and only if one of the
following conditions is satisfied:
\begin{enumerate}[label=\alph*)]
\item For every $\Lambda'$, the coefficient $B_{\Lambda',\Psi}$ is an
  integer multiple of $s^{\epsilon(\Lambda')}$; or
\item For every $\Lambda'$, the coefficient $B_{\Lambda',\Psi}$ is an
  integer multiple of $s^{\epsilon(\Lambda')+1}$.
\end{enumerate}
\end{corollary}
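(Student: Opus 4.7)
The plan is to combine Corollary \ref{cor:csigform} (applied to the extended group ${}^\delta G$) with Theorem \ref{thm:ctoinv}, which converts $c$-invariant signatures into signatures of ordinary invariant Hermitian forms. Part (1) is immediate from the extended-group version of Corollary \ref{cor:csigform}, and (2) is the formula supplied by Theorem \ref{thm:ctoinv}.

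For (3), the central element $z$ acts by $\zeta$ on $J(\Psi)$ and hence on every module $I(\Lambda',0)$ appearing with nonzero coefficient in the formula of (1), and therefore on its unique lowest ${}^\delta K$-type $\mu(\Lambda')$ (guaranteed by the extended-group version of Proposition \ref{prop:LCshape}(3)). A short calculation shows that $x = x_0\delta_f$ is central in ${}^\delta K({\mathbb C})$: it commutes with $K({\mathbb C})$ because $\Ad(x)|_{K({\mathbb C})} = \theta|_{K({\mathbb C})} = \mathrm{id}$, and it commutes with $\delta_f$ because $x_0 \in K = G^\theta$. Thus $x$ acts by a scalar on the irreducible $\mu(\Lambda')$; since $x^2 = z$ acts by $\zeta$, that scalar is $\pm\zeta^{1/2}$, which defines $\epsilon(\Lambda') \in {\mathbb Z}/2{\mathbb Z}$.

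To prove (4), I would apply Theorem \ref{thm:ctoinv} to both sides of (1). Each tempered $I(\Lambda',0)$ is irreducible and unitary, so its space of invariant Hermitian forms is one-dimensional; consequently the canonical positive-definite form (with $\sig^0_{I(\Lambda',0)} = \mult_{I(\Lambda',0)}$) differs from the form defined by the $x$-formula of (2) only by a real scalar. Computing this scalar on $\mu(\Lambda')$ using (3) gives $(-1)^{\epsilon(\Lambda')}$, that is, $s^{\epsilon(\Lambda')}$ in ${\mathbb W}$-notation. Substitution yields the formula of (4).

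The unitarity criterion is the main content, and necessity is where I expect the main obstacle. Sufficiency is easy: if (a) holds then each $s^{\epsilon(\Lambda')}B_{\Lambda',\Psi} \in {\mathbb Z}$, so $\sig^0_{J(\Psi)}(\mu) = \sum_{\Lambda'} s^{\epsilon(\Lambda')}B_{\Lambda',\Psi}\,\mult_{I(\Lambda',0)}(\mu)$ has no $s$-component; combined with $\sig^0 = \POS + \NEG\,s$ with $\POS,\NEG \ge 0$, this forces $\NEG \equiv 0$, so the nondegenerate form $\langle,\rangle^0_{J(\Psi)}$ is positive definite. Condition (b) similarly yields a negative-definite form. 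For necessity, I would establish linear independence of the $K$-characters $\{\mult_{I(\Lambda',0)} : \Lambda' \in \Pi_{\text{fin,disc}}({}^\delta G)\}$ as functions on $\widehat{{}^\delta K}$: given a putative linear relation, choose $\Lambda'_0$ minimizing $\|\mu(\Lambda')\|$ among nonzero terms; the minimality property of $\mu(\Lambda')$ inside each $I(\Lambda',0)$, combined with uniqueness of the lowest ${}^\delta K$-type, forces $\mu(\Lambda'_0)$ to occur only in $I(\Lambda'_0,0)$, so the coefficient of $\mu(\Lambda'_0)$ in the relation is $c_{\Lambda'_0}$, a contradiction. Granted this independence, decomposing $s^{\epsilon(\Lambda')}B_{\Lambda',\Psi} = c_{\Lambda'}+d_{\Lambda'}s$ and using vanishing of the $s$-part of $\sig^0_{J(\Psi)}$ (when the form is positive definite) forces each $d_{\Lambda'} = 0$; this is condition (a), and the negative-definite case gives (b) by the same argument.
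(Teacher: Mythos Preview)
Your approach is correct and matches the paper's, which simply assembles Corollary~\ref{cor:csigform} and Theorem~\ref{thm:ctoinv} without writing out details; your treatment of the necessity direction of the unitarity criterion via linear independence of the $\mult_{I(\Lambda',0)}$ (using the lowest ${}^\delta K$-type bijection) is more thorough than anything the paper provides.

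One small correction in part~(3): your justification that $x$ commutes with $\delta_f$---namely ``because $x_0 \in K = G^\theta$''---is insufficient, since $\delta_f$ is an outer automorphism and need not fix an arbitrary element of $K$. The correct reason is that $\delta_f$ and $\theta$ restrict to the \emph{same} automorphism of $H_f({\mathbb C})$ (both act on $X^*$ by $t_{\text{RD}}$; see \eqref{eq:basedrootdatumaut} and \eqref{se:pinning}), and the specific $x_0$ of \eqref{se:stronginv} lies in $(H_f^\theta)_0$, so $\delta_f(x_0) = \theta(x_0) = x_0$. With this fix your argument that $x$ is central in ${}^\delta K$ goes through.
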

The corollary says that to test whether $J(\Psi)$ is unitary, we must
first write down a formula as in Corollary \ref{cor:csigform} for the
$c$-invariant form {\em on the extended group ${}^\delta G$}. (The
first term $\sig^c_{I(\Lambda,0)}$ must be  
rewritten as a sum over the lowest ${}^\delta K$-types as explained
after Theorem \ref{thm:deftozero}.)  If any coefficient in that
expression is of the form $a+bs$ with $a$ and $b$ both nonzero, then
$J(\Psi)$ cannot be unitary. If every coefficient is either an integer
or an integer multiple of $s$, then we must compare this behavior with
the action of the element $x$ on the lowest ${}^\delta K$-type of the
corresponding standard representation. If they either agree everywhere
or disagree everywhere, then $J(\Psi)$ is unitary; otherwise it is not.

The proof of Theorem \ref{thm:deftozero} will occupy the rest of this
section. The idea is to deform the continuous parameter $\nu$ to zero,
and to compute the changes in the signature along the
way. Mathematically the simplest way to perform this deformation is in
a straight line. (We believe that deformation along a more complicated
path could be preferable computationally, but we do not pursue that
possibility here.) The main step is Corollary
\ref{cor:sigchange}. Here is a reformulation of that result using the
detailed information about Jantzen filtrations and forms contained in
Theorem \ref{thm:KLsigpolQ}.


\begin{theorem}\label{thm:sigchange} Suppose
  $\Gamma$ is a Langlands parameter of real infinitesimal character,
  so that the Langlands quotient $J(\Gamma)$ admits a nonzero
  $c$-invariant Hermitian form $\langle,\rangle^c_1$. Consider 
  the family of standard representations $I_{\quo}(\Gamma_t)$
  (for $t\ge 0$) defined in \eqref{eq:Gammat}, and the family of
  $c$-invariant forms $\langle,\rangle^c_t$ 
  extending $\langle,\rangle^c_1$ as in Proposition
  \ref{prop:jantzenfilt}. For every $t\ge 0$, define forms 
$$\langle,\rangle_t^{c,[r]} \text{\ on }
I_{\quo}(\Gamma_t)^r/I_{\quo}(\Gamma_t)^{r+1}$$
with signatures
$$\sig^{c,[r]}_{I(\Gamma_t)} = \POS_t^{[r]} + s\NEG_t^{[r]}\colon
\widehat K \rightarrow {\mathbb W}$$ 
as in Definition \ref{def:witt}. Write 
$$\sig^c_t = \sig^c_{I(\Gamma_t)} = \sum_{r=0}^\infty
\sig^{c,[r]}_{I(\Gamma_t)}$$ 
for the signature of the (nondegenerate) Jantzen form on $\gr
I_{\quo}(\Gamma_t)$. Consider a finite subset
$${0 < t_r < t_{r-1} < \cdots <t_1 \le 1} \subset (0,1].$$
so that $I(\Gamma_t)$ is irreducible for $t\in (0,1]
\setminus\{t_i\}$.  
\begin{enumerate}
\item On the complement of $\{t_i\}$, the form $\langle,\rangle^c_t$ is
  nondegenerate and of locally constant signature
$$ \sig^c_t = \sig_t^{c,[0]}.$$
\item Choose $\epsilon$ so small that $I(\Gamma_t)$ is irreducible for
  $t\in [1-\epsilon,1+\epsilon]\backslash\{1\}$. Then
$$\begin{aligned}
\sig^c_{1+\epsilon} &= \sig^c_1 = \sig^c_{1-\epsilon}\\
&+ (1-s) 
\sum_{\substack{\Xi\in
    B(\chi),\ \Xi < \Gamma \\ \ell(\Xi) - \ell(\Gamma)
    \text{\thinspace odd}}} 
s^{(\ell_o(\Xi) - \ell_o(\Gamma))/2}
Q_{\Xi,\Gamma}(s)\sig^c_{J(\Xi)}. \end{aligned}$$ 
\item  Under the same hypotheses as in (2),
$$\begin{aligned}
 \sig^c_{I(\Gamma_{1+\epsilon})} &= \sig^c_{I(\Gamma_1)} = \sig^c_{I(\Gamma_{(1-\epsilon)})}\\
&+ (1-s) 
\sum_{\substack{\Phi,\Xi\in
    B(\chi)\\\Phi \le \Xi < \Gamma \\ \ell(\Xi) - \ell(\Gamma)
    \text{\thinspace odd}}}
s^{(\ell_o(\Phi) - \ell_o(\Gamma))/2}(-1)^{\ell(\Xi) - \ell(\Phi)}
P_{\Phi,\Xi}(s)Q_{\Xi,\Gamma}(s) \sig^c_{I(\Phi)}\\
 &= \sig^c_{I(\Gamma_1)} = \sig^c_{I(\Gamma_{(1-\epsilon)})}\\
&+ (1-s) 
\sum (-1)^{(\ell_o(\Phi) - \ell_o(\Gamma))/2}
(-1)^{\ell(\Xi) - \ell(\Phi)}P_{\Phi,\Xi}(-1)Q_{\Xi,\Gamma}(-1) \sig^c_{I(\Phi)}.
\end{aligned}$$ 
\end{enumerate}
\end{theorem}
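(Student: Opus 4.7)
The plan is to present the theorem as a streamlined reformulation: essentially all of the content is contained in the intermediate results already proven, and the task is to combine them and then do careful bookkeeping of powers of $s$ in the Witt-type ring $\mathbb W$.

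Part (1) is immediate from Corollary~\ref{cor:sigchange}(1), applied with the $c$-invariant form: away from the discrete set $\{t_i\}$ the forms are nondegenerate, and the signature is locally constant with value $\sig_t^{c,[0]}$.

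For part (2), I will start from the jump formula
\[
\sig^c_{1+\epsilon} - \sig^c_{1-\epsilon} \;=\; (s-1)\sum_{r\text{ odd}}\sig^{c,[r]}_{I(\Gamma_1)}
\]
of Corollary~\ref{cor:sigchange}(3). By Definition~\ref{def:wpoly}, each odd-level signature character expands as
$\sig^{c,[r]}_{I(\Gamma_1)} = \sum_\Xi w^{c,r}_{\Xi,\Gamma}\sig^c_{J(\Xi),b}$,
where $w^{c,r}_{\Xi,\Gamma}$ is the coefficient of $q^{(\ell(\Gamma)-\ell(\Xi)-r)/2}$ in the signature multiplicity polynomial $Q^c_{\Xi,\Gamma}$. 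Theorem~\ref{thm:KLsigpolQ} identifies
$Q^c_{\Xi,\Gamma}(q)=s^{(\ell_o(\Xi)-\ell_o(\Gamma))/2}Q_{\Xi,\Gamma}(sq)$,
and in particular shows $w^{c,r}_{\Xi,\Gamma}$ vanishes unless $r\equiv \ell(\Gamma)-\ell(\Xi)\pmod 2$. Consequently summing over odd $r$ picks out only the $\Xi$ with $\ell(\Xi)-\ell(\Gamma)$ odd; for those, every nonzero $r$-contribution is included, and
$\sum_{r\text{ odd}}w^{c,r}_{\Xi,\Gamma} = Q^c_{\Xi,\Gamma}(1) = s^{(\ell_o(\Xi)-\ell_o(\Gamma))/2}Q_{\Xi,\Gamma}(s)$,
giving exactly the formula asserted in (2) (with $\sig^c_{J(\Xi)}=\sig^c_{J(\Xi),b}$).

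For part (3) I substitute into the formula just established the expansion of each $\sig^c_{J(\Xi)}$ into standard-module signatures provided by Corollary~\ref{cor:KLsig}(1):
$\sig^c_{J(\Xi)} = \sum_\Phi s^{(\ell_o(\Xi)-\ell_o(\Phi))/2}P_{\Phi,\Xi}(s)\sig^c_{I(\Phi)}$.
The combined exponent of $s$ is $(\ell_o(\Xi)-\ell_o(\Gamma))/2+(\ell_o(\Xi)-\ell_o(\Phi))/2$, and this is congruent mod $2$ to $(\ell_o(\Phi)-\ell_o(\Gamma))/2$ because their difference is $\ell_o(\Xi)-\ell_o(\Phi)$, which is even by the block-constancy of $\ell_o$ modulo $2$ given in Theorem~\ref{thm:KLsigpolQ}(1); all three parameters $\Phi,\Xi,\Gamma$ lie in a common block because $\Phi\le\Xi<\Gamma$ in the Bruhat order. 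That produces the first equality of (3). The second equality uses the elementary identity $s^k(s-1)=(-1)^k(s-1)$ in $\mathbb W$, which yields $(s-1)s^kf(s)=(-1)^kf(-1)(s-1)$ for any polynomial $f\in\mathbb Z[q]$; applying this with $f=P_{\Phi,\Xi}Q_{\Xi,\Gamma}$ and $k=(\ell_o(\Phi)-\ell_o(\Gamma))/2$ converts the $s$-evaluations into $-1$-evaluations with the indicated sign.

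The main obstacle is not mathematical depth — the argument is a direct assembly of Corollary~\ref{cor:sigchange}, Theorem~\ref{thm:KLsigpolQ}, and Corollary~\ref{cor:KLsig} — but rather careful bookkeeping: checking that the parity constraints from KLV theory force the right vanishings, that block-constancy of $\ell_o$ mod $2$ is available exactly where needed, and that the $\mathbb W$-arithmetic of $(s-1)$-multiples behaves as a signed evaluation at $q=-1$.
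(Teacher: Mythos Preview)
Your proof is correct and follows exactly the route the paper takes: part~(1) is Corollary~\ref{cor:sigchange}(1); part~(2) combines Corollary~\ref{cor:sigchange}(3) with the explicit formula of Theorem~\ref{thm:KLsigpolQ}; the first equality of~(3) substitutes Corollary~\ref{cor:KLsig}(1) into~(2); and the second equality of~(3) uses the identity $(s-1)s=(s-1)(-1)$ in~$\mathbb W$. Your exposition simply fills in the bookkeeping (parity of $r$, block-constancy of $\ell_o$ mod~$2$, the $(s-1)s^k=(s-1)(-1)^k$ reduction) that the paper leaves implicit.
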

\begin{proof}
Part (1) is exactly Corollary \ref{cor:sigchange}(1). 
Part (2) is Corollary \ref{cor:sigchange}(3), with the explicit formulas from
Theorem \ref{thm:KLsigpolQ}.  The first formula in (3) plugs Corollary
\ref{cor:KLsig} into (2). The second is a consequence of the first
because of the identity in ${\mathbb W}$
$$(s-1)s = (s-1)(-1).$$
\end{proof}

\begin{proof}[Proof of Theorem \ref{thm:deftozero}] We fix an
  invariant symmetric bilinear form on ${\mathfrak g}$ that is
  negative definite on the compact form ${\mathfrak g}({\mathbb
    R},\sigma_c)$ (Theorem \ref{thm:realforms}).  This form gives rise
  to a $W({\mathfrak g},{\mathfrak h})$-invariant form
  $\langle,\rangle$ on the dual ${\mathfrak h}^*$ of any Cartan
  subalgebra, which is positive definite on the canonical real weights
  (the real span of the differentials of algebraic characters).  We
  can arrange for this form to take rational values on the lattice
  $X^*(H({\mathbb C}))$; the denominators appearing are then
  necessarily bounded by some positive integer $N$. In this case the
  form also takes rational values on the differentials of discrete
  Langlands parameters, this time with denominators bounded by $4N$.
  (We may gain one $2$ from the $\rho$ shift in the parameter, and one
  from the operation $(1+\theta)/2$ of restricting to the compact
  part.)

To prove the theorem, we fix a bound $M$ on the size of infinitesimal
characters, and prove the theorem for all Langlands parameters
\begin{equation*}
\Gamma' = (\Lambda',\nu'), \qquad \langle
(d\Lambda',\nu'),(d\Lambda,\nu') \rangle \le M.
\end{equation*}
The proof proceeds by {\em downward} induction on $\langle
d\Lambda',d\Lambda'\rangle = k$. For $k>M$ there are no such Langlands
parameters, and the theorem is vacuously true; so we assume that the
theorem is known for $\Gamma'$ with
\begin{equation*}
\langle d\Lambda',d\Lambda' \rangle > \langle
d\Lambda,d\Lambda\rangle.
\end{equation*}
We apply the deformation formulas of Theorem \ref{thm:sigchange} $r$
times, at the points $t_1$, $t_2$,\dots, $t_r$. We get in the end a
(computable) formula
\begin{equation}\label{eq:rdefform}
\sig^c_{I(\Gamma)} = \sig^c_{I(\Lambda,0)} + (s-1)\sum_{\Phi \in
  B(\chi_{t_j})} M_{\Phi,\Gamma} \sig^c_{I(\Phi)}.
\end{equation}
Here every Langlands parameter $\Phi$ appearing has infinitesimal character
of the form
\begin{equation*}
\chi_{t_j} \leftrightarrow (d\Lambda, t_j\nu),
\end{equation*}
which has size less than or equal to the size of $\chi = \chi_1$, and so is
bounded by $M$. The condition in Theorem \ref{thm:sigchange}(5) $\Phi
< \Gamma_{t_j}$ implies in particular that the discrete part
$\Lambda(\Phi)$ satisfies
\begin{equation*}
\langle d\Lambda(\Phi),d\Lambda(\Phi)\rangle > \langle d\Lambda,
d\Lambda\rangle.
\end{equation*}
By the inductive hypothesis, each $\sig^c_{I(\Phi)}$ has a formula
\begin{equation}\label{eq:indhypdefform}
\sig^c_{I(\Phi)} = \sig^c_{I(\Lambda(\Phi),0)} +
(s-1)\sum_{\Lambda' \in \Pi_{\text{fin,disc}}(G)} M_{\Lambda',\Phi}
\sig^c_{I(\Lambda',0)} \quad  (M_{\Lambda',\Phi} \in {\mathbb
  Z}).
\end{equation}
By the remarks after the statement of Theorem \ref{thm:deftozero}, the
first term on the right can be rewritten as a sum (over the lowest
$K$-types of $I(\Phi)$) of standard final discrete
representations. Now if we insert all these inductively known formulas
\eqref{eq:indhypdefform} in \eqref{eq:rdefform}, we get the conclusion
of the theorem.
\end{proof}

\section{Example: Hermitian forms for $SL(2,{\mathbb
    C})$}\label{sec:exsl2C} 
\setcounter{equation}{0}

In this section we will carry out the algorithm of this paper to
calculate the signatures of the invariant Hermitian forms on the
irreducible representations of $G = SL(2,{\mathbb C})$, and in particular
to find the unitary representations of $SL(2,{\mathbb C})$. Of course
these results go back to Gelfand and Naimark in the 1940s; the point
is just to indicate as explicitly as possible the structure of our
algorithm. 

\begin{subequations}\label{se:SL2C}
We follow the notation for complex groups (regarded as real groups)
explained in \Cite{Vgreen}*{\S 7.1}. The complex Lie group with which
we begin is
\begin{equation}
G({\mathbb C}) = SL(2,{\mathbb C}) \times SL(2,{\mathbb C}) = G^L \times
G^R;
\end{equation}
the labels $L$ and $R$ stand for ``left'' and ``right.'' Here is some
of the notation introduced in Section \ref{sec:realred}. The standard
compact real form of $G({\mathbb C})$ is
\begin{equation}
\sigma_c(g,h) = ((g^*)^{-1}, (h^*)^{-1}), \qquad G({\mathbb
  R},\sigma_c) = SU(2) \times SU(2);
\end{equation}
here $g^* = {}^t \overline g$ is the usual Hermitian transpose on
matrices. The real form we are considering is
\begin{equation}
\sigma(g,h) = ((h^*)^{-1},(g^*)^{-1});
\end{equation}
the corresponding real group is
\begin{equation*}
 \quad G({\mathbb R},\sigma) =
\{(g,(g^*)^{-1}) \mid g\in SL(2,{\mathbb C})\} \simeq SL(2,{\mathbb C}).
\end{equation*}
Then the Cartan involution and complexified maximal compact subgroup
are
\begin{equation} \theta = \sigma\sigma_c, \quad \theta(g,h) = (h,g),
  \quad K({\mathbb C}) = SL(2,{\mathbb C})_\Delta \subset
  SL(2,{\mathbb C}) \times SL(2,{\mathbb C}).
\end{equation}
If we choose the same pinning (see \eqref{se:pinning}) on each of the
two simple factors of $G({\mathbb C})$, then $\theta$ preserves the
pinning; so the distinguished automorphism $\delta_f$ is
\begin{equation*}
\delta_f(g,h) = \theta(g,h) = (h,g).
\end{equation*}
The extended group is therefore
\begin{equation}
{}^\delta G({\mathbb C}) = G({\mathbb C}) \rtimes \{1,\delta_f\},
\end{equation}
and we can choose $x=\delta_f$ as a strong involution representing
$G$. (There are four legal choices of $x$: we can multiply $\delta_f$
by any of the central elements of $G$.)

The maximal compact subgroup is
\begin{equation*}
K = G^\theta = SU(2).
\end{equation*}
We will index irreducible representations $\mu_n\in \widehat K$ by nonnegative
integers $n$ (highest weights). Of course $\dim \mu_n = n+1$.
\end{subequations} 


\begin{subequations}\label{se:SL2Ctorus}
Up to conjugation by $G$, there is a unique maximal torus defined over
${\mathbb R}$. We choose as a representative the diagonal one
\begin{equation}
H({\mathbb C}) = \left\{\left(\begin{pmatrix} z & 0 \\ 0 & z^{-1}
    \end{pmatrix}, \begin{pmatrix} w & 0 \\ 0 &
      w^{-1}\end{pmatrix}\right) \mid z, w\in {\mathbb
    C}^\times\right\}. 
\end{equation}
We will write $h(z,w)$ for this element of the torus. The algebraic
characters of $H({\mathbb C})$ send $h(z,w)$ to $z^pw^q$ (for $p$ and
$q$ integers. The four roots are
\begin{equation*}
\pm \alpha^L(h(z,w)) = z^{\pm 2}, \qquad \pm\alpha^R(h(z,w)) = w^{\pm 2},
\end{equation*}
and the corresponding coroots are
\begin{equation*}
\pm (\alpha^L)^\vee(s) = h(s^{\pm 1},1), \qquad \pm (\alpha^R)^\vee(t)
= h(1,t^{\pm 1}).
\end{equation*}
The complex Weyl group is $W(G({\mathbb C}),H({\mathbb C})) = \{\pm 1\}
\times \{\pm 1\}$, acting by inversion on each coordinate
separately. Identifying the Lie algebra of ${\mathbb C}^\times$ with
${\mathbb C}$ identifies
\begin{alignat*}{2}
{\mathfrak h}({\mathbb C}) &\simeq {\mathbb C}^2,& \qquad
\pm (\alpha^L)^\vee &= (\pm 1,0),\\
{\mathfrak h}({\mathbb C})^* &\simeq {\mathbb C}^2,& 
\pm\alpha^L &= (\pm 2,0).\end{alignat*}

The real torus is
\begin{equation}\begin{aligned}
H &= \{h(z,\overline z^{-1})\mid z\in {\mathbb C}^\times
\\
&= \{h(re^{i\theta}, r^{-1}e^{i\theta}) \mid \theta \in {\mathbb R},
r> 0\}.
\end{aligned}
\end{equation}
Evidently the roots are all complex (Definition \ref{def:rhoim}), with
$\theta(\alpha^L) = \alpha^R$. 
The characters of this torus are 
\begin{equation}
\gamma_{n,\nu}(h(re^{i\theta}, r^{-1}e^{i\theta})) = r^\nu
e^{in\theta} \qquad (n\in {\mathbb Z}, \nu \in {\mathbb C}).
\end{equation}
The real Weyl group is the diagonal
subgroup
\begin{equation*}
W(G,H) = \{\pm 1\}_\Delta \subset \{\pm 1\} \times \{\pm 1\},
\end{equation*}
acting by inversion on each coordinate. (This is consistent with
Knapp's general result in Proposition \ref{prop:realweyl}, but of
course this case is much older and entirely elementary.) The action of
the real Weyl group on characters sends $\gamma_{n,\nu}$ to
$\gamma_{-n,-\nu}$. The Hermitian dual and $c$-Hermitian dual
parameters (Definitions \ref{def:dualparam} and \ref{def:cdualparam})
are
\begin{equation}
\gamma_{n,\nu}^{h,\sigma_0} = \gamma_{n,-\overline\nu}, \qquad
\gamma_{n,\nu}^{h,\sigma_c} = \gamma_{n,\overline\nu}.
\end{equation}

The differential of the character $\gamma_{n,\nu}$ is
\begin{equation}
d\gamma_{n,\nu} = \left( (n+\nu)/2, (n-\nu)/2\right) \in {\mathfrak h}^*.
\end{equation}
A root $\alpha$ is integral for $\gamma$ if and only if $d\gamma$
takes an integer value on $\alpha^\vee$ (Definition
\ref{def:introots}). Therefore
\begin{equation}
R(\gamma_{n,\nu}) = \begin{cases} R & (\nu \in 2{\mathbb Z} + n)\\
\emptyset & (\nu \notin 2{\mathbb Z} + n).\end{cases}
\end{equation}
That is, the only ``integrality condition'' is that the continuous
parameter $\nu$ should be an integer congruent to $n$ modulo $2$. The
``integral length'' (Definition \ref{def:length}) is
\begin{equation}
\ell(\gamma_{n,\nu}) = \begin{cases} 1 & (\nu \in 2{\mathbb Z} + n,
  |\nu| > |n|)\\
0 &\text{(otherwise).} \end{cases}
\end{equation}
Similarly, the ``orientation number''  (Definition \ref{def:ornumber})
is
\begin{equation}
\ell_o(\gamma_{n,\nu}) = \begin{cases} 1 & (\nu \notin 2{\mathbb Z} + n,
  |\nu| > |n|)\\
0 &\text{(otherwise).} \end{cases}
\end{equation}
\end{subequations} 

A Langlands parameter for $G$ (Definition \ref{def:langlandsparam}) is
\begin{subequations} \label{se:SL2CLanglands}
the same thing (since there are no imaginary roots) as a character of
$H$. Equivalence classes of 
parameters under conjugation by $G$ are the same thing as $W({\mathbb
  R})$ orbits of characters. The standard representations are
principal series (see Theorem \ref{thm:LC})
\begin{equation}\label{e:SL2Cps}
I_{\quo}(\gamma_{n,\nu}) =_{\text{def}} I_{\quo}(n,\nu), \qquad
I(n,\nu)|_K = \sum_{p=0}^\infty \mu_{|n| + 2p}.
\end{equation}
Each of these has a Langlands quotient representation (Theorem \ref{thm:LC})
\begin{equation}\label{e:SL2Cirr}
J(\gamma_{n,\nu}) =_{\text{def}} J(n,\nu), \qquad
J(n,\nu) \simeq J(n',\nu') \iff (n',\nu') = \pm(n,\nu).
\end{equation}
According to Theorem \ref{thm:unitarydual2}, 
\begin{equation*}
\text{$J(n,\nu)$ is Hermitian if and only if either $\nu\in i{\mathbb
    R}$, or $n=0$ and $\nu\in {\mathbb R}$.}
\end{equation*}
The first class of representations $J(n,i\sigma)=I(n,is)\simeq
I(-n,-is)$ is the unitary principal series; these representations are
all unitary. It is the second family $J(0,t) \simeq J(0,-t)$ that
requires study. We do this by comparing the invariant Hermitian form
to the invariant $c$-Hermitian form. According to Proposition
\ref{prop:cdualstd},  
\begin{equation*}
\text{$J(n,\nu)$ is $c$-Hermitian if and only if either $\nu\in {\mathbb
    R}$, or $n=0$ and $\nu\in i{\mathbb R}$.}
\end{equation*}
\end{subequations} 

To continue, we need to discuss parameters and representations of the
\begin{subequations}\label{se:SL2Cextparam}
extended group ${}^\delta G$. 
We have
\begin{equation}\begin{aligned}
\gamma_{n,\nu}^\theta &= \gamma_{n,-\nu},\\
\gamma_{n,\nu}^\theta \simeq \gamma_{n,\nu} &\iff \nu = 0 \text{\ or\
} n=0.
\end{aligned}\end{equation}
The second assertion amounts to
\begin{equation*}
\text{$\gamma_{n,\nu}$ is type one if and only if $\nu=0$ or $n=0$.}
\end{equation*}
Up to conjugation by $G$, there are exactly two classes of extended
maximal tori (Definition \ref{def:exttorus}):
\begin{equation}
{}^f H = \langle H,\delta_f\rangle, \qquad {}^s H = \langle H,
\delta_s\rangle;
\end{equation}
here $f$ and $s$ stand for ``fundamental'' and ``split.'' The second
generator for ${}^f H$ is the involution $\delta_f$ exchanging
the two factors of $G({\mathbb C})$. The generator for ${}^s H$ is
\begin{equation}
\delta_s = \left(\begin{pmatrix}0&1\\-1 &0\end{pmatrix},
  \begin{pmatrix} 0&-1 \\ 1 & 0 \end{pmatrix} \right)\delta_f
\end{equation}
The element $\delta_s$ is the conjugate of $\delta_f$ by
$\left(\begin{pmatrix} 0&1\\-1&0 \end{pmatrix},I\right)$.  That is one
of many ways to see that $\delta_s^2 = \delta_f^2 = 1$.  

The type one extended Langlands parameters attached to ${}^f H$ are
the two extensions of the characters $\gamma_{n,0}$:
\begin{equation}
{}^f\gamma_{n,0}^{\epsilon} \in \widehat{{}^f H}, \quad
{}^f\gamma_{n,0}^{\epsilon}(\delta_f) = \epsilon \qquad (\epsilon = \pm
1).
\end{equation}
The character ${}^f\gamma_{n,0}^{\epsilon}$ is conjugate (by the
normalizer in $K$ of ${}^f H$) to ${}^f\gamma_{-n,0}^{\epsilon}$ (no
change of sign in the superscript).

In the same way, the type one extended Langlands parameters attached
to ${}^s H$ are the two extensions of the characters $\gamma_{0,\nu}$:
\begin{equation}
{}^s\gamma_{0,\nu}^{\epsilon} \in \widehat{{}^s H}, \quad
{}^s\gamma_{0,\nu}^{\epsilon}(\delta_s) = \epsilon \qquad (\epsilon = \pm 1).
\end{equation}
The character ${}^s\gamma_{0,\nu}^{\epsilon}$ is conjugate (by the
normalizer in $K$ 
of ${}^s H$) to ${}^s\gamma_{0,-\nu}^{\epsilon}$ (no change of sign in the
superscript). 
The only other equivalence for extended parameters is
\begin{equation}\label{eq:sfequiv}
{}^f\gamma_{0,0}^{\epsilon} \simeq {}^s\gamma_{0,0}^{\epsilon}
\end{equation}
(no change of sign in the superscript); this is explained in
Definition \ref{def:equivextparam}. 
\end {subequations} 

In order to make the deformation in $\nu$ described in Section
\ref{sec:defto0}, we need to 
control possible reducibility of standard representations
$I({}^s\gamma_{n,\nu}^{+1}$. We will do this only for $n=0$,
since that is the  only case arising in the analysis of unitary
representations. That is, we are considering the one-dimensional real
vector space
\begin{equation*}
\{\nu \in {\mathbb R}\} \leftrightarrow \{{}^s\gamma_{0,\nu}^{+1}\}.
\end{equation*}
(It is enough to consider only the extended parameters with
superscript $+1$
because signatures of forms on irreducibles are unchanged by switching
$+1$ and $-1$.) 
Reducibility is possible (Proposition \ref{prop:LCshape}) only if
\begin{equation*}
\nu = \pm 2, \pm 4,\ldots
\end{equation*}
The signature of the
$c$-invariant form on $I(0,\nu)$ can change only when $\nu = 2p$ is a
nonzero even integer. For definiteness we take $p>0$. The
infinitesimal character $\xi$ is represented by the Weyl group orbit $(\pm
p, \pm p)$. Up to equivalence, the extended Langlands parameters
with this infinitesimal character are
\begin{equation*}
B(\xi) = \{{}^f\gamma_{2p,0}^{\pm 1}, {}^s\gamma_{0,2p}^{\pm 1}\},
\end{equation*}
having length $0$ and $1$ respectively. The non-diagonal Kazhdan-Lusztig
multiplicity polynomials are
\begin{equation*}
Q_{{}^f\gamma_{2p,0}^{(-1)^p},{}^s\gamma_{0,2p}^+} = 1.
\end{equation*}
(Deciding what sign to put in the superscript of
${}^s\gamma_{0,2p}$ is not trivial. There is an explanation in
\Cite{AV}*{\S 9}; it is one of the main points of that paper.)
According to Theorem \ref{thm:sigchange}, it follows that
\begin{equation*}
\sig^c_{I({}^s\gamma^{+1}_{0,2p+\epsilon})} =
\sig^c_{I({}^s\gamma^{+1}_{0,2p})}
    = \sig^c_{I({}^s\gamma^{+1}_{0,2p+\epsilon})} +
      (1-s)\sig^c_{I({}^f\gamma^{(-1)^p}_{2p,0})}.
\end{equation*}
Applying such formulas at each of the reducibility points, as in the
proof of Theorem \ref{thm:deftozero}, we find
\begin{equation*}
\sig^c_{I({}^s\gamma^+_{0,t})} = \sig^c_{I({}^s\gamma^+_{0,0})} + (1-s)
\sum_{j=1}^p \sig^c_{I({}^f\gamma^{(-1)^j}_{2j,0})} \qquad (0 \le 2p\le t <
2p+2).
\end{equation*}
For $t$ not a positive integer, this principal series representation
is irreducible, so we have computed the signature of the $c$-invariant
form on the Langlands quotient. For $t=2p$, the calculation in
Corollary \ref{cor:sigform} is very short:
\begin{equation*}
\sig^c_{J({}^s\gamma^+_{0,2p})} = \sig^c_{I({}^s\gamma^+_{0,0})} + (1-s)
\left(\sum_{j=1}^{p-1} \sig^c_{I({}^f\gamma^{(-1)^j}_{2j,0})}\right)  - s 
\sig^c_{I({}^f\gamma^{(-1)^p}_{2p,0})}.
\end{equation*}

Now we convert these formulas to formulas for the ordinary invariant
Hermitian forms $\langle,\rangle^0$ as in Corollary
\ref{cor:sigform}. Recall that we chose $x=\delta_f$ in
\eqref{se:SL2C} for our strong involution; so $z=x^2=1$ acts by $\zeta
= 1$ in all of our representations.  We choose the square root
$\zeta^{1/2} =1$. Almost by definition of the Langlands parameters,
$x$ acts by the scalar $\epsilon = \pm 1$
on the lowest $K$-type of the standard
\begin{subequations}\label{se:SL2Csig0}
representation $I({}^f\gamma^\epsilon_{2p,0})$. In light of
\eqref{eq:sfequiv}, $x$ also acts by $\epsilon$ on the lowest $K$-type
of $I({}^s\gamma^\epsilon_{0,0})$. By Corollary
\ref{cor:sigform}  
\begin{equation}
\sig^0_{I({}^s\gamma^{+1}_{0,t})} = \sig^0_{I({}^s\gamma^{+1}_{0,0})} + (1-s)
\sum_{j=1}^p s^j\sig^0_{I({}^f\gamma^{(-1)^j}_{2j,0})} \qquad (0 \le 2p\le t <
2p+2), 
\end{equation}
\begin{equation}\label{e:SL2Cfd} 
\sig^0_{J({}^s\gamma^{+1}_{0,2p})} = \sig^0_{I({}^s\gamma^{+1}_{0,0})} + (1-s)
\left(\sum_{j=1}^{p-1}
  s^j\sig^0_{I({}^f\gamma^{(-1)^j}_{2j,0})}\right)  -s^{p+1} 
\sig^0_{I({}^f\gamma^{(-1)^p}_{2p,0})}. 
\end{equation}
As explained in Corollary \ref{cor:sigform}(4), such a formula is the
signature of a unitary representation if and only if $s$ never
appears. We therefore find (for $t\ge 0$)
\begin{equation}
\text{$J({}^s\gamma^+_{0,t})$  is unitary} \iff 0\le t \le 2.
\end{equation}
The reason is that for $t>2$, the term $(s-1)I({}^f\gamma^{-1}_{2,0})$ in
the signature formula 
guarantees that the three-dimensional representation of $K$
contributes to the negative part of the invariant Hermitian form. 

A little more precisely, the formula \eqref{e:SL2Cfd} says that in the
spherical finite-dimensional representation of $SL(2,{\mathbb C})$ of
infinitesimal character $(p,p)$ (which has dimension $p^2$), the $K$
representations $\mu_0, \mu_2,\ldots,\mu_{2p-2}$ appear with
alternating signs in the invariant Hermitian form.
\end{subequations} 

Of course these facts about $SL(2,{\mathbb C})$ are classical (see
\Cite{GN}) and easy to prove directly.

\begin{bibdiv}
\begin{biblist}[\normalsize]

\bib{ABV}{book}{
author={Adams, Jeffrey D.},
author={Barbasch, Dan M.},
author={Vogan, David A., Jr.},
title={The {L}anglands Classification and Irreducible Characters for
  Real Reductive Groups}, 
publisher={Birkh\"auser},
address={Boston-Basel-Berlin},
date={1992},
}

\bib{AV}{collection.article}{
author={Adams, Jeffrey D.}, 
author={Vogan, David A., Jr.},
pages={51--116},
title={Parameters for twisted representations},
booktitle={Representations of reductive groups},
editor={Nevins, Monica},
editor={Trapa, Peter E.},
series={Progress in Mathematics},
volume={312},
publisher={Birkh\"auser},
address={Switzerland},
date={2015},
}

\bib{BFilt}{article}{
   author={Barbasch, Dan},
   title={Filtrations on Verma modules},
   journal={Ann. Sci. \'Ecole Norm. Sup. (4)},
   volume={16},
   date={1983},
   number={3},
   pages={489--494 (1984)},
}

\bib{BB}{article}{
   author={Beilinson, A.},
   author={Bernstein, J.},
   title={A proof of Jantzen conjectures},
   conference={
      title={I. M. Gel\cprime fand Seminar},
   },
   book={
      series={Adv.\ Soviet Math.},
      volume={16},
      publisher={Amer.\ Math.\ Soc.},
      place={Providence, RI},
   },
   date={1993},
   pages={1--50},
}

\bib{BBD}{article}{
   author={Beilinson, A. A.},
   author={Bernstein, J.},
   author={Deligne, P.},
   title={Faisceaux pervers},
   conference={
      title={Analyse et Topologie sur les Espaces Singuliers, I},
      address={Luminy},
      date={1981},
   },
   book={
      series={Ast\'erisque},
      volume={100},
      publisher={Soc.\ Math.\ France},
      place={Paris},
   },
   date={1982},
   pages={5--171},
}

\bib{LAG}{book}{
author={Borel, Armand},
title={Linear Algebraic Groups},
series={Graduate Texts in Mathematics},
publisher={Springer-Verlag},
address={New York},
date={1991},
}

\bib{Bour}{book}{
author={Bourbaki, N.},
title={Groupes et Alg\`ebres de Lie. Chapitres 4, 5, et 6},
publisher={Masson},
address={Paris},
date={1981},
}

\bib{Chev}{book}{
author={Chevalley, Claude},
title={Theory of Lie Groups},
publisher={Princeton University Press},
address={Princeton, New Jersey},
date={1946},
}

\bib{Cliff}{article}{
   author={Clifford, A. H.},
   title={Representations induced in an invariant subgroup},
   journal={Ann.\ of Math.\ (2)},
   volume={38},
   date={1937},
   number={3},
   pages={533--550},
}

\bib{atlas}{misc}{
   author={du Cloux, Fokko},
   author={van Leeuwen, Marc},
   title={Software for structure and representations of real reductive groups,
   version 1.0.6},
   status={available from {\tt http://www.liegroups.org}},
   date={2017},
}

\bib{FS}{article}{
   author={Fomin, A. I.},
   author={{\v{S}}apovalov, N. N.},
   title={A certain property of the characters of irreducible
   representations of real semisimple Lie groups},
   language={Russian},
   journal={Funkcional.\ Anal.\ i Prilo\v zen.},
   volume={8},
   date={1974},
   number={3},
   pages={87--88},
   translation={ 
       journal={Functional Anal.\ Appl.\ },
       volume={8},
       date={1974},
       pages={270--271},
     },
}

\bib{GN}{article}{
author={I.~M.~Gelfand},
author={M.~A.~Naimark},
title={Unitary representations of the Lorentz group},
journal={Izv.\ Akad.\ Nauk S.S.S.R.},
volume={11},
date={1947},
pages={411--504},
}

\bib{HCI}{article}{
author={Harish-Chandra},
title={Representations of a semisimple Lie group on a Banach space I},
journal={Trans.\ Amer.\ Math.\ Soc.},
volume={75},
date={1953},
pages={185--243},
}

\bib{HCII}{article}{
author={Harish-Chandra},
title={Representations of semisimple Lie groups. II},
journal={Trans.\ Amer.\ Math.\ Soc.},
volume={76},
date={1954},
pages={26--65},
}

\bib{HCIII}{article}{
   author={Harish-Chandra},
   title={Representations of semisimple Lie groups. III},
   journal={Trans.\ Amer.\ Math.\ Soc.},
   volume={76},
   date={1954},
   pages={234--253},
}

\bib{HCchar}
{article}{
   author={Harish-Chandra},
   title={The characters of semisimple Lie groups},
   journal={Trans.\ Amer.\ Math.\ Soc.},
   volume={83},
   date={1956},
   pages={98--163},
}

\bib{HClocL1}{article}{
   author={Harish-Chandra},
   title={Invariant eigendistributions on a semisimple Lie group},
   journal={Trans. Amer. Math. Soc.},
   volume={119},
   date={1965},
   pages={457--508},
 }

\bib{HS}{article}{
   author={Hecht, Henryk},
   author={Schmid, Wilfried},
   title={Characters, asymptotics and ${\germ n}$-homology of Harish-Chandra
   modules},
   journal={Acta Math.},
   volume={151},
   date={1983},
   number={1-2},
   pages={49--151},
}

\bib{Hum}{book}{
   author={Humphreys, James E.},
   title={Introduction to Lie algebras and representation theory},
   series={Graduate Texts in Mathematics},
   volume={9},
   note={Second printing, revised},
   publisher={Springer-Verlag},
   place={New York},
   date={1978},
}

\bib{Jantzen}{book}{
   author={Jantzen, Jens Carsten},
   title={Moduln mit einem h\"ochsten Gewicht},
   language={German},
   series={Lecture Notes in Mathematics},
   volume={750},
   publisher={Springer, Berlin},
   date={1979},
   pages={ii+195},
}

\bib{kl79}{article}{
   author={Kazhdan, David},
   author={Lusztig, George},
   title={Representations of Coxeter groups and Hecke algebras},
   journal={Invent.\ Math.},
   volume={53},
   date={1979},
   number={2},
   pages={165--184},
}

\bib{kl80}{article}{
author={Kazhdan, David},
author={Lusztig, George},
title={Schubert varieties and Poincar\'e duality},
booktitle={Geometry of the Laplace Operator},
series={Proceedings of Symposia in Pure Mathematics},
volume={36},
pages={185--203},
publisher={American Mathematical Society},
address={Providence, Rhode Island},
date={1980},
}

\bib{KOv}{book}{
author={Knapp, Anthony W.},
title={Representation Theory of Semisimple Groups: An Overview Based on Examples
},
publisher={Princeton University Press},
address={Princeton, New Jersey},
date={1986},
}

\bib{KBey}{book}{
author={Knapp, Anthony W.},
title={Lie Groups Beyond an Introduction},
edition={Second Edition},
series={Progress in Mathematics 140},
publisher={Birkh\"auser},
address={Boston-Basel-Berlin},
date={2002},
}

\bib{KV}{book}{
author={Knapp, Anthony W.},
author={Vogan, David A., Jr.},
title={Cohomological Induction and Unitary Representations},
publisher={Princeton University Press},
address={Princeton, New Jersey},
date={1995},
}

\bib{KZ}{article}{
  author={Knapp, Anthony W.},
  author={Zuckerman, Gregg},
  title={Classification of irreducible tempered representations of
  semisimple Lie groups},
journal={Ann.\ of Math.\ }, 
volume={116},
date={1982},
pages={389--501},
}

\bib{KSII}{article}{
author={Knapp, Anthony W.},
author={Stein, Elias M.},
title={Intertwining operators for semisimple groups II},
journal={Invent.\ Math.\ },
volume={60},
date={1980},
pages={9--84},
}

\bib{LC}{article}{
  author={Langlands, Robert P.},
  title={On the classification of representations of real algebraic groups},
  pages={101--170},
  booktitle={Representation Theory and Harmonic Analysis on Semisimple
  Lie Groups},
editor={Sally, Paul},
editor={Vogan, David A., Jr.},
series={Mathematical Surveys and Monographs},
volume={31},
publisher={American Mathematical Society},
address={Providence, Rhode Island},
date={1989},
}

\bib{LRed}{book}{
author={Lusztig, George},
title={Characters  of  Reductive  Groups  over  a  Finite  Field},
series={Annals of Mathematics Studies},
volume={107},
publisher={Princeton University Press},
address={Princeton, New Jersey},
date={1984},
}

\bib{LV83}{article}{
   author={Lusztig, George},
   author={Vogan, David A., Jr.},
   title={Singularities of closures of $K$-orbits on flag manifolds},
   journal={Invent.\ Math.},
   volume={71},
   date={1983},
   number={2},
   pages={365--379},
}


\bib{LV12}{article}{
   author={Lusztig, George},
   author={Vogan, David A., Jr.},
   title={Quasisplit Hecke algebras and symmetric spaces},
   journal={Duke Math.\ J.},
   volume={163},
   date={2014},
   number={5},
   pages={983--1034},
}


\bib{OV}{book}{
   author={Onishchik, A. L.},
   author={Vinberg, {\`E}. B.},
   title={Lie groups and algebraic groups},
   series={Springer Series in Soviet Mathematics},
   note={Translated from the Russian and with a preface by D. A. Leites},
   publisher={Springer-Verlag},
   place={Berlin},
   date={1990},
   pages={xx+328},
}

\bib{RenTr}{article}{
   author={Renard, David A.},
   author={Trapa, Peter E.},
   title={Kazhdan-Lusztig algorithms for nonlinear groups and applications
   to Kazhdan-Patterson lifting},
   journal={Amer.\ J.\ Math.},
   volume={127},
   date={2005},
   number={5},
   pages={911--971},
}

\bib{Ross}{article}{
   author={Rossmann, W.},
   title={Tempered representations and orbits},
   journal={Duke Math.\ J.},
   volume={49},
   date={1982},
   number={1},
   pages={231--247},
}

\bib{Schiff}{article}{
   author={Schiffmann, G{\'e}rard},
   title={Int\'egrales d'entrelacement et fonctions de Whittaker},
   journal={Bull. Soc. Math. France},
   volume={99},
   date={1971},
   pages={3--72},
}

\bib{SchProp}{article}{
author={Schmid, Wilfried},
title={Some properties of square-integrable representations of
  semisimple Lie groups}, 
journal={Ann.\ of Math.},
volume={102},
date={1975},
pages={535--564},
}

\bib{Sch2}{article}{
author={Schmid, Wilfried},
title={Two character identities for semisimple Lie groups},
pages={196--225},
booktitle={Non-commutative  Harmonic  Analysis  and  Lie  groups},
editor={J.~Carmona and M.~Vergne},
series={Lecture Notes in Mathematics},
volume={587},
publisher={Springer-Verlag},
address={Berlin-Heidelberg-New York},
date={1977},
}

\bib{Segal}{article}{
author={I.~E.~Segal},
title={Hypermaximality of certain operators on Lie groups},
journal={Proc.~Amer. Math.~Soc.},
volume={3},
date={1952},
pages={13--15},
}

\bib{invsub}{article}{
author={Soergel, W.},
title={An irreducible not admissible Banach representation of
  $\operatorname{SL}(2,{\mathbb R})$},
journal={Proc.~Amer.~Math.~Soc.},
volume={104},
date={1988},
pages={1322--1324},
}

\bib{SpehV}{article}{
author={Speh, Birgit},
author={Vogan, David A., Jr.},
title={Reducibility of generalized principal series representations},
journal={Acta Math.\ },
volume={145},
date={1980},
pages={227--299},
}

\bib{Spr}{book}{
   author={Springer, T. A.},
   title={Linear algebraic groups},
   series={Progress in Mathematics},
   volume={9},
   edition={2},
   publisher={Birkh\"auser Boston Inc.},
   place={Boston, MA},
   date={1998},
   pages={xiv+334},
}

\bib{Vgreen}{book}{
author={Vogan, David A., Jr.},
title={Representations of Real Reductive Lie Groups},
publisher={Birkh\"auser},
address={Boston-Basel-Stuttgart},
date={1981},
}

\bib{IC1}{article}{
author={Vogan, David A., Jr.},
title={Irreducible characters of semisimple Lie groups. I},
journal={Duke Math.\ J.~},
volume={46},
date={1979},
pages={61--108},
}

\bib{IC2}{article}{
   author={Vogan, David A., Jr.},
   title={Irreducible characters of semisimple Lie groups. II. The
   Kazhdan-Lusztig conjectures},
   journal={Duke Math. J.},
   volume={46},
   date={1979},
   number={4},
   pages={805--859},
}

\bib{IC3}{article}{
   author={Vogan, David A., Jr.},
   title={Irreducible characters of semisimple Lie groups III. Proof of
   Kazhdan-Lusztig conjecture in the integral case},
   journal={Invent. Math.},
   volume={71},
   date={1983},
   number={2},
   pages={381--417},
   issn={0020-9910},
}

\bib{IC4}{article}{
author={Vogan, David A., Jr.},
title={Irreducible characters of semisimple Lie
  groups. IV. Character-multi\-plicity duality}, 
journal={Duke Math. J.},
volume={49},
number={4},
date={1982},
pages={943--1073},
}

\bib{Vu}{article}{
author={Vogan, David A., Jr.},
title={Unitarizability of certain series of representations},
journal={Ann.\ of Math.\ },
volume={120},
date={1984},
pages={141--187},
}

\bib{VK}{article}{
author={Vogan, David A., Jr.},
title={Branching to a maximal compact subgroup},
book={
title={Harmonic analysis, group representations, automorphic forms, and invariant theory: in honour of Roger E.~Howe},
series={Lect.~Notes~Ser.~ Inst.~Math.~Sci.~Nat.~Univ.~Singap.},
volume={12},
publisher={World Sci.~Publ., Hackensack, NJ},
},
date={2007},
pages={321--401},
}

\bib{Yee}{article}{
   author={Yee, Wai Ling},
   title={Signatures of invariant Hermitian forms on irreducible
   highest-weight modules},
   journal={Duke Math. J.},
   volume={142},
   date={2008},
   number={1},
   pages={165--196},
}
\bib{Yee-new}{article}{
   author={Yee, Wai Ling},
   title={Relating signed and classical Kazhdan-Lusztig polynomials}, 
journal={Duke Math. J.},
volume={163},
date={2014},
pages={2161-2178},
}

\bib{Zuck}{article}{
author={Zuckerman, Gregg},
title={Tensor products of finite and infinite dimensional
representations of semisimple Lie groups}, 
journal={Ann.\ of Math.\ (2)},
volume={106},
date={1977},
pages={295--308},
}

\end{biblist}
\end{bibdiv}

\end{document}